%
%
%
%
\documentclass{amsart}
\usepackage{amsmath,amsfonts,amssymb,amsthm,amscd}
\usepackage{url}
\usepackage[dvipdfm]{graphicx}

\newcommand{\mc}[1]{{}}
%
%
%
\hyphenation{homo-logous}
%
\renewcommand{\emptyset}{\varnothing}
\renewcommand{\Im}{\operatorname{Im}}
\renewcommand{\Re}{\operatorname{Re}}
\renewcommand{\setminus}{-}

%

\renewcommand{\epsilon}{\varepsilon}
%
%
\newcommand{\Vol}{\operatorname{Vol}}

\newcommand{\Area}{\operatorname{Area}}
\newcommand{\area}{\operatorname{area}}
\newcommand{\diam}{\operatorname{diam}}

\newcommand{\card}{\operatorname{card}}

\newcommand{\Id}{\operatorname{Id}}

\newcommand{\GL}{\operatorname{GL}(2,{\mathbb R})}
\newcommand{\SL}{\operatorname{SL}(2,{\mathbb R})}
\newcommand{\PSL}{\operatorname{PSL}(2,{\mathbb R})}
\newcommand{\SLZ}{\operatorname{SL}(2,{\mathbb Z})}
\newcommand{\PSLZ}{\operatorname{PSL}(2,{\mathbb Z})}
\newcommand{\SO}{\operatorname{SO}(2,{\mathbb R})}
\newcommand{\PSO}{\operatorname{PSO}(2,{\mathbb R})}
\newcommand{\CP}{{\mathbb C}\!\operatorname{P}^1}
\newcommand{\ext}{{\operatorname{Ext}}}
%
%
\newcommand{\noz}{n}

\newcommand{\SVc}{c_{\mathcal{C}}}

\newcommand{\cf}{\phi}

\newcommand{\Dflat}{\Delta_{\text{\textit{flat}}}}
\newcommand{\Dhyp}{\Delta_{\mathit{T\hspace{-1.5pt}ei\hspace{-0.5pt}ch}}}
\newcommand{\nhyp}{\nabla_{\!\mathit{T\hspace{-1.5pt}ei\hspace{-0.5pt}ch}}}

\newcommand{\nuhyp}{\nu_{\mathit{hyp}}}

\newcommand{\gflat}{g_{\mathit{flat}}}
\newcommand{\ghyp}{g_{\mathit{hyp}}}

\newcommand{\gfe}{g_{\mathit{flat},\epsilon}}
\newcommand{\rhofe}{\rho_{\mathit{flat},\epsilon}}
\newcommand{\cfe}{\mathit{const}_{\mathit{flat},\epsilon}}

\newcommand{\ghd}{g_{\mathit{hyp},\delta}}
\newcommand{\rhohd}{\rho_{\mathit{hyp},\delta}}
\newcommand{\chd}{\mathit{const}_{\mathit{hyp},\delta}}

\newcommand{\dd}{\partial\overline{\partial}}
\newcommand{\dtdtbar}{\frac{\partial^2}{\partial t\,\partial\overline{t}}}

\newcommand\const{\mathit{const}}

\newcommand\lf{\ell_{\mathit{flat}}}
\newcommand\lh{\ell_{\mathit{hyp}}}

\newcommand{\Mod}{\operatorname{Mod}}

\newcommand{\Y}{\mathsf{Y}}

\newcommand{\Ypunc}{\mathring{Y}}
\newcommand{\Spunc}{\mathring{\!S}}

\newcommand{\geff}{g_{\mathit{eff}}}

\newcommand{\pihor}{\pi_{\mathit{hor}}}
\newcommand{\pivert}{\pi_{\mathit{vert}}}

%
\newcommand\N{\mathbb N}

\newcommand\Z{\mathbb Z}

\newcommand\R[1]{{\mathbb R}^{#1}}
\newcommand{\reals}{{\mathbb R}}

\newcommand\C[1]{{\mathbb C}^{#1}}

\newcommand\Hyp{{\mathbb H}^2}
\newcommand\T{{\mathbb T}^2}


\newcommand{\cC}{{\mathcal C}}
\newcommand{\cE}{{\mathcal E}}

\newcommand{\cH}{{\mathcal H}}

\newcommand{\cL}{{\mathcal L}}
\newcommand{\cM}{{\mathcal M}}
\newcommand{\cN}{{\mathcal N}}
\newcommand{\cO}{{\mathcal O}}

\newcommand{\cQ}{{\mathcal Q}}

\newcommand{\PcH}{\operatorname{\mathbb{P}}\!{\mathcal H}}
\newcommand{\PcQ}{\operatorname{\mathbb{P}}\hspace*{-2pt}{\mathcal Q}}


\newlength{\halfbls}\setlength{\halfbls}{.5\baselineskip}


\numberwithin{equation}{section}


\newtheorem{Theorem}{Theorem}
\newtheorem{theorem}[Theorem]{Theorem}
\newtheorem*{NNTheorem}{Theorem}
\newtheorem*{Theorem1prime}{Theorem $\mathbf{1'}$}
\newtheorem*{BackgroundTheorem}{Background Theorem}

\newtheorem{prop}{Proposition}[section]
\newtheorem{proposition}[prop]{Proposition}
\newtheorem{Proposition}[prop]{Proposition}

\newtheorem{Lemma}{Lemma}[section]
\newtheorem{lemma}[Lemma]{Lemma}

\newtheorem*{NNLemma}{Lemma}

\newtheorem{cor}{Corollary}
\newtheorem{corollary}[cor]{Corollary}
\newtheorem{Corollary}[cor]{Corollary}
\newtheorem*{NNCorollary}{Corollary}

\newtheorem{Conjecture}{Conjecture}
\newtheorem{Problem}{Problem}

\newtheorem*{PolyakovFormula}{Theorem (Polyakov Formula)}
\newtheorem*{GreenFormula}{Green's Formula}

\theoremstyle{remark}

\newtheorem{Remark}{Remark}[section]
\newtheorem{remark}[Remark]{Remark}
\newtheorem*{NNRemark}{Remark}

\newtheorem{Example}{Example}[section]

\theoremstyle{definition}

\newtheorem{Definition}{Definition}
\newtheorem{definition}[Definition]{Definition}


\begin{document}

\title[Lyapunov exponents of the Teichm\"uller flow]
{Sum of Lyapunov exponents of the Hodge bundle with respect to the
Teichm\"uller geodesic flow}

\dedicatory{Symphony in $\Delta$-Flat}

\author{Alex Eskin}
\thanks{Research  of  the first author is partially supported  by
NSF grant.}
\address{
Department of Mathematics,
University of Chicago,
Chicago, Illinois 60637, USA\\
}
\email{eskin@math.uchicago.edu}

\author{Maxim Kontsevich}
\thanks{Research of the second and of the third authors is partially
supported by ANR grant GeoDyM}
\address{IHES,
le Bois Marie,
35, route de Chartres,
91440 Bures-sur-Yvette, FRANCE\\
}
\email{maxim@ihes.fr}

\author{Anton Zorich}
\thanks{Research of the third author is partially supported by IUF}
\address{
Institut de Math\'ematiques de Jussieu (Paris Rive Gauche),
Universit\'e Paris 7 and IUF,
B\^atiment Sophie~Germain, Case 7012,  75205 Paris Cedex 13, FRANCE
}
\email{zorich@math.jussieu.fr}

\subjclass[2000]{
Primary
30F30, 
32G15, 
32G20, 
57M50; 
Secondary
14D07, 
37D25  
}

\keywords{Teichm\"uller geodesic flow, moduli space of quadratic
differentials, Lyapunov exponent, Hodge norm, determinant of Laplacian,
flat structure, saddle connection}

   %
\maketitle

\tableofcontents

\section{Introduction}

\subsection{Moduli spaces of Abelian and quadratic differentials}

The  moduli space $\cH_g$ of pairs $(C,\omega)$ where $C$ is a smooth
complex  curve  of  genus $g$ and $\omega$ is an Abelian differential
(or,  in the other words, a holomorphic 1-form) is a total space of a
complex  $g$-dimensional  vector bundle over the moduli space $\cM_g$
of  curves  of  genus $g$. The moduli space $\cQ_g$ of of holomorphic
quadratic  differentials  is  a  complex  $(3g-3)$-dimensional vector
bundle   over  the  moduli  space  of  curves  $\cM_g$.  In  all  our
considerations  we  always  remove the zero sections from both spaces
$\cH_g$ and $\cQ_g$.

There  are  natural  actions  of $\C{\ast}$ on the spaces $\cH_g$ and
$\cQ_g$  by  multiplication of the corresponding Abelian or quadratic
differential  by  a nonzero complex number. We will also consider the
corresponding     projectivizations    $\PcH_g=\cH_g/\C{\ast}$    and
$\PcQ_g=\cQ_g/\C{\ast}$ of the spaces $\cH_g$ and $\cQ_g$.
\smallskip

\noindent
\textbf{Stratification.}
Each  of  these  two spaces is naturally stratified by the degrees of
zeroes  of  the  corresponding  Abelian  differential or by orders of
zeroes  of the corresponding quadratic differential. (We try to apply
the  word  ``degree'' for the zeroes of  {\it Abelian}  differentials
reserving  the  word  ``order''  for  the  zeroes of  {\it quadratic}
differentials.)  We  denote the strata by $\cH(m_1,\dots,m_\noz)$ and
$\cQ(d_1,\dots,d_\noz)$ correspondingly. Here $m_1+\dots+m_\noz=2g-2$
and    $d_1+\dots+d_\noz=4g-4$.   By   $\PcH(m_1,\dots,m_\noz)$   and
$\PcQ(d_1,\dots,d_\noz)$  we  denote  the  projectivizations  of  the
corresponding strata.
We   shall  also consider slightly more general strata of meromorphic
quadratic  differentials  with at most simple poles, for which we use
the  same  notation $\cQ(d_1,\dots,d_\noz)$ allowing to certain $d_j$
be equal to $-1$.

The   dimension of a stratum of Abelian differentials is expressed as
$$
\dim_{\C{}}\cH(m_1,\dots,m_\noz)=2g+\noz-1\,.
$$
The   dimension of a stratum of quadratic differentials which are not
global squares of an Abelian differentials is expressed as
$$
\dim_{\C{}}\cQ(d_1,\dots,d_\noz)=2g+\noz-2\\
$$
Note  that,  in  general,  the  strata do not have the structure of a
bundle over the moduli space $\cM_g$, in particular, it is clear from
the  formulae  above that some strata have dimension smaller then the
dimension of $\cM_g$.
\smallskip

\noindent\textbf{Period coordinates.}
Consider  a  small  neighborhood  $U(C_0,\omega_0)$   of  a ``point''
$(C_0,\omega_0)$    in    a    stratum   of   Abelian   differentials
$\cH(m_1,\dots,m_\noz)$. Any Abelian differential $\omega$ defines an
element  $[\omega]$  of the relative cohomology $H^1(C,\{\text{zeroes
of  }\omega\};\C{})$.  For  a  sufficiently  small  neighborhood of a
generic  ``point'' $(C_0,\omega_0)$ the resulting map from $U$ to the
relative  cohomology  is  a bijection, and one can use an appropriate
domain   in   the   relative   cohomology   $H^1(C,\{\text{zeroes  of
}\omega\};\C{})$    as    a   coordinate   chart   in   the   stratum
$\cH(m_1,\dots,m_\noz)$.

Chose  some  basis  of cycles in $H_1(S,\{P_1,\dots,P_\noz\};\Z)$. By
$Z_1,\dots,  Z_{2g+\noz-1}$  we  denote  the  corresponding  relative
periods   which   serve   as   local   coordinates   in  the  stratum
$\cH(m_1,\dots,m_\noz)$.       Similarly,       one      can      use
$(Z_1:Z_2:\dots:Z_{2g+\noz-1})$    as   projective   coordinates   in
$\PcH(m_1,\dots,m_\noz)$.

The  situation with the strata $\cQ(d_1,\dots,d_\noz)$ of meromorphic
quadratic  differentials  with  at  most  simple  poles, which do not
correspond  to global squares of Abelian differentials, is analogous.
We  first  pass  to  the canonical double cover $p:\hat S\to S$ where
$p^\ast  q=\hat\omega^2$  becomes  a  global  square  of  an  Abelian
differential  $\hat\omega$  and  then  use  the  subspace $H^1_-(\hat
S,\{\text{zeroes  of  }\hat\omega\};\C{})$  antiinvariant  under  the
natural involution to construct coordinate charts. Thus, we again use
a  certain subcollection of relative periods $Z_1, \dots, Z_k$ of the
Abelian  differential  $\hat\omega$  as  coordinates  in  the stratum
$\cQ(d_1,\dots,d_\noz)$.     Passing    to    the    projectivization
$\PcQ(d_1,\dots,d_\noz)$  we  use  projective  coordinates $(Z_1:Z_2:
\dots:                                                          Z_k)$
\smallskip

\subsection{Volume element and action of the linear group.}
\label{ss:Volume:element:and:action:of:the:linear:group}

The vector space
$$
H^1(S,\{\text{zeroes of }\omega\};\C{})
$$
considered  over  real  numbers  is  endowed  with  a natural integer
lattice,   namely   with   the   lattice   $H^1(S,\{\text{zeroes   of
}\omega\};\Z\oplus  i\Z)$.  Consider  a linear volume element in this
vector  space normalized in such way that a fundamental domain of the
lattice  has  area  one.  Since  relative  cohomology  serve as local
coordinates  in  the  stratum, the resulting volume element defines a
natural  measure  $\mu$ in the stratum $\cH(m_1,\dots,m_\noz)$. It is
easy  to  see that the measure $\mu$ does not depend on the choice of
local  coordinates  used  in  the construction, so the volume element
$\mu$ is defined canonically.

The  canonical volume element in a stratum $\cQ(d_1,\dots,d_\noz)$ of
meromorphic  quadratic  differentials  with  at  most simple poles is
defined analogously using the vector space
$$
H^1_-(S,\{\text{zeroes of }\hat\omega\};\C{})
$$
described above and the natural lattice inside it.
\smallskip

\noindent\textbf{Flat structure.}
A  quadratic  differential  $q$ with at most simple poles canonically
defines  a  flat  metric  $|q|$  with  conical  singularities  on the
underlying Riemann surface $C$.

If  the  quadratic  differential  is  a  global  square of an Abelian
differential, $q=\omega^2$, the linear holonomy of the flat metric is
trivial; if not, the holonomy representation in the group $\Z/2\Z$ is
nontrivial. We denote the resulting flat surface by $S=(C,\omega)$ or
$S=(C,q)$ correspondingly.

A  zero of order $d$ of the quadratic differential corresponds to a
conical point with the cone angle $\pi(d+2)$. In particular, a simple
pole corresponds to a conical point with the cone angle $\pi$. If the
quadratic differential is a global square of an Abelian differential,
$q=\omega^2$,   then  a  zero  of  \textit{degree}  $m$  of  $\omega$
corresponds to a conical point with the cone  angle  $2\pi(m+1)$.

When  $q=\omega^2$ the area of the surface $S$ in the associated flat
metric  is defined in terms of the corresponding Abelian differential
as
$$
\Area(S)=
\int_C |q|=
\cfrac{i}{2}\int_C \omega\wedge\bar\omega\,.
$$
When  the quadratic differential is not a global square of an Abelian
differential,  one  can express the flat area in terms of the Abelian
differential   on   the   canonical   double   cover   where  $p^\ast
q=\hat\omega^2$:
$$
\Area(S)=
\int_C |q|=
\cfrac{i}{4}\int_{\hat C} \hat\omega\wedge\overline{\hat\omega}\,.
$$

By  $\cH_1(m_1,\dots,m_\noz)$  we denote the real hypersurface in the
corresponding  stratum  defined by the equation $\Area(S)=1$. We call
this  hypersurface  by  the same word ``stratum'' taking care that it
does not provoke ambiguity.
Similarly   we   denote   by   $\cQ_1(d_1,\dots,d_\noz)$   the   real
hypersurface  in  the  corresponding  stratum defined by the equation
$\Area(S)=\const$.  Throughout  this paper we choose $const:=1$; note
that   some   other   papers,   say~\cite{Athreya:Eskin:Zorich},  use
alternative convention $\const:=\frac{1}{2}$.
\smallskip

\noindent\textbf{Group action.}
Let  $X_j=\Re(Z_j)$ and let $Y_j=\Im(Z_j)$. Let us rewrite the vector
of periods $(Z_1,\dots,Z_{2g+\noz-1})$ in two lines
$$
\begin{pmatrix}
X_1&X_2&\dots&X_{2g+\noz-1}\\
Y_1&Y_2&\dots&Y_{2g+\noz-1}
\end{pmatrix}
$$
The   group $\operatorname{GL}_+(2,\R{})$ of $2\!\times\! 2$-matrices
with  positive  determinant  acts  on the left on the above matrix of
periods as
$$
\begin{pmatrix}
g_{11}&g_{12}\\
g_{21}&g_{22}
\end{pmatrix}
\cdot
\begin{pmatrix}
X_1&X_2&\dots&X_{2g+\noz-1}\\
Y_1&Y_2&\dots&Y_{2g+\noz-1}
\end{pmatrix}
$$
Considering  the  lines  of  resulting  product  as  the real and the
imaginary  parts  of periods of a new Abelian differential, we define
an   action   of   $\operatorname{GL}_+(2,\R{})$   on   the   stratum
$\cH(m_1,\dots,m_\noz)$ in period coordinates. Thus, in the canonical
local   affine   coordinates,   this   action   is   the   action  of
$\operatorname{GL}_+(2,\R{})$ on the vector space
\begin{multline*}
H^1(C,\{\text{zeroes of }\omega\};\C{})
\simeq \C{} \otimes H^1(C,\{\text{zeroes of }\omega\};\R{})
\simeq
\\
\simeq\R{2} \otimes H^1(C,\{\text{zeroes of }\omega\};\R{})
\end{multline*}
through the first factor in the tensor product.

The  action of the linear group on the strata $\cQ(d_1,\dots,d_\noz)$
is    defined    completely   analogously   in   period   coordinates
$H^1_-(C,\{\text{zeroes  of }\hat\omega\};\C{})$. The only difference
is  that  now  we  have  the action of the group $\PSL$ since $p^\ast
q=\hat\omega^2=(-\hat\omega)^2$, and the subgroup $\{\Id,-\Id\}$ acts
trivially on the strata of quadratic differentials.

\begin{NNRemark}
One  should  not  confuse the trivial action of the element $-\Id$ on
quadratic  differentials  with  multiplication  by  $-1$:  the latter
corresponds   to   multiplication   of   the   Abelian   differential
$\hat\omega$   by   $i$,   and   is   represented   by   the   matrix
$\begin{pmatrix} 0&1\\-1&0\end{pmatrix}$.
\end{NNRemark}

From  this  description it is clear that the subgroup $\SL$ preserves
the  measure  $\mu$  and  the  function  $\Area$, and, thus, it keeps
invariant  the  ``unit  hyperboloids''  $\cH_1(m_1,\dots,m_\noz)$ and
$\cQ_1(d_1,\dots,d_\noz)$. Let
$$
a(S):=\Area(S)
$$
The measure $\mu$ in the stratum defines canonical measure
$$
\nu:=\frac{\mu}{da}
$$
on     the     ``unit     hyperboloid''     $\cH_1(m_1,\dots,m_\noz)$
(correspondingly    on    $\cQ_1(d_1,\dots,d_\noz)$).    It   follows
immediately  from  the  definition of the group action that the group
$\SL$ (correspondingly $\PSL$) preserves the measure $\nu$.

The     following    two    Theorems    proved    independently    by
H.~Masur~\cite{Masur}  and  by  W.~Veech~\cite{Veech} are fundamental
for the study of dynamics in the Teichm\"uller space.

\begin{NNTheorem}[H.~Masur; W.~Veech]
The  total volume of any stratum $\cH_1(m_1,\dots,m_\noz)$ of Abelian
differentials   and   of  any  stratum  $\cQ_1(d_1,\dots,d_\noz)$  of
meromorphic  quadratic  differentials  with at most simple poles with
respect to the measure $\nu$ is finite.
\end{NNTheorem}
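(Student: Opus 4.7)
The plan is to estimate the $\nu$-volume of a fundamental domain for the mapping class group $\Mod(S)$ acting on $\cH_1(m_1,\dots,m_\noz)$ (and analogously on $\cQ_1(d_1,\dots,d_\noz)$) by splitting the quotient into a thick and a thin part according to the length of the shortest saddle connection. Fix a small $\epsilon_0>0$ and let $K_{\epsilon_0}$ denote the set of surfaces in the quotient whose shortest saddle connection has length at least $\epsilon_0$. By the flat-surface analogue of Mumford's compactness criterion (due to Masur), $K_{\epsilon_0}$ is compact in $\cH_1(m_1,\dots,m_\noz)/\Mod(S)$, hence has finite $\nu$-measure; the real content is bounding the thin part.

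For the thin part I would decompose dyadically, writing it as a union over $k\geq 1$ of sets $A_k$ of surfaces whose systole lies in $(\epsilon_0 2^{-k},\epsilon_0 2^{-(k-1)}]$, and estimate each $\nu(A_k)$. For a surface in $A_k$, select one shortest saddle connection $\gamma$; its relative period $Z_\gamma=\int_\gamma\omega$ can be taken as one of the $2g+\noz-1$ complex coordinates in a period chart. Because $\nu$ is locally the Lebesgue measure $\mu$ divided by $d(\Area)$, the slice $\{|Z_\gamma|\leq\epsilon\}\cap\{\Area=1\}$, with the remaining $2g+\noz-2$ periods constrained to a bounded region corresponding to a thick ``background,'' has $\nu$-volume $O(\epsilon^2)$ by Fubini. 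This is the basic Siegel--Veech-type estimate: restricting one complex coordinate to a disk of radius $\epsilon$ costs a factor $\pi\epsilon^2$, uniformly in the homotopy class of $\gamma$.

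The main obstacle is organizing the sum over homotopy classes of candidate short $\gamma$, since a single surface can harbor many simultaneously short saddle connections, and the same $A_k$ gets covered many times. I would address this by breaking ties in the choice of ``shortest'' saddle connection in a $\Mod(S)$-equivariant way, so that each surface in the quotient is counted once, and then grouping the relevant homotopy classes by $\Mod(S)$-orbit. Combined with the quadratic growth bound of Masur for the number of saddle connections of length at most $L$ on a surface with bounded geometry, the number of classes contributing to $A_k$ grows at most polynomially in $k$, while the per-class volume is $O(\epsilon_0^2 4^{-k})$; the resulting geometric series $\sum_k \nu(A_k)$ converges. The argument transfers mutatis mutandis to the strata of quadratic differentials by working in the antiinvariant relative cohomology $H^1_-$ on the canonical double cover and replacing $\SL$ by $\PSL$.
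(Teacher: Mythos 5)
First, be aware that the paper does not prove this statement: it is quoted as a background theorem of Masur and Veech, and their original arguments run through combinatorial coordinates (Veech's zippered rectangles and Rauzy induction; Masur's measured-foliation/Teichm\"uller-theoretic setup) rather than through a thick--thin decomposition in period coordinates. So there is no internal proof to match your proposal against; it has to stand on its own, and as written it does not.

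There are two concrete gaps, both in the thin part. (1) For a surface in $A_k$ the complement of the chosen shortest saddle connection $\gamma$ is only guaranteed to have all saddle connections of length $\ge \epsilon_0 2^{-k}$, i.e.\ it is ``thick'' only at a scale that degenerates with $k$. Hence the remaining $2g+\noz-2$ periods do not range over a bounded region with a $k$-independent volume: they sweep out essentially the $(\epsilon_0 2^{-k})$-thick part of a stratum, whose volume is of the same nature as the quantity you are trying to bound. The Fubini factorization $\nu(A_k)=O(4^{-k})\cdot(\text{bounded})$ therefore does not close up. The standard repair is an induction on the dimension of the stratum: degenerate $\gamma$ to the principal boundary, identify the ``background'' with a product of lower-dimensional strata, and use the inductively known finiteness of their volumes, with the torus as base case (this is how Masur--Smillie and Eskin--Masur organize the estimate $\nu\{\lf(S)<\epsilon\}=O(\epsilon^2)$). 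That inductive structure is the missing idea. (2) Your appeal to the quadratic growth bound for the number of saddle connections is circular in the form you need it: Masur's pointwise bound $N(S,L)\le c(S)L^2$ has a constant that blows up precisely as $S$ enters the thin part, so it cannot be summed over $A_k$ uniformly, while the uniform-over-the-stratum version of Eskin--Masur is itself deduced from the Siegel--Veech formula, which presupposes a finite $\SL$-invariant measure. What actually controls the combinatorics is that the short configurations fall into finitely many $\Mod(S)$-orbits (finitely many topological types), each contributing $O(\epsilon^2)$ times an inductively finite boundary volume. Your thick-part step (Masur's compactness criterion) and the reduction of the quadratic case to $H^1_-$ of the canonical double cover are fine.
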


Note that the strata might have up to three connected components. The
connected components of the strata were classified by the authors for
Abelian   differentials~\cite{Kontsevich:Zorich:connected:components}
and  by  E.~Lanneau~\cite{Lanneau}  for  the  strata  of  meromorphic
quadratic differentials with at most simple poles.

\begin{Remark}
\label{rm:values:of:volumes}
The  volumes  of  the  connected  components of the strata of Abelian
differentials    were    effectively   computed   by   A.~Eskin   and
A.~Okounkov~\cite{Eskin:Okounkov}.   The   volume  of  any  connected
component  of  any  stratum  of  Abelian  differentials  has the form
$r\cdot\pi^{2g}$, where $r$ is a rational number. The exact numerical
values  of the corresponding rational numbers are currently tabulated
up to genus ten (up to genus $60$ for some individual strata like the
principal one).
\end{Remark}

\begin{NNTheorem}[H.~Masur; W.~Veech]
The action of the one-parameter subgroup of $\SL$ (correspondingly of
$\PSL$) represented by the matrices
$$
G_t=
\begin{pmatrix}
e^t&0\\
0&e^{-t}
\end{pmatrix}
$$
is  ergodic  with  respect  to  the measure $\nu$   on each connected
component   of  each  stratum  $\cH_1(m_1,\dots,m_\noz)$  of  Abelian
differentials  and  on  each  connected  component  of  each  stratum
$\cQ_1(d_1,\dots,d_\noz)$ of meromorphic quadratic differentials with
at most simple poles.
\end{NNTheorem}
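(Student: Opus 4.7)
The plan is to code the Teichm\"uller flow as a suspension over Rauzy--Veech renormalization and then deduce ergodicity of the flow from ergodicity of the renormalization, making crucial use of the finiteness of $\nu$ already supplied by the preceding theorem.

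First, I would construct the symbolic model. Fix a connected component $\mathcal C$ of $\cH_1(m_1,\dots,m_\noz)$; the case of a stratum $\cQ_1(d_1,\dots,d_\noz)$ which is not a locus of global squares reduces to this by passing to the canonical double cover and restricting to the anti-invariant part $H^1_-$. For each translation surface $S\in\mathcal C$, pick a horizontal segment $I$ issuing from a zero. The first-return map of the vertical flow to $I$ is an interval exchange transformation $T$ on $d=2g+\noz-1$ subintervals, equipped with combinatorial permutation data $\pi$, and $S$ is reconstructed as a union of zippered rectangles over $(T,\pi)$. This yields coordinates $(\pi,\lambda,h)$ on an open subset of $\mathcal C$, with $\lambda$ the subinterval lengths and $h$ the rectangle heights, in which the Teichm\"uller flow acts by $(\pi,\lambda,h)\mapsto(\pi,e^t\lambda,e^{-t}h)$ and $\Area(S)=\langle\lambda,h\rangle$.

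Second, I would introduce the Rauzy--Veech map. For large $t$ one of the subintervals of $T$ exceeds length $1$; cutting off and re-inserting the excess produces a new IET $\mathcal R(T,\pi)$ of total length $1$ with modified combinatorics. First return of $G_t$ to the cross-section $\{|\lambda|=1\}$ presents the Teichm\"uller flow on $\mathcal C$ as the suspension of $\mathcal R$ over a logarithmic roof function. It therefore suffices to show that $\mathcal R$, together with its absolutely continuous invariant measure derived from $\nu$, is ergodic on each Rauzy class, and to transfer this ergodicity to the suspension in the standard way. Finiteness of $\nu$ is what makes the suspension argument well-defined.

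The core work is ergodicity of $\mathcal R$ on the corresponding Rauzy class. This rests on two ingredients: a bounded distortion estimate for the projective action of high iterates of $\mathcal R$ on cylinders (Veech's distortion lemma), together with a combinatorial irreducibility statement for the Rauzy diagram of the component $\mathcal C$. The connected-component hypothesis enters decisively here, since connected components of strata are precisely the combinatorial objects indexing Rauzy classes (Kontsevich--Zorich and Lanneau). A Hopf-type argument using the distortion estimate and the irreducibility then rules out non-trivial $\mathcal R$-invariant sets. The main obstacle I expect is the coupling of these two steps: one must quantify the mixing of cylinders under $\mathcal R$ sharply enough to exclude invariant sets of intermediate measure, and simultaneously verify that the Rauzy classes relevant to each connected component, including the anti-invariant Rauzy classes needed for genuine quadratic differentials, are connected in the appropriate sense.
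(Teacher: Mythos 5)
First, a point of calibration: the paper does not prove this statement at all. It is quoted as a background theorem of Masur and Veech with citations to their 1982 papers, so there is no ``paper's own proof'' to compare against; what you have written is a sketch of Veech's original argument (zippered rectangles, Rauzy--Veech induction, bounded distortion), which is indeed one of the two classical proofs (Masur's is different in spirit, relying on his unique-ergodicity theorem and a Hopf-type argument directly on the moduli space rather than on a symbolic renormalization). For the Abelian case your outline is the right strategy, with two caveats worth making explicit: the Rauzy--Veech map on the normalized length simplex alone has an \emph{infinite} absolutely continuous invariant measure, so the suspension must genuinely be built over the zippered-rectangle renormalization in the $(\pi,\lambda,h)$ coordinates with the area constraint, as you begin to do but do not carry through; and the identification of Rauzy classes (more precisely, extended Rauzy classes) with connected components postdates the theorem by twenty years, so logically one proves ergodicity Rauzy class by Rauzy class and then observes that the suspension over a single class fills a full-measure subset of a component.

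The genuine gap is your treatment of the quadratic case. You propose to reduce $\cQ_1(d_1,\dots,d_\noz)$ to the Abelian case ``by passing to the canonical double cover and restricting to the anti-invariant part $H^1_-$.'' This does not work: the double-cover construction embeds the stratum of quadratic differentials as a closed $\PSL$-invariant locus inside a stratum $\cH_1(m_1,\dots,m_k)$ of Abelian differentials, and this locus has positive codimension (compare $\dim_{\C{}}\cQ=2g+\noz-2$ with $\dim_{\C{}}\cH=2\hat g+k-1$ for the covering stratum), hence measure zero for the ambient measure $\nu$. Ergodicity of $G_t$ on the ambient component therefore says nothing about ergodicity on this sublocus. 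To handle genuine quadratic differentials one must either redo the renormalization theory intrinsically --- first-return maps of the vertical foliation are then linear involutions rather than interval exchanges, and one needs the Rauzy--Veech induction for generalized permutations together with the corresponding irreducibility of the (extended) Rauzy diagrams --- or follow Veech's and Masur's original separate treatments of the quadratic case. Without one of these, your argument proves the theorem only for strata of Abelian differentials.
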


The  projection  of trajectories of the corresponding group action to
the  moduli  space  of  curves  $\cM_g$  correspond  to Teichm\"uller
geodesics  in  the natural parametrization, so the corresponding flow
$G_t$  on  the  strata is called the ``Teichm\"uller geodesic flow''.
Notice,  however,  that  the Teichm\"uller metric is not a Riemannian
metric, but only a Finsler metric.

\subsection{Hodge bundle and Gauss--Manin connection}
\label{sec:Hodge:bundle}
A  complex  structure  on  the  Riemann surface $C$ underlying a flat
surface  $S$  of genus $g$ determines a complex $g$-dimensional space
of   holomorphic   1-forms   $\Omega(C)$   on   $C$,  and  the  Hodge
decomposition
$$
H^1(C;\C{}) = H^{1,0}(C)\oplus H^{0,1}(C) \simeq
\Omega(C)\oplus\bar \Omega(C)\ .
$$
The intersection form
\begin{equation}
\label{eq:hodge:hermitian:form}
\langle\omega_1,\omega_2\rangle:=\frac{i}{2}
\int_{C} \omega_1\wedge  \bar{\omega}_2\qquad\qquad\qquad
\end{equation}
is   positive-definite   on  $H^{1,0}(C)$  and  negative-definite  on
$H^{0,1}(C)$.

The projections      $H^{1,0}(C)\to      H^1(C;\R{})$,      acting as
$[\omega]\mapsto[\Re(\omega)]$ and $[\omega]\mapsto[\Im(\omega)]$ are
isomorphisms  of  vector  spaces over $\R{}$.      The Hodge operator
$\ast: H^1(C;\R{})\to   H^1(C;\R{})$   acts  as  the inverse  of  the
first isomorphism composed with the second one. In other words, given
$v\in   H^1(C;\R{})$,   there   exists   a  unique  holomorphic  form
$\omega(v)$    such  that  $v=[\Re(\omega(v))]$; the dual $\ast v$ is
defined  as $[\Im(\omega)]$.

Define the \textit{Hodge norm} of $v\in H^1(C,\R{})$ as
$$
\|v\|^2=\langle\omega(v),\omega(v)\rangle
$$

Passing  from  an  individual  Riemann  surface  to the moduli  stack
$\cM_g$  of                Riemann  surfaces,  we get vector  bundles
$H^1_{\C{}}=H^{1,0}\oplus  H^{0,1}$,  and $H^1_{\R{}}$ over   $\cM_g$
with   fibers   $H^1(C,\C{})=H^{1,0}(C)\oplus  H^{0,1}(C)$,       and
$H^1(C,\R{})$  correspondingly over $C\in\cM_g$.    The vector bundle
$H^{1,0}$  is  called  the  \textit{Hodge  bundle}.  When the context
excludes  any possible ambiguity we also refer to each of the bundles
$H^1_{\C{}}$ and to $H^1_{\R{}}$ as \textit{Hodge bundle}.
\medskip

Using    integer    lattices   $H^1(C,\Z{}\oplus   i\Z{})$        and
$H^1(C,\Z{})$   in   the  fibers  of  these  vector  bundles  we  can
canonically  identify  fibers  over  nearby  Riemann  surfaces.  This
identification  is  called the \textit{Gauss--Manin}  connection. The
Hodge   norm   \textit{is  not}  preserved  by  the     Gauss---Manin
connection  and  the  splitting    $H^1_{\C{}}=H^{1,0}\oplus H^{0,1}$
\textit{is   not}   covariantly    constant   with  respect  to  this
connection.

\subsection{Lyapunov exponents}
\label{sec:Lyapunov:exponents}
Informally,  the  Lyapunov  exponents of a vector bundle endowed with
a     connection      can be viewed as logarithms of mean eigenvalues
of monodromy    of    the    vector    bundle    along a    flow   on
the base.

In  the case of the Hodge bundle, we take a fiber of $H^1_{\R{}}$ and
pull  it  along a Teichm\"uller geodesic on the moduli space. We wait
till  the  geodesic  winds a lot and comes close to the initial point
and  then  compute the resulting monodromy matrix $A(t)$. Finally, we
compute  logarithms of eigenvalues of $A^T\!A$, and normalize them by
twice the length $t$ of the geodesic. By the Oseledets multiplicative
ergodic  theorem,  for  almost  all choices of initial data (starting
point,  starting  direction) the resulting $2g$ real numbers converge
as  $t \to \infty$, to limits which do not depend on the initial data
within   an   ergodic   component   of   the   flow.   These   limits
$\lambda_1\ge\dots\ge\lambda_{2g}$  are  called the \textit{Lyapunov
exponents}  of  the  Hodge  bundle  along  the Teichm\"uller flow.

The  matrix  $A(t)$ preserves the intersection form on cohomology, so
it  is  symplectic.  This implies that Lyapunov spectrum of the Hodge
bundle is symmetric with respect to the sign interchange,
$
\lambda_j=-\lambda_{2g-j+1}
$.
Moreover,  from  elementary  geometric  arguments it follows that one
always  has  $\lambda_1=1$. Thus, the Lyapunov spectrum is defined by
the remaining nonnegative Lyapunov exponents
$$
\lambda_2\ge\dots\ge\lambda_g\ .
$$

Given  a    vector bundle endowed with a norm and a connection we can
construct  other  natural  vector  bundles  endowed with a norm and a
connection:  it  is  sufficient  to apply elementary linear-algebraic
constructions  (direct  sums,  exterior  products, etc.) The Lyapunov
exponents  of  these  new  bundles might be expressed in terms of the
Lyapunov  exponents  of  the  initial vector bundle. For example, the
Lyapunov  spectrum  of  a  $k$th  exterior power of a vector   bundle
(where  $k$ is not bigger than a dimension of a fiber) is represented
by all possible sums
$$
\lambda_{j_1}+\dots+\lambda_{j_k}\qquad\text{where } j_1<j_2<\dots<j_k
$$
of $k$-tuples of Lyapunov exponents of the initial vector bundle.

\subsection{Regular invariant suborbifolds}
\label{sec:Regular:invariant:submanifolds}

For a subset $\cM_1 \subset \cH_1(m_1 \dots, m_\noz)$ we write
\begin{displaymath}
\reals \cM_1 = \{ (M, t \omega) \;|\; (M,\omega) \in \cM_1, \quad t \in
\reals \} \subset \cH(m_1 \dots, m_\noz)\,.
\end{displaymath}
Let $a(S):=\Area(S)$.

\begin{Conjecture}
\label{conj:main}
Let  $\cH(m_1  \dots, m_\noz)$ be a stratum of Abelian differentials.
Let  $\nu_1$  be  an  ergodic  $\SL$-invariant probability measure on
$\cH_1(m_1 \dots, m_\noz)$.
Then
\begin{itemize}
\item[{\rm (i)}]
The  support of $\nu_1$ is an immersed
suborbifold $\cM_1$ of $\cH_1(m_1,\dots,
m_\noz)$.   In cohomological       local       coordinates
$H^1(S,\{\text{\textit{zeroes}}\}\,;\,\C{})$,  the suborbifold $\cM =
\reals \cM_1$ of $\cH(m_1 \dots, m_\noz)$ is represented by a complex
affine   subspace,  such  that  the  associated  linear  subspace  is
invariant under complex conjugation.
\item[{\rm (ii)}]
Let  $\mu$  be  the measure on $\cM$ such that $d\mu = d\nu_1 \, da$.
Then  $\mu$  is \textit{ affine}, i.e. it is an affine linear measure
in         the         cohomological         local        coordinates
$H^1(S,\{\text{\textit{zeroes}}\}\,;\,\C{})$.
\end{itemize}
\end{Conjecture}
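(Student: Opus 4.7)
The plan is to adapt the Ratner-style measure-rigidity strategy from homogeneous dynamics to the non-homogeneous affine setting of strata of Abelian differentials. Since $\SL$ is generated by the diagonal Teichm\"uller flow $G_t$ together with the upper-triangular unipotent subgroup $U=\left\{\left(\begin{smallmatrix}1&s\\0&1\end{smallmatrix}\right)\right\}$, $\nu_1$ is automatically invariant under both. The target is to show that the conditional measures of $\nu_1$ along the strong unstable horocycle leaves of $G_t$ are Lebesgue measures on affine subspaces of the period coordinates $H^1(S,\{\text{zeroes}\};\C{})$, and that these subspaces assemble into a single immersed affine suborbifold $\cM$. Its tangent $\R{}$-subspace, being preserved by the $\SL$-action on the coefficient factor of $\R{2}\otimes H^1(S,\{\text{zeroes}\};\R{})$, is automatically $\C{}$-linear and conjugation-invariant, which gives the extra structural statement in (i).

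To produce the affine subspace I would fix a Lusin set $K$ on which the conditional measures depend continuously, and run a polynomial-divergence drift argument. Pick $\nu_1$-typical points $x,y\in K$ that are close in period coordinates with $y-x$ in the strong unstable direction. Under $U_s$ the difference grows only polynomially in $s$, rather than exponentially as it would under $G_t$, so one can choose a sequence $s_n\to\infty$ along which $U_{s_n}\cdot x,\,U_{s_n}\cdot y$ simultaneously return to $K$. The limit of the resulting normalized drift vectors produces a new direction of translation invariance of $\nu_1$. Iterating this, starting from pairs $(x,y)$ that differ in different generic directions, saturates an affine subspace $L(x)\subset H^1(S,\{\text{zeroes}\};\C{})$; ergodicity of $G_t$ then forces $L(x)$ to be essentially constant, and a closing argument upgrades the measurable family $x\mapsto L(x)$ to a genuine immersed affine suborbifold, with $\nu_1$ locally Lebesgue along it.

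Two auxiliary ingredients make the drift argument feasible. First, quantitative non-divergence of the $G_t$- and $U_s$-flows (of Masur--Smillie type, built on top of the finite-volume Masur--Veech theorem cited above) guarantees that a definite proportion of generic orbits returns to a thick compact part, providing the simultaneous recurrence of $U_{s_n}\cdot x$ and $U_{s_n}\cdot y$ to $K$. Second, one needs Hodge-norm estimates for the Kontsevich--Zorich cocycle along $G_t$-orbits, in order to identify the drift vectors, which a priori live in cohomology groups over \emph{different} Riemann surfaces, via the Gauss--Manin connection, and to make sure the comparison remains meaningful after long flow times.

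The main obstacle, and what distinguishes this problem sharply from Ratner's theorem, is the absence of any ambient Lie group structure. Drift vectors are elements of fibers of the Hodge bundle rather than of a Lie algebra, so one cannot conclude automatically that translation in a drift direction preserves $\nu_1$ locally on all of $\cM$. I would handle this by splitting the relative cohomology $H^1(S,\{\text{zeroes}\};\C{})$ into its absolute part $H^1(S;\C{})$, where $G_t$ acts hyperbolically and a Forni-type spectral gap provides uniform contraction transverse to the tautological plane spanned by $[\Re\omega],[\Im\omega]$, and its purely relative complement, and then bootstrapping invariance from the absolute factor to the relative factor via the full $\SL$-action. Making the polynomial drift actually converge despite the non-uniform hyperbolicity near the cusps of the stratum --- where the Hodge norm degenerates and short saddle connections introduce arbitrarily large shears --- is where I expect essentially all the technical difficulty to concentrate.
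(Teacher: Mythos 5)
First, note that the paper does not prove this statement at all: it appears as Conjecture~1, and the text merely records that a proof ``has been recently announced by A.~Eskin and M.~Mirzakhani.'' There is therefore no internal proof to compare yours against; what you have written is a strategy outline for that external (and very long) theorem, with the decisive steps described but not carried out.

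The concrete gap is in the drift mechanism itself. You propose to run Ratner's polynomial-divergence argument along the unipotent subgroup $U$: take nearby generic points $x,y$ in a Lusin set $K$, flow by $U_s$, and extract a limiting drift vector from times $s_n$ of simultaneous return to $K$. This is precisely the step that is known to fail in strata. The transverse displacement $y-x$ lives in $H^1(S,\{\text{zeroes}\};\C{})$ and evolves under the Kontsevich--Zorich cocycle twisted by the standard $\SL$-representation; this cocycle is only measurable, non-uniformly hyperbolic, and carries no polynomial structure. There is no a priori control of the time change between the two orbits, no canonical identification of cohomology fibers over distant points except via the Gauss--Manin parallel transport (whose norm growth is governed by the Lyapunov spectrum), and hence no uniformly ``slowest direction of divergence'' that could be promoted to a new invariant direction of $\nu_1$. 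The Eskin--Mirzakhani proof abandons unipotent polynomial divergence entirely: it works with measures invariant under the upper-triangular subgroup, uses the Benoist--Quint \emph{exponential} drift along conditional measures of the $G_t$-action, Forni's semisimplicity and spectral-gap results for the Hodge bundle to control the cocycle, quantitative recurrence estimates, and an entropy comparison argument to upgrade partial invariance to Lebesgue conditional measures on affine subspaces. Your closing paragraph correctly identifies that ``essentially all the technical difficulty'' concentrates in the absence of an ambient Lie group structure; but that difficulty \emph{is} the theorem, and a proposal that leaves it unresolved --- and moreover routes through a divergence argument that does not survive the non-uniform hyperbolicity you yourself flag --- cannot be accepted as a proof of the statement.
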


We  say that a suborbifold $\cM_1$, for which there exists a measure
$\nu_1$  such  that the pair $(\cM_1, \nu_1)$ satisfies (i) and (ii),
is an \textit{invariant suborbifold}.

\begin{Conjecture}
\label{conj:closures}
The closure of any $\SL$-orbit is an invariant
suborbifold. For any invariant suborbifold,
the set of self-intersections is itself a finite union of
affine invariant suborbifolds of lower dimension.
\end{Conjecture}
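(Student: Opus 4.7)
The plan is to deduce Conjecture~\ref{conj:closures} from Conjecture~\ref{conj:main} together with an equidistribution statement for $\SL$-orbits. Fix $(C,\omega)\in\cH_1(m_1,\dots,m_\noz)$, let $N$ denote the closure of its $\SL$-orbit, and form the circle-and-geodesic averages
\[
\nu_T \;=\; \frac{1}{T}\int_0^T (G_t)_*\bigl(\text{Haar on }\SO\cdot(C,\omega)\bigr)\,dt.
\]
First I would show that every weak-$\ast$ accumulation point $\nu_1$ of $\{\nu_T\}$ is an $\SL$-invariant probability measure supported in $N$; the danger is escape of mass to infinity in the non-compact space $\cH_1(m_1,\dots,m_\noz)$, which I would rule out via an Eskin--Masur-style proper function whose $\SL$-averages are uniformly bounded. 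Applying Conjecture~\ref{conj:main} to each ergodic component of $\nu_1$, the supports of the components are affine invariant suborbifolds. It then remains to show $N$ itself is (the closure of) one such component, i.e.\ that the orbit of $(C,\omega)$ equidistributes. This is a unipotent-rigidity statement for the horocycle subgroup
\[
U=\left\{\begin{pmatrix}1&s\\0&1\end{pmatrix}: s\in\reals\right\}\subset\SL
\]
in the non-homogeneous setting of strata, analogous to Ratner's theorem but not a consequence of it.

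For the second assertion, let $\cM_1$ be an invariant suborbifold and set $\cM=\reals\cM_1\subset\cH(m_1,\dots,m_\noz)$. At any self-intersection point the local branches of $\cM$ are, in period coordinates $H^1(C,\{\text{zeroes}\};\C{})$, affine subspaces $L_1,\dots,L_r$ whose associated linear parts are defined over $\ratls$ (they arise from Gauss--Manin monodromy and are cut out by equations with integer coefficients in the lattice $H^1(C,\{\text{zeroes}\};\Z\oplus i\Z)$) and are invariant under complex conjugation. The intersection $\bigcap_i L_i$ inherits these properties, is automatically $\SL$-invariant, and has strictly smaller complex dimension than $\cM$. Hence the self-intersection locus is locally a finite union of pieces satisfying condition (i) of Conjecture~\ref{conj:main}. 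Invoking the Masur--Veech finiteness theorem for each such piece (which by Part~1 is itself an invariant suborbifold carrying a finite affine measure) and stratifying by dimension yields global finiteness.

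The principal obstacle is the unipotent equidistribution step. Absent a general Ratner-type theorem on strata, one must attack it directly: a plausible route combines an exponential-drift / time-change argument in the spirit of Benoist--Quint with careful control of the Kontsevich--Zorich cocycle along divergent pairs of nearby orbits, using the uniform recurrence estimates from the first step to prevent escape of mass throughout. Establishing this rigidity would simultaneously upgrade Conjecture~\ref{conj:main} from a classification of invariant \emph{measures} to a classification of invariant \emph{closed sets}, which is precisely the content of Conjecture~\ref{conj:closures}.
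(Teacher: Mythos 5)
This statement is Conjecture~\ref{conj:closures}: the paper offers no proof of it, only the remark that a proof had been announced by A.~Eskin, M.~Mirzakhani and A.~Mohammadi in~\cite{Eskin:Mirzakhani:Mohammadi} (the measure classification, Conjecture~\ref{conj:main}, having been announced separately in~\cite{Eskin:Mirzakhani}). So there is no in-paper argument to compare against, and your text cannot be accepted as a proof: it is a reduction of the conjecture to another, harder, unproved statement. You say so yourself --- the ``unipotent equidistribution step'' for the horocycle subgroup acting on a stratum is exactly the content that Ratner's theorems do not cover and that constitutes the core difficulty of the Eskin--Mirzakhani--Mohammadi program (isolation/avoidance of lower-dimensional invariant suborbifolds, non-escape of mass for the averages $\nu_T$, and the upgrade from measure rigidity to topological rigidity). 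Gesturing at ``exponential drift in the spirit of Benoist--Quint'' does not close that gap; it names the gap.

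The second half also has concrete problems. First, you assert that the linear parts of the local branches at a self-intersection are defined over $\ratls$; Conjecture~\ref{conj:main} only gives invariance under complex conjugation, and rationality of the field of definition is a separate nontrivial theorem that you would need to prove or cite, not something that ``arises from Gauss--Manin monodromy'' for free. Second, even granting that each pairwise intersection $L_i\cap L_j$ is an $\SL$-invariant affine subspace of smaller dimension, the paper's definition of \emph{invariant suborbifold} requires the existence of an ergodic $\SL$-invariant probability measure $\nu_1$ satisfying (i) and (ii) and supported on it; being an affine invariant subvariety does not by itself produce such a measure, and the Masur--Veech finiteness theorem applies to strata, not to arbitrary affine subspaces. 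Third, the finiteness of the union of self-intersection components is asserted, not proved. Each of these points needs an argument before the second assertion of the conjecture is established.
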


These  conjectures  have been proved by C.~McMullen in genus $2$, see
\cite{McMullen:genus2}.  They  are  also known in a few other special
cases,  see  \cite{Eskin:Marklof:Morris}  and \cite{Calta:Wortman}. A
proof  of  Conjecture  1  has been recently announced by A.~Eskin and
M.~Mirzakhani  \cite{Eskin:Mirzakhani};  a  proof of Conjecture 2 has
been  recently  announced by A.~Eskin, M.~Mirzakhani and A.~Mohammadi
\cite{Eskin:Mirzakhani:Mohammadi}.

\begin{definition}
\label{def:invariant:regular}
An  invariant  suborbifold is \textit{ regular} if in addition to (i)
and (ii) it satisfies the following technical condition:
\begin{itemize}
\item[{\rm  (iii)}]  \textit{For  $K  >  0$  and  $\epsilon  > 0$ let
  $\cM_1(K,  \epsilon)  \subset  \cM_1$  denote  the  set of surfaces
  which  contain  two  non-parallel cylinders $C_1$, $C_2$, such that
  for  $i=1,2$, $\Mod(C_i) > K$ and $w(C_i) < \epsilon$. An invariant
  suborbifold  is called \textit{ regular} if there exists a $K > 0$,
  such that
\begin{equation}
\label{eq:condition:III}
\lim_{\epsilon \to 0} \frac{\nu_1(\cM_1(K,\epsilon))}{\epsilon^2} = 0.
\end{equation}
}
\end{itemize}
\end{definition}

All  known  examples  of  invariant  suborbifolds are regular, and we
believe this is always the case. (After completion of
  work on this paper, it was proved by A.~Avila, C.~Matheus Santos and
  J.~C.~Yoccoz that indeed all $\SL$-invariant measures are regular,
  see \cite{Avila:Matheus:Yoccoz:regular}.)
In the rest of the paper we consider
only regular invariant suborbifolds. (However, the condition (iii) is
used only in section~\ref{sec:cutoff}.)

\begin{NNRemark}
In view of Conjecture~\ref{conj:main}, in this paper we consider only
density  measures;  moreover,  densities  always correspond to volume
forms  on appropriate suborbifolds. Depending on a context we use one
of  the three related structures mostly referring to any of them just
as   a   ``measure''.   Also,  if  $\cM_1$  is  a  regular  invariant
suborbifold,  we  often  write  $c_{\mathit{area}}(\cM_1)$ instead of
$c_{\mathit{area}}(\nu_1)$,    where   the   Siegel--Veech   constant
$c_{\mathit{area}}$ is defined in \S\ref{sec:subsec:Siegel:Veech}.
Throughout   this   paper   we   denote  by  $d\nu_1$  the  invariant
\textit{probability}   density  measure  and  by  $d\nu$  any  finite
invariant density measure on a regular invariant suborbifold $\cM_1$.
\end{NNRemark}

\begin{NNRemark}
We  say that a subset $\cM_1$ of a stratum of quadratic differentials
is  a  regular  invariant  suborbifold  if under the canonical double
cover  construction it corresponds to a regular invariant suborbifold
of a stratum of Abelian differentials. See section~\ref{sec:sums} for
details.
\end{NNRemark}

\subsection{Siegel--Veech constants}
\label{sec:subsec:Siegel:Veech}

Let  $S$  be  a  flat surface in some stratum of Abelian or quadratic
differentials.  Together  with every closed regular geodesic $\gamma$
on $S$ we have a bunch of parallel closed regular geodesics filling a
maximal  cylinder $\mathit{cyl}$ having a conical singularity at each
of  the  two  boundary  components.  By  the  \textit{width} $w$ of a
cylinder  we  call  the  flat  length  of  each  of  the two boundary
components, and by the \textit{height} $h$ of a cylinder --- the flat
distance between the boundary components.

The  number of maximal cylinders filled with regular closed geodesics
of  bounded  length  $w(cyl)\le L$ is finite. Thus, for any $L>0$ the
following quantity is well-defined:

\begin{equation}
\label{eq:N:area}
N_{\mathit{area}}(S,L):=
\frac{1}{\Area(S)}
\sum_{\substack{
\mathit{cyl}\subset S\\
w(\mathit{cyl})<L}}
\Area(cyl)
\end{equation}

The  following  theorem  is a special case of a fundamental result of
W.~Veech,     \cite{Veech:Siegel}     considered    by    Y.~Vorobets
in~\cite{Vorobets}:
\begin{NNTheorem}[W.~Veech; Ya.~Vorobets]
Let   $\nu_1$  be  an  ergodic  $\SL$-invariant  probability  measure
(correspondingly  $\PSL$-invariant  probability measure) on a stratum
$\cH_1(m_1,\dots,m_\noz)$  of  Abelian differentials (correspondingly
on  a  stratum  $\cQ_1(d_1,\dots,d_\noz)$  of  meromorphic  quadratic
differentials  with  at  most  simple  poles)  of area one. Then, the
following  ratio  is constant (i.e. does not depend on the value of a
positive parameter $L$):
\begin{equation}
\label{eq:SV:constant:definition}
\frac{1}{\pi L^2}\int N_{\mathit{area}}(S,L)\,d\nu_1=
c_{\mathit{area}}(\nu_1)
\end{equation}
\end{NNTheorem}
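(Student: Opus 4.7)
The plan is to realize the left-hand side of \eqref{eq:SV:constant:definition} as the total mass of a Radon measure on $\R{2}$ built from the surfaces in $\cM_1$, and then exploit $\SL$-invariance to force this Radon measure to be a multiple of Lebesgue measure on $\R{2}$, from which the $\pi L^2$ dependence follows automatically.

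Concretely, for a compactly supported continuous function $f\colon\R{2}\to\R{}$, I would define a Siegel--Veech transform
$$
\hat f(S)\;=\;\frac{1}{\Area(S)}\sum_{\mathit{cyl}\subset S}\Area(\mathit{cyl})\,f\bigl(v_{\mathit{cyl}}\bigr),
$$
where $v_{\mathit{cyl}}\in\R{2}$ is the holonomy vector of the core curve of $\mathit{cyl}$. Then $\hat{\mathbf 1}_{B(0,L)}(S)=N_{\mathit{area}}(S,L)$, so that \eqref{eq:N:area} is a special case of $\hat f$ evaluated on indicator functions of disks. The first step is to show that $f\mapsto \int_{\cH_1}\hat f(S)\,d\nu_1(S)$ is a well-defined continuous linear functional on $C_c(\R{2})$, i.e.\ that $\hat f\in L^1(\nu_1)$ whenever $f$ has compact support. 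By linearity and monotonicity it suffices to handle $f=\mathbf 1_{B(0,L)}$; thus the required input is the finiteness statement
$$
\int_{\cH_1}N_{\mathit{area}}(S,L)\,d\nu_1(S)\;<\;\infty\qquad\text{for every }L>0.
$$
This is the genuine analytic content and the main obstacle; I would obtain it from the Masur--Veech finiteness theorem combined with a Vorobets-type estimate bounding the number of short cylinders on a flat surface whose systole is not too small, together with a neighborhood-of-the-cusp volume estimate showing that surfaces with extremely short saddle connections contribute a negligible amount.

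Once the functional is well-defined, Riesz representation gives a Radon measure $\sigma$ on $\R{2}$ with $\int f\,d\sigma=\int\hat f\,d\nu_1$. The second step is to verify that $\sigma$ is $\SL$-invariant. For $g\in\SL$, the map $S\mapsto gS$ preserves $\Area(S)$, sends cylinders to cylinders with $\Area(g\cdot\mathit{cyl})=\Area(\mathit{cyl})$ and $v_{g\cdot\mathit{cyl}}=g\,v_{\mathit{cyl}}$, so $\widehat{(f\circ g)}(S)=\hat f(gS)$. The $\SL$-invariance of $\nu_1$ then yields
$$
\int f\circ g\,d\sigma\;=\;\int \hat f(gS)\,d\nu_1(S)\;=\;\int\hat f(S)\,d\nu_1(S)\;=\;\int f\,d\sigma,
$$
so $\sigma$ is $\SL$-invariant. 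Moreover $\sigma$ has no atom at $0$ since no cylinder has zero holonomy.

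The third step invokes the classification of $\SL$-invariant Radon measures on $\R{2}\setminus\{0\}$: since the $\SL$-action on $\R{2}\setminus\{0\}$ is transitive with trivial (up to $\pm\Id$) stabilizer modulo the one-parameter orbit structure, and any $\SL$-invariant Radon measure is in particular invariant under the diagonal and the unipotent subgroups, such a measure must be a constant multiple of Lebesgue measure. Denote that constant $\SVc(\nu_1)$. Taking $f$ to be a sequence of compactly supported continuous functions approximating $\mathbf 1_{B(0,L)}$ from below and from above, monotone/dominated convergence gives
$$
\int N_{\mathit{area}}(S,L)\,d\nu_1(S)\;=\;\int\mathbf 1_{B(0,L)}\,d\sigma\;=\;\SVc(\nu_1)\cdot\pi L^2,
$$
which is \eqref{eq:SV:constant:definition}. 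The quadratic differential case is identical after passing to the canonical double cover and replacing $\SL$ by $\PSL$, using that $\{\Id,-\Id\}$ acts trivially on $\R{2}\setminus\{0\}$ only up to the sign of $v_{\mathit{cyl}}$, which is irrelevant because cylinders only have unoriented core curves and the ball $B(0,L)$ is symmetric.
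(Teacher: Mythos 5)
You should first be aware that the paper does not prove this statement at all: it is quoted as a background theorem of Veech \cite{Veech:Siegel} in the form considered by Vorobets \cite{Vorobets}, so there is no in-text proof to compare yours against. Your proposal reconstructs the standard ``Siegel measures'' argument, and in outline it is correct: the transform $f\mapsto\hat f$, the equivariance identity $\widehat{(f\circ g)}(S)=\hat f(gS)$ (valid because elements of $\SL$ preserve $\Area$ and act linearly on holonomy vectors), the $\SL$-invariant Radon measure $\sigma$ on $\R{2}$ obtained from Riesz representation, and the identification of $\sigma$ with a multiple of Lebesgue measure together yield the $\pi L^2$ law; note also that ergodicity is never actually needed for the averaged identity~\eqref{eq:SV:constant:definition}, only invariance and integrability. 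Two remarks on where the weight of the proof really sits. First, you correctly isolate the integrability $\int N_{\mathit{area}}(S,L)\,d\nu_1<\infty$ as the genuine analytic content; in the literature this rests on the Eskin--Masur counting bounds \cite{Eskin:Masur} (the present paper invokes exactly this mechanism, in the saddle-connection version, in Lemma~\ref{lemma:sv:sc:estimate}), and your sketch of how to obtain it is plausible but is the one step that would require substantial work to write out. Second, your justification of the classification step is misstated even though the conclusion is right: the stabilizer in $\SL$ of a nonzero vector is not ``trivial up to $\pm\Id$'' but a full one-parameter unipotent subgroup $N$, and invariance under the diagonal and unipotent subgroups alone does not force proportionality to Lebesgue (a measure invariant under the Borel subgroup they generate may, for instance, weight the two half-planes differently). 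The correct argument is that $\R{2}\setminus\{0\}\simeq\SL/N$ with both $\SL$ and $N$ unimodular, so the invariant Radon measure on this homogeneous space is unique up to a positive scalar and hence is a multiple of Lebesgue measure; combined with your (correct) observation that $\sigma$ has no atom at the origin, this closes the argument. With these two points repaired, your proof is the standard one of Veech.
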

This  formula  is  called  a \textit{Siegel---Veech formula}, and the
corresponding   constant  $c_{\mathit{area}}(\nu_1)$  is  called  the
\textit{Siegel--Veech constant}.

\begin{Conjecture}
For any regular $\SL$-invariant suborbifold $\cM_1$ in any stratum of
Abelian   differentials   the  corresponding  Siegel--Veech  constant
$\pi^2\cdot c_{\mathit{area}}(\cM_1)$ is a rational number.
\end{Conjecture}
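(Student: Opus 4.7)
The plan is to first reduce the conjecture to rationality of the sum of nonnegative Lyapunov exponents $\lambda_1+\dots+\lambda_g$ of the Hodge bundle along the Teichm\"uller flow on $\cM_1$, and then to prove the latter via algebro-geometric methods. The first step is an application of the main formula of the present paper (established in the subsequent sections), which will express
\begin{equation*}
\lambda_1+\dots+\lambda_g \;=\; \frac{1}{12}\sum_{i=1}^{\noz} \frac{m_i(m_i+2)}{m_i+1} \;+\; \frac{\pi^2}{3}\, c_{\mathit{area}}(\cM_1),
\end{equation*}
so that rationality of $\pi^2 c_{\mathit{area}}(\cM_1)$ is equivalent to rationality of $\lambda_1+\dots+\lambda_g$: the combinatorial summand is manifestly rational and is determined by the ambient stratum alone.

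For the second step one would employ the Kontsevich--Forni integral formula, which writes $\lambda_1+\dots+\lambda_g$ as the average over $\cM_1$ of a curvature-type quantity canonically associated with the Hodge form \eqref{eq:hodge:hermitian:form}. Granting Conjecture~\ref{conj:main}, the locus $\cM_1$ is an affine algebraic suborbifold cut out by linear equations in period coordinates with coefficients in a number field; so after choosing a suitable compactification $\overline{\mathbb{P}\cM_1}$ extending the projectivization of $\cM_1$, one may reinterpret the Kontsevich--Forni integral as a ratio of degrees of two natural algebraic line bundles on $\overline{\mathbb{P}\cM_1}$: the determinant of the Hodge bundle $\det H^{1,0}$ in the numerator, and the tautological bundle whose first Chern class is proportional to the Teichm\"uller form in the denominator. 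Both degrees being intersection numbers on a projective variety defined over a number field, their ratio is rational.

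The main obstacle lies in constructing the compactification and controlling the boundary. Even assuming $\cM_1$ is defined over $\overline{\Q}$, its natural closure inside the Deligne--Mumford space is typically singular, and the Hodge bundle extends only with logarithmic singularities along the cusps corresponding to degenerating cylinders. One must therefore resolve $\overline{\mathbb{P}\cM_1}$ and justify that the degrees appearing above are insensitive to this choice. The regularity condition~(iii) of Definition~\ref{def:invariant:regular} is precisely the tool needed to control the measure of narrow-cylinder loci, which in turn should allow one to exchange the dynamical average for an algebraic Chern-class computation and to ensure that the boundary contributions remain rational; carrying this out in full generality, without additional hypotheses on the arithmetic of $\cM_1$, is the principal difficulty one should expect.
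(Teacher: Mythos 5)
The statement you are addressing is presented in the paper as a \emph{Conjecture}, and the paper does not prove it; the authors explicitly remark after Theorem~\ref{theorem:general:Abelian} that they ``do not have a proof of rationality of the sum of the Lyapunov exponents for \textit{any} regular $\SL$-invariant suborbifold.'' Your first step is correct but is exactly this observation in reverse: by formula~\eqref{eq:general:sum:of:exponents:for:Abelian} the rationality of $\pi^2 c_{\mathit{area}}(\cM_1)$ is equivalent to the rationality of $\lambda_1+\dots+\lambda_g$, since the combinatorial summand is manifestly rational. So the reduction buys nothing beyond restating the problem in the form the paper itself already gives.

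The genuine gap is in the second step. From Conjecture~\ref{conj:main} one only gets that $\cM=\reals\cM_1$ is a complex affine subspace in period coordinates whose associated linear subspace is invariant under complex conjugation, i.e.\ defined over $\reals$ --- not over a number field. The assertion that $\cM_1$ is ``cut out by linear equations in period coordinates with coefficients in a number field'' is an additional, much deeper arithmeticity statement that neither Conjecture~\ref{conj:main} nor the regularity condition~(iii) of Definition~\ref{def:invariant:regular} supplies; without it the intersection numbers you wish to compute are not a priori rational. Moreover, even granting arithmeticity, the identification of the Kontsevich--Forni dynamical average with a ratio of degrees of line bundles requires constructing a compactification of $\mathbb{P}\cM_1$, extending the Hodge bundle across the boundary, and showing the cusp contributions are themselves rational --- none of which is carried out; and condition~(iii) is used in the paper only to justify the cutoff and integration-by-parts argument in the proof of Theorem~\ref{theorem:int:Dflat:equals:SV:const}, not to control algebraic degrees on a compactification. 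What you have written is a plausible research program (close in spirit to what was later accomplished once algebraicity of affine invariant submanifolds was established), but it is not a proof, and the statement remains an open conjecture within the paper itself.
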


By  Lemma~\ref{lm:SVconst:for:cover}  below  an affirmative answer to
this  conjecture  automatically  implies an affirmative answer to the
analogous  conjecture  for  invariant  suborbifolds  in the strata of
meromorphic quadratic differentials with at most simple poles.

Let  $\nu_1$  be an ergodic $\PSL$-invariant probability measure on a
stratum    $\cQ_1(d_1,\dots,d_\noz)$    of    meromorphic   quadratic
differentials  with  at most simple poles, which are \textit{not} the
global  squares  of  Abelian  differentials.  Passing  to a canonical
double  cover  $p:\hat  C\to  C$,  where  $p^\ast q$ becomes a global
square  of  an Abelian differential we get an induced $\SL$-invariant
probability    measure   $\hat\nu_1$   on   the   resulting   stratum
$\cH_1(m_1,\dots,m_k)$.   The  degrees  $m_j$  of  the  corresponding
Abelian      differential      $\hat\omega$      are     given     by
formula~\eqref{eq:d:to:m}                                          in
section~\ref{ss:Sum:of:the:Lyapunov:exponents}  below.  We shall need
the   following   relation   between   the   Siegel--Veech   constant
$c_{\mathit{area}}(\hat\nu_1)$  of  the induced invariant probability
measure   $\hat\nu_1$   in   terms   of  the  Siegel--Veech  constant
$c_{\mathit{area}}(\nu_1)$   of  the  initial  invariant  probability
measure $\nu_1$.

\begin{Lemma}
\label{lm:SVconst:for:cover}
Let  $\hat\nu_1$  be  an  $\SL$-invariant  probability  measure  on a
stratum   $\cH_1(m_1,\dots,m_k)$   induced  from  a  $\PSL$-invariant
probability  measure  on  a  stratum $\cQ_1(d_1,\dots,d_\noz)$ by the
canonical  double  cover construction. The Siegel--Veech constants of
the two measures are related as follows:
\begin{equation*}
   %
c_{\mathit{area}}(\hat\nu_1)=
2\,c_{\mathit{area}}(\nu_1)
\end{equation*}
\end{Lemma}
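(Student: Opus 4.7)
The plan is to relate the two Siegel--Veech integrals pointwise: for each quadratic-differential surface $(C,q)$, compare the cylinders of the rescaled Abelian cover with those of $(C,q)$, then integrate against $\nu_1$.

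First I would fix the normalization. Since $\Area(\hat C,\hat\omega)=2\,\Area(C,q)$, the naive pushforward of $\nu_1$ under $(C,q)\mapsto(\hat C,\hat\omega)$ lives on the area-$2$ locus; rescaling $\hat\omega$ by $1/\sqrt 2$ produces a well-defined injection $\tilde\Phi\colon\cQ_1\to\cH_1$ with $\hat\nu_1=\tilde\Phi_*\nu_1$. Under this rescaling every flat length is divided by $\sqrt 2$ and every area by $2$.

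Next I would analyze the cylinder lifting. Given a cylinder $\mathit{cyl}\subset S=(C,q)$ of width $w$ and area $A$, its preimage in $\hat S=(\hat C,\hat\omega)$ is determined by the $\mathbb{Z}/2\mathbb{Z}$-holonomy of $q$ along the core curve. If the holonomy is trivial, the preimage is a pair of disjoint cylinders each isometric to $\mathit{cyl}$; if nontrivial, the preimage is a single cylinder of $\hat S$ connected-double-covering $\mathit{cyl}$, with doubled core length $2w$ and doubled area $2A$. Every maximal cylinder of $\hat S$ arises in exactly one of these ways, since the deck involution either swaps two disjoint cylinders or fixes a single one. In both cases the total area of the preimage equals $2A$. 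Reorganize the sum defining $N_{\mathit{area}}(\tilde S,L)$ by grouping cylinders of $\tilde S$ according to their image in $S$. With the convention that the ``width'' of a nontrivial-holonomy region downstairs is the length of its single boundary circle (equal to $2w$, i.e.\ the width of its lift), each downstairs cylinder of width $w_{\mathit{down}}$ contributes $\Area(\mathit{cyl}_S)$ to $N_{\mathit{area}}(\tilde S,L)\cdot\Area(\tilde S)$ exactly when $w_{\mathit{down}}/\sqrt 2<L$, giving $N_{\mathit{area}}(\tilde S,L)=N_{\mathit{area}}(S,L\sqrt 2)$. Integrating against $\hat\nu_1=\tilde\Phi_*\nu_1$ and applying the Siegel--Veech theorem for $\nu_1$ yields
\begin{equation*}
\int N_{\mathit{area}}(\tilde S,L)\,d\hat\nu_1
=\int N_{\mathit{area}}(S,L\sqrt 2)\,d\nu_1
=\pi(L\sqrt 2)^2\,c_{\mathit{area}}(\nu_1)
=2\pi L^2\,c_{\mathit{area}}(\nu_1),
\end{equation*}
and comparing with the Siegel--Veech formula for $\hat\nu_1$ gives the desired identity $c_{\mathit{area}}(\hat\nu_1)=2\,c_{\mathit{area}}(\nu_1)$.

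The main obstacle is the nontrivial-holonomy case, where the downstairs region is topologically a M\"obius band rather than a true cylinder and its preimage is a single connected double cover. The bookkeeping — in particular the convention that ``width downstairs'' is the total boundary length (matching the width of the corresponding closed orbit on the Abelian cover) — must be set up so that the ``downstairs width divided by $\sqrt 2$'' formula works uniformly across both holonomy types. Without this uniform convention the two cases contribute at different scalings in $L$, producing a case-dependent mixture rather than the clean factor of $2$.
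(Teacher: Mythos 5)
Your strategy is the same as the paper's: lift each maximal cylinder to the canonical double cover, compare the counting functions $N_{\mathit{area}}$ upstairs and downstairs, and then account for the rescaling by $\sqrt{2}$ needed to renormalize the area. The rescaling computation at the end is correct and matches the paper's. The gap is in the cylinder-lifting step, and it is exactly the point you flag yourself as ``the main obstacle'': the nontrivial-holonomy case. That case is in fact vacuous, and observing this is the one substantive input of the paper's proof. A closed \emph{regular} geodesic parallel-transports its own tangent vector, so the linear holonomy of the flat metric along it fixes a nonzero vector of $\R{2}$; since the holonomy group is $\{\pm\Id\}$, that holonomy must be $+\Id$. (Equivalently: an orientable surface contains no embedded flat M\"obius band, so a maximal family of parallel closed regular geodesics is always a genuine cylinder whose core lifts to two disjoint closed loops of the same length.) Hence \emph{every} maximal cylinder of $S$ lifts to two disjoint cylinders of $\hat S$, each isometric to the original, the preimage has twice the area, and one gets $N_{\mathit{area}}(\hat S,L)=N_{\mathit{area}}(S,L)$ directly, with no convention needed.

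As written, your patch does not close the gap: by redefining the ``width downstairs'' for the (phantom) nontrivial-holonomy regions you change the meaning of $N_{\mathit{area}}(S,L\sqrt{2})$, whereas the Siegel--Veech theorem for $\nu_1$ that you then invoke is a statement about the \emph{standard} counting function, not the modified one. So the step $\frac{1}{\pi R^2}\int N_{\mathit{area}}(S,R)\,d\nu_1=c_{\mathit{area}}(\nu_1)$ with $R=L\sqrt2$ is unjustified unless you separately prove that the modified and standard counting functions agree --- which is precisely the statement that the problematic case never occurs. Insert the holonomy observation at the start and your argument becomes the paper's proof.
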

\begin{proof}
Consider  any  flat  surface  $S=(C,q)$ in the support of the measure
$\nu_1$.  The  linear  holonomy  of  the flat metric on $S$ along any
closed  flat geodesic is trivial. Thus, the waist curves of cylinders
on  $S$  are  lifted to closed flat geodesics on the canonical double
cover  $\hat  S$  of  the same length as downstairs. Hence, the total
area  $\Area(\widehat{cyl})$ swept by each family of parallel closed
geodesics  on  the  double cover $\hat S$ doubles with respect to the
corresponding area downstairs. Since $\Area\hat S=2\Area S$ we get
$$
N_{\mathit{area}}(\hat S,L)
  =
\sum_{\substack{
\widehat{cyl}\subset \hat S\\
w(\widehat{cyl})<L}}
\frac{\Area(\widehat{cyl})}{\Area(\hat S)}
  =
\sum_{\substack{
cyl\subset S\\
w(cyl)<L}}
\frac{\Area(cyl)}{\Area(S)}
  =
N_{\mathit{area}}( S,L)
$$

For  a flat surface $M$ denote by $M_{(1)}$ a proportionally rescaled
flat surface of area one. The definition of $N_{\mathit{area}}( M,L)$
immediately implies that for any $L>0$
$$
N_{\mathit{area}}(M_{(1)},L)=
N_{\mathit{area}}\left(M,\sqrt{\Area(M)} L\right)\,.
$$
Hence,

\begin{multline*}
c_{\mathit{area}}(\hat\nu_1):=
\frac{1}{\pi L^2}\int N_{\mathit{area}}(\hat S_{(1)},L)\,d\hat\nu_1
=
\frac{1}{\pi L^2}\int
  N_{\mathit{area}}\left(\hat S,\sqrt{\Area(\hat S)}L\right)\,d\hat\nu_1
=\\=
\frac{2}{\pi \left(\sqrt{2}L\right)^2}\int
  N_{\mathit{area}}\left(\hat S,\sqrt{2}L\right)\,d\hat\nu_1
=
\frac{2}{\pi R^2}\int
  N_{\mathit{area}}(S,R)\,d\nu_1
=
2\,c_{\mathit{area}}(\nu_1)\,,
\end{multline*}
where we used the notation $R:=\sqrt{2} L$.
\end{proof}

\section{Sum of Lyapunov exponents for $\SL$-invariant suborbifolds}
\label{sec:sums}

\subsection{Historical remarks}

There  are  no  general  methods  of evaluation of Lyapunov exponents
unless  the  base  is a homogeneous space or unless the vector bundle
has  real  $1$-dimensional  equivariant  subbundles. However, in some
cases  it  is  possible  to evaluate Lyapunov exponents approximately
through  computer  simulation  of the corresponding dynamical system.
Such  experiments  with  \textit{Rauzy--Veech  induction} (a discrete
model of the Teichm\"uller geodesic flow) performed by the authors in
1995--1996,   indicated   a   surprising   rationality  of  the  sums
$\lambda_1+\dots+\lambda_g$ of Lyapunov exponents of the Hodge bundle
with respect to Teichm\"uller flow on strata of Abelian and quadratic
differentials,  see~\cite{Kontsevich:Zorich}.  An explanation of this
phenomenon  was  given by M.~Kontsevich in~\cite{Kontsevich} and then
developed by G.~Forni~\cite{Forni:Deviation}.

It took us almost fifteen years to collect and assemble all necessary
ingredients  to  obtain  and justify an explicit formula for the sums
$\lambda_1+\dots+\lambda_g$.   In   particular,  to  obtain  explicit
numerical  values of these sums, one needs estimates
from the work of A.~Eskin and H.~Masur on the asymptotic of the
counting function of periodic orbits \cite{Eskin:Masur}
(developing Veech's seminal paper \cite{Veech:Siegel});
one needs to know the classification
of  connected  components  of  the  strata  (which  was  performed by
M.~Kontsevich    and    A.~Zorich~\cite{Kontsevich:Zorich}   and   by
E.~Lanneau~\cite{Lanneau});  one  needs  to  compute volumes of these
components (they are computed in the papers of A.~Eskin, A.~Okounkov,
and                           R.~Pandharipande~\cite{Eskin:Okounkov},
\cite{Eskin:Okounkov:Pandharipande});   one   also   has  to  know  a
description  of  the  principal  boundary  of  the  components of the
strata,  and  values  of  the  corresponding  Siegel--Veech constants
(obtained      by      A.~Eskin,      H.~Masur      and     A.~Zorich
in~\cite{Eskin:Masur:Zorich} and~\cite{Masur:Zorich}).

Several  important  subjects  related  to  the  study of the Lyapunov
spectrum remain beyond the scope of our consideration. We address the
reader  to  the original paper of G.~Forni~\cite{Forni:Deviation}, to
the     survey~\cite{Forni:Handbook}     and     to     the    recent
papers~\cite{Forni:new},    \cite{Trevino},   \cite{Aulicino:thesis},
\cite{Aulicino:affine}    for    the   very   important   issues   of
\textit{determinant  locus} and of \textit{nonuniform hyperbolicity}.
We address the reader to the paper~\cite{Avila:Viana} of A.~Avila and
M.~Viana  for  the  proof  of  \textit{simplicity of the spectrum} of
Lyapunov  exponents for connected components of the strata of Abelian
differentials.  For  invariant  suborbifolds of the strata of Abelian
differentials   in   genus   two   (see~\cite{Bainbridge:Euler:char},
\cite{Bainbridge:L:shaped})  and  for  certain  special Teichm\"uller
curves,   the   Lyapunov   exponents   are   computed   individually,
see~\cite{Bouw:Moeller},              \cite{Eskin:Kontsevich:Zorich},
\cite{Forni:Handbook},                   \cite{Forni:Matheus:Zorich},
\cite{Wright:JMD}, \cite{Wright:GAFA}.

\subsection{Sum of Lyapunov exponents}
\label{ss:Sum:of:the:Lyapunov:exponents}

Now we are ready to formulate the principal results of our paper.

\begin{Theorem}
\label{theorem:general:Abelian}
Let   $\cM_1$   be   any  closed  connected  regular  $\SL$-invariant
suborbifold  of  some  stratum  $\cH_1(m_1,\dots,m_\noz)$  of Abelian
differentials,   where  $m_1+\dots+m_\noz=2g-2$. The top $g$ Lyapunov
exponents  of the of the Hodge bundle $H^1_{\R{}}$ over $\cM_1$ along
the Teichm\"uller flow satisfy the following relation:
\begin{equation}
\label{eq:general:sum:of:exponents:for:Abelian}
\lambda_1 + \dots + \lambda_g
\ = \
\cfrac{1}{12}\cdot\sum_{i=1}^\noz\cfrac{m_i(m_i+2)}{m_i+1}
\ +\ \frac{\pi^2}{3}\cdot c_{\mathit{area}}(\cM_1)
\end{equation}
where   $c_{\textit{area}}(\cM_1)$   is  the  Siegel--Veech  constant
corresponding   to  the  regular  suborbifold  $\cM_1$.  The  leading
Lyapunov exponent $\lambda_1$ is equal to one.
\end{Theorem}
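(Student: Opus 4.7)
The plan is to follow the blueprint of Kontsevich and Forni: express $\lambda_1+\dots+\lambda_g$ as the $\nu_1$-average of an explicit curvature-type function on $\cM_1$, and then evaluate that function by comparing determinants of Laplacians associated to the hyperbolic and the flat conformal metrics on $C$. First, by Forni's variational formula for the Hodge bundle on Teichm\"uller disks, the sum of the top $g$ Lyapunov exponents for the flow $G_t$ equals the $\nu_1$-average of $\tfrac{1}{4}\Dhyp\!\log\det G_\omega$, where $G_\omega$ is the Gram matrix of the Hodge Hermitian form of~\eqref{eq:hodge:hermitian:form} on $H^{1,0}(C)$ written in a Gauss--Manin parallel frame, and $\Dhyp$ is the hyperbolic Laplacian on the Teichm\"uller disk. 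The supplementary identity $\lambda_1=1$ is standard and follows from the existence of the tautological direction: the Hodge norm of the class $[\Re\,\omega]$ grows exactly like $e^{t}$ under $G_t$.

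Second, I would convert $\log\det G_\omega$ into a determinant of a Laplacian via the Quillen / Bismut--Gillet--Soul\'e formula, which identifies $\log\det G_\omega$ with $\log\det\Dhyp$ up to terms whose $\Dhyp$-derivative along a Teichm\"uller disk is explicit, reducing the problem to averaging $\tfrac{1}{4}\Dhyp\!\log\det\Dhyp$ against $\nu_1$. The Polyakov anomaly formula then compares $\log\det\Dhyp$ with $\log\det\Dflat$: on a Teichm\"uller disk the flat metric is only rigidly rescaled, so $\log\det\Dflat$ contributes trivially, and the Polyakov comparison expresses the difference as an integral of the conformal factor against the hyperbolic Gaussian curvature plus explicit local corrections at the conical singularities. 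For a cone point of total angle $2\pi(m+1)$ the Polyakov conical formula (Aurell--Salomonson, Kokotov--Korotkin) outputs exactly $\tfrac{1}{12}\cdot\tfrac{m(m+2)}{m+1}$; summed over the zeros of $\omega$ this furnishes the first term on the right-hand side of~\eqref{eq:general:sum:of:exponents:for:Abelian}.

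The remaining bulk term, a conformal-factor integral against the hyperbolic curvature, must be integrated over $\cM_1$. Since $\cM_1$ is non-compact, I would exhaust it by the thick part on which every cylinder has modulus at most a large constant $K$; on this thick part Stokes' theorem converts the bulk integral into a boundary contribution supported on surfaces with at least one long flat cylinder. By a Ray--Singer type asymptotic for $\log\det\Dhyp$ on a long flat annulus, each degenerating cylinder of modulus $M$ contributes asymptotically $\tfrac{\pi M}{3}$ to the boundary integral; summing over such cylinders and invoking the Siegel--Veech formula~\eqref{eq:SV:constant:definition} converts the boundary term into $\tfrac{\pi^2}{3}\,c_{\mathit{area}}(\cM_1)$ as $K\to\infty$. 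The regularity condition~(iii) guarantees that surfaces carrying two non-parallel short cylinders contribute negligibly, so the thin parts can be treated one cylinder at a time.

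The main obstacle is precisely this final step. Carrying it out rigorously requires a Ray--Singer / Burghelea--Friedlander--Kappeler asymptotic for $\log\det\Dhyp$ on degenerating hyperbolic surfaces with uniform error control in the modulus of the pinching cylinder, a Fubini-type exchange of limits that matches the renormalized boundary term against Eskin--Masur's counting asymptotics for periodic cylinders, and careful interaction estimates among moderately thin cylinders so that the thick/thin splitting genuinely decouples. By comparison, the conical contribution coming out of the Polyakov step and the equality $\lambda_1=1$ are essentially formal once the analytic framework is in place.
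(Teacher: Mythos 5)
Your overall blueprint (the Kontsevich--Forni average of the curvature of $\det H^{1,0}$, a comparison of determinants of Laplacians, degeneration asymptotics plus a cutoff feeding into the Siegel--Veech formula) is indeed the architecture of the paper's proof, but two of your pivotal claims fail. First, it is false that ``on a Teichm\"uller disk the flat metric is only rigidly rescaled, so $\log\det\Dflat$ contributes trivially.'' The $\operatorname{GL}_+(2,\R{})$-action deforms the flat structure non-conformally; $\det\Dflat(g_tS,S_0)$ varies strongly along the disk (it behaves like $\exp\bigl(-\frac{\pi}{3}\sum_r h_r/w_r\bigr)$ as long flat cylinders develop, Theorem~\ref{theorem:Dflat:near:the:boundary}), and the entire Siegel--Veech term of \eqref{eq:general:sum:of:exponents:for:Abelian} is exactly the average $\int_{\cM_1}\Dhyp\log\det\Dflat(S,S_0)\,d\nu_1=-\frac{4}{3}\pi^2 c_{\mathit{area}}(\cM_1)$ of Theorem~\ref{theorem:int:Dflat:equals:SV:const}. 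Having discarded that term, you try to recover the cylinder contributions from the ``bulk'' Polyakov integral; making that rigorous would in any case force you to reprove the content of sections~\ref{sec:Comparison:of:determinants}--\ref{sec:det:near:the:boundary} (Rafi's thick--thin comparison, the control of expanding annuli, the matching of $\pi^2/(3\ell_{\mathit{hyp}})$ with $\frac{\pi}{3}h/w$), so nothing is gained, and as written the bookkeeping is inconsistent.

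Second, the mechanism you propose for the combinatorial term does not produce it. The conical Polyakov/heat-kernel anomaly of a cone of angle $2\pi(m+1)$ is indeed $\pm\frac{1}{12}\frac{m(m+2)}{m+1}$, but as a correction to $\log\det\Delta$ of a \emph{fixed} surface it is a constant along the Teichm\"uller disk and is annihilated by $\Dhyp$. In the paper the term $\frac{1}{12}\sum m_i(m_i+2)/(m_i+1)$ comes from the analytic Riemann--Roch theorem (Theorem~\ref{theorem:main:local:formula}), which compares $\Dhyp\log|\det\langle\omega_i,\omega_j\rangle|$ with $\Dhyp\log\det\Dflat$ (not $\det\Delta_{\mathit{hyp}}$): concretely it is the second variation, \emph{along the family}, of the unit-area normalization factor coupled to the homogeneity degree $p=\frac{1}{12}\sum m_i(m_i+2)/(m_i+1)$ of the Bergman tau-function --- equivalently, Fay's variational formula localized at the cone points (Proposition~\ref{pr:integrating:Fay}). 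This genuinely variational computation is the step your outline is missing; no static anomaly at a single surface can supply it. (A minor point: your sign in the first step is off --- the quantity whose $\nu_1$-average equals $\lambda_1+\dots+\lambda_g$ is $\Lambda(S)=-\frac14\Dhyp\log|\det\langle\omega_i,\omega_j\rangle|$, which is nonnegative.)
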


We        prove       Theorem~\ref{theorem:general:Abelian}       and
formula~\eqref{eq:general:sum:of:exponents:for:Abelian}  in  the very
end of section~\ref{sec:Outline:of:proofs}.

\begin{NNRemark}
For  all  known  regular $\SL$-invariant suborbifolds, in particular,
for   connected  components  of  the  strata  and  for  preimages  of
Teichm\"uller  curves, the sum of the Lyapunov exponents is rational.
However,  currently  we do not have a proof of rationality of the sum
of  the  Lyapunov  exponents for  \textit{any} regular $\SL$-invariant
suborbifold.
\end{NNRemark}

Let  us proceed with a consideration of sums of Lyapunov exponents in
the  case  of meromorphic quadratic differentials with at most simple
poles.  Let  $S$  be  a  flat  surface  of  genus  $g$  in a stratum
$\cQ(d_1,\dots,d_\noz)$    of    quadratic    differentials,    where
$d_1+\dots+d_\noz=4g-4$. Similarly to the case of Abelian differentials
we  have  the  Hodge bundle $H^1_{\R{}}$ over $\cQ(d_1,\dots,d_\noz)$
with  a  fiber  $H^1(S,\R{})$  over  a  ``point'' $S$. As before this
vector   bundle   is  endowed  with  the  Hodge  norm  and  with  the
Gauss--Manin    connection.   We   denote   the   Lyapunov   exponents
corresponding  to  the  action  of the Teichm\"uller geodesic flow on
this vector bundle by $\lambda_1^+\ge \dots \ge \lambda_g^+$.

Consider  a  canonical (possibly ramified) double cover $p:\hat{S}\to
S$  such  that  $p^\ast  q=(\hat\omega)^2$,  where $\hat\omega$ is an
Abelian  differential  on  the  Riemann surface $\hat S$. This double
cover  has ramification points at all zeroes of odd orders of $q$ and
at  all  simple  poles, and no other ramification points. It would be
convenient to introduce the following notation:
\begin{equation}
\label{eq:g:eff}
g_{\mathit{eff}}:=\hat{g}-g
\end{equation}

By  construction the double cover $\hat{S}$ is endowed with a natural
involution $\sigma:\hat{S}\to\hat{S}$ interchanging the two sheets of
the cover. We can decompose the vector space $H^1(\hat{S},\R{})$ into
a    direct    sum    of    subspaces    $H^1_+(\hat{S},\R{})$    and
$H^1_-(\hat{S},\R{})$   which   are   correspondingly  invariant  and
anti-invariant  with  respect to the induced involution $\sigma^\ast:
H^1(\hat{S},\R{})\to  H^1(\hat{S},\R{})$  on  cohomology.  Note  that
topology  of  the  ramified  cover $\hat{S}\to S$ is the same for all
flat  surfaces  in  the stratum $\cQ(d_1,\dots,d_\noz)$. Thus, we get
two  natural  vector  bundles  over  $\cQ(d_1,\dots,d_\noz)$ which we
denote  by  $H^1_+$  and  by  $H^1_-$.  By construction, these vector
bundles  are  equivariant with respect to the $\PSL$-action; they are
endowed with the Hodge norm and with the Gauss--Manin connection.

Clearly,  the  vector bundle $H^1_+$ is canonically isomorphic to the
initial  Hodge  bundle  $H^1_{\R{}}$:  it  corresponds  to cohomology
classes   pulled  back  from  $S$  to  $\hat{S}$  by  the  projection
$p:\hat{S}\to S$. Hence,
$$
\dim H^1_- = \dim H^1_-(\hat{S},\R{}) = 2g_{\mathit{eff}}
$$
We denote the top $g_{\mathit{eff}}$ Lyapunov exponents corresponding
to the action of the Teichm\"uller geodesic flow on the vector bundle
$H^1_-$ by $\lambda_1^-\ge \dots \ge \lambda_{g_{\mathit{eff}}}^-$.

\begin{Theorem}
\label{theorem:general:quadratic}
Consider  a  stratum $\cQ_1(d_1,\dots,d_\noz)$ in the moduli space of
quadratic   differentials   with   at   most   simple   poles,  where
$d_1+\dots+d_\noz=4g-4$.  Let $\cM_1$ be any regular $\PSL$-invariant
suborbifold of $\cQ_1(d_1,\dots,d_\noz)$.

a)  The  Lyapunov exponents $\lambda_1^+\ge \dots \ge \lambda_g^+$ of
the  invariant  subbundle  $H^1_+$  of  the Hodge bundle over $\cM_1$
along the Teichm\"uller flow satisfy the following relation:

\begin{equation}
\label{eq:general:sum:of:plus:exponents:for:quadratic}
\lambda^+_1 + \dots + \lambda^+_g
\ = \
\cfrac{1}{24}\,\sum_{j=1}^\noz \cfrac{d_j(d_j+4)}{d_j+2}
+\frac{\pi^2}{3}\cdot c_{\mathit{area}}(\cM_1)
\end{equation}
where   $c_{\textit{area}}(\cM_1)$   is  the  Siegel--Veech  constant
corresponding  to  the  suborbifold $\cM_1$. By convention the sum in
the                 left-hand                 side                 of
equation~\eqref{eq:general:sum:of:plus:exponents:for:quadratic}    is
defined to be equal to zero for $g=0$.

b)    The    Lyapunov    exponents    $\lambda_1^-\ge    \dots    \ge
\lambda_{g_{\mathit{eff}}}^-$ of the anti-invariant subbundle $H^1_-$
of the Hodge bundle over $\cM_1$ along the Teichm\"uller flow satisfy
the following relation:
\begin{equation}
\label{eq:general:index:of:exponents:for:quadratic}
\big(\lambda^-_1 + \dots + \lambda^-_{g_{\mathit{eff}}}\big)-
\big(\lambda^+_1+ \dots +\lambda^+_g\big)
\ = \
\cfrac{1}{4}\,\cdot\,\sum_{\substack{j \text{ such that}\\
d_j \text{ is odd}}}
\cfrac{1}{d_j+2}
\end{equation}
The leading Lyapunov exponent $\lambda^-_1$ is equal to one.
\end{Theorem}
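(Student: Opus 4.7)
The plan is to establish part (a) directly by adapting the proof of Theorem~\ref{theorem:general:Abelian} to the half-translation setting, and then to deduce part (b) as a consequence of part (a), Theorem~\ref{theorem:general:Abelian} applied to the canonical Abelian double cover, and Lemma~\ref{lm:SVconst:for:cover}.

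For part (a), I would follow the same strategy as for Theorem~\ref{theorem:general:Abelian}. Kontsevich's formula identifies $\lambda_1^+ + \cdots + \lambda_g^+$ with the $\SL$-average of the curvature of the determinant of the Hodge bundle $\det H^{1,0}$ over $\cM_1$. Via the Polyakov formula, this curvature is expressed as the Laplacian of $\log\det\Delta_{|q|}$ up to a universal factor. The bulk term produces the Siegel--Veech contribution $\frac{\pi^2}{3}c_{\mathit{area}}(\cM_1)$, arising from the asymptotic behavior of the determinant of the Laplacian under degenerations corresponding to short cylinders; the regularity condition (iii) of Definition~\ref{def:invariant:regular} justifies the cutoff. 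The local contribution near each cone point of angle $\pi(d_j+2)$ yields the term $\frac{1}{24}\frac{d_j(d_j+4)}{d_j+2}$. The coefficient $\frac{1}{24}$ (versus $\frac{1}{12}$ in the Abelian case) reflects the fact that cone angles on a half-translation surface are integer multiples of $\pi$ rather than $2\pi$.

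For part (b), I pass to the canonical double cover $p\colon\hat S\to S$ and to the induced regular $\SL$-invariant suborbifold $\hat\cM_1\subset\cH_1(m_1,\dots,m_k)$. The deck involution $\sigma$ splits the Hodge bundle over $\hat\cM_1$ as $H^1_+\oplus H^1_-$, and since the Gauss--Manin connection preserves this splitting, the Lyapunov spectrum of the full Hodge bundle is the disjoint union of the spectra of $H^1_+$ and $H^1_-$. Applying Theorem~\ref{theorem:general:Abelian} to $\hat\cM_1$, together with Lemma~\ref{lm:SVconst:for:cover}, gives
\begin{equation*}
(\lambda_1^+ + \cdots + \lambda_g^+) + (\lambda_1^- + \cdots + \lambda_{g_{\mathit{eff}}}^-) = \frac{1}{12}\sum_{i=1}^{k}\frac{m_i(m_i+2)}{m_i+1} + \frac{2\pi^2}{3}\,c_{\mathit{area}}(\cM_1).
\end{equation*}
Subtracting~\eqref{eq:general:sum:of:plus:exponents:for:quadratic} cancels the Siegel--Veech term and reduces~\eqref{eq:general:index:of:exponents:for:quadratic} to the combinatorial identity
\begin{equation*}
\frac{1}{12}\sum_i\frac{m_i(m_i+2)}{m_i+1} - \frac{1}{12}\sum_j\frac{d_j(d_j+4)}{d_j+2} = \frac{1}{4}\sum_{j\,:\,d_j\text{ odd}}\frac{1}{d_j+2},
\end{equation*}
which I would verify by casework using~\eqref{eq:d:to:m}: each even $d_j=2m$ lifts to two zeros of $\hat\omega$ of degree $m$ whose joint contribution matches $\frac{d_j(d_j+4)}{d_j+2}$ exactly, while each odd $d_j$ (including $d_j=-1$) lifts to a single ramification point of degree $d_j+1$, producing the residual $\frac{3}{d_j+2}$ per odd-order singularity. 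Finally, $\lambda_1^-=1$ because the anti-invariant class $[\Re\hat\omega]\in H^1_-$ is expanded by the Teichm\"uller flow at rate exactly $1$, and this is the largest possible Lyapunov exponent on the Hodge bundle.

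The main obstacle is the direct proof of part (a): it requires redoing the entire apparatus behind Theorem~\ref{theorem:general:Abelian} in the half-translation setting. In particular, the Polyakov formula and the asymptotic analysis of $\det\Delta_{|q|}$ near boundary strata must accommodate cone angles that are odd multiples of $\pi$, which occur precisely at simple poles and at odd-order zeros of $q$ (exactly the ramification points of the canonical double cover). Once part (a) is in place, part (b) follows essentially formally from the Abelian case applied to $\hat S$.
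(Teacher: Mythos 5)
Your proposal matches the paper's proof in both parts: part (a) is obtained exactly as in the Abelian case by combining the Background Theorem with the quadratic version~\eqref{eq:main:local:formula:quadratic} of the analytic Riemann--Roch theorem and Theorem~\ref{theorem:int:Dflat:equals:SV:const}, while part (b) is deduced, as in the paper, from Theorem~\ref{theorem:general:Abelian} applied to the canonical double cover together with Lemma~\ref{lm:SVconst:for:cover}, the symplectic splitting $H^1=H^1_+\oplus H^1_-$, and the combinatorics of~\eqref{eq:d:to:m}. The only slip is verbal: to cancel the Siegel--Veech term one must subtract~\eqref{eq:general:sum:of:plus:exponents:for:quadratic} with coefficient $2$ rather than $1$, which is in fact what your displayed combinatorial identity (with the coefficient $\tfrac{1}{12}$ in front of the $d_j$-sum) already encodes.
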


We  prove  part  (a)  of  Theorem~\ref{theorem:general:quadratic} and
formula~\eqref{eq:general:sum:of:plus:exponents:for:quadratic} in the
very end of section~\ref{sec:Outline:of:proofs}.

\begin{proof}[Proof of part (b) of Theorem~\ref{theorem:general:quadratic}]
Recall   that  we  reserve  the  word  ``degree''  for  the zeroes of
\textit{Abelian} differentials and the word ``order''  for the zeroes
of \textit{quadratic} differentials.

Let  the  covering  flat  surface  $\hat{S}$  belong  to  the stratum
$\cH(m_1,\dots,m_k)$.  The resulting holomorphic form $\hat\omega$ on
$\hat{S}$ has zeroes of the following degrees:
\begin{align}
\notag
\text{A singularity } & \text{of order $d$ of $q$ on $S$}\\
\label{eq:d:to:m}
&\text{gives rise to }
\begin{cases}
\text{two zeroes of $\hat\omega$ of degree $m=d/2$ when $d$ is even}
\\
\text{single zero of $\hat\omega$ of degree $m=d+1$ when $d$ is odd}
\end{cases}
\end{align}

Thus, we get the following expression for the genus $\hat{g}$ of the
double cover $\hat{S}$:
\begin{equation}
\label{eq:g:hat}
\hat{g}=2g-1+\cfrac{1}{2}\,
(\text{Number of singularities of odd order})
\end{equation}
which follows from the relation below:
\begin{multline*}
4\hat{g}-4
\ =\
\sum_{\substack{
j \text{ such that}\\
d_j \text{ is odd}}} \
(2d_j+2)
\ +\
\sum_{\substack{
j \text{ such that}\\
d_j \text{ is even}}} \
(2d_j)
\ = \\
=\ 2\sum_{j=1}^\noz d_j
\ +\ 2\,(\text{Number of singularities of odd order})
\ =\ \\
=\ 2(4g-4)+2\,(\text{Number of singularities of odd order})
\end{multline*}

Applying           Theorem~\ref{theorem:general:Abelian}          and
equation~\eqref{eq:sum:for:Abelian:strata}     to    the    invariant
suborbifold  $\hat{\cM}\subset\cH(m_1,\dots,m_k)$  induced from $\cM$
we get
$$
\lambda_1 + \dots + \lambda_{\hat{g}} \ = \
\cfrac{1}{12}\cdot\sum_{i=1}^\noz\cfrac{m_i(m_i+2)}{m_i+1}
+\frac{\pi^2}{3}\cdot c_{\mathit{area}}(\hat{\cM})
$$
where  $\hat{g}$  is  the genus of $\hat S$, and $\lambda_1 \ge \dots
\ge \lambda_{\hat{g}}$ are the Lyapunov exponents of the Hodge bundle
$H^1(\hat{S};\R{})$ over $\hat{\cM}$.

Note  that  $H^1(\hat{S};\R{})$  decomposes  into  a  direct  sum  of
symplectically orthogonal subspaces:
$$
H^1(\hat{S};\R{})=H_+^1(\hat{S};\R{})\oplus H_-^1(\hat{S};\R{})
$$
Hence,
$$
(\lambda_1 + \dots + \lambda_{\hat{g}}) \ = \
(\lambda^-_1 + \dots + \lambda^-_{g_{\mathit{eff}}})\ +\
(\lambda^+_1+ \dots +\lambda^+_g)
$$
Moreover,     by     Lemma~\ref{lm:SVconst:for:cover}     we     have
$c_{\mathit{area}}(\hat{\cM})=2\,c_{\mathit{area}}(\cM_1)$,     which
implies the following relation:
\begin{multline}
\label{eq:tmp}
\big(\lambda^-_1 + \dots + \lambda^-_{g_{\mathit{eff}}}\big)+
\big(\lambda^+_1+ \dots +\lambda^+_g\big)
\ =\\ =\
\cfrac{1}{12}\cdot\sum_{i=1}^\noz\cfrac{m_i(m_i+2)}{m_i+1}
\ +\
2\frac{\pi^2}{3}\cdot c_{\mathit{area}}(\cM_1)
\end{multline}

The  degrees $m_i$ of zeroes of the Abelian differential $\hat\omega$
defining  the flat metric on $\hat{S}$ are calculated in terms of the
orders  $d_j$  of  zeroes  and  of  simple  poles  of  the  quadratic
differential    $q$    defining   the   flat   metric   on   $S$   by
formula~\eqref{eq:d:to:m}, which implies:
$$
\sum_{i=1}^\noz
\cfrac{m_i(m_i+2)}{m_i+1}
\ =
\sum_{\substack{
j \text{ such that}\\
d_j \text{ is odd}}}
\cfrac{(d_j+1)(d_j+3)}{d_j+2}
\ +\
2
\sum_{\substack{
j \text{ such that}\\
d_j \text{ is even}}}
\cfrac{(d_j/2)(d_j/2+2)}{d_j/2+1}
$$
Thus, we can rewrite relation~\eqref{eq:tmp} as follows:
\begin{multline*}
\big(\lambda^-_1 + \dots + \lambda^-_{g_{\mathit{eff}}}\big)+
\big(\lambda^+_1+ \dots +\lambda^+_g\big)
\ = \\
=\
\cfrac{1}{12}
\sum_{\substack{
j \text{ such that}\\
d_j \text{ is odd}}}
\cfrac{(d_j+1)(d_j+3)}{d_j+2}
\ +\
\cfrac{1}{12}
\sum_{\substack{
j \text{ such that}\\
d_j \text{ is even}}} \cfrac{d_j(d_j+4)}{d_j+2}
\ +\ 2\frac{\pi^2}{3}\cdot c_{\mathit{area}}(\cM_1)
\end{multline*}

Taking    the    difference    between   the   above   relation   and
relation~\eqref{eq:general:sum:of:plus:exponents:for:quadratic} taken
with      coefficient      $2$      we     obtain     the     desired
relation~\eqref{eq:general:index:of:exponents:for:quadratic}.
\end{proof}

\subsection{Genus zero and hyperelliptic loci}

Our results become even more explicit in a particular case of genus
zero, and in a closely related case of hyperelliptic loci.

\begin{Theorem}
\label{theorem:sum:for:CP1}
Consider a stratum $\cQ_1(d_1,\dots,d_\noz)$ in the moduli space of
quadratic differentials with at most simple poles on $\CP$, where
$d_1+\dots+d_\noz=-4$. Let $\cM_1$ be any regular $\PSL$-invariant
suborbifold of $\cQ_1(d_1,\dots,d_\noz)$. Let
$g_\mathit{eff}$ be the genus of the canonical double cover $\hat
S$ over a Riemann surface $S$ in $\cQ_1(d_1,\dots,d_\noz)$.

\begin{itemize}
\item[{\rm        (a)}]        The       Siegel--Veech       constant
$c_{\mathit{area}}(\cM_1)$  depends  only  on the ambient stratum and
equals
$$
c_{\mathit{area}}(\cM_1)=
-\cfrac{1}{8\pi^2}\,\sum_{j=1}^\noz \cfrac{d_j(d_j+4)}{d_j+2}
$$

\item[{\rm  (b)}]  The  Lyapunov  exponents $\lambda_1^-\ge \dots \ge
\lambda^-_{g_\mathit{eff}}$  of  the anti-invariant subbundle $H^1_-$
of the Hodge bundle over $\cM_1$ along the Teichm\"uller flow satisfy
the following relation:
\begin{equation}
\label{eq:sum:for:CP1}
\lambda_1^- + \dots + \lambda^-_{g_\mathit{eff}}
\ = \
\cfrac{1}{4}\,\cdot\,\sum_{\substack{j \text{ such that}\\
d_j \text{ is odd}}}
\cfrac{1}{d_j+2}
\end{equation}
\end{itemize}
\end{Theorem}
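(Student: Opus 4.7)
The plan is to derive both parts directly from Theorem~\ref{theorem:general:quadratic} by specializing to genus zero. The key observation is that when the base surface $S$ has genus $g=0$, the cohomology $H^1(S;\reals)$ vanishes, so the invariant subbundle $H^1_+$ is trivial and the sum $\lambda^+_1+\dots+\lambda^+_g$ is an empty sum, equal to zero by the convention stated right after equation~\eqref{eq:general:sum:of:plus:exponents:for:quadratic}.

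For part~(a), I would substitute $\lambda^+_1+\dots+\lambda^+_g = 0$ into equation~\eqref{eq:general:sum:of:plus:exponents:for:quadratic}, giving
\begin{equation*}
0 \ = \ \frac{1}{24}\sum_{j=1}^{\noz}\frac{d_j(d_j+4)}{d_j+2} + \frac{\pi^2}{3}\,c_{\mathit{area}}(\cM_1).
\end{equation*}
Solving for $c_{\mathit{area}}(\cM_1)$ yields exactly the formula stated in part~(a). Note that this argument in fact shows that $c_{\mathit{area}}(\cM_1)$ depends only on the ambient stratum $\cQ_1(d_1,\dots,d_\noz)$ and not on the particular regular $\PSL$-invariant suborbifold $\cM_1$, which is the extra content of part~(a) beyond the formula itself.

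For part~(b), I would use equation~\eqref{eq:general:index:of:exponents:for:quadratic} from Theorem~\ref{theorem:general:quadratic}(b), which reads
\begin{equation*}
\bigl(\lambda^-_1+\dots+\lambda^-_{g_{\mathit{eff}}}\bigr) - \bigl(\lambda^+_1+\dots+\lambda^+_g\bigr) \ = \ \frac{1}{4}\sum_{\substack{j\text{ such that}\\ d_j\text{ is odd}}}\frac{1}{d_j+2}.
\end{equation*}
Again using $\lambda^+_1+\dots+\lambda^+_g = 0$ in the genus-zero case, the left-hand side collapses to $\lambda^-_1+\dots+\lambda^-_{g_{\mathit{eff}}}$, producing~\eqref{eq:sum:for:CP1}.

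There is essentially no obstacle here: the proposition is a corollary of Theorem~\ref{theorem:general:quadratic}, and the only subtlety is correctly interpreting the empty sum convention when $g=0$. The nontrivial work is all packaged inside Theorem~\ref{theorem:general:quadratic}; the hyperelliptic-loci case mentioned in the subsection heading would presumably be handled similarly by reducing it, via the canonical double cover, to the genus-zero computation treated here, although that reduction is not part of the statement~\ref{theorem:sum:for:CP1} itself.
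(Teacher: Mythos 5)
Your proposal is correct and is essentially identical to the paper's own proof: both specialize Theorem~\ref{theorem:general:quadratic} to genus zero, using the convention that $\lambda^+_1+\dots+\lambda^+_g$ is an empty sum equal to zero, and then read off part~(a) from equation~\eqref{eq:general:sum:of:plus:exponents:for:quadratic} and part~(b) from equation~\eqref{eq:general:index:of:exponents:for:quadratic}. The arithmetic giving $c_{\mathit{area}}(\cM_1)=-\frac{1}{8\pi^2}\sum_j \frac{d_j(d_j+4)}{d_j+2}$ checks out.
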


\begin{NNRemark}
Relation~\eqref{eq:sum:for:CP1}            was            conjectured
in~\cite{Kontsevich:Zorich}.
\end{NNRemark}

\begin{proof}
Apply
equations~\eqref{eq:general:sum:of:plus:exponents:for:quadratic}
and~\eqref{eq:general:index:of:exponents:for:quadratic} and note that
by   convention   the   sum  of  exponents  $\big(\lambda^+_1+  \dots
+\lambda^+_g\big)$  in  the  left-hand side is defined to be equal to
zero for $g=0$.
\end{proof}

The  square  of  any  holomorphic  1-form $\omega$ on a hyperelliptic
Riemann  surface  $S$  is  a  pullback  $(\omega)^2=p^\ast q$ of some
meromorphic  quadratic  differential  with  simple poles $q$ on $\CP$
where   the   projection   $p:S\to\CP$   is  the  quotient  over  the
hyperelliptic   involution.   The   relation   between   the  degrees
$m_1,\dots,m_k$    of    zeroes    of   $\omega$   and   the   orders
$d_1,\dots,d_\noz$   of   singularities  of  $q$  is  established  by
formula~\eqref{eq:d:to:m}.

Note,   that   a   pair   of   hyperelliptic   Abelian  differentials
$\omega_1,\omega_2$  in  the  same stratum $\cH(m_1,\dots,m_k)$ might
correspond to meromorphic quadratic differentials in different strata
on  $\CP$ depending on which zeroes are interchanged and which zeroes
are  invariant  under  the  hyperelliptic involution. Note also, that
hyperelliptic  loci  in  the  strata  of  Abelian  differentials  are
$\SL$-invariant,   and   that   the   orders   $d_1,\dots,d_\noz$  of
singularities  of the underlying quadratic differential do not change
under the action of $\SL$.

\begin{Corollary}
\label{cor:hyp:connected:comps}
Suppose  that  $\cM_1$  is a regular $\SL$-invariant suborbifold in a
hyperelliptic locus of some stratum $\cH_1(m_1,\dots,m_k)$ of Abelian
differentials in genus $g$. Denote by $(d_1,\dots,d_\noz)$ the orders
of singularities of the underlying quadratic differentials.

The  top  $g$  Lyapunov  exponents  of  the  Hodge bundle $H^1$ over
$\cM_1$ along the Teichm\"uller flow satisfy the following relation:
$$
\lambda_1 + \dots + \lambda_g
\ = \
\cfrac{1}{4}\,\cdot\,\sum_{\substack{j \text{ such that}\\
d_j \text{ is odd}}}
\cfrac{1}{d_j+2}\,,
$$
where, as usual, we associate the order $d_i=-1$ to simple poles.

In particular, for any regular $\SL$-invariant suborbifold $\cM_1$ in
a hyperelliptic connected component one has
$$
\begin{array}{ccccl}
1+\lambda_2 + \dots + \lambda_g &
= & \cfrac{g^2}{2g-1} & \text{for } & \cM_1\subseteq\cH^{\mathit{hyp}}_1(2g-2)
        \\[-\halfbls] \\
1+\lambda_2 + \dots + \lambda_g &
= & \cfrac{g+1}{2}    & \text{for } & \cM_1\subseteq\cH^{\mathit{hyp}}_1(g-1,g-1)\,.
\end{array}
$$
\end{Corollary}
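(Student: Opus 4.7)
The plan is to reduce the corollary to Theorem~\ref{theorem:sum:for:CP1}(b) by viewing the hyperelliptic locus as the canonical double cover of an ambient stratum on $\CP$, and then to read off the two special numerical values by enumerating the orders $(d_1,\dots,d_\noz)$ on $\CP$ in each case.

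First I would observe that for any $(S,\omega)$ in a hyperelliptic locus, the square $\omega^2$ descends under the hyperelliptic involution $\iota:S\to S$ to a meromorphic quadratic differential $q$ on $\CP=S/\iota$, and the projection $p:S\to\CP$ is precisely the canonical double cover of $(\CP,q)$ from \S\ref{ss:Volume:element:and:action:of:the:linear:group}. Hence $\cM_1$ is the lift of a regular $\PSL$-invariant suborbifold of $\cQ_1(d_1,\dots,d_\noz)$, where the orders satisfy the Euler constraint $d_1+\dots+d_\noz=-4$. Because the base $\CP$ has genus zero, $H^1(\CP;\R{})=0$, and so $H^1_+=0$ while the entire Hodge bundle over the hyperelliptic locus coincides with the anti-invariant subbundle $H^1_-$. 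In particular $g_{\mathit{eff}}=\hat g-g=g$ and $\lambda_i=\lambda_i^-$ for $i=1,\dots,g$. Plugging this into Theorem~\ref{theorem:sum:for:CP1}(b) gives directly the general identity
$$
\lambda_1+\dots+\lambda_g \ =\ \frac{1}{4}\sum_{\substack{j\text{ such that}\\ d_j\text{ odd}}}\frac{1}{d_j+2}.
$$

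For the two named components I would enumerate $(d_1,\dots,d_\noz)$ using the dictionary~\eqref{eq:d:to:m} together with $\sum d_j=-4$. In $\cH^{\mathit{hyp}}(2g-2)$ the unique zero of $\omega$ must be a Weierstrass point, hence fixed by $\iota$; then~\eqref{eq:d:to:m} forces it to descend to a single odd-order singularity with $d=2g-3$, and the Euler constraint requires $2g+1$ additional simple poles. In $\cH^{\mathit{hyp}}(g-1,g-1)$ the two zeros of $\omega$ are interchanged by $\iota$, so they descend to a single even-order singularity with $d=2g-2$, and the constraint forces $2g+2$ simple poles. Substituting each tuple into $\tfrac{1}{4}\sum_{d_j\text{ odd}}(d_j+2)^{-1}$ and simplifying yields $g^2/(2g-1)$ in the first case and $(g+1)/2$ in the second.

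The only ingredient requiring genuine care is the identification of which zeros of $\omega$ are fixed by $\iota$ and which are exchanged in each of the two hyperelliptic components; this is a classical topological fact about $\cH^{\mathit{hyp}}$ that must be invoked, but is not in itself part of the argument. Once it is granted, the rest is a direct bookkeeping exercise plus a single application of Theorem~\ref{theorem:sum:for:CP1}.
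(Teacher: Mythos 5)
Your proposal is correct and follows essentially the same route as the paper: the first identity is obtained by recognizing the hyperelliptic locus as the canonical double cover of a genus-zero stratum of quadratic differentials (so that the Hodge bundle is $H^1_-$ and Theorem~\ref{theorem:sum:for:CP1} applies verbatim), and the two numerical values come from identifying the underlying strata as $\cQ(2g-3,-1^{2g+1})$ and $\cQ(2g-2,-1^{2g+2})$. Your version simply makes explicit the bookkeeping (via~\eqref{eq:d:to:m} and the constraint $\sum d_j=-4$) that the paper's one-line proof leaves implicit.
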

\begin{proof}
The   first   statement   is   just  an  immediate  reformulation  of
Theorem~\ref{theorem:sum:for:CP1}.  To  prove  the  second part it is
sufficient   to   note  in  addition,  that  hyperelliptic  connected
components               $\cH^{\mathit{hyp}}(2g-2)$               and
$\cH^{\mathit{hyp}}(g-1,g-1)$   are  obtained  by  the  double  cover
construction  from  the strata of meromorphic quadratic differentials
$\cQ(2g-3,-1^{2g+1})$ and $\cQ(2g-2,-1^{2g+2})$ correspondingly.
\end{proof}

\begin{Corollary}
\label{cor:g2}
For  any  regular  $\SL$-invariant suborbifold $\cM_1$ in the stratum
$\cH_1(2)$  of  Abelian  differentials in genus two the Siegel--Veech
constant  $c_{area}(\cM_1)$  is equal to $10/(3\pi^2)$ and the second
Lyapunov exponent $\lambda_2$ is equal to $1/3$.

For  any  regular  $\SL$-invariant suborbifold $\cM_1$ in the stratum
$\cH_1(1,1)$  of Abelian differentials in genus two the Siegel--Veech
constant  $c_{area}(\cM_1)$ is equal to $15/(4\pi^2)$ and the second
Lyapunov exponent $\lambda_2$ is equal to $1/2$.
\end{Corollary}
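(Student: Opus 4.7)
The proof is essentially a two-step computation using results already established in the paper, so there is no serious obstacle — the corollary is a direct specialization. The crucial preliminary observation is that in genus $g=2$ every smooth complex curve is hyperelliptic, and hence both strata $\cH_1(2)$ and $\cH_1(1,1)$ coincide with their hyperelliptic components $\cH^{\mathit{hyp}}_1(2g-2)$ and $\cH^{\mathit{hyp}}_1(g-1,g-1)$ respectively. Consequently, any regular $\SL$-invariant suborbifold $\cM_1$ inside one of these strata is automatically contained in a hyperelliptic locus, and Corollary~\ref{cor:hyp:connected:comps} applies.

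First I would compute $\lambda_2$. Applying the hyperelliptic formulas from Corollary~\ref{cor:hyp:connected:comps} with $g=2$: for $\cM_1\subseteq\cH_1(2)$ one gets $1+\lambda_2=g^2/(2g-1)=4/3$, hence $\lambda_2=1/3$; for $\cM_1\subseteq\cH_1(1,1)$ one gets $1+\lambda_2=(g+1)/2=3/2$, hence $\lambda_2=1/2$. Note that $\lambda_1=1$ is built into Theorem~\ref{theorem:general:Abelian}, so these determine $\lambda_1+\lambda_2$.

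Next I would extract the Siegel--Veech constant by solving equation~\eqref{eq:general:sum:of:exponents:for:Abelian} of Theorem~\ref{theorem:general:Abelian} for $c_{\mathit{area}}(\cM_1)$. For $\cH_1(2)$ the combinatorial term is
\[
\frac{1}{12}\cdot\frac{2\cdot 4}{2+1}=\frac{2}{9},
\]
so $\frac{\pi^2}{3}c_{\mathit{area}}(\cM_1)=\frac{4}{3}-\frac{2}{9}=\frac{10}{9}$, giving $c_{\mathit{area}}(\cM_1)=10/(3\pi^2)$. For $\cH_1(1,1)$ the combinatorial term is
\[
\frac{1}{12}\cdot 2\cdot\frac{1\cdot 3}{1+1}=\frac{1}{4},
\]
so $\frac{\pi^2}{3}c_{\mathit{area}}(\cM_1)=\frac{3}{2}-\frac{1}{4}=\frac{5}{4}$, giving $c_{\mathit{area}}(\cM_1)=15/(4\pi^2)$. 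This matches the claimed values in both cases, completing the proof. No step is delicate: the whole corollary is extracted by substituting $g=2$ into the previously proved formulas and using that all genus $2$ Abelian differentials arise via the hyperelliptic double cover construction.
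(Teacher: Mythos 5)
Your proof is correct and follows essentially the same route as the paper: both rest on the observation that every genus-two curve is hyperelliptic, so the strata coincide with their hyperelliptic components and Corollary~\ref{cor:hyp:connected:comps} gives $1+\lambda_2=4/3$ resp.\ $3/2$. The only (inessential) difference is in extracting $c_{\mathit{area}}$: you back-solve formula~\eqref{eq:general:sum:of:exponents:for:Abelian} from the known exponent sum, whereas the paper computes it forward from Theorem~\ref{theorem:sum:for:CP1}(a) for the underlying genus-zero quadratic strata $\cQ(1,-1^5)$ and $\cQ(2,-1^6)$ and then doubles via Lemma~\ref{lm:SVconst:for:cover}; both give $10/(3\pi^2)$ and $15/(4\pi^2)$.
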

\begin{proof}
Any  Riemann  surface of genus two is hyperelliptic. The moduli space
of  Abelian  differentials  in  genus $2$ has two strata $\cH(2)$ and
$\cH(1,1)$.  Both  strata  are  connected  and  coincide  with  their
hyperelliptic  components. The value of the Siegel--Veech constant is
now      given      by      Theorem~\ref{theorem:sum:for:CP1}     and
Lemma~\ref{lm:SVconst:for:cover}   and   the   values   of  the  sums
$\lambda_1+\lambda_2=1+\lambda_2$       are       calculated       in
Corollary~\ref{cor:hyp:connected:comps}.
\end{proof}

\begin{NNRemark}
The  values  of  the  second  Lyapunov  exponent  in  genus  $2$ were
conjectured  by  the  authors in 1997 (see~\cite{Kontsevich:Zorich}).
This        conjecture       was       recently       proved       by
M.~Bainbridge~\cite{Bainbridge:Euler:char},
\cite{Bainbridge:L:shaped}   where  he  used  the  classification  of
ergodic  $\SL$-invariant  measures  in  the  moduli  space of Abelian
differentials in genus due to C.~McMullen~\cite{McMullen:genus2}.
\end{NNRemark}

\begin{NNRemark}
Note  that  although  the  sum of the Lyapunov exponents is constant,
individual        Lyapunov       exponents       $\lambda^-_j(\cM_1)$
in~\eqref{eq:sum:for:CP1}  might  vary from one invariant suborbifold
of  a  given stratum in genus zero to another, or, equivalently, from
one invariant suborbifold in a fixed hyperelliptic locus to another.
\end{NNRemark}

We  formulate  analogous  statements  for the hyperelliptic connected
components  in the strata of meromorphic quadratic differentials with
at most simple poles.

\begin{Corollary}
\label{cor:Q:hyp:connected:comps}
For   any   regular   $\PSL$-invariant   suborbifold   $\cM_1$  in  a
hyperelliptic  connected  component  of  any  stratum  of meromorphic
quadratic  differentials  with  at  most  simple  poles,  the  sum of
nonnegative  Lyapunov  exponents  $\lambda^-_1 +\lambda^-_2 + \dots +
\lambda^-_{\geff}$ has the following value:
$$
\begin{array}{cll}
\frac{g+1}{2}+\frac{g+1}{2(2g-2k-1)(2k+3)} & \text{for} &
\cQ_1^{\mathit{hyp}}\big(2(g-k)-3,2(g-k)-3,2k+1,2k+1\big)
        \\[-\halfbls] \\
&&\text{ where } k \geq
-1, \ g \geq 1, \ g-k \geq 2, \geff=g+1
        \\[-\halfbls] \\
\frac{2g+1}{4}+\frac{1}{8(g-k)-4} &   \text{for} &
\cQ_1^{\mathit{hyp}}\big(2(g-k)-3,2(g-k)-3,4k+2\big),
        \\[-\halfbls] \\
&&\text{ where } k \geq 0, \
g \geq 1, \ g-k \geq 1, \ \geff=g
        \\[-\halfbls] \\
\frac{g}{2}  &   \text{for} &
\cQ_1^{\mathit{hyp}}\big(4(g-k)-6,4k+2\big),
        \\[-\halfbls] \\
&&\text{ where } k \geq 0, \ g \geq 2, \
g-k \geq 2, \ \geff=g-1
\,.
\end{array}
$$
\end{Corollary}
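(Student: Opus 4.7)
The plan is to reduce each case to the genus-zero setting and invoke Theorem~\ref{theorem:sum:for:CP1}(b). Each hyperelliptic component $\cQ_1^{\mathit{hyp}}(\ldots)$ in a stratum of quadratic differentials is $\PSL$-equivariantly realized as a $2:1$ cover $\pi\colon S\to S/\tau=\CP$, where $\tau$ is the hyperelliptic involution; writing $q=\pi^\ast\tilde q$, the given $\cM_1$ descends to a regular $\PSL$-invariant suborbifold $\tilde\cM_1$ in some explicit genus-zero stratum $\tilde\cQ_1$. One identifies $\tilde\cQ$ in each of the three cases by tracking how upstairs singularities descend under $\pi$: a $\tau$-orbit of two singularities of order $d$ (necessarily $d$ odd) descends to a single downstairs singularity of order $d$; a singularity of order $d$ fixed by $\tau$ (necessarily $d$ even) descends to order $d/2-1$; and each of the remaining Weierstrass points of $S$ (there are $2g+2$ Weierstrass points in total) contributes a simple pole downstairs. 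This yields
\[
\tilde\cQ \,=\,
\begin{cases}
\cQ\bigl(2(g-k)-3,\,2k+1,\,-1^{2g+2}\bigr) & \text{in Case 1,}\\
\cQ\bigl(2(g-k)-3,\,2k,\,-1^{2g+1}\bigr)   & \text{in Case 2,}\\
\cQ\bigl(2(g-k)-4,\,2k,\,-1^{2g}\bigr)     & \text{in Case 3.}
\end{cases}
\]

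Next I would match the anti-invariant Hodge bundles. Let $\hat{\tilde S}\to\CP$ be the canonical double cover of $\CP$ for $\tilde q$, and form the fiber product $\hat S := S\times_{\CP}\hat{\tilde S}$. A Riemann--Hurwitz computation in each case shows that $\hat S$ is the canonical double cover of $S$ for $q$, and that $g_{\mathit{eff}}(\hat S/S)$ equals $\tilde g_{\mathit{eff}}(\hat{\tilde S}/\CP)$---namely $g+1$, $g$, and $g-1$ respectively, in agreement with the effective genera stated in the Corollary. The surface $\hat S$ carries two commuting involutions, $\sigma$ (from $\hat S\to S$) and the lift $\hat\tau$ of $\tau$ (from the second fiber-product projection $\hat S\to\hat{\tilde S}$). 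Decomposing $H^1(\hat S;\R{})$ into $\ZZ\times\ZZ$-eigenspaces and using that $H^1(\CP)=0$ together with the fact that $H^1(S;\R{})$ is entirely $\tau$-anti-invariant (since $S/\tau=\CP$), the $\sigma$-anti-invariant subspace $H^1_-(\hat S)$ is identified with $H^1(\hat{\tilde S};\R{})=H^1_-(\hat{\tilde S})$ via the projection $\hat S\to\hat{\tilde S}$.

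The remaining step is almost routine: the identification is holomorphic, preserves the integral lattice, and is equivariant for the $\PSL$-actions on both sides, so it intertwines the Hodge norms, Gauss--Manin connections, and Teichm\"uller flows. Consequently the Lyapunov spectra of $H^1_-$ over $\cM_1$ and over $\tilde\cM_1$ coincide, and
\[
\lambda_1^-(\cM_1)+\cdots+\lambda_{g_{\mathit{eff}}}^-(\cM_1)
\,=\,\frac{1}{4}\sum_{\tilde d_j\text{ odd}}\frac{1}{\tilde d_j+2}
\]
by Theorem~\ref{theorem:sum:for:CP1}(b) applied to $\tilde\cM_1$. Elementary simplification of the right-hand side in each of the three cases recovers the three claimed formulas.

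The main obstacle is the middle step: rigorously establishing the fiber-product identification of $\hat S$ with the canonical double cover of $(S,q)$, and checking that the resulting isomorphism $H^1_-(\hat S)\cong H^1_-(\hat{\tilde S})$ is an isometry of $\PSL$-equivariant Hodge bundles with Gauss--Manin connection. Once this is in place, the Corollary is a direct arithmetic consequence of the genus-zero formula of Theorem~\ref{theorem:sum:for:CP1}.
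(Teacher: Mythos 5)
Your proposal follows essentially the same route as the paper: pass to the quotient by the hyperelliptic involution, identify the canonical double cover of the hyperelliptic surface with that of the genus-zero quotient via a degree-two map (the paper gets this map from Lemma~\ref{lm:cover:factors}, you from the fiber product — these agree), verify that the induced map on $H^1_-$ is an isomorphism because the two effective genera coincide for exactly these three series, and then apply the genus-zero formula of Theorem~\ref{theorem:sum:for:CP1}. The only caveat is that your eigenspace bookkeeping alone does not force $H^1(\hat S)^{\sigma=-1,\hat\tau=-1}=0$; the real content is the equality of effective genera, which you do check, so the argument is complete.
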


We   shall   need  the  following  general  Lemma  in  the  proof  of
Corollary~\ref{cor:Q:hyp:connected:comps}.

\begin{Lemma}
\label{lm:cover:factors}    Consider    a    meromorphic    quadratic
differential  $q$ with at most simple poles on a Riemann surface $C$.
We assume that $q$ is not a global square of an Abelian differential.
Suppose that for some finite (possibly  ramified) cover
$$
P:\tilde C\to C
$$
the  induced  quadratic  differential  $P^\ast  q$ on $\tilde C$ is a
global  square  of  an  Abelian  differential.  Then  the  cover  $P$
quotients through the canonical double cover $p:\hat C\to C$
$$
\begin{array}{rcl}
\tilde C\!\!\!&\xrightarrow{\ P\ }&\!\!\!C\\
&\searrow\quad\ \nearrow &\\
&\hat C&
\end{array}
$$
constructed in section~\eqref{ss:Sum:of:the:Lyapunov:exponents}.
\end{Lemma}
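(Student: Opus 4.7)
The plan is to characterize $\hat{C}$ as the \emph{minimal} cover of $C$ on which $q$ admits a holomorphic square root, and then to construct the factoring map $\pi\colon\tilde{C}\to\hat{C}$ by the lifting criterion of covering space theory on the complement of the ramification locus, extending holomorphically across it.

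First I would set $\Sigma_{\mathit{ram}}\subset C$ to be the set of odd-order zeroes and simple poles of $q$ — equivalently, the points at which $q$ admits no local holomorphic square root — and put $C^{*}:=C\setminus\Sigma_{\mathit{ram}}$. On $C^{*}$ the obstruction to the existence of a single-valued global branch of $\sqrt{q}$ is a class $\varphi\in H^{1}(C^{*};\Z/2\Z)=\mathrm{Hom}(\pi_{1}(C^{*}),\Z/2\Z)$, and this class is nonzero precisely because $q$ is not a global square on $C$. The canonical double cover $p\colon\hat{C}\to C$ is then characterized as the unique double cover branched exactly over $\Sigma_{\mathit{ram}}$ whose restriction over $C^{*}$ is classified by $\ker\varphi\subseteq\pi_{1}(C^{*})$.

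Next, set $\tilde{C}^{*}:=P^{-1}(C^{*})$. The hypothesis $P^{*}q=\tilde{\omega}^{2}$ with $\tilde{\omega}$ holomorphic on $\tilde{C}$ implies in particular that $\sqrt{P^{*}q}$ is single-valued on $\tilde{C}^{*}$, i.e.\ the pull-back class $P^{*}\varphi\in H^{1}(\tilde{C}^{*};\Z/2\Z)$ vanishes. Equivalently $P_{*}(\pi_{1}(\tilde{C}^{*}))\subseteq\ker\varphi$, and the lifting criterion for covering maps produces a holomorphic map $\pi^{*}\colon\tilde{C}^{*}\to\hat{C}^{*}$ satisfying $p\circ\pi^{*}=P|_{\tilde{C}^{*}}$. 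Since $\tilde{C}\setminus\tilde{C}^{*}$ is a finite set of isolated points and $\hat{C}$ is a compact Riemann surface, Riemann's removable singularity theorem furnishes a unique holomorphic extension $\pi\colon\tilde{C}\to\hat{C}$; the identity $p\circ\pi=P$ then holds on all of $\tilde{C}$ by continuity.

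The main obstacle is pinning down the compatibility in the second step: one must verify that the $\Z/2\Z$-obstruction class on $C^{*}$ whose kernel classifies $\hat{C}^{*}$ is the \emph{same} class whose vanishing on $\tilde{C}^{*}$ is equivalent to $P^{*}q$ being a global square. A minor consistency check is that the ramification index $e$ of $P$ at each preimage of a point $x\in\Sigma_{\mathit{ram}}$ is automatically even — a local computation $z=w^{e}$, $q\sim z^{d}(dz)^{2}$ yields $P^{*}q\sim w^{e(d+2)-2}(dw)^{2}$, which can be the square of a holomorphic form near $w=0$ only when $e(d+2)$ is even, forcing $e$ even whenever $d$ is odd — and this is exactly what is needed so that the branching of $P$ at such a point factors through the order-two ramification of $p$ over $\Sigma_{\mathit{ram}}$.
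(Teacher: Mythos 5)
Your proposal is correct and follows essentially the same route as the paper's proof: puncture $C$ at the odd-order zeroes and simple poles, identify the canonical double cover over the punctured surface with the kernel of the $\Z/2\Z$-holonomy of the flat metric (your obstruction class $\varphi$ is exactly that holonomy representation), deduce $P_\ast\pi_1(\tilde C^\ast)\subseteq\ker\varphi$ from the hypothesis that $P^\ast q$ is a global square, and invoke the lifting criterion, finishing with the observation that $P$ must have even ramification indices over those points. The only cosmetic differences are that you use the general lifting criterion rather than first puncturing away the remaining ramification points of $P$ to make it an honest covering, and that you extend across the punctures by the removable singularity theorem rather than by the branched-cover factorization argument.
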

\begin{proof}
Let  us  puncture  $C$  at all zeroes of odd orders and at all simple
poles  of  $q$;  let  us  puncture  $\tilde  C$  and  $\hat C$ at all
preimages  of  punctures on $C$. If necessary, puncture $\tilde C$ at
all  remaining ramification points. The covers $P$ and $p$ restricted
to the resulting punctured surfaces become nonramified.

A  non ramified cover $f:X\to Z$ is defined by the image of the group
$f_\ast\pi_1(X)\subset\pi_1(Z)$.  A  cover  $f$  quotients  through a
cover  $g:Y\to  Z$  if  and only if $f_\ast\pi_1(X)$ is a subgroup of
$g_\ast\pi_1(Y)$.

Consider the flat metric defined by the quadratic differential $q$ on
$C$  punctured  at the conical singularities. Note that by definition
of  the  cover  $p:\hat  C\to  C$, the subgroup $p_\ast\pi_1(\hat C)$
coincides   with   the   kernel   of   the   corresponding   holonomy
representation $\pi_1(C)\to \Z/2\Z$.

The quadratic differential $P^\ast q$ induced on the covering surface
$\tilde  S$ by a finite cover $P:\tilde C\to C$ is a global square of
an  Abelian  differential  if and only if the holonomy of the induced
flat   metric   is   trivial,   or,  equivalently,  if  and  only  if
$P_\ast\pi_1\tilde C$ is in the kernel of the holonomy representation
$\pi_1(C)\to  \Z/2\Z$.  Thus,  the  Lemma  is  proved  for  punctured
surfaces.

It  remains  to  note  that  the ramification points of the canonical
double  cover $p:\hat C\to C$ are exactly those, where $q$ has zeroes
of  odd  degrees  and simple poles. Thus, the cover $P:\tilde C\to C$
necessarily  has  ramifications  of  even orders at all these points,
which completes the proof of the Lemma.
\end{proof}

\begin{proof}[Proof of Corollary~\ref{cor:Q:hyp:connected:comps}]
Let  $\tilde  S$  be a surface in a hyperelliptic connected component
$\cQ^{\mathit{hyp}}(m_1,\dots,m_k)$;  let  $S$ be the underlying flat
surface  in  the  corresponding  stratum  $\cQ(d_1,\dots,d_\noz)$  of
meromorphic  quadratic  differentials  with  at  most simple poles on
$\CP$.   Denote   by   $\widehat{\tilde  S}$  and  by  $\hat  S$  the
corresponding  flat  surfaces  obtained  by  the  canonical  ramified
covering         construction         described         in         in
section~\eqref{ss:Sum:of:the:Lyapunov:exponents}.

By Lemma~\ref{lm:cover:factors} the diagram
$$
\begin{CD}
\hat S  @.  \widehat{\tilde S}\\
@VVV         @VVV    \\
S @<f<< \tilde S
\end{CD}
$$
can be completed to a commutative diagram
\begin{equation}
\label{eq:covers}
\begin{CD}
\hat S  @<\hat f<<  \widehat{\tilde S}\\
@VVV         @VVV    \\
S @<f<< \tilde S\,.
\end{CD}
\end{equation}
By  construction  $\hat  f$  intertwines  the  natural involutions on
$\widehat{\tilde S}$ and on $\hat S$. Hence, we get an induced linear
map $\hat f^\ast: H^1_-(\hat S)\to H^1_-(\widehat{\tilde S})$.
Note  that  since  $S\simeq\CP$, one has $H^1_-(\hat S)=H^1(\hat S)$.
Note  also  that  a  holomorphic  differential $\hat f^\ast\omega$ in
$H^{1,0}(\widehat{\tilde  S})$  induced  from  a  nonzero holomorphic
differential $\omega\in H^{1,0}(\hat S)$ by the double cover $\hat f$
is  obviously  nonzero.  This  implies  that $\hat f^\ast: H^1_-(\hat
S)\to H^1_-(\widehat{\tilde S})$ is a monomorphism.

An  elementary  dimension  count  shows  that for the three series of
hyperelliptic             components             listed            in
Corollary~\ref{cor:Q:hyp:connected:comps},   the   effective   genera
associated  to the ``orienting'' double
covers $\hat S\to S$ and to $\widehat{\tilde S}\to
\tilde  S$  coincide.  Hence,  for these three series of
hyperelliptic components the map
$\hat  f^\ast$  is,  actually,  an isomorphism. This implies that the
Lyapunov                                                     spectrum
$\lambda^-_1>\lambda^-_2\ge\dots\ge\lambda^-_{\geff}$             for
$\cQ^{\mathit{hyp}}(m_1,\dots,m_k)$  coincides with the corresponding
spectrum for $\cQ(d_1,\dots,d_\noz)$.

The  remaining part of the proof is completely analogous to the proof
of  Corollary~\ref{cor:hyp:connected:comps}. The relation between the
orders of singularities of $\cQ^{\mathit{hyp}}(m_1,\dots,m_k)$ and of
the   underlying   stratum   $\cQ_1(d_1,\dots,d_\noz)$  is  described
in~\cite{Lanneau:hyperell}.
\end{proof}

Let  us  use  Corollary~\ref{cor:Q:hyp:connected:comps}  to study the
Lyapunov  exponents  of  the  vector  bundle  $H^1_-$  over invariant
suborbifolds  in the strata of holomorphic quadratic differentials in
small genera. We consider only those strata, $\cQ(d_1,\dots,d_\noz)$,
for  which  the quadratic differentials \textit{do not} correspond to
global squares of Abelian differentials.

Recall  that any holomorphic quadratic differential in genus one is a
global  square  of  an  Abelian  differential, so $\cQ(0)=\emptyset$.
Recall also, that in genus two the strata $\cQ(4)$ and $\cQ(3,1)$ are
empty,  see~\cite{Masur:Smillie}. The stratum $\cQ(2,2)$ in genus two
has  effective genus one, so $\lambda^-_1=1$ and there are no further
positive Lyapunov exponents of $H^1_-$.

\begin{Corollary}
\label{cor:Q:g2}
For  any  regular $\PSL$-invariant suborbifold $\cM_1$ in the stratum
$\cQ_1(2,1,1)$  of  holomorphic  quadratic differentials in genus two
the second Lyapunov exponent $\lambda^-_2$ is equal to $1/3$.

For  any  regular $\PSL$-invariant suborbifold $\cM_1$ in the stratum
$\cQ_1(1,1,1,1)$  of holomorphic quadratic differentials in genus two
the  sum  of Lyapunov exponents $\lambda^-_2+\lambda^-_3$ is equal to
$2/3$.
\end{Corollary}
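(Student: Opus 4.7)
The plan is to reduce both claims to Corollary~\ref{cor:Q:hyp:connected:comps} by observing that in genus two every holomorphic quadratic differential is automatically ``hyperelliptic'' in Lanneau's sense. Specifically, every genus-two Riemann surface $S$ carries a unique hyperelliptic involution $\tau$, which acts on $H^{0}(S, K_S)$ as $-\Id$, and therefore on $H^{0}(S, K_S^{\otimes 2})$ as $+\Id$. Consequently $\tau^{\ast} q = q$ for every holomorphic quadratic differential $q$, so the entire strata $\cQ_1(2,1,1)$ and $\cQ_1(1,1,1,1)$ coincide with the corresponding hyperelliptic loci in the sense defined before Corollary~\ref{cor:Q:hyp:connected:comps}. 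In particular, any regular $\PSL$-invariant suborbifold $\cM_1$ of either stratum lies inside the relevant hyperelliptic component, so the sums of nonnegative Lyapunov exponents listed in Corollary~\ref{cor:Q:hyp:connected:comps} apply verbatim to it.

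Next I would match each stratum with the correct family in Corollary~\ref{cor:Q:hyp:connected:comps}. The stratum $\cQ_1(2,1,1)$ corresponds to $\cQ_1^{\mathit{hyp}}\bigl(2(g{-}k){-}3,\,2(g{-}k){-}3,\,4k{+}2\bigr)$ with $g=2$, $k=0$, giving singularities $(1,1,2)$ and $\geff = g = 2$. The stratum $\cQ_1(1,1,1,1)$ corresponds to $\cQ_1^{\mathit{hyp}}\bigl(2(g{-}k){-}3,\,2(g{-}k){-}3,\,2k{+}1,\,2k{+}1\bigr)$ with $g=2$, $k=0$, giving singularities $(1,1,1,1)$ and $\geff = g+1 = 3$. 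With these identifications in hand the rest is bookkeeping: plug into the closed-form sums and subtract off $\lambda^-_1=1$ (guaranteed by part (b) of Theorem~\ref{theorem:general:quadratic}).

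Carrying out the arithmetic, for $\cQ_1(2,1,1)$ the formula gives
\[
\lambda^-_1+\lambda^-_2 \;=\; \frac{2g+1}{4}+\frac{1}{8(g-k)-4}\;=\;\frac{5}{4}+\frac{1}{12}\;=\;\frac{4}{3},
\]
so $\lambda^-_2 = 1/3$. For $\cQ_1(1,1,1,1)$ the formula gives
\[
\lambda^-_1+\lambda^-_2+\lambda^-_3 \;=\; \frac{g+1}{2}+\frac{g+1}{2(2g-2k-1)(2k+3)}\;=\;\frac{3}{2}+\frac{1}{6}\;=\;\frac{5}{3},
\]
so $\lambda^-_2+\lambda^-_3 = 2/3$, matching the two assertions of the corollary.

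The only nontrivial ingredient is the hyperellipticity step in the first paragraph; everything else is a direct substitution into an already-proved formula. I expect the main potential obstacle to be a notational one: ensuring that the ``hyperelliptic component'' appearing in Corollary~\ref{cor:Q:hyp:connected:comps} is interpreted in a way consistent with Lanneau's classification, so that the identification of $\cQ_1(2,1,1)$ and $\cQ_1(1,1,1,1)$ with the claimed hyperelliptic families is unambiguous. Once that is settled, no further dynamical or geometric input is required beyond the $\tau^{\ast}q=q$ observation.
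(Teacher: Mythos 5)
Your proposal is correct and follows essentially the same route as the paper: the paper's proof likewise notes that each stratum coincides with its hyperelliptic connected component and makes exactly the identifications $\cQ(2,1,1)=\cQ^{\mathit{hyp}}\big(2(2-0)-3,2(2-0)-3,4\cdot 0+2\big)$ and $\cQ(1,1,1,1)=\cQ^{\mathit{hyp}}\big(2(2-0)-3,2(2-0)-3,2\cdot 0+1,2\cdot 0+1\big)$ before invoking Corollary~\ref{cor:Q:hyp:connected:comps}. Your extra remark that the hyperelliptic involution acts as $+\Id$ on $H^0(S,K_S^{\otimes 2})$, so that $\tau^\ast q=q$ for every holomorphic quadratic differential in genus two, is a correct (and slightly more explicit) justification of the step the paper states without comment, and your arithmetic matches.
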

\begin{proof}
Each  stratum coincides with its hyperelliptic connected component, so
we are in the situation of Corollary~\ref{cor:Q:hyp:connected:comps}.
Namely,
\begin{align*}
\cQ(2,1,1)&=\cQ^{\mathit{hyp}}\big(2(2-0)-3,2(2-0)-3,4\cdot 0+2\big)\\
\cQ(1,1,1,1)&=\cQ^{\mathit{hyp}}\big(2(2-0)-3,2(2-0)-3,2\cdot 0+1,2\cdot 0+1\big)\,.
\end{align*}
\end{proof}

In  analogy with Corollary~\ref{cor:hyp:connected:comps} we can study
the  sum  of  the  top  $\geff$ exponents $\lambda^-_i$ for a general
$\PSL$-invariant  suborbifold  in  a hyperelliptic locus of a general
stratum  of  meromorphic  quadratic differentials with at most simple
poles.  However,  in  the  most general situation we only get a lower
bound for this sum.

\begin{Corollary}
\label{cor:hyp:connected:comps:quadratic}
Suppose  that $\tilde\cM_1$ is a regular $\PSL$-invariant suborbifold
in  a  hyperelliptic  locus  of some stratum of meromorphic quadratic
differentials with at most simple poles. Denote by $\geff(\tilde\cM)$
the  effective genus of $\tilde\cM_1$ and by $(d_1,\dots,d_\noz)$ the
orders  of singularities of the underlying quadratic differentials in
the  associated  $\PSL$-invariant  suborbifold $\cM_1$ in the stratum
$\cQ_1(d_1,\dots,d_\noz)$ in genus $0$.

The  top  $\geff(\tilde\cM_1)$ Lyapunov exponents of the Hodge bundle
$H^1_-$  over  $\tilde\cM_1$ along the Teichm\"uller flow satisfy the
following relation:
\begin{equation}
\label{eq:sum:lambda:minus:hyp}
\lambda_1^-(\tilde\cM_1) + \dots + \lambda^-_{\geff(\tilde\cM)}(\tilde\cM_1)
\ \ge \
\cfrac{1}{4}\,\cdot\,\sum_{\substack{j \text{ such that}\\
d_j \text{ is odd}}}
\cfrac{1}{d_j+2}\,,
\end{equation}
where, as usual, we associate the order $d_i=-1$ to simple poles.

If
$$
2\geff(\tilde\cM_1)-2=\text{number of odd entries in }(d_1,\dots,d_\noz)\,,
$$
then the nonstrict inequality~\eqref{eq:sum:lambda:minus:hyp} becomes
an equality.
\end{Corollary}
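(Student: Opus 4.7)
The strategy is to realize the Hodge bundle over the genus-zero quotient $\cM_1$ as a flow-invariant symplectic subbundle of the Hodge bundle over $\tilde\cM_1$, and to deduce the inequality by monotonicity of Lyapunov sums combined with Theorem~\ref{theorem:sum:for:CP1}.

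First I would apply Lemma~\ref{lm:cover:factors} to the hyperelliptic projection $f:\tilde S\to S=\CP$ to obtain the commutative diagram~\eqref{eq:covers}, in which $\hat f:\widehat{\tilde S}\to\hat S$ is a degree-two cover intertwining the canonical involutions of both double covers. Exactly as in the proof of Corollary~\ref{cor:Q:hyp:connected:comps}, the pullback
$$
\hat f^\ast: H^1_-(\hat S)\hookrightarrow H^1_-(\widehat{\tilde S})
$$
is an injective bundle map that commutes with the Teichm\"uller flow and the Gauss--Manin connection, and rescales the Hodge norm by the constant factor $\sqrt{\deg(\hat f)}$. Its image is therefore a flow-invariant symplectic subbundle of $H^1_-(\widehat{\tilde S})$ whose Lyapunov spectrum coincides with that of $H^1_-(\hat S)$ over the associated genus-zero $\PSL$-invariant suborbifold $\cM_1\subset\cQ_1(d_1,\dots,d_\noz)$. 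In particular, Theorem~\ref{theorem:sum:for:CP1}(b) applied to $\cM_1$ gives that the nonnegative Lyapunov exponents of the subbundle sum to $\tfrac{1}{4}\sum_{d_j\text{ odd}}\tfrac{1}{d_j+2}$.

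Next, the top $\geff(\tilde\cM_1)$ Lyapunov exponents of the ambient bundle $H^1_-(\widehat{\tilde S})$ are exactly its nonnegative exponents, and because the nonnegative Lyapunov spectrum of a flow-invariant symplectic subbundle is a sub-multiset of that of the ambient bundle, they contain, with multiplicity, the $\geff(\cM_1)$ nonnegative exponents of the subbundle together with $\geff(\tilde\cM_1)-\geff(\cM_1)$ additional nonnegative exponents. Summing yields the desired inequality~\eqref{eq:sum:lambda:minus:hyp}.

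For the equality case I would analyze the symplectic complement of $\hat f^\ast H^1_-(\hat S)$ in $H^1_-(\widehat{\tilde S})$, which is naturally identified with the joint $(-,-)$-eigenspace of the two commuting involutions $\tau$ (sheet swap of $\widehat{\tilde S}\to\tilde S$) and $\tilde\iota$ (lift to $\widehat{\tilde S}$ of the hyperelliptic involution); a dimension count combining~\eqref{eq:g:hat} for both $\hat S$ and $\widehat{\tilde S}$ with Riemann--Hurwitz for $\hat f$ shows that the stated condition $2\geff(\tilde\cM_1)-2=N_{\text{odd}}$ is equivalent to this complement being itself a pullback from the auxiliary double cover $\widehat{\tilde S}/(\tau\tilde\iota)\to\CP$, so that an application of Theorem~\ref{theorem:sum:for:CP1} to this second genus-zero setup (whose underlying stratum has no odd-order singularities by the dimension constraint) forces the extra exponents to vanish. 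The main obstacle is precisely this last step: identifying the symplectic complement as arising from a second genus-zero $\PSL$-invariant suborbifold, and verifying by a careful bookkeeping of the $\Z/2\times\Z/2$-action on $\widehat{\tilde S}$ that the numerical condition exactly guarantees the vanishing of the auxiliary Siegel--Veech contribution.
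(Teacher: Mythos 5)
Your argument for the inequality \eqref{eq:sum:lambda:minus:hyp} is essentially the paper's: you build the monomorphism $\hat f^\ast: H^1_-(\hat S)\to H^1_-(\widehat{\tilde S})$ from diagram \eqref{eq:covers}, observe that its image is a flow-invariant subbundle with symmetric Lyapunov spectrum equal to that of $H^1_-(\cM_1)$, and then bound the sum of the top $\geff(\tilde\cM_1)$ exponents from below by the genus-zero sum of Theorem~\ref{theorem:sum:for:CP1}. That part is fine.

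The equality case is where your proposal diverges from the paper and where it has a genuine gap. The paper's point is that the numerical condition on $\geff(\tilde\cM_1)$ is nothing other than the condition $\geff(\tilde\cM_1)=\geff(\cM_1)$ (the right-hand side being computed from \eqref{eq:d:to:m}, exactly as in the first part of Corollary~\ref{cor:Q:hyp:connected:comps}); under that condition $\dim H^1_-(\widehat{\tilde S})=2\geff(\tilde\cM_1)=2\geff(\cM_1)$ equals the rank of the image of $\hat f^\ast$, so the monomorphism is an isomorphism, $H^1_-(\tilde\cM_1)=\hat f^\ast H^1_-(\cM_1)$, and the inequality is an equality with nothing left to prove. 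There is no symplectic complement to analyze. Your proposed route instead assumes the complement can be nontrivial, tries to realize it as a pullback from a second double cover $\widehat{\tilde S}/(\tau\tilde\iota)\to\CP$, and wants to kill its exponents by a second application of Theorem~\ref{theorem:sum:for:CP1}. Besides being unnecessary, this step is not carried out (you yourself flag it as ``the main obstacle''), and the sketch does not hold together: a genus-zero stratum ``with no odd-order singularities'' would consist of global squares (or have non-positive effective genus), so it cannot serve as the auxiliary $\PSL$-invariant locus you need, and in any case a vanishing right-hand side in \eqref{eq:sum:for:CP1} would only bound a sum of nonnegative numbers by zero after one has already identified the complement's spectrum with that of a genuine invariant suborbifold in genus zero --- which is precisely what is not established. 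You should replace the entire equality discussion by the one-line dimension count showing that the hypothesis forces $\hat f^\ast$ to be onto.
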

\begin{proof}
For  a  general  ramified  double cover $\tilde S\to S\simeq\CP$ from
diagram~\eqref{eq:covers}  the effective genera $\geff(\tilde S)$ and
$\geff(S)$  associated  to the ``orienting'' double covers $\hat S\to
S$   and   $\widehat{\tilde  S}\to  \tilde  S$  might  be  different,
$\geff(\tilde  S)\ge \geff(S)$. However, as we have seen in the proof
of  Corollary~\ref{cor:Q:hyp:connected:comps},  the induced map $\hat
f^\ast:  H^1_-(\hat  S)\to  H^1_-(\widehat{\tilde  S})$  is  still  a
monomorphism,   and  $f^\ast$  is  an  isomorphism  if  and  only  if
$\geff(\tilde S)=\geff(S)$.

This implies that when we have a regular $\PSL$-invariant suborbifold
$\cM_1$ in  some  stratum  $\cQ_1(d_1,\dots,d_\noz)$  of  meromorphic
quadratic  differentials  with  at most simple poles on $\CP$, and an
induced  regular  $\PSL$-invariant  suborbifold  $\tilde\cM_1$ in the
associated    hyperelliptic   locus   of   the   associated   stratum
$\cQ_1(m_1,\dots,m_k)$,  the  Hodge  bundle  $H^1_-(\tilde\cM)$  over
$\tilde\cM$ contains a $\PSL$-invariant subbundle $f^\ast H^1_-(\cM)$
of  dimension  $2\geff(\cM)$  with  symmetric  spectrum  of  Lyapunov
exponents  along  the  Teichm\"uller  flow. Here by $f$ we denote the
natural  projection $f:\tilde\cM\to\cM$. Thus, the sum of nonnegative
Lyapunov exponents of the bundle $H^1_-(\tilde\cM_1)$ is greater than
or  equal  to  the  sum  of  nonnegative  Lyapunov  exponents  of the
subbundle  $f^\ast H^1_-(\cM)$. Since $f^\ast$ is a monomorphism, the
Lyapunov  spectrum  of  $f^\ast  H^1_-(\cM_1)$  and of $H^1_-(\cM_1)$
coincide,  and  the  latter  sum  is  equal to the sum of nonnegative
Lyapunov    exponents    of    $H_1^-(\cM_1)$,    which    is   given
by~\eqref{eq:sum:for:CP1}:
$$
\lambda^-_1(\cM_1) + \dots + \lambda^-_{\geff(\cM)}(\cM_1)
\ = \
\cfrac{1}{4}\,\cdot\,\sum_{\substack{j \text{ such that}\\
d_j \text{ is odd}}}
\cfrac{1}{d_j+2}\,.
$$

When         $\geff(\tilde\cM_1)=\geff(\cM_1)$         we         get
$H^1_-(\tilde\cM)=f^\ast      H^1_-(\cM)$     and     a     nonstrict
inequality~\eqref{eq:sum:lambda:minus:hyp}  becomes  an  equality. It
remains to apply~\eqref{eq:d:to:m} to compute the the effective genus
$\geff(\cM_1)$:
$$
2\geff(\cM)-2=2\geff(\cQ(d_1,\dots,d_\noz))-2=
\text{number of odd entries in }(d_1,\dots,d_\noz)
$$
which            completes           the           proof           of
Corollary~\ref{cor:hyp:connected:comps:quadratic}.
\end{proof}
\subsection{Positivity of several leading exponents}

\begin{Corollary}
\label{cor:g:ge:7}
For  any  regular  $\SL$-invariant  suborbifold  in in any stratum of
Abelian  differentials  in  genus  $g\ge  7$  the  Lyapunov exponents
$\lambda_2\ge\dots\ge\lambda_k$    are   strictly   positive,   where
$k=\left[\frac{(g-1)g}{6g-3}\right]+1$.

For  any regular $\SL$-invariant suborbifold in the principal stratum
$\cH_1(1\dots  1)$  of  Abelian  differentials  in genus $g\ge 5$ the
Lyapunov   exponents   $\lambda_2\ge\dots\ge\lambda_k$  are  strictly
positive, where $k=\left[\frac{g-1}{4}\right]+1$.
\end{Corollary}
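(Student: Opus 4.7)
The approach is to combine the sum-of-exponents identity~\eqref{eq:general:sum:of:exponents:for:Abelian} with the elementary observation that, for the Hodge bundle, symplecticity together with $\lambda_1=1$ forces $1=\lambda_1\ge\lambda_2\ge\dots\ge\lambda_g\ge 0$ with every $\lambda_i\le 1$. Therefore a vanishing $\lambda_k$ would propagate, giving $\lambda_k=\lambda_{k+1}=\dots=\lambda_g=0$ and hence $\lambda_1+\dots+\lambda_g\le k-1$; contrapositively, any strict lower bound $\lambda_1+\dots+\lambda_g>k-1$ already guarantees $\lambda_k>0$.

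By Theorem~\ref{theorem:general:Abelian} together with $c_{\mathit{area}}(\cM_1)\ge 0$ (in fact strictly positive on every regular suborbifold, since such a suborbifold always carries flat surfaces with cylinders), it suffices to bound from below the quantity
\begin{equation*}
X(m_1,\dots,m_\noz)\ :=\ \frac{1}{12}\sum_{i=1}^\noz\frac{m_i(m_i+2)}{m_i+1}
\ =\ \frac{1}{12}\sum_{i=1}^\noz\left((m_i+1)-\frac{1}{m_i+1}\right)
\end{equation*}
uniformly over the stratum data $(m_1,\dots,m_\noz)$, $\sum m_i=2g-2$, $m_i\ge 1$. The rewritten summand is strictly concave in $m$, so for each fixed $\noz$ the sum is Schur-concave and its minimum is attained at the most spread-out integer vertex $(2g-1-\noz,1,\dots,1)$; a short calculation gives the vertex value $\tfrac{1}{12}\bigl(2g-\tfrac{3}{2}+\tfrac{\noz}{2}-\tfrac{1}{2g-\noz}\bigr)$, which is increasing in $\noz$, so the global minimum over all strata of genus $g$ is attained at $\noz=1$, i.e.\ on the minimal stratum $\cH(2g-2)$, and equals
\begin{equation*}
X_{\min}\ =\ \frac{1}{12}\left(2g-1-\frac{1}{2g-1}\right)\ =\ \frac{g(g-1)}{6g-3}.
\end{equation*}

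Assembling these steps yields $\lambda_1+\dots+\lambda_g>g(g-1)/(6g-3)$ uniformly on every regular $\SL$-invariant suborbifold in genus $g$, and the contrapositive gives $\lambda_k>0$ for $k=\left[g(g-1)/(6g-3)\right]+1$; the hypothesis $g\ge 7$ is exactly the range in which this $k$ exceeds $1$. For the principal stratum $\cH_1(1,\dots,1)$ one has the sharper direct value $X=\tfrac{(2g-2)\cdot 3/2}{12}=\tfrac{g-1}{4}$, and the same contrapositive produces $\lambda_k>0$ for $k=\left[(g-1)/4\right]+1$, nontrivial as soon as $g\ge 5$. The one delicate point is the edge case in which $X_{\min}$ or $(g-1)/4$ happens to be an integer (for example the principal stratum in genus $5$, where $(g-1)/4=1$): there one genuinely needs the strict positivity of $c_{\mathit{area}}(\cM_1)$ in order to upgrade the equality~\eqref{eq:general:sum:of:exponents:for:Abelian} to the strict inequality that crosses the integer boundary.
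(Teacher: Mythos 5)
Your proof is correct and follows essentially the same route as the paper: combine formula~\eqref{eq:general:sum:of:exponents:for:Abelian} with $c_{\mathit{area}}>0$ and the bounds $0\le\lambda_i\le\lambda_1=1$, then minimize the combinatorial term over strata, finding the minimum $\frac{g(g-1)}{6g-3}$ on $\cH(2g-2)$. The only differences are cosmetic refinements: you supply the Schur-concavity justification for the minimization step that the paper merely asserts, and you isolate the integer-boundary edge case (e.g.\ $(g-1)/4=1$ for $g=5$) where strict positivity of $c_{\mathit{area}}$ is genuinely needed, a point the paper covers by invoking $c_{\mathit{area}}>0$ together with Forni's strict inequality $\lambda_1>\lambda_2$.
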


Currently we do not have much information on how sharp the above
estimates are. The paper~\cite{Matheus:appendix} contains an explicit
computation showing that certain infinite family of arithmetic
Teichm\"uller curves related to cyclic covers studied
in~\cite{Matheus:Yoccoz} has approximately $g/3$ positive Lyapunov
exponents, where the genus $g$ of the corresponding square-tiled
surfaces tends to infinity. Another family of $\SL$-invariant
submanifolds (also related to cyclic covers) seem to have
approximately $g/4$ positive Lyapunov exponents, where the genus $g$
tends to infinity, see~\cite{Avila:Matheus:Yoccoz:progress}. Finally,
numerical experiments of C.~Matheus seem to indicate that for certain
rather special square-tiled surfaces constructed
in~\cite{Matheus:Yoccoz:Zmiaikou} the contribution of the
Siegel-Veech constant to the
formula~\eqref{eq:general:sum:of:exponents:for:Abelian} for the sum
of the Lyapunov exponents for the corresponding arithmetic
Teichm\"uller curve might be very small compared to the combinatorial
term.

\begin{proof}
Consider   the  formula~\eqref{eq:general:sum:of:exponents:for:Abelian}.
Since  $c_{area}>0$, and $1=\lambda_1>\lambda_2\ge\lambda_3\ge\dots$,
we    get    at    least    $k+1$    positive    Lyapunov   exponents
$\lambda_1,\dots,\lambda_k$ as soon as the expression
\begin{equation}
\label{eq:1:12}
\cfrac{1}{12}\ \sum_{i=1}^\noz \cfrac{m_i(m_i+2)}{m_i+1}
\end{equation}
is  greater  than  or  equal to $k$, where $k$ is a strictly positive
integer.  (Here  the  strict  inequality $\lambda_1>\lambda_2$ is the
result  of  Forni~\cite{Forni:Deviation}.) It remains to evaluate the
minimum  of  expression~\eqref{eq:1:12} over all partitions of $2g-2$
and notice that it is achieved on the ``smallest'' partition $(2g-2)$
composed    of   a   single   element.   For   this   partition   the
sum~\eqref{eq:1:12} equals
$$
\frac{1}{12}\left(2g-1-\frac{1}{2g-1}\right)=\cfrac{(g-1)g}{6g-3}
\,.
$$
This proves the first part of the statement.

The  consideration for the principal stratum is completely analogous,
except that this time the above sum equals $(g-1)/4$.
\end{proof}

\begin{Problem}
\label{pb:degenerate:abelian}  Are  there  any  examples  of  regular
$\SL$-invariant   suborbifolds  $\cM_1$  in  the  strata  of  Abelian
differentials  in  genera  $g\ge 2$ different from the two arithmetic
Teichm\"uller  curves  found by G.~Forni in~\cite{Forni:Handbook} and
by        G.~Forni        and        C.~Matheus~\cite{Forni:Matheus},
\cite{Forni:Matheus:Zorich}  with  a  completely  degenerate Lyapunov
spectrum $\lambda_2=\dots=\lambda_g=0$?
\end{Problem}

By  Corollary~\ref{cor:g:ge:7}  such  example  might  exist  only  in
certain  strata  in  genera  from  $3$  to  $6$.
After completion of work on this paper, it was proved
  by D.~Aulicino \cite{Aulicino:affine} that any such an example must
  be a Teichm\"uller curve. By  the  result  of
M.~M\"oller~\cite{Moeller:Shimura:curves},  Teichm\"uller curves with
such a property might exist only in several strata in genus five.

\begin{Corollary}
\label{cor:qd:positivity}
For  any  regular  $\PSL$-invariant  suborbifold  in  any  stratum of
holomorphic  quadratic  differentials  in genus $g\ge 7$ the Lyapunov
exponents   $\lambda^+_2\ge\dots\ge\lambda^+_k$   and   the  Lyapunov
exponents  $\lambda^-_2\ge\dots\ge\lambda^-_k$ are strictly positive,
where $k=\left[\frac{(g-1)g}{6g+3}\right]+1$.

For any regular $\PSL$-invariant suborbifold in the principal stratum
of holomorphic quadratic differentials in genus $g\ge 5$ the Lyapunov
exponents  $\lambda^+_2\ge\dots\ge\lambda^+_k$ are strictly positive,
where $k=\left[\frac{5(g-1)}{18}\right]+1$.

For any regular $\PSL$-invariant suborbifold in the principal stratum
of  holomorphic  quadratic  differentials in genus $g=2$ the Lyapunov
exponent   $\lambda^-_2$   is  strictly  positive.  For  any  regular
$\PSL$-invariant  suborbifold in the principal stratum of holomorphic
quadratic  differentials  in  genus  $g\ge  3$ the Lyapunov exponents
$\lambda^-_2\ge\dots\ge\lambda^-_l$   are  strictly  positive,  where
$l=\left[\frac{11(g-1)}{18}\right]+1$.
\end{Corollary}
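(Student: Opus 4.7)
The plan is to imitate the proof of Corollary~\ref{cor:g:ge:7}, now feeding in the two formulas of Theorem~\ref{theorem:general:quadratic}. For the $\lambda^+$-sum I would use~\eqref{eq:general:sum:of:plus:exponents:for:quadratic} directly; for the $\lambda^-$-sum I would add it to~\eqref{eq:general:index:of:exponents:for:quadratic} to obtain
\begin{equation*}
\lambda^-_1+\dots+\lambda^-_{\geff}
=\frac{1}{24}\sum_j\frac{d_j(d_j+4)}{d_j+2}+\frac{1}{4}\sum_{d_j\text{ odd}}\frac{1}{d_j+2}+\frac{\pi^2}{3}\,c_{\mathit{area}}(\cM_1).
\end{equation*}
Since $c_{\mathit{area}}(\cM_1)>0$ and every $\lambda^{\pm}_i\le 1$, the pigeonhole principle then gives: if the combinatorial part of the formula is $\ge k-1$, then the full sum strictly exceeds $k-1$, forcing $\lambda^{\pm}_1,\dots,\lambda^{\pm}_k$ to be positive (and in particular $\lambda^{\pm}_2,\dots,\lambda^{\pm}_k>0$).

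To handle general strata I would minimize $\frac{1}{24}\sum_j f(d_j)$ with $f(d)=d(d+4)/(d+2)=d+2-4/(d+2)$ subject to $\sum d_j=4g-4$ and $d_j\ge 1$. The function $f$ is concave on $[0,\infty)$ with $f(0)=0$, hence subadditive, so the minimum is attained on the one-part partition $(4g-4)$, whose stratum $\cQ(4g-4)$ is nonempty for $g\ge 3$, and equals $g(g-1)/(6g-3)$. This yields $k=\lfloor g(g-1)/(6g-3)\rfloor+1$, which for the relevant values of $g$ matches the $k$ in the statement. The extra nonnegative term $\tfrac{1}{4}\sum_{d_j\text{ odd}}\tfrac{1}{d_j+2}$ only strengthens the bound for $\lambda^-$, so the same $k$ works in both cases.

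For the principal stratum $\cQ(1,\dots,1)$ (with $4g-4$ ones) I would substitute $d_j=1$ directly: the $\lambda^+$ combinatorial piece becomes $\tfrac{1}{24}\cdot(4g-4)\cdot\tfrac{5}{3}=\tfrac{5(g-1)}{18}$, and adding $\tfrac{1}{4}\cdot\tfrac{4g-4}{3}=\tfrac{g-1}{3}$ gives $\tfrac{11(g-1)}{18}$ for $\lambda^-$. The only case not covered by this scheme is $g=2$ in the principal stratum, where $11(g-1)/18=11/18<1$ makes the pigeonhole argument vacuous; there I would instead invoke Corollary~\ref{cor:Q:g2}, which supplies $\lambda^-_2+\lambda^-_3=2/3$, and use monotonicity $\lambda^-_2\ge\lambda^-_3\ge 0$ to conclude $\lambda^-_2\ge 1/3>0$.

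The main technical point is the concavity-plus-subadditivity argument pinning down the minimizing partition; once that is in place, the rest is arithmetic bookkeeping (floor functions, off-by-one indexing, and verifying nonemptiness of $\cQ(4g-4)$ in the relevant genera). No genuinely new analytic input beyond Theorem~\ref{theorem:general:quadratic} and the positivity of $c_{\mathit{area}}$ is needed.
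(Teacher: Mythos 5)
Your proposal is correct and follows essentially the same route as the paper, whose own proof consists of exactly this reduction: feed the two formulas of Theorem~\ref{theorem:general:quadratic} into the pigeonhole argument of Corollary~\ref{cor:g:ge:7}, observe that $d_j\ge 1$ for holomorphic differentials, and invoke Corollary~\ref{cor:Q:g2} for the genus-two principal stratum. Two small remarks. First, your concavity/subadditivity minimization correctly yields $\frac{g(g-1)}{6g-3}$ (attained at the one-part partition), but the statement of Corollary~\ref{cor:qd:positivity} has $6g+3$ in the denominator, so your claim that your $k=\left[\frac{g(g-1)}{6g-3}\right]+1$ ``matches the $k$ in the statement'' is not literally true (for $g=7$ you get $k=2$ versus the stated $k=1$); since $\frac{g(g-1)}{6g-3}>\frac{g(g-1)}{6g+3}$, your bound is the stronger one and the stated corollary follows a fortiori, so this is not a gap. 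Second, the pigeonhole step uses $\lambda^{\pm}_i\le 1$, which you assert without justification; the paper is explicit that this rests on $\lambda^-_1=1$ (elementary geometric reasons) together with Forni's inequality $\lambda^+_1<\lambda^-_1$, and you should cite the same facts.
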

\begin{proof}
This                   time                   we                  use
formulae~\eqref{eq:general:sum:of:plus:exponents:for:quadratic}
and~\eqref{eq:general:index:of:exponents:for:quadratic}.   Note  that
since   the   quadratic   differentials   under   consideration   are
\textit{holomorphic}, we have $d_j\ge 1$ for any $j$. Note also, that
it  follows  from  the  result  of  Forni~\cite{Forni:Deviation} that
$\lambda_1^->\lambda_2^-$    and    that   $\lambda_1^->\lambda_1^+$.
Finally, by elementary geometric reasons one has $\lambda_1^-=1$. For
genus  two  we use Corollary~\ref{cor:Q:g2}. The rest of the proof is
completely analogous to the proof of Corollary~\ref{cor:g:ge:7}.
\end{proof}

\begin{Problem}
\label{pr:degenerate:H:minus}
Are  there  any  examples  of  regular  $\PSL$-invariant suborbifolds
$\cM_1$  in  the  strata  of  meromorphic  quadratic differentials in
genera  $g_{\mathit{eff}}\ge  2$  different  from  the  Teichm\"uller
curves       of      square-tiled      cyclic      covers      listed
in~\cite{Forni:Matheus:Zorich}  having completely degenerate Lyapunov
spectrum   $\lambda^-_2=\dots=\lambda^-_{\geff}=0$   for  the  bundle
$H^1_-$?
\end{Problem}

Note  that  under  the  additional restriction that the corresponding
quadratic           differentials           are           holomorphic
Corollary~\ref{cor:qd:positivity}   limits   the  genus  of  possible
examples  for Problem~\ref{pr:degenerate:H:minus} to several possible
values only.

When the work on this paper was completed, C.~Matheus indicated to us
that  the formula~\eqref{eq:general:index:of:exponents:for:quadratic}
implies  a  strong restriction on the strata of meromorphic quadratic
differentials   which   might  \textit{a  priori}  contain  invariant
submanifolds with completely degenerate $\lambda^-$-spectrum. Namely,
since                    the                    $\lambda^+$-exponents
in~\eqref{eq:general:index:of:exponents:for:quadratic}            are
nonnegative,   the   $\lambda^-$-spectrum   may   not  be  completely
degenerate  as  soon  as  the  ambient  stratum $Q(d_1,\dots,d_\noz)$
satisfies
$$
\sum_{\substack{j \text{ such that}\\
d_j \text{ is odd}}}
\cfrac{1}{d_j+2}>4\,,
$$
say, when quadratic differentials contain at least four poles,
and the stratum is different from $\cQ(-1^4)$.

\begin{Problem}
\label{pr:degenerate:H:plus}
Are  there  any  examples  of  regular  $\PSL$-invariant suborbifolds
$\cM_1$  in  the  strata  of  meromorphic  quadratic differentials in
genera   $g\ge   2$   different  from  the  Teichm\"uller  curves  of
square-tiled   cyclic  covers  listed  in~\cite{Forni:Matheus:Zorich}
having       completely       degenerate       Lyapunov      spectrum
$\lambda^+_1=\dots=\lambda^+_g=0$ for the bundle $H^1_+$?
\end{Problem}

Note                                                             that
formula~\eqref{eq:general:sum:of:plus:exponents:for:quadratic}
implies   that   Problem~\ref{pr:degenerate:H:plus}  does  not  admit
solutions  for  the  $\PSL$-invariant  suborbifolds  in the strata of
\textit{holomorphic} quadratic differentials.

After  completion  of the work on this paper J.~Grivaux and P.~Hubert
found    a    geometric    reason    for   the   vanishing   of   all
$\lambda^+$-exponents  in  examples  from~\cite{Forni:Matheus:Zorich}
and  constructed  further  examples  of the same type with completely
degenerate                                      $\lambda^+$-spectrum,
see~\cite{Grivaux:Hubert:in:progress}.  We  do not know whether their
construction    covers    all    possible    situations    when   the
$\lambda^+$-spectrum is completely degenerate.

\subsection{Siegel--Veech  constants:  values  for  certain  invariant
suborbifolds}
\label{ss:introduction:Siegel:Veech:values}

We  compute  numerical  values of the Siegel--Veech constant for some
specific       regular      $\SL$-invariant      suborbifolds      in
section~\ref{sec:Evaluation:of:SV:constants}. We consider the largest
possible  and  the  smallest  possible  cases,  namely,  we  consider
connected  components  of  the  strata  and  Teichm\"uller  discs  of
arithmetic  Veech  surfaces.  In the current section we formulate the
corresponding    statements;    the    proofs    are   presented   in
section~\ref{sec:Evaluation:of:SV:constants}.

\subsubsection{Arithmetic Teichm\"uller discs}

Consider  a  connected  square-tiled  surface  $S$ in some stratum of
Abelian  or  quadratic  differentials. For every square-tiled surface
$S_i$  in  its  $\SLZ$-orbit (correspondingly $\PSLZ$-orbit) consider
the decomposition of $S_i$ into maximal cylinders $\mathit{cyl}_{ij}$
filled  with  closed  regular horizontal geodesics. For each cylinder
$\mathit{cyl}_{ij}$  let  $w_{ij}$ be the length of the corresponding
closed  horizontal  geodesic  and  let  $h_{ij}$ be the height of the
cylinder     $\mathit{cyl}_{ij}$.     Let     $\card(\SLZ\cdot    S)$
(correspondingly  $\card(\PSLZ\cdot  S)$)  be  the cardinality of the
orbit.

\begin{Theorem}
\label{theorem:SVconstant:for:square:tiled}
For   any   connected   square-tiled   surface   $S$   in  a  stratum
$\cH(m_1,\dots,m_\noz)$  of  Abelian differentials, the Siegel--Veech
constant $c_{\mathit{area}}(\cM_1)$ of the $\SL$-orbit $\cM_1$ of the
normalized   surface   $S_{(1)}\in\cH_1(m_1,\dots,m_\noz)$   has  the
following value:
\begin{equation}
\label{eq:SVconstant:for:square:tiled}
c_{\mathit{area}}(\cM_1)=
\cfrac{3}{\pi^2}\cdot
\cfrac{1}{\card(\SLZ\cdot S)}\
\sum_{S_i\in\SLZ\cdot S}\ \
\sum_{\substack{
\mathit{horizontal}\\
\mathit{cylinders\ cyl}_{ij}\\
such\ that\\S_i=\sqcup\mathit{cyl}_{ij}}}\
\cfrac{h_{ij}}{w_{ij}}\,,
\end{equation}
For  a square-tiled surface $S$ in a stratum of meromorphic quadratic
differentials  with  at  most  simple  poles the analogous formula is
obtained by replacing $\SLZ$ with $\PSLZ$.
\end{Theorem}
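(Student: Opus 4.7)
The plan is to compute $\int_{\cM_1}N_{\mathit{area}}(S',L)\,d\nu_1(S')$ directly by the standard unfolding method for $\SL$-orbits. Since $S$ is square-tiled, its Veech group $\Gamma_0 = \SL(S_{(1)})$ is a finite-index subgroup of $\SLZ$ with $[\SLZ:\Gamma_0]=\card(\SLZ\cdot S)$, and $\cM_1\cong\SL/\Gamma_0$ carries the probability Haar measure $\nu_1$. For $g\in\SL$, every cylinder of $gS_{(1)}\in\cM_1$ is $g\cdot cyl_0$ for a unique $cyl_0\subset S_{(1)}$, with width $|g\,v(cyl_0)|$ and area $\Area(cyl_0)$, giving
$$
N_{\mathit{area}}(gS_{(1)},L)\;=\;\sum_{cyl_0\subset S_{(1)}}\Area(cyl_0)\,\mathbb{1}\!\bigl[\,|g\,v(cyl_0)|<L\,\bigr].
$$

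Grouping cylinders of $S_{(1)}$ by $\Gamma_0$-orbits $[cyl_0]$ and applying the unfolding identity $\int_{\SL/\Gamma_0}\sum_{\gamma\in\Gamma_0/H_{cyl_0}}f(g\gamma)\,d\nu_1=\mu(\SL/\Gamma_0)^{-1}\int_{\SL/H_{cyl_0}}f\,d\mu$ (with $H_{cyl_0}$ the $\Gamma_0$-stabilizer of $cyl_0$ and $\mu$ a Haar measure on $\SL$) reduces the computation to one Haar integral per orbit. The key geometric step evaluates $\int_{\SL/H_{cyl_0}}\mathbb{1}[|gv_0|<L]\,d\mu$ through the fibration $\SL/H_{cyl_0}\to\reals^2\setminus\{0\}$, $g\mapsto gv_0$: its fiber is the circle $N_{v_0}/(N_{v_0}\cap H_{cyl_0})$ of length equal to the parabolic period $T_\kappa$ in the cusp direction $\kappa=[v_0]$ of $\Gamma_0$, while Haar pushes forward under this map to $|v_0|^{-2}\,dA$ on $\reals^2\setminus\{0\}$, so the integral equals $T_\kappa\,\pi L^2/|v_0|^2$. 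Using the scale-invariant identity $\Area(cyl_0)/|v_0|^2=h_{cyl_0}/w_{cyl_0}$ yields
$$
c_{\mathit{area}}(\cM_1)\;=\;\mu(\SL/\Gamma_0)^{-1}\cdot\sum_\kappa T_\kappa\sum_{j}\frac{h_{\kappa,j}}{w_{\kappa,j}}.
$$

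Two final ingredients complete the proof. First, the combinatorial identity converting the cusp sum into the $\SLZ$-orbit sum: each $\Gamma_0$-orbit of cylinders in cusp direction $\kappa$ has exactly $T_\kappa$ horizontal representatives distributed among the surfaces $S_i\in\SLZ\cdot S$, corresponding to the cosets of the Dehn-twist subgroup $\langle n_{T_\kappa}\rangle$ inside the horizontal upper-triangular subgroup of $\SLZ$, and horizontal shears preserve the modulus $h/w$. Second, an explicit Haar-covolume computation based on $\Vol(\mathbb{H}^2/\SLZ)=\pi/3$ (or equivalently the standard Siegel mean-value theorem for primitive vectors) yields the final normalization constant $3/\pi^2$. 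The main obstacle is the careful bookkeeping of the $\mathbb{Z}/2$-factors arising from $-I\in\SLZ$, which appear in both the combinatorial identity and the Haar covolume $\mu(\SL/\Gamma_0)$ depending on whether $-I\in H_{cyl_0}$, but cancel out in the final answer (as can be verified in the torus example $\Gamma_0=\SLZ$, $T=1$, $h/w=1$, giving $c_{\mathit{area}}=3/\pi^2$ as required). The quadratic-differential case follows by the analogous argument in $\PSL$ throughout, or equivalently by passing to the canonical double cover and applying Lemma~\ref{lm:SVconst:for:cover}.
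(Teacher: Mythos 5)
Your argument is correct and rests on the same three ingredients as the paper's proof --- the identification of the cusps of the arithmetic Teichm\"uller curve with $U$-orbits in the $\SLZ$-orbit of $S$ (your cosets of the Dehn-twist subgroup) together with their widths, the identity $\Area(cyl)/w^2=h/w$, and the volume $\pi/3$ of the modular curve producing the factor $3/\pi^2$ --- but you reach the formula by a different analytic route. The paper never unfolds: it invokes Veech's theorem that the Siegel--Veech transform satisfies $\frac{1}{\nu(\cM_1)}\int\hat f\,d\nu=\const\cdot\int_{\R{2}}f\,dx\,dy$ for an a priori unknown constant, and then pins down $\const=\frac{3}{\pi^2}\frac{N_i}{N}$ by testing against the characteristic function $\chi_\epsilon$ of a small disc, for which $\hat\chi_\epsilon$ is the indicator of the cusp neighborhood $\Pi^{-1}\big(\cH_1^\epsilon(0)\big)$ of known measure $N_i\pi\epsilon^2$; the staircase counting function $\chi_r(\vec v,\cC_i)$ then turns $\int_{\R{2}}$ into $\pi r^2 D^{-1}\sum_j h_j/w_j$ directly. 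Your explicit unfolding over $\SL/H_{cyl_0}$ and the computation of the pushforward of Haar measure to $\R{2}\setminus\{0\}$ amounts to reproving the Siegel--Veech formula in this arithmetic setting; it is more self-contained, but it forces you to track by hand the normalization of the Haar measure on the fiber $N_{v_0}/(N_{v_0}\cap H_{cyl_0})$ and the $\pm\Id$ bookkeeping, which the paper's test-function trick sidesteps because the measure of a cusp neighborhood is computed once and for all on the modular curve. Both routes end with the same conversion of the cusp-weighted sum into the plain average of $\sum h_{ij}/w_{ij}$ over the $\SLZ$-orbit, and your torus sanity check confirms the normalization.
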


Theorem~\ref{theorem:SVconstant:for:square:tiled}     is     proved     in
section~\ref{sec:Evaluation:of:SV:constants}.

\begin{Corollary}
\label{cr:sum:for:arithmetic:T:disc}
a)  Let  $\cM_1$  be  an  arithmetic  Teichm\"uller disc defined by a
square-tiled   surface   $S_0$   of   genus   $g$   in  some  stratum
$\cH_1(m_1,\dots,m_\noz)$  of  Abelian  differentials.  The  top  $g$
Lyapunov  exponents  of  the  of  the Hodge bundle $H^1$ over $\cM_1$
along the Teichm\"uller flow satisfy the following relation:
\begin{multline}
\label{eq:sum:for:arithmetic:T:disc}
\lambda_1+\dots+\lambda_g
\ =\\= \
\cfrac{1}{12}\cdot\sum_{i=1}^\noz\cfrac{m_i(m_i+2)}{m_i+1}
\ +\
\cfrac{1}{\card(\SLZ\cdot S_0)}\
\sum_{S_i\in\SLZ\cdot S_0}\ \
\sum_{\substack{
\mathit{horizontal}\\
\mathit{cylinders\ cyl}_{ij}\\
such\ that\\S_i=\sqcup\mathit{cyl}_{ij}}}\
\cfrac{h_{ij}}{w_{ij}}\ .
\end{multline}

b)  Let  $\cM_1$  be  an  arithmetic  Teichm\"uller disc defined by a
square-tiled   surface   $S_0$   of   genus   $g$   in  some  stratum
$\cQ_1(d_1,\dots,d_\noz)$ of meromorphic quadratic differentials with
at  most  simple  poles. The top $g$ Lyapunov exponents of the of the
Hodge  bundle  $H^1_+$  over  $\cM_1$  along  the  Teichm\"uller flow
satisfy the following relation:
\begin{multline}
\label{eq:sum:for:arithmetic:T:disc:q}
\lambda^+_1+\dots+\lambda^+_g
\ =\\= \
\cfrac{1}{24}\cdot\sum_{i=1}^\noz\cfrac{d_i(d_i+4)}{d_i+2}
\ +\
\cfrac{1}{\card(\PSLZ\cdot S_0)}\
\sum_{S_i\in\PSLZ\cdot S_0}\ \
\sum_{\substack{
\mathit{horizontal}\\
\mathit{cylinders\ cyl}_{ij}\\
such\ that\\S_i=\sqcup\mathit{cyl}_{ij}}}\
\cfrac{h_{ij}}{w_{ij}}\ .
\end{multline}
\end{Corollary}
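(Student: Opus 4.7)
The plan is to derive Corollary~\ref{cr:sum:for:arithmetic:T:disc} as a direct combination of Theorem~\ref{theorem:general:Abelian} (respectively, part (a) of Theorem~\ref{theorem:general:quadratic}) with Theorem~\ref{theorem:SVconstant:for:square:tiled}. The only non-mechanical step is recognizing that the $\SL$-orbit (respectively $\PSL$-orbit) of a square-tiled surface is automatically a regular invariant suborbifold so that the main theorems apply.

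First I would recall that for a square-tiled surface $S_0$ the orbit $\SLZ\cdot S_0$ (respectively $\PSLZ\cdot S_0$) is finite, and hence the $\SL$-orbit (respectively $\PSL$-orbit) of $S_0$ is a closed arithmetic Teichm\"uller curve $\cM_1$, carrying a unique $\SL$-invariant (respectively $\PSL$-invariant) probability measure $\nu_1$. Condition (iii) of Definition~\ref{def:invariant:regular} is trivially satisfied: a square-tiled surface has no short cylinders with large modulus, so $\cM_1(K,\epsilon)$ is empty for all sufficiently small $\epsilon$, and $\cM_1$ is regular.

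For part (a), Theorem~\ref{theorem:general:Abelian} applied to this regular suborbifold gives
\begin{equation*}
\lambda_1+\dots+\lambda_g
\ =\
\cfrac{1}{12}\sum_{i=1}^{\noz}\cfrac{m_i(m_i+2)}{m_i+1}
\ +\ \frac{\pi^2}{3}\cdot c_{\mathit{area}}(\cM_1).
\end{equation*}
Substituting the value of $c_{\mathit{area}}(\cM_1)$ from formula~\eqref{eq:SVconstant:for:square:tiled} of Theorem~\ref{theorem:SVconstant:for:square:tiled}, the factor $\pi^2/3$ cancels the prefactor $3/\pi^2$ appearing in~\eqref{eq:SVconstant:for:square:tiled}, and we arrive at~\eqref{eq:sum:for:arithmetic:T:disc}.

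For part (b), the argument is identical with Theorem~\ref{theorem:general:Abelian} replaced by part (a) of Theorem~\ref{theorem:general:quadratic} and $\SLZ$ replaced by $\PSLZ$ throughout. The combinatorial term becomes $\frac{1}{24}\sum \frac{d_i(d_i+4)}{d_i+2}$ and again the $\pi^2/3$ coefficient in front of $c_{\mathit{area}}(\cM_1)$ absorbs the $3/\pi^2$ in the quadratic version of Theorem~\ref{theorem:SVconstant:for:square:tiled}. There is no real obstacle here; the entire content of the corollary lies in the two theorems being combined, and what makes the statement attractive is precisely the cancellation of transcendental factors, leaving an expression~\eqref{eq:sum:for:arithmetic:T:disc} in which each term is manifestly rational once the finite orbit $\SLZ\cdot S_0$ has been enumerated.
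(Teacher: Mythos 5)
Your proposal is correct and follows the same route the paper intends: the corollary is an immediate substitution of formula~\eqref{eq:SVconstant:for:square:tiled} into \eqref{eq:general:sum:of:exponents:for:Abelian} (respectively into \eqref{eq:general:sum:of:plus:exponents:for:quadratic}), with the factors $\pi^2/3$ and $3/\pi^2$ cancelling. One small correction to your regularity check: it is not true that surfaces in $\cM_1$ have no short cylinders of large modulus --- the surfaces near the cusps of the Teichm\"uller curve have exactly such cylinders; what makes $\cM_1(K,\epsilon)$ empty for small $\epsilon$ is that on a Veech (lattice) surface any two sufficiently short non-parallel cylinders are impossible, since all short cylinders lie in a single periodic direction (equivalently, there are no small virtual triangles). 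With that fix the verification of condition (iii), and hence the whole argument, goes through.
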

\begin{NNRemark}
Combining     equation~\eqref{eq:sum:for:arithmetic:T:disc:q}    from
statement      b)      of      the      Corollary      above     with
equation~\eqref{eq:general:index:of:exponents:for:quadratic}     from
Theorem~\ref{eq:general:sum:of:plus:exponents:for:quadratic}       we
immediately  obtain  a  formula for the sum of the Lyapunov exponents
$\lambda^-_1+\dots+\lambda^-_{\geff}$     of     the    corresponding
Teichm\"uller disc.
\end{NNRemark}

\begin{figure}[htb]
   %
   %
\includegraphics{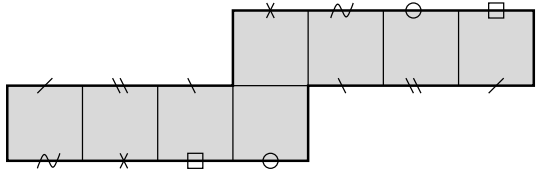}
\vspace{70pt}
\caption{
\label{fig:Eierlegende:Wollmilchsau}
Eierlegende Wollmilchsau}
\end{figure}

To  illustrate  how  the  above  statement  works,  let us consider a
concrete    example.    The   following   square-tiled   surface   is
$\SLZ$-invariant.  It belongs to the principal stratum $\cH(1,1,1,1)$
in genus $g=3$.

Hence,  the  sum  of  the  Lyapunov  exponents  for the corresponding
Teichm\"uller disc equals
$$
1+\lambda_2+\lambda_3\ =\
\cfrac{1}{12}\cdot\sum_{i=1}^4\cdot\cfrac{1(1+2)}{1+1}\ +\
\cfrac{1}{1}\left(\cfrac{1}{4}+\cfrac{1}{4}\right)\ =\
\cfrac{1}{2}+\cfrac{1}{2}\ =\ 1
$$
This  implies  that  $\lambda_2=\lambda_3=0$.  (This result was first
proved   by  G.~Forni  in~\cite{Forni:Handbook},  who  used  symmetry
arguments.   See  also  Problem~\ref{pb:degenerate:abelian}  and  the
discussion after it.)

\subsubsection{Connected components of the strata}

Let us come back to generic flat surfaces $S$ in the strata. Consider
a  maximal  cylinder  $\mathit{cyl}_1$  in a flat surface $S$. Such a
cylinder is filled with parallel closed regular geodesics. Denote one
of  these geodesics by $\gamma_1$. Sometimes it is possible to find a
regular  closed  geodesic  $\gamma_2$  on $S$ parallel to $\gamma_1$,
having  the  same  length  as  $\gamma_1$,  but living outside of the
cylinder  $\mathit{cyl}_1$. It is proved in~\cite{Eskin:Masur:Zorich}
that   for  almost  any  flat  surface  in  any  stratum  of  Abelian
differentials   this   implies   that  $\gamma_2$  is  homologous  to
$\gamma_1$.  Consider  a maximal cylinder $\mathit{cyl}_2$ containing
$\gamma_2$   filled   with   closed  regular  geodesics  parallel  to
$\gamma_2$.  Now  look  for  closed  regular  geodesics  parallel  to
$\gamma_1$ and to $\gamma_2$ and having the same length as $\gamma_1$
and   $\gamma_2$   but  located  outside  of  the  maximal  cylinders
$\mathit{cyl}_1$  and  $\mathit{cyl}_2$,  etc.  The resulting maximal
decomposition  of  the  surface is encoded by a \textit{configuration
$\cC$      of      homologous      closed      regular     geodesics}
(see~\cite{Eskin:Masur:Zorich} for details).

One  can consider a counting problem for any individual configuration
$\cC$.   Denote  by  $N_{\cC}(S,L)$  the  number  of  collections  of
homologous  saddle  connections  on $S$ of length at most $L$ forming
the given configuration $\cC$. By the general results of A.~Eskin and
H.~Masur~\cite{Eskin:Masur}    almost    all    flat    surfaces   in
$\cH_1^{comp}(m_1,\dots,m_\noz)$ share the same quadratic asymptotics
\begin{equation}
\label{eq:c:configuration:definition}
\lim_{L\to\infty} \cfrac{N_\cC(S,L)}{L^2}=c_\cC
\end{equation}
where the  \textit{Siegel---Veech constant} $c_\cC$ depends only on the
chosen connected component of the stratum.

\begin{NNTheorem}[Vorobets]
For  any  connected component of any stratum of Abelian differentials
the  Siegel--Veech  constants  $c_{\mathit{area}}$  and  $c_\cC$ are
related as follows:
\begin{equation}
\label{eq:c:area:in:terms:of:c:configurations:Abelian}
c_{\mathit{area}}=
\cfrac{1}{\dim_\C{}\cH(m_1,\dots,m_n)-1}\cdot\sum_{q=1}^{g-1} q\cdot
\sum_{\substack{\textit{Configurations}\ \cC\\
\textit{containing exactly}\\
q\textit{ cylinders}}}
c_\cC\,.
\end{equation}
\end{NNTheorem}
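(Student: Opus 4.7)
The plan is to express both Siegel--Veech constants as integrals over the moduli space in period coordinates adapted to configurations, and to extract the relation between them by a local averaging argument.

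First, I would introduce local period coordinates near a surface realizing a configuration $\cC$ of $q$ homologous cylinders. The common complex holonomy of the waist curves is a single complex parameter $z$ (so that $w=|z|$), there are $q$ further complex parameters $t_1,\dots,t_q$ parametrizing the individual cylinders (with heights $h_i$ and twists $\tau_i$ encoded in $t_i$), and the remaining period coordinates $\zeta$ describe the reduced surface $S_0$ obtained by collapsing the cylinders of $\cC$. In these coordinates the invariant Lebesgue measure factorises as $d\mu = d^2z\,d^2t_1\cdots d^2t_q\,d\mu_0(\zeta)$ and the area splits as $\Area(S)=\Area(S_0)+\sum_{i=1}^q h_iw$.

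Second, I would unfold both Siegel--Veech integrals in these coordinates. For the configuration counting function one has
\begin{equation*}
\int N_\cC(S,L)\,d\nu_1(S)
 = \int_{|z|<L} d^2z \, \int d^2t_1\cdots d^2t_q\,d\mu_0(\zeta)\cdot (\text{normalising factor from }\mu/da),
\end{equation*}
so that $\int_{|z|<L}d^2z=\pi L^2$ identifies $c_\cC$ with the integral over the remaining coordinates. For the area-weighted counting function, the decomposition
\begin{equation*}
N_{\mathit{area}}(S,L) = \sum_\cC \sum_{\text{realizations}} \frac{1}{\Area(S)}\sum_{i=1}^{q_\cC} h_iw\cdot \chi_{w<L}
\end{equation*}
and the same unfolding give
\begin{equation*}
c_{\mathit{area}} = \sum_\cC \int \frac{\sum_{i=1}^{q_\cC} h_iw}{\Area(S)}\,d^2t_1\cdots d^2t_{q_\cC}\,d\mu_0(\zeta).
\end{equation*}

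Third, the heart of the argument is the averaging identity
\begin{equation*}
\int \frac{\sum_{i=1}^{q}h_iw}{\Area(S)}\,d^2t_1\cdots d^2t_q\,d\mu_0(\zeta)
 = \frac{q}{\dim_{\C{}}\cH(m_1,\dots,m_\noz)-1}\cdot c_\cC.
\end{equation*}
The numerator $q$ appears by symmetry once one sums the $q$ equal contributions of the individual cylinders in $\cC$. The denominator $\dim_{\C{}}\cH-1$ is the complex dimension of the projectivisation $\PcH$, reflecting the fact that after removing the area direction each cylinder contributes one of the $\dim_{\C{}}\cH-1$ equivalent ``degrees of freedom'' to the total area.

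\emph{Main obstacle.} The averaging identity of the third step is the decisive point. To prove it one has to evaluate the integral of $(h_iw)/\Area(S)$ on the level set $\{\Area=1\}$. I would exploit the $\C{*}$-homogeneity of both numerator and denominator (each of degree $2$) together with the $\SO$-invariance of $\nu_1$ to reduce the integral to the average of a single coordinate against a Hermitian form on the projectivised period space, and then invoke the standard fact that such an average equals $1/(\dim_{\C{}}\cH-1)$. Once this lemma is established, regrouping configurations according to the number $q$ of cylinders yields formula~\eqref{eq:c:area:in:terms:of:c:configurations:Abelian}.
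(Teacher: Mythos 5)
The paper does not actually prove this theorem; it is quoted from Vorobets's article, so the relevant comparison is with the standard argument there (and in Eskin--Masur--Zorich). Your overall strategy coincides with that argument: unfold both Siegel--Veech integrals in period coordinates adapted to a fixed configuration $\cC$ of $q$ homologous cylinders, and reduce the theorem to an averaging identity for the normalized cylinder area. Your first two steps (the coordinates $z,t_1,\dots,t_q,\zeta$, the product form of $\mu$, the decomposition $\Area(S)=\Area(S_0)+w\sum h_i$, and the unfolding) are essentially the right setup.

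The gap is in the third step, which you yourself flag as the main obstacle: the averaging identity is asserted rather than proved, and the method you sketch would not produce the constant $1/(\dim_{\C{}}\cH-1)$. The ``standard fact'' that the average of one coordinate against a Hermitian form on the projectivized period space equals $1/(\dim_{\C{}}\cH-1)$ is not a fact: for a positive-definite form on an $n$-dimensional complex space the average of $|z_1|^2/\|z\|^2$ is $1/n$, which here would give $1/\dim_{\C{}}\cH$; moreover the area form is not positive definite in period coordinates (it has signature $(g,g-1)$ on absolute periods and is degenerate in the relative directions), so no such spherical average is available. Note also that the integral in your averaging identity diverges as written, since $t_i$ and $\zeta$ range over unbounded domains; the restriction to the unit hyperboloid via $d\nu_1=d\mu/da$ is exactly where the missing structure enters. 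The constant in the theorem comes from two inputs you never supply: (i) the complex dimension of the principal boundary obtained by collapsing the $q$ cylinders is $m+1$ with $m=\dim_{\C{}}\cH-1-q$, so the volume of the slice $\{\Area(S_0)=1-w\sum h_i\}$ is proportional to $(1-w\sum h_i)^{m-1}$; and (ii) the Dirichlet integral over the simplex of normalized cylinder areas $u_i=wh_i$,
$$
\frac{\displaystyle\int_{\sum u_i\le 1}\Big(\sum_i u_i\Big)\,\prod du_i\,\Big(1-\sum u_i\Big)^{m-1}}
{\displaystyle\int_{\sum u_i\le 1}\prod du_i\,\Big(1-\sum u_i\Big)^{m-1}}
\;=\;\frac{q}{m+q}\;=\;\frac{q}{\dim_{\C{}}\cH-1}\,.
$$
Your heuristic about ``$\dim_{\C{}}\cH-1$ equivalent degrees of freedom'' is an after-the-fact reading of this computation (the $u_i$ together with the residual area are Dirichlet-distributed with parameters $(1,\dots,1,m)$), but it cannot replace it: nothing in the geometry permutes the cylinder areas with the $m$ ``pieces'' of $\Area(S_0)$, and without the boundary dimension count the $-1$ in the denominator is not derivable.
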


The  above  Theorem  is  proved  in~\cite{Vorobets}.  As an immediate
corollary  of  Theorem~\ref{theorem:general:Abelian}  and  the  above
theorem we get the following statement:

\begin{Theorem1prime}
   %
For  any  connected  component of any stratum $\cH(m_1,\dots,m_n)$ of
Abelian  differentials  the  sum  of  the  top $g$ Lyapunov exponents
induced  by  the  Teichm\"uller  flow  on  the  Hodge  vector  bundle
$H^1_{\R{}}$ satisfies the following relation:
\begin{multline}
\label{eq:sum:for:Abelian:strata}
\lambda_1+\dots+\lambda_g =\\
=\ \cfrac{1}{12}\cdot\sum_{i=1}^\noz\cfrac{m_i(m_i+2)}{m_i+1}
\ +\
\cfrac{\pi^2}{3\dim_\C{}\cH(m_1,\dots,m_n)-3}\cdot\sum_{q=1}^{g-1}
q\cdot
\sum_{\substack{\textit{Admissible}\\\textit{configurations}\ \cC\\
\textit{containing exactly}\\
q\textit{ cylinders}}}
c_\cC
\end{multline}
where  $c_\cC$  are  the  Siegel--Veech constant of the corresponding
connected component of the stratum $\cH(m_1,\dots,m_n)$.
\end{Theorem1prime}

The Siegel--Veech       constants       $c_\cC$       were       computed
in~\cite{Eskin:Masur:Zorich}.  Here  we  present  an  outline  of the
corresponding formulae.

A  ``configuration''  $\cC$  can  be viewed as a combinatorial way to
represent  a  flat  surface  as  a collection of $q$ flat surfaces of
smaller  genera joined cyclically by narrow flat cylinders. Thus, the
configuration  represented  schematically  on  the  right  picture in
Figure~\ref{fig:dance:ugly:drawing}    is   admissible,   while   the
configuration on the left picture is not.

\begin{figure}[ht]
\includegraphics{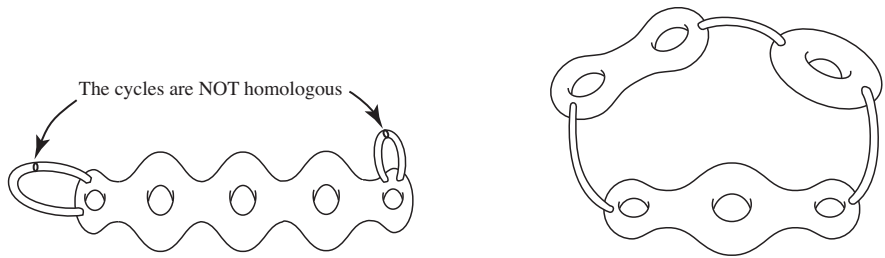}
\vspace{100bp}
\caption{
\label{fig:dance:ugly:drawing}
Topological pictures for admissible (on the right) and non-admissible
(on the left) configurations of cylinders. }
\end{figure}

Denote  by  $\cH^\epsilon_1(\cC)$  the subset of flat surfaces in the
stratum $\cH(m_1,\dots,m_\noz)$ having a maximal collection of narrow
cylinders  of width at most $\epsilon$ forming a configuration $\cC$.
Here ``maximal'' means that the narrow cylinders in the configuration
$\cC$ do not make part of a larger configuration $\cC'$.

Contracting the waist curves of the cylinders completely and removing
them  we  get a collection of disjoint closed flat surfaces of genera
$g_1,  \dots,  g_q$.  By  construction $g_1+\dots+g_q=g-1$. Denote by
$\cH^{comp}(\beta'_j)$  the  ambient  stratum  (more  precisely,  its
connected  component)  for  the  resulting  flat  surfaces. Denote by
$\cH^{comp}_1(\beta)$   the  ambient  stratum  (more  precisely,  its
connected    component)    for   the   initial   surface.   According
to~\cite{Eskin:Masur:Zorich} the Siegel--Veech constant $\SVc$ can be
expressed as

\begin{multline}
\label{eq:SV:constant:individual:configuration}
\SVc
\ =\
\lim_{\varepsilon\to 0} \cfrac{1}{\pi\varepsilon^2}\,
\cfrac{\Vol\cH^{\varepsilon}_1(\cC)}
{\Vol\cH^{comp}_1(d_1,\dots,d_\noz)}
\ =\\=\
(\text{explicit combinatorial factor})\cdot
\cfrac{\prod_{j=1}^k\Vol\cH_1(\beta'_k)}
{\Vol\cH^{comp}_1(\beta)}\ .
\end{multline}

Thus,  the Theorem above allows to compute the exact numerical values
of $c_{\textit{area}}$ for all connected components of all strata (at
least  in  small genera, where we know numerical values of volumes of
connected components of the strata). The resulting explicit numerical
values of the sums of Lyapunov exponents for all strata in low genera
are presented in Appendix~\ref{a:Values:of:exponents}.

By the results of A.~Eskin and A.~Okounkov~\cite{Eskin:Okounkov}, the
volume of any connected component of any stratum of Abelian
differentials   is   a   rational   multiple   of  $\pi^{2g}$.  Thus,
relations~\eqref{eq:sum:for:Abelian:strata}
and~\eqref{eq:SV:constant:individual:configuration} imply rationality
of  the  sum of Lyapunov exponents for any connected component of any
stratum of Abelian differentials.

\section{Outline of proofs}
\label{sec:Outline:of:proofs}

To  simplify  the  exposition of the proof, we have isolated its most
technical  fragments.  In  the  current  section  we present complete
proofs   of  all  statements  of  section~\ref{sec:sums},  which  are
however,                           based                           on
Theorems~\ref{theorem:main:local:formula}--\ref{theorem:int:Dflat:equals:SV:const}
stated   below.   These   Theorems   will  be  proved  separately  in
corresponding
sections~\ref{sec:analytic:Riemann:Roch:proof}\,--\,\ref{sec:cutoff}.

In  section~\ref{sec:Evaluation:of:SV:constants}  we describe in more
detail  the  Siegel--Veech  constant  $c_{area}$;  in  particular  we
explicitly  evaluate  it  for  arithmetic  Teichm\"uller discs, thus,
proving Theorem~\ref{theorem:SVconstant:for:square:tiled}.

In  Appendix~\ref{a:Values:of:exponents}  we present the exact values
of  the  sums  of  the Lyapunov exponents and conjectural approximate
values  of  individual Lyapunov exponents for connected components of
the   strata   of   Abelian   differentials   in   small  genera.  In
Appendix~\ref{a:pairs:of:permutations}   we  present  an  alternative
combinatorial   approach   to   square-tiled   surfaces  and  to  the
construction of the corresponding arithmetic Teichm\"uller curves. We
apply  it  to  discuss the non-varying phenomenon of their Siegel--Veech
constants in the strata of small genera.


\subsection{Teichm\"uller discs.}
\label{ss:Teichmuller:discs}

We                   have                   seen                   in
section~\ref{ss:Volume:element:and:action:of:the:linear:group}   that
each     ``unit     hyperboloid''     $\cH_1(m_1,\dots,m_\noz)$    and
$\cQ_1(d_1,\dots,d_\noz)$  is  foliated  by  the  orbits of the group
$\SL$  and  $\PSL$ correspondingly. Recall that the quotient of these
groups by the subgroups of rotations is canonically isomorphic to the
hyperbolic plane:
$$
\SL/\SO\simeq\PSL/\PSO\simeq\Hyp\,.
$$
Thus,     the    projectivizations    $\PcH(m_1,\dots,m_\noz)$    and
$\PcQ(d_1,\dots,d_\noz)$  are foliated by hyperbolic discs $\Hyp$. In
other words, every $\SL$-orbit in $\cH(m_1,\dots,m_\noz)$ descends to
a commutative diagram
$$
\begin{CD}
\SL  @>>>  \cH(m_1,\dots,m_\noz)\\
@VVV         @VVV    \\
\SL/\SO \simeq\Hyp @>>> \PcH(m_1,\dots,m_\noz)\,,
\end{CD}
$$
and  similarly,  every  $\PSL$-orbit  in  the  stratum  of  quadratic
differentials descends to a commutative diagram
$$
\begin{CD}
\PSL  @>>>  \cQ(m_1,\dots,m_\noz)\\
@VVV         @VVV    \\
\PSL/\PSO \simeq\Hyp @>>> \PcQ(m_1,\dots,m_\noz)\,.
\end{CD}
$$
The composition of each of the immersions
$$
\Hyp\subset\PcH(m_1,\dots,m_\noz)\quad\text{and}\quad
\Hyp\subset\PcQ(d_1,\dots,d_\noz)
$$
with the projections to the moduli space of curves $\cM_g$ defines an
immersion $\Hyp\subset\cM_g$. The latter immersion is an isometry for
the   hyperbolic   metric   of  curvature  $-1$  on  $\Hyp$  and  the
Teichm\"uller  metric  on $\cM_g$. The images of hyperbolic planes in
$\cM_g$  are  also  called Teichm\"uller discs. Following C.~McMullen
one  can  consider  them  as  ``\textit{complex geodesics}'' in the
Teichm\"uller  metric.  The  images of the diagonal subgroup in $\SL$
are  represented  by  geodesic  lines  in the hyperbolic plane; their
projections  to the Teichm\"uller discs in $\cM_g$ might be viewed as
geodesics in the Teichm\"uller metric.

It  would  be  convenient  to  consider  throughout  this  paper  the
hyperbolic  metric  of  constant curvature $-4$ on $\Hyp$. Under this
choice  of  the  curvature,  the  parameter  $t$ of the one-parameter
subgroup represented by the matrices
$$
G_t=
\begin{pmatrix}
e^t&0\\
0&e^{-t}
\end{pmatrix}
$$
corresponds  to  the natural parameter of geodesics on the hyperbolic
plane  $\Hyp$.  In  the standard coordinate $\zeta=x+iy$ on the upper
half-plane  model  of  the  hyperbolic  plane  $y>0$,  the  metric of
constant curvature $-4$ has the form
$$
\ghyp=\frac{|d\zeta|^2}{4\Im^2\zeta}=\frac{dx^2+dy^2}{4y^2}\,.
$$
The  Laplacian of this metric in coordinate $\zeta=x+iy$ has the form
$$
\Dhyp=
16\Im^2\zeta\,\frac{\partial^2}{\partial\zeta\partial\bar\zeta}
=4y^2\left(\frac{\partial^2}{\partial x^2}
+\frac{\partial^2}{\partial y^2}\right)
$$

In  the  Poincar\'e  model  of  the  hyperbolic  plane,  $|w|<1$, the
hyperbolic metric of constant curvature $-4$ has the form
$$
\ghyp=\frac{|dw|^2}{\left(1-|w|^2\right)^2}
$$
In  the  next  section  we  will  also  use  polar  coordinates  $w=r
e^{i\theta}$ in the Poincar\'e model of the hyperbolic plane. Here
\begin{equation}
\label{eq:hyp:polar:coordinates}
r=\tanh t\,,
\end{equation}
where  $t$ is the distance from the point to the origin in the metric
of   curvature  $-4$.  The  coordinates  $t,\theta$  will  be  called
hyperbolic polar coordinates.

\begin{figure}[htb]
%
\centering
\includegraphics{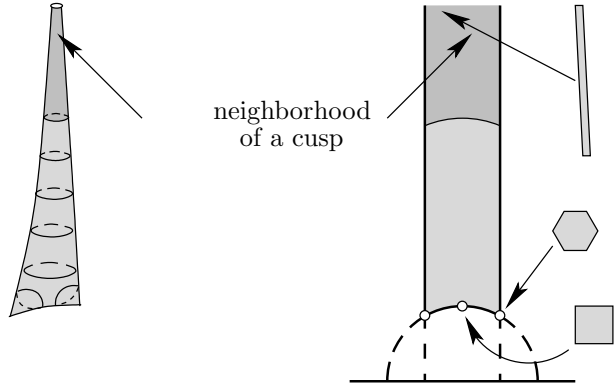}
\begin{picture}(0,0)(0,0)
\put(-50,-40){neighborhood}
\put(-50,-50){\ \ \ of a cusp}
\end{picture}
\vspace{140bp}
\caption{
\label{fig:space:of:flat:tori}
Space of flat tori
}
\end{figure}

\begin{Example}
\label{ex:M1}
The  moduli space $\cM_1$ of curves of genus one is isomorphic to the
projectivized  space  of  flat tori $\PcH(0)$; it is represented by a
single Teichm\"uller disc
\begin{equation}
\label{eq:modular:surface}
\begin{array}{rcl}
&\big\backslash\, \SL\, \big/&\\
[-\halfbls]
\SO\!\!\!\!\!&&\!\!\!\!\!\SLZ
\end{array}
\quad
=
\quad
\begin{array}{rl}
\Hyp\big/&\\
[-\halfbls]  &\!\!\!\!\!  \SLZ
\end{array}
\end{equation}
(see Figure~\ref{fig:space:of:flat:tori}).

Geometrically  one can interpret the local coordinate $\zeta$ on this
Teichm\"uller  disc  as  follows. Consider a pair $(C,\omega)$, where
$C$  is a Riemann surface of genus one, and $\omega$ is a holomorphic
one-form  on  it.  By  convention $C$ is endowed with a marked point.
Choose  the  shortest  flat  geodesic  $\gamma_1$ passing through the
marked  point  and  the  next  after  the  shortest, $\gamma_2$, also
passing  through  the  marked  point.  Under an appropriate choice of
orientation   of   the  geodesics  $\gamma_1$  and  $\gamma_2$,  they
represent   a   pair   of   independent   integer  cycles  such  that
$\gamma_1\circ  \gamma_2=1$.  Consider  the  corresponding periods of
$\omega$,
$$
A:=\int_{\gamma_1} \omega\qquad B:=\int_{\gamma_2} \omega\,.
$$
It  is  easy  to  see  that  the  canonical coordinate $\zeta$ on the
modular  surface~\eqref{eq:modular:surface}  can  be  represented  in
terms of the periods $A$ and $B$ as:
$$
\zeta=\frac{B}{A}\,.
$$

\end{Example}

\subsection{Lyapunov  exponents  and curvature  of  the  determinant
bundle.}
\label{sec:sum:as:mean:curvature}

The      following      observation      of     \mbox{M.~Kontsevich},
see~\cite{Kontsevich},  might  be considered as the starting point of
the  entire construction. Consider a flat surface $S$ in some stratum
$\cH_1(m_1,\dots,m_\noz)$  of  Abelian  differentials  and consider a
Teichm\"uller  disc  passing  through the projection of the ``point''
$S$      to      the      corresponding     projectivized     stratum
$\PcH(m_1,\dots,m_\noz)$.  Recall  that  any  Teichm\"uller  disc  is
endowed  with a canonical hyperbolic metric. Take a circle of a small
radius $\epsilon$ in the Teichm\"uller disc centered at $S$. Consider
a  Lagrangian  subspace  of  the fiber $H^1(S,\R{})$ of the the Hodge
bundle  over $S$ and a basis $v_1,\dots, v_g$ in it. Apply a parallel
transport  of  the  vectors  $v_1,\dots,  v_g$  to every point of the
circle.  The  vectors  do  not  change,  but  their  Hodge norm does.
Evaluate   an   average   of   the   logarithm   of  the  Hodge  norm
$\|v_1\wedge\dots\wedge   v_g\|_{g_\epsilon  r_\theta  S}$  over  the
circle  and  subtract the Hodge norm $\|v_1\wedge\dots\wedge v_g\|_S$
at the initial point.
The  starting observation in~\cite{Kontsevich} claims that the result
does  not depend on the choice of the basis $v_1,\dots, v_g$, and not
even  on  the  Lagrangian  subspace $L$ but only on the initial point
$S$. For the sake of completeness, we present the arguments here.

We start with a convenient  expression  for  the  Hodge  norm  of  a
polyvector  $v_1\wedge\dots\wedge v_g$ spanning a Lagrangian subspace
in $H^1(S,\R{})$. Note that the vector space $H^1(S,\R{})$ is endowed
with  a  canonical  integer  lattice  $H^1(S,\Z{})$,  which defines a
canonical  linear  volume element on $H^1(S,\R{})$: the volume of the
fundamental domain of the integer lattice with respect to this volume
element is equal to one. In other words,
we have a map
\begin{displaymath}
\Omega: \Lambda^{2g} H^1(S, \R{}) \to \R{}/\pm
\end{displaymath}
given by
\begin{displaymath}
\Omega(\lambda) = \lambda(c_1, \dots, c_{2g}),
\end{displaymath}
where $\lambda\in\Lambda^{2g} H^1(S, \R{})$, and
$\{c_1, \dots, c_{2g}\}$ is any $\Z$-basis for $H_1(S, \Z{})$.
This map naturally extends to a linear map:
$$
\Omega:\Lambda^{2g}H^1(S,\C{})\to \C{}/\pm\,.
$$
Let  $L=v_1\wedge\dots\wedge v_g$, where vectors $v_1,\dots,v_g$ span
a Lagrangian subspace in $H^1(S,\R{})$.
Let $\omega_1,\dots,\omega_g$ form a basis
in $H^{1,0}(S)$. We define
\begin{equation}
\label{eq:def:norm:L}
\|L\|^2:=
\frac{
|\Omega(v_1\wedge\dots\wedge v_g\wedge\omega_1\wedge\dots\wedge\omega_g)|\cdot
|\Omega(v_1\wedge\dots\wedge v_g\wedge\bar\omega_1\wedge\dots\wedge\bar\omega_g)|}
{|\Omega(\omega_1\wedge\dots\wedge\omega_g\wedge
\bar\omega_1\wedge\dots\wedge\bar\omega_g)|}\,.
\end{equation}
For  vectors $v_1,\dots,v_g$ spanning a Lagrangian subspace, the norm
defined    above    coincides    with    the   Hodge   norm   as   in
section~\ref{sec:Hodge:bundle}    and    is    thus    non-degenerate
(see~\cite{Hubert:Bourbaki} where this important issue is clarified).
Clearly,  this definition does not depend on a choice of the basis in
$H^{1,0}(S)$. Note that
$$
\Omega(\omega_1\wedge\dots\wedge\omega_g\wedge
\bar\omega_1\wedge\dots\wedge\bar\omega_g)=
\det\langle\omega_i,\omega_j\rangle\ ,
$$
where
\begin{equation}
\label{eq:w:i:w:j}
\langle\omega_i,\omega_j\rangle:=
\left(\begin{array}{ccc}
\langle\omega_1,\omega_1\rangle & \dots & \langle\omega_1,\omega_g\rangle\\
\dots&\dots&\dots\\
\langle\omega_g,\omega_1\rangle & \dots & \langle\omega_g,\omega_g\rangle
\end{array}
\right)
\end{equation}
is      the      matrix      of     pairwise     Hermitian     scalar
products~\eqref{eq:hodge:hermitian:form}  of elements of the basis in
$H^{1,0}(S)$.

\begin{Proposition}(\cite{Kontsevich})
\label{prop:Dhyp:L:equals:Lambda}
For any flat surface $S$, any $L = v_1 \wedge \dots
  v_g$, where the vectors $v_1, \dots v_g$ span a Lagrangian subspace
  of $H^1(S,   \R{})$,  and  for any basis $\{\omega_k\}$ of
local  holomorphic sections of the Hodge vector bundle $H^{1,0}$ over
the ambient stratum, the following identity holds:
$$
\Dhyp\log\|L\|=
-\cfrac{1}{2}\,\Dhyp\log|\det\langle\omega_i,\omega_j\rangle|
$$
where $\Dhyp$ is the hyperbolic Laplacian along the Teichm\"uller
disc.
\end{Proposition}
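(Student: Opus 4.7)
The plan is to reduce the identity to the classical fact that $\log$ of the modulus of a holomorphic function on a Riemann surface is harmonic (in either the flat or the hyperbolic metric, since harmonicity in two real dimensions is conformally invariant). Taking logarithms in the definition~\eqref{eq:def:norm:L} gives
\begin{equation*}
2\log\|L\| \;=\; \log|F(t)| \;+\; \log|G(t)| \;-\; \log|\det\langle\omega_i,\omega_j\rangle|,
\end{equation*}
where, writing $t$ for a local holomorphic coordinate on the Teichm\"uller disc,
\begin{align*}
F(t) &:= \Omega\bigl(v_1\wedge\dots\wedge v_g\wedge\omega_1(t)\wedge\dots\wedge\omega_g(t)\bigr),\\
G(t) &:= \Omega\bigl(v_1\wedge\dots\wedge v_g\wedge\bar\omega_1(t)\wedge\dots\wedge\bar\omega_g(t)\bigr).
\end{align*}
The whole proposition will follow once one shows that $F$ is holomorphic and $G$ is anti-holomorphic in $t$ on a neighbourhood of the base point.

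First I would make this assertion precise. By the Gauss--Manin connection, the integer lattice $H^1(S,\Z{})$ is covariantly constant along the Teichm\"uller disc, hence so is the linear functional $\Omega:\Lambda^{2g}H^1(\cdot,\C{})\to\C{}/\pm$. The Lagrangian vectors $v_1,\dots,v_g\in H^1(S,\R{})$ are transported as flat sections, so they appear in $F$ and $G$ as $t$-independent arguments. Since $H^{1,0}$ is a holomorphic subbundle of $H^1_{\C{}}$ and $\{\omega_k(t)\}$ is by hypothesis a holomorphic local frame, the wedge $\omega_1(t)\wedge\dots\wedge\omega_g(t)$ depends holomorphically on $t$, and hence so does $F(t)$. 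Complex conjugating the frame exchanges $H^{1,0}$ and $H^{0,1}$, and $G$ depends on $t$ through $\bar\omega_k(t)$ only, so $G(t)$ is anti-holomorphic.

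Next I would observe that, at the given point $S$, one has $F(S)\neq 0$ and $G(S)\neq 0$: indeed $\|L\|^2>0$ by the Hodge positivity on Lagrangian subspaces, and $|\det\langle\omega_i,\omega_j\rangle|>0$ since the Hermitian form~\eqref{eq:hodge:hermitian:form} is positive definite on $H^{1,0}(S)$, so in view of the definition~\eqref{eq:def:norm:L} neither $F$ nor $G$ can vanish at $S$. By continuity $F,G$ stay nonzero on a neighbourhood of~$S$ in the Teichm\"uller disc, and therefore $\log|F|$, $\log|G|$ are smooth and harmonic (with respect to the flat Laplacian in~$t$) on that neighbourhood. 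Since the hyperbolic metric on the Teichm\"uller disc is conformal to the flat metric in the coordinate $t$, and since in real dimension two the Laplacian of a function is multiplied only by the conformal factor $e^{-2\phi}$, one has $\Dhyp\log|F|=0$ and $\Dhyp\log|G|=0$ at $S$. Substituting into the displayed identity yields exactly
\begin{equation*}
2\,\Dhyp\log\|L\| \;=\; -\,\Dhyp\log|\det\langle\omega_i,\omega_j\rangle|,
\end{equation*}
which is the claim.

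The main subtlety, and the only step that is not formal, is the claim that $F$ and $G$ really are holomorphic/anti-holomorphic along a Teichm\"uller disc. Concretely this amounts to checking that, in period coordinates on the ambient stratum, the composition of the holomorphic section $\omega_k$ of $H^{1,0}$ with the Gauss--Manin trivialization of $H^1_{\C{}}$ is a holomorphic map with values in the fixed vector space $H^1(S,\C{})$; equivalently, that the Hodge filtration varies holomorphically with respect to the complex structure on the stratum induced by period coordinates. This is standard variation-of-Hodge-structure input, and once it is invoked, the rest of the argument is the elementary harmonic-function computation above.
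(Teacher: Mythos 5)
Your argument is exactly the paper's: take the logarithm of the definition~\eqref{eq:def:norm:L}, note that the two $\Omega(\cdots)$ factors are holomorphic and anti-holomorphic in the deformation parameter because the $v_i$ are Gauss--Manin flat and the $\omega_k$ form a holomorphic frame, and conclude that their log-moduli are annihilated by $\Dhyp$. The extra care you take about non-vanishing of the two factors and conformal invariance of two-dimensional harmonicity is correct and only makes explicit what the paper leaves implicit.
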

\begin{proof}
Applying       the       hyperbolic       Laplacian       to      the
expression~\eqref{eq:def:norm:L} we get
\begin{multline*}
\Dhyp\log\|L\|=
\cfrac{1}{2}\,\Dhyp\log\|L\|^2=
\cfrac{1}{2}\Big(\Dhyp\log
|\Omega(v_1\wedge\dots\wedge v_g\wedge\omega_1\wedge\dots\wedge\omega_g)|
\ +\\+\
\Dhyp\log
|\Omega(v_1\wedge\dots\wedge v_g\wedge\bar\omega_1\wedge\dots\wedge\bar\omega_g)|
\ -\
\Dhyp \log
|\det\langle\omega_i,\omega_j\rangle|\Big)
\end{multline*}
Note that $v_1,\dots,v_g$ do not change along the Teichm\"uller disc,
so the function
$
\Omega(v_1\wedge\dots\wedge v_g\wedge\omega_1\wedge\dots\wedge\omega_g)
$
is   a   holomorphic  function  of  the  deformation  parameter,  and
$
\Omega(v_1\wedge\dots\wedge v_g\wedge\bar\omega_1\wedge\dots\wedge\bar\omega_g)
$
is  an  antiholomorphic  one.  Hence both functions are harmonic. The
Lemma is proved.
\end{proof}

Denote
\begin{equation}
\label{eq:lambda}
\Lambda(S):=
-\cfrac{1}{4}\,\Dhyp\log|\det\langle\omega_i,\omega_j\rangle|\,,
\end{equation}
where  $\Dhyp$  is  the  hyperbolic Laplacian along the Teichm\"uller
disc in the metric of constant negative curvature $-4$.

\begin{NNRemark}
Note  that  one fourth of the hyperbolic Laplacian in curvature $-4$,
as   in   definition~\eqref{eq:lambda},   coincides  with  the  plain
hyperbolic Laplacian in curvature $-1$.
\end{NNRemark}

The  function  $\Lambda(S)$ is initially defined on the projectivized
strata    $\PcH(m_1,\dots,m_\noz)$    and   $\PcQ(d_1,\dots,d_\noz)$.
Sometimes it would be convenient to pull it back to the corresponding
strata  $\cH(m_1,\dots,m_\noz)$  and $\cQ(d_1,\dots,d_\noz)$ by means
of the natural projection. As we already mentioned, $\Lambda(S)$ does
not depend on a choice of a basis of Abelian differentials.

One  can  recognize  in $\Lambda(S)$ the curvature of the determinant
line   bundle  $\Lambda^g  H^{1,0}$.  This  relation  is  of  crucial
importance     for     us;     it     will     be     explored     in
sections~\ref{ss:Sum:of:exponents:for:Teichmuller:curve}--\ref{sec:Riemann:Roch}
and in section~\ref{ss:analytic:Riemann:Roch:statement}.

\begin{NNRemark}
The   function  $\Lambda(S)$  defined  by  equation~\eqref{eq:lambda}
coincides with the function
$$
\Phi_g(q,I_g)=\Lambda_1(q)+\dots+\Lambda_g(q)
$$
introduced  in  formula  (5.9) in~\cite{Forni:Deviation}; see also an
alternative                    geometric                   definition
in~\cite{Forni:Matheus:Zorich:second:fund:form}. In particular, it is
proved  in~\cite{Forni:Deviation}  that  $\Lambda(S)$  is  everywhere
nonnegative.  (A  similar  statement in terms of the curvature of the
determinant line bundle is familiar to algebraic geometers.)
\end{NNRemark}

The next argument follows G.~Forni~\cite{Forni:Deviation}; see also
the survey of R.~Krikorian~\cite{Krikorian}. In the original
paper of M.~Kontsevich~\cite{Kontsevich} an equivalent statement was
formulated for connected components of the strata; it was proved by
G.~Forni~\cite{Forni:Deviation} that it is valid for any regular
invariant suborbifold.

Following  G.~Forni  we  start  with a formula from harmonic analysis
(literally  corresponding  to  Lemma  3.1 in~\cite{Forni:Deviation}).
Consider  the  Poincar\'e  model  of  the  hyperbolic plane $\Hyp$ of
constant   curvature   $-4$;   let  $t,\theta$  be  hyperbolic  polar
coordinates~\eqref{eq:hyp:polar:coordinates}.  Denote by $D_t$ a disc
of  radius  $t$  in  the hyperbolic metric, and by $|D_t|$ denote its
area.

\begin{NNLemma}
For  any  smooth  function  $L$  on  the hyperbolic plane of constant
curvature $-4$ one has the following identity:
\begin{equation}
\label{eq:Green:formula}
\cfrac{1}{2\pi} \cfrac{1}{\partial t}\int_0^{2\pi} L(t,\theta)\,d\theta=
\cfrac{1}{2} \tanh(t) \cfrac{1}{|D_t|}\int_{D_t}\Dhyp L\, d\ghyp
\end{equation}
\end{NNLemma}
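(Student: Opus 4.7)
The plan is to prove this by a direct application of the divergence theorem in hyperbolic polar coordinates, after carefully working out the metric in the curvature $-4$ normalization. The whole statement is essentially a mean value identity relating a boundary derivative to an interior integral of the Laplacian, so there is no deep obstruction—only a bookkeeping check that the normalization constants $\tanh(t)$ and $|D_t|$ combine correctly.

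First I would set up coordinates. Using $r=\tanh t$ in the Poincar\'e disk model with metric $\ghyp=|dw|^2/(1-|w|^2)^2$, a short calculation gives
\begin{equation*}
\ghyp = dt^2 + \tfrac14\sinh^2(2t)\, d\theta^2,
\end{equation*}
so the hyperbolic area element is $d\ghyp=\tfrac12\sinh(2t)\,dt\,d\theta$ and the area of the geodesic disc is
\begin{equation*}
|D_t| = \int_0^t\!\!\int_0^{2\pi} \tfrac12\sinh(2s)\,d\theta\,ds = \pi\sinh^2(t).
\end{equation*}

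Next I would apply the divergence theorem to $L$ on $D_t$. Since $t$ is precisely the (signed) hyperbolic distance from the origin, $\partial_t$ is the outward unit normal to $\partial D_t$, and the induced arc length element on $\partial D_t$ is $d\ell = \tfrac12\sinh(2t)\,d\theta$. Hence
\begin{equation*}
\int_{D_t}\Dhyp L\,d\ghyp = \int_{\partial D_t} \partial_n L\, d\ell = \tfrac12\sinh(2t)\int_0^{2\pi}\frac{\partial L}{\partial t}(t,\theta)\,d\theta = \tfrac12\sinh(2t)\,\frac{\partial}{\partial t}\int_0^{2\pi} L(t,\theta)\,d\theta,
\end{equation*}
where I used that $\tfrac12\sinh(2t)$ is independent of $\theta$ to pull it outside, and interchanged $\partial_t$ with the $\theta$-integration.

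Finally I would rearrange and verify the constant. Dividing by $2\pi$ and solving for the $\theta$-average derivative,
\begin{equation*}
\frac{1}{2\pi}\frac{\partial}{\partial t}\int_0^{2\pi} L(t,\theta)\,d\theta = \frac{1}{\pi\sinh(2t)}\int_{D_t}\Dhyp L\,d\ghyp.
\end{equation*}
Since $\tfrac12\tanh(t)/|D_t| = \tfrac12\tanh(t)/(\pi\sinh^2 t) = 1/(2\pi\sinh(t)\cosh(t)) = 1/(\pi\sinh(2t))$, this is exactly formula~\eqref{eq:Green:formula}. The only subtlety worth watching is the factor of $4$ coming from the curvature $-4$ convention, which is precisely what makes the $\tfrac12\tanh(t)$ appear on the right-hand side instead of $\tanh(t)$.
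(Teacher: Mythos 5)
Your proof is correct, and it fills a gap the paper deliberately leaves open: the paper does not prove this lemma at all, but simply cites it as Lemma~3.1 of Forni's paper \cite{Forni:Deviation} ("a formula from harmonic analysis"). Your route — write the curvature $-4$ metric in geodesic polar coordinates as $dt^2+\tfrac14\sinh^2(2t)\,d\theta^2$, compute $|D_t|=\pi\sinh^2 t$, and apply the divergence theorem on the geodesic disc — is the natural one, and all the constants check out: one indeed has $\int_{D_t}\Dhyp L\,d\ghyp=\tfrac12\sinh(2t)\,\partial_t\int_0^{2\pi}L\,d\theta$ since $\partial_t$ is the unit outward normal and $d\ell=\tfrac12\sinh(2t)\,d\theta$, and $\tfrac12\tanh(t)/(\pi\sinh^2 t)=1/(\pi\sinh(2t))$ matches. (As a sanity check on the normalization, the curvature of $dt^2+f(t)^2d\theta^2$ is $-f''/f=-4$ for $f=\tfrac12\sinh(2t)$, so $t$ really is hyperbolic distance in the metric the paper uses.) Your closing remark about the curvature convention is slightly loose — the curvature $-1$ analogue of the coefficient would be $\tanh(t/2)$ rather than $\tanh(t)$ — but this is a side comment and does not affect the argument. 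The only implicit step worth flagging is the interchange of $\partial_t$ with the $\theta$-integral, which is immediate for smooth $L$.
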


To  prove  the  key  Background  Theorem  below  we  need a couple of
preparatory statements.

\begin{NNLemma}[Forni]
For  any  flat surface $S$ in any stratum in any genus the derivative
of the Hodge norm admits the following uniform bound:
$$
\max_{\substack{
c\in H^1(S,\R{})\text{\rm such}\\
\text{\rm that }\|c\|=1}}
\left|\,\frac{d\log\|c\|}{dt}\right|\le 1
$$
   %
and the function $\Lambda(S)$ defined in~\eqref{eq:lambda} satisfies:
\begin{equation}
\label{eq:bound:for:Lambda}
|\Lambda(S)|\le g\,.
\end{equation}
\end{NNLemma}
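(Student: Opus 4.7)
The plan is to derive both bounds from the variational formulas for the Hodge form along the Teichm\"uller flow, combined with the pointwise bound $|\mu|\equiv 1$ on the Beltrami differential that generates the flow. For $c\in H^1(S,\R{})$ let $\omega(c,t)\in H^{1,0}(S_t)$ denote the unique holomorphic representative with $[\Re\omega(c,t)]=c$ under Gauss--Manin parallel transport. In the flat coordinate $z$ determined by $\omega_0$, the Teichm\"uller deformation is generated by the Beltrami differential $\mu=d\bar z/dz=\bar\omega_0/\omega_0$, which has pointwise modulus $1$ almost everywhere on $S$; this is the geometric source of the uniform bounds we are after.

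First I would establish the bound on the Hodge norm derivative. Requiring that $\omega(c,t)$ remain holomorphic in the deforming complex structure while preserving the fixed real cohomology class $c$ forces, via standard quasi-conformal deformation calculus, the identity
\begin{equation*}
\frac{d}{dt}\|c\|^2\Big|_{t=0}\ =\ -2\,\Re\int_S\mu\cdot\omega(c)^2.
\end{equation*}
Since $\mu\cdot\omega^2$ is a $(1,1)$-form of pointwise density $|\mu|\cdot|\omega|^2=|\omega|^2$, the pointwise estimate gives
\begin{equation*}
\left|\frac{d}{dt}\|c\|^2\right|\ \le\ 2\int_S|\omega(c)|^2\ =\ 2\,\|c\|^2,
\end{equation*}
equivalent to $|d\log\|c\|/dt|\le 1$.

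Second I would bound $\Lambda(S)=-\tfrac14\Dhyp\log|\det\langle\omega_i,\omega_j\rangle|$. Choose a Hodge-orthonormal basis $\omega_1,\dots,\omega_g$ of $H^{1,0}(S)$, so that $\det\langle\omega_i,\omega_j\rangle=1$ at the basepoint, and form the symmetric complex $g\times g$ matrix $B=(B_{ij})$ with $B_{ij}=\int_S\mu\,\omega_i\omega_j$. The Poincar\'e disc model of $\Hyp$ is conformally Euclidean at the origin, so the hyperbolic Laplacian at the centre of the Teichm\"uller disc reduces to the ordinary Laplacian; inserting the variation formula for $\omega_i(t)$ into $\det\langle\omega_i(t),\omega_j(t)\rangle$ and performing the second-order expansion produces
\begin{equation*}
\Lambda(S)\ =\ \operatorname{tr}(B\bar B)\ =\ \sum_{i,j}|B_{ij}|^2\ =\ \sum_{j=1}^g\Lambda_j,
\end{equation*}
where $\Lambda_1,\dots,\Lambda_g\ge 0$ are the eigenvalues of the Hermitian positive semi-definite matrix $B\bar B$. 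Viewing $B$ as the matrix of the operator $\omega\mapsto\pi_{H^{0,1}}(\mu\omega)$ between $g$-dimensional Hermitian spaces, the equality $\int_S|\mu\omega|^2=\int_S|\omega|^2$ together with the fact that the Hodge projection is norm-decreasing gives operator norm $\|B\|_{\mathrm{op}}\le 1$. Hence each $\Lambda_j\in[0,1]$ and $\Lambda(S)\le g$; nonnegativity of $\Lambda$ was already observed by Forni.

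The main obstacle is the precise derivation of the variation formula for $\omega(c,t)$ and the subsequent identification $\Lambda(S)=\operatorname{tr}(B\bar B)$. Both require careful simultaneous tracking of the deformation of the complex structure and of the Hodge filtration, by means of Kodaira--Spencer/quasi-conformal deformation theory (locally coordinates transform as $z_t=\cosh(t)\,z+\sinh(t)\,\bar z$, forcing the antiholomorphic correction $\bar B_c$ in the $t$-derivative of $\omega(c,t)$). Once these two analytic ingredients are in place, the bounds themselves are immediate consequences of the unit Teichm\"uller norm $|\mu|\equiv 1$ of the generating Beltrami differential.
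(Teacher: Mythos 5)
Your argument is correct and is essentially the same as the paper's, which simply cites the variational formulas of Forni (Lemma~$2.1'$ and Corollary~2.2 of \cite{Forni:Deviation}): the first bound is the Cauchy--Schwarz estimate $|B_\omega(\omega(c),\omega(c))|\le\|\omega(c)\|^2$ applied to the first variation of the Hodge norm, and the second is the identification of $\Lambda(S)$ with $\operatorname{tr}(BB^*)=\Lambda_1+\dots+\Lambda_g$ with each eigenvalue in $[0,1]$ because $\|B\|_{\mathrm{op}}\le 1$. You have merely unpacked the cited reference rather than taken a different route.
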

\begin{proof}
The  statement  of the Lemma is an immediate corollary of variational
formulas  from  Lemma~$2.1'$ in~\cite{Forni:Deviation}; basically, it
is  proved  in Corollary 2.2 in~\cite{Forni:Deviation} (in a stronger
form).
\end{proof}

As  an  immediate  Corollary we obtain the following universal bound:

\begin{NNCorollary}
For any flat surface $S$ in any stratum in any genus, the logarithmic
derivative   of   the  induced  Hodge  norm  on  the  exterior  power
$\Lambda^g( H^1(S,\R{}))$ admits the following uniform bound:
\begin{equation}
\label{eq:bound:for:derivative:of:L}
\max_{\substack{
L\in \Lambda^g(H^1(S,\R{}))\\
L\neq 0}}
\left|\,\frac{d\log\|L\|}{dt}\right|\le 1\,.
\end{equation}
\end{NNCorollary}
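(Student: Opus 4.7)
The plan is to reduce the polyvector bound directly to the scalar Forni bound of the preceding Lemma, via orthonormal diagonalization of the infinitesimal Hodge variation. Fix a time $t_0$ along the orbit. The first-order variation of the Hodge inner product at $t_0$ is a self-adjoint operator $T$ on $(H^1(S,\R{}),\langle\cdot,\cdot\rangle_{t_0})$. First I would choose a Hodge-orthonormal basis $e_1,\dots,e_{2g}$ of $H^1(S,\R{})$ at $t_0$ diagonalizing $T$, with eigenvalues $\mu_i$; applying the preceding Lemma to each unit eigenvector $e_i$ yields $|\mu_i|\le 1$.

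Next, for any nonzero $L\in\Lambda^g(H^1(S,\R{}))$, expand $L=\sum_I c_I\,e_I$ in the induced basis indexed by $g$-subsets $I\subset\{1,\dots,2g\}$, with $e_I=e_{i_1}\wedge\cdots\wedge e_{i_g}$. The coefficients $c_I$ are independent of $t$ since the classes $e_i$ are parallel under the Gauss--Manin connection. The induced Gram matrix on $\Lambda^g H^1$ in this basis has entries $\det((G(t))_{IJ})$; at $t_0$ one has $G(t_0)=\mathrm{Id}$ while $G'(t_0)$ is diagonal, so the off-diagonal entries ($I\ne J$) of the induced Gram matrix have vanishing first derivative at $t_0$. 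A direct calculation then gives
\[
\frac{d\log\|L\|_t}{dt}\bigg|_{t_0} \;=\; \frac{\sum_I c_I^2\,\bigl(\sum_{i\in I}\mu_i\bigr)}{\sum_I c_I^2},
\]
a weighted average of the $g$-subset sums $\sum_{i\in I}\mu_i$.

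The bound then reduces to estimating each sum $\sum_{i\in I}\mu_i$. The naive pointwise estimate $|\mu_i|\le 1$ yields only $|d\log\|L\|/dt|\le g$, and the main obstacle is sharpening this to the stated $1$. This sharpening exploits the Hodge-theoretic structure behind the Forni Lemma beyond the scalar bound it states: the eigenvalues $\mu_i$ come in opposite pairs $\mu_i+\mu_{2g+1-i}=0$ by the symplectic invariance of Gauss--Manin, and the interplay with the Hodge decomposition $H^1_{\C{}}=H^{1,0}\oplus H^{0,1}$ imposes further constraints controlling each $g$-subset sum $\sum_{i\in I}\mu_i$ by $1$ in absolute value. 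Given this refined eigenvalue estimate, the weighted-average formula above immediately yields the claim.
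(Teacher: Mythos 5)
Your reduction is set up correctly and is the natural route: diagonalizing the first variation of the Hodge inner product in a Hodge-orthonormal basis, invoking the scalar bound $|\mu_i|\le 1$ from the preceding Lemma, and deriving the weighted-average identity
\begin{equation*}
\frac{d\log\|L\|_t}{dt}\bigg|_{t_0}
=\frac{\sum_I c_I^2\,\bigl(\sum_{i\in I}\mu_i\bigr)}{\sum_I c_I^2}
\end{equation*}
are all fine, and together they yield the uniform bound $g$. The gap is your final sharpening to $1$: you assert, with no argument, that the symplectic pairing $\mu_i+\mu_{2g+1-i}=0$ together with unspecified Hodge-theoretic constraints forces $\bigl|\sum_{i\in I}\mu_i\bigr|\le 1$ for every $g$-subset $I$. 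No such constraint exists. The eigenvalues are $\{\pm\Lambda_1,\dots,\pm\Lambda_g\}$ with each $\Lambda_j\in[0,1]$, and the $g$-subset consisting of all the positive eigenvalues (whose eigenvectors span a Lagrangian subspace) produces the sum $\Lambda_1+\dots+\Lambda_g$ --- precisely the quantity that the paper only bounds by $g$ in~\eqref{eq:bound:for:Lambda}, and which genuinely exceeds $1$. Indeed, since $\lambda_1+\dots+\lambda_g=\lim_{T\to\infty}\frac{1}{T}\int_0^T\frac{d}{dt}\log\|L(g_tS)\|\,dt$ for a generic Lagrangian $L$, any uniform constant $C$ in~\eqref{eq:bound:for:derivative:of:L} forces $\lambda_1+\dots+\lambda_g\le C$; but $\lambda_1+\lambda_2=1+\tfrac13=\tfrac43>1$ on $\cH_1(2)$ by Corollary~\ref{cor:g2}.

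So the estimate with constant $1$ is not merely unproved in your write-up; it cannot be proved, and the constant in the displayed statement should be read as $g$. The paper offers no proof (the Corollary is presented as immediate from the Lemma), and the only use made of~\eqref{eq:bound:for:derivative:of:L} is as a dominated-convergence bound when interchanging a limit with the integral over the circle in the proof of the Background Theorem, for which any finite uniform constant suffices. Your argument through the weighted-average formula, combined with $|\mu_i|\le 1$, already gives the correct and sufficient bound $\left|\frac{d\log\|L\|}{dt}\right|\le g$; stop there rather than chasing the constant $1$.
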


Now  everything is ready to prove the Proposition below, which is the
starting point of the current work.

\begin{BackgroundTheorem}[M.~Kontsevich; G.~Forni]
Let   $\cM_1$   be   any  closed  connected  regular  $\SL$-invariant
suborbifold  of  some  stratum of Abelian differentials in genus $g$.
The  top   $g$  Lyapunov  exponents  of  the  Hodge bundle $H^1$ over
$\cM_1$  along the Teichm\"uller flow satisfy the following relation:
\begin{equation}
\label{eq:sum:of:exponents:equals:Lambda}
\lambda_1 + \dots + \lambda_g =
\int_{\cM_1}\Lambda(S)\,d\nu_1(S)\ .
\end{equation}

Let   $\cM_1$   be  any  closed  connected  regular  $\PSL$-invariant
suborbifold  of  some  stratum of meromorphic quadratic differentials
with   at  most  simple  poles  in  genus  $g$.  The top $g$ Lyapunov
exponents  of  the  Hodge  bundle  $H^1_+$  over  $\cM_1$  along  the
Teichm\"uller flow satisfy the following relation:
\begin{equation}
\label{eq:sum:of:exponents:equals:Lambda:q}
\lambda^+_1 + \dots + \lambda^+_g =
\int_{\cM_1}\Lambda(S)\,d\nu_1(S)\ .
\end{equation}
\end{BackgroundTheorem}
\begin{proof}
We  prove  the  first  part of the statement; the proof of the second
part is completely analogous.

Consider   the   bundle   ${\mathcal   Gr}_g(\cM_1)$   of  Lagrangian
Grassmannians  ${\mathcal  Gr}_g(\R{2g})$  associated  to  the  Hodge
vector  bundle $H^1_{\R{}}$ over $\cM_1$. A fiber of this bundle over
a  ``point''  $S\in\cM_1$ can be naturally identified with the set of
of Lagrangian subspaces of $H^1(S,\R{})$.

Note  also that the sum of the top $k$ Lyapunov exponents of a vector
bundle  is  equal to the top Lyapunov exponent of its $k$-th exterior
power. Denote by $d\sigma_S$ the normalized Haar measure in the fiber
of  the  Lagrangian  Grassmannian bundle over a point $S\in\cM_1$. By
the      Oseledets     multiplicative     ergodic     theorem     for
$(\nu_1\times\sigma)$-almost all pairs $(S,L)$ where $S\in\cM_1$, and
$L\in{\mathcal Gr}_g\left(H^1(S,\R{})\right)$ one has
$$
\lambda_1+\dots+\lambda_g=
\lim_{T\to+\infty}\frac{1}{T}
\log\|L(g_t S)\|\,.
$$
(Here  we  use  the  simple  fact  that for $\nu_1$-almost every flat
surface     $\sigma$-almost    every    Lagrangian    subspace    is
Oseledets-generic.)

Using the identity
$$
\log\|L(g_t S)\|=\int_0^T\frac{d}{dt}\log\|L(g_t S)\|\,dt
$$
we  average  the right hand side of the above formula along the total
space  of  the Grassmanian bundle obtaining the first equality below.
Then  we  apply  an  extra  averaging over the circle, and, using the
uniform bound~\eqref{eq:bound:for:derivative:of:L} we interchange the
limit with the integral over the circle. Thus, we establish a further
equality with the expression in the second line below. We apply Green
formula~\eqref{eq:Green:formula}  to  the  inner  expression  in  the
second  line thus establishing an equality with the expression in the
third line. Then we apply Proposition~\ref{prop:Dhyp:L:equals:Lambda}
to  pass  to  the  expression  in  line  four  below.  We pass to the
expression  in line five applying definition~\eqref{eq:lambda}. (Note
that  the  fraction  $\cfrac{\tanh(t)}{2|D_t|}$  in  line  four  gets
transformed to $\cfrac{\tanh(t)}{|D_t|}$ in line five; the factor $2$
from  the  denominator  of  the  first  fraction  is  incorporated in
$\Lambda(S)$.)  Finally,  to pass to the left-hand side expression in
the bottom line, we use the uniform bound~\eqref{eq:bound:for:Lambda}
to  change  the  order  of  integration. The very last equality is an
elementary  property  of  $\tanh(t)$.  As  a  result  we  obtain  the
following sequence of equalities:
\begin{multline*}
\lambda_1+\dots+\lambda_g=
\int_{{\mathcal Gr}_g(\cM_1)}
\lim_{T\to+\infty}\frac{1}{T}
\int_0^T
\frac{d}{dt}\log\|L(g_t S)\|\,dt\,d\nu_1\,d\sigma_S
\ =\\=\
\int_{{\mathcal Gr}_g(\cM_1)}
\lim_{T\to+\infty}\frac{1}{T}
\int_0^T\frac{1}{2\pi}\int_0^{2\pi}
\frac{d}{dt}\log\|L(g_t r_\theta S)\|d\theta\,\,dt\,d\nu_1\,d\sigma_S
\ =\\=\
\int_{{\mathcal Gr}_g(\cM_1)}
\lim_{T\to+\infty}\frac{1}{T}
\int_0^T
\frac{\tanh(t)}{2|D_t|}
\int_{D_t}
\Dhyp\log\|L(g_t r_\theta S)\|d\ghyp\,\,dt\,d\nu_1\,d\sigma_S
\ =\\=\
\int_{\cM_1}
\lim_{T\to+\infty}\frac{1}{T}
\int_0^T
\frac{\tanh(t)}{2|D_t|}
\int_{D_t}
-\cfrac{1}{2}\,\Dhyp\log|\det\langle\omega_i,\omega_j\rangle|
d\ghyp\,\,dt\,d\nu_1
\ =\\=\
\int_{\cM_1}
\lim_{T\to+\infty}\frac{1}{T}
\int_0^T
\frac{\tanh(t)}{|D_t|}
\int_{D_t}
\Lambda(S)\,
d\ghyp\,\,dt\,d\nu_1
\ =\\=\
\int_{\cM_1}\Lambda(S)\,d\nu_1\cdot
\left(\lim_{T\to+\infty}\frac{1}{T}
\int_0^T \tanh(t)\,dt\right)
\ =\
\int_{\cM_1}\Lambda(S)\,d\nu_1(S)
\end{multline*}
The Proposition is proved.
\end{proof}

This  result  was developed by G.~Forni in~\cite{Forni:Deviation}. In
particular, he defined a collection of very interesting submanifolds,
called  \textit{determinant  locus}.  The  way  in  which the initial
invariant  suborbifold  $\cM_1$ intersects with the determinant locus
is   responsible   for  degeneration  of  the  spectrum  of  Lyapunov
exponents,     see~\cite{Forni:Deviation},     \cite{Forni:Handbook},
\cite{Forni:Matheus:Zorich},
\cite{Forni:Matheus:Zorich:second:fund:form}.      However,     these
beautiful  geometric results of G.~Forni are beyond the scope of this
paper, as well as further results of G.~Forni~\cite{Forni:Deviation},
and  of A.~Avila and M.~Viana~\cite{Avila:Viana} on simplicity of the
spectrum of Lyapunov exponents for connected components of the strata
of Abelian differentials.

\subsection{Sum of Lyapunov exponents for a Teichm\"uller curve}
\label{ss:Sum:of:exponents:for:Teichmuller:curve}
For   the   sake  of  completeness  we  consider  an  application  of
formula~\eqref{eq:sum:of:exponents:equals:Lambda}   to  Teichm\"uller
curves.

Let  $\cC$  be a smooth possibly non-compact complex algebraic curve.
We  recall  that a variation of {\it real} polarized Hodge structures
of  weight $1$  on $\cC$ is given by a real symplectic vector bundle
$\mathcal{E}_{\R{}}$  with  a flat connection $\nabla$ preserving the
symplectic  form,  such  that  every  fiber of $\mathcal E$ carries a
Hermitian  structure  compatible  with  the symplectic form, and such
that     the     corresponding     complex    Lagrangian    subbundle
$\mathcal{E}^{1,0}$   of  the  complexification  $\mathcal{E}_{\C{}}=
\mathcal{E}_{\R{}}\otimes\C{}$ is {\it holomorphic}. The variation is
called  {\it  tame}  if all eigenvalues of the monodromy around cusps
lie  on  the  unit  circle,  and the subbundle $\mathcal{E}^{1,0}$ is
meromorphic  at cusps. For example, the Hodge bundle of any algebraic
family  of  smooth compact curves over $\cC$ (or an orthogonal direct
summand of it) is a tame variation.

Similarly, a variation of {\it complex} polarized Hodge structures of
weight $1$  is given by a complex vector bundle $\mathcal{E}_{\C{}}$
of  rank  $p+q$ (where $p,q$ are nonnegative integers) endowed with a
flat  connection $\nabla$, by a covariantly constant pseudo-Hermitian
form   of   signature   $(p,q)$,   and  by  a  holomorphic  subbundle
$\mathcal{E}^{1,0}$  of  rank  $p$,  such that the restriction of the
form  to  it  is  strictly  positive.  The  condition  of tameness is
completely parallel to the real case.

Any  real  variation  of  rank  $2r$ gives a complex one of signature
$(r,r)$  by  the complexification. Conversely, one can associate with
any        complex       variation       $(\mathcal{E}_{\C{}},\nabla,
\mathcal{E}^{1,0})$  of  signature  $(p,q)$  a real variation of rank
\mbox{$2(p+q)$},  whose  underlying  local  system of real symplectic
vector spaces is obtained from $\mathcal{E}_{\C{}}$ by forgetting the
complex structure.

Let   us  assume  that  the  variation  of  complex  polarized  Hodge
structures of weight $1$ has a unipotent monodromy around cusps. Then
the   bundle   $\mathcal{E}^{1,0}$   admits   a  canonical  extension
$\overline{\mathcal{E}^{1,0}}$   to   the   natural  compactification
$\overline{\mathcal{C}}$.  It  can  be described as follows: consider
first     an     extension     $\overline{\mathcal{E}_{\C{}}}$     of
$\mathcal{E}_{\C{}}$    to   $\overline{\mathcal{C}}$   as   a   {\it
holomorphic} vector bundle in such a way that the connection $\nabla$
will  have  only first order poles at cusps, and the residue operator
at  any  cusp is nilpotent (it is called the {\it Deligne extension}).
Then     the    holomorphic    subbundle    $\mathcal{E}^{1,0}\subset
\mathcal{E}_\C{}$     extends     uniquely     as     a     subbundle
$\overline{\mathcal{E}^{1,0}}\subset  \overline{\mathcal{E}_\C{}}$ to
the cusps.

Let   $(\mathcal{E}_{\R{}},\nabla,\mathcal{E}^{1,0})$   be   a   tame
variation  of polarized real Hodge structures of rank $2r$ on a curve
$\cC$  with  {\it  negative} Euler characteristic. For example, $\cC$
could  be  an  unramified cover of a general arithmetic Teichm\"uller
curve,  and  $\mathcal  E$  could  be a subbundle of the Hodge bundle
which  is  simultaneously invariant under the Hodge star operator and
under the monodromy.

Using  the  canonical  complete  hyperbolic  metric  on $\cC$ one can
define  the  geodesic  flow  on  $\cC$ and the corresponding Lyapunov
exponents  $\lambda_1\ge  \dots \ge \lambda_{2r}$ for the flat bundle
$(\mathcal{E}_{\R{}},\nabla)$, satisfying the usual symmetry property
$\lambda_{2r+1-i}=-\lambda_i,\,i=1,\dots, r$.

The holomorphic vector bundle $\mathcal{E}^{1,0}$ carries a Hermitian
form, hence its top exterior power $\wedge^r(\mathcal{E}^{1,0})$ is a
holomorphic  line bundle also endowed with a Hermitian metric. Let us
denote  by $\Theta$ the curvature $(1,1)$-form on $\cC$ corresponding
to this metric. Then we have the following general result:
\begin{NNTheorem}
Under  the  above  assumptions,  the  sum  of  the  top  $r$ Lyapunov
exponents of $V$ with respect to the geodesic flow satisfies
\begin{equation}
\label{eq:sum:lambda:Teich:curve}
\lambda_1+\dots+\lambda_r=
\cfrac{\frac{i}{\pi}\int_C \Theta}{2G_\cC-2+s_\cC}\,,
\end{equation}
where  we  denote  by  $G_\cC$ --- the genus of $\cC$, and by $s_\cC$
---    the    number    of    hyperbolic    cusps    on   $\cC$.
\end{NNTheorem}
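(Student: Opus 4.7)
The plan is to derive the formula directly from the Background Theorem of section~\ref{sec:sum:as:mean:curvature}, combined with the observation that the function $\Lambda(S)$ from~\eqref{eq:lambda} is (up to a scalar) the Chern curvature form of the determinant bundle $\wedge^r\mathcal{E}^{1,0}$, together with a Gauss--Bonnet evaluation of the total hyperbolic area of $\cC$.

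First, I would verify that the Background Theorem applies verbatim to the present abstract setting. Its proof uses only the Hodge norm on a polarized variation of weight~$1$, the hyperbolic structure of curvature $-4$ on the base, the universal bounds~\eqref{eq:bound:for:derivative:of:L}--\eqref{eq:bound:for:Lambda}, and ergodicity of the geodesic flow for the finite invariant measure --- all of which are available for any tame variation $(\mathcal{E}_{\R{}},\nabla,\mathcal{E}^{1,0})$ on a finite-volume hyperbolic curve $\cC$ once one works on the unit tangent bundle of $\cC$ with its Liouville measure. Denoting by $\nu_1$ the normalized hyperbolic probability measure on $\cC$, the same argument produces
\begin{equation*}
\lambda_1+\dots+\lambda_r=\int_\cC \Lambda(S)\,d\nu_1(S).
\end{equation*}

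Next, I would convert $\Lambda\, d\ghyp$ to the Chern form $\Theta$. In a local holomorphic coordinate $\zeta=x+iy$ on $\cC$ with a local holomorphic frame $\omega_1,\dots,\omega_r$ of $\mathcal{E}^{1,0}$, the Hermitian metric on $\wedge^r\mathcal{E}^{1,0}$ has local norm-squared $h:=\det\langle\omega_i,\omega_j\rangle$, so by definition $\Theta=-\partial\bar\partial\log h$. Substituting $\Dhyp=16\,\Im^2\zeta\cdot\partial^2/(\partial\zeta\partial\bar\zeta)$ and $dx\wedge dy=\tfrac{i}{2}\,d\zeta\wedge d\bar\zeta$ into the definition~\eqref{eq:lambda} yields the pointwise identity $\Lambda\cdot d\ghyp=\tfrac{i}{2}\,\Theta$. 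Applying Gauss--Bonnet in constant curvature $-4$ to $\cC$ then gives $\int_\cC d\ghyp=-\tfrac{\pi}{2}\chi(\cC)=\tfrac{\pi}{2}(2G_\cC-2+s_\cC)$; dividing by this area to normalize $d\nu_1$ and assembling the three displays recovers exactly~\eqref{eq:sum:lambda:Teich:curve}.

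The main obstacle is a careful treatment of the cusps, where both the Hodge metric on $\mathcal{E}^{1,0}$ degenerates and the hyperbolic area form blows up. Specifically, one must verify (a)~finiteness of $\int_\cC\Lambda\,d\nu_1$, (b)~convergence of $\int_\cC\Theta$ on the open curve together with its identification with the Chern class of the canonical Deligne extension $\overline{\wedge^r\mathcal{E}^{1,0}}$ on $\overline{\cC}$, and (c)~validity of the Green's formula and Oseledets manipulations from the Background Theorem for geodesics that excurse arbitrarily deep into the cusps. All three statements follow from the tameness hypothesis: unipotency of the local monodromy together with Schmid's nilpotent orbit theorem forces $\log h$ to have only logarithmic singularities at each cusp, which yields both local integrability against the hyperbolic measure and the identification of the extended Chern form with the Chern class of the Deligne extension. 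The rest of the proof is a formal combination of the three ingredients above.
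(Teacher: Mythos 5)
Your proposal is correct and follows essentially the same route as the paper's own proof: apply the Background Theorem to get $\lambda_1+\dots+\lambda_r=\int_{\cC}\Lambda\,d\nu_1$, identify $\Lambda\,d\ghyp$ with $\tfrac{i}{2}\Theta$ by the local coordinate computation, and divide by the hyperbolic area $\tfrac{\pi}{2}(2G_\cC-2+s_\cC)$. Your closing paragraph on cusp behaviour (integrability, Deligne extension, unipotent monodromy) supplies details the paper defers to its surrounding discussion, but it does not change the argument.
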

Note  that  the genus $G_\cC$ of the Teichm\"uller curve $\cC$ has no
relation to the genus $g$ of the flat surface $S$.

Formula~\eqref{eq:sum:lambda:Teich:curve}  was  first  formulated  by
M.~Kontsevich (in a slightly different form) in~\cite{Kontsevich} and
then proved rigorously by G.~Forni~\cite{Forni:Deviation}.

\begin{proof}
We  prove the above formula for $\mathcal{E}_{\R{}}:=H^1_{\R{}}$; the
proof in general situation is completely analogous.

By formula~\eqref{eq:sum:of:exponents:equals:Lambda} one has
$$
\lambda_1 + \dots + \lambda_g =
\int_{\cM_1}\Lambda(S)\,d\nu_1(S)=
\cfrac{1}{\operatorname{Area}(\cC)}
\int_{\cC}\Lambda(S)\,d\ghyp(S)\,,
$$
where  $\operatorname{Area}(\cC)=\cfrac{\pi}{2}\,(2G_\cC-2+s_\cC)$ is
the area of $\cC$ in the hyperbolic metric of curvature $-4$.

Let  $\zeta$  be  the  natural  complex  coordinate in the hyperbolic
plane; let $\partial=\partial/\partial\zeta$. The latter integral can
be expressed as
\begin{multline*}
\int_{\cC}\Lambda(S)\,d\ghyp(S)
\ =\
-\cfrac{1}{4}
\int_{\cC}\Dhyp\log|\det\langle\omega_i,\omega_j\rangle|\,d\ghyp(S)
\ =\\=\
-\cfrac{1}{4}
\int_{\cC}4\dd\log|\det\langle\omega_i,\omega_j\rangle|\,
\cfrac{i}{2}\,d\zeta\wedge d\bar\zeta
\ =\\=\
\cfrac{i}{2}
\int_{\cC}-2\dd\log|\det\langle\omega_i,\omega_j\rangle|^\frac{1}{2}\,
d\zeta\wedge d\bar\zeta
\ =\
\cfrac{i}{2}
\int_{\cC}\Theta(\Lambda^g H^{1,0})
\end{multline*}
where  $\Theta(\Lambda^g  H^{1,0})$  is  the  curvature  form  of the
determinant  line  bundle.  Dividing  the  latter  expression  by the
expression for the $\operatorname{Area}(\cC)$ found above we complete
the proof.
\end{proof}

Note  that  a  similar  result  holds  also for \textit{complex} tame
variations  of polarized Hodge structures. Namely, for a variation of
signature  $(p,q)$ one has $p+q$ Lyapunov exponents
$$
\lambda_1\ge \dots \ge \lambda_{p+q}\,.
$$
Let $r:=\min(p,q)$. Then, it is easy to verify that we again have the
symmetry  $\lambda_{p+q+1-i}=-\lambda_i,\,i=1,\dots,  p+q$,  and that
when     $p\neq     q$     we    have    an    additional    relation
$\lambda_{r+1}=\dots=\lambda_{p+q-r}=0$
(see~\cite{Forni:Matheus:Zorich:semisimplicity}).    The   collection
(with  multiplicities) $\{\lambda_1,\dots,\lambda_r\}$ will be called
the  \textit{non-negative}  part  of  the Lyapunov spectrum. We claim
that the sum of non-negative exponents $\lambda_1+\dots+\lambda_r$ is
again given by the formula~\eqref{eq:sum:lambda:Teich:curve}.

The  proof follows from the simple observation that one can pass from
a complex variation to a real one by taking the underlying real local
system.  Both  the  sum of non-negative exponents and the integral of
the curvature form are multiplied by two under this procedure.

The  denominator  in  the  above  formula is equal to minus the Euler
characteristic  of $\cC$, i.e. to the area of $\cC$ up to a universal
factor   $2\pi$.  The  numerator  also  admits  an  algebro-geometric
interpretation  for  variations  of  real Hodge structures arising as
direct  summands  of  Hodge bundles for algebraic families of curves.
Note  that the form $\frac{i}{2\pi}\Theta$ represents the first Chern
class  of  $\mathcal{E}^{1,0}$.  Let  us assume that the monodromy of
$(\mathcal{E},\nabla)$  around  any  cusp  is  unipotent (this can be
achieved  by passing to a finite unramified cover of $\cC$). Then one
has    the    following    identity   (see   e.g.   Proposition   3.4
in~\cite{Peters}):
$$
\frac{i}{\pi}\int_{\mathcal C} \Theta= 2\deg\overline{\mathcal{E}^{1,0}}\,.
$$
In  general, without the assumption on unipotency, we obtain that the
integral  above  is a rational number, which can be interpreted as an
orbifold  degree in the following way. Namely, consider an unramified
Galois  cover $\mathcal{C}'\to \mathcal{C}$ such that the pullback of
$(\mathcal{E},\nabla)$   has   a   unipotent   monodromy.   Then  the
compactified   curve   $\overline{\mathcal{C}}$   is  a  quotient  of
$\overline{\mathcal{C}'}$  by  a  finite  group  action, and hence is
endowed  with a natural orbifold structure. Moreover, the holomorphic
Hodge bundle on $\overline{\mathcal{C}'}$ will descend to an orbifold
bundle    on   $\overline{\mathcal{C}}$.   Then   the   integral   of
$\frac{i}{2\pi}\Theta$  over  $\mathcal  C$  is equal to the orbifold
degree of this bundle.

The  choice  of the orbifold structure on $\overline{\mathcal{C}}$ is
in  a  sense  arbitrary,  as we can choose the cover $\mathcal{C}'\to
\mathcal{C}$  in  different  ways. The resulting orbifold degree does
not  depend  on  this  choice.  The  corresponding  algebro-geometric
formula  for  the  denominator given as an orbifold degree, is due to
I.~Bouw  and  M.~M\"oller  in~\cite{Bouw:Moeller}.

In  the  next sections we compute the integral in the right-hand side
of~\eqref{eq:sum:of:exponents:equals:Lambda}, that is, we compute the
average  curvature  of  the determinant bundle. Our principal tool is
the             analytic             Riemann--Roch            Theorem
(Theorem~\ref{theorem:main:local:formula}  below)  combined  with the
study  of  the determinant of the Laplacian of a flat metric near the
boundary of the moduli space. The next section~\ref{sec:Riemann:Roch}
is used to motivate Theorem~\ref{theorem:main:local:formula}; readers
with  a  purely  analytic  background may wish to proceed directly to
section~\ref{sec:Determinant:of:Laplacian}.

\subsection{Riemann--Roch--Hirzebruch--Grothendieck Theorem}
\label{sec:Riemann:Roch}

Let  $\pi:C\to  B$  be a complex analytic family of smooth projective
algebraic    curves,    endowed   with   $n$   holomorphic   sections
$s_1,\dots,s_\noz$,  and  multiplicities  $m_i>0$. We assume that for
any   $x\in   B$   points  $s_i(x),\,i=1,\dots,\noz,$  in  the  fiber
$C_x:=\pi^{-1}(x)$     are     pairwise     distinct.    Denote    by
$D_i,\,i=1,\dots,\noz$  the  irreducible  divisor in $C$ given by the
image  of $s_i$. Moreover, we assume that a complex line bundle $\cL$
on $B$ is given, together with a holomorphic identification
$$
T^*_{C/B}\simeq
\pi^*\cL\otimes \mathcal{O}_C\left(\sum_i m_i D_i\right)\ .
$$
In  plain  terms  it  means  that any nonzero vector $l$ in the fiber
$\mathcal{L}_x$  of  $\cL$  at  $x\in B$ gives a holomorphic one form
$\alpha_l$  on  $C_x$  with  zeroes of multiplicities $m_i$ at points
$s_i(x)$.

Let  us  apply  the  standard Riemann--Roch--Hirzebruch--Grothendieck
theorem to the trivial line bundle $\mathcal{E}:=\mathcal{O}_C$:
$$
ch(R\pi_*(\mathcal{E}))=
\pi_*\left(ch(\mathcal{E})td(T_{C/B})\right)\in
H^{even}(B;\mathbb{Q})
$$
and  look  at  the term in $H^2(B;\mathbb{Q})$. The left-hand side is
equal to
$$
c_1(\mathcal{H})\ ,
$$
where  $\mathcal{H}$ is the holomorphic vector bundle on $B$ with the
fiber at $x\in B$ given by
$$
\mathcal{H}_x:=\Gamma(C_x,\Omega^1_{C_x})\ ,
$$
(that is the Hodge bundle $H^{1,0}$.) The reason is that the class of
$R\pi_*(\mathcal{O}_C)$ in the $K$-group of $B$ is represented by the
difference
$$
[R^0\pi_*(\mathcal{O}_C)]-[R^1\pi_*(\mathcal{O}_C)]=
[\mathcal{O}_B]-[\mathcal{H}^*]
$$
Let     us     compute     the     right-hand     side     in     the
Riemann--Roch--Hirzebruch--Grothendieck  formula. The Chern character
of $\mathcal{E}:=\mathcal{O}_C$ is
$$
ch(\mathcal{E})=1\in H^{even}(C;\mathbb{Q})\ .
$$
Therefore, the term in
$$
H^2(B;\mathbb{Q})
$$
is  the  direct  image of the term in $H^4(C;\mathbb{Q})$ of the Todd
class $T_{C/B}$, that is
$$
\frac{1}{12} \pi_*( c_1(T_{C/B})^2)\ .
$$

By our assumption, we have
$$
c_1(T_{C/B})=-\left(\pi^*c_1(\cL)+\sum_i m_i[D_i]\right)\ .
$$

First of all, we have
$$
\pi_*\left(\pi^*(c_1(\cL))\right)^2=\pi_*(1)\cdot c_1(\cL)^2=0
$$
because $\pi_*(1)=0$. Also, divisors $D_i$ and $D_j$ are disjoint for
$i\ne j$. Hence,
$$
\pi_*( c_1(T_{C/B})^2)=
2\sum_i m_i\pi_*(\pi^*c_1(\cL)\cdot[D_i])+
\sum_i m_i^2 \pi_*([D_i]\cdot [D_i])\ .
$$

Obviously,
$$
\pi_*(\pi^*c_1(\cL)\cdot[D_i])=c_1(\cL)\cdot \pi_*([D_i])=
c_1(\cL)\in H^2(B;\mathbb{Q})
$$
because $\pi_*([D_i])=1$.

Also,
$$
\pi_*([D_i]\cdot [D_i])=s_i^*(c_1(N_{D_i}))\,,
$$
where  $N_{D_i}$  is  the  normal  line  bundle  to  the $D_i$. If we
identify  $D_i$  with  the  base $B$ by map $s_i$, one can see easily
that
$$
s_i^*(c_1(N_{D_i}))=-\frac{1}{m_i+1} c_1(\cL)\in H^2(B;\mathbb{Q})
$$

The conclusion is that
$$
c_1(\mathcal{H})=const\cdot c_1(\cL)
$$
where the constant is given by
$$
const=\frac{1}{12}\sum_i\left(2m_i-\frac{m_i^2}{m_i+1}\right)=
\frac{1}{12}\sum_i \frac{m_i(m_i+2)}{m_i+1}
$$

The  line  bundle $\cL$ is endowed with a natural Hermitian norm, for
any $l \in \cL_x,\,\,x\in B$ we define
$$
|l|^2:=\int_{C_x}|\alpha_l|^2
$$
where $\alpha_l\in \Gamma(C_x,\Omega^1_{C_x})$ is the holomorphic one
form corresponding to $l$.

Hence, we have a canonical 2-form representing $c_1(\cL)$. Similarly,
the vector bundle $\mathcal {H}$ carries its own natural Hermitian metric
coming  form  Hodge  structure.  It  gives  another  canonical 2-form
representing   $c_1(\mathcal{H})$.   The analytic  Riemann--Roch  theorem
provides    an    explicit    formula    for    a   function,   whose
$\partial\overline{\partial}$  derivative  gives  the  correction. To
formulate the analytic Riemann--Roch theorem we need to introduce the
determinant of Laplace operator.

\subsection{Determinant of Laplace operator on a Riemann surface}
\label{sec:Determinant:of:Laplacian}
A good reference for this subsection is the book \cite{Soule}.

To  define a determinant $\det\Delta_g$ of the Laplace operator
on  a  Riemann surface $C$ endowed with a smooth Riemannian metric $g$
one defines the following \textit{spectral zeta function}:
$$
\zeta(s)=\sum_{\theta} \theta^{-s}
$$
where  the sum is taken over nonzero eigenvalues of $\Delta_g$.
This  sum  converges for $\Re(s)>1$. The function $\zeta(s)$ might be
analytically continued to $s=0$ and then one defines
$$
\log\det\Delta_g:=-\zeta'(0)
$$

The  analytic continuation can be obtained from the following formula
expressing $\zeta(s)$ in terms of the trace of the heat kernel,
\begin{displaymath}
\zeta(s) = \frac{1}{\Gamma(s)}\int_0^\infty t^{s-1} \operatorname{Tr}
\left(\exp ( t \Delta_g )\right) \, dt,
\end{displaymath}
and  the  well  known short-time asymptotics of the trace of the heat
kernel.

Let  $g_1$ and $g_2$ be two nonsingular metrics in the same conformal
class  on  a  closed  nonsingular  Riemann  surface $C$. Let the smooth
function  $2\cf$  be the logarithm of the conformal factor relating the
metrics $g_1$ and $g_2$:
$$
g_2=\exp(2\cf)\cdot g_1\,.
$$
The         theorem        below,        see~\cite{Polyakov:bosonic},
\cite{Polyakov:fermionic},   relates  the  determinants  of  the  two
Laplace operators:

\begin{PolyakovFormula}
\begin{multline}
\label{eq:Polyakov:formula}
\log\det\Delta_{g_2}-\log\det\Delta_{g_1}
\ =\\= \
\frac{1}{12\pi}\left(
\int_C\cf\,\Delta_{g_1}\cf\,d g_1
\ -\
2\int_C\cf\,K_{g_1}\,d g_1
\right)
\ +\
\bigg(\log\Area_{g_2}(C)-
\log\Area_{g_1}(C)\bigg)
\end{multline}
\end{PolyakovFormula}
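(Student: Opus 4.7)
My plan is to prove the Polyakov formula by the standard variational method: interpolate through a one-parameter family of conformally equivalent metrics and differentiate $\log\det\Delta$ along the family. Specifically, set $g_\tau := e^{2\tau\cf}g_1$ for $\tau\in[0,1]$, so that $g_0=g_1$ and $g_1=g_2$, and let $F(\tau):=\log\det\Delta_{g_\tau}$. Because the integrand on the right-hand side of~\eqref{eq:Polyakov:formula} depends quadratically on $\cf$ while the left-hand side is an increment, it suffices to show that $F'(\tau)$ has the correct form and then integrate over $\tau$. The fundamental input is the transformation rule of the Laplacian under a conformal change on a surface, $\Delta_{g_\tau}=e^{-2\tau\cf}\Delta_{g_1}$, which in particular gives $\partial_\tau\Delta_{g_\tau}=-2\cf\,\Delta_{g_\tau}$.

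Next I would compute $F'(\tau)$ via the spectral zeta function. Writing
\begin{equation*}
\zeta_\tau(s)=\frac{1}{\Gamma(s)}\int_0^\infty t^{s-1}\operatorname{Tr}{}'\!\left(\exp(t\Delta_{g_\tau})\right)dt,
\end{equation*}
where $\operatorname{Tr}'$ omits the zero eigenvalue, one uses the Duhamel principle and the variational formula above to obtain
\begin{equation*}
\partial_\tau\zeta_\tau(s)=\frac{-2s}{\Gamma(s)}\int_0^\infty t^{s-1}\operatorname{Tr}{}'\!\left(\cf\,\exp(t\Delta_{g_\tau})\right)dt.
\end{equation*}
Differentiating at $s=0$ and using $\log\det\Delta_{g_\tau}=-\zeta_\tau'(0)$, the factor of $s$ kills the analytic continuation except for the constant term in the short-time asymptotics of $\operatorname{Tr}(\cf\exp(t\Delta_{g_\tau}))$. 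The Minakshisundaram--Pleijel expansion on a surface gives the coefficient $\tfrac{1}{12\pi}\int_C(2K_{g_\tau})\cf\,dg_\tau$ at $t^0$, while the $t^{-1}$ coefficient produces only an area-related boundary term after the subtraction of the zero mode.

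Assembling these ingredients yields, after careful bookkeeping of the zero mode (which contributes exactly $\partial_\tau\log\Area_{g_\tau}$ because the constant function has $L^2$-norm equal to $\sqrt{\Area_{g_\tau}(C)}$),
\begin{equation*}
\frac{d}{d\tau}\log\det\Delta_{g_\tau}=-\frac{1}{6\pi}\int_C\cf\,K_{g_\tau}\,dg_\tau+\frac{d}{d\tau}\log\Area_{g_\tau}(C).
\end{equation*}
To integrate from $\tau=0$ to $\tau=1$, I would use the conformal transformation of the Gauss curvature on a surface, $K_{g_\tau}\,dg_\tau=(K_{g_1}-\tau\Delta_{g_1}\cf)\,dg_1$, which converts the right-hand side into a polynomial in $\tau$. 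Performing the elementary integration gives
\begin{equation*}
\int_0^1\int_C\cf\,K_{g_\tau}\,dg_\tau\,d\tau=\int_C\cf\,K_{g_1}\,dg_1-\tfrac{1}{2}\int_C\cf\,\Delta_{g_1}\cf\,dg_1,
\end{equation*}
and combining with the area term reproduces exactly the formula~\eqref{eq:Polyakov:formula}.

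The step I expect to be the main obstacle is the rigorous manipulation of the heat-kernel regularization: justifying interchange of $\partial_\tau$ with the Mellin transform, proving that the necessary short-time uniform estimates on $\operatorname{Tr}(\cf\exp(t\Delta_{g_\tau}))$ hold uniformly in $\tau$, and correctly extracting the $t^0$-coefficient (the ``conformal anomaly'') while carefully separating the zero-mode contribution. Once these analytic issues are handled, the algebraic manipulation in the last paragraph is routine. The role of the factor $1/(12\pi)$ rather than a different numerical constant is precisely fixed by the second Seeley--DeWitt coefficient on a surface, which is $\tfrac{1}{6\pi}K$, combined with the factor $\tfrac{1}{2}$ coming from the integration of the linear-in-$\tau$ curvature.
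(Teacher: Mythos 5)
The paper does not actually prove this statement: the Polyakov formula is quoted as classical background, with references to \cite{Polyakov:bosonic}, \cite{Polyakov:fermionic} (see also \cite{Osgood:Phillips:Sarnak}), so there is no in-paper argument to compare yours against. Your proposal is the standard proof of the conformal anomaly, and its structure is correct: the interpolation $g_\tau=e^{2\tau\cf}g_1$, the Duhamel variation of the heat trace, extraction of the $t^0$ coefficient, the zero-mode bookkeeping producing $\partial_\tau\log\Area_{g_\tau}(C)$, the identity $K_{g_\tau}\,dg_\tau=(K_{g_1}-\tau\Delta_{g_1}\cf)\,dg_1$, and the elementary integration in $\tau$ do combine exactly as you say to give \eqref{eq:Polyakov:formula}. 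The analytic points you flag (uniformity in $\tau$ of the short-time expansion, interchanging $\partial_\tau$ with the Mellin transform) are the only real technical content and are standard for smooth metrics on a closed surface, which is the setting in which the paper invokes the formula.

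Two intermediate constants are off, though your final variational identity is right, so these are bookkeeping slips rather than gaps. First, with the paper's conventions ($\Delta_{g_\tau}$ nonpositive, $\partial_\tau\Delta_{g_\tau}=-2\cf\,\Delta_{g_\tau}$, $\log\det\Delta=-\zeta'(0)$), Duhamel gives $\partial_\tau\operatorname{Tr}(e^{t\Delta_{g_\tau}})=-2t\,\partial_t\operatorname{Tr}(\cf\,e^{t\Delta_{g_\tau}})$, and integration by parts in $t$ yields $\partial_\tau\zeta_\tau(s)=\frac{+2s}{\Gamma(s)}\int_0^\infty t^{s-1}\operatorname{Tr}'\!\left(\cf\,e^{t\Delta_{g_\tau}}\right)dt$, with a plus sign. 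Second, the $t^0$ coefficient of $\operatorname{Tr}(\cf\,e^{t\Delta_{g_\tau}})$ is $\frac{1}{4\pi}\int_C\frac{K_{g_\tau}}{3}\cf\,dg_\tau=\frac{1}{12\pi}\int_C K_{g_\tau}\cf\,dg_\tau$ (Seeley--DeWitt $a_1=R/6$ with $R=2K$); the extra factor $2$ that produces the $\frac{1}{6\pi}$ in your variational formula comes from $\partial_s\!\left(2s/\Gamma(s)\right)\big|_{s=0}=2$, not from the heat coefficient itself. With these corrected, $\frac{d}{d\tau}\log\det\Delta_{g_\tau}=-\frac{1}{6\pi}\int_C\cf\,K_{g_\tau}\,dg_\tau+\frac{d}{d\tau}\log\Area_{g_\tau}(C)$ follows exactly as you state, and the remaining algebra is as you wrote it.
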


\subsection{Determinant of Laplacian in the flat metric}
\label{sec:def:det:flat:metric}

Consider a flat surface $S$ of area one in some stratum of Abelian or
quadratic  differentials.  In a neighborhood of any nonsingular point
of  $S$  we  can  choose a \textit{flat coordinate} $z$ such that the
corresponding   quadratic   differential   $q$  (which  is  equal  to
$\omega^2$  when  we  work with an Abelian differential $\omega$) has
the form
$$
q=(dz)^2.
$$

A  conical  singularity $P$ of \textit{order} $d$ of $S$ has the cone
angle  $(d+2)\pi$.  One  can  choose  a  local  coordinate  $w$  in a
neighborhood  of $P$ such that the quadratic differential $q$ has the
form
\begin{equation}
\label{eq:def:local:coordinate:w}
q=w^{d}\, (dw)^2 \ .
\end{equation}
in  this  coordinate.  The corresponding flat metric $\gflat$ has the
form $|dz|^2$ in a neighborhood of a nonsingular point and
\begin{equation}
\label{eq:flat:metric:in:a:local:coordinate:w}
g_{\mathit{flat}}(w,\bar w)=
|w|^d\,|dw|^2 \ .
\end{equation}
in a neighborhood of a conical singularity.

Let  $\epsilon > 0$, and suppose that $\gflat$ is such, that the flat
distance   between   any   two  conical  singularities  is  at  least
$2\epsilon$.  We  define a smoothed flat metric $\gfe$ as follows. It
coincides    with    the   flat   metric   $|q|$   outside   of   the
$\epsilon$-neighborhood    of    conical    singularities.    In   an
$\epsilon$-neighborhood of a conical singularity it is represented as
$\gfe=\rhofe(|w|)\,|dw|^2$  where the local coordinate $w$ is defined
in~\eqref{eq:def:local:coordinate:w}.  We  choose  a  smooth function
$\rhofe(r)$ so that it satisfies the following conditions:
\begin{equation}
\label{eq:def:rho:flat:epsilon}
\rhofe(r)=\begin{cases}
r^d &r\ge\epsilon\\
\cfe&0\le r\le\epsilon'
\end{cases}
\ ,
\end{equation}
and  on  the interval $\epsilon'<r<\epsilon$ the function $\rhofe(r)$
is monotone and has monotone derivative.

\begin{figure}[htb]
\includegraphics{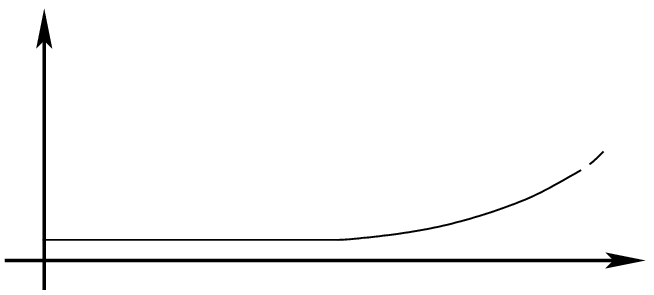}
\includegraphics{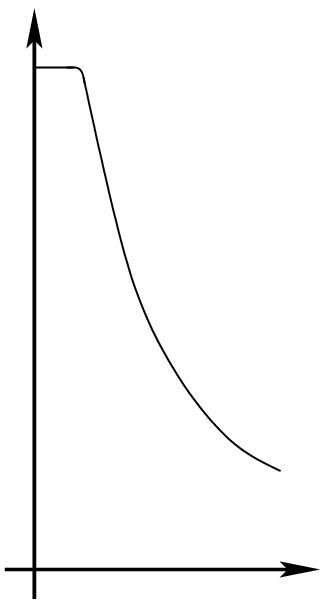}
\begin{picture}(0,0)(50,-70)
\put(-40,-165){\small $\epsilon'$}
\put(-35,-165){\small$\epsilon$}
\put(130,-165){\small $\epsilon'$}
\put(135,-165){\small$\epsilon$}
\end{picture}
\vspace{95pt}
\caption{
\label{fig:smoothed}
Function  $\rhofe(r)$  corresponding  to  a  zero  of  a  meromorphic
quadratic  differential  on  the left and to a simple pole --- on the
right. }
\end{figure}

It  is  convenient  for  us to obtain the function $\rhofe(r)$ in the
definition  of $\gfe$ from a continuous function which is constant on
the   interval   $[0,\epsilon]$   and   coincides   with   $r^d$  for
$r\ge\epsilon$.   This   continuous   function   is  not  smooth  for
$r=\epsilon$,  so we smooth out this ``corner'' in an arbitrary small
interval  $]\epsilon',\epsilon[$  by an appropriate convex or concave
function   depending   on   the   sign   of   the  integer  $d$,  see
Figure~\ref{fig:smoothed}.

Denote  by  $S$  a  flat  surface  of  area one defined by an Abelian
differential  or by a meromorphic quadratic differential with at most
simple  poles.  Denote  by  $S_0$ some fixed flat surface in the same
stratum.

\begin{Definition}
\label{def:relative:Laplace:flat}
We define the relative determinant of a Laplace operator as
\begin{equation}
\label{eq:def:relative:Laplace:flat}
\det\Dflat(S,S_0):=\lim_{\epsilon\to 0}
\frac{\det\Delta_{\mathit{flat},\epsilon}(S)}
{\det\Delta_{\mathit{flat},\epsilon}(S_0)}
\end{equation}
where  $\Delta_{\mathit{flat},\epsilon}$  is  the Laplace operator of
the metric $\gfe$.
\end{Definition}

Note  that  numerator and denominator in the above formula diverge as
$\epsilon  \to  0$.  However,  we  claim  that for sufficiently small
$\varepsilon$  the  ratio,  in  fact,  does  not  depend  neither  on
$\epsilon$  nor  the  exact  form  of the function $\rhofe$.  Indeed,
suppose $\epsilon_1 < \epsilon_2$. Then by the Polyakov formula,

\begin{multline*}
\log \frac{\det\Delta_{\mathit{flat},\epsilon_2}(S)}
{\det\Delta_{\mathit{flat},\epsilon_2}(S_0)}
-
\log \frac{\det\Delta_{\mathit{flat},\epsilon_1}(S)}
{\det\Delta_{\mathit{flat},\epsilon_1}(S_0)}
=\\=
\log \frac{\det\Delta_{\mathit{flat},\epsilon_2}(S)}
{\det\Delta_{\mathit{flat},\epsilon_1}(S)}
-
\log \frac{\det\Delta_{\mathit{flat},\epsilon_2}(S_0)}
{\det\Delta_{\mathit{flat},\epsilon_1}(S_0)}
\ =\\=\
\frac{1}{12\pi}\left(
\int_S\cf_S\,\Delta_{\mathit{flat},\epsilon_1}\cf_S
\,d g_{\mathit{flat},\epsilon_1}
\ -\
2\int_S\cf_S\,K_{\mathit{flat},\epsilon_1}\,d g_{\mathit{flat},\epsilon_1}
\right)
\ +\\+\
\bigg(\log\Area_{g_{\mathit{flat},\epsilon_2}}(C)-
\log\Area_{g_{\mathit{flat},\epsilon_1}}(C)\bigg)
\ -\\-\
\frac{1}{12\pi}\left(
\int_{S_0}\cf_{S_0}\,\Delta_{\mathit{flat},\epsilon_1}\cf_{S_0}\,d
g_{\mathit{flat},\epsilon_1}
\ -\
2\int_{S_0}\cf_{S_0}\,K_{\mathit{flat},\epsilon_1}\,
d g_{\mathit{flat},\epsilon_1}
\right)
\ -\\-\
\bigg(\log\Area_{g_{\mathit{flat},\epsilon_2}}(C_0)-
\log\Area_{g_{\mathit{flat},\epsilon_1}}(C_0)\bigg)\,.
\end{multline*}
Note    that    the    metrics   $g_{\mathit{flat},\epsilon_2}$   and
$g_{\mathit{flat},\epsilon_1}$     on     $C$    differ    only    on
$\epsilon_2$-neighborhoods  of conical points. Similarly, the metrics
$g_{\mathit{flat},\epsilon_2}$  and $g_{\mathit{flat},\epsilon_1}$ on
$C_0$ differ only on $\epsilon_2$-neighborhoods of conical points; in
particular the conformal factors are supported on this neighborhoods.
Since these neighborhoods are isometric by our construction the above
difference is equal to zero.

Thus, $\det\Dflat(S,S_0)$ is well-defined on the entire stratum.
\begin{remark}
\label{remark:S0}
It is clear from the definition that $\log \det\Dflat(S,S_0)$ depends on
the choice of $S_0$ only via an additive constant.
\end{remark}

\begin{NNRemark}
One can apply various approaches to regularize the determinant of the
Laplacian  of  a  flat  metric  with  conical singularities, see, for
example,   the  approach  of  A.~Kokotov  and  D.~Korotkin,  who  use
Friedrichs  extension  in~\cite{Kokotov:Korotkin}, or the approach of
A.~Kokotov~\cite{Kokotov:regularization}, who works with more general
metrics with conical singularities. All these various approaches lead
to essentially equivalent definitions, and to the same
  definition for the ``relative determinant'' $\det\Dflat(S,S_0)$.
\end{NNRemark}

\subsection{Analytic Riemann--Roch Theorem}
\label{ss:analytic:Riemann:Roch:statement}

The  Analytic Riemann--Roch Theorem was developed by numerous authors
in  different contexts. To give a very partial credit we would like to
cite         the        papers        of        A.~Belavin        and
V.~Knizhnik~\cite{Belavin:Knizhnik},      of     J.-M.~Bismut     and
J.-B.~Bost~\cite{Bismut:Bost}    of   J.-M.~Bismut,   H.~Gillet   and
C.~Soul\'e~\cite{Bismut:Gillet:Soule:1},
\cite{Bismut:Gillet:Soule:2},     \cite{Bismut:Gillet:Soule:3},    of
D.~Quillen~\cite{Quillen},        of        L.~Takhtadzhyan       and
P.~Zograf~\cite{Takhtadzhyan:Zograf}, and references in these papers.

The   results   obtained  in  the  recent  paper  of  A.~Kokotov  and
D.~Korotkin~\cite{Kokotov:Korotkin}    are    especially   close   to
Theorem~\ref{theorem:main:local:formula}                         (see
section~\ref{ss:Kokotov:Korotkin} below).

\begin{Theorem}
\label{theorem:main:local:formula}
For  any flat surface $S$ in any stratum $\cH_1(m_1,\dots,m_\noz)$ of
Abelian differentials the following formula holds:

\begin{equation}
\label{eq:main:local:formula:Abelian}
\Dhyp\log|\det\langle\omega_i,\omega_j\rangle|\,=\,
\Dhyp\log\det\Dflat(S,S_0)\,-\,
\cfrac{1}{3}\,\sum_{j=1}^\noz \cfrac{m_j(m_j+2)}{m_j+1}\,,
\end{equation}
where  $m_1+\dots+m_\noz=2g-2$. Here $\Dhyp$ is taken with respect to
the   canonical   hyperbolic   metric   of   curvature  $-4$  on  the
Teichm\"uller disc passing through $S$. (Note that the
  right-hand-side of (\ref{eq:main:local:formula:Abelian}) is
  independent of the choice of $S_0$ in view of Remark~\ref{remark:S0}.)

For  any flat surface $S$ in any stratum $\cQ_1(d_1,\dots,d_\noz)$ of
meromorphic  quadratic  differentials  with  at most simple poles the
following formula holds:
\begin{equation}
\label{eq:main:local:formula:quadratic}
\Dhyp\log|\det\langle\omega_i,\omega_j\rangle|\,=\,
\Dhyp\log\det\Dflat(S,S_0)\,-\,
\cfrac{1}{6}\,\sum_{j=1}^\noz \cfrac{d_j(d_j+4)}{d_j+2},
\end{equation}
where $d_1+\dots+d_\noz=4g-4$.
\end{Theorem}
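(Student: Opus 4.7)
The plan is to derive~\eqref{eq:main:local:formula:Abelian} by interpolating between the flat metric with conical singularities and a smooth reference metric in the same conformal class, then applying a classical analytic Riemann--Roch theorem to the smooth side and isolating the local contributions at each conical point. For each flat surface $S$ I would fix a smooth reference metric $\ghd$ agreeing with $\gflat$ outside the $\delta$-neighborhoods of the singularities and having a specific model form $\rhohd(|w|)\,|dw|^2$ near each singularity (parallel to the construction of $\gfe$ with $\delta$ fixed). The goal is to split the desired identity into a ``bulk'' part (smooth metric Riemann--Roch) and a ``singularity'' part (Polyakov comparison) whose constants can be computed separately.

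First I would apply the Polyakov formula to compare $\det \Delta_{\gfe}(S)$ with $\det \Delta_{\ghd}(S)$. The conformal factor $\phi_\epsilon$ between these two metrics is supported in the $\delta$-neighborhoods of the conical singularities and, near a singularity of local order $d_j$ in the coordinate $w$ given by~\eqref{eq:def:local:coordinate:w}, depends only on $d_j$ and the chosen smoothing profile. Passing to the limit $\epsilon \to 0$ and taking ratios with respect to the fixed reference surface $S_0$, one obtains an identity of the form
\[
\log\det\Dflat(S,S_0) \;=\; \log\frac{\det\Delta_{\ghd}(S)}{\det\Delta_{\ghd}(S_0)} + \Phi(S) - \Phi(S_0),
\]
where $\Phi(S)$ is a sum over the singularities of $S$ of explicit local integrals of $\phi_\epsilon$ against $K_{\ghd}$ and $\Delta_{\ghd}\phi_\epsilon$ in the limit. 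Next I would apply the classical analytic Riemann--Roch theorem (in the form of Takhtadzhyan--Zograf or Bismut--Gillet--Soul\'e, with the adaptation to singular metrics as in Kokotov--Korotkin) to the smooth family $S \mapsto \ghd$; this expresses $\log\det\Delta_{\ghd}(S) - \log|\det\langle \omega_i,\omega_j\rangle|$ as an explicit Liouville-type functional of $\ghd$ on $C$, whose hyperbolic Laplacian along the Teichm\"uller disc can be computed locally.

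Combining these two inputs and applying $\Dhyp$ to both sides, the smooth contributions cancel pairwise because $\ghd$ varies only through the conformal class and the reference is independent of the Teichm\"uller parameter outside the singularities; what remains is precisely $\Dhyp \Phi(S)$, which localizes to a sum over the conical points. In local coordinates where $q = w^{m_j}(dw)^2$, this contribution is an integral against a radially symmetric, scale-invariant profile depending only on $m_j$, and a direct computation evaluates it to $-\tfrac{1}{3}\,\tfrac{m_j(m_j+2)}{m_j+1}$. Summation over the zeros gives~\eqref{eq:main:local:formula:Abelian}. For the quadratic case~\eqref{eq:main:local:formula:quadratic}, I would lift to the canonical double cover $p:\hat S \to S$ where $p^*q = \hat\omega^2$, apply the Abelian result on $\hat S$, split $H^1(\hat S)$ into $\sigma$-invariant and anti-invariant parts to isolate the contributions pulled back from $S$, and reassemble the local terms at each zero $d_j$ of $q$ using~\eqref{eq:d:to:m}: an elementary identity then verifies that the sum over preimages equals $-\tfrac{1}{6}\,\tfrac{d_j(d_j+4)}{d_j+2}$ for each $j$.

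The principal obstacle is the analytic control of the limit $\epsilon \to 0$: the regularized determinants $\det \Delta_{\gfe}$ individually diverge, and one must both verify that the ratio in Definition~\ref{def:relative:Laplace:flat} is well-defined and that $\Dhyp$ commutes with this limit. This requires a precise understanding of the heat kernel short-time asymptotics on surfaces with conical singularities, matching them with the smooth-metric asymptotics up to a universal singular defect that contributes exactly the stated combinatorial factor. Equivalently, one must show that the $\dd$-derivative along the Teichm\"uller disc of the Polyakov integral $\Phi(S)$ is itself locally integrable near the singularities with a limit independent of the choice of the smoothing profile $\rhofe$, $\rhohd$. The bulk of the technical work, carried out in the sections referenced after Theorem~\ref{theorem:main:local:formula}, consists of establishing this compatibility and executing the local model computation at a single conical point of each admissible order.
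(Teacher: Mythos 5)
Your general strategy (a Polyakov comparison between the singular flat metric and a smoothed reference, an analytic Riemann--Roch input, and a localization of the defect at the cone points) is in the spirit of the paper's first proof, but two of your key steps fail as stated. The most serious is the reduction of the quadratic case to the Abelian case via the double cover $p:\hat S\to S$. Both sides of~\eqref{eq:main:local:formula:quadratic} are quantities attached to $S$ itself (the Gram determinant of $H^{1,0}(C)$ and the flat Laplacian of $(C,q)$), whereas the Abelian theorem applied to $\hat S$ controls the corresponding quantities on $\hat S$, which split into $\sigma$-invariant and anti-invariant sectors; to descend you would need an independent formula for the anti-invariant sector, which the Abelian statement does not provide. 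Moreover the ``elementary identity'' you invoke is false: by~\eqref{eq:d:to:m} the even-order singularities force the lifted constants to equal \emph{twice} the quadratic ones (two preimages, each contributing $\tfrac13\cdot\tfrac{(d_j/2)(d_j/2+2)}{d_j/2+1}=\tfrac16\cdot\tfrac{d_j(d_j+4)}{d_j+2}$), but then an odd-order singularity lifts to a single zero of degree $d_j+1$ and
\begin{equation*}
\cfrac{1}{3}\cdot\cfrac{(d_j+1)(d_j+3)}{d_j+2}\;-\;2\cdot\cfrac{1}{6}\cdot\cfrac{d_j(d_j+4)}{d_j+2}\;=\;\cfrac{1}{d_j+2}\;\neq\;0\,.
\end{equation*}
This discrepancy is not a slip to be absorbed; it is exactly the source of the term $\tfrac14\sum_{d_j\text{ odd}}\tfrac{1}{d_j+2}$ in~\eqref{eq:general:index:of:exponents:for:quadratic}. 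The paper runs the logic in the opposite direction: the local computation at a cone point depends only on the cone angle and not on the holonomy of the flat metric, so~\eqref{eq:main:local:formula:quadratic} is proved directly and~\eqref{eq:main:local:formula:Abelian} is deduced from it by substituting $d_j=2m_j$.

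The second gap is your cancellation of the bulk term. The assertion that after applying $\Dhyp$ ``the smooth contributions cancel pairwise because the reference is independent of the Teichm\"uller parameter outside the singularities'' is unjustified: along a Teichm\"uller deformation the flat structure changes on the \emph{entire} surface ($u=a(t)z+b(t)\bar z$ in period coordinates), so the bulk part of any Liouville/Polyakov functional carries a nontrivial $t$-dependence everywhere. In the paper the localization is not a soft cancellation but a consequence of Fay's variational formula: the integrand there is a $2\times2$ determinant of second derivatives of $\log\rho_\epsilon$, which vanishes off the singular neighborhoods precisely because the normalized smoothed flat metric has constant density in the flat coordinate there; the remaining local integral at a cone point must then actually be computed (it yields $\pi\,\tfrac{d(d+4)}{4(d+2)}\,|b'(0)|^2$), and one must separately identify $\partial_t\partial_{\bar t}$ with $\tfrac{1}{4}|b'(0)|^2\,\Dhyp$ as in Lemma~\ref{lm:from:t:to:Laplacian}. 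These are the substantive steps of the proof, and your proposal asserts their conclusions without supplying either the mechanism of localization or the local model computation.
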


Theorem~\ref{theorem:main:local:formula}       is      proved      in
section~\ref{sec:analytic:Riemann:Roch:proof}.

Consider two basic examples illustrating
Theorem~\ref{theorem:main:local:formula}.

\begin{Example}[\textit{Flat torus}]
\label{ex:torus}  Consider  the  canonical coordinate $\zeta=x+iy$ in
the     fundamental    domain,    $\Im\zeta>0$,    $|\zeta|\ge    1$,
$-1/2\le\Re\zeta\le  1/2$, of the upper half-plane parametrizing the
space   of   flat   tori.   This   coordinate   was   introduced   in
Example~\ref{ex:M1} in the end of section~\ref{ss:Teichmuller:discs}.

There are no conical singularities on a flat torus, so the definition
of  the  determinant  of Laplacian does not require a regularization.
For a torus of unit area, one has:
$$
\det \Dflat=4\Im(\zeta)\, |\eta(\zeta)|^4\,,
$$
where $\eta$ is the Dedekind $\eta$-function,
see,   for   example,  \cite[\S{4}]{Ray:Singer},
\cite{Osgood:Phillips:Sarnak},  page  205,  or
formula~(1.3) in~\cite{McIntyre:Takhtajan}. Since
  $\eta$ is holomorphic,
$$
\Dhyp\log \det \,\Dflat=\Dhyp\log|\Im\zeta|
=\Dhyp\log
y  = 4 y^2 \frac{\partial^2}{\partial y^2} \log y = -4.
$$

On  the  other  hand, as a holomorphic section $\omega(\zeta)$ we can
choose  the  Abelian  differential with periods $1$ and $\zeta$. Then
$\det\langle\omega_i,\omega_j\rangle|=\|\omega\|^2=\Area=\Im\zeta
=y$.
Thus, the equality~\eqref{eq:main:local:formula:Abelian}
holds.
In addition, we get
\begin{displaymath}
\Lambda(S) = - \frac{1}{4} \Dhyp \log \det \langle \omega_i, \omega_j
\rangle = -\frac{1}{4} \Dhyp \log y = 1.
\end{displaymath}
Thus, since $\nu_1$ is a probability measure, we get
\begin{equation}
\label{eq:torus:verification}
\int_{\cM_1} \Lambda(S) \, d\nu_1(S) = 1.
\end{equation}
In the torus case there is only one Lyapunov exponent, namely
$\lambda_1$, and we know from general arguments that $\lambda_1 =
1$. Therefore, (\ref{eq:torus:verification}) verifies explicitly the
key  formula~\eqref{eq:sum:of:exponents:equals:Lambda}.

\end{Example}



\begin{Example}[\textit{Flat sphere with four cone points}]
\label{ex:CP1:with:4:poles}
According        to        a        result       A.~Kokotov       and
D.~Korotkin~\cite{Kokotov:Korotkin:spere},  the  determinant  of  the
Laplacian  for  the  flat  metric defined by a quadratic differential
with  four  simple  poles and no other singularities on $\CP$ one has
the form
$$
\det\Delta^{|q|}
=const\cdot\frac{|\Im(A\bar B)|\cdot |\eta(B/A)|^2}{|A|}\,,
$$
where $A$ and $B$ are the periods of the covering torus (see the last
pages   of~\cite{Kokotov:Korotkin:spere}).   Here,   the  determinant
$\det\Delta^{|q|}$  of  Laplacian  corresponding  to  the flat metric
$|q|$    defined    in~\cite{Kokotov:Korotkin:spere}   differs   from
$\det\Delta^{|q|}(S,S_0)$  only  by  a  multiplicative constant. Note
that
$$
\Im(A\bar B)
=\Im\left(\frac{A\bar A \bar B}{\bar A}\right)=|A|^2\Im(\bar B/\bar A)
= -|A|^2\Im(B/A)
$$
Thus,
$$
\det\Delta^{|q|}=const\cdot|A|\cdot|\Im(B/A)|\cdot |\eta(B/A)|^2\,.
$$
One  should not be misguided by the fact that under the normalization
$A:=1$  one gets $\Dhyp|A|=0$ along a holomorphic deformation. Recall
that  in our setting we have to normalize the area of the flat sphere
to  one!  Doing  so  for  the  double-covering  torus  with $A=1$ and
$B=\zeta=x+iy$  we rescale $A$ to $A=1/\sqrt{y}$ and $B\sim\sqrt{y}$,
which implies that for the sphere of unit area we get
\begin{equation}
\label{eq:det:Dflat:4:CP1:with:4:poles}
\det\Delta^{|q|}=const\cdot y^{-1/2}\cdot y\cdot |\eta(B/A)|^2\,,
\end{equation}
so
$$
\Dhyp\log\det\Delta^{|q|}=\cfrac{1}{2}\,\log y\,.
$$
Comparing to the integral above, we get
$$
\int_{\cM_1}-\cfrac{1}{4}\,\Dhyp\log\det\Delta^{|q|}=\cfrac{1}{2}\,.
$$
On the other hand, for four simple poles one has
$$
\cfrac{1}{24}\,\sum_{j=1}^4 \cfrac{(-1)(-1+4)}{-1+2}=-\cfrac{1}{2}\,,
$$
and  integrating~\eqref{eq:main:local:formula:quadratic} we get zeros
on both sides, as expected.
\end{Example}

\subsection{Hyperbolic metric with cusps}
\label{sec:punctured:hyperbolic}

A  conformal  class  of  a  flat  metric  $|q|$  contains a canonical
hyperbolic  metric of any given constant curvature with cusps exactly
at  the  singularities  of  the  flat  metric.  (In  the  case,  when
$q=\omega^2$,   where  $\omega\in\cH(0)$  is  a  holomorphic  Abelian
differential  on  a  torus,  we  mark  a  point  on the torus.) In an
appropriate  holomorphic  coordinate  $\zeta$  in a neighborhood of a
conical  singularity  $P$ of such canonical hyperbolic metric $\ghyp$
of curvature $-1$ has the form
\begin{equation}
\label{eq:def:local:coordinate:zeta}
g_{\mathit{hyp}}(\zeta,\bar\zeta)=
\frac{|d\zeta|^2}{|\zeta|^2\log^2|\zeta|} \ .
\end{equation}

Similarly to the smoothed flat metric we define a smoothed hyperbolic
metric    $\ghd$.   It   coincides   with   the   hyperbolic   metric
$g_{\mathit{hyp}}$  outside  of a neighborhood of singularities. In a
small   neighborhood   of   a   singularity   it  is  represented  as
$\ghd=\rhohd(|\zeta|)\,|d\zeta|^2$ where the local coordinate $\zeta$
is  as  in~\eqref{eq:def:local:coordinate:zeta}.  We  choose a smooth
function $\rhohd(s)$ so that it satisfies the following conditions:
\begin{equation}
\label{eq:def:rho:hyp:delta}
\rhohd(s)=\begin{cases}
s^{-2}\log^{-2} s &s\ge\delta\\
\chd&0\le s\le\delta'\ ,
\end{cases}
\end{equation}
and  on  the  interval $\delta'<s<\delta$ the function $\rhohd(s)$ is
monotone and has monotone derivative. We can assume that $\delta'$ is
extremely  close  to  $\delta$  and that $\chd$ is extremely close to
$\delta^{-2}\log^{-2}(\delta)$, see Figure~\ref{fig:smoothed}.

Suppose that $S$ and $S_0$ are two surfaces in the same stratum.

\begin{Definition}[Jorgenson--Lundelius]
\label{def:relative:Laplace:hyperbolic}
Define  relative  determinant of a Laplace operator in the hyperbolic
metric as
$$
\det\Delta_{\ghyp}(S,S_0):=
\frac{\det\Delta_{\mathit{hyp},\delta}(S)}
{\det\Delta_{\mathit{hyp},\delta}(S_0)}\ ,
$$
where              $\Delta_{\mathit{hyp},\delta}(S)$              and
$\Delta_{\mathit{hyp},\delta}(S_0)$  are  Laplace  operators  of  the
metric $\ghd$ on $S$ and $S_0$ correspondingly.
\end{Definition}

As  in section section~\ref{sec:def:det:flat:metric}, we can see that
$\det\Delta_{\ghyp}(S,S_0)$  does  not depend either on $\delta$ or
on the exact choice of the function $\rhohd(s)$.
\medskip

\noindent
{\bf Strategy.}
Now  we  can  formulate  our  strategy  for the rest of the proof. By
formula~\eqref{eq:sum:of:exponents:equals:Lambda}, to compute the sum
of  the  Lyapunov  exponents  we  need  to  evaluate  the integral of
$\Dhyp\log|\det\langle\omega_i,\omega_j\rangle|$       over       the
corresponding   $\SL$-invariant   suborbifold.   Using  the  analytic
Riemann--Roch  Theorem  this  is  equivalent  to  evaluation  of  the
integral         of         $\Dhyp\log\det\Dflat(S,S_0)$,         see
equations~\eqref{eq:main:local:formula:Abelian}
and~\eqref{eq:main:local:formula:quadratic}.  Using  Polyakov formula
we           compare           $\log\det\Dflat(S,S_0)$           with
$\log\det\Delta_{\ghyp}(S,S_0)$  and  show  that  when the underlying
Riemann  surface  $S$  is  close to the boundary of the moduli space,
there is no much difference between them.

The  determinant  of  Laplacian in the \textit{hyperbolic} metric was
thoroughly   studied,   see,   for   example,  papers  of  B.~Osgood,
R.~Phillips,    and    P.~Sarnak~\cite{Osgood:Phillips:Sarnak},    of
S.~Wolpert~\cite{Wolpert},         of         J.~Jorgenson        and
R.~Lundelius~\cite{Jorgenson:Lundelius},     \cite{Lundelius}.     In
particular,   there   is  a  very  explicit  asymptotic  formula  for
$\log\det\Delta_{\ghyp}(S,S_0)$  due to S.~Wolpert~\cite{Wolpert} and
to  R.~Lundelius~\cite{Lundelius}. Using these formulas and performing
an  appropriate cutoff near the boundary, we evaluate the integral of
$\Dhyp\log\det\Dflat(S,S_0)$.

\subsection{Relating flat and hyperbolic Laplacians by means of
the Polyakov formula}
\label{sec:flat:to:hyperbolic}

Consider a function $f$ on a flat surface $S$ and a function $f_0$ on
a  fixed  flat  surface $S_0$ in the same stratum as $S$. Assume that
the functions are nonsingular outside of conical singularities of the
flat  metrics. By convention the surfaces belong to the same stratum.
We  assume  that  the  conical singularities are named, so there is a
canonical  bijection  between conical singularities of $S$ and $S_0$.
By   construction,   small  neighborhoods  of  corresponding  conical
singularities  are  isometric in the corresponding hyperbolic metrics
with   cusps   defined  by~\eqref{eq:def:local:coordinate:zeta}.  The
isometry is unique up to a rotation.

Suppose  that we can represent $f$ and $f_0$ in a neighborhood $\cO(R)$
of each cusp as
\begin{align*}
f(r,\theta)&=g(r)+h(r,\theta)\\
f_0(r,\theta)&=g(r)+h_0(r,\theta)
\end{align*}
where  $g(r)$ is rotationally symmetric and $h$ and $h_0$ are already
integrable  with  respect  to  the  hyperbolic metric of the cusp. We
define
\begin{multline}
\label{eq:relative:integral}
\left\langle\int_S f \, d\ghyp - \int_{S_0} f_0 \, d\ghyp
\right\rangle:=\\
=
\int_{S\setminus\cup\cO_j(R)} f \, d\ghyp -
\int_{S_0\setminus\cup\cO_j(R)} f_0 \, d\ghyp+
\sum_j\int_{\cO_j(R)} (f-f_0) \, d\ghyp
\end{multline}
Clearly, this definition does not depend on the cutoff parameter $R$.

Recall  that $\gflat$ and $\ghyp$ belong to the same conformal class.
Denote  by  $\cf$ (correspondingly $\cf_0$) the following function on
the surface $S$ (correspondingly $S_0$):
$$
\gflat=\exp(2\cf)\ghyp\ .
$$

\begin{Theorem}
\label{theorem:log:det:flat:minus:log:det:hyp}
For any pair $S,S_0$ of flat surfaces of the same area in any stratum
of  Abelian  differentials  or of meromorphic quadratic differentials
with at most simple poles one has
\begin{multline}
\label{eq:log:det:flat:minus:log:det:hyp}
\log\det\Dflat(S,S_0)-\log\det\Delta_{\ghyp}(S,S_0)
\ =\\=\
\frac{1}{12\pi}\left\langle\int_S \cf \, d\ghyp -
\int_{S_0} \cf_0 \, d\ghyp\right\rangle
\ -\\-\
\frac{1}{6} \sum_j\left(\log\left|\frac{dw}{d\zeta}(P_j,S)\right|-
\log\left|\frac{dw_0}{d\zeta}(P_j,S_0)\right|\right),
\end{multline}
where $\zeta$ is as in
  (\ref{eq:def:local:coordinate:zeta}), and $w$ and $w_0$ are as in
  (\ref{eq:def:local:coordinate:w}) for $S$ and $S_0$ respectively.
\end{Theorem}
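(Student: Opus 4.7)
The plan is to apply the Polyakov formula~\eqref{eq:Polyakov:formula} with $g_1 = \ghd$ and $g_2 = \gfe$ on each of $S$ and $S_0$, subtract the two identities, and pass to the limit $\epsilon,\delta\to 0$. Writing $\gfe = e^{2\phi_{\epsilon,\delta}}\ghd$ on $S$ (and an analogous conformal factor on $S_0$), the $\log\Area$ terms cancel under subtraction: both surfaces have unit flat area, the smoothing correction to the flat area near a singularity depends only on the local order $d_j$, and the smoothed hyperbolic area depends only on the topology and the singularity data. One is left with
\begin{multline*}
\log\det\Dflat(S,S_0) - \log\det\Delta_{\ghyp}(S,S_0)
\\
= \frac{1}{12\pi}\left[\int_S \bigl(\phi_{\epsilon,\delta}\Delta_{\ghd}\phi_{\epsilon,\delta} - 2\phi_{\epsilon,\delta}K_{\ghd}\bigr)\,d\ghd - (S \mapsto S_0)\right] + o(1).
\end{multline*}

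Next I would analyze the integrand away from the smoothing annuli. There $\ghyp$ has constant curvature $-1$ and $\gflat$ is flat, so the conformal-change identity $K_{g_2}e^{2\phi} = K_{g_1}-\Delta_{g_1}\phi$ gives $\Delta_{\ghyp}\phi = -1$, and the bulk integrand reduces to $(-\phi+2\phi)\,d\ghyp = \phi\,d\ghyp$. Near each cusp the conformal factor admits the local expansion
\begin{equation*}
\phi(\zeta) = \log\Bigl|\tfrac{dw}{d\zeta}(P_j)\Bigr| + \Bigl(\tfrac{d_j}{2}+1\Bigr)\log|\zeta| + \log|\log|\zeta|| + O(|\zeta|),
\end{equation*}
whose rotationally symmetric divergent part depends only on $d_j$ and therefore matches between $S$ and $S_0$. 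The regularization $\langle\,\cdot\,\rangle$ of~\eqref{eq:relative:integral} is precisely what is needed to extract a finite quantity from this bulk contribution, producing the first term on the right-hand side of~\eqref{eq:log:det:flat:minus:log:det:hyp}.

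What remains is the local contribution from the smoothing annulus $\{\epsilon' < |w| < \epsilon\}$ at each cone point $P_j$. Here both $K_{\ghd}$ and $\Delta_{\ghd}\phi_{\epsilon,\delta}$ acquire contributions supported in the smoothing region, and the conformal factor departs from its unsmoothed expression. The pieces that depend on the detailed shape of the smoothing functions $\rhofe$, $\rhohd$ are determined by the local singularity type alone, and therefore cancel between $S$ and $S_0$. What survives is the coupling between the constant $\log|dw/d\zeta(P_j)|$ appearing in the expansion of $\phi$ and the concentrated Gaussian curvature $-d_j\pi$ (the cone-angle deficit) through the $-2\phi K_{\ghd}$ term of Polyakov, combined with the bulk $\phi\,d\ghyp$ piece. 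A careful Gauss--Bonnet-type computation in the annulus shows that these combine to give a universal coefficient $-1/6$ in front of each $\log|dw/d\zeta(P_j)|$, independent of $d_j$. The main obstacle is exactly this local computation: one must track the smoothed conformal factor and curvature to sufficient accuracy to verify that (i) the $\rhofe,\rhohd$-dependence cancels (as is forced in principle by the independence of $\det\Dflat(S,S_0)$ and $\det\Delta_{\ghyp}(S,S_0)$ from the smoothing choices), and (ii) the universal constant is indeed $-1/6$ in every case, regardless of the cone order $d_j$.
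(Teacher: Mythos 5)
Your overall strategy coincides with the paper's: apply the Polyakov formula to $\gfe=\exp(2\cf)\,\ghd$ on $S$ and on $S_0$, subtract, observe that away from the smoothing regions $\Delta_{\ghyp}\cf=K_{\ghyp}=-1$ so the bulk integrand $\cf(\Delta_{\ghyp}\cf-2K_{\ghyp})\,d\ghyp$ reduces to $\cf\,d\ghyp$, and recognize the regularization $\langle\cdot\rangle$ as exactly what renders the bulk term finite. Up to that point the proposal is sound, modulo one slip: since $e^{2\cf}=|w|^{d_j}\,|dw/d\zeta|^2\,|\zeta|^2\log^2|\zeta|$, the constant term in your local expansion of $\cf$ carries the coefficient $\tfrac{d_j+2}{2}$, not $1$.

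The gap is the step you yourself flag as the main obstacle, and the mechanism you propose for it points in the wrong direction. You attribute the $-\tfrac16\log|dw/d\zeta(P_j)|$ term to the coupling of that constant with the cone-angle deficit $-d_j\pi$ of the flat metric through the $-2\cf K$ term; such a coupling would produce a coefficient proportional to $d_j$, which is exactly what the theorem excludes, and no bookkeeping will turn $-d_j\pi$ into a $d_j$-independent answer. In the paper's computation the term does not come from the flat smoothing annulus $\epsilon'\le|w|\le\epsilon$ at all: there the boundary term of the integration by parts couples $r\,\partial_r\cf_2\big|_{r=\epsilon}=\tfrac{d}{2}$ with $\cf_1=-\log\epsilon-\log|\log\epsilon|+O(1/\log\epsilon)$, and the dependence on $dw/d\zeta$ is hidden inside the $O(1/\log\epsilon)$ error, so it disappears in the limit. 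The surviving contribution arises instead from the \emph{hyperbolic} smoothing annulus $\delta'\le|\zeta|\le\delta$, where $\gfe=\cfe\,|dw|^2$ is already constant, so $\cf_2=\tfrac12\log\cfe+\log|dw/d\zeta|$ exactly (coefficient $1$, which is why your expansion error is ultimately immaterial), and the boundary term $\int_0^{2\pi}\cf_2\, r\,\partial_r\cf_1\,d\theta\big|_{r=\delta}$ with $r\,\partial_r\cf_1\big|_{r=\delta}=-(1+1/\log\delta)$ yields $-2\pi\log|dw/d\zeta(0)|$ plus a piece that cancels against $S_0$. Equivalently, the relevant concentrated curvature is the universal $+2\pi$ carried by the capped-off hyperbolic cusp --- the same for every cone order --- which is why the final coefficient $\tfrac{1}{12\pi}\cdot 2\pi=\tfrac16$ is independent of $d_j$. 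Without this identification the decisive local computation remains an assertion rather than a proof; you would also need to fix the order of limits ($0<\delta\ll\epsilon\ll R$), which the paper justifies by the a priori independence of each regularized determinant from its own smoothing parameter.
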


Theorem~\ref{theorem:log:det:flat:minus:log:det:hyp}  is  a  corollary  of
Polyakov         formula;        it        is        proved        in
section~\ref{sec:Comparison:of:determinants}.

\subsection{Comparison  of relative determinants of Laplace operators
near the boundary of the moduli space}

In
section~\ref{sec:Comparison:of:determinants:end:asymptotic:behavior}
we            estimate            the           integral           in
formula~\eqref{eq:log:det:flat:minus:log:det:hyp}                from
Theorem~\ref{theorem:log:det:flat:minus:log:det:hyp}    and    prove   the
following statement.

\begin{Theorem}
\label{theorem:det:minus:det:is:O:ell:flat}
Consider  two  flat surfaces $S,S_0$ of area one in the same stratum.
Let  $\lf(S),\lf(S_0)$  be the lengths of shortest saddle connections
on   flat   surface   $S$  and  $S_0$  correspondingly.  Assume  that
$\lf(S_0)\ge l_0$. Then
\begin{multline}
\label{eq:det:minus:det:is:O:ell:flat}
\big|\log\det\Dflat(S,S_0)-\log\det\Delta_{\ghyp}(S,S_0)\big| \le \\
\le\const_1(g,\noz)\cdot|\log\ell_{\mathit{flat}}(S)|
+\const_0(g,\noz,l_0)
\end{multline}
with  constants $\const_0(g,\noz,l), \const_1(g,\noz)$ depending only
on the genus of $S$, on the number $\noz$ of conical singularities of
the flat metric on $S$ and on the bound $l_0$ for $\lf(S_0)$.
\end{Theorem}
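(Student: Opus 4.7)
The plan is to invoke Theorem~\ref{theorem:log:det:flat:minus:log:det:hyp}, which writes $\log\det\Dflat(S,S_0)-\log\det\Delta_{\ghyp}(S,S_0)$ as a sum of (a) a regularized integral of the conformal factor $\cf$ relating $\gflat$ and $\ghyp$, and (b) a boundary correction $\sum_j\log|dw/d\zeta|(P_j,\,\cdot\,)$ at the conical points. Since $\lf(S_0)\ge l_0$, every contribution coming from $S_0$ is uniformly bounded in terms of $g$, $\noz$, and $l_0$, and can be absorbed into $\const_0$. It therefore suffices to bound each of the two quantities attached to $S$ by $\const_1(g,\noz)\cdot|\log\lf(S)|+\const$.

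For the boundary term, the coordinates $w$ and $\zeta$ near a cusp $P_j$ are intrinsic up to rotation: $w$ is determined by the normal form $q=w^{d_j}(dw)^2$ of the quadratic differential, and $\zeta$ by the cusp normal form~\eqref{eq:def:local:coordinate:zeta} of the hyperbolic metric. The ratio $|dw/d\zeta|(P_j,S)$ is hence a well-defined scalar, which can be related through an explicit local computation to the conformal modulus of the largest embedded flat disk around $P_j$. That modulus degenerates exactly when a saddle connection incident to $P_j$ becomes short, yielding $\bigl|\log|dw/d\zeta|(P_j,S)\bigr|\le \const(g,\noz)\cdot|\log\lf(S)|+\const$; summing over $j$ produces a bound of the desired form.

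For the integral of $\cf$, decompose $S$ into (i) the hyperbolic thick part, (ii) the standard cusp neighborhoods of the $P_j$, and (iii) the thin collars around short closed hyperbolic geodesics (if any). On (i), a Mumford-type compactness argument together with Rafi-style comparison between flat and hyperbolic metrics gives $|\cf|=O(1)$ against a bounded hyperbolic area, contributing $O(1)$. On (ii), the divergent rotationally symmetric portion of $\cf$ on $S$ matches that on $S_0$ by the very design~\eqref{eq:relative:integral} of the regularized bracket $\langle\,\cdot\,\rangle$, up to a local discrepancy that is already captured by the boundary term controlled above; what remains is integrable and uniformly bounded. The main obstacle, and the actual source of the $|\log\lf|$ growth, lies in (iii): by the comparison of flat and hyperbolic short-curve lengths, a short saddle connection on $S$ forces a short hyperbolic geodesic with collar of length $\sim|\log\lf(S)|$. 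In Fermi-type collar coordinates the flat metric is essentially a fixed Euclidean cylinder, so $\cf$ varies by $O(|\log\lf(S)|)$ between the waist and the ends of the collar, while $d\ghyp$ concentrates at the thick ends; the resulting contribution is $\const(g,\noz)\cdot|\log\lf(S)|$. Summing over the uniformly bounded number of collars yields the required estimate~\eqref{eq:det:minus:det:is:O:ell:flat}.
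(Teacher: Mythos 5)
Your overall strategy coincides with the paper's: start from Theorem~\ref{theorem:log:det:flat:minus:log:det:hyp}, absorb everything coming from $S_0$ into $\const_0$, and estimate the regularized integral of $\cf$ via a hyperbolic thick--thin decomposition together with the cusp neighborhoods. However, your step (i) contains a genuine error. It is \emph{not} true that $|\cf|=O(1)$ on the hyperbolic thick part. A $\delta$-thick component $Y$ (in the hyperbolic sense) can carry a flat structure of arbitrarily small size $\lambda(Y)$ --- this is exactly the situation of Example~\ref{ex:linear:in:lf}, where the pair of pants containing two merging zeroes is hyperbolically thick (its hyperbolic area is $2\pi|\chi|$, bounded below) while its flat size is $2\lf(S_\tau)\to 0$. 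On such a component $\gflat=e^{2\cf}\ghyp$ forces $\cf=\log\lambda(Y)+O(1)$, which is what Proposition~\ref{prop:pointwise:conformal:factor} actually asserts, and $\log\lambda(Y)$ can be as negative as $\log\lf(S)$. The Mumford-type compactness you invoke fails precisely here: the unrescaled quadratic differential does not range over a compact family on a thick component; only the rescaled differential $\lambda(Y)^{-2}q$ does (Theorem~\ref{theorem:Deligne:Mumford}), and Rafi's comparison of flat and hyperbolic lengths holds only up to the factor $\lambda(Y)$. Consequently the thick part contributes a term of order $\Area_{hyp}(Y(\eta))\cdot\log\lambda(Y)\sim|\log\lf(S)|$, not $O(1)$, and your assertion that the collars are ``the actual source'' of the logarithmic growth is incorrect: in the paper's sharper Theorem~\ref{theorem:det:minus:det:upto:O(1)} the coefficient $-2\pi\chi(Y_j)$ of $\log\lambda(Y_j)$ is assembled from \emph{both} the thick-component integrals and the adjacent collar integrals. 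The final inequality happens to survive because the missed thick-part contribution is still $O(|\log\lf(S)|)$, but the proof as written does not establish it.

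A secondary weakness is the treatment of the boundary terms $\log|dw/d\zeta|(P_j,S)$. The bound you state is correct, but ``an explicit local computation relating it to the conformal modulus of the largest embedded flat disk around $P_j$'' is not an argument. The paper obtains it by rescaling $q$ to $\tilde q=\lambda(Y)^{-2}q$, which shifts the term by $-\tfrac{2}{d(P_j)+2}\log\lambda(Y)$, and then bounds the remainder $\tfrac{1}{d+2}\log|f(0)|$ (with $\tilde q=f(\zeta)\zeta^d(d\zeta)^2$) by applying the maximum principle to the harmonic function $\log|f|$, whose boundary values on $\partial\cO_{P_j}(R)$ are controlled by Proposition~\ref{prop:pointwise:conformal:factor}. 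You would need some such mechanism --- again hinging on the rescaling by the size $\lambda(Y)$ rather than on $\lf(S)$ directly --- to make this step rigorous.
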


In fact we prove a much more accurate statement in
Theorem~\ref{theorem:det:minus:det:upto:O(1)}, which gives the exact
difference between the flat and hyperbolic determinants up to an error
which is bounded in terms only of $S_0$, $g$ and $\noz$.  The optimal
constant $c_1(g,\noz)$ in (\ref{eq:det:minus:det:is:O:ell:flat}) can
also be deduced easily from
Theorem~\ref{theorem:det:minus:det:upto:O(1)}.

Establishing  a convention confining the choice of the auxiliary flat
surface  $S_0$  to  some  reasonable predefined compact subset of the
stratum  one  can  make $\const_0$ independent of $l_0$. For example,
the  subset of those $S_0$ for which $\lf(S_0)\ge 1/\sqrt{2g-2+n}$ is
nonempty   for   any  connected  component  of  any  stratum.  As  an
alternative  one  can impose a lower bound on the shortest hyperbolic
geodesic  on the Riemann surface underlying $S_0$ in terms of $g$ and
$\noz$.
\smallskip

We  prove  Theorem~\ref{theorem:det:minus:det:is:O:ell:flat}  applying the
following     scheme.     To     evaluate     the     integral     in
formula~\eqref{eq:log:det:flat:minus:log:det:hyp} we use a thick-thin
decomposition of the surface $S$ determined by the hyperbolic metric.
Then,      using      Theorem~\ref{theorem:Deligne:Mumford}     (Geometric
Compactification  Theorem) we obtain a desired estimate for the thick
part. We then use the maximum principle and some simple calculations
to obtain the desired estimates for the integral on the thin part.

\subsection{Determinant of Laplacian near the boundary of the moduli
space}
\label{ss:det:near:the:boundary}

Consider a holomorphic 1-form $\omega$ (or a meromorphic differential
$q$  with  at most simple poles) on a closed Riemann surface of genus
$g$.    Consider the   corresponding   flat   surface   $S=S(\omega)$
(correspondingly  $S(q)$). Assume that $\omega$ (correspondingly $q$)
is normalized in such way that the flat area of $S$ is equal to one.

Every  regular  closed geodesic on a flat surface belongs to a family
of  parallel  closed  geodesics  of equal length. Such family fills a
maximal cylinder with conical points of the metric on each of the two
boundary  components.  Denote by $h_j$ and $w_j$ a height and a width
correspondingly   of  such  a  maximal  cylinder.  (By  convention  a
``width''  of  a  cylinder is the length of its waist curve, which by
assumption   is   a  closed  geodesic  in  the  flat  metric.)  By  a
\textit{modulus} of the flat cylinder we call the ratio $h_j/w_j$.

\begin{Theorem}
\label{theorem:Dflat:near:the:boundary}
For  any  stratum  $\cH_1(m_1,\dots,m_\noz)$ of Abelian differentials
and   for   any   stratum  $\cQ_1(d_1,\dots,d_\noz)$  of  meromorphic
quadratic  differentials  with  at  most  simple  poles there exist a
constant  $M=M(g,n)\gg  1$ depending only on the genus $g$ and on the
number  $\noz$ of zeroes and simple poles, such that for any pair $S,
S_0$  of  flat surfaces of unit area in the corresponding stratum one
has
\begin{equation}
\label{eq:Dflat:near:the:boundary}
-\log\det\Dflat(S,S_0)= \frac{\pi}{3}\,
\sum_{
\substack{
\text{cylinders with}\\
h_r/w_r\ge M
}
}
\frac{h_r}{w_r}
+ O(\log\ell_{\mathit{flat}}(S))\,,
\end{equation}
where
$\lf(S)$  is the length of the shortest saddle connection on the flat
surfaces $S$ and $h_r, w_r$ denote heights and widths of maximal flat
cylinders of modulus at least $M$ on the flat surface $S$. Here
$$
\Big|O(\log\ell_{\mathit{flat}}(S))\Big|
\le C_1(g,\noz)\cdot|\log\ell_{\mathit{flat}}(S)|+C_0(g,\noz,S_0)
$$
with  $C_1(g,\noz), C_0(g,\noz,S_0)$ depending only on the genus $g$,
on  the  number  $\noz$  of  conical  singularities  of the base flat
surface $S_0$.
\end{Theorem}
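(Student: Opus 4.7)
The plan is to reduce the statement to a known asymptotic expansion of the hyperbolic determinant of the Laplacian near the boundary of moduli space, and then transport that expansion to the flat setting via a matching of short hyperbolic geodesics with large-modulus flat cylinders. The first step is essentially free: Theorem~\ref{theorem:det:minus:det:is:O:ell:flat} already shows
\begin{equation*}
-\log\det\Dflat(S,S_0)\ =\ -\log\det\Delta_{\ghyp}(S,S_0)\ +\ O\bigl(|\log\lf(S)|\bigr),
\end{equation*}
so the problem is reduced to establishing the formula~\eqref{eq:Dflat:near:the:boundary} with the hyperbolic relative determinant in place of the flat one.

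Next I would invoke the Wolpert--Jorgenson--Lundelius pinching asymptotic for the hyperbolic regularized determinant: there is a threshold $\ell_0=\ell_0(g,\noz)>0$ such that, letting $\gamma_1,\dots,\gamma_N$ denote the simple closed hyperbolic geodesics on $S$ of length $\lh(\gamma_i)\le \ell_0$, one has
\begin{equation*}
-\log\det\Delta_{\ghyp}(S,S_0)\ =\ \sum_{i=1}^{N}\frac{\pi^2}{3\,\lh(\gamma_i)}\ +\ O(1),
\end{equation*}
with an implicit constant depending only on $g$, $\noz$ and $S_0$. This is the principal external input and is a consequence of Wolpert's pinching formula in~\cite{Wolpert}, together with its extension to the relative determinant in~\cite{Jorgenson:Lundelius}, \cite{Lundelius}.

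The technical heart of the argument is to match these short hyperbolic geodesics with the maximal flat cylinders of large modulus. For $h/w$ large, the core curve of a flat cylinder is conformally equivalent to an annulus of modulus $h/w$, and extremal length theory gives the comparison
\begin{equation*}
\lh(\gamma)\ =\ \frac{\pi w}{h}\,\bigl(1+O(w/h)\bigr),
\end{equation*}
so $\pi^2/(3\lh(\gamma))=\pi h/(3w)\cdot(1+O(w/h))$. Choosing $M=M(g,\noz)$ sufficiently large, a Modulus Lemma in the spirit of~\cite{Masur:Zorich} ensures that, apart from an $O(1)$-bounded number of ``transitional'' geodesics with $\lh\in[\pi/M+O(1),\ell_0]$, every simple hyperbolic geodesic of length $\le\ell_0$ is the core of a unique maximal flat cylinder of modulus $\ge M$, and conversely. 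Substituting the length comparison converts $\sum \pi^2/(3\lh(\gamma_i))$ into $\tfrac{\pi}{3}\sum h_r/w_r$ plus a multiplicative error of size $O(1/M)$ per term; since $w_r\ge\lf(S)$ and $h_r\cdot w_r\le 1$, each $h_r/w_r$ is at most $1/\lf(S)^2$, and the aggregated error is absorbed into $O(|\log\lf(S)|)$.

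The principal obstacle lies in this matching step: \emph{a priori} a short hyperbolic geodesic could arise from a thin conformal tube supported on a cluster of short saddle connections rather than on a genuine flat cylinder, and this possibility must be excluded. The Modulus Lemma accomplishes precisely this, by combining the area bound on $S$ with an inequality between extremal length and the modulus of the maximal embedded annulus homotopic to $\gamma$: for $M$ large enough the modulus defect between the hyperbolic collar and the largest flat cylinder it contains is uniformly bounded. Once this dichotomy is in place, the rest of the argument is bookkeeping: hyperbolic geodesics longer than $\ell_0$ contribute $O(1)$ through the Wolpert--Lundelius formula, transitional geodesics are uniformly bounded in number by $3g-3+\noz$ and contribute $O(1)$, and the matched cylinders produce precisely the leading term $\tfrac{\pi}{3}\sum_{h_r/w_r\ge M} h_r/w_r$ up to the desired error.
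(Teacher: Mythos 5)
Your overall strategy coincides with the paper's: reduce to the hyperbolic relative determinant via Theorem~\ref{theorem:det:minus:det:is:O:ell:flat}, apply the Wolpert--Lundelius pinching asymptotic (the paper's Corollary~\ref{cor:lundelius}), and then convert the sum over short hyperbolic geodesics into a sum over flat cylinders of large modulus. The first two steps are sound (modulo a minor overstatement: the Lundelius error is $O(1+|\log\lh(S)|)$, not $O(1)$, but this is harmless by Lemma~\ref{lemma:O:lhyp:O:lflat}).

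The gap is in your matching step. You assert that, apart from an $O(1)$-bounded collection of ``transitional'' geodesics contributing $O(1)$, every hyperbolic geodesic of length $\le\ell_0$ is the core of a maximal flat cylinder of modulus $\ge M$, and that the possibility of a short geodesic arising from ``a thin conformal tube supported on a cluster of short saddle connections'' is \emph{excluded} by a modulus lemma. This is false, and it is the crux of the whole estimate. Short hyperbolic geodesics whose conformal thin part is an \emph{expanding annulus} in Minsky's sense --- for instance the curve encircling two merging zeroes, or the curves bounding the middle sphere in the configuration of two homologous saddle connections (the paper's Examples of \textit{two merging zeroes} and of \textit{a pair of homologous saddle connections}) --- are not cores of any flat cylinder, yet they genuinely occur and their reciprocal hyperbolic lengths are \emph{not} $O(1)$: by Minsky's estimate (Lemma~\ref{lemma:modulus}(b)) one has $\pi/\lh(\gamma)\asymp|\log\lambda_+(A_\gamma)-\log\lambda_-(A_\gamma)|$, which can be as large as a constant times $|\log\lf(S)|$. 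These terms cannot be excluded or absorbed into $O(1)$; they are precisely the source of the $O(\log\lf(S))$ error in~\eqref{eq:Dflat:near:the:boundary}, and bounding them requires the size lower bound $\lambda(Y)\ge\lf(S)$ of Lemma~\ref{lemma:size:ge:lflat} together with Lemma~\ref{lemma:modulus}(b). Your bookkeeping, which only accounts for matched cylinders and $O(1)$-contributions, therefore does not establish the claimed error bound; the dichotomy ``flat cylinder of modulus $\ge M$ \emph{versus} expanding annulus'' must be treated case by case, as in the paper's decomposition of $\Gamma(\delta)$ into $\Gamma(\delta)\cap\Gamma'_M$ and $\Gamma(\delta)\setminus\Gamma'_M$ and the bound on $\cE(S)$.
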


\noindent\textbf{Choice of the constants.}
Similarly   to   the   way  suggested  in  the  discussion  following
Theorem~\ref{theorem:det:minus:det:is:O:ell:flat},      establishing     a
reasonable  convention  on  the  choice  of  $S_0$ one can get rid of
dependence   of   the  constants  on  the  base  surface  $S_0$.

\begin{NNRemark}
It is a well known fact, see e.g.  \cite[Proposition 3.3.7]{Hubbard:book}
that a  flat cylinder of sufficiently large modulus necessarily contains a
short  hyperbolic  geodesic for the underlying hyperbolic metric. The
number  of  short  hyperbolic  geodesics  on  a surface is bounded by
$3g-3+n$.  Thus, for sufficiently large $M$ depending only on $g$ and
$n$,         the         number         of         summands        in
expression~\eqref{eq:Dflat:near:the:boundary}  is  uniformly bounded.
\end{NNRemark}

\begin{Example}[\textit{Flat torus}]
In notations of Example~\ref{ex:torus} from the previous section, one
has  the following expression for the determinant of the Laplacian in
a flat metric on a torus of area one:
$$
\det\Dflat(\zeta)=4\Im(\zeta)\, |\eta(\zeta)|^4\,.
$$
see,   for   example,  \cite{Osgood:Phillips:Sarnak},  page  205,  or
formula~(1.3)  in~\cite{McIntyre:Takhtajan}.  Taking the logarithm of
the   above   formula  and  using  the  asymptotic  of  the  Dedekind
$\eta$-function for large values of $\Im\zeta$ we get
$$
\log\det\Dflat\sim 4\log|\eta(\zeta)| \sim
-\cfrac{\pi}{3}\,\Im\zeta\,,
\ \text{ when }\Im\zeta\to +\infty
\,.
$$
Note that $h/w$ does not depend on the rescaling
of the torus, and $h/w\sim\Im\zeta$. Thus, we
get the asymptotics promised by
relation~\eqref{eq:Dflat:near:the:boundary} of
Theorem~\ref{theorem:Dflat:near:the:boundary}.
\end{Example}

\begin{Example}[\textit{Flat sphere with four cone points}]
In  Example~\ref{ex:CP1:with:4:poles}  from  the  previous section we
considered the determinant of the Laplacian in a flat metric on $\CP$
defined  by  a quadratic differential with four simple poles and with
no      other      singularities;      see     the     last     pages
of~\cite{Kokotov:Korotkin:spere}        for       details.       This
expression~\eqref{eq:det:Dflat:4:CP1:with:4:poles}     implies    the
following  asymptotics  for  large  values  of  $\Im\zeta$  (we  keep
notations of Example~\ref{ex:CP1:with:4:poles}):
$$
\log\det\Delta^{|q|}\sim 2\log|\eta(\zeta)|\,,
\ \text{ when }\Im\zeta\to +\infty\,,
$$
which  is  one  half  of the torus case. Indeed, the height $h$ of the
single  flat  cylinder of the covering torus is twice bigger then the
height  of  the  single  flat cylinder on the underlying flat sphere,
while  the  width $w$ of the cylinder on the torus is the same as the
width  of  the  one  on  the  flat  sphere.  Thus,  we  again get the
asymptotics  promised  by relation~\eqref{eq:Dflat:near:the:boundary}
of Theorem~\ref{theorem:Dflat:near:the:boundary}.
\end{Example}

Theorem~\ref{theorem:Dflat:near:the:boundary}     is     proved    in
section~\ref{sec:det:near:the:boundary}.  Our  strategy  is to derive
the   result   from   an   analogous   estimate   by   Lundelius  and
Jorgenson---Lundelius for a hyperbolic metric punctured at the zeroes
of    $\omega$    (correspondingly,   $q$)   and   then   apply   the
estimate~\eqref{eq:Dflat:near:the:boundary}                      from
Theorem~\ref{theorem:Dflat:near:the:boundary}.

\subsection{The contribution of the  boundary of moduli space}
\label{sec:contrib:boundary}

A  regular  invariant  suborbifold  $\cM_1$  is never compact, so one
should  not  expect that the integral of $\Dhyp \log \det\Dflat$ over
$\cM_1$ would be zero. Indeed,

\begin{Theorem}
\label{theorem:int:Dflat:equals:SV:const}
Let  $\cM_1$  be  a regular invariant suborbifold of flat surfaces of
area  one in a stratum of Abelian differentials (correspondingly in a
stratum  of  quadratic  differentials with at most simple poles). Let
$\nu_1$     be    the    associated    probability    $\SL$-invariant
(correspondingly     $\PSL$-invariant)     density    measure.    Let
$c_{\mathit{area}}(\cM_1)   :=   c_{\mathit{area}}(\nu_1)$   be   the
corresponding Siegel--Veech constant. Then
\begin{equation}
\label{eq:int:Dflat:equals:SV:const}
   %
\int_{\cM_1} \Dhyp\log\Dflat(S,S_0)\;d\nu_1\ =\
-\frac{4}{3}\,\pi^2\cdot c_{\mathit{area}}(\cM_1)
\end{equation}
\end{Theorem}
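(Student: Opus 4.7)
The strategy is to exploit the self-adjointness of $\Dhyp$ against the $\SL$-invariant measure $\nu_1$ by pairing it with a compactly supported cutoff, and then to evaluate the resulting boundary-type integral by combining the near-boundary asymptotic of Theorem~\ref{theorem:Dflat:near:the:boundary} with the area-weighted Siegel--Veech formula.

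Set $f:=\log\det\Dflat(S,S_0)$; because the flat metric $|q|$ is $\SO$-invariant, so is $f$ on $\cM_1$. On $\SO$-invariant functions $\Dhyp$ (in our curvature $-4$ normalization) coincides with the Casimir of $\mathfrak{sl}_2$, and the $\SL$-invariance of $\nu_1$ makes the fundamental vector fields of the $\SL$-action divergence-free, so $\Dhyp$ is formally self-adjoint in $L^2(\cM_1,\nu_1)$. Hence for every smooth, $\SO$-invariant, compactly supported $\chi$ on $\cM_1$,
\[
\int_{\cM_1}\chi\cdot\Dhyp f\,d\nu_1\;=\;\int_{\cM_1} f\cdot\Dhyp\chi\,d\nu_1.
\]
Take a family $\chi_\epsilon$ with support in the thick part $\{\ell_{\mathit{flat}}\geq\epsilon\}$ (compact by the standard flat-to-hyperbolic comparison and Mumford's compactness criterion), equal to $1$ off a small neighborhood of $\{\ell_{\mathit{flat}}=\epsilon\}$. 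Using regularity hypothesis~(iii) and the quantitative bound of Theorem~\ref{theorem:Dflat:near:the:boundary}, one verifies $\Dhyp f\in L^1(\cM_1,\nu_1)$ and that the LHS above converges to $\int_{\cM_1}\Dhyp f\,d\nu_1$ as $\epsilon\to 0$.

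On the support of $\Dhyp\chi_\epsilon$ at least one cylinder $C$ is short, and Theorem~\ref{theorem:Dflat:near:the:boundary} gives
\[
f(S)\;=\;-\frac{\pi}{3}\sum_{h_r/w_r\geq M}\frac{h_r}{w_r}\;+\;O(\log\ell_{\mathit{flat}}(S)).
\]
Since $\Dhyp\chi_\epsilon=O(\epsilon^{-2})$ on a transition set of $\nu_1$-measure $O(\epsilon^2)$ (where, by~(iii), only a single cylinder is short to leading order), the $O(\log\ell)$ remainder contributes $o(1)$. The surviving main term is
\[
-\frac{\pi}{3}\,\lim_{\epsilon\to 0}\int_{\cM_1}\sum_C\frac{A_C}{|v_C|^2}\,\Dhyp\chi_\epsilon\,d\nu_1.
\]
For a single cylinder of holonomy $v=(a,b)$ and area $A_C$, one has on the Teichm\"uller disc $|v(\zeta)|^2=((ay+bx)^2+b^2)/y$, whence at $\zeta=i$ one computes directly $|\nabla|v||^2_{\mathrm{hyp}}=|v|^2$ and, after averaging over $\arg v$, $\overline{\Dhyp|v|}=3|v|$. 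Applying the area-weighted Siegel--Veech formula
\[
\int_{\cM_1}\sum_C A_C\,F(v_C)\,d\nu_1\;=\;c_{\mathit{area}}(\cM_1)\int_{\R^2}F(v)\,d^2v
\]
with $F(v)=\Dhyp\chi_\epsilon(v)/|v|^2$, passing to polar coordinates, and integrating by parts once in $r$ reduces the planar integral to $2\pi\int_0^\infty\bigl(s\,\eta''(s)+3\eta'(s)\bigr)\,ds=4\pi$, where $\eta$ is the one-dimensional profile defining $\chi_\epsilon$ as a function of $|v|/\epsilon$. Scaling makes the value $\epsilon$-independent. Multiplying by the prefactor $-\pi/3\cdot c_{\mathit{area}}(\cM_1)$ yields $-\tfrac{4}{3}\pi^2\,c_{\mathit{area}}(\cM_1)$, as claimed.

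The principal technical obstacle is the rigorous execution of the limit $\epsilon\to 0$: proving $\Dhyp f\in L^1(\cM_1,\nu_1)$, quantifying the $O(\log\ell_{\mathit{flat}})$ remainder, and controlling the contribution of surfaces carrying several simultaneously short, possibly non-parallel, cylinders. Each of these steps relies essentially on hypothesis~(iii) and on the quantitative form of Theorem~\ref{theorem:Dflat:near:the:boundary}.
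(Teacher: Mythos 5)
Your overall strategy coincides with the paper's own (a smooth cutoff, Green's formula, substitution of the near-boundary asymptotics of $\log\det\Dflat$ from Theorem~\ref{theorem:Dflat:near:the:boundary}, and an appeal to the area Siegel--Veech formula), and your local computations --- $|\nabla|v||^2_{\mathit{hyp}}=|v|^2$, the angular average $\overline{\Dhyp|v|}=3|v|$, and the planar integral $2\pi\int_0^\infty\bigl(s\,\eta''(s)+3\eta'(s)\bigr)\,ds=4\pi$ --- are correct and do reproduce the constant $-\tfrac{4}{3}\pi^2$. There is, however, a genuine gap at the central step: the application of the Siegel--Veech formula with $F(v)=\Dhyp\chi_\epsilon(v)/|v|^2$. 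The Siegel--Veech identity applies to transforms $\sum_C\Area(C)\,F(v_C)$ of a \emph{fixed} function $F$ on $\R{2}$, whereas $\Dhyp\chi_\epsilon(S)$ is a function on $\cM_1$: the cutoff depends on $\ell_{\mathit{flat}}(S)$, a minimum over all saddle connections, so its Laplacian along the Teichm\"uller disc is not of the form $F(v_C)$ for the individual cylinders $C$ appearing in the sum. On the support of $\Dhyp\chi_\epsilon$ the sum $\sum_C\Area(C)/|v_C|^2$ runs over all high-modulus cylinders, including ones parallel to the shortest but of much larger width, and ones not responsible for $\ell_{\mathit{flat}}(S)\approx\epsilon$ at all; for these the identification of $\Dhyp\chi_\epsilon(S)$ with $(\Dhyp\eta)(|v_C|/\epsilon)$ fails. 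Moreover the direction of $\nhyp\Mod(gC)$ in the Teichm\"uller disc depends on $C$, so when non-parallel cylinders are simultaneously short the pointwise identification breaks down; condition~(iii) is needed quantitatively here, not merely as a measure-zero caveat, and it does not by itself dispose of the parallel cylinders of incomparable widths.

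The paper's \S\ref{sec:cutoff} is devoted to exactly these points, and they constitute the substance of the proof rather than a routine verification: the cutoff is built from $\ell_K(S)$ (the shortest waist curve among cylinders of modulus at least $K$) rather than from $\ell_{\mathit{flat}}(S)$; Lemma~\ref{lemma:can:neglect:nonparallel} uses~(iii) to discard the non-parallel short cylinders; Lemma~\ref{lemma:nearly:equal:widths} discards parallel cylinders of width at least $P\ell_K(S)$ with an $O(1/P^2)$ error; and Lemma~\ref{lemma:last:lemma} replaces the Siegel--Veech-transform shortcut by an explicit disintegration of the affine measure $\nu_1$ along the $a_y$-flow near each cusp region, where the identity $\nhyp\Mod(a_yC)=(0,2\Mod)$ reduces the pairing $\int\nhyp\tilde{\psi}^{K,P}\cdot\nhyp f_\epsilon\,d\nu_1$ to a one-dimensional fundamental-theorem-of-calculus computation (your planar integral is the integrated-by-parts avatar of this). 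Your closing paragraph names the right obstacles, but the proposal as written asserts the conclusion of this analysis rather than supplying it.
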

Theorem~\ref{theorem:int:Dflat:equals:SV:const}    is    proved    in
section~\ref{sec:cutoff}.

\begin{Remark}
\label{rm:only:cylinders:joining:size:one}
   %
After integrating by parts, the left side of
(\ref{eq:int:Dflat:equals:SV:const}) can be written as
an integral over a neighborhood of the boundary of the moduli space,
which in view of
(\ref{eq:Dflat:near:the:boundary}) is dominated by a sum over all
cylinders of large modulus. Also the Siegel-Veech constant $c_{area}(\cM_1)$
measures the contribution of (certain kinds) of cylinders of large
modulus; this gives a heuristic explanation of
(\ref{eq:int:Dflat:equals:SV:const}). However, for the precise proof
of (\ref{eq:int:Dflat:equals:SV:const}) in \S\ref{sec:cutoff}
we need the assumptions of
\S\ref{sec:Regular:invariant:submanifolds}, (so we can e.g. justify the
integration by parts).
\end{Remark}

The  main  Theorems  now  become  elementary corollaries of the above
statements.

\begin{proof}[Proof of Theorem~\ref{theorem:general:Abelian}]
Suppose that $\cM_1$ is a regular suborbifold of a stratum of Abelian
differentials.                                                  Apply
equation~\eqref{eq:sum:of:exponents:equals:Lambda}               from
the Background Theorem        to express  the  sum  of  the  Lyapunov
exponents  as  the  integral  of  $\Lambda(S)$  defined by
relation~\eqref{eq:lambda}.                                    Use
equation~\eqref{eq:main:local:formula:Abelian}                   from
Theorem~\ref{theorem:main:local:formula}  to  rewrite the integral of
$\Dhyp\log|\det\langle\omega_i,\omega_j\rangle|$   in  terms  of  the
integral   of   $\Dhyp\log\det\Dflat(S,S_0)$.   Finally,   apply  the
relation~\eqref{eq:int:Dflat:equals:SV:const}                    from
Theorem~\ref{theorem:int:Dflat:equals:SV:const} to express the latter
integral in terms of the corresponding Siegel--Veech constant.
\end{proof}

\begin{proof}[Proof of part (a) of
Theorem~\ref{theorem:general:quadratic}]
The  proof  of part (a) of Theorem~\ref{theorem:general:quadratic} is
completely        analogous        to        the       proof       of
Theorem~\ref{theorem:general:Abelian}  with  the only difference that
one   uses   expression~\eqref{eq:main:local:formula:quadratic}  from
Theorem~\ref{theorem:main:local:formula}          instead          of
equation~\eqref{eq:main:local:formula:Abelian}.
\end{proof}

\section{Geometric Compactification Theorem}
\label{sec:Geometric:Compactification:Theorem}

In  section~\ref{ss:after:Rafi}, we present the results of K.~Rafi on
comparison  of  flat  and hyperbolic metrics near the boundary of the
moduli   space.   Using   the   notions   of   a   \textit{thick-thin
decomposition}  and  of  a  \textit{size} (in the sense of Rafi) of a
thick       part       we       formulate      and      prove      in
section~\ref{ss:Compactification:Theorem}    a    version    of   the
Deligne--Mumford--Grothendieck  Compactification Theorem in geometric
terms.  The proof is an elementary corollary of nontrivial results of
K.~Rafi.  The  Geometric  Compactification  Theorem  is an important
ingredient            of            the            proof           of
Theorem~\ref{theorem:Dflat:near:the:boundary}       postponed      to
section~\ref{sec:det:near:the:boundary}.


\subsection{Comparison of flat and hyperbolic geometry (after K.~Rafi)}
\label{ss:after:Rafi}

We  start  with  an  outline  of  results  of  K.~Rafi~\cite{Rafi} on
the comparison  of  flat  and hyperbolic metrics when the Riemann surface
underlying  the  flat  surface  $S$  is  close to the boundary of the
moduli space.

Throughout    section~\ref{sec:Geometric:Compactification:Theorem}   we
consider  a  larger class of flat metrics, namely, we consider a flat
metric  defined  by  a  meromorphic  quadratic differential $q$ which
might  have  poles  of any order. In particular, the flat area of the
surface  might  be  infinite.  Unless  it is stated explicitly, it is
irrelevant  whether or not the quadratic differential $q$ is a global
square of a meromorphic 1-form.

In   section~\ref{sec:Geometric:Compactification:Theorem}   we   mostly
consider  the  flat  surface $S$ and its subsurfaces $Y$ punctured at
all  singular  points  of  the flat metric (or, in other words at all
zeroes   and   poles   of  the  corresponding  meromorphic  quadratic
differential $q$). Sometimes, to stress that the surface is punctured
we denote it by $\Spunc$ and $\Ypunc$ correspondingly.

Following  K.~Rafi,  by  a  ``curve''  we  always  mean a non-trivial
non-peripheral   piecewise-smooth  simple  closed  curve.  Any  curve
$\alpha$  in $S$, has a geodesic representative in the flat metric.
This  representative  is unique except for the case when it is one of
the  continuous  family  of  closed  geodesics in a flat cylinder. We
denote  the flat length of the geodesic representative of $\alpha$ by
$l_{\mathit{flat}}[\alpha]$.

A  \textit{  saddle  connection}  is  a  geodesic segment in the flat
metric   joining  a  pair  of  conical  singularities  or  a  conical
singularity  to itself without any  singularities  in  its  interior.
A geodesic representative of any curve on $S$ is a closed broken line
composed from a finite number of saddle connections.

\begin{figure}[htb]
\includegraphics{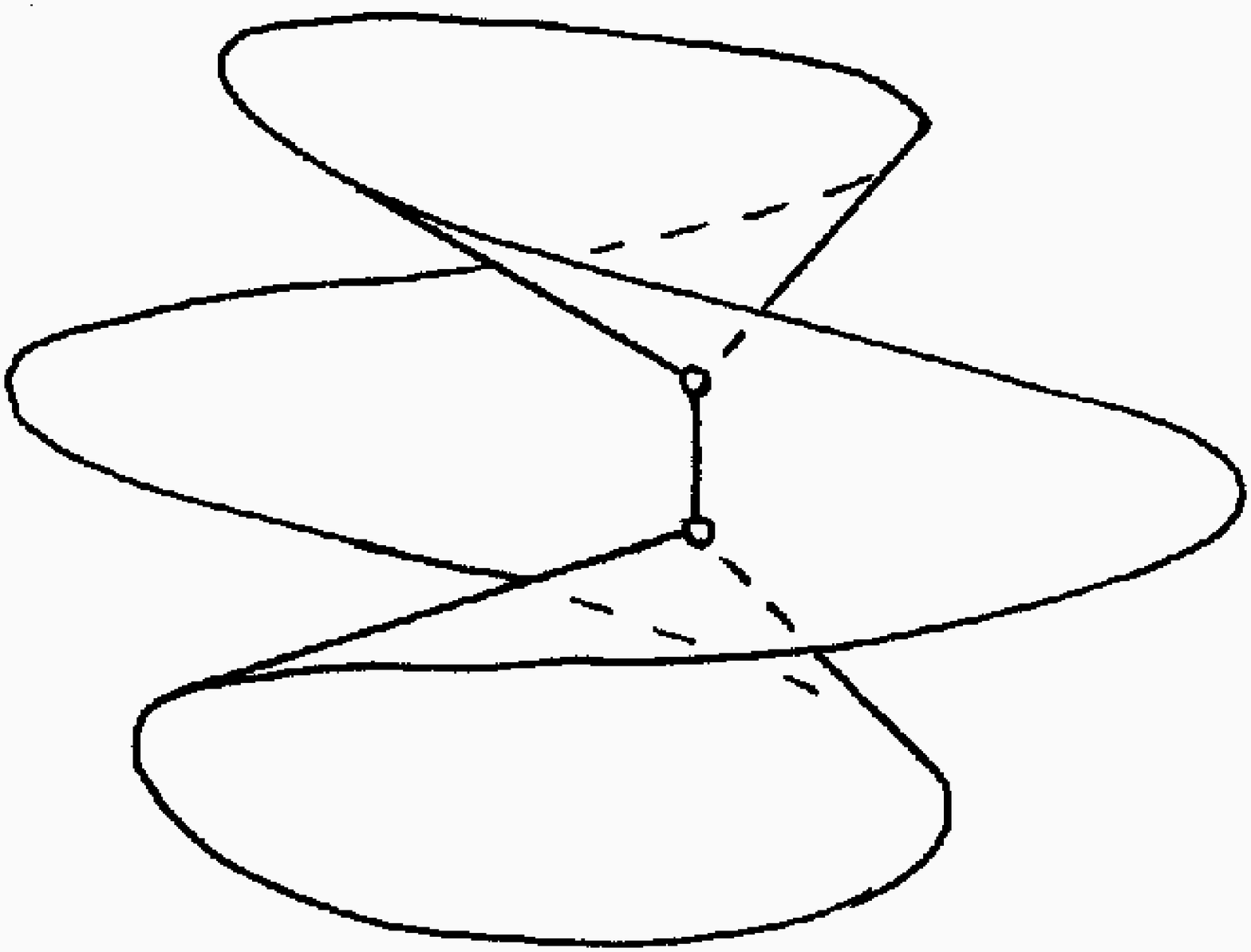}
\includegraphics{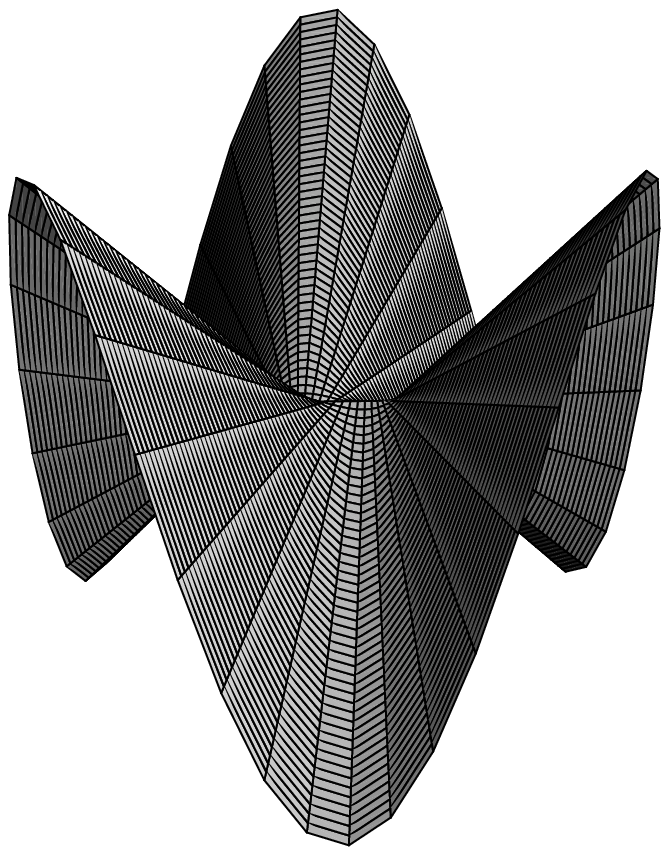}
\begin{picture}(0,0)(0,10)
\put(-94,-65){$\gamma$}
\put(-73,0){$\alpha$}
\end{picture}
\vspace{125pt}
\caption{
\label{fig:gamma:gamma}
As  a geodesic representative of a closed curve $\alpha$ encircling a
short saddle connection $\gamma$ we get a closed broken line composed
from two copies of $\gamma$. }
\end{figure}

Considering  the punctured flat surface $\Spunc$, formally we have to
speak   about  the \textit{infimum}  of  a  flat  length  over  essential
(non-peripheral)  curves  in  a free homotopy class of a given curve.
However,  even  in the case of the punctured flat surface $\Spunc$ it
is  convenient  to  consider  limiting  closed geodesic broken lines,
where  segments  of  the  broken  line are saddle connections joining
zeroes  and  simple poles of the quadratic differential. For example,
for  a  closed  curve  $\alpha$  encircling a short saddle connection
$\gamma$,   one  has  $l_{\mathit{flat}}[\alpha]=2|\gamma|$  and  the
corresponding  closed  broken  line  is  composed  from two copies of
$\gamma$,  see Figure~\ref{fig:gamma:gamma}. Following the discussion
in~\cite{Rafi:short:curves},  we can ignore this difficulty and treat
these special geodesics as we would treat any other geodesic.

Under  this  convention every curve $\alpha$ in the punctured surface
$\Spunc$  has  a geodesic representative in the flat metric, and this
representative  is  unique  except for the case when it is one of the
continuous family of closed geodesics in a flat cylinder.
We call such a representative a $q$-geodesic representative of $\gamma$.

Let  $\ghyp$ be the hyperbolic metric with cusps at all singularities
of  $S$  in  the conformal class of the flat metric on $S$. We define
$l_{\mathit{hyp}}[\alpha]$  to be the shortest hyperbolic length of a
curve  in  a  free  homotopy  class  of $\alpha$ on the corresponding
punctured surface $\Spunc$ or on its appropriate subsurface $\Ypunc$.

Let  $\delta\ll  1$  be a fixed constant; let $\Gamma(\delta)$ be the
set  of  simple  closed  geodesics of $\ghyp$ in $S$ whose hyperbolic
length  is  less  than or equal to $\delta$. A $\delta$\textit{-thick
component}  of $\ghyp$ is a connected component $Y$ of the complement
$S\setminus\Gamma(\delta)$.

Assume  that  $\delta$  is  sufficiently  small  (here the measure of
``sufficiently small'' depends only on the genus and on the number of
punctures  of  the  surface).
We now cut the surface $S$ along all the $q$-geodesic representatives of all
the short curves in
$\Gamma(\delta)$. More precisely, if $\gamma \in \Gamma(\delta)$ has a
unique $q$-geodesic representative, we cut along that representative;
otherwise $\gamma$ is represented by a
closed geodesic in a flat cylinder $F_\gamma$,
in which case we cut
along \textit{both} curves at the ends of $F_\gamma$ (and thus remove
the cylinder $F_\gamma$ from the surface). After this procedure the
surface $S$ breaks up into the following pieces:
\begin{itemize}
\item For each $\gamma \in \Gamma(\delta)$ whose $q$-geodesic
  representative is part of a continuous family of closed geodesics in
  a cylinder, we get the corresponding cylinder.
\item For each $\delta$-thick component $Y$ of $S\setminus
  \Gamma(\delta)$ we get a subsurface $\Y \subset S$ with boundaries
  which are geodesic in the flat metric defined by $q$.
Following  K.~Rafi,  we call such a flat surface with boundary $\Y$ a
\textit{$q$-representative}    of    $Y$    (see    an   example   at
Figure~\ref{fig:q:representatives}). Note that $\Y$ always has finite
area; in some particular cases it might degenerate to a graph. In that
case, we should think of $\Y$ as a ribbon graph (which, as all ribbon
graphs, uniquely defines a surface with boundary). With that caveat, we
can say that $\Y$ is in the same homotopy class as $Y$. We note that
$\Y$ is the smallest representative of the homotopy class of $Y$ with
$q$-geodesic boundaries.
\end{itemize}

\begin{figure}[htb]
\includegraphics{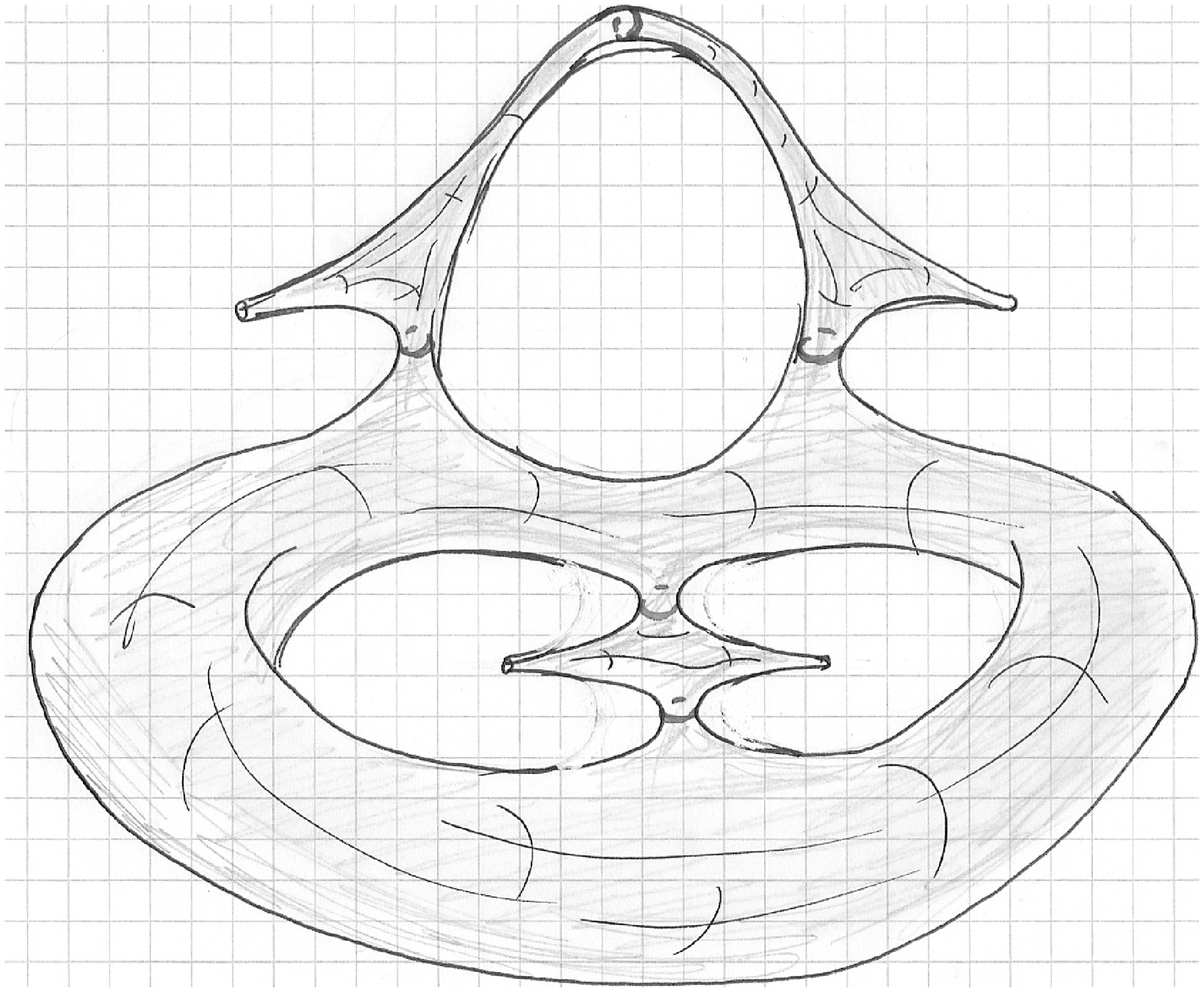}
\includegraphics{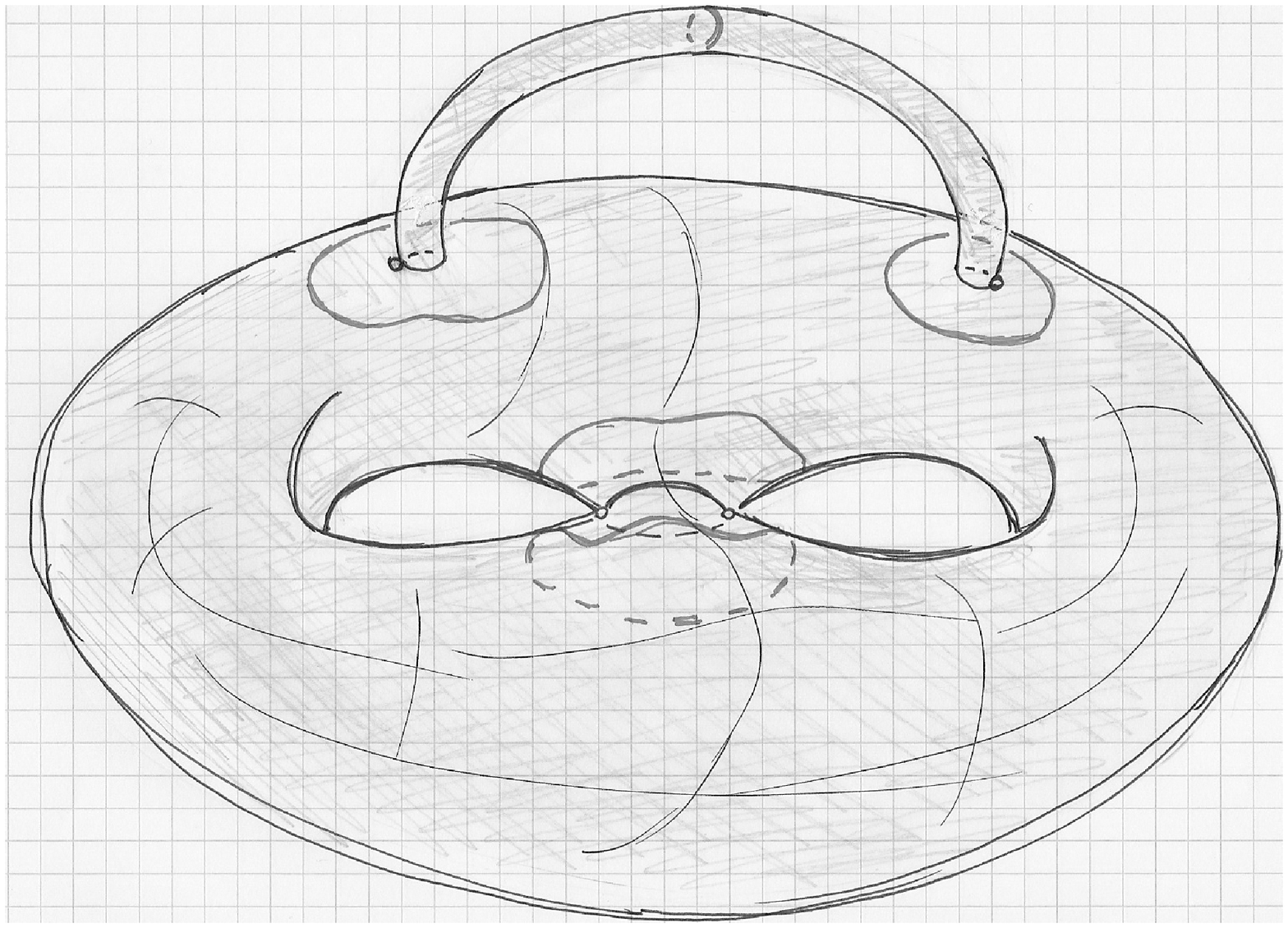}
\includegraphics{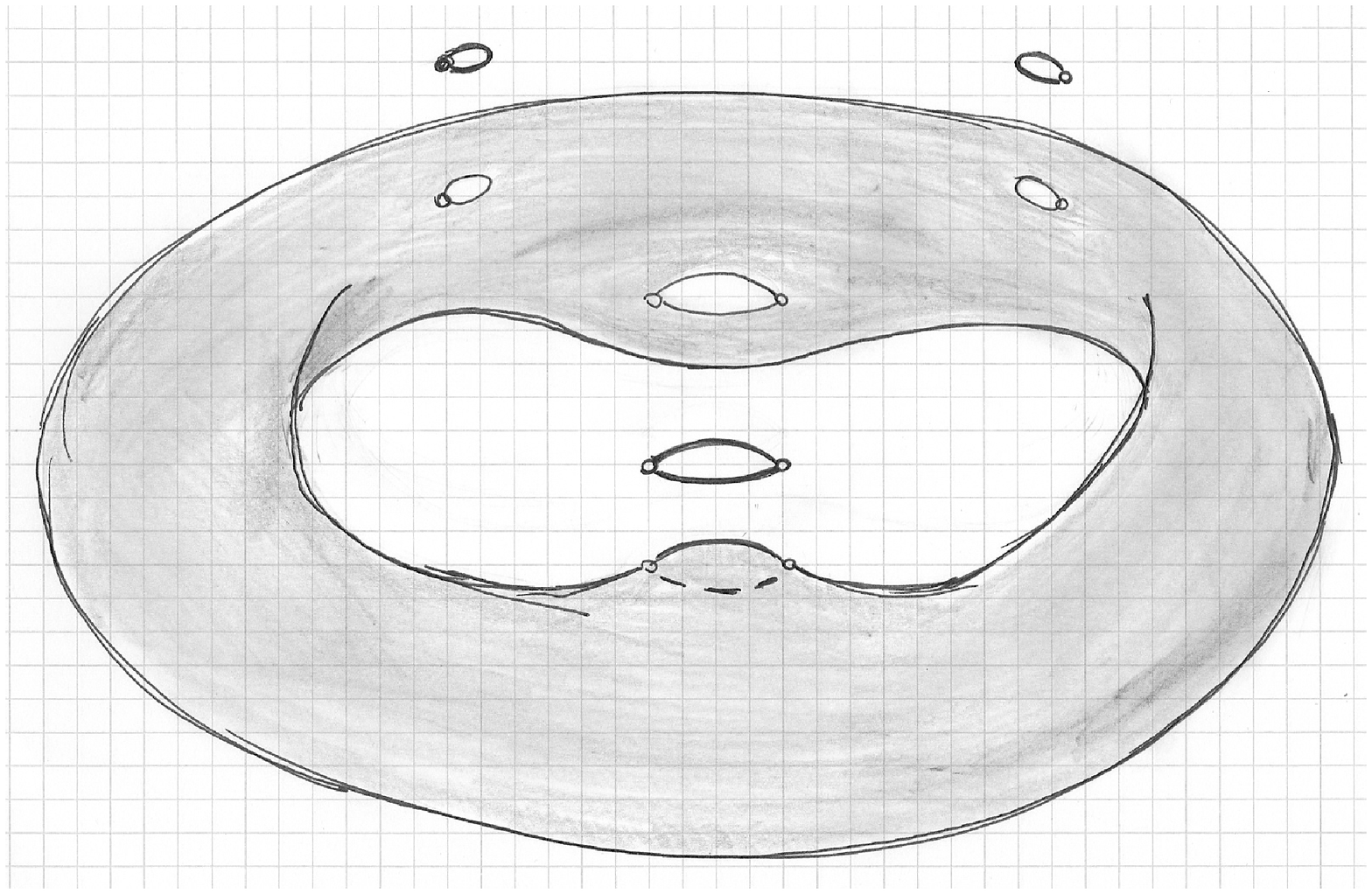}
\begin{picture}(0,0)(162,75) 
\put(-14,2){\small$\gamma_1$}
\put(32,17){\small$\gamma_2$}
\put(32,-14){\small$\gamma_3$}
\put(62,0){\small$\gamma_4$}
\put(75,0){\small$\gamma_5$}
\end{picture}
\begin{picture}(0,0)(20,75)
\put(-4,44){\small$Y_1$}
\put(-4,-33){\small$Y_2$}
\put(72,1){\small$Y_3$}
\put(42,1){\small$Y_4$}
\end{picture}
\begin{picture}(0,0)(-90,75)
\put(-10,44){\small$\Y_1$}
\put(-10,-33){\small$\Y_2$}
\put(62,1){\small$\Y_3$}
\put(37,-10){\small$\Y_4$}
\end{picture}
\vspace{150pt}
\caption{
\label{fig:q:representatives}
Schematic  picture  of  thick components of the underlying hyperbolic
metric  (on  the  left) and their $q$-representatives (on the right).
The   $q$-representative  $\Y_2$  degenerate  to  a  pair  of  saddle
connections.  Each  of  the  $q$-representatives  $\Y_3,\Y_4$  of the
corresponding  pair of pants degenerate to a single saddle connection
joining a zero to itself. }
\end{figure}


A  very expressive example of $q$-representatives is presented at the
very end of the original paper~\cite{Rafi} of K.~Rafi.

We  shall  also  need the notion of a \textit{curvature of a boundary
curve} of a subsurface $Y$ introduced in~\cite{Rafi}. Let $\gamma$ be
a    boundary    component    of   $\Y$.
The    \textit{curvature}
$\kappa_Y(\gamma)$  of  $\gamma$  in  the  flat metric on
$S$ is well
defined as a measure with atoms at the corners.

We  choose  the  sign  of  the  curvature  to  be  positive  when the
acceleration   vector   points   into  $\Y$.
If  a  curve  is  curved
non-negatively  (or  non-positively)  with  respect  to
$\Y$ at every
point,  we  say  that it is monotonically curved with respect to $Y$.
Let  $A$  be  an  annulus  in  $S$  with  boundaries  $\gamma_0$  and
$\gamma_1$.  Suppose  that  both  boundaries are monotonically curved
with  respect  to  $A$  and  that $\kappa_A(\gamma_0)\le 0$. Further,
suppose  that the boundaries are equidistant from each other, and the
interior  of $A$ contains no zeroes or poles. We call $A$ a primitive
annulus and write $\kappa_A:=-\kappa_A(\gamma_0)$. When $\kappa_A=0$,
$A$  is called a flat cylinder, in this case it is foliated by closed
Euclidean  geodesics  homotopic  to the
boundaries. Otherwise, $A$ is called an \textit{expanding
    annulus.} See~\cite{Minsky} for more details.

\begin{Definition}[K.~Rafi]
\label{def:size}  Define  the  \textit{flat  size} $\lambda(Y)$ of a
subsurface $Y$ different from a pair of pants to be the shortest flat
length of an essential (non-peripheral) curve in $\Y$.

When $Y$ is a pair of pants (that is, when $\Ypunc$ has genus $0$ and
$3$  boundary  components),  there are no essential curves in $Y$. In
this  case,  define the \textit{flat size} of $Y$ as the maximal flat
length of the three boundary components of $\Y$.
\end{Definition}

We will often use the notation $\lambda(\Y)$ to denote
$\lambda(Y)$.

\begin{NNTheorem}[K.~Rafi]
For  every  $\delta$-thick
component $Y$ of $S$
and  for  every  essential curve $\alpha$ in $Y$, the flat length  of
$\alpha$  is  equal to the size of $Y$ times the hyperbolic length of
$\alpha$  up  to  a multiplicative  constant
$C(g,n,\delta)$
depending  only  on $\delta$ and the topology of $S$:
$$
\frac{\lambda(Y)}{C(g,n,\delta)}\cdot
l_{\mathit{hyp}}[\alpha]\le
l_{\mathit{flat}}[\alpha]\le
C(g,n,\delta)\lambda(Y)\cdot l_{\mathit{hyp}}[\alpha]
$$
Also, the diameter of $\Y$ in the flat metric
is bounded by $C(g,n,\delta)   \lambda(Y)$.

\end{NNTheorem}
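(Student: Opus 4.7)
The plan is to exploit the scale-invariance of the statement under $q \mapsto t q$: the quantities $l_{\mathit{flat}}[\alpha]$ and $\lambda(Y)$ both scale by $\sqrt{t}$, while $l_{\mathit{hyp}}[\alpha]$ and the $\delta$-thick decomposition are conformal invariants. Thus after normalizing $q$ so that $\lambda(Y)=1$, the theorem reduces to proving a two-sided bound $\frac{1}{C}\, l_{\mathit{hyp}}[\alpha]\le l_{\mathit{flat}}[\alpha]\le C\, l_{\mathit{hyp}}[\alpha]$ together with $\diam(\mathsf{Y})\le C$, for a constant $C=C(g,n,\delta)$.

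First I would establish the diameter bound. When $Y$ is not a pair of pants, one can choose a ``Bers system'' of essential simple closed curves on $Y$ of uniformly bounded hyperbolic length $\le L_0(g,n,\delta)$ that fill $Y$, and complete it to a triangulation by hyperbolic arcs of bounded hyperbolic length. By assumption each essential curve in $\Y$ has flat length $\ge \lambda(Y)=1$. The key point is the matching upper bound: every curve of the Bers system has flat length bounded above by $C(g,n,\delta)$. This is the geometric heart of the argument: if some essential curve in $\Y$ had enormous flat length while the shortest essential curve had flat length $1$, a primitive annulus around the short curve (a flat cylinder or an expanding annulus in the sense of Minsky) would have large modulus, forcing its core to have small hyperbolic length and contradicting thickness. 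Once every curve of the triangulation has uniformly bounded flat length, the diameter of $\Y$ is automatically bounded.

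Next I would derive the two-sided comparison. For the upper estimate $l_{\mathit{flat}}[\alpha]\le C\, l_{\mathit{hyp}}[\alpha]$, straighten the hyperbolic geodesic representative of $\alpha$ into the finite flat triangulation above; each intersection of $\alpha$ with an edge of the triangulation contributes at most $C$ to the flat length, and the total number of intersections is controlled by $l_{\mathit{hyp}}[\alpha]$ divided by the (bounded below) hyperbolic length of the edges. For the lower estimate, cover $\Y$ by a bounded number of small flat disks, on each of which the conformal factor $\rho$ between $\gflat$ and $\ghyp$ is nearly constant by bounded hyperbolic geometry of the thick part and Schwarz-lemma type estimates, and conclude that the hyperbolic length is controlled by the flat length. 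The degenerate case where $Y$ is a pair of pants (so $\Y$ is a ribbon graph) is handled separately: there are no essential curves, and one shows directly that each of the three $q$-geodesic boundaries has flat length comparable to the hyperbolic length of the corresponding short curve, while the graph has diameter comparable to this length.

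The main obstacle is the quantitative control of the flat geometry on the thick part, and in particular the uniform upper bound on the flat length of a Bers pants system in terms of $\lambda(Y)$. This is where Rafi's thick-thin theory via primitive (flat cylinder and expanding) annuli is essential, and the delicate point is to rule out the ``thin-in-flat-but-thick-in-hyperbolic'' pathology by producing, out of any such configuration, an annulus of large modulus whose core curve would then violate the $\delta$-thickness assumption.
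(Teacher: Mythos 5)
You should first be aware that the paper does not prove this statement at all: it is quoted as a theorem of K.~Rafi (from his paper on thick-thin decompositions for quadratic differentials), and the surrounding text offers only a heuristic explanation via the Deligne--Mumford compactification — take a degenerating sequence $S_\tau=(C_\tau,q_\tau)$, rescale $q_\tau$ on each thick piece so that it converges to a nonzero limit, and invoke compactness of the limiting moduli problem — while explicitly remarking that Rafi's actual proof is ``by completely different methods.'' So there is no in-paper proof to match your argument against; your sketch is instead an attempt at (an outline of) Rafi's own argument, using Minsky's primitive annuli, moduli estimates, and the comparison of extremal and hyperbolic length. That is the right toolbox, and your scaling normalization $\lambda(Y)=1$ is a legitimate first reduction.

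However, as a proof the sketch has a genuine gap at its declared ``geometric heart.'' From the hypotheses ``the shortest essential curve in $\Y$ has flat length $1$'' and ``some Bers curve has flat length $L\gg 1$'' it does not follow, without substantial further work, that there is a primitive annulus of large modulus around the short curve: to get a large-modulus expanding annulus one needs the short curve to admit a regular neighborhood whose boundary is at flat distance $\gg 1$ from it and which contains no singularities, and producing such an annulus (or a long flat cylinder) from the mere discrepancy of flat lengths is exactly the hard content of Rafi's theorem — it is what his paper spends its effort on. Similarly, your lower estimate rests on the conformal factor between $\gflat$ and $\ghyp$ being ``nearly constant'' on the thick part ``by bounded hyperbolic geometry and Schwarz-lemma type estimates''; but in this paper that pointwise statement (Proposition~\ref{prop:pointwise:conformal:factor}) is \emph{deduced from} Rafi's theorem via the Geometric Compactification Theorem, whose proof in turn cites Rafi's diameter bound. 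Invoking it here is therefore circular unless you supply an independent proof, which is again essentially the theorem itself. Finally, the straightening argument for the upper bound needs a lower bound on the hyperbolic width of the collars of the triangulation edges to convert intersection numbers into hyperbolic length; this is standard but should be stated. In short: the architecture is reasonable and consistent with Rafi's approach, but the two steps you flag as the ``key point'' and the ``heart'' are asserted rather than proved, and they carry all the difficulty.
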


One possible heuristic explanation of this theorem is as follows (see
also Theorem~\ref{theorem:Deligne:Mumford} and
Remark~\ref{remark:deligne:mumford} below).  On compact subsets of the
moduli space the flat and hyperbolic metrics are comparable (by a
compactness argument), and so the theorem trivially holds.  Thus
assume that we have a sequence of surfaces $S_\tau = (C_\tau, q_\tau)$
tending to infinity in moduli space. By the Deligne-Mumford theorem,
we may assume that the Riemann surfaces $C_\tau$ tend to a noded
surface $C_\infty$. Then, the $\delta$-thick subsurfaces $Y_{\tau,j}$
of $C_\tau$ converge to the components of $C_{\infty,j}$ of
$C_\infty$.  We may also assume after passing to a subsequence that
the quadratic differentials $q_\tau$ tend to a (meromorphic) quadratic
differential on $C_\infty$. (If the original quadratic differentials
$q_\tau$ are holomorphic, the limit quadratic differential will be
holomorphic away from the nodes of $C_\infty$, but may develop poles
at the nodes.)  However, $q_\tau$ may tend to zero on some component
$C_{\infty,j}$ of $C_\infty$, i.e. it may be very small on the
subsurfaces $Y_{\tau,j}$. But, with the proper choice of rescaling
factors $\lambda_{\tau,j} \in \reals^+$, we can make sure that the
sequence of quadratic differentials $\lambda_{\tau,j} q_\tau$ tends to
a bounded and non-zero limit on $C_{\infty,j}$. This limit is a
meromorphic quadratic differential with poles, and number and the
degrees of the poles can be bounded in terms of only the topology. The
set of all such differentials is a finite dimensional vector space,
and so, as all such vector spaces, is projectively compact. Thus,
after the rescaling, the restriction to $Y_{\tau,j}$ is again in a
situation where the moduli space is compact, and thus (up to the
rescaling factor) the flat metric coming from $q$ is comparable to the
hyperbolic metric.

The strength of the above theorem of K.~Rafi (which is proved by
completely different methods) is to justify the above discussion, and
also to identify the rescaling factor $\lambda_{\tau,j}$ with
$\lambda(Y_{\tau,j})^{-2}$, where $\lambda(Y_{\tau,j})$ is the size of
$Y_{\tau,j}$ which can be detected by measuring the flat
lengths of saddle connections in the ($q$-geodesic representative of) the
$\delta$-thick subsurface $Y_{\tau,j}$.

We complete this section by the following elementary Lemma which will
be                               used                              in
section~\ref{sec:Comparison:of:determinants:end:asymptotic:behavior}.

\begin{Lemma}
\label{lemma:size:ge:lflat}
The size of any thick component of a flat surface $S$ is bounded from
below  by  the  length  $\lf(S)$ of the shortest saddle connection on
$S$:
$$
\lambda(Y)\ge \lf(S)\,.
$$
\end{Lemma}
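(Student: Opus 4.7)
The plan is to use directly the dichotomy that every closed flat geodesic on $S$ is either a concatenation of saddle connections or the waist curve of a flat cylinder, combined with the fact that every boundary component of a maximal flat cylinder is itself a concatenation of saddle connections.

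First I would treat the case when $Y$ is not a pair of pants. By definition, $\lambda(Y)$ is realized by an essential simple closed curve $\alpha$ in $\Y$, and $l_{\mathit{flat}}[\alpha]$ is the length of its $q$-geodesic representative $\alpha^{\ast}$. Since $\alpha^{\ast}$ is a closed flat geodesic, it either is a closed broken line consisting of one or more saddle connections of $S$, in which case its length is clearly at least $\lf(S)$; or it lies in the interior of some maximal flat cylinder and is parallel to the core curve of that cylinder, in which case its length equals the length of either boundary component of the cylinder. Each boundary component of a maximal flat cylinder is itself a concatenation of one or more saddle connections, and so its length is again bounded below by $\lf(S)$. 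Either way, $\lambda(Y)=l_{\mathit{flat}}[\alpha]\geq \lf(S)$.

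Next I would handle the pair-of-pants case. Here $\lambda(Y)$ is the largest of the three flat lengths of the $q$-geodesic boundary components of $\Y$. Each such boundary component is a closed $q$-geodesic in $S$, which (since $Y$ is not the entire surface and its boundaries arise from cutting along geodesic representatives of short hyperbolic curves) is a concatenation of saddle connections of $S$. Hence each boundary has length at least $\lf(S)$, and therefore so does the maximum.

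The argument is essentially routine once one invokes the structural fact about closed flat geodesics recalled above, so I do not expect a genuine obstacle; the only mild subtlety is making sure degenerate situations (where $\Y$ collapses to a ribbon graph, such as the examples $\Y_2,\Y_3,\Y_4$ in Figure~\ref{fig:q:representatives}) are handled. In those cases the essential curve on $Y$ (respectively a boundary component, in the pair-of-pants case) has geodesic representative that traces a saddle connection twice, giving length $2|\gamma|\geq 2\,\lf(S)>\lf(S)$, so the bound is in fact strict. This completes the proof outline.
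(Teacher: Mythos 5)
Your argument is correct and follows essentially the same route as the paper's own proof: split into the non-pair-of-pants and pair-of-pants cases, and observe that the realizing curve (or boundary component) is a closed broken line of saddle connections, hence of length at least $\lf(S)$. Your explicit treatment of the case where the geodesic representative is a waist curve of a flat cylinder (reducing to the cylinder's boundary, which is a union of saddle connections) and of the degenerate ribbon-graph cases is a welcome bit of extra care that the paper's proof passes over silently.
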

\begin{proof}
We  consider  separately  the  situation when $Y$ is different from a
pair of pants, and when $Y$ is a pair of pants.

If  $Y$  is  not  a  pair of pants, $\lambda(Y)$ is the shortest flat
length  of  an essential (non-peripheral) curve $\gamma$ in $\Ypunc$.
This shortest length is realized by a flat geodesic representative of
$\gamma$,  that  is by a broken line composed from saddle connections
(possibly  a single saddle connection). This implies the statement of
the Lemma.

Implicitly  the  statement  for the pair of pants is contained in the
paper  of  K.~Rafi.  According to~\cite{Rafi} the size of any pair of
pants   is  strictly  positive.  Hence,  the  corresponding  boundary
component   has   a  geodesic  representative  composed  from  saddle
connections  and  the statement follows. A direct proof can be easily
obtained  from  the  explicit description of possible ``pairs of flat
pants'' in the next section.
\end{proof}

\subsection{Flat pairs of pants}
\label{ss:Flat:interpretation:of:the:boundary:surfaces}

In  this  section  we  describe the flat metric on $\CP$ defined by a
meromorphic  quadratic  differential  from  $\cQ(d_1,d_2,d_3)$, where
$d_1+d_2+d_3=-4$.   In  particular,  we  consider  the  size  of  the
corresponding flat surface.

\begin{figure}[htb]
\includegraphics{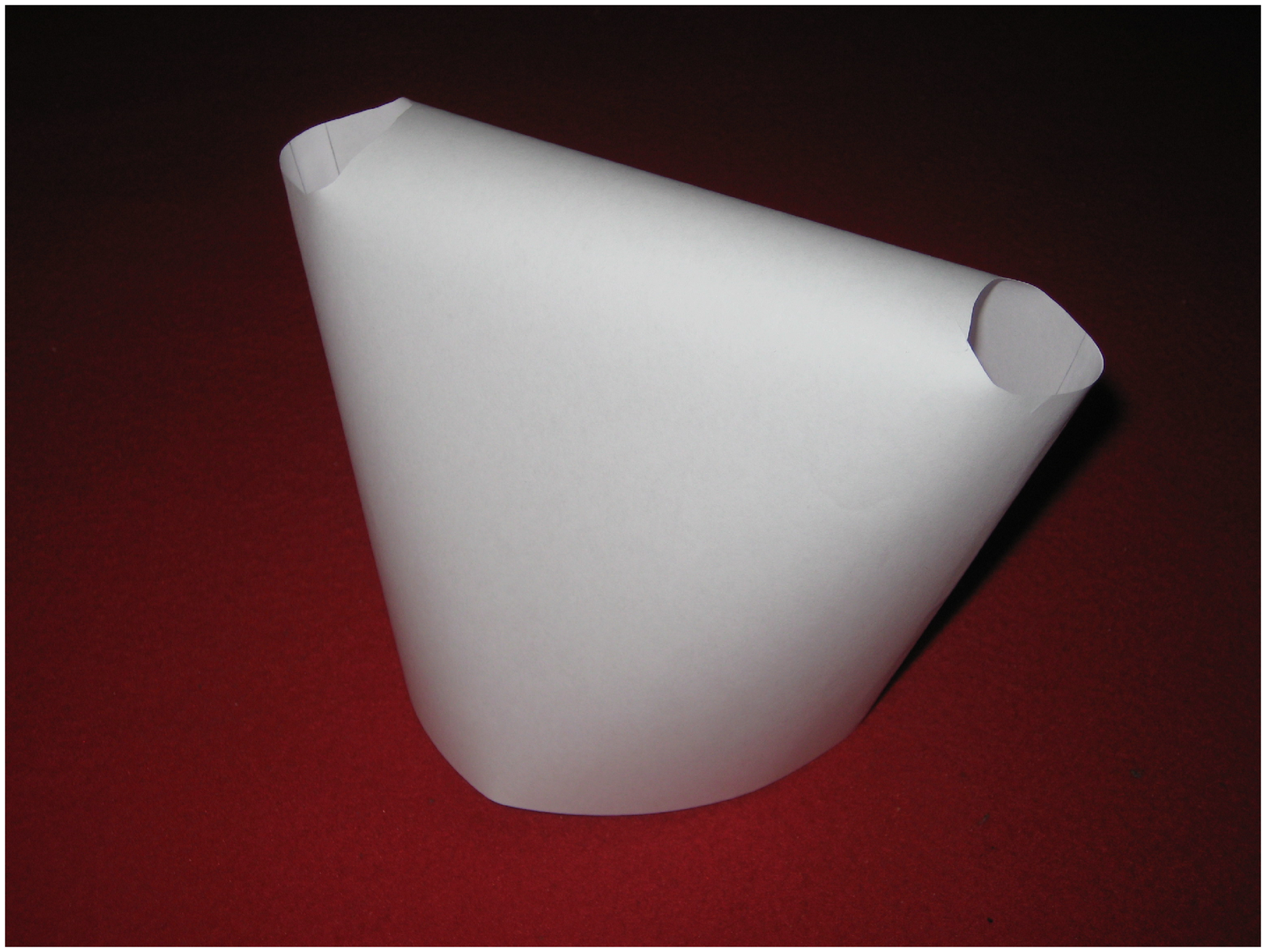}
\includegraphics{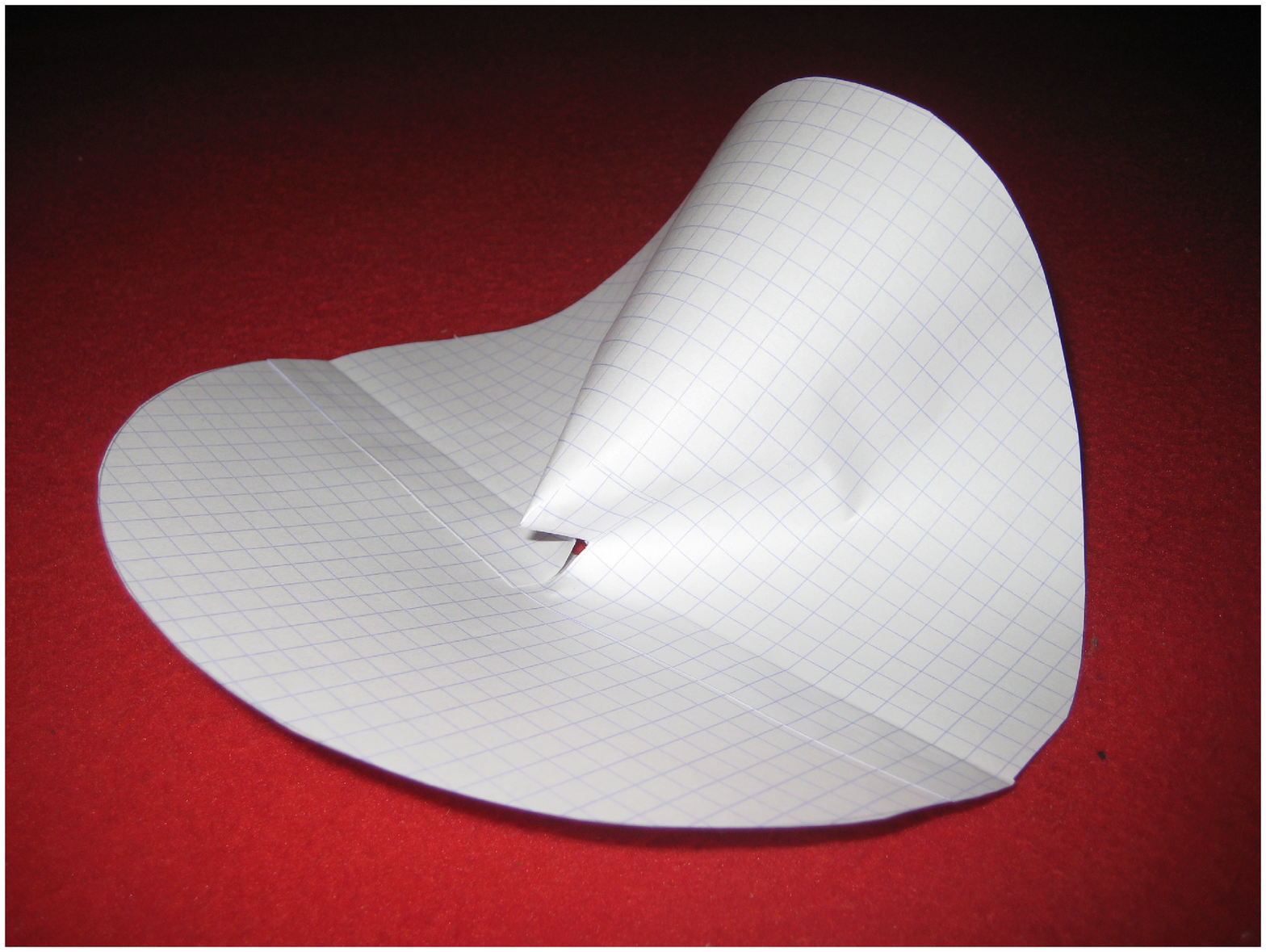}
\includegraphics{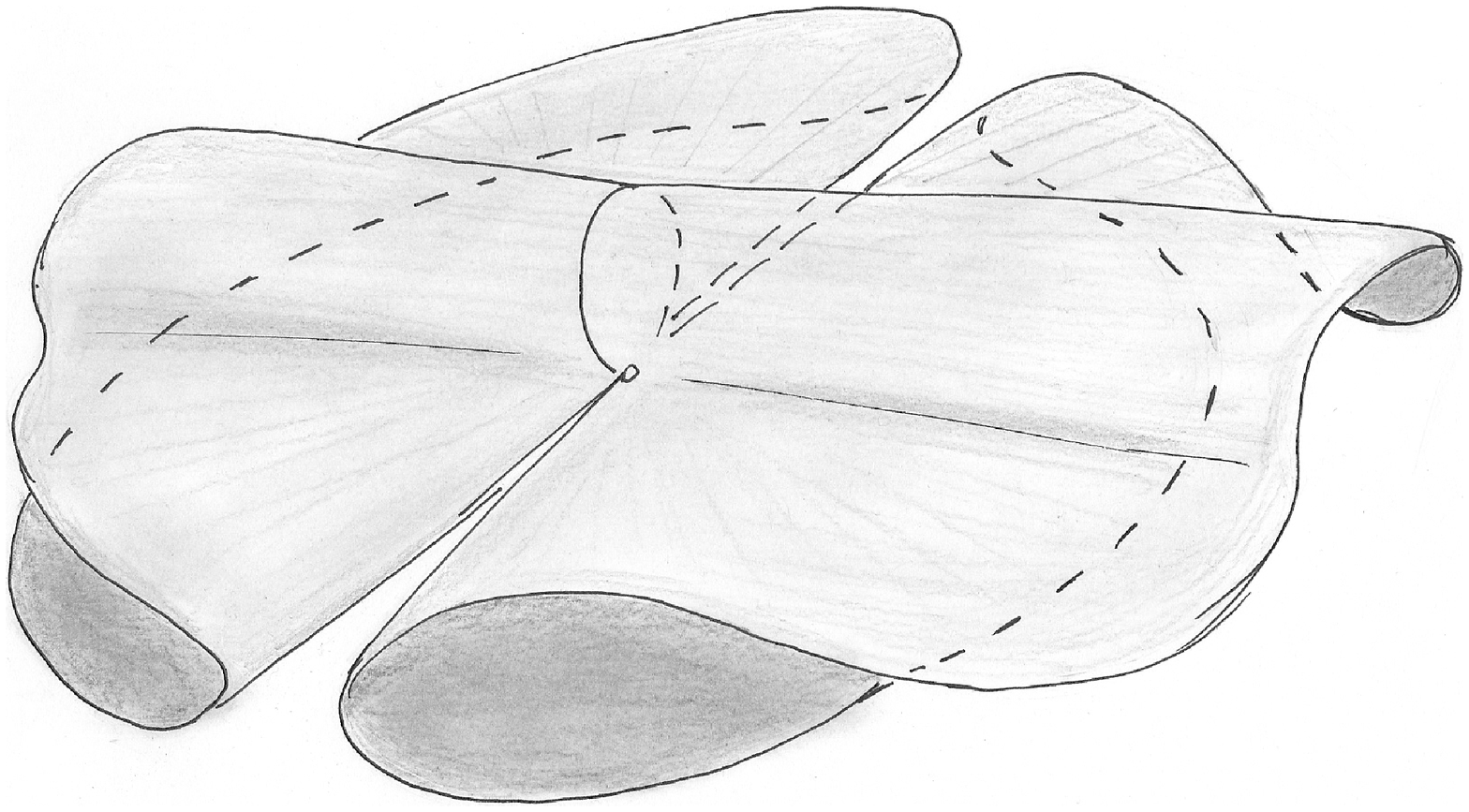}
\begin{picture}(0,0)(0,0)
\put(-140,-105){a.}
\put(-15,-105){b.}
\put(110,-105){c.}
\end{picture}
\vspace{100pt}
\caption{
\label{fig:size:of:pair:of:pants}
Different types of flat pairs of pants.}
\end{figure}

Consider the subcase, when among $d_1,d_2,d_3$ there are two entries,
say   $d_1,d_2$,   satisfying  the  inequality  $d_1,d_2\ge  -1$.  If
$d_1=d_2=-1$, then then $Y$ is metrically equivalent to the following
surface.  Take  a  flat cylinder and isometrically identify a pair of
symmetric  semi-circles  on  one  of  its  boundary  components,  see
Figure~\ref{fig:size:of:pair:of:pants}.a.  We get a saddle connection
joining  a  pair  of  simple  poles  as a boundary on one side of the
cylinder  and  an  ``open end'' on the other side. The size of $Y$ is
represented  by  the  flat length of the waist curve of the cylinder,
which  is  twice  longer  than  the  corresponding  saddle connection
joining the two simple poles.

If,  say,  $d_1\ge  0$,  and $d_2\ge -1$, the situation is completely
analogous  except  that  now  $Y$  is metrically equivalent to a flat
expanding  annulus  with a pair of singularities of degrees $d_1,d_2$
inside  it.  The  size  of  $Y$  is  twice  the  length of the saddle
connection        joining        these       singularities,       see
Figure~\ref{fig:size:of:pair:of:pants}.b.

Finally,  there  remains  the  case  when  there are two values, say,
$d_2,d_3$,  out  of three, satisfying the inequality $d_2,d_3\le -2$,
then  the  third  value,  $d_1$,  necessarily  satisfy the inequality
$d_1\ge  1$  (note  that $d_1$ cannot be equal to zero). In this case
$Y$  is  metrically equivalent to a pair of expanding annuli attached
to  a  common  saddle  connection  joining  a  zero of order $d_1$ to
itself, see Figure~\ref{fig:size:of:pair:of:pants}.c. The size of $Y$
coincides with the length of this saddle connection.

\subsection{Geometric Compactification Theorem}
\label{ss:Compactification:Theorem}

Recall,                        that                        throughout
section~\ref{sec:Geometric:Compactification:Theorem}   we   consider  a
wider  class  of  flat  metrics,  namely,  we  consider  flat metrics
corresponding to meromorphic quadratic differentials (and meromorphic
$1$-forms)  having poles of arbitrary order. We also deviate from the
usual  convention denoting by the same symbol $\cQ(d_1,\dots,d_\noz)$
strata  of  meromorphic  differentials  even  when they correspond to
``strata of global squares of $1$-differentials''.

Now    we    are    ready    to    formulate   a   version   of   the
Deligne--Mumford--Grothendieck  Compactification Theorem in geometric
terms. As remarked above, this theorem is implicit in the statement of
the theorem of K.~Rafi.

\begin{Theorem}
\label{theorem:Deligne:Mumford}
Consider  a  sequence of flat surfaces $S_\tau=(C_\tau,q_\tau)$ where
meromorphic  quadratic differentials $q_\tau$ stay in a fixed stratum
$\cQ(d_1,\dots,d_\noz)$. Suppose that the underlying Riemann surfaces
$C_\tau$  converge  to  a  stable  Riemann surface
$C_\infty$. Choose $\delta_0$ so that $\delta_0$ is
smaller then half the injectivity radius (in the hyperbolic metric)
of any desingularized irreducible component $C_{\infty,j}$
of $C_\infty$.
Let  $Y_{\tau,j}$  be  the  component corresponding to $C_{\infty,j}$
in a
$\delta_0$-thick-thin  decomposition  of  $C_\tau$;
let $\lambda(Y_{\tau,j})$ be
the size of a flat subsurface $(Y_{\tau,j}\,,\,q_\tau)$. Denote
$$
\tilde q_{\tau,j}:=
\cfrac{1}{\lambda(Y_{\tau,j})^2}\cdot q_\tau\,.
$$

There is a subsequence $S_{\tau'}=(C_{\tau'},q_{\tau'})$ and a
nontrivial meromorphic quadratic differential $\tilde q_{\infty,j}$ on
$C_{\infty,j}$ such that the $\tilde q_{\tau',j}$-representatives
$\tilde\Y_{\tau',j}$ of the corresponding thick components
$Y_{\tau',j}$ of the flat surfaces $(C_{\tau'},\tilde q_{\tau',j})$
converge to the $\tilde q_{\infty,j}$-representative
$\tilde\Y_{\infty,j}$ of the flat surface $(C_{\infty,j}\,,\,\tilde
q_{\infty,j})$. Furthermore, the conformal structures
  on $C_{\tau,j}$ converge to the conformal structure of
  $C_{\infty,j}$, and the quadratic differentials $\tilde{q}_{\tau,j}$
  converge to the limiting quadratic differential
  $\tilde{q}_{\infty,j}$ on compact subsets of $C_{\infty,j}$.

With the possible exception of the nodes of
  $C_{\infty,j}$ all zeroes and poles of $\tilde{q}_{\infty,j}$ are
  limits of zeroes and poles of the prelimit differentials
  $\tilde{q}_{\tau,j}$.  If all meromorphic quadratic differentials
$q_\tau$ are global squares of meromorphic 1-forms $\omega_\tau$, then
the limiting quadratic differential $\tilde q_{\infty,j}$ is also a
global square of a meromorphic 1-form $\tilde\omega_{\infty,j}$ on
$C_{\infty,j}$.
\end{Theorem}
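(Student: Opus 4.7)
\noindent\textbf{Proof proposal for Theorem~\ref{theorem:Deligne:Mumford}.}
The plan is to combine the Deligne--Mumford compactness for the base Riemann surfaces with Rafi's comparison theorem, which gives the correct rescaling factor $\lambda(Y_{\tau,j})^{-2}$ on each thick piece. First I would fix, once and for all, the Deligne--Mumford limit: after extraction of a subsequence the marked Riemann surfaces $C_\tau$ converge to the stable curve $C_\infty$, and the $\delta_0$-thick components $Y_{\tau,j}$ can be identified, via quasi-conformal maps with dilatation tending to one on compact subsets away from the nodes, with an exhaustion of the desingularized component $C_{\infty,j}$ by compact subsurfaces avoiding fixed small cusp neighborhoods. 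Rafi's theorem guarantees that on the $\tilde q_{\tau,j}$-representative $\tilde\Y_{\tau,j}$ the flat and hyperbolic metrics are comparable with constants depending only on $g$, $n$, $\delta_0$, so the rescaled flat surfaces $\tilde\Y_{\tau,j}$ have bounded diameter and bounded injectivity radius away from the punctures (the nodes and the marked zeroes/poles lying inside~$Y_{\tau,j}$).

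Next I would extract a subsequential limit of the rescaled quadratic differentials. The space of meromorphic quadratic differentials on the (fixed, up to conformal equivalence) component $C_{\infty,j}$ with prescribed pole orders at a given finite set of points is a finite-dimensional complex vector space, so once one controls the pole orders everything is projectively compact. Away from the nodes, the orders of the poles (or degrees of zeroes) appearing inside $Y_{\tau,j}$ are determined by the stratum and are bounded by the $d_i$; at the nodes, one uses the geometric description of the thin parts to bound the pole order of $\tilde q_{\infty,j}$ in terms of the topology. Together with Rafi's theorem (which gives that a fixed multiple of $|\tilde q_{\tau,j}|$ dominates the hyperbolic metric on compact subsets of $C_{\infty,j}\setminus\{\text{nodes}\}$) this gives $L^\infty_{\mathrm{loc}}$-bounds for $\tilde q_{\tau,j}$ on compact subsets; holomorphicity of each $\tilde q_{\tau,j}$ then upgrades these to locally uniform convergence, after passing to a further subsequence, to a meromorphic quadratic differential $\tilde q_{\infty,j}$ on $C_{\infty,j}$.

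The limit $\tilde q_{\infty,j}$ is \emph{nontrivial}: by definition of the size, the shortest essential curve in $\tilde\Y_{\tau,j}$ has $\tilde q_{\tau,j}$-length exactly $1$ (or, if $Y$ is a pair of pants, a boundary component has $\tilde q_{\tau,j}$-length exactly~$1$), so any sublimit must carry a curve of positive length. The geometric convergence $\tilde\Y_{\tau',j}\to\tilde\Y_{\infty,j}$ follows by observing that the $q$-geodesic representatives of the short hyperbolic curves are compact subsets of $Y_{\tau,j}$ in bounded flat distance from each other by Rafi, so they converge to the $\tilde q_{\infty,j}$-representatives of the boundary curves of $C_{\infty,j}$. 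For the statement that, with the possible exception of the nodes, every zero/pole of $\tilde q_{\infty,j}$ is a limit of zeroes/poles of $\tilde q_{\tau',j}$, I would use Hurwitz's theorem (or a direct argument using the argument principle) applied to $\tilde q_{\tau',j}$ on a small disc in $C_{\infty,j}\setminus\{\text{nodes}\}$ around a zero or pole of the limit. Finally, if each $q_\tau=\omega_\tau^2$, then $\tilde q_{\tau,j}=(\omega_\tau/\lambda(Y_{\tau,j}))^2$; the rescaled $1$-forms $\tilde\omega_{\tau,j}$ are controlled by $|\tilde q_{\tau,j}|^{1/2}$ and therefore admit a subsequential locally uniform limit $\tilde\omega_{\infty,j}$ with $\tilde\omega_{\infty,j}^2=\tilde q_{\infty,j}$.

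The hard part is the control of $\tilde q_{\tau,j}$ near the nodes. Everything away from the nodes reduces to the classical compactness of spaces of meromorphic differentials with bounded pole data, and so is essentially automatic once Rafi's theorem is in hand. At each node, however, one must argue that the pole order of $\tilde q_{\infty,j}$ is bounded in terms only of the topology of the stratum (and not of the sequence), and that no further zeroes or poles ``escape into'' the node in a way that would cause cancellation in the limit. The cleanest route is the one already sketched in section~\ref{ss:Flat:interpretation:of:the:boundary:surfaces}: the flat structure of the neighbourhood of a boundary curve of $\tilde\Y_{\tau,j}$ is either a flat cylinder or an expanding annulus, and both admit a direct description from which one reads off a uniform bound on the pole order of $\tilde q_{\infty,j}$ at the corresponding puncture of $C_{\infty,j}$. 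With this bound in place, the finite-dimensional compactness argument applies and the extraction of $\tilde q_{\infty,j}$ goes through.
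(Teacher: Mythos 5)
Your route is genuinely different from the paper's: it is essentially the heuristic discussion the paper itself gives right after quoting Rafi's theorem, promoted to a proof (identify the thick pieces with an exhaustion of $C_{\infty,j}$, get $L^\infty_{\mathrm{loc}}$ bounds on the rescaled differentials, and invoke compactness of the finite-dimensional space of meromorphic quadratic differentials with controlled pole data, plus Montel). The paper instead argues combinatorially: by Lemma~\ref{lemma:uniform:triangulation} each $q$-geodesic representative $\Y_{\tau,j}$ is triangulated by a uniformly bounded number of saddle connections whose flat lengths lie in $[\lambda(Y)/2,\,C_2\lambda(Y)]$; after passing to a subsequence the combinatorial type of the triangulation is fixed and the edge lengths converge, so the limiting flat structure exists directly, $\tilde q_{\infty,j}$ is simply a differential representing it, and the final convergence statements follow from a quasiconformal map close to the identity on compact sets. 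The decisive advantage of that route is that no a priori control of the behaviour of $\tilde q_{\infty,j}$ at the nodes is needed.

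That is exactly where your argument has gaps, and you partly flag this yourself. First, the input you use to obtain $L^\infty_{\mathrm{loc}}$ bounds --- that a fixed multiple of $|\tilde q_{\tau,j}|$ dominates the hyperbolic metric on compact subsets --- is a \emph{pointwise} comparison, whereas Rafi's theorem as stated gives only length comparisons for essential curves and a flat diameter bound. In the paper the pointwise comparison is Proposition~\ref{prop:pointwise:conformal:factor}, and its proof is by contradiction \emph{using} Theorem~\ref{theorem:Deligne:Mumford}; so as written your argument is circular unless you derive the local sup bounds independently (e.g.\ from subharmonicity of $|q|^{1/2}$ combined with the length/diameter bounds, or from the triangulation lemma). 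Second, the uniform bound on the pole order of $\tilde q_{\infty,j}$ at the nodes --- which you correctly identify as the hard part --- is asserted rather than proved. The appeal to section~\ref{ss:Flat:interpretation:of:the:boundary:surfaces} does not suffice: that section describes three-punctured spheres in genus zero, not the local structure of an arbitrary thin annulus degenerating to a node, and Cauchy estimates on a degenerating annulus from sup bounds on its thick-side boundary alone do not control the negative Laurent coefficients. One would have to exploit the flat-cylinder/expanding-annulus dichotomy for the thin parts (Minsky; cf.\ Lemma~\ref{lemma:modulus}) to pin down the pole order and the boundedness of its coefficients. Until those two points are supplied, the extraction of a nontrivial meromorphic limit is not established; the rest of your argument (nontriviality from the normalization of the size, Hurwitz for the zeroes, taking square roots in the Abelian case) is fine.
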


\begin{NNRemark}
Completing  the  current paper we learned that analogous results were
simultaneously   and  independently  obtained  by  S.~Grushevsky  and
I.~Krichever    in~\cite{Grushevsky:Krichever}, by S.~Koch and
J.~Hubbard~\cite{Koch:Hubbard},    and   by
J.~Smillie~\cite{Smillie}.
\end{NNRemark}

We  start with the following Lemma which will be used in the proof of
Theorem~\ref{theorem:Deligne:Mumford}.

\begin{lemma}
\label{lemma:uniform:triangulation}
For  every  thick  component $Y$ of a thick-thin decomposition of $S$
the
$q$-geodesic representative $\Y$ can be triangulated by adding $C_1$
saddle connections $\gamma$, each satisfying the the flat length estimate:
\begin{equation}
\label{eq:flat:length:to:size}
  \cfrac{\lambda(Y)}{2}\le
|\gamma|\le C_2 \lambda(Y)\,,
\end{equation}
where   the   constants $C_1$ and $C_2$   depend   only   on   the
ambient  stratum $\cQ(d_1,\dots,d_\noz)$ of $S$.
\end{lemma}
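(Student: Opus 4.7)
I rescale $q$ to $\tilde q := q/\lambda(Y)^2$, so that $\lambda(\tilde{\Y})=1$ and, by Rafi's theorem, $\tilde{\Y}$ has flat diameter bounded by some $D = D(g,\noz,\delta)$. It suffices to triangulate $\tilde{\Y}$ by at most $C_1$ saddle connections, each of length in $[1/2,\,C_2]$. Existence and the upper bound are routine: any half-translation surface with piecewise-geodesic boundary admits a geodesic triangulation whose vertices are the conical singularities of the flat metric (including corners of $\partial\tilde{\Y}$), and whose number of triangles is controlled by the Euler characteristic and the singularity count---hence by a constant depending only on the ambient stratum $\cQ(d_1,\dots,d_\noz)$. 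Using a Delaunay-type construction, every edge has length bounded by a constant multiple of $D$, giving $|\gamma|\le C_2$.

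\textbf{Lower bound by compactness.} Forcing every edge to have length at least $1/2$ is the delicate part; I would argue by contradiction. If the bound failed uniformly, one would obtain a sequence of thick components $Y_\tau$, rescaled so that $\lambda(\tilde{Y}_\tau)=1$, such that every admissible triangulation of $\tilde{\Y}_\tau$ contains an edge of length $<1/\tau$. By Theorem~\ref{theorem:Deligne:Mumford}, a subsequence of $(\tilde{\Y}_\tau,\tilde{q}_\tau)$ converges to a limit flat surface $(\tilde{\Y}_\infty,\tilde q_\infty)$ on a component of a stable curve, with bounded diameter, size at least $1$, and only finitely many zeroes and poles of total multiplicity controlled by the original stratum together with the nodes of the limit. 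The family of possible limits therefore lies in a relatively compact subset of one of finitely many strata of flat surfaces with piecewise-geodesic boundary.

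\textbf{Model triangulation and transport.} On each such limit I would exhibit a geodesic triangulation with all edges of length at least $1/2$, and then transport it back to $\tilde{\Y}_\tau$ for large $\tau$. Pairs of pants are handled directly by the three explicit models of Section~\ref{ss:Flat:interpretation:of:the:boundary:surfaces} (Figure~\ref{fig:size:of:pair:of:pants}), each of which admits a symmetric triangulation whose shortest edge equals $\lambda(Y)/2$, with sharpness realized by the two-simple-poles configuration of Figure~\ref{fig:size:of:pair:of:pants}(a). For a non-pants thick component, $\lambda=1$ is the length of the shortest \emph{essential} curve, and the doubling construction of Figure~\ref{fig:gamma:gamma}, together with a short case analysis, shows that any short interior saddle connection must be confined to a pair-of-pants-like subsurface of $\tilde{\Y}_\infty$; these subsurfaces are triangulated by the explicit models above, while the remaining edges---added by the Delaunay-type procedure in the complement---are automatically long. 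By continuity of saddle connections under the convergence of Theorem~\ref{theorem:Deligne:Mumford}, this triangulation pulls back edge by edge to $\tilde{\Y}_\tau$ with lengths converging to the limit lengths, contradicting the hypothesis.

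\textbf{Main obstacle.} The principal difficulty is not the existence or the upper bound, which are standard, but the case analysis that confines every short saddle connection of the limit $\tilde{\Y}_\infty$ to a pair-of-pants subsurface classified by Section~\ref{ss:Flat:interpretation:of:the:boundary:surfaces}, and then verifies that the explicit local triangulations of these pieces glue consistently with a Delaunay triangulation of the complement without producing new short edges. The compactness and transport parts of the argument are essentially mechanical once this combinatorial bookkeeping is in place.
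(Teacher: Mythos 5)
There is a genuine gap here, on two counts.

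First, your argument is circular within the logical structure of the paper. You invoke Theorem~\ref{theorem:Deligne:Mumford} to produce the limit surfaces $(\tilde\Y_\infty,\tilde q_\infty)$ on which you build model triangulations, but Theorem~\ref{theorem:Deligne:Mumford} is itself proved \emph{using} Lemma~\ref{lemma:uniform:triangulation}: the convergence of the rescaled representatives $\tilde\Y_{\tau',j}$ is obtained there by fixing the combinatorial type of a uniform triangulation as in the lemma and extracting a subsequence along which the uniformly bounded edge lengths converge. So the compactness you rely on is not available at this stage; establishing it by independent means would amount to re-proving the lemma. Even granting some external compactness statement, your ``model triangulation and transport'' step is left as an acknowledged obstacle rather than an argument.

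Second, the part you single out as the main difficulty --- forcing every edge to have length at least $\lambda(Y)/2$ --- is in fact the part with an essentially one-line proof, and the machinery you propose (rescaling, Delaunay triangulations, confining short saddle connections to pair-of-pants subsurfaces and gluing model triangulations) is not needed. The key is the doubling trick of Figure~\ref{fig:gamma:gamma}: a saddle connection $\gamma$ of flat length $|\gamma|$ determines a closed broken geodesic $\gamma'$ of length exactly $2|\gamma|$ on the punctured surface. The paper builds the triangulation greedily: as long as some complementary region is not a triangle, it contains a saddle connection whose associated closed curve is essential (non-peripheral) in $\Ypunc$; adding the shortest such saddle connection, one gets $2|\gamma|\ge\lambda(Y)$ directly from the definition of size, which is the lower bound in~\eqref{eq:flat:length:to:size}. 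The upper bound is $|\gamma|\le\diam_q(\Y)\le\const\cdot\lambda(Y)$ by Rafi's diameter estimate, and the process terminates after a number of steps bounded in terms of the stratum because the Euler characteristic is fixed. In particular, your worry about short edges forced by nearby singularities is ruled out by the same observation: two singularities at flat distance less than $\lambda(Y)/2$ would produce an essential curve of length less than $\lambda(Y)$, contradicting the definition of size.
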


\begin{proof}
We build this triangulation inductively. At each stage
  we have a partial triangulation of $\Y$. If some complementary
  region is not a triangle, it contains a saddle connection whose
  associated closed curve $\gamma'$  \mc{explain this}
is essential, i.e.
not homotopic to a
  boundary component of $\Y$. Let $\gamma$ be the shortest saddle
  connection with this property. Then the flat length of $\gamma'$,
  which is twice the flat length of $\gamma$ is
  bounded from below by the size $\lambda(Y)$ (by the definition of
  size). Also, the flat length of $\gamma$ is bounded above by the
diameter  $\diam_q(\Y)$  of  $\Y$  in the flat metric
defined   by   $q$.   By   the  Theorem  of  K.~Rafi  (see  Theorem~4
in~\cite{Rafi})
$$
\diam_q(\Y)\le \const\cdot\lambda(Y)\,
$$
and thus, (\ref{eq:flat:length:to:size}) holds. This process has to
terminate after finitely many steps (depending only on the stratum)
since the Euler characteristic is finite.  \mc{explain}
Thus the lemma holds.
\end{proof}

\begin{proof}[Proof of Theorem~\ref{theorem:Deligne:Mumford}]
For  each  component  of  the  stable  Riemann  surface  consider the
associated hyperbolic metric, and consider the length of the shortest
closed  geodesic in this metric.  Let  $L$  be the minimum of these lengths over all
components.  We  choose  $\delta$ in such way that $\delta\ll L$. For
each surface $S_\tau$ we consider a decomposition into $\delta$-thick
components as in section~\ref{ss:after:Rafi}.

Since  the  Riemann surfaces $C_\tau$ converge to $C_\infty$, we know
that,   for  sufficiently  large  $\tau$,  the topology  of  $Y_{\tau,j}$
coincides  with the topology of $C_{\infty,j}$ punctured at the points of
crossing   with  other  components  of  the  stable  Riemann  surface
$C_\infty$  and  at  the  points  of  self-intersection.  Hence,  for
sufficiently    large   $\tau$   the   $\tilde   q_{\tau,j}$-geodesic
representative  $\Y_{\tau,j}$  of  the  thick  component $Y_{\tau,j}$
might  have  only  finite  number  of combinatorial types of
triangulations as in Lemma~\ref{lemma:uniform:triangulation}.
Passing  to a subsequence we fix the combinatorial type of
the triangulation.

Such  a  triangulation  contains  a finite number of edges. Hence, by
Lemma~\ref{lemma:uniform:triangulation}  we may chose a subsequence
for which lengths of all sides of the triangulation of $\Y_{\tau',j}$
converge.    Note   that   by   continuity,   the   limiting   length
$\gamma_\infty$ of each side satisfies:
$$
|\gamma_\infty|\le \const\ .
$$
Hence, the limiting triangulation defines some flat structure sharing
with  $\Y_{\tau',j}$  the  combinatorial  geometry  of the triangulation.
Clearly,  the linear holonomy of the limiting flat metric is the same
as the the linear holonomy of the prelimiting flat metrics.

By construction, the underlying Riemann surface for the limiting flat
surface  is  $C_{\infty,j}$.  Thus,  to  complete  the  proof  it  is
sufficient  to  consider a meromorphic quadratic differential $\tilde
q_{\infty,j}$ representing the limiting flat structure.
Since $C_{\infty,j}$ and $C_{\tau,j}$ for large
$\tau$ have triangulations which are close, if we remove the
neighborhoods of the cusps of $C_{\infty,j}$, there is a
quasiconformal map with dilatation close to $1$ taking $C_{\tau,j}$
to $C_{\infty,j}$ which is close to the identity on compact sets.
This implies that $C_{\tau,j}$ converge to $C_{\infty,j}$ as Riemann
surfaces, and also that $\tilde{q}_{\tau,j}$ converge to
$\tilde{q}_{\infty,j}$.
\end{proof}

\begin{Remark}
\label{remark:deligne:mumford}
Note that the quadratic differentials $\tilde q_{\tau',j}$ defined in
the  statement of Theorem~\ref{theorem:Deligne:Mumford} might tend to zero
or   to   infinity   while   restricted  to  other  thick  components
$Y_{\tau',k}$,  where  $k\neq  j$.  To  get  a  well-defined limiting
quadratic  differentials  on  each  individual  component  one has to
rescale   the   quadratic   differentials   $q_{\tau'}$  individually
component by component. As an illustration the reader may consider an
example  at  the  very  end  of the paper~\cite{Rafi} of K.~Rafi.
\end{Remark}

\subsection{The $(\delta,\eta)$-thick-thin decomposition}
\label{sec:subsec:delta:eta:thick:thin}
Suppose $\delta >0$ in the choice of the thick-thin decomposition
is sufficiently small, and fix $\eta$ (depending only on the genus and
the number of punctures) so that $\delta \ll \eta \ll 1$.
In particular, we choose $\eta$ to be smaller than the Margulis
constant.
We  work  in  terms  of  a hyperbolic metric with cusps
$\ghyp(S)$. Consider an $(\eta,\delta)$-thick-thin decomposition
of  the surface  $S$.  Namely,  for each short closed
geodesic  $\gamma\in\Gamma(\delta)$  consider  the set of points in the surface
located  at  a bounded distance from $\gamma$. When the bound for the
distance  is  not  too large, we get a topological annulus. We choose
the  bounding distance to make the length of each of the two boundary
components  of  the  annulus equal to the chosen constant $\eta$. Let
$A_\gamma(\eta)$ denote this annulus. If we remove these annuli from
$S$, $S$ becomes disconnected; the connected components which we
denote by $Y_j(\eta)$ are subsets of the $\delta$-thick
components $Y_j$ defined in
\S\ref{sec:Geometric:Compactification:Theorem}. We have
\begin{displaymath}
S = \left( \bigcup_{j=1}^m Y_j(\eta) \right) \cup \left(
  \bigcup_{\gamma \in \Gamma(\delta)} A_\gamma(\eta) \right).
\end{displaymath}
We note that the \textit{$(\delta,\eta)$-thick} components $Y_j(\eta)$
and  the \textit{$(\delta,\eta)$-thin} components $A_\gamma(\delta)$
depend only on the hyperbolic metric on $S$, and not the quadratic
differential $q$.
\begin{figure}[htb]
\includegraphics{EKZ_5_2.ps}
\includegraphics{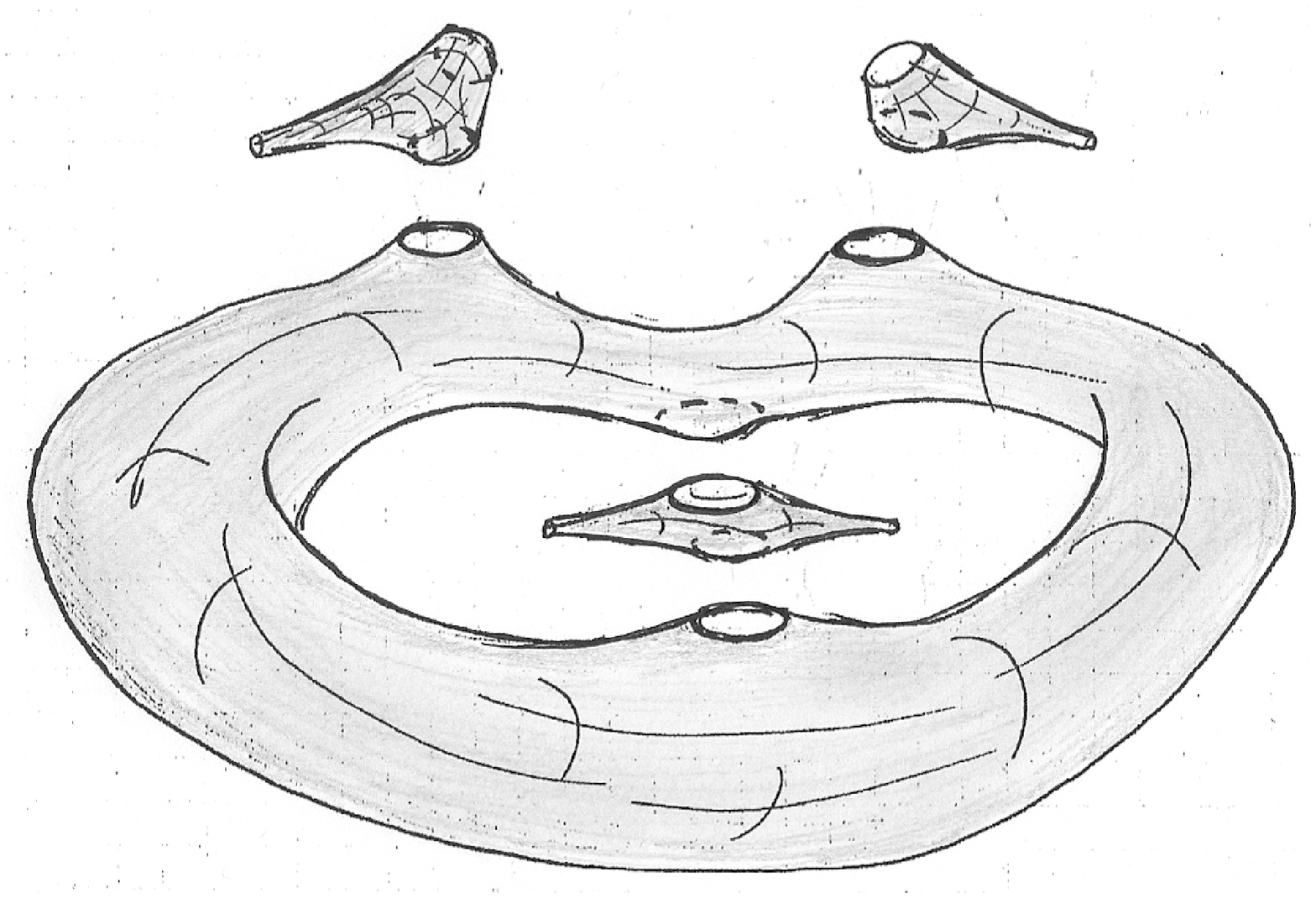}
\vspace{140pt}
\caption{
\label{fig:thick:thin:decomposition}
$(\delta,\eta)$-thick components in hyperbolic metric}
\end{figure}

\subsection{Uniform bounds for the conformal factor}
\label{sec:subsec:uniform:bound:conformal}
For $R > 0$ and a cusp $P$, let $\cO_P(R)$ denote the neighborhood
$\{ \zeta \mid |\zeta| < R \}$ where $\zeta$ is as in
(\ref{eq:def:local:coordinate:zeta}).
In this subsection we fix a constant $R$ (depending only on
$\delta$, $\eta$ and the stratum) such that for any hyperbolic surface
$S$ and each hyperbolic cusp $P$ of $S$, the neighborhood $\cO_P(R)$
 does not intersect any of the
$(\delta,\eta)$-thin components $A_\gamma(\eta)$, $\gamma \in
\Gamma(\delta)$, and also for distinct cusps $P$ and $Q$, the
neighborhoods $\cO_P(R)$ and
$\cO_Q(R)$ are disjoint.
\medskip

The following Proposition is a variant of the Theorem of K.~Rafi
stated in the beginning of \S\ref{sec:Geometric:Compactification:Theorem}.
\begin{proposition}
\label{prop:pointwise:conformal:factor}
Let $S=(C,q)$ be a flat surface, and let $Y$ be a $\delta$-thick component
of $S$. Let $Y(\eta)$ be the corresponding
$(\delta,\eta)$-thick component of $S$ (defined as in
\S\ref{sec:subsec:delta:eta:thick:thin}). For each $P \in Z(Y(\eta))$,
let $\cO_P(R)$ be the neighborhood of $P$ as defined in the beginning
of \S\ref{sec:subsec:uniform:bound:conformal}.
Then, there exists a constant $C'$
depending only on the stratum, $\delta$, $\eta$
and $R$ such that
for all $x \in Y(\eta) \setminus \bigcup_{P\in Z(Y)} \cO_P(R)$,
\begin{displaymath}
| \cf(q)(x) - \log \lambda(Y)| \le C',
\end{displaymath}
where $\cf(q)$ is the conformal factor of $q$
defined by $g_{flat}(q) = \exp (2 \phi(q)) g_{hyp}$.
\end{proposition}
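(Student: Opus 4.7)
The plan is to argue by contradiction, combining a rescaling trick with the Geometric Compactification Theorem (Theorem~\ref{theorem:Deligne:Mumford}). Suppose the claimed bound fails. Then one can find a sequence of flat surfaces $S_\tau=(C_\tau,q_\tau)$ in the fixed stratum $\cQ(d_1,\dots,d_\noz)$, a $\delta$-thick component $Y_\tau\subset S_\tau$, and points $x_\tau\in Y_\tau(\eta)\setminus\bigcup_{P\in Z(Y_\tau(\eta))}\cO_P(R)$ with $|\cf(q_\tau)(x_\tau)-\log\lambda(Y_\tau)|\to\infty$.

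The first step is to rescale. Set $\tilde q_\tau:=\lambda(Y_\tau)^{-2}q_\tau$, so that $\gflat(\tilde q_\tau)=\lambda(Y_\tau)^{-2}\gflat(q_\tau)$, while the underlying conformal class (and hence the cusped hyperbolic metric $\ghyp$) is unchanged. This gives $\cf(\tilde q_\tau)=\cf(q_\tau)-\log\lambda(Y_\tau)$, so the contradiction hypothesis becomes $|\cf(\tilde q_\tau)(x_\tau)|\to\infty$; moreover the rescaled component has flat size exactly $1$. Applying Theorem~\ref{theorem:Deligne:Mumford} and passing to a subsequence, the Riemann surfaces $C_\tau$ converge in the Deligne--Mumford sense to a stable $C_\infty$, the component corresponding to $Y_\tau$ converges to a component $C_{\infty,j}$, and the rescaled differentials $\tilde q_\tau$ converge, on compact subsets of $C_{\infty,j}$ away from the nodes, to a nontrivial meromorphic quadratic differential $\tilde q_{\infty,j}$.

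The crux of the argument is that the region $Y_\tau(\eta)\setminus\bigcup_P\cO_P(R)$ stays inside a compact subset $K\subset C_{\infty,j}$ that is bounded away from both (i) the nodes of $C_\infty$, since by definition of the $(\delta,\eta)$-thick part $Y_\tau(\eta)$ avoids the Margulis tubes around short hyperbolic geodesics, and these short geodesics pinch to the nodes in the limit; and (ii) all zeros and poles of $\tilde q_{\infty,j}$, since by Theorem~\ref{theorem:Deligne:Mumford} every non-nodal singularity of $\tilde q_{\infty,j}$ is the limit of singularities of $\tilde q_\tau$, which are precisely the hyperbolic cusps around which the excluded neighborhoods $\cO_P(R)$ are placed; the conformally invariant choice of the coordinate $\zeta$ in~\eqref{eq:def:local:coordinate:zeta} together with the choice of $R$ ensures that $\cO_P(R)$ has a well-defined limit covering a neighborhood of the corresponding cusp of $C_{\infty,j}$. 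On such a $K$, both $\gflat(\tilde q_{\infty,j})$ and $\ghyp$ are smooth and nondegenerate, so $\cf(\tilde q_{\infty,j})$ is bounded on $K$ by some constant depending only on the stratum, $\delta$, $\eta$, $R$; by the uniform convergence of the metrics on $K$, the same bound (enlarged by $1$) holds for $\cf(\tilde q_\tau)$ for $\tau$ sufficiently large, contradicting $|\cf(\tilde q_\tau)(x_\tau)|\to\infty$.

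The main obstacle is the precise bookkeeping needed to check that the hyperbolically-defined cusp neighborhoods $\cO_P(R)$ of $S_\tau$ really do exhaust a neighborhood of every non-nodal zero or pole of the limit differential $\tilde q_{\infty,j}$, so that the complement lies in a compact set disjoint from the singularities. This is essentially a statement about the compatibility of two different geometric decompositions of $C_\tau$: the hyperbolic thick-thin decomposition (used to define $Y_\tau(\eta)$ and the $\cO_P(R)$) and the flat decomposition afforded by the rescaled differentials $\tilde q_\tau$ (used in Theorem~\ref{theorem:Deligne:Mumford}). The compatibility follows from K.~Rafi's theorem comparing flat and hyperbolic lengths on thick components, stated in \S\ref{ss:after:Rafi}, which guarantees that after the rescaling by $\lambda(Y_\tau)^{-2}$ the two geometries on $Y_\tau(\eta)\setminus\bigcup_P\cO_P(R)$ are uniformly bi-Lipschitz with constants depending only on $\delta$, $\eta$, $R$ and the stratum.
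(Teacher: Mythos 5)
Your overall strategy --- contradiction, rescaling by the size, and passage to the Deligne--Mumford limit via Theorem~\ref{theorem:Deligne:Mumford} --- is the same as the paper's, but there is a genuine gap in the rescaling step. You rescale by $\lambda(Y_\tau)^{-2}$, where $Y_\tau$ is the $\delta$-thick component for the constant $\delta$ fixed in advance. Theorem~\ref{theorem:Deligne:Mumford}, however, only guarantees a nontrivial limit for the differentials rescaled by $\lambda(Y_{\tau,j})^{-2}$, where $Y_{\tau,j}$ is the $\delta_0$-thick component determined by the limiting stable surface; since $\delta_0$ must be chosen small relative to $C_\infty$, one may well have $\delta_0<\delta$, in which case $Y_\tau$ is a priori only \emph{contained} in $Y_{\tau,j}$ and the two sizes need not agree. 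If $\lambda(Y_\tau)/\lambda(Y_{\tau,j})$ degenerated along the sequence, your rescaled differentials $\tilde q_\tau$ would tend to $0$ or to $\infty$ on $C_{\infty,j}$ and the compactness argument would collapse. The missing ingredient is the uniform comparison
$$
\limsup_{\tau\to\infty}\bigl|\log\lambda(Y_\tau)-\log\lambda(Y_{\tau,j})\bigr|<\infty\,,
$$
which the paper obtains from \cite[Lemma~4.9]{Eskin:Mirzakhani:Rafi}: were the sizes to diverge, $Y_{\tau,j}$ would contain an essential curve whose hyperbolic length tends to $0$, contradicting the fact that $Y_{\tau,j}$ converges to the connected component $C_{\infty,j}$.

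Apart from this, your argument is sound and in fact proves slightly more than needed: the paper only tracks the single point $x_\tau$, extracts a limit $x_\infty\in C_{\infty,j}$ which is neither a node nor a point of $Z(Y_{\infty,j})$ (precisely because $x_\tau$ lies in $Y_{\tau,j}(\eta)$ and outside the neighborhoods $\cO_P(R)$), and concludes that $\cf(\tilde q_{\tau,j})(x_\tau)$ converges to the finite value $\cf(\tilde q_{\infty,j})(x_\infty)$, which is enough for the contradiction. Your uniform compact-exhaustion claim and the appeal to Rafi's bi-Lipschitz comparison, while plausible, require the same careful bookkeeping you acknowledge and are not needed once the pointwise limit is available.
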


\begin{proof}
The proof will be by contradiction. Suppose there is no
  $C'$ satisfying the conditions of the lemma. Then there exists
  sequence of triples $(x_\tau, Y_\tau, S_\tau)$ such that $Y_\tau
  \subset S_\tau$ is a $\delta$-thick subsurface, $x_\tau \in
  Y_\tau(\eta)$, and if we write $S_\tau = (C_\tau,q_\tau)$ then
\begin{equation}
\label{eq:sequence:phi:counterexamples}
| \cf(q_\tau)(x_\tau) - \log \lambda(Y_\tau)| \to \infty.
\end{equation}
After passing to a subsequence, we may assume that the flat surfaces
$S_\tau$ converge in the sense of
Theorem~\ref{theorem:Deligne:Mumford}. Let $C_{\infty}$, $\delta_0$,
$C_{\infty,j}$, $Y_{\tau,j}$, $\tilde{q}_{\tau,j}$ be as in the
statement of Theorem~\ref{theorem:Deligne:Mumford}. One technical
issue is that the constant $\delta_0$ (which depends on the sequence
$S_\tau$) might not coincide with the constant $\delta > 0$ which is
chosen in advance; in particular, we may have $\delta_0 < \delta$.

Since $Y_\tau$ is a thick component of $S_\tau$, for large enough
$\tau$ no boundary curve of
one of the $Y_{\tau,j}$ (which are all in $\Gamma(\delta_0)$) can cross
the interior of $Y_\tau$; therefore the subsurface $Y_\tau$ must be
contained in one of the $Y_{\tau,j}$ where $j = j(\tau)$; however
after passing to a subsequence we may assume that $j$ is fixed. Even
then, we might not have $Y_\tau = Y_{\tau,j}$ since all we know about
the boundary curves of $Y_\tau$ is that they have hyperbolic length at
most $\delta$, while by definition, the hyperbolic length of the boundary
curves of $Y_{\tau,j}$ tends to $0$ as $\tau \to \infty$. However, we
claim that
\begin{equation}
\label{eq:size:subsubsurface}
\limsup_{\tau \to \infty} |\log \lambda(Y_\tau) - \log
\lambda(Y_{\tau,j})| < \infty.
\end{equation}
Indeed, if (\ref{eq:size:subsubsurface}) failed, then (after passing
to a subsequence) by~\cite[Lemma~4.9]{Eskin:Mirzakhani:Rafi},
the subsurface $Y_{\tau,j}$ would contain a curve
$\gamma_{\tau,j}$
with the hyperbolic length of $\gamma_{\tau,j} \to 0$; this contradicts the
fact that $Y_{\tau,j} \to C_{\infty,j}$ where $C_{\infty,j}$ is
connected. Therefore
(\ref{eq:size:subsubsurface}) holds. It follows from
(\ref{eq:sequence:phi:counterexamples}) and
(\ref{eq:size:subsubsurface}) that
\begin{equation}
\label{eq:sequence:phi:newsize:counterexamples}
| \phi(q_\tau)(x_\tau) - \log \lambda(Y_{\tau,j})| \to \infty.
\end{equation}
As in Theorem~\ref{theorem:Deligne:Mumford}, let
\begin{displaymath}
\tilde{q}_{\tau,j} = \lambda(Y_{\tau,j})^{-2} q_\tau
\end{displaymath}
By Theorem~\ref{theorem:Deligne:Mumford}, we have $\tilde{q}_{\tau,j}
\to \tilde{q}_{\infty,j}$ on uniformly on compact subsets of
$C_{\infty,j}$
After passing to a subsequence, we have $x_\tau \to x_\infty \in
C_{\infty,j}$. Since $x_\tau$ stays away from $Z(Y_{\tau,j})$,
$x_\infty \not\in Z(Y_{\infty,j})$.
Also, since $x_\tau \in Y_{\tau,j}(\eta)$, $x_\infty$ is not one of the
nodes. Since $\tilde{q}_{\infty,j}$ is finite and does not vanish except at ponts of $Z(Y_{\infty,j})$ and the nodes, we see that $\tilde{q}_{\infty,j}(x_\infty) \ne 0$.

Recall  that  we  represent  the  conformal  factor  relating the flat and
hyperbolic  metrics as $\gflat(q_\tau)=\exp(2\cf(q_\tau))\ghyp$.
Therefore,
\begin{displaymath}
\cf(\tilde{q}_{\tau,j})(x_\tau) \to
\cf(\tilde{q}_{\infty,j})(x_\infty) \ne 0.
\end{displaymath}
Hence,
\begin{equation}
\label{eq:conf:factor:tildeq:converges}
\limsup_{\tau \to \infty}
|\phi(\tilde{q}_{\tau,j})(x_\tau) | < \infty.
\end{equation}
Recall that by definition $\tilde q_{\tau,j}:=\lambda(Y_{\tau,j})^{-2} q_\tau$.
Note
that  multiplying  $q_\tau$  by a constant factor $k$ we do not
change the hyperbolic metric, $\ghyp(kq_\tau)=\ghyp(q_\tau)$. Thus,
\begin{equation}
\label{eq:cf:formula}
\cf(\tilde{q}_{\tau,j})=\cf(q_\tau)- \log \lambda(Y_{\tau,j}).
\end{equation}
Now (\ref{eq:cf:formula}) and (\ref{eq:conf:factor:tildeq:converges})
contradict (\ref{eq:sequence:phi:newsize:counterexamples}).
\end{proof}

\section{Analytic Riemann--Roch Theorem}
\label{sec:analytic:Riemann:Roch:proof}

This    section    is    entirely    devoted    to    a    proof   of
Theorem~\ref{theorem:main:local:formula}.                          In
section~\ref{ss:proof:Fay} we present our original proof based on the
results of J.~Fay~\cite{Fay}.

Having  seen  a  draft  of  the  paper, D.~Korotkin indicated us that
Theorem~\ref{theorem:main:local:formula}   should   be  an  immediate
corollary     of     the     holomorphic     factorization    formula
from~\cite{Kokotov:Korotkin} combined with the homogeneity properties
of the tau-function established in~\cite{Korotkin:Zograf}. We present
a          corresponding         alternative         proof         in
section~\ref{ss:Kokotov:Korotkin}.

\subsection{Proof based on the results of J.~Fay}
\label{ss:proof:Fay}

Recall   the   setting  of  Theorem~\ref{theorem:main:local:formula}.
Consider  a  flat  surface  $S$  of  area one in a stratum of Abelian
differentials  or in a stratum of meromorphic quadratic differentials
with  at  most simple poles. In the current context we are interested
only  in the underlying flat metric, so we forget about the choice of
the  vertical  direction.  In other words, we do not distinguish flat
surfaces   corresponding   to   Abelian  differentials  $\omega$  and
$\exp(i\varphi)\omega$  (correspondingly  quadratic differentials $q$
and $\exp(2i\varphi)q$), where $\varphi$ is a constant real number. e
consider the flat surface W$S$ as a point of the quotient
$$
\cH_1(m_1,\dots,m_\noz)/\SO\simeq\
\PcH(m_1,\dots,m_\noz)
$$
or
$$
\cQ_1(d_1,\dots,d_\noz)/\SO\simeq\
\PcQ_1(d_1,\dots,d_\noz)
$$
correspondingly.

Consider   a   complex  one-parameter  family  of  local  holomorphic
deformations    $S(t)$    of    $S$    in    the    ambient   stratum
$\cH(m_1,\dots,m_\noz)$  or  $\cQ(d_1,\dots,d_\noz)$ correspondingly.
Denote  by  $z$ a flat coordinate on the initial flat surface, and by
$u$  denote  a flat coordinate on the deformed flat surface. The area
of the deformed flat surface $S(t)$ is not unit anymore. We denote by
$S^{(1)}(t)$ the flat surface of area one obtained from $S(t)$ by the
proportional  rescaling.  Smoothing the resulting flat metric of area
one  as it was described in section~\ref{sec:def:det:flat:metric}, we
get the smoothed flat metric $\rho_\epsilon(u,\bar u)|du|^2$.

By   $\omega_i$,   $i=1,\dots,g$,   we   denote   local  nonvanishing
holomorphic    sections   of   the   Hodge   bundle   $H^{1,0}$,   so
$\det^{\frac{1}{2}}\langle\omega_i,\omega_j\rangle$    is   a   local
holomorphic  section  of  the determinant bundle $\Lambda^g H^{1,0}$,
and $|\det\langle\omega_i,\omega_j\rangle|$ is the square of its norm
induced    by    Hermitian    metric~\eqref{eq:hodge:hermitian:form},
see~\eqref{eq:w:i:w:j}.

The  starting  point of the proof is the following reformulation of a
formula of J.~Fay.

\begin{Proposition}[after J.~Fay]
The following relation is valid
\begin{multline}
\label{eq:intermediate}
\partial_{\bar t}\partial_t\log\det\Delta_{\mathit{flat},\epsilon}\big(S^{(1)}(t)\big)-
\partial_{\bar t}\partial_t\log|\det\langle\omega_i,\omega_j\rangle|=\\
=\cfrac{1}{6\pi}\,
\int_C
\det\left(\begin{array}{c|c}
\partial_{\bar t}\partial_t\log \rho_\epsilon &
\partial_t\partial_{\bar u} \log \rho_\epsilon\\
[-\halfbls] &\\
\hline  & \\
[-\halfbls]
\partial_{\bar t}\partial_u \log \rho_\epsilon &
\partial_z\partial_{\bar z} \log \rho_\epsilon
\end{array}
\right)
dx\,dy\,.
\end{multline}
where  the  derivatives  of functions of the local coordinate $u$ are
evaluated at $t=0$.
\end{Proposition}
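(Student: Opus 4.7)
The plan is to derive \eqref{eq:intermediate} from Fay's local variational formula for determinants of Laplacians along holomorphic families, which in modern language is the complex-dimension-one case of the Bismut--Bost--Quillen local index theorem. The essential point is that the smoothed metric $\rho_\epsilon(u,\bar u)|du|^2$ is \emph{honestly smooth} on the underlying Riemann surface (all conical behaviour has been regularised away on scales $<\epsilon$), so Fay's framework in \cite{Fay} applies directly to the holomorphic family $t\mapsto S^{(1)}(t)$ endowed with this smoothed metric of unit area.

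First, I would recast the two quantities on the left of \eqref{eq:intermediate} in Quillen's framework. On the determinant line bundle $\lambda:=\det R\pi_*\mathcal O_{C/B}$ of the family, the Quillen metric is $\|\cdot\|_Q^2=\|\cdot\|_{L^2}^2\cdot(\det{}'\!\Delta_{\mathit{flat},\epsilon})^{-1}$, where the $L^2$-metric induced on $\Lambda^g H^{1,0}$ by $\rho_\epsilon$ evaluated on a holomorphic frame $\omega_1\wedge\dots\wedge\omega_g$ gives exactly $|\det\langle\omega_i,\omega_j\rangle|$. Hence the left-hand side of \eqref{eq:intermediate} is just $-\partial_{\bar t}\partial_t\log\|\sigma\|_Q^2$ for any local holomorphic section $\sigma$ of $\lambda$, which by definition computes $-2\pi i$ times the $(1,1)$-curvature form of the Quillen metric pulled back to the parameter disc.

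Second, invoke the local index theorem: the curvature of the Quillen metric on $\lambda$ is the degree-$2$ piece of the fibre integral of the Todd form of $T_{C/B}$,
\begin{equation*}
c_1(\lambda,\|\cdot\|_Q)\;=\;\frac{1}{12}\,\pi_*\!\bigl(c_1(T_{C/B},g_{\mathit{flat},\epsilon})^2\bigr).
\end{equation*}
The Chern form of the smoothed conformal metric on the total space is $c_1(T_{C/B},g_{\mathit{flat},\epsilon})=-\frac{i}{2\pi}\,\partial\bar\partial\log\rho_\epsilon$, where $\partial,\bar\partial$ involve both fibre variables $u,\bar u$ and base variables $t,\bar t$. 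Wedging this $2$-form with itself and extracting the coefficient of $dt\wedge d\bar t\wedge du\wedge d\bar u$ yields precisely the $2\times 2$ determinant appearing in \eqref{eq:intermediate}; the combinatorial factor $2$ from the wedge product, combined with the $\frac{1}{12}$ above and the $\frac{1}{2\pi}$ from the standard Chern-form normalisation, reproduces the overall coefficient $\frac{1}{6\pi}$.

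The main technical obstacle is the unit-area renormalisation $S^{(1)}(t)$ in place of $S(t)$: rescaling by a factor $\Area(S(t))^{-1/2}$ multiplies $\rho$ by a $t$-dependent constant, which shifts both $\log\det\Delta_{\mathit{flat},\epsilon}$ and $\log|\det\langle\omega_i,\omega_j\rangle|$ by terms involving $\log\Area(S(t))$. I would need to check that $\partial_{\bar t}\partial_t$ annihilates the difference of these corrections; this should follow because $\Area(S(t))=\tfrac{i}{2}\int\omega_t\wedge\bar\omega_t$ is (the modulus squared of) an antiholomorphic/holomorphic pairing whose logarithm transforms correctly when $\omega_i$ are taken holomorphic in $t$, and because the conformal rescaling enters symmetrically in the Polyakov-type variational formula for $\det\Delta_{\mathit{flat},\epsilon}$ and in the $L^2$-metric factor. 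A secondary point is to verify that the smoothing procedure commutes with the two $t$-derivatives uniformly in $\epsilon$, so that the identity at fixed $\epsilon$ is the one actually stated in the Proposition; this is straightforward since the family of smoothings is supported on a fixed-radius neighbourhood of the prescribed zeroes, which move holomorphically with $t$.
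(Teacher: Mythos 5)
Your argument is correct in substance but follows a genuinely different route from the paper. The paper's proof is essentially a citation: it identifies \eqref{eq:intermediate} with formula (3.37) of Fay's memoir, supplies the dictionary (the auxiliary bundle $L_t$ is trivial, $K_t\otimes L_t^\ast$ becomes $H^{1,0}$, the convention $\rho_\epsilon|du|^2$ versus Fay's $\rho^{-2}|du|^2$ accounts for a factor of $4$), and then algebraically collapses Fay's four-term integrand into the $2\times 2$ determinant. You instead re-derive the identity from the curvature formula for the Quillen metric on $\det R\pi_\ast\mathcal{O}_{C/B}$ (the Bismut--Gillet--Soul\'e/Bismut--Bost local index theorem), computing $\tfrac{1}{12}\pi_\ast\bigl(c_1(T_{C/B},\gfe)^2\bigr)$ by hand; your Chern--Weil bookkeeping does check out --- $(\partial\overline{\partial}\log\rho_\epsilon)^2$ contributes twice the determinant, and the factors $\bigl(-\tfrac{i}{2\pi}\bigr)^2$, $du\wedge d\overline{u}=-2i\,dx\,dy$, and $\tfrac{1}{12}$ combine with the $-\tfrac{i}{2\pi}$ in the definition of the curvature to give exactly $\tfrac{1}{6\pi}$. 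What the paper's route buys is brevity and the fact that Fay's formula is stated precisely for conformal fibre metrics on families of curves, so no convention-chasing is needed; what your route buys is conceptual transparency, since it exhibits \eqref{eq:intermediate} as the form-level refinement of the cohomological Riemann--Roch computation the paper carries out separately in its \S\ref{sec:Riemann:Roch}. Two small remarks on your execution: the ``unit-area renormalisation'' issue you flag as the main obstacle largely dissolves if you apply the index theorem directly to the family already equipped with the normalized smoothed metric $\rho_\epsilon(u,\overline{u})|du|^2$ of $S^{(1)}(t)$ --- then $\|1\|_{L^2}^2=\Area=1$ is constant, and the $L^2$ metric on $\Lambda^g H^{1,0}$ is metric-independent and equals $\det\langle\omega_i,\omega_j\rangle$ --- and the one genuinely delicate point that remains is the sign/normalization convention for the Quillen metric (numerator versus denominator for $\det'\Delta$), which you should fix explicitly since it determines the overall sign of \eqref{eq:intermediate}.
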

\begin{proof}
Actually,  formula~\eqref{eq:intermediate}  above  is  formula~(3.37)
in~\cite{Fay} adjusted to our notations.

A  vector  bundle $L_t$ in formula~(3.37) in~\cite{Fay} is trivial in
our  case.  This  means that the metric $h$ on it is also trivial and
equals  identically  one: $h=1$. The same is true for the determinant
$\det\langle\omega_i,\omega_j\rangle_{L_t}$     in     formula~(3.37)
in~\cite{Fay};  this  determinant  is identically equal to one in our
case.

A vector bundle $K_t\otimes L^\ast_t$ in formula~(3.37) in~\cite{Fay}
becomes in our context the vector bundle $H^{1,0}$ and a basis in the
fiber    of    this    vector   bundle   denoted   in~\cite{Fay}   by
$\{\omega^\ast_k\}$   becomes  a  basis  of  holomorphic  1-forms  in
$H^{1,0}(C(t))$,  denoted  in  our  notations  by $\omega_k(t)$ where
$C(t)$  is  a  Riemann  surface  underlying the deformed flat surface
$S(t)$.  Note  that each $\omega_k(t)$ considered as a section of the
holomorphic  vector  bundle  $H^{1,0}$ is holomorphic with respect to
the parameter of deformation $t$.

Note,   that   we   represent  the  metric  as  $\rho_\epsilon(u,\bar
u)\,|du|^2$ while in the original paper~\cite{Fay} the same metric is
written as $\rho^{-2}|du|^2$. This explains an extra factor of $4$ in
the  denominator  of  $1/(4\pi)$ in formula~\eqref{eq:Fay:3:37} below
with respect to the original formula~(3.37) in~\cite{Fay}.

Finally, using that
$$
\rho_\epsilon\partial(\rho_\epsilon^{-1})=-\partial\log \rho_\epsilon\,.
$$
we  can  rewrite  formula~(3.37)  in~\cite{Fay}  in  our notations as
\begin{multline}
\label{eq:Fay:3:37}
\partial_{\bar t}\partial_t\,
\log\left(\cfrac{\det\Delta_{\mathit{flat},\epsilon}
\big(S^{(1)}(t)\big)}
{|\det\langle\omega_i,\omega_j\rangle|}\right)
\ =\\ \ =
\frac{1}{4\pi}\int_{S}\Big(
(\partial_{\bar t}\partial_t\log \rho_\epsilon)\;
(\partial_{\bar z}\partial_z \log \rho_\epsilon) -
(\partial_t\partial_{\bar u}\log \rho_\epsilon)\;
(\partial_{\bar t}\partial_u\log \rho_\epsilon)-\\
-\frac{1}{3}(\partial_t\partial_{\bar t}\log \rho_\epsilon)\;
(\partial_z\partial_{\bar z}\log \rho_\epsilon)+
\cfrac{1}{3}(\partial_t\partial_{\bar u}\log \rho_\epsilon)\;
(\partial_{\bar t}\partial_u\log \rho_\epsilon)
\Big)\,dx\,dy\,.
\end{multline}
Simplifying     the     expression    in    the    right-hand    side
of~\eqref{eq:Fay:3:37},  we  can  rewrite  the  latter formula in the
form~\eqref{eq:intermediate}.
\end{proof}

\begin{Lemma}
\label{lm:Fay}
In  the  same  setting  as  above  the  following  formula  is  valid
\begin{multline}
\label{eq:with:limit}
\partial_{\bar t}\partial_t\,
\log\det\Dflat\big(S^{(1)}(t),S_0\big)
\ =\\=\
\partial_{\bar t}\partial_t\,
\log|\det\langle\omega_i,\omega_j\rangle|
\ +\
\cfrac{1}{6\pi}\,
\lim_{\epsilon\to+0}\,
\int_C
\det\left(\begin{array}{c|c}
\partial_{\bar t}\partial_t\log \rho_\epsilon &
\partial_t\partial_{\bar u} \log \rho_\epsilon\\
[-\halfbls] &\\
\hline  & \\
[-\halfbls]
\partial_{\bar t}\partial_u \log \rho_\epsilon &
\partial_z\partial_{\bar z} \log \rho_\epsilon
\end{array}
\right)
dx\,dy
\end{multline}
where  all  derivatives  of functions of the local coordinate $u$ are
evaluated at $t=0$.
\end{Lemma}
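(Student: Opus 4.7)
The plan is to derive~\eqref{eq:with:limit} directly from formula~\eqref{eq:intermediate} of the preceding Proposition by unwinding Definition~\ref{def:relative:Laplace:flat} and exploiting the $\epsilon$-independence of the \emph{relative} regularized determinant established in the discussion following that definition.

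By Definition~\ref{def:relative:Laplace:flat},
\[
\log\det\Dflat\big(S^{(1)}(t),S_0\big)\;=\;
\lim_{\epsilon\to 0}\Big(\log\det\Delta_{\mathit{flat},\epsilon}\big(S^{(1)}(t)\big)
-\log\det\Delta_{\mathit{flat},\epsilon}(S_0)\Big).
\]
The point, emphasized after Definition~\ref{def:relative:Laplace:flat}, is that the difference on the right is actually independent of $\epsilon$ once $\epsilon$ is smaller than half of the shortest flat distance between two conical singularities on $S^{(1)}(t)$ (and on $S_0$, which is fixed). Since $t$ ranges over a small neighborhood of $0$ on which this minimum distance is bounded away from $0$, there is a uniform threshold $\epsilon_0>0$ below which the difference
\[
F_\epsilon(t,\bar t)\;:=\;\log\det\Delta_{\mathit{flat},\epsilon}\big(S^{(1)}(t)\big)-\log\det\Delta_{\mathit{flat},\epsilon}(S_0)
\]
is independent of $\epsilon$, and equals $\log\det\Dflat(S^{(1)}(t),S_0)$.

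Now apply $\partial_{\bar t}\partial_t$ to $F_\epsilon(t,\bar t)$. The term $\log\det\Delta_{\mathit{flat},\epsilon}(S_0)$ is constant in $t$ and hence contributes zero, so for every $\epsilon<\epsilon_0$ one has
\[
\partial_{\bar t}\partial_t\log\det\Dflat\big(S^{(1)}(t),S_0\big)
\;=\;\partial_{\bar t}\partial_t\log\det\Delta_{\mathit{flat},\epsilon}\big(S^{(1)}(t)\big).
\]
Substituting formula~\eqref{eq:intermediate} from the Proposition into the right-hand side gives, again for every $\epsilon<\epsilon_0$,
\[
\partial_{\bar t}\partial_t\log\det\Dflat\big(S^{(1)}(t),S_0\big)
\;=\;\partial_{\bar t}\partial_t\log|\det\langle\omega_i,\omega_j\rangle|
\;+\;\frac{1}{6\pi}\int_C\det(\cdots)\,dx\,dy,
\]
where the $2\times 2$ determinant is the one displayed in~\eqref{eq:intermediate}. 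Since the left-hand side and the $|\det\langle\omega_i,\omega_j\rangle|$-term are manifestly $\epsilon$-independent, the integral on the right is also $\epsilon$-independent for $\epsilon<\epsilon_0$. In particular the limit $\epsilon\to 0^+$ of this integral exists and equals its value for any $\epsilon<\epsilon_0$, giving precisely~\eqref{eq:with:limit}.

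The only delicate point is the first step, namely justifying that the difference $F_\epsilon(t,\bar t)$ is $\epsilon$-independent \emph{uniformly} in a complex neighborhood of $t=0$, so that the $t$-differentiations commute with the limit. This, however, is already built into the construction: the smoothing functions $\rhofe$ in~\eqref{eq:def:rho:flat:epsilon} are supported in $\epsilon$-disks around the conical singularities which, for a holomorphic deformation $S(t)$ inside a fixed stratum, vary continuously with $t$ and remain pairwise disjoint once $\epsilon$ is small enough. The Polyakov-formula computation in section~\ref{sec:def:det:flat:metric} then applies verbatim to $S^{(1)}(t)$ and to $S_0$, and the same cancellation shows that $F_{\epsilon_1}(t,\bar t)=F_{\epsilon_2}(t,\bar t)$ for all $\epsilon_1,\epsilon_2<\epsilon_0(t)$, with $\epsilon_0(t)$ locally bounded below by a positive constant. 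This makes the interchange of $\partial_{\bar t}\partial_t$ and $\lim_{\epsilon\to 0}$ completely elementary and finishes the proof.
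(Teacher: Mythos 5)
Your proposal is correct and follows the same route as the paper, whose proof of Lemma~\ref{lm:Fay} is the one-line instruction to combine formula~\eqref{eq:intermediate} with Definition~\ref{def:relative:Laplace:flat} and pass to the limit $\epsilon\to+0$. Your additional observation that the $\epsilon$-independence of the relative determinant (already established via the Polyakov formula after Definition~\ref{def:relative:Laplace:flat}) justifies the interchange of $\partial_{\bar t}\partial_t$ with the limit is exactly the detail the paper leaves implicit.
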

\begin{proof}
Combine           the          latter          equation          with
definition~\eqref{eq:def:relative:Laplace:flat}                    of
$\det\Dflat\big(S^{(1)}(t),S_0\big)$   and   pass  to  the  limit  as
$\epsilon\to+0$.
\end{proof}

Now  let  us  specify  the  holomorphic  1-parameter family $S(t)$ of
infinitesimal deformations of the flat surface $S=S(0)$.

When  the  flat surface $S$ is represented by an Abelian differential
$\omega$   in   a  stratum  $\cH(m_1,\dots,m_\noz)$  we  consider  an
infinitesimal   affine  line  $\gamma(t)$  defined  in  cohomological
coordinates
$$
(Z_1,\dots,Z_{2g+\noz-1})\in
H^1(S,\{\text{zeroes of }\omega\};\C{})
$$
by the parametric system of equation
\begin{align}
\label{eq:deformation:in:periods}
Z_j(t):=&a(t)Z_j(0)+b(t)\bar Z_j(0)\,,\quad\text{for }j=1,\dots,2g+\noz-1\,,\\
\notag
&\text{where }\ a(0)=1,\ b(0)=0,\ \text{ and }\ b'(0)\neq 0\,.
\end{align}

When  the  flat surface $S$ is represented by a meromorphic quadratic
differential $q$ in a stratum $\cQ(d_1,\dots,d_\noz)$, we consider an
infinitesimal   affine  line  $\gamma(t)$  defined  in  cohomological
coordinates
$$
(Z_1,\dots,Z_{2g+\noz-2})\in
H_-^1(S,\{\text{zeroes of }\hat\omega\};\C{})
$$
by an analogous parametric system of equations
\begin{align}
\label{eq:deformation:in:periods:quadratic}
Z_j(t):=&a(t)Z_j(0)+b(t)\bar Z_j(0)\,,\quad\text{for }
j=1,\dots,2g+\noz-2\,,\\
\notag
&\text{where }\ a(0)=1,\ b(0)=0,\ \text{ and }\ b'(0)\neq 0\,.
\end{align}

The      next      Proposition     evaluates     the     limit     in
equation~\eqref{eq:with:limit}         for         families        of
deformations~\eqref{eq:deformation:in:periods}
and~\eqref{eq:deformation:in:periods:quadratic}.

\begin{Proposition}
\label{pr:integrating:Fay}
In the same setting as above the following formulae hold.

For a family of deformations~\eqref{eq:deformation:in:periods} of the
initial  flat surface $S$ inside a stratum $\cH(m_1,\dots,m_\noz)$ of
Abelian differentials one has:
\begin{equation}
\label{eq:lim:for:Abelian}
\lim_{\epsilon\to+0}\int_C
\det\left(\begin{array}{c|c}
\partial_t\partial_{\bar t}\log \rho_\epsilon &
\partial_t\partial_{\bar u} \log \rho_\epsilon\\
[-\halfbls] &\\
\hline  & \\
[-\halfbls]
\partial_{\bar t}\partial_u \log \rho_\epsilon &
\partial_z\partial_{\bar z} \log \rho_\epsilon
\end{array}
\right)
dx\,dy =
\pi\cdot\sum_{j=1}^\noz \cfrac{m_j(m_j+2)}{2(m_j+1)}
\cdot|b'(0)|^2
\end{equation}

For                   a                   family                   of
deformations~\eqref{eq:deformation:in:periods:quadratic}    of    the
initial  flat surface $S$ inside a stratum $\cQ(d_1,\dots,d_\noz)$ of
meromorphic  quadratic  differentials  with  at most simple poles one
has:
\begin{equation}
\label{eq:lim:for:quadratic}
\lim_{\epsilon\to+0}
\int_C
\det\left(\begin{array}{c|c}
\partial_t\partial_{\bar t}\log \rho_\epsilon &
\partial_t\partial_{\bar u} \log \rho_\epsilon\\
[-\halfbls] &\\
\hline  & \\
[-\halfbls]
\partial_{\bar t}\partial_u \log \rho_\epsilon &
\partial_z\partial_{\bar z} \log \rho_\epsilon
\end{array}
\right)
dx\,dy =
\pi\cdot\sum_{j=1}^\noz \cfrac{d_j(d_j+4)}{4(d_j+2)}
\cdot|b'(0)|^2
\end{equation}
\end{Proposition}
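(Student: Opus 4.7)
The plan is to evaluate the limit by localizing the integral near the conical singularities of the flat metric and computing each local contribution directly. The key observation is that away from the singularities, the smoothed flat metric coincides with the honest flat metric, so in the flat coordinate $u$ on $S^{(1)}(t)$ the conformal density satisfies $\rho_\epsilon \equiv 1$ once $\epsilon$ is smaller than the local injectivity radius. Consequently $\log\rho_\epsilon$ is locally constant and each of the mixed partial derivatives appearing in the determinant of~\eqref{eq:intermediate} vanishes on such open sets. The integral therefore decomposes into a sum of local contributions, one for each conical singularity.

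Near a zero of $\omega$ of degree $m$, choose a local holomorphic coordinate $w$ on $S(0)$ in which $\omega = w^m\,dw$, so that $z = w^{m+1}/(m+1)$ is the flat coordinate. Under the real-linear deformation of periods~\eqref{eq:deformation:in:periods} the flat coordinate on $S(t)$ becomes $U = a(t)\,z + b(t)\,\bar z$; after rescaling to area one, $U^{(1)} = U/\sqrt{A(t,\bar t)}$, where $A(t,\bar t) = |a(t)|^2 - |b(t)|^2$. Correspondingly, the local holomorphic coordinate $W^{(1)}$ on $S^{(1)}(t)$, in which $\omega(t) = (W^{(1)})^m\,dW^{(1)}$, satisfies
\[
(W^{(1)})^{m+1} \;=\; \frac{a(t)\,w^{m+1} + b(t)\,\bar w^{m+1}}{A(t,\bar t)^{1/2}},
\]
and outside the smoothing region the density takes the form $\log\rho_\epsilon = \tfrac{m}{m+1}\bigl[\log P + \log\bar P\bigr] - \tfrac{m}{m+1}\log A$, where $P(t,w) = a(t)w^{m+1} + b(t)\bar w^{m+1}$.

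The third step is to compute the four matrix entries and evaluate the determinant at $t = 0$. Since $P$ is holomorphic in $t$ and $\bar P$ is anti-holomorphic in $t$, the entry $\partial_t\partial_{\bar t}\log\rho_\epsilon$ inherits only the contribution from the area term, yielding $\tfrac{m}{m+1}|b'(0)|^2$, while $\partial_z\partial_{\bar z}\log\rho_\epsilon$ vanishes outside the smoothing region because $\log|z|^2$ is harmonic away from $z=0$. The off-diagonal entries $\partial_t\partial_{\bar u}\log\rho_\epsilon$ and $\partial_{\bar t}\partial_u\log\rho_\epsilon$ reduce to explicit rational expressions in $w, \bar w$ and in $a, b, a', b'$. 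Working in polar coordinates $w = re^{i\theta}$, the angular integration produces a simple numerical factor, and the radial integral combined with the contribution from the smoothing region $\{|w|<\epsilon\}$ converges in the limit $\epsilon \to 0$; a direct algebraic computation gives the contribution $\pi\cdot\tfrac{m(m+2)}{2(m+1)}\,|b'(0)|^2$ per singularity, which proves~\eqref{eq:lim:for:Abelian}.

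The quadratic case~\eqref{eq:lim:for:quadratic} follows from the Abelian one by passing to the canonical double cover $\hat S \to S$ on which $q$ pulls back to the square of a $1$-form $\hat\omega$: a singularity of $q$ of even order $d$ lifts to two zeros of degree $m = d/2$ of $\hat\omega$, while a singularity of odd order $d$ lifts to a single zero of degree $m = d+1$ at a ramification point. Applying the Abelian formula on $\hat S$ and accounting for the factor $2$ relating the flat areas of $\hat S$ and $S$ yields $\pi\cdot\tfrac{d(d+4)}{4(d+2)}\,|b'(0)|^2$ per singularity. The main obstacle is the careful interpretation of the mixed partial derivatives $\partial_t, \partial_{\bar t}, \partial_u, \partial_{\bar u}, \partial_z, \partial_{\bar z}$: the coordinate $u = U^{(1)}(z, \bar z, t, \bar t)$ depends on the deformation parameter through both the linear map of periods and the area-rescaling $\sqrt{A(t,\bar t)}$, so one must track precisely which variable is held fixed in each partial derivative and apply the chain rule consistently. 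Equally important is verifying that the apparent $1/|z|^2$-type divergence of the naive unsmoothed integrand at $z = 0$ is compensated by the contribution from the smoothing region, which follows from an $O(\epsilon^2)$ bound on the integrand over $\{|w|<\epsilon\}$; equivalently, one can integrate by parts to reduce the computation to a convergent boundary integral encircling the singularity.
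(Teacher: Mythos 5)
Your overall skeleton (localize at the conical points, compute an explicit local contribution per singularity) is the right one, and your target numbers are correct, but the proposal contains a genuine error in the reduction between the two cases, and the local computation as described would not close. The paper proves \eqref{eq:lim:for:quadratic} \emph{first}, for an arbitrary cone angle $\pi(d+2)$, precisely because the local integrand depends only on the cone angle and not on the holonomy of the flat metric; the Abelian formula \eqref{eq:lim:for:Abelian} is then the special case $d=2m$, since $\frac{2m(2m+4)}{4(2m+2)}=\frac{m(m+2)}{2(m+1)}$. You go in the opposite direction, deducing the quadratic case from the Abelian one on the double cover $\hat S\to S$ ``with a factor $2$.'' This fails at every singularity of \emph{odd} order $d$ (including simple poles): there the cover is ramified, the preimage is a single zero of degree $m=d+1$ by \eqref{eq:d:to:m}, and the Abelian local contribution $\pi\frac{(d+1)(d+3)}{2(d+2)}|b'(0)|^2$ is \emph{not} twice the required $\pi\frac{d(d+4)}{4(d+2)}|b'(0)|^2$, since $(d+1)(d+3)=d(d+4)+3$. (This defect of $\tfrac{3}{2(d+2)}$ at ramification points is not a nuisance to be absorbed --- it is exactly the source of the nontrivial right-hand side of \eqref{eq:general:index:of:exponents:for:quadratic}.) The factor-of-$2$ area argument also has no purchase on a purely local integral.

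The local computation itself is also not carried out in a consistent coordinate. Fay's formula, as used here, requires $\rho_\epsilon$ to be the density with respect to the flat coordinate $u$ of the deformed surface; your expression $\log\rho_\epsilon=\frac{m}{m+1}[\log P+\log\bar P]-\frac{m}{m+1}\log A$ is the density in the holomorphic coordinate $W^{(1)}$, and the entries $\partial_t\partial_{\bar t}\log\rho_\epsilon$ genuinely change under such a coordinate change (in the paper's normalization the $(1,1)$ entry equals $|b'(0)|^2$, not $\frac{m}{m+1}|b'(0)|^2$, away from the smoothing annulus --- in particular your claim that \emph{all} entries vanish off the singularities is false; only the second row does, which is what kills the determinant there). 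Finally, the mechanism that actually evaluates the limit is missing: in the correct computation the determinant equals $2|b'(0)|^2\,\frac{d}{ds}\bigl[(\varphi_\epsilon'(s)s)^2+\varphi_\epsilon'(s)s\bigr]$ with $s=|z|^2$, so it is supported entirely in the smoothing annulus $\epsilon'\le|w|\le\epsilon$ and its integral is read off from the boundary values $\varphi_\epsilon'(s)s\big|_{+0}=-\tfrac{d}{2(d+2)}$ and $\varphi_\epsilon'(s)s\big|_{\epsilon^2}=0$, giving $\pi\frac{d(d+4)}{4(d+2)}|b'(0)|^2$ independently of $\epsilon$. Your ``$O(\epsilon^2)$ bound on the integrand over $\{|w|<\epsilon\}$'' would instead show that region contributes nothing in the limit, which contradicts the fact that the entire answer lives there.
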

\begin{proof}
We  are  going  to  show  that  the  integral  under consideration is
localized into small neighborhoods of conical singularities, and that
the  integral  over  any  such  neighborhood depends only on the cone
angle  at  the  singularity. In particular, it does not depend on the
holonomy  of the flat metric, so it does not distinguish flat metrics
corresponding  to holomorphic 1-forms and to quadratic differentials.
In   other   words   the   second   formula   in   the  statement  of
Proposition~\ref{pr:integrating:Fay}  is  valid  no  matter whether a
quadratic  differential  is  or  is not a global square of an Abelian
differential. The first formula, thus, becomes an immediate corollary
of  the  second one: if an Abelian differential has zeroes of degrees
$m_1,\dots,m_\noz$,  the quadratic differential $\omega^2$ has zeroes
of  orders  $2m_1,\dots,2m_\noz$. Applying the second formula to this
latter collection of singularities we obtain the first one.

We can represent a holomorphic deformation of the flat coordinate $z$
as follows:
$$
u(z,\bar z,t)=a(t)\,z+b(t)\,\bar z
$$
where  $t\in\C{}$  is a parameter of the deformation and coefficients
are   normalized   as   $a(0)=1$,   $b(0)=0$,   and   $b'(0)\neq  0$,
see~\eqref{eq:deformation:in:periods},
and~\eqref{eq:deformation:in:periods:quadratic}. We get
\begin{equation}
\label{eq:udu}
du\wedge d\bar u=(a\,dz+b\,d\bar z)\wedge(\bar a\,d\bar z+\bar b\,d z)=
(a\bar a-b\bar b)\,dz\wedge d\bar z
\end{equation}
Computing          the          derivatives          we          get:
\begin{align}
\label{eq:derivatives:u:bar:u}
\partial_t u&=a' z+b'\bar z &
\partial_t \bar u&=0\\
\partial_{\bar t} u&=0 &
\partial_{\bar t} \bar u&=\bar a' \bar z+\bar b' z\notag
\end{align}
where $a'=\partial_t\, a(t)$ and
$b'=\partial_t\, b(t)$.

It would be convenient to introduce the following notation: $G(t,\bar
t):=(a\bar a-b\bar b)^{-1}$. Computing the derivatives of $G$ we get:
\begin{align}
\partial_t\, G&= -G^2\cdot (a'\bar a-b'\bar b) &
\partial_t G\big\vert_{t=0} &=-a'(0)\notag\\
\label{eq:derivatives:G}
\partial_{\bar t}\, G&= -G^2\cdot (a\bar a'-b\bar b') &
\partial_{\bar t} G\big\vert_{\bar t=0} &=-\bar a'(0)\\
\partial_t\partial_{\bar t}\, G&= 2G^3\cdot (a'\bar a-b'\bar b)(a\bar a'-b\bar b')-
\notag
\\
& \qquad-G^2(a'\bar a'-b'\bar b')
&
\partial_t\partial_{\bar t} G\big\vert_{t=0} &=a'(0)\bar a'(0)+b'(0)\bar b'(0)\notag
\end{align}

Consider  a  neighborhood $\cO$ of a conical singularity $P$ of order
$d$ on the initial flat surface $S$. Recall that the local coordinate
$w$  in  $\cO$  is  defined  by the equation $(dz)^2=w^{d}\, (dw)^2$,
see~\eqref{eq:def:local:coordinate:w}. The smoothed metric $\gfe$ was
defined  in  $\cO$  as $\gfe=\rhofe(|w|)\,|dw|^2$, where the function
$\rhofe$  is  defined in equation~\eqref{eq:def:rho:flat:epsilon}. In
the   flat   coordinate   $z$   the  smoothed  metric  has  the  form
$\gfe=\rho_\epsilon(|z|)\,|dz|^2$,       where      the      function
$\rho_\epsilon(|z|)$ is defined by the equation
$$
\rhofe(|w|)\,|dw|^2=\rho_\epsilon(|z|)\,|dz|^2
$$
A simple calculation shows that
\begin{equation}
\label{eq:f:d}
\rho_\epsilon(r)=\begin{cases}
1&,\text{ when }r\ge\epsilon\\
\left(\frac{2}{d+2}\right)^2\cdot r^{-\frac{2d}{d+2}}
&,\text{ when }0<r\le\epsilon'\,.
\end{cases}
\end{equation}
Finally,  it  would  be  convenient  to  make  one more substitution,
representing the smoothed metric in $\cO$ as
$$
\rho_\epsilon(|z|)\,|dz|^2 =\exp\big(2\varphi_\epsilon(|z|^2)\big)\,|dz|^2\,.
$$
The above definition of $\varphi_\epsilon$ implies that
\begin{equation}
\label{eq:def:varphi}
\varphi_\epsilon(s)=\begin{cases}
0&,\text{ when }s\ge\epsilon^2\\
\log\left(\frac{2}{d+2}\right)-\frac{d}{2(d+2)}\log s
&,\text{ when }0<s\le(\epsilon')^2\,.
\end{cases}
\end{equation}
We  will  need below the following immediate implication of the above
expression:
\begin{equation}
\label{eq:phi:prime:s}
\varphi_\epsilon'(s)\cdot s=\begin{cases}
0&\text{ when }s\ge\epsilon^2\\
-\frac{d}{2(d+2)}&\text{ when }0<s\le(\epsilon')^2
\end{cases}
\end{equation}

Consider now a neighborhood $\cO$ of a conical singularity $P$ on the
deformed flat surface $S^{(1)}(t)$ with normalized metric. It follows
from~\eqref{eq:udu}   that   smoothed   metric  $\rho_\epsilon(u,\bar
u)|du|^2$ has the form
$$
\rho_\epsilon(u,\bar u) =\exp\big(2\varphi_\epsilon(u\bar u G)\big)
\cdot G(t,\bar t)
$$
in  such  neighborhood.  The second factor $G(t,\bar t)$ in the above
expression is responsible for the normalization
$$
\area\big(S^{(1)}(t)\big)=1
$$
of the total area of the deformed flat surface $S(t)$. Passing to the
logarithm we get
$$
\log \rho_\epsilon(u,\bar u)=
2\varphi_\epsilon(u\bar u G)+\log G\,.
$$
Now  everything  is  ready  to  compute  the  entries  of  the matrix
$$
\left(\begin{array}{c|c}
\partial_t\partial_{\bar t}\log \rho_\epsilon &
\partial_t\partial_{\bar u} \log \rho_\epsilon\\
[-\halfbls] &\\
\hline  & \\
[-\halfbls]
\partial_{\bar t}\partial_u \log \rho_\epsilon &
\partial_z\partial_{\bar z} \log \rho_\epsilon
\end{array}
\right).
$$


\textbf{Entry } $\left(\begin{array}{c|c}
\bullet&\phantom{\bullet}\\\hline&
\end{array}\right).$
\medskip

Evaluating the first derivative $\partial_{\bar t}\log \rho_\epsilon$
we get
$$
\partial_{\bar t}\log \rho_\epsilon=
\partial_{\bar t}\,\Big(2\varphi_\epsilon(u\bar u G) + \log G(t,\bar t)\Big)=
2\varphi_\epsilon'\cdot
\left(u\cfrac{\partial\bar u}{\partial\bar t}\,G+
u\bar u\,\cfrac{\partial G}{\partial\bar t}\right) +
\cfrac{\partial G}{\partial\bar t}\cdot\cfrac{1}{G}
$$

Passing to the second derivative we obtain
\begin{multline*}
\partial_t\partial_{\bar t}\log \rho_\epsilon=
2\varphi_\epsilon''\cdot
\left(\bar u\cfrac{\partial u}{\partial t}\,G+
u\bar u\,\cfrac{\partial G}{\partial t}\right)
\left(u\cfrac{\partial\bar u}{\partial\bar t}\;G+
u\bar u\,\cfrac{\partial G}{\partial\bar t}\right)+\\
+2\varphi_\epsilon'\cdot
\left(
\cfrac{\partial u}{\partial t}\,\cfrac{\partial\bar u}{\partial\bar t}\cdot
G+
\bar u\,\cfrac{\partial u}{\partial t}\,\cfrac{\partial G}{\partial\bar t}+
u\,\cfrac{\partial\bar u}{\partial\bar t}\,\cfrac{\partial G}{\partial t}+
u\bar u\, \cfrac{\partial^2 G}{\partial t\,\partial\bar t}
\right)
+\cfrac{\partial^2 G}{\partial t\partial\bar t}\,\cdot\cfrac{1}{G}-
\cfrac{\partial G}{\partial t}\,\cfrac{\partial G}{\partial\bar t}\cdot
  \cfrac{1}{G^2}
\end{multline*}

Applying   formulae~\eqref{eq:derivatives:u:bar:u}  we  evaluate  the
above expression at $t=0$ getting:
\begin{multline*}
2\varphi_\epsilon''\cdot
\left( (a' z+b'\bar z)\bar z\cdot G
  +z\bar z\,\cfrac{\partial G}{\partial t}\right)
\left( (\bar a'\bar z+\bar b' z)z\cdot G
  +z\bar z\,\cfrac{\partial G}{\partial\bar t}\right)+\\
+\;2\varphi_\epsilon'\cdot
\Bigg((a'z+b'\bar z)(\bar a'\bar z+\bar b' z)\cdot G+
(a'z+b'\bar z)\bar z\,\cfrac{\partial G}{\partial\bar t}\;+\\
+(\bar a'\bar z+\bar b' z) z\,\cfrac{\partial G}{\partial t}+
z\bar z\,\cfrac{\partial^2 G}{\partial t\,\partial\bar t}
\Bigg)\; +\\
+\;\cfrac{\partial^2 G}{\partial t\partial\bar t}\,\cdot\cfrac{1}{G}-
\cfrac{\partial G}{\partial t}\,\cfrac{\partial G}{\partial\bar t}\cdot
  \cfrac{1}{G^2}
\end{multline*}

Applying  formulae~\eqref{eq:derivatives:G} for derivatives of $G$ at
$t=0$ we can rewrite the latter expression as
\begin{multline*}
2\varphi_\epsilon''\cdot
\Big((a' z+b'\bar z)\bar z\cdot 1+z\bar z\cdot (-a')\Big)
\Big((\bar a'\bar z+\bar b' z)z\cdot 1+z\bar z\cdot (-\bar
a')\Big)+\\
+2\varphi_\epsilon'\cdot
\Big((a'z+b'\bar z)(\bar a'\bar z+\bar b' z)\cdot 1+
(a'z+b'\bar z)\bar z\,(-\bar a')+\\
+(\bar a'\bar z+\bar b' z) z(-a')+
z\bar z(a'\bar a'+b'\bar b')
\Big)+\\
+(a'\bar a'+b'\bar b')\cdot 1- (-a')(-\bar a')\cdot 1
\end{multline*}

Simplifying the latter expression we get
\begin{equation}
\label{eq:upper:left:corner}
\partial_t\partial_{\bar t}\log \rho_\epsilon=
b'\bar b'\Big(2\varphi_\epsilon''\cdot(z\bar z)^2+4\varphi'\cdot z\bar z+1\Big)
\end{equation}
\bigskip


\textbf{Entry } $\left(\begin{array}{c|c}
\phantom{\bullet}&\phantom{\bullet}\\\hline&\bullet
\end{array}\right).$
\medskip

For this entry of the determinant we have
$$
\partial_z\partial_{\bar z}\log \rho_\epsilon=
2\varphi_\epsilon''\cdot
\left(\bar u\cfrac{\partial u}{\partial z}+
u\cfrac{\partial\bar u}{\partial z}\right)
\left(\bar u\cfrac{\partial u}{\partial\bar z}+
u\cfrac{\partial\bar u}{\partial\bar z}\right)G^2
+2\varphi_\epsilon'\cdot
\left(
\cfrac{\partial u}{\partial\bar z}\,\cfrac{\partial\bar u}{\partial z}
+\cfrac{\partial u}{\partial z}\,\cfrac{\partial\bar u}{\partial\bar z}
\right)G
$$

Applying~\eqref{eq:derivatives:u:bar:u}  we  can  evaluate  the above
expression at $t=0$ which leads to
$$
\partial_z\partial_{\bar z}\log \rho_\epsilon=
2\varphi_\epsilon''\cdot z\bar z\cdot G^2(0)+2\varphi_\epsilon'\cdot 1\cdot G(0)=
2(\varphi_\epsilon''\cdot z\bar z+\varphi_\epsilon')
$$
\bigskip


\textbf{Product of diagonal terms} $\left(\begin{array}{c|c}
\bullet&\\\hline&\bullet
\end{array}\right).$
\medskip

Taking  into consideration~\eqref{eq:upper:left:corner} we obtain the
following value for the diagonal product in our determinant:
\begin{equation}
\label{eq:diagonal:product}
\partial_t\partial_{\bar t}\log \rho_\epsilon \cdot
\partial_z\partial_{\bar z}\log \rho_\epsilon=
2b'\bar b'\cdot
\Big(2\varphi_\epsilon''\cdot (z\bar z)^2+4\varphi_\epsilon'\cdot z\bar z+1\Big)
\Big(\varphi_\epsilon''\cdot z\bar z+\varphi_\epsilon'\Big)
\end{equation}
\bigskip


\textbf{Entry } $\left(\begin{array}{c|c}
\phantom{\bullet}&\bullet\\\hline&
\end{array}\right).$
\medskip

For the first derivative $\partial_{\bar u}\log \rho_\epsilon$ we get
$$
\partial_{\bar u}\log \rho_\epsilon=2\varphi_\epsilon'\cdot u\cdot G
$$

For the second derivative we obtain:
$$
\partial_t\partial_{\bar u}\log \rho_\epsilon=
2\varphi_\epsilon''\cdot
\left(\bar u\,\cfrac{\partial u}{\partial t}\,G+
u\bar u\,\cfrac{\partial G}{\partial t}\right)\cdot u\cdot G+
2\varphi_\epsilon'\cdot\left(\cfrac{\partial u}{\partial t}\,G+
u\,\cfrac{\partial G}{\partial t}\right)
$$

Evaluating     the     above     second     derivative    at    $t=0$
using~\eqref{eq:derivatives:u:bar:u}  and~\eqref{eq:derivatives:G} we
proceed as
\begin{multline}
\partial_t\partial_{\bar u}\log \rho_\epsilon=
2\varphi_\epsilon''\cdot\Big(
\bar z\cdot (a'z+b'\bar z)\cdot 1+
z\bar z\cdot(-a')\Big)\cdot z\cdot 1+\\
+2\varphi_\epsilon'\cdot\Big(
(a'z+b'\bar z)\cdot 1+z\cdot(-a')
\Big)=
2b'\bar z\cdot(\varphi_\epsilon''\cdot z\bar z+\varphi_\epsilon')
\label{eq:up:right}
\end{multline}
\bigskip


\textbf{Entry } $\left(\begin{array}{c|c}
&\phantom{\bullet}\\\hline\bullet&
\end{array}\right).$
\medskip

Analogously,  for the first derivative $\partial_u\log \rho_\epsilon$
we get
$$
\partial_u\log \rho_\epsilon=2\varphi_\epsilon'\cdot\bar u\cdot G
$$
and for the second derivative we obtain:
$$
\partial_{\bar t}\partial_u\log \rho_\epsilon=
2\varphi_\epsilon''\cdot
\left(u\,\cfrac{\partial\bar u}{\partial\bar t}\;G+
u\bar u\,\cfrac{\partial G}{\partial\bar t}\right)\cdot\bar u\cdot G+
2\varphi_\epsilon'\cdot\left(\cfrac{\partial\bar u}{\partial\bar t}\,G+
\bar u\,\cfrac{\partial G}{\partial\bar t}\right)
$$

Evaluating       the       above       expression       at      $t=0$
using~\eqref{eq:derivatives:u:bar:u}  and~\eqref{eq:derivatives:G} we
complete the calculation as
\begin{multline}
\partial_{\bar t}\partial_u\log \rho_\epsilon=
2\varphi_\epsilon''\cdot\Big(
z\cdot (\bar a'\bar z+\bar b' z)\cdot 1+
z\bar z\cdot(-\bar a')\Big)\cdot\bar z\cdot 1\;+\\
+\;2\varphi_\epsilon'\cdot\Big(
(\bar a'\bar z+\bar b' z)\cdot 1+\bar z\cdot(-\bar a')
\Big)=
2\bar b' z\cdot(\varphi_\epsilon''\cdot z\bar z+\varphi_\epsilon')
\label{eq:down:left}
\end{multline}
\bigskip

\textbf{Product of diagonal terms} $\left(\begin{array}{c|c}
&\bullet\\\hline\bullet&
\end{array}\right).$
\medskip

Combining~\eqref{eq:up:right}  and~\eqref{eq:down:left} we obtain the
following value for the anti diagonal product in our determinant:
\begin{equation}
\label{eq:anti:diagonal:product}
\partial_t\partial_{\bar u}\log \rho_\epsilon \cdot
\partial_{\bar t}\partial_u\log \rho_\epsilon=
4b'\bar b'\cdot z\bar z\cdot\big(\varphi_\epsilon''\cdot z\bar z+\varphi_\epsilon'\big)^2
\end{equation}
\bigskip

Finally,                        combining~\eqref{eq:diagonal:product}
and~\eqref{eq:anti:diagonal:product}  we  obtain the desired value of
the determinant:
\begin{multline}
\label{eq:det}
\det
\left(\begin{array}{c|c}
\partial_t\partial_{\bar t}\log \rho_\epsilon &
\partial_t\partial_{\bar u} \log \rho_\epsilon\\
[-\halfbls] &\\
\hline  & \\
[-\halfbls]
\partial_{\bar t}\partial_u \log \rho_\epsilon &
\partial_z\partial_{\bar z} \log \rho_\epsilon
\end{array}
\right)=\\
=2b'\bar b'\cdot\big(\varphi_\epsilon''\cdot z\bar z+\varphi_\epsilon'\big)
\cdot\Bigg(
\big(2\varphi_\epsilon''\cdot (z\bar z)^2+4\varphi_\epsilon'\cdot z\bar z+1\big)-
2z\bar z\cdot\big(\varphi_\epsilon''\cdot z\bar z+\varphi_\epsilon'\big)
\Bigg)=\\
=2b'\bar b'\cdot \Big( 2\varphi_\epsilon'\cdot\varphi_\epsilon''\cdot (z\bar
z)^2+2(\varphi_\epsilon')^2\cdot z\bar z+ \varphi_\epsilon''\cdot z\bar z+\varphi_\epsilon'\Big)
\end{multline}

Now  we  need to integrate the above expression over the flat surface
$S$.  First  note  that  outside  of  small  neighborhoods of conical
singularities,  the  smoothed  metric $\rho_\epsilon(z,\bar z)|dz|^2$
coincides  with the original flat metric, so for such values of $x,y$
we  have  $\rho_\epsilon=1$  and hence, for such values of $(x,y)$ we
have $\log \rho_\epsilon(x,y)=0$. This observation proves that

\begin{multline}
\label{eq:det:as:sum:over:Oj}
\int_S
\det
\left(\begin{array}{c|c}
\partial_t\partial_{\bar t}\log \rho_\epsilon &
\partial_t\partial_{\bar u} \log \rho_\epsilon\\
[-\halfbls] &\\
\hline  & \\
[-\halfbls]
\partial_{\bar t}\partial_u \log \rho_\epsilon &
\partial_z\partial_{\bar z} \log \rho_\epsilon
\end{array}
\right)\;dx\,dy\ =\\
=\sum_{j=1}^{\noz}
\int_{\cO_j(\epsilon)}
2b'\bar b'\cdot\Big( 2\varphi_\epsilon'\cdot\varphi_\epsilon''\cdot (z\bar z)^2+2(\varphi_\epsilon')^2\cdot
z\bar z+ \varphi_\epsilon''\cdot z\bar z+\varphi_\epsilon'\Big)\;dx\,dy\;,
\end{multline}
where  the  sum  is taken over all conical points $P_1,\dots,P_\noz$.
(We  did not introduce separate notations for flat coordinates in the
neighborhoods of different conical points).

Using  the  definition~\eqref{eq:def:varphi} of $\varphi_\epsilon(s)$
we can rewrite the expression which we integrate in terms of a single
variable $s=z\bar z=|z|^2$ as follows:
\begin{multline}
\label{eq:def:Phi:of:s}
2\varphi_\epsilon'\cdot\varphi_\epsilon''\cdot (z\bar z)^2+2(\varphi_\epsilon')^2\cdot
z\bar z+ \varphi_\epsilon''\cdot z\bar z+\varphi_\epsilon'=\\
= 2\varphi_\epsilon'(s)\cdot\varphi_\epsilon''(s)\cdot s^2+2(\varphi_\epsilon'(s))^2\cdot s+
\varphi_\epsilon''(s)\cdot s+\varphi_\epsilon'(s)=:\Phi_\epsilon(s)
\end{multline}

Recall  that  in  the  flat  coordinate $z$ a small neighborhood of a
conical   singularity  of  order  $d$  is  glued  from  $d+2$  metric
half-discs.   Taking  into  consideration  angular  symmetry  of  the
expression  which  we integrate and passing through polar coordinates
in  our  integral  we  can  reduce  integration  over  a $d+2$ metric
half-discs to integration over a segment:
\begin{multline}
\label{eq:disk:to:segment}
2b'\bar b'\int_{\cO_j(\epsilon)}
\Phi_\epsilon(z\bar z)\,dx\,dy\ =\ 2(d+2)b'\bar b'
\int_{\substack{|z|\le\epsilon\\ \Re(x)\ge 0}}
\Phi_\epsilon(r^2)\,r\,dr\,d\theta\ =\\
=\ (d+2)\pi\cdot b'\bar b'\int_0^{\epsilon^2}
\Phi_\epsilon(s)\,ds
\end{multline}

Finally,  observe  that  it  is  easy  to  find an antiderivative for
$\Phi_\epsilon(s)$, namely:
$$
\Phi_\epsilon(s)=\left(\big(\varphi'(s)\big)^2\cdot s^2+\varphi'(s)\cdot s\right)'
$$
which implies, that
$$
\int_0^{\epsilon^2}
\Phi_\epsilon(s)\,ds=
\big(\varphi_\epsilon'(s)\cdot s\big)^2\Big|^{\epsilon^2}_{+0}\ +\
\varphi_\epsilon'(s)\cdot s\Big|^{\epsilon^2}_{+0}
$$
Using  the properties~\eqref{eq:phi:prime:s} of $\varphi_\epsilon(s)$
we get
$$
\int_0^{\epsilon^2}
\Phi_\epsilon(s)\,ds=
\left(0-\left(-\frac{d}{2(d+2)}\right)^2\right)+
\left(0-\left(-\frac{d}{2(d+2)}\right)\right)=
   %
\frac{d(d+4)}{4(d+2)^2}
$$

Plug the value of the integral obtained in the right-hand side of the
above  formula in equation~\eqref{eq:disk:to:segment} and combine the
result             with~\eqref{eq:det:as:sum:over:Oj}             and
with~\eqref{eq:def:Phi:of:s}. The resulting expression coincides with
equation~\eqref{eq:lim:for:quadratic}    in    the    statement    of
Proposition~\ref{pr:integrating:Fay}.  As  we  have already indicated
above,  relation~\eqref{eq:lim:for:Abelian}  follows immediately from
equation~\eqref{eq:lim:for:quadratic}  and  from  the  fact  that the
integral is supported on small neighborhoods of the conical points of
the metric. Proposition~\ref{pr:integrating:Fay} is proved.
\end{proof}

\begin{Lemma}
\label{lm:Th5:in:t}
In the same setting as above the following formulae hold.

For a family of deformations~\eqref{eq:deformation:in:periods} of the
initial  flat  surface  $S$  in  a stratum $\cH(m_1,\dots,m_\noz)$ of
Abelian differentials one has:
\begin{multline}
\label{eq:Th5:A:in:t}
\partial_{\bar t}\partial_t\,
\log\det\Dflat(S,S_0)
\ =\\=\
\partial_{\bar t}\partial_t\,
\log|\det\langle\omega_i,\omega_j\rangle|
\ +\
\frac{1}{12}\cdot\sum_{j=1}^\noz \cfrac{m_j(m_j+2)}{(m_j+1)}
\cdot|b'(0)|^2
\end{multline}

For                   a                   family                   of
deformations~\eqref{eq:deformation:in:periods:quadratic}    of    the
initial  flat surface $S$ inside a stratum $\cQ(d_1,\dots,d_\noz)$ of
meromorphic  quadratic  differentials  with  at most simple poles one
has:
\begin{multline}
\label{eq:Th5:q:in:t}
\partial_{\bar t}\partial_t\,
\log\det\Dflat(S,S_0)
\ =\\=\
\partial_{\bar t}\partial_t\,
\log|\det\langle\omega_i,\omega_j\rangle|
\ +\
\frac{1}{24}\cdot\sum_{j=1}^\noz \cfrac{d_j(d_j+4)}{(d_j+2)}
\cdot|b'(0)|^2
\end{multline}
\end{Lemma}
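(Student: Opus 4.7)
The plan is to observe that Lemma~\ref{lm:Th5:in:t} is the direct combination of the two ingredients immediately preceding it, namely Lemma~\ref{lm:Fay} (the Fay-type formula~\eqref{eq:with:limit}) and Proposition~\ref{pr:integrating:Fay} (the evaluation of the limiting integral).

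First, I would invoke formula~\eqref{eq:with:limit} from Lemma~\ref{lm:Fay}, applied to the specific one-parameter family $S(t)$ given by~\eqref{eq:deformation:in:periods} in the Abelian case or~\eqref{eq:deformation:in:periods:quadratic} in the quadratic case. This expresses
$$
\partial_{\bar t}\partial_t\,\log\det\Dflat\big(S^{(1)}(t),S_0\big)
\;-\;
\partial_{\bar t}\partial_t\,\log|\det\langle\omega_i,\omega_j\rangle|
$$
as $\frac{1}{6\pi}$ times the limit, as $\epsilon\to 0$, of the integral over $C$ of the $2\times 2$ determinant in the logarithmic derivatives of $\rho_\epsilon$.

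Second, I would substitute the value of this limit as computed by Proposition~\ref{pr:integrating:Fay}. In the Abelian case this limit equals $\pi\sum_{j}\frac{m_j(m_j+2)}{2(m_j+1)}\,|b'(0)|^2$, so multiplying by the prefactor $\frac{1}{6\pi}$ yields exactly
$$
\frac{1}{12}\,\sum_{j=1}^{\noz}\frac{m_j(m_j+2)}{m_j+1}\,|b'(0)|^2,
$$
establishing~\eqref{eq:Th5:A:in:t}. In the quadratic case, the limit equals $\pi\sum_j\frac{d_j(d_j+4)}{4(d_j+2)}\,|b'(0)|^2$, and the same prefactor gives the coefficient $\frac{1}{24}$ in~\eqref{eq:Th5:q:in:t}.

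There is no genuine obstacle here — the only point that deserves a line of comment is that writing $\log\det\Dflat(S,S_0)$ on the left-hand side of~\eqref{eq:Th5:A:in:t} and~\eqref{eq:Th5:q:in:t} is shorthand for $\log\det\Dflat(S^{(1)}(t),S_0)$ along the deformation, and the derivatives $\partial_{\bar t}\partial_t$ are evaluated at $t=0$; this matches the setup in which both Lemma~\ref{lm:Fay} and Proposition~\ref{pr:integrating:Fay} are stated. All the real work has already been done in Proposition~\ref{pr:integrating:Fay}, where the localization of the integrand near the conical singularities and the explicit computation with the smoothed conformal factor $\varphi_\epsilon$ yielded the combinatorial factors $\frac{m_j(m_j+2)}{m_j+1}$ and $\frac{d_j(d_j+4)}{d_j+2}$.
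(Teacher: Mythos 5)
Your proposal is correct and coincides with the paper's own proof: the paper likewise obtains Lemma~\ref{lm:Th5:in:t} by plugging the limits~\eqref{eq:lim:for:Abelian} and~\eqref{eq:lim:for:quadratic} from Proposition~\ref{pr:integrating:Fay} into formula~\eqref{eq:with:limit} of Lemma~\ref{lm:Fay}, and your arithmetic with the prefactor $\frac{1}{6\pi}$ producing the coefficients $\frac{1}{12}$ and $\frac{1}{24}$ is exactly right.
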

\begin{proof}
Plugging                       expressions~\eqref{eq:lim:for:Abelian}
and~\eqref{eq:lim:for:quadratic}              obtained             in
Proposition~\ref{pr:integrating:Fay}                             into
formula~\eqref{eq:with:limit}  from  Lemma~\ref{lm:Fay}  we  get  the
relations~\eqref{eq:Th5:A:in:t} and~\eqref{eq:Th5:q:in:t} from above.
\end{proof}

Consider the natural projection
$$
p:\cH(m_1,\dots,m_\noz)\to \PcH(m_1,\dots,m_\noz)\,.
$$
Families       of      deformations~\eqref{eq:deformation:in:periods}
and~\eqref{eq:deformation:in:periods:quadratic}  are chosen in such a
way  that the resulting infinitesimal affine line $\gamma(t)$ defined
by    equation~\eqref{eq:deformation:in:periods}   in   the   stratum
$\cH(m_1,\dots,m_\noz)$              (correspondingly              by
equation~\eqref{eq:deformation:in:periods:quadratic}  in  the stratum
$\cQ(d_1,\dots,d_\noz)$)  projects  to the Teichm\"uller disc passing
through  $p(S)$.  We will show below that the projection map $p$ from
$\gamma(t)$  to  the  Teichm\"uller  disc  is  nondegenerate  in  the
neighborhood  of  $t=0$. Thus, we can induce the canonical hyperbolic
metric of curvature $-4$ to $\gamma(t)$.

\begin{Lemma}
\label{lm:from:t:to:Laplacian}
The   canonical   hyperbolic   metric   of   curvature  $-4$  on  the
Teichm\"uller   disc  induced  to  the  infinitesimal  complex  curve
$\gamma(t)$ under the projection $p$ has the form
$$
|b'(0)|^2\,|dt|^2
$$
at the point $t=0$.

In  particular,  the  Laplacian  of  the induced hyperbolic metric of
curvature $-4$ on $\gamma(t)$ satisfies the relation
\begin{equation}
\label{eq:laplacian:in:t}
|b'(0)|^2\cdot\cfrac{1}{4}\,\,\Dhyp\Big|_{t=0}=\dtdtbar\Big|_{t=0}
\end{equation}
at the point $t=0$.
\end{Lemma}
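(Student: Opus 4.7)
The plan is to identify the parameter $t$ on the affine line $\gamma(t)$ with the Poincar\'e disc coordinate of the Teichm\"uller disc through $p(S)$, and then read off the pullback of the hyperbolic metric at $t=0$.

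First, I would verify that $\gamma(t)$ lies in the $\GL$-orbit of $S=S(0)$, so that its image under $p$ really is contained in the Teichm\"uller disc. Writing $Z_j = X_j+iY_j$, the relation $Z_j(t) = a(t)Z_j + b(t)\bar Z_j$ takes the form $(X_j,Y_j)^T \mapsto M(t)(X_j,Y_j)^T$ for a real $2\times 2$ matrix $M(t)$ whose determinant is $|a(t)|^2-|b(t)|^2$. By assumption this equals $1$ at $t=0$ and is positive in a neighborhood, so $M(t)\in\GL$ and $S(t) = M(t)\cdot S$. The same computation applies verbatim in the quadratic case via the canonical double cover.

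Next, I would identify the induced coordinate on $\SL/\SO\simeq\Hyp$. Rotation $r_\theta\in\SO$ multiplies every period by $e^{i\theta}$, which under the reparametrization $Z_j(t)=aZ_j+b\bar Z_j$ becomes $(a,b)\mapsto(e^{i\theta}a, e^{i\theta}b)$; the area normalization, which rescales periods by a positive real factor, becomes $(a,b)\mapsto(\lambda a,\lambda b)$ with $\lambda>0$. Combining these produces a $\C^\ast$-equivalence on the domain $\{|a|^2>|b|^2\}$, whose quotient is parametrized by
$$
w := b/a \in \{|w|<1\}.
$$
This is precisely the Poincar\'e disc coordinate from section~\ref{ss:Teichmuller:discs}, in which the canonical hyperbolic metric of constant curvature $-4$ takes the form $|dw|^2/(1-|w|^2)^2$. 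As a sanity check, setting $a(t)=\cosh t$, $b(t)=\sinh t$ for real $t$ gives $M(t) = G_t$ and $w(t)=\tanh t$, which recovers the Teichm\"uller geodesic parametrized by its natural arclength.

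The remaining step is a direct differentiation. Since $a(0)=1$ and $b(0)=0$, one has $w(0)=0$ and $dw|_{t=0} = b'(0)\,dt$; in particular, the projection $p$ restricted to $\gamma(t)$ is a local diffeomorphism at $t=0$ since $b'(0)\neq 0$. At $w=0$ the hyperbolic metric collapses to $|dw|^2$, hence its pullback to $\gamma(t)$ at $t=0$ equals $|b'(0)|^2\,|dt|^2$, which proves the first assertion. The Laplacian formula~\eqref{eq:laplacian:in:t} is then immediate: for any conformal metric $\rho^2\,|dt|^2$ the Laplace--Beltrami operator equals $(4/\rho^2)\dtdtbar$, and substituting $\rho^2=|b'(0)|^2$ gives the claimed identity. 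I do not anticipate a serious obstacle; the only point requiring care is the matching between the matrix action of $R_\theta\in\SO$ on periods and the induced action $(a,b)\mapsto(e^{i\theta}a,e^{i\theta}b)$ on the deformation parameters, which is a direct computation with the rotation matrix.
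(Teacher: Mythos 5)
Your proof is correct, but it takes a different coordinate route than the paper. The paper works in the upper half-plane model: it picks a symplectic pair of cycles $c_1,c_2$, forms the period ratio $\zeta(t)=B(t)/A(t)=\bigl(a(t)B+b(t)\bar B\bigr)/\bigl(a(t)A+b(t)\bar A\bigr)$ (mirroring Example~\ref{ex:M1}), and differentiates the metric $|d\zeta|^2/(4\Im^2\zeta)$ at the generic base point $\zeta(0)=B/A$; the factor $\Im^2\zeta(0)$ appears in $|\partial_t\zeta|^2=4|b'(0)|^2\Im^2\zeta(0)$ and cancels against the denominator. You instead pass to the Poincar\'e disc model with the deformation coefficients themselves as coordinate, $w=b/a$, after quotienting by the combined rotation/rescaling action $(a,b)\mapsto(\lambda e^{i\theta}a,\lambda e^{i\theta}b)$. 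This places the base point at $w=0$, where the metric degenerates to $|dw|^2$ and the final differentiation $dw|_{t=0}=b'(0)\,dt$ is immediate. What each approach buys: yours makes the endgame trivial and makes manifest that the answer is independent of any choice of cycles, but shifts the burden onto identifying $w=b/a$ as the correctly normalized curvature $-4$ uniformizer — your check that $(a,b)=(\cosh t,\sinh t)$ realizes $G_t$ with $w=\tanh t$, matched against~\eqref{eq:hyp:polar:coordinates}, together with the rotational symmetry coming from conjugating $G_t$ by $r_\phi$ (which sends $b\mapsto e^{2i\phi}b$), is exactly what is needed and suffices. The paper's route carries slightly more computation but borrows its normalization directly from the flat-torus example and only remarks that the result is independent of the choice of $c_1,c_2$. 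Your observation that $Z_j\mapsto aZ_j+b\bar Z_j$ is the real matrix $M(t)$ with $\det M=|a|^2-|b|^2>0$, so that $\gamma(t)$ genuinely lies in the $\operatorname{GL}_+(2,\R{})$-orbit, and that $b'(0)\neq 0$ gives nondegeneracy of $p|_{\gamma(t)}$, covers the remaining points the paper's proof relies on.
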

\begin{proof}
We  prove  the  Lemma  for a flat surface corresponding to an Abelian
differential;  for  a  flat  surface  corresponding  to a meromorphic
quadratic  differentials  with  at  most  simple  pole  the  proof is
completely analogous.

Choose  a  pair  of independent integer cycles $c_1,c_2\in H_1(C,\Z)$
such that $c_1\circ c_2=1$, and transport them to all surfaces $C(t)$
(we  assume  that  $\gamma(t)$  stays  in  a tiny neighborhood of the
initial  point,  so  we  would  not  have any ambiguity in doing so).
Consider the corresponding periods of $\omega(t)$,
$$
A(t):=\int_{c_1} \omega(t)\qquad B(t):=\int_{c_2} \omega(t)\,.
$$
By definition of the family of deformations we get
\begin{align*}
A(t)&=a(t)A+b(t)\bar A\\
B(t)&=a(t)B+b(t)\bar B\,,
\end{align*}
where  $A=A(0)$  and  $B=B(0)$  are  the corresponding periods of the
initial Abelian differential $\omega$. Define
$$
\zeta(t):=\frac{B(t)}{A(t)}=
\frac{a(t)B+b(t)\bar B}{a(t)A+b(t)\bar A}\,.
$$

At  the  first  glance  this  definition of the hyperbolic coordinate
$\zeta(t)$  depends  on the choice of a pair of cycles $c_1,c_2$, and
on  the  values  of  the periods of the initial Abelian differential.
However, it would be clear from the proof that the induced hyperbolic
metric  does  not  depend on this choice. Basically, the situation is
the  same  as  in  the  case of flat tori, see Example~\ref{ex:M1} in
section~\ref{ss:Teichmuller:discs}.

Consider  now  the  hyperbolic  half-plane  $\Hyp$  endowed  with the
canonical  metric  $\cfrac{|d\zeta|^2}{4|\Im\zeta|^2}$  of  curvature
$-4$. Let us compute the induced metric in the coordinate $t$.

Clearly
$$
\Im\zeta(0)=\Im\frac{B}{A}\,.
$$
Computing the derivative at $t=0$ we get
$$
\frac{\partial\zeta}{\partial t}\Big|_{t=0}=
b'(0)\,\frac{\bar B A-B\bar A}{A^2}\,.
$$
Thus
\begin{multline*}
\frac{\partial\zeta}{\partial t}
\frac{\partial\bar\zeta}{\partial\bar t}\Big|_{t=0}=
-b'(0)\bar b'(0)\,\left(\frac{\bar B A-B\bar A^{\phantom{!}}}{A\bar A}\right)^2=
4|b'(0)|^2\,\left(\Im\frac{B}{A}\right)^2
=\\=
4|b'(0)|^2\,\Im^2\zeta(0)\,.
\end{multline*}
Hence,  the  hyperbolic  metric has the following form in coordinates
$t$ at $t=0$
\begin{multline*}
\frac{|d\zeta|^2}{4(\Im\zeta)^2}\Bigg|_{\zeta(0)}=
\frac{\partial\zeta}{\partial t}
\frac{\partial\bar\zeta}{\partial\bar t}\,
\frac{|d t|^2}{4(\Im\zeta(t))^2}\Bigg|_{t=0}
=\\=
4|b'(0)|^2\,\Im^2\zeta(0)
\frac{|d t|^2}{4(\Im\zeta(0))^2}=
|b'(0)|^2\,|dt|^2
\end{multline*}
This  implies that the Laplacian of this metric at $t=0$ is expressed
as
$$
\Dhyp=\cfrac{4}{|b'(0)|^2}\,\,\dtdtbar\,.
$$
\end{proof}

\begin{proof}[Proof of Theorem~\ref{theorem:main:local:formula}]
Plug        the        expression~\eqref{eq:laplacian:in:t}       for
$\partial_t\bar\partial_t$                 obtained                in
Lemma~\ref{lm:from:t:to:Laplacian}                               into
formulae~\eqref{eq:Th5:A:in:t}  and~\eqref{eq:Th5:q:in:t} obtained in
Lemma~\ref{lm:Th5:in:t}.  Dividing  all  the  terms  of the resulting
equality  by  the  common  factor  $|b'(0)|$ (which is nonzero by the
definition  of  the  family of deformations) we obtain the relations
equivalent                to                the               desired
relations~\eqref{eq:main:local:formula:Abelian}
and~\eqref{eq:main:local:formula:quadratic}                        in
Theorem~\ref{theorem:main:local:formula}.
\end{proof}

\subsection{Alternative proof based on results of A.~Kokotov,
\mbox{D.~Korotkin} and P.~Zograf}
\label{ss:Kokotov:Korotkin}

By  assumption  the  initial  flat  surface  $S=S(0)$  has  area one.
However,    the    area    of    the    flat    surface   $S(t)$   in
family~\eqref{eq:deformation:in:periods}             or            in
family~\eqref{eq:deformation:in:periods:quadratic}   varies  in  $t$.
Define the function
$$
k(t,\bar t):=\Area\big(S(t)\big)^{-\frac{1}{2}}\,.
$$
We shall need the following technical Lemma concerning this function.

\begin{Lemma}
One  has  the  following  expression  for  the  partial derivative of
$k(t,\bar t)$ at $t=0$:
\begin{equation}
\label{eq:dd:k}
\frac{\partial^2
\log k(t,\bar t)}{\partial t\,\partial\overline{t}}\Big\vert_{t=0}
=\frac{1}{2}\,|b'(0)|^2
\end{equation}
\end{Lemma}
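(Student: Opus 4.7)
The plan is to compute $k(t,\bar t)=\Area(S(t))^{-1/2}$ directly in terms of the deformation parameters $a(t),b(t)$, using the fact that the flat area is a fixed Hermitian form on periods.

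First I would observe that for any flat surface represented by an Abelian differential $\omega$, if $\{c_i,d_i\}$ is a symplectic basis of $H_1(C;\Z)$ and we set $A_i:=\int_{c_i}\omega$, $B_i:=\int_{d_i}\omega$, then
\[
\Area(S)=\sum_i \Im\bigl(\bar A_iB_i\bigr).
\]
Under the deformation~\eqref{eq:deformation:in:periods}, every absolute period transforms as $A_i(t)=a(t)A_i+b(t)\bar A_i$ and $B_i(t)=a(t)B_i+b(t)\bar B_i$. Expanding $\bar A_i(t)B_i(t)$ and taking imaginary parts, the cross terms $a\bar b\,A_iB_i$ and $b\bar a\,\bar A_i\bar B_i$ are complex conjugates, hence contribute nothing to the imaginary part, while $|a|^2\Im(\bar A_iB_i)+|b|^2\Im(A_i\bar B_i)=(|a|^2-|b|^2)\Im(\bar A_iB_i)$. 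Summing over $i$ and using $\Area(S(0))=1$, this yields the clean identity
\[
\Area\bigl(S(t)\bigr)=|a(t)|^2-|b(t)|^2.
\]
(For the quadratic differential case~\eqref{eq:deformation:in:periods:quadratic} the same computation applies on the canonical double cover, with the factor of $1/2$ absorbed into the normalization of $\Area$.)

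Given this identity, the rest is an elementary manipulation. Since $a,b$ are holomorphic in $t$, the function $F(t,\bar t):=|a(t)|^2-|b(t)|^2=a(t)\overline{a(t)}-b(t)\overline{b(t)}$ is real analytic in $(t,\bar t)$, and
\[
\log k(t,\bar t)=-\tfrac12\log F(t,\bar t).
\]
I would then compute
\[
\partial_t F=a'(t)\,\overline{a(t)}-b'(t)\,\overline{b(t)},\qquad
\partial_{\bar t}\partial_t F=|a'(t)|^2-|b'(t)|^2,
\]
and
\[
\partial_{\bar t}\partial_t\log F=\frac{F\cdot\partial_{\bar t}\partial_t F-\partial_tF\cdot\partial_{\bar t}F}{F^2}.
\]
At $t=0$ we have $a(0)=1$, $b(0)=0$, so $F(0)=1$, $\partial_tF|_{0}=a'(0)$, $\partial_{\bar t}F|_{0}=\overline{a'(0)}$, and $\partial_{\bar t}\partial_tF|_{0}=|a'(0)|^2-|b'(0)|^2$. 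Substituting gives
\[
\partial_{\bar t}\partial_t\log F\big|_{t=0}=\bigl(|a'(0)|^2-|b'(0)|^2\bigr)-|a'(0)|^2=-|b'(0)|^2,
\]
and therefore $\partial_{\bar t}\partial_t\log k|_{t=0}=\tfrac12|b'(0)|^2$, which is exactly~\eqref{eq:dd:k}.

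There is no real obstacle here; the only subtlety is making sure that the ``canonical'' contribution to the area genuinely takes the advertised Hermitian form $|a|^2-|b|^2$ independently of the chosen periods. This follows from the above imaginary-part cancellation of the cross terms, which is a purely formal consequence of the fact that $\Im(z+\bar z)=0$ for any $z\in\C$. The identity $\Area(S(t))=|a(t)|^2-|b(t)|^2$ is also consistent with the familiar fact that the area is preserved by the $\SL$-action (where $|a|^2-|b|^2=\det=1$), providing a useful sanity check.
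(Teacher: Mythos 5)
Your proof is correct and follows essentially the same route as the paper: establish $\Area(S(t))=a\bar a-b\bar b$ and then take $\partial_t\partial_{\bar t}\log$ of $F^{-1/2}$, using $a(0)=1$, $b(0)=0$. The only difference is cosmetic — the paper gets the area identity instantly from its pointwise relation $du\wedge d\bar u=(a\bar a-b\bar b)\,dz\wedge d\bar z$ (equation~\eqref{eq:udu}), whereas you rederive it globally via the Riemann bilinear relations and the cancellation of the cross terms in $\Im(\bar A_iB_i)$, which is a valid but slightly longer path to the same intermediate fact.
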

\begin{proof}
Relation~\eqref{eq:udu} implies that:
$$
\Area\big(S(t)\big)=
(a\bar a-b\bar b)\,,
$$
so  we  get  the following expression for the function $k(t,\bar t)$:
$$
k(t,\bar t)=(a\bar a-b\bar b)^{-\frac{1}{2}}\,.
$$
Computing  the  value  of  the  second  derivative  at  $t=0$, we get
$$
\frac{\partial^2 \log k(t,\bar t)}{\partial t\,\partial\overline{t}}\Bigg\vert_{t=0}
=\frac{1}{2}\,|b'(0)|^2\,,
$$
where  we  used  the conventions chosen above: $a(0)=1$ and $b(0)=0$.
\end{proof}

The  proof of Theorem~\ref{theorem:main:local:formula} can be derived
from   the   following   formula   due   to  A.~Kokotov,  D.~Korotkin
(formula~(1.10)      in~\cite{Kokotov:Korotkin}).      Denote      by
$\det\Delta^{|\omega|^2}$  the regularized determinant of the Laplace
operator  in the flat metric defined as in~\cite{Kokotov:Korotkin} by
a  holomorphic  form  $\omega\in\cH(m_1,\dots,m_\noz)$. It is defined
for      flat      surfaces      of      arbitrary      area.     For
$\omega\in\cH_1(m_1,\dots,m_\noz)$           the          determinant
$\det\Delta^{|\omega|^2}$                 differs                from
$\det\Dflat\left(S(\omega),S_0\right)$  by  a multiplicative constant
depending only on the choice of the base surface $S_0$.

\begin{NNTheorem}[A.~Kokotov, D.~Korotkin~\cite{Kokotov:Korotkin}]
For any flat surface in any stratum of Abelian differentials the
following formula of holomorphic factorization holds:
\begin{equation}
\label{eq:holomorphic:factorization}
\det\Delta^{|\omega|^2}=
\const\cdot\Area(C,\omega)\cdot \det(\Im B)\cdot|\tau(C,\omega)|^2\,,
\end{equation}
where  $B$  is  the  matrix  of $B$-periods and $\tau(C,\omega)$ is a
flat section  of  a holomorphic line  bundle  over  the  ambient  stratum
$\cH(m_1,\dots,m_\noz)$ of Abelian differentials.

Moreover     (see~\cite{Korotkin:Zograf}),     $\tau(S,\omega)$    is
homogeneous in $\omega$ of degree $p$, where
\begin{equation}
\label{eq:p:value}
p=\frac{1}{12}\sum_{i=1}^\noz\frac{m_i(m_i+2)}{m_i+1}\,.
\end{equation}
In   other  words,  for  any  nonzero  complex  number  $k$  one  has
\begin{equation}
\label{eq:homogenity}
\tau(C,k\omega)=k^p\tau(C,\omega)
\end{equation}
\end{NNTheorem}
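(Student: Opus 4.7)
The strategy is to establish the factorization formula~\eqref{eq:holomorphic:factorization} by showing that the right combination of objects is pluriharmonic on the stratum, and then to extract the homogeneity degree $p$ by studying the scaling behavior under $\omega \mapsto k\omega$ via the Polyakov formula.

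\emph{Step 1 (Variational formulas).} The plan is to compute $\partial_t\partial_{\bar t}\log\det\Delta^{|\omega|^2}$ along an arbitrary holomorphic family $\omega(t)$ of Abelian differentials in the stratum, using period coordinates $Z_j$. The starting point is the Fay-type formula already invoked in section~\ref{ss:proof:Fay}, applied this time \emph{without} the area-normalization constraint. One then separately computes $\partial_t\partial_{\bar t}\log\Area(C,\omega)$ (elementary from $\Area = \tfrac{i}{2}\int\omega\wedge\bar\omega$) and $\partial_t\partial_{\bar t}\log\det(\Im B)$ using the Riemann bilinear relations and Rauch's variational formula for the period matrix ($\partial B_{ij}/\partial Z_k = $ pairing of holomorphic differentials).

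\emph{Step 2 (Pluriharmonicity).} The core analytic claim is that
\[
F(\omega) \,:=\, \log\det\Delta^{|\omega|^2} - \log\Area(C,\omega) - \log\det(\Im B)
\]
is pluriharmonic on the stratum, i.e.\ $\partial\bar\partial F = 0$. The calculations of Step~1 should produce cancellations between the Fay contribution and the $\det(\Im B)$ contribution, both of which involve the Bergman reproducing kernel and the Bergman projective connection on $C$. The remaining boundary contributions from conical singularities are supported in neighborhoods of the zeroes of $\omega$ and can be evaluated explicitly, as in Proposition~\ref{pr:integrating:Fay}; the key verification is that these local contributions are themselves $\partial\bar\partial$-exact and cancel.

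\emph{Step 3 (Existence of $\tau$).} Once $F$ is known to be pluriharmonic on each simply connected domain of the stratum, one has $F = 2\Re(G)$ for a locally defined holomorphic function $G$. Setting $\tau := e^{G}$ gives a local holomorphic function satisfying
\[
\det\Delta^{|\omega|^2} = \mathit{const}\cdot\Area(C,\omega)\cdot\det(\Im B)\cdot|\tau(C,\omega)|^2.
\]
To promote $\tau$ to a global flat section of a holomorphic line bundle on the stratum, one must check that the local choices patch up modulo constants of modulus one, which amounts to computing the monodromy around loops in the stratum; this will come out to be unitary precisely because $|\tau|^2$ is globally well-defined. This is the step I expect to be the main obstacle: identifying the correct transition cocycle and verifying the line-bundle structure rigorously requires care with the phase ambiguity in $\tau$ and with the Deligne extension at the boundary of the stratum.

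\emph{Step 4 (Homogeneity degree).} To pin down the homogeneity exponent $p$, apply the Polyakov formula~\eqref{eq:Polyakov:formula} to the rescaling $\omega \mapsto k\omega$, which changes the flat metric by the constant conformal factor $2\phi = \log|k|^2$. Since $\phi$ is constant, $\int \phi\,\Delta\phi\,dg = 0$, and the Polyakov formula reduces to
\[
\log\det\Delta^{|k\omega|^2} - \log\det\Delta^{|\omega|^2} = -\tfrac{1}{6\pi}\log|k|\cdot\!\!\int_{C}\! K_{\mathit{flat}}\,dg_{\mathit{flat}} + \log\tfrac{\Area(C,k\omega)}{\Area(C,\omega)},
\]
where the curvature integral is concentrated at the conical points, each zero of degree $m_i$ contributing angle deficit $-2\pi m_i$. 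Carrying out the careful regularization near the cone points, which is needed because the plain Gauss--Bonnet integrand is singular, produces an effective coefficient $\tfrac{1}{12}\cdot\tfrac{m_i(m_i+2)}{m_i+1}$ per zero (matching what we already obtained in Proposition~\ref{pr:integrating:Fay}). Combining with the $|k|^2$-scaling of $\Area$ and the invariance of $\det(\Im B)$ under $\omega\mapsto k\omega$, the factorization of Step~3 forces
\[
|\tau(C,k\omega)|^2 = |k|^{2p}\,|\tau(C,\omega)|^2, \qquad p = \tfrac{1}{12}\sum_{i=1}^{n}\tfrac{m_i(m_i+2)}{m_i+1},
\]
which together with the holomorphicity of $\tau$ yields~\eqref{eq:homogenity}.
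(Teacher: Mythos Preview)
The paper does not prove this statement. It is quoted verbatim from the cited works of Kokotov--Korotkin (for the factorization~\eqref{eq:holomorphic:factorization}) and Korotkin--Zograf (for the homogeneity degree~\eqref{eq:p:value}--\eqref{eq:homogenity}), and is then \emph{used} as a black box to give the alternative proof of Theorem~\ref{theorem:main:local:formula} that follows it in section~\ref{ss:Kokotov:Korotkin}. There is therefore no ``paper's own proof'' to compare your proposal against.

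That said, your Steps 1--3 do sketch the strategy actually carried out in~\cite{Kokotov:Korotkin}: one shows that $\log\det\Delta^{|\omega|^2}-\log\Area-\log\det(\Im B)$ is pluriharmonic in period coordinates via variational (Rauch-type and Fay-type) formulas, and then writes it as $\log|\tau|^2$ for a locally holomorphic $\tau$. The line-bundle and monodromy issues you flag in Step~3 are genuine and are handled in that reference.

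Your Step~4, however, has a real gap. For the singular flat metric, the naive Polyakov formula with a \emph{constant} conformal factor $\phi=\log|k|$ produces a curvature term proportional to $\int_C K_{\mathit{flat}}\,dg_{\mathit{flat}}=2\pi\chi(C)$, hence an exponent depending only on $\sum_i m_i=2g-2$ and not on the individual $m_i$; this cannot yield~\eqref{eq:p:value}. If instead you pass to the smoothed metric $\gfe$, then under $\omega\mapsto k\omega$ the local coordinate $w$ near a zero of degree $m$ rescales by $k^{1/(m+1)}$, so $\gfe(k\omega)$ and $\gfe(\omega)$ are related by a conformal factor that is \emph{not} constant near the cone points; the simplification $\int\phi\,\Delta\phi=0$ therefore fails precisely where the interesting contribution lives. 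Your appeal to Proposition~\ref{pr:integrating:Fay} is also off-target: that proposition evaluates the localized integral along the deformation~\eqref{eq:deformation:in:periods} with $b'(0)\neq 0$ (the Teichm\"uller-disc direction), and its output is proportional to $|b'(0)|^2$, hence vanishes for pure rescaling, which corresponds to $b'(0)=0$. In~\cite{Korotkin:Zograf} the degree $p$ is obtained not via Polyakov but from the explicit theta-functional expression for $\tau$, by directly checking how each factor transforms under $\omega\mapsto k\omega$.
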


\begin{Remark}
Note   that  the  ``Bergman  $\tau$-function''  $\tau(C,\omega)$  is,
actually,  a  flat section of a certain local system over the stratum
$\cH(m_1,\dots,m_n)$,  see  Definition  3 in~\cite{Kokotov:Korotkin}.
Such  a  section  is defined up to a constant factor. However, in the
calculation   below  the  $\tau$-function  is  present  only  in  the
expression  $\dtdtbar\log(|\tau(C,\omega)|^2)$  which does not depend
on  the  choice  of  the  particular  flat  section. (See section 3.1
in~\cite{Kokotov:Korotkin} for more details.)
\end{Remark}

\begin{proof}[Alternative proof of
Theorem~\ref{theorem:main:local:formula}]
Note  that  $\Im B(t)$ depends only on the underlying Riemann surface
$C(t)$;  in  particular,  rescaling $\omega(t)$ proportionally, we do
not change $\Im B(t)$.

Applying    formula~\eqref{eq:holomorphic:factorization}    to    the
\textit{normalized}   Abelian  differential  $k(t,\bar  t)\omega(t)$,
which defines a flat surface $S^{(1)}(t,\bar t)$ of unit area, we get
\begin{multline*}
\det\Dflat\left(S^{(1)}(t),S_0\right)=
\const\cdot 1\cdot \det\big(\Im B(t)\big)\cdot
\bigg|\tau\bigg(C(t),k(t,\bar t)\omega(t)\bigg)\bigg|^2
\ =\\=\
\const\cdot \det(\Im B(t))
\cdot k^{2p}(t,\bar t)\cdot|\tau(C,\omega(t))|^2\,,
\end{multline*}
where  we  used homogeneity~\eqref{eq:homogenity} of $\tau$ to get the
latter  expression.  Passing  to logarithms of the above expressions,
applying     $\dtdtbar$,     taking     into    consideration    that
$\tau\big(C(t),\omega(t)\big)$  is  a holomorphic function, and using
relations~\eqref{eq:dd:k} and~\eqref{eq:p:value} we get
\begin{multline}
\label{eq:temp}
\dtdtbar\log|\det\Dflat(S(t),S_0)|
=\\=
\dtdtbar\log|\det\Im B|
+\frac{1}{12}\sum_{i=1}^\noz\frac{m_i(m_i+2)}{m_i+1}\,|b'(0)|^2\,.
\end{multline}
It remains to note that
$$
|\det\langle\omega_i(t),\omega_j(t)\rangle|\,=\,
|\text{holomorphic function of }t|\cdot\Im B(t)\,.
$$
Thus,
$$
\dtdtbar\log|\det\langle\omega_i(t),\omega_j(t)\rangle|=
\dtdtbar\log|\det\Im B(t)|\,.
$$
Applying  the  latter  remark to expression~\eqref{eq:temp}, dividing
the  result by $|b'(0)|^2$ and recalling~\eqref{eq:laplacian:in:t} we
get
$$
\Dhyp\log|\det\langle\omega_i(t),\omega_j(t)\rangle|=
\Dhyp\log|\Dflat(S,S_0)|\,-\,
\frac{1}{3}\sum_{i=1}^\noz\frac{m_i(m_i+2)}{m_i+1}
$$
\end{proof}

The  proof for quadratic differentials is completely analogous. It is
based  on  the  following  statement  of  A.~Kokotov  and D.~Korotkin
(see\cite{Kokotov:Korotkin:MPI}):

\begin{NNTheorem}[A.~Kokotov, D.~Korotkin]
For  any  flat  surface  in  any  stratum  of  meromorphic  quadratic
differentials  with  at  most  simple  poles the following formula of
holomorphic factorization holds:
\begin{equation*}
\det\Delta^{|q|}=
\const\cdot\Area(C,q)\cdot \det(\Im B)\cdot|\tau(C,q)|^2\,,
\end{equation*}
where  $\tau(S,q)$  is  a holomorphic function in the ambient stratum
$\cQ(d_1,\dots,d_\noz)$ of quadratic differentials.

Moreover,  $\tau(S,q)$  is  homogeneous  in  $q$ of degree $p$, where
\begin{equation*}
p=\frac{1}{48}\sum_{i=1}^\noz\frac{d_i(d_i+4)}{d_i+2}\,.
\end{equation*}
In   other  words,  for  any  nonzero  complex  number  $k$  one  has
\begin{equation*}
\tau(C,kq)=k^p\tau(C,q)
\end{equation*}
\end{NNTheorem}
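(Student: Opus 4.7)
The plan is to deduce this quadratic-differential formula from the Abelian-differential version stated immediately above, via the canonical ramified double cover $p:\hat C\to C$ with $p^\ast q=\hat\omega^2$ and $\hat\omega\in\cH(m_1,\ldots,m_k)$, whose degrees $m_i$ are related to $(d_1,\ldots,d_\noz)$ through~\eqref{eq:d:to:m}. The strategy is to track how each factor in the Abelian holomorphic factorization, applied to $(\hat C,\hat\omega)$, restricts to the anti-invariant sector under the involution $\sigma:\hat C\to\hat C$, thereby isolating the piece that descends to $C$.

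First I would split the period matrix: choosing a symplectic basis of $H_1(\hat C,\Z)$ adapted to $\sigma$, the matrix $\hat B$ block-diagonalizes up to an integral change of basis, yielding $\det(\Im\hat B)=\det(\Im B)\cdot\det(\Im B_-)$, where the second factor lives on the $2g_{\mathit{eff}}$-dimensional anti-invariant cohomology. Second, I would split the Laplacian determinant: since $\sigma$ is an isometry of $(\hat C,|\hat\omega|^2)$, the operator $\Delta^{|\hat\omega|^2}$ decomposes on $\sigma$-invariant and anti-invariant functions, and I would establish a factorization
\[
\det\Delta^{|\hat\omega|^2}\ =\ \const\cdot\det\Delta^{|q|}\cdot\det\Delta_-^{|\hat\omega|^2},
\]
where the multiplicative constant depends only on the combinatorics of ramification (the ramification locus consisting precisely of the odd-order zeros and simple poles of $q$). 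Substituting these two factorizations into the Abelian Kokotov--Korotkin formula and using $\Area(\hat C,\hat\omega)=2\Area(C,q)$, one reads off the desired formula for $\det\Delta^{|q|}$ and simultaneously \emph{defines}
\[
|\tau(C,q)|^2\ :=\ \frac{|\tau(\hat C,\hat\omega)|^2\cdot\det(\Im B_-)}{\det\Delta_-^{|\hat\omega|^2}}
\]
up to a universal constant. That the right-hand side is actually the squared modulus of a holomorphic section would follow from showing that $\log\big(\det\Delta_-^{|\hat\omega|^2}/\det(\Im B_-)\big)$ is pluriharmonic along the stratum $\cQ(d_1,\ldots,d_\noz)$, which is the $\sigma$-anti-invariant analog of the holomorphy of $\tau(\hat C,\hat\omega)$.

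For the homogeneity statement, under $q\mapsto kq$ one has $\hat\omega\mapsto k^{1/2}\hat\omega$, and hence $\tau(\hat C,k^{1/2}\hat\omega)=k^{\hat p/2}\tau(\hat C,\hat\omega)$ with $\hat p=\tfrac{1}{12}\sum_i\tfrac{m_i(m_i+2)}{m_i+1}$. Separating the contributions of even and odd $d_j$ by~\eqref{eq:d:to:m}, and combining with the scaling of $\det\Delta_-^{|\hat\omega|^2}$ and $\det(\Im B_-)$ under the same rescaling, one checks that the compensation between the odd-$d_j$ contributions on the two sides leaves the net exponent for $\tau(C,q)$ equal to $p=\tfrac{1}{48}\sum_j\tfrac{d_j(d_j+4)}{d_j+2}$; the bookkeeping is parallel to the even/odd splitting already carried out in the proof of part~(b) of Theorem~\ref{theorem:general:quadratic}.

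The main obstacle is the factorization of the Laplacian determinant in Step~2. The zeta-regularized determinants on the two sides must be matched near the ramification locus, where $\hat\omega$ develops zeros of even degree $d_j+1$ and the short-time heat kernel asymptotics have different leading behavior than away from ramification. Computing the resulting finite multiplicative anomaly explicitly, and confirming that it depends only on the orders $d_j$ (not on the moduli of $(C,q)$), is the core analytical content. A self-contained alternative would be to adapt the original Kokotov--Korotkin variational approach directly to the $\Z/2$-twisted setting, which introduces its own technical difficulties at conical singularities of nontrivial linear holonomy.
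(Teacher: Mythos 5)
This statement is not proved in the paper at all: it is quoted as a result of A.~Kokotov and D.~Korotkin with a pointer to \cite{Kokotov:Korotkin:MPI}, and is then used as a black box (together with its Abelian counterpart) in the alternative proof of Theorem~\ref{theorem:main:local:formula}. So there is no in-paper argument to measure your proposal against. What you have written is an attempt to rederive the cited theorem from the Abelian version via the canonical double cover $p:\hat C\to C$, $p^\ast q=\hat\omega^2$. The general direction --- splitting periods and the Laplace spectrum into $\sigma$-invariant and anti-invariant parts, with the invariant spectrum upstairs coinciding with the spectrum of $\Delta^{|q|}$ downstairs, and the period matrix factoring as $\det(\Im\hat B)=\const\cdot\det(\Im B)\det(\Im B_-)$ up to the index of the split sublattice --- is sound and is indeed the spirit in which the quadratic case is treated in the reference. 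But as a proof your outline is missing its two essential steps.

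First, the holomorphy of $\tau(C,q)$. You define $|\tau(C,q)|^2$ as the quotient $|\tau(\hat C,\hat\omega)|^2\det(\Im B_-)/\det\Delta_-^{|\hat\omega|^2}$ and then say holomorphy ``would follow from showing that $\log\big(\det\Delta_-^{|\hat\omega|^2}/\det(\Im B_-)\big)$ is pluriharmonic along the stratum.'' That pluriharmonicity is precisely the Kokotov--Korotkin theorem restricted to the anti-invariant (Prym) sector; it is not a lemma you may invoke but the substance of what is to be proved, and nothing in your outline supplies it (in the reference it comes from explicit formulas for the Bergman tau-function via prime forms and theta functions, not from an abstract splitting). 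Second, the homogeneity. Under $q\mapsto kq$ one has $\tau(\hat C,k^{1/2}\hat\omega)=k^{\hat p/2}\tau(\hat C,\hat\omega)$, and using~\eqref{eq:d:to:m} one computes $\hat p/2=2p+\tfrac18\sum_{d_j\ \mathrm{odd}}\tfrac{1}{d_j+2}$, which is \emph{not} $p$; to land on the claimed exponent $p$ you must know exactly how $\det\Delta_-^{|\hat\omega|^2}$ scales, i.e.\ the value of $\zeta_-(0)$ for the anti-invariant Laplacian, which is governed by heat-trace coefficients at the ramification points where the cone angle doubles. This is the very anomaly computation you defer to the last paragraph, so the ``compensation between the odd-$d_j$ contributions'' is an assertion rather than a proof. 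Finally, the constant in your factorization $\det\Delta^{|\hat\omega|^2}=\const\cdot\det\Delta^{|q|}\cdot\det\Delta_-^{|\hat\omega|^2}$ hides a genuine regularization-matching problem: although the invariant eigenvalues upstairs literally coincide with the eigenvalues downstairs, the regularized determinants are defined via smoothings near conical points whose angles differ upstairs and downstairs at the ramification locus, and showing that the resulting discrepancy is moduli-independent is again the core analytic content. You identify this correctly, but identifying the gap is not the same as closing it.
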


Note  the  only  difference  with  the  previous case. Multiplying an
Abelian  differential  by  a  factor  $k$  we  change the area of the
corresponding  flat  surface  by  a  factor  $|k|^2$.  Multiplying  a
quadratic  differential  by  a  factor  $k$ we change the area of the
corresponding flat surface by a factor $|k|$.

\section{Relating flat and hyperbolic Laplacians by means of
Polyakov formula}
\label{sec:Comparison:of:determinants}

In             this            section            we            prove
Theorem~\ref{theorem:log:det:flat:minus:log:det:hyp}.  Our  proof is based
on the Polyakov formula. We start by rewriting the Polyakov formula in a more
symmetric  form~\eqref{eq:Polyakov:formula:symmetric:form}.  Then  we
perform  the integration separately over complements to neighborhoods
of cusps and over neighborhoods of cusps. A neighborhood of each cusp
we    also    subdivide    into    several   domains   presented   at
Figure~\ref{fig:domains},  and  we perform the integration separately
for each domain.

\subsection{Polyakov formula revisited}

In   local  coordinates  $x,y$  the  Laplace  operator  of  a  metric
$\rho(x,y)\,(dx^2+dy^2)$ has the form
$$
\Delta_g=\rho^{-1}\left(\frac{\partial^2}{\partial x^2}+
\frac{\partial^2}{\partial y^2}\right)
$$
and the curvature $K_g$ of the metric is expressed as
$$
K_g=-\Delta_g\log\sqrt{\rho}\,.
$$

In  some  situations  it  would  be  convenient  to use the following
coordinate     version     of     the     Polyakov     formula    (see
section~\ref{sec:Determinant:of:Laplacian}).  Let  in some coordinate
domain $x,y$
\begin{align*}
g_1& =\rho_1\,(dx^2+dy^2)=\exp(2\cf_1)\,(dx^2+dy^2)\\
g_2&
=\rho_2\,(dx^2+dy^2)=\exp(2\cf_2)\,(dx^2+dy^2)\,.
\end{align*}
Then,     $g_2=\exp\left(2(\phi_2-\phi_1)\right)\cdot     g_1$,    so
$\phi=\phi_2-\phi_1$.  An  elementary  calculation  shows that in the
corresponding coordinate domain
\begin{equation}
\label{eq:Polyakov:formula:symmetric:form}
\int(\cf\,\Delta_{g_1}\cf -
2\cf\,K_{g_1})\,d g_1=
\int(\cf_2\Delta\cf_2-\cf_1\Delta\cf_1)+
(\cf_2\Delta\cf_1-\cf_1\Delta\cf_2)\, dxdy\ ,
\end{equation}
where
$\Delta=\cfrac{\partial^2}{\partial x^2}+
   \cfrac{\partial^2}{\partial y^2}$.

\subsection{Polyakov  Formula applied to smoothed flat and hyperbolic
metrics}

Let  $w$  be a coordinate in a neighborhood of a conical point on $S$
defined  by~\eqref{eq:flat:metric:in:a:local:coordinate:w}; let $w_0$
be  analogous  coordinate for $S_0$. By assumptions, the order $d$ of
the  corresponding conical singularity is the same for $S$ and $S_0$.
Then, we obtain
\begin{multline}
\label{eq:cf:minus:cf0:is:regular}
2\left(\cf(\zeta)-\cf_0(\zeta)\right)=
\log\left|w^d\left(\frac{dw}{d\zeta}\right)^2 \zeta^2\log^2\zeta\right|-
 \log\left|w_0^d\left(\frac{dw_0}{d\zeta}\right)^2
 \zeta^2\log^2\zeta\right|=\\
=d\log\left|\frac{w}{w_0}\right|+\text{regular function}=
\text{regular function}
\end{multline}
In       the       last       equality       we       used       that
$w=\frac{dw}{d\zeta}\vert_{\zeta=0}\cdot\zeta(1+O(|\zeta|))$      and
$w_0=\frac{dw_0}{d\zeta}\vert_{\zeta=0}\cdot\zeta(1+O(\zeta))$  where
both  derivatives  are  different  from  zero.  This  proves that the
right-hand-side                     expression                     in
formula~\eqref{eq:log:det:flat:minus:log:det:hyp}                  of
Theorem~\ref{theorem:log:det:flat:minus:log:det:hyp} is well-defined.

Applying  the Polyakov  formula to the metrics $\gfe=\exp(2\phi)\ghd$ on $S$,
then  to  the metrics  $\gfe=\exp(2\phi_0)\ghd$  on $S_0$, and taking the
difference we get the following relation:
\begin{multline}
\label{eq:Polyakov:minus:Polyakov0}
\log\det\Delta_{\gfe}(S,S_0)-\log\det\Delta_{\ghd}(S,S_0)=\\=
\cfrac{1}{12\pi}
\int_S \cf(\Delta_{\ghd}\cf-2K_{\ghd})\, d\ghd -
\cfrac{1}{12\pi}
\int_{S_0} \cf_0(\Delta_{\ghd}\cf_0-2K_{\ghd})\, d\ghd\,,
\end{multline}
where we took into account that
$$
\Area_{\gfe}(S)=\Area_{\gfe}(S_0)\quad\text{and}\quad
\Area_{\ghd}(S)=\Area_{\ghd}(S_0)\,.
$$

To  prove  Theorem~\ref{theorem:log:det:flat:minus:log:det:hyp} we need to
compute  the  limit of expression~\eqref{eq:Polyakov:minus:Polyakov0}
as  $\epsilon$  and  $\delta$  tend  to  zero.  Note  that  the  term
$\log\det\Delta_{\gfe}(S,S_0)$  does not depend on $\delta$, and that the
existence  of  a  limit  of  this term as $\epsilon$ tends to zero is
\textit{a      priori}      known.      Similarly,      the      term
$\log\det\Delta_{\ghd}(S,S_0)$  does  not  depend  on  $\epsilon$ and
the existence  of  a limit of this term as $\delta$ tends to zero is also
\textit{a  priori}  known. Hence, to \textit{evaluate} the difference
of  the corresponding limits we can make $\epsilon$ and $\delta$ tend
to  zero  in any particular way, which is convenient for us. From now
on let us assume that $0<\delta\ll\epsilon\ll R\ll 1$.

As  usual, we perform the integration over a surface in several steps
integrating separately over complements to $R$-neighborhoods of cusps
and over $R$-neighborhoods of cusps. An $R$-neighborhood of each cusp
we  also  subdivide into several domains. We proceed by computing the
integral              in              the             right-hand-side
of~\eqref{eq:Polyakov:minus:Polyakov0} domain by domain.

\subsubsection{Integration over complements of cusps}

In  this  domain  $\gfe=\gflat$, and $\ghd=\ghyp$. In coordinates $z$
and $\zeta$ we have
$$
\cf=\log\left|\frac{dz}{d\zeta}\right|+\cfrac{1}{2}\log\rho^{-1}\,,
$$
where  $\rho(\zeta,\bar\zeta)=|\zeta|^{-2}(\log|\zeta|)^{-2}$  is the
density of the hyperbolic metric. Also, in this domain
$$
\Delta_{\ghd}=\Delta_{\ghyp}=
4\rho^{-1}\frac{\partial^2}{\partial\zeta\partial\bar\zeta}
$$
Hence,
$$
\Delta_{\ghd}\cf=
4\rho^{-1}\frac{\partial^2}{\partial\zeta\partial\bar\zeta}
\Bigg( \frac{1}{2}\left(
\log\frac{dz}{d\zeta}+\log\frac{d\bar z}{d\bar\zeta}\right)
-\cfrac{1}{2}\log\rho\Bigg)
$$
Since  $\log\cfrac{dz}{d\zeta}$  is holomorphic and $\log\cfrac{d\bar
z}{d\bar\zeta}$  is  antiholomorphic they both are annihilated by the
Laplace operator. Thus, in this domain
$$
\Delta_{\ghd}\cf=-\cfrac{1}{2}\,\Delta_{\ghyp}\log\rho_{hyp}=
K_{\ghyp}=-1\,,
$$
and, hence, in this domain we get
$$
(\Delta_{\ghd}\cf-2K_{\ghd})=1\quad\text{and}\quad
(\Delta_{\ghd}\cf_0-2K_{\ghd})=1
$$
In    notations~\eqref{eq:relative:integral}    we    can   represent
integrals~\eqref{eq:Polyakov:minus:Polyakov0}     over    complements
$S\setminus\sqcup\cO_j(R)$  and  $S_0\setminus\sqcup\cO_j(R)$  to the
cusps as
\begin{equation}
\label{eq:int:over:complement:to:cusps}
\frac{1}{12\pi}\left\langle\int_S \cf \, d\ghyp -
\int_{S_0} \cf_0 \, d\ghyp\right\rangle\,-\,
\frac{1}{12\pi}\int_{\sqcup\cO_j(R)} (\cf-\cf_0) \, d\ghyp\,.
\end{equation}

\subsection{Integration over a neighborhood of a cusp}
The rest of \S\ref{sec:Comparison:of:determinants} consists of a very tedious calculation. We fix  a  pair  of
corresponding conical singularities $P_j$ on $S$ and on $S_0$, and we
consider   neighborhoods  $\cO(R)$  of  the  corresponding  cusps  in
hyperbolic   metrics  on  $S$  and  $S_0$.  These  neighborhoods  are
isometric,  where  isometry is defined up to a global rotation of the
cusp.  Using  such  an  isometry  we  identify  the two corresponding
neighborhoods on $S$ and on $S_0$. Clearly, $\ghyp$ and $\ghd$ coming
from  $S$  and  from $S_0$ coincide, while the holomorphic functions $w$,
and  $w_0$  defined  in  a  disc  $\cO(R)=\{\zeta  \text{ such that }
|\zeta|\le  R\}$  (and  hence  the  corresponding  flat  metrics  and
smoothed  flat  metrics)  differ.  Note,  however,  that the cusp was
chosen  exactly at the conical point, so $w(0)=w_0(0)=0$. Also, since
$\zeta,  w$  are holomorphic coordinates in a neighborhood of a point
$P_j$    of    a    regular    Riemann    surface    $S$,   one   has
$\cfrac{dw}{d\zeta}\Big\vert_{\zeta=0}\neq        0$.       Similarly
$\cfrac{dw_0}{d\zeta}\Big\vert_{\zeta=0}\neq 0$.

\begin{figure}
\includegraphics{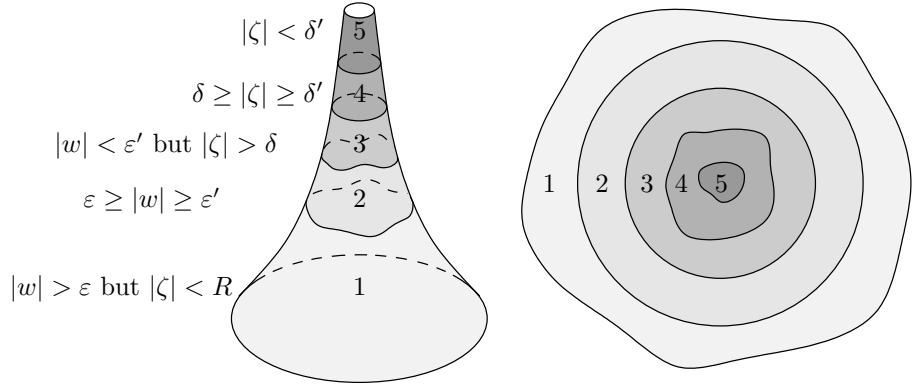}
\begin{picture}(0,0)(97,-53)
\put(-71,-170){$|w|>\epsilon \text{ but }|\zeta|<R$}
\put(-43,-137){$\epsilon\ge |w|\ge\epsilon'$}
\put(-54,-116){$|w|<\epsilon' \text{ but }|\zeta|>\delta$}
\put(-2,-97){$\delta\ge |\zeta|\ge \delta'$}
\put(16.5,-73){$|\zeta|<\delta'$}
\end{picture}
\begin{picture}(0,0)(-29.5,-53)
\put(-71,-170){1}
\put(-71,-137){2}
\put(-71,-116){3}
\put(-71,-97){4}
\put(-71,-73){5}
\end{picture}
\begin{picture}(0,0)(-40,-30)
\put(-13,-108){1}
\put(7,-108){2}
\put(24,-108){3}
\put(37,-108){4}
\put(52,-108){5}
\end{picture}
\vspace{150pt}
\caption{
\label{fig:domains}
Domains of integration}
\end{figure}

By  assumption $0<\delta\ll\epsilon\ll R\ll 1$. We subdivide the disc
$|\zeta|<R$     into     the     following    domains    (see    also
Figures~\ref{fig:domains})
\begin{enumerate}
\item $\epsilon<|w|$;
\item $\epsilon'\le |w|\le\epsilon$;
\item $|w|<\epsilon' \text{ but }\delta<|\zeta|$;
\item $\delta'\le |\zeta|\le \delta$;
\item $|\zeta|<\delta'$.
\end{enumerate}
and we perform integration over domains (1)--(5) in
parallel with integration over analogous domains defined in terms of
$w_0$.

\subsubsection{Integration over a cusp: the domain $|w|>\epsilon$}
\label{sss:w:ge:epsilon}

First  note  the  following elementary formula from calculus: for any
constant $C>0$
\begin{equation}
\label{eq:log:along:annulus:vanishes}
\int_{r\le|\zeta|\le Cr}
\cfrac{\log|\zeta|\,|d\zeta|^2}{|\zeta|^2\log^2|\zeta|}\
\to 0\quad\text{as }r\to +0
\end{equation}
In  other  words,  while  the  corresponding  integral  over  a  disc
diverges,  an  integral  over  a contracting annulus tends to zero as
soon as a modulus of the annulus remains bounded.

Now consider the smallest annulus
\begin{equation}
\label{eq:annulus:A}
A(\epsilon):=
\{\zeta\text{ such that }r(\epsilon)\le|\zeta|\le C(\epsilon)r(\epsilon)\}
\end{equation}
containing  both  curves $|w|=\epsilon$ and $|w_0|=\epsilon$. Clearly
$r(\epsilon)\to 0$ as $\epsilon\to +0$ and $C(\epsilon)$ is uniformly
bounded  by  some  constant  $C$ for all sufficiently small values of
$\epsilon$.

Now      let     us     compute     the     difference     of     the
integrals~\eqref{eq:Polyakov:minus:Polyakov0}    over    the   domain
$|w|>\epsilon$   and   integrals   over   the   corresponding  domain
$|w_0|>\epsilon$. Our computation mimics one in the previous section.
In  particular, our integrals~\eqref{eq:Polyakov:minus:Polyakov0} are
reduced to
$$
\frac{1}{12\pi}\left(\int_{\substack{|\zeta|<R\\|w|>\epsilon}} \cf \, d\ghyp -
\int_{\substack{|\zeta|<R\\|w_0|>\epsilon}} \cf_0 \, d\ghyp\right)\,,
$$
where
   %
\begin{equation}
\label{eq:kappa:equals:log}
\cf=(d+2)(\log|\zeta|)(1+o(1))
\qquad
\cf_0=(d+2)(\log|\zeta|)(1+o(1))
\end{equation}
in our domains. Decomposing the domains of integration we can proceed
as:
\begin{multline*}
\left(\int_{\substack{|\zeta|<R\\|w|>\epsilon}} \cf \, d\ghyp -
\int_{\substack{|\zeta|<R\\|w_0|>\epsilon}} \cf_0 \, d\ghyp\right)=\\
=\int_{\cO(R)}(\cf-\cf_0)\,d\ghyp-
\int_{|\zeta|\le r(\epsilon)}(\cf-\cf_0)\,d\ghyp+
\int_{\substack{|\zeta|>r(\epsilon)\\|w|\le\epsilon}}\cf\,d\ghyp-
\int_{\substack{|\zeta|>r(\epsilon)\\|w_0|\le\epsilon}}\cf_0\,d\ghyp
\end{multline*}

By~\eqref{eq:cf:minus:cf0:is:regular} the difference $(\cf-\cf_0)$ is
regular  in  a  neighborhood  of a cusp, so its integral over a small
disc  $\{|\zeta|\le  r(\epsilon)\}$ tends to zero as $\epsilon$ tends
to  zero.  We  can  bound  from  above absolute values of each of the
remaining  two  integrals  by  the integrals of $|\cf|$ and $|\cf_0|$
correspondingly     along     a    larger    domain    $A(\epsilon)$,
see~\eqref{eq:annulus:A}.                 Taking                 into
consideration~\eqref{eq:kappa:equals:log}
and~\eqref{eq:log:along:annulus:vanishes}  we conclude that these two
integrals also tend to zero.

Hence,  after passing to a limit $\epsilon\to 0$ integration over the
domains $w>\epsilon$ and $w_0>\epsilon$ compensates a missing term
$$
\frac{1}{12\pi}\int_{\sqcup\cO_j(R)} (\cf-\cf_0) \, d\ghyp
$$
in~\eqref{eq:int:over:complement:to:cusps}.

\subsubsection{Integration over a cusp: the domain $\epsilon'\le|w|\le\epsilon$}

In the annulus $\epsilon'\le|w|\le\epsilon$ we have
\begin{alignat*}{2}
     &\gfe =\rhofe(|w|)\,|dw|^2
     &=\exp(2\cf_2)\,|dw|^2\\
\ghd=&\ghyp=\cfrac{1}{|\zeta|^2\log^2|\zeta|}\left|\frac{d\zeta}{dw}\right|^2\,|dw|^2
     &=\exp(2\cf_1)|dw|^2
\end{alignat*}
where
\begin{align*}
\cf_2&=\cfrac{1}{2}\log\rhofe(|w|)\\
\cf_1&=-\log|\zeta|-\log|\log|\zeta||+\log\left|\frac{d\zeta}{dw}\right|
\end{align*}

An elementary calculation shows that
\begin{align}
\notag
\cf_2&=\cfrac{d}{2}\cdot\log|w|+\text{regular function of }w\text{ and }\bar w\\
\label{eq:kappa1:of:w}
\cf_1&=-\log|w|-\log|\log|w||+O\left(\frac{1}{\log|w|}\right)\\
\notag
\Delta\cf_1&=\cfrac{1}{|w|^2\log^2|w|}\left(1+O\left(\frac{1}{\log|w|}\right)\right)
\end{align}

Applying  formula~\eqref{eq:Polyakov:formula:symmetric:form}  to  the
first integral in~\eqref{eq:Polyakov:minus:Polyakov0} we obtain
\begin{multline}
\label{eq:4:terms}
\int_{\epsilon'\le|w|\le\epsilon} \cf(\Delta_{\ghd}\cf-2K_{\ghd})\,
d\ghd=\\
=\int_{\epsilon'\le|w|\le\epsilon}
(\cf_2\Delta\cf_2-\cf_1\Delta\cf_1+
\cf_2\Delta\cf_1-\cf_1\Delta\cf_2)\, dxdy\ ,
\end{multline}
An   expression  $\cf_2\Delta\cf_2$  in~\eqref{eq:4:terms}  does  not
depend  on  $\zeta$  or  $\bar\zeta$.  Hence  it  coincides  with the
corresponding expression for $w_0$, and the difference
$$
\int_{\epsilon'\le|w|\le\epsilon}
\cf_2\Delta\cf_2 \,dxdy\ -\
\int_{\epsilon'\le|w_0|\le\epsilon}
\cf_2\Delta\cf_2 \,dxdy\ =\ 0
$$
is  equal  to zero. This term produces no contribution to the difference of
integrals in~\eqref{eq:Polyakov:minus:Polyakov0}.

By  assumption  the  ratio  $\epsilon/\epsilon'$ is uniformly bounded
(and,  actually,  can  be  chosen  arbitrarily  close  to  one). Hence,
the estimates~\eqref{eq:kappa1:of:w} combined  with the
formula~\eqref{eq:log:along:annulus:vanishes} imply that
\begin{align*}
\int_{\epsilon'\le|w|\le\epsilon}
\cf_1\Delta\cf_1 \,dxdy \to 0\\
\int_{\epsilon'\le|w|\le\epsilon}
\cf_2\Delta\cf_1 \,dxdy \to 0\\
\end{align*}
as  $\epsilon$  tends to zero, and these two terms produce no contribution
to the difference of integrals in~\eqref{eq:Polyakov:minus:Polyakov0}
either.

Finally,
\begin{multline}
\label{eq:by:parts}
\int_{\epsilon'\le|w|\le\epsilon}
\cf_1\Delta\cf_2 \,dxdy\
=\ \int_0^{2\pi} d\theta\int_{\epsilon'}^{\epsilon}
\cf_1\cdot\left[
\frac{1}{r}\frac{\partial}{\partial r}\left(r\frac{\partial}{\partial
r}\right)+ \frac{1}{r^2}\frac{\partial^2}{\partial
\theta^2}\right]\cf_2(r)\ r dr\ =\\
=\
\int_0^{2\pi} d\theta\int_{\epsilon'}^{\epsilon}
\cf_1
\frac{\partial}{\partial r}\left(r\frac{\partial\cf_2}{\partial
r}\right)\, dr
\ =\
\int_0^{2\pi}d\theta
\left(
\cf_1\,r\frac{\partial\cf_2}{\partial
r}\Big\vert_{\epsilon'}^\epsilon-
\int_{\epsilon'}^{\epsilon}
\frac{\partial\cf_1}{\partial r}
r\frac{\partial\cf_2}{\partial r}\, dr\right)
\end{multline}

Recall  that  $\cf_2(r)=\cfrac{1}{2}\log\rhofe(r)$, where $\rhofe(r)$
is defined in~\eqref{eq:def:rho:flat:epsilon}. In particular,
\begin{align*}
r\cdot\cf'_2(r)\vert_{r=\epsilon}=\frac{d}{2}\\
r\cdot\cf'_2(r)\vert_{r=\epsilon'}=0
\end{align*}
and
\begin{multline*}
\int_0^{2\pi}\left(
\cf_1\,r\frac{\partial\cf_2}{\partial r}\Big\vert_{\epsilon'}^\epsilon
\right)\,d\theta=
\frac{d}{2}\int_0^{2\pi}\cf_1(r,\theta)\,d\theta\ =\\
=\
\frac{d}{2}\int_0^{2\pi}\left(-\log\epsilon-\log|\log\epsilon|+O\left(\frac{1}{\log\epsilon}\right)\right)\,d\theta\
=\\=\
-\pi d\left(\log\epsilon+\log|\log\epsilon|\right)\,+\,O\left(\frac{1}{\log\epsilon}\right)
\end{multline*}
where  we  used  expression~\eqref{eq:kappa1:of:w}  for $\cf_1$. Once
again,  the  first term in the above expression will be compensated by
an  identical  term  in the corresponding expression for $w_0$, while
the second term in both expressions tends to zero.

It remains to evaluate the difference of the integrals
\begin{equation}
\label{eq:d:phi1:r:d:phi2}
\int_0^{2\pi}d\theta
\int_{\epsilon'}^{\epsilon}
\frac{\partial\cf_1}{\partial r}
r\frac{\partial\cf_2}{\partial r}\, dr
\end{equation}
for $w$ and for $w_0$.

By the construction~\eqref{eq:def:rho:flat:epsilon} of $\rhofe(r)$, the maximum
of  the absolute  value  of  its derivative on the interval $\epsilon'\le
r\le     \epsilon$     is     attained    at    $r=\epsilon$    where
$\rhofe'(\epsilon)=d\epsilon^{d-1}$.   Also,   by   construction   of
$\rhofe(r)$,  the minimum of its value on the interval $\epsilon'\le r\le
\epsilon$ equals $\epsilon^d\cdot(1+o(1))$. Finally, by our choice of
$\epsilon'$  and  $\epsilon$  we  have  $\epsilon/\epsilon'\to  1$ as
$\epsilon\to 0$.
Hence
\begin{multline}
\label{eq:max:min}
\max_{\epsilon'\le r\le \epsilon}
\left|r\frac{\partial\cf_2}{\partial r}\right|=
\max_{\epsilon'\le r\le \epsilon}
\left|\frac{r}{2}\frac{\partial\log\rhofe}{\partial r}\right|\le
\frac{\epsilon}{2}\cdot
\frac
{\max_{\epsilon'\le r\le \epsilon}|\rhofe'(r)|}
{\min_{\epsilon'\le r\le \epsilon}\rhofe(r)}=
\\
=\frac{\epsilon}{2}\cdot\frac{d\epsilon^{d-1}}{\epsilon^d(1+o(1))}
=\frac{d}{2}+o(1)\quad\text{ as } \epsilon\to 0
\end{multline}

Now,
$$
\cf_1=
-\log|\zeta|-\log|\log|\zeta||-\log\left|\frac{dw}{d\zeta}\right|\ =\
\log\left|\frac{dw}{d\zeta}\zeta\right|
-\log|\log|\zeta||\,.
$$
Note that
$$
\frac{dw}{d\zeta}\zeta=w\cdot f_1(w)\qquad\zeta=w\cdot f_2(w)
$$
where  $f_1(w), f_2(w)$ are holomorphic functions different from zero
in a neighborhood of $w=0$. Hence
$$
\frac{\partial}{\partial r}
\left(\log\left|\frac{dw}{d\zeta}\zeta\right|\right)=
\frac{1}{r}+O(1)
\qquad
\frac{\partial}{\partial r}
\log|\log|\zeta||=\frac{1}{\log r+O(1)}
\left(\frac{1}{r}+O(1)\right)
$$
Taking  into  consideration  estimate~\eqref{eq:max:min} this implies
that the difference of the integrals~\eqref{eq:d:phi1:r:d:phi2} taken
for $S$ and $S_0$ is of order
$$
\int_{\epsilon'}^{\epsilon}
\frac{o(1)}{r\log r}\,
\, dr\ ,
$$
which  tends to zero as $\epsilon\to 0$ since $\epsilon'/\epsilon$ is
bounded (and, actually can be chosen to tend to $1$).

We    conclude    that    in    the    limit    the   difference   of
integrals~\eqref{eq:Polyakov:minus:Polyakov0}    over   the   domains
$\epsilon'\le|w|\le\epsilon$  and  $\epsilon'\le|w_0|\le\epsilon$  is
equal   to   zero;   in   particular,   it   produces  no  contribution  to
the formula~\eqref{eq:log:det:flat:minus:log:det:hyp}.

\subsubsection{Integration  over a cusp: the domain where $|w|<\epsilon'$
but $|\zeta|>\delta$}

The computation           of          the          difference          of
integrals~\eqref{eq:Polyakov:minus:Polyakov0}    over   the   domains
$\{|w|<\epsilon'\}\cap\{|\zeta|>\delta \}$                           and
$\{|w|<\epsilon'\}\cap\{|\zeta|>\delta \}$  is  analogous  to the one in
section~\ref{sss:w:ge:epsilon}.  In  particular the difference of the
integrals  for  these  domains tends to zero as $R\to 0$ and hence it
produces                 no                 contribution
to the
formula~\eqref{eq:log:det:flat:minus:log:det:hyp}.

\subsubsection{Integration over a cusp: the annulus
$\delta'\le|\zeta|\le\delta$}

In the annulus $\delta'\le|\zeta|\le\delta$ we have
\begin{alignat*}{2}
\gfe &=\cfe\,|dw|^2
     &=\exp(2\cf_2)\,|d\zeta|^2\\
\ghd &=\rhohd(|\zeta|)\,|d\zeta|^2
     &=\exp(2\cf_1)|d\zeta|^2
\end{alignat*}
where
\begin{align*}
\cf_2&=\cfrac{1}{2}\log\cfe+\log\left|\frac{dw}{d\zeta}\right|\\
\cf_1&=\cfrac{1}{2}\log\rhohd\ .
\end{align*}
In particular, $\Delta\cf_2=0$.

Applying  the formula~\eqref{eq:Polyakov:formula:symmetric:form}  to  the
first integral in~\eqref{eq:Polyakov:minus:Polyakov0} we obtain
\begin{multline}
\label{eq:4:terms:zeta}
\int_{\delta'\le|\zeta|\le\delta} \cf(\Delta_{\ghd}\cf-2K_{\ghd})\,
d\ghd=\\
=\int_{\delta'\le|\zeta|\le\delta}
(\cf_2\Delta\cf_2-\cf_1\Delta\cf_1+
\cf_2\Delta\cf_1-\cf_1\Delta\cf_2)\, |d\zeta|^2\ =\\
=\ \int_{\delta'\le|\zeta|\le\delta}
(-\cf_1\Delta\cf_1+
\cf_2\Delta\cf_1)\, |d\zeta|^2\ .
\end{multline}
The expression $\cf_1\Delta\cf_1$ in~\eqref{eq:4:terms:zeta} does not
depend   on  $w$  or  $\bar  w$.  Hence  it  is  annihilated  by  the
corresponding expression for $w_0$.

It  remains  to compute the integral of $\cf_2\Delta\cf_1$. Similarly
to the analogous computation~\eqref{eq:by:parts} we get
$$
\int_{\delta'\le|w|\le\delta}
\cf_2\Delta\cf_1 \,|d\zeta|^2
=
\int_0^{2\pi}d\theta
\left(
\cf_2\,r\frac{\partial\cf_1}{\partial
r}\Big\vert_{\delta'}^\delta-
\int_{\delta'}^{\delta}
\frac{\partial\cf_2}{\partial r}
r\frac{\partial\cf_1}{\partial r}\, dr\right)
$$

Note that
$$
\frac{\partial\cf_1}{\partial r}=
\cfrac{1}{2}\cdot\cfrac{\partial}{\partial r}\log\rhohd(r)\,.
$$

By  definition~\eqref{eq:def:rho:hyp:delta}  of  $\rhohd(r)$  we  get
$\cfrac{\partial\cf_1}{\partial r}\Big\vert_{r=\delta'}=0$ and
\begin{equation}
\label{eq:dphi1:dr}
\frac{\partial\cf_1}{\partial r}\Big\vert_{r=\delta}=
-\frac{\partial}{\partial r}(\log r+\log|\log r|)\Big\vert_{r=\delta}=
-\left(\frac{1}{\delta}+\frac{1}{\delta\log\delta}\right)\,.
\end{equation}
Hence
\begin{multline*}
\int_0^{2\pi}d\theta
\cf_2\cdot r\cdot\frac{\partial\cf_1}{\partial
r}\Big\vert_{\delta'}^\delta=\\
= -\int_0^{2\pi}
\left(\cfrac{1}{2}\,\log\cfe+\log\left|\frac{dw}{d\zeta}\Big\vert_{\zeta=\delta e^{i\theta}}\right|\right)
\left(1+\cfrac{1}{\log\delta}\right)d\theta=\\
=
-\pi\,\log\cfe\left(1+\cfrac{1}{\log\delta}\right)
-2\pi\log\left|\frac{dw}{d\zeta}\Big\vert_{\zeta=0}\right|
+o(1)
\end{multline*}

Evaluating  the  difference with the corresponding integral for $w_0$
and  passing  to  a  limit  as  $\delta\to+0$ we see that these terms
produces the following impact to~\eqref{eq:Polyakov:minus:Polyakov0}:
$$
\cfrac{1}{12\pi}\cdot
2\pi\left(
\log\left|\frac{dw_0}{d\zeta}\Big\vert_{\zeta=0}\right|-
\log\left|\frac{dw}{d\zeta}\Big\vert_{\zeta=0}\right|
\right)
$$

It remains to evaluate the integral
$$
\int_{\delta'}^{\delta}
\frac{\partial\cf_2}{\partial r}
r\frac{\partial\cf_1}{\partial r}\, dr
$$

Recall that
$$
\min_{\delta'\le r\le \delta}\rhohd(r)=\rhohd(\delta)=
\cfrac{1}{\delta^2\log^2\delta}\,,
$$
see the definition~(\ref{eq:def:rho:hyp:delta})
of the monotone function $\rhohd(r)$. Note also that
by definition $\rhohd(r)$ has monotone derivative
on the interval $[\delta',\delta]$ and
$\rhohd'(r)$ vanishes at $r=\delta'$.
Hence, the maximum of the absolute value of the logarithmic derivative
$$
\left|\frac{\partial\phi_1}{\partial r}\right|=
\frac{1}{2}\left|\frac{\partial}{\partial r}\log\rhohd(r)\right|=
\frac{1}{2}\left|\frac{\rhohd'(r)}{\rhohd(r)}\right|
$$
on the interval $[\delta',\delta]$ is attained at the endpoint $\delta$,
where its value is already evaluated in~\eqref{eq:dphi1:dr}.
Since the function $\cfrac{\partial\cf_2}{\partial r}$ is regular, we
conclude that the integral
$$
\left|\int_{\delta'}^{\delta}
\frac{\partial\cf_2}{\partial r}
r\frac{\partial\cf_1}{\partial r}\, dr
\right|\le
\int_{\delta'}^{\delta}
\left|\frac{\partial\cf_2}{\partial r}\right|
\delta\cdot
\left(\frac{1}{\delta}+\frac{1}{\delta\log\delta}\right)\, dr
$$
tends to zero as $\delta$ tends to zero.


\subsubsection{Integration over a cusp: the disc $|\zeta|<\delta'$}
\label{sss:interior:disc}

In the disc $|\zeta|<\delta'$ we have
\begin{alignat*}{2}
\gfe &=\cfe\,|dw|^2
     &=\exp(2\cf_2)\,|d\zeta|^2\\
\ghd &=\chd\,|d\zeta|^2
     &=\exp(2\cf_1)|d\zeta|^2
\end{alignat*}
where
\begin{align*}
\cf_2&=\cfrac{1}{2}\log\cfe+\log\left|\frac{dw}{d\zeta}\right|\\
\cf_1&=\cfrac{1}{2}\log\chd
\end{align*}

Hence $\Delta\cf_1=\Delta\cf_2=0$ and the integral
$$
\int_{|\zeta|\le\delta'}
(\cf_2\Delta\cf_2-\cf_1\Delta\cf_1+
\cf_2\Delta\cf_1-\cf_1\Delta\cf_2)\, |d\zeta|^2
$$
is identically equal to zero.

Applying   the  formula~\eqref{eq:Polyakov:formula:symmetric:form}    we
conclude  that  integrals  over  this region produce no contribution to the
difference of integrals in~\eqref{eq:Polyakov:minus:Polyakov0}.

Combining the relation~\eqref{eq:int:over:complement:to:cusps} with
the estimates from
sections~\ref{sss:w:ge:epsilon}--\ref{sss:interior:disc} we get
the formula~\eqref{eq:relative:integral}.
Theorem~\ref{theorem:log:det:flat:minus:log:det:hyp} is proved.
\qed

\section{Comparison  of  relative  determinants  of Laplace operators
near the boundary of the moduli space}
\label{sec:Comparison:of:determinants:end:asymptotic:behavior}

In   notations  of  Theorem~\ref{theorem:det:minus:det:is:O:ell:flat}
define the following function:
\begin{multline}
\label{eq:E}
E(S,S_0):=\\
\left\langle\int_{S} \cf \, d\ghyp -
\int_{S_0} \cf_0 \, d\ghyp\right\rangle-2\pi
\sum_j\left(\log\left|\frac{dw}{d\zeta}(P_j)\right|-
\log\left|\frac{dw_0}{d\zeta}(P_j)\right|\right)
\end{multline}
Here      $w$      denotes      the     coordinate     defined     by
equation~\eqref{eq:def:local:coordinate:w}  in  a  neighborhood  of a
conical  point  and  $\zeta$  is  a holomorphic coordinate defined by
equation~\eqref{eq:def:local:coordinate:zeta}  in the neighborhood of
the            same            conical            point.           By
equation~\eqref{eq:log:det:flat:minus:log:det:hyp}               from
Theorem~\ref{theorem:log:det:flat:minus:log:det:hyp} one has
$$
\log\det\Dflat(S,S_0)-\log\det\Delta_{\ghyp}(S,S_0)=
\frac{1}{12\pi}\,E(S,S_0)\,.
$$
In  this  section  we  estimate  the  value  of  $E(S,S_0)$ and prove
Theorem~\ref{theorem:det:minus:det:is:O:ell:flat}.

We  will  actually  prove a stronger statement, which is in some ways
best  possible.  Recall  the thick-thin decomposition for a quadratic
differential          which          was          defined          in
\S\ref{sec:Geometric:Compactification:Theorem}.

\begin{theorem}
\label{theorem:det:minus:det:upto:O(1)}
Let      $S      \in      \cQ_1(d_1,\dots,d_\noz)$,      $S_0     \in
\cQ_1(d_1,\dots,d_\noz)$  be flat surfaces, and let $Y_1, \dots, Y_m$
be  the  $\delta$-thick  components  of  $S$. Let $Z(Y_j)$ denote the
subset  of  zeroes  and  poles  which  is contained in $Y_j$ (so that
$\bigcup_{j=1}^m
\left\{\text{\rm orders of } Z(Y_j)\right\}
= \{ d_1, \dots, d_\noz\}$). Then,
\begin{displaymath}
\left| E(S,S_0) -
\sum_{j=1}^m \left( -2\pi \chi(Y_j)-\sum_{P\in Z(Y_j)}
\frac{4\pi}{d(P)+2} \right) \log \lambda(Y_j)\right|
\le C\,,
\end{displaymath}
where $\chi(Y_j)$ is the Euler characteristic of $Y_j$ (considered as
a  surface  with  boundary  which  is  punctured  at  all  points  of
$Z(Y_j)$),   $\lambda(Y_j)$   is   the  size  of  $Y_j$  (defined  in
\S\ref{sec:Geometric:Compactification:Theorem}), and $C$ depends only
on  $\delta$,
$\eta$, $R$,
on  the  stratum  $\cQ_1(d_1,  \dots, d_\noz)$, and on $S_0$.
\end{theorem}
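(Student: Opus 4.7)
By Theorem~\ref{theorem:log:det:flat:minus:log:det:hyp}, $E(S,S_0)$ is given by the explicit expression~\eqref{eq:E}, so the goal is to extract the leading $\log\lambda(Y_j)$-asymptotics of the integral $\langle\int_S\cf\,d\ghyp-\int_{S_0}\cf_0\,d\ghyp\rangle$ and of the boundary correction $-2\pi\sum_j\log|dw/d\zeta(P_j)|$. I will proceed by a region-by-region analysis using the $(\delta,\eta)$-thick-thin decomposition of~\S\ref{sec:subsec:delta:eta:thick:thin}, writing
\begin{displaymath}
S \;=\; \bigcup_j \Big(Y_j(\eta)\setminus\bigcup_{P\in Z(Y_j)}\cO_P(R)\Big) \;\cup\; \bigcup_{P\in Z(S)} \cO_P(R) \;\cup\; \bigcup_\gamma A_\gamma(\eta),
\end{displaymath}
and similarly for $S_0$. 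The two principal tools will be Proposition~\ref{prop:pointwise:conformal:factor}, which compares $\cf$ to $\log\lambda(Y_j)$ on the clean thick part, and the Geometric Compactification Theorem~\ref{theorem:Deligne:Mumford}, which controls the rescaled flat geometry at the boundary of moduli space.

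On each clean thick piece, Proposition~\ref{prop:pointwise:conformal:factor} lets me replace $\cf$ by $\log\lambda(Y_j)$ with bounded error; Gauss--Bonnet applied to $Y_j(\eta)$ (whose short boundary curves are equidistant curves of geodesic curvature of modulus $1+O(\ell/\eta)$ and length $\eta$), together with $\Area_{\ghyp}(\cO_P(R))=2\pi/|\log R|=:A_R$, gives the hyperbolic area of this piece as $-2\pi\chi(Y_j)-\eta k_j-A_R n_j+O(1)$, where $k_j$ and $n_j$ denote respectively the number of short boundary components and the number of punctures of $Y_j$. On each cusp neighborhood $\cO_P(R)$ with $P\in Y_j$, the local expansion
\begin{displaymath}
\cf \;=\; \tfrac{d(P)+2}{2}\log|\zeta| + \log|\log|\zeta|| + \tfrac{d(P)+2}{2}\log\Big|\tfrac{dw}{d\zeta}(P)\Big| + O(|\zeta|)
\end{displaymath}
makes the singular part cancel in $\cf-\cf_0$, leaving $\int_{\cO_P(R)}(\cf-\cf_0)\,d\ghyp = \tfrac{d(P)+2}{2}A_R\log|dw/d\zeta(P)|+O(1)$. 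A key step is the observation that applying Proposition~\ref{prop:pointwise:conformal:factor} on the circle $|\zeta|=R$ (or, equivalently, the rescaling $\tilde w=w/\lambda^{2/(d(P)+2)}$ of Theorem~\ref{theorem:Deligne:Mumford}) forces $\log|dw/d\zeta(P)|=\tfrac{2}{d(P)+2}\log\lambda(Y_j)+O(1)$. Consequently the cusp integral contributes $A_R\log\lambda(Y_j)+O(1)$, and the boundary correction $-2\pi\log|dw/d\zeta(P)|$ in~\eqref{eq:E} contributes $-\tfrac{4\pi}{d(P)+2}\log\lambda(Y_j)+O(1)$.

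The thin annuli $A_\gamma(\eta)$ constitute the main technical step. When $\gamma$ bounds a flat cylinder of width $w$ and height $h$, I will use the explicit conformal model in which $\cf=\log(h/\pi)+\log\sin(\pi y/h)$ and compute $\int_{A_\gamma(\eta)}\cf\,d\ghyp=2\eta\log w+O(1)$ by direct integration against $d\ghyp=|du|^2/\sin^2 v$. When $\gamma$ bounds an expanding annulus rather than a flat cylinder, the same estimate will be derived via a rescaling/compactness argument based on Theorem~\ref{theorem:Deligne:Mumford}: after rescaling by the appropriate power of $\lambda$, the flat geometry on the thin annulus lies in a bounded family and the remaining integral stays uniformly controlled. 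Invoking Rafi's framework to compare the flat length $w_\gamma$ of the short curve with the sizes of the two adjacent thick components $Y_j, Y_{j'}$, each contribution $2\eta\log w_\gamma$ splits as $\eta\log\lambda(Y_j)+\eta\log\lambda(Y_{j'})+O(1)$, so that summing over thin tubes yields $+\eta k_j\log\lambda(Y_j)+O(1)$ attached to each thick component $Y_j$.

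Assembling all contributions, the coefficient of $\log\lambda(Y_j)$ in $E(S,S_0)$ becomes
\begin{displaymath}
\big(-2\pi\chi(Y_j)-\eta k_j-A_R n_j\big)+\eta k_j+A_R n_j-\sum_{P\in Z(Y_j)}\frac{4\pi}{d(P)+2}\;=\;-2\pi\chi(Y_j)-\sum_{P\in Z(Y_j)}\frac{4\pi}{d(P)+2},
\end{displaymath}
with the Gauss--Bonnet boundary correction $-\eta k_j$ canceling the thin-annulus contribution $+\eta k_j$ and the cusp area loss $-A_R n_j$ canceling the cusp-integral contribution $+A_R n_j$. The contribution from $S_0$ and all remaining error terms collect into a constant $C$ depending only on $\delta,\eta,R$, the stratum, and the base surface $S_0$, as required. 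The main obstacle is the thin-annulus analysis: the flat-cylinder case is explicit, but matching $\log w_\gamma$ to $\log\lambda(Y_j)$ uniformly (especially when $Y_j$ is a pair of pants whose size is attained by a different boundary) and handling the expanding-annulus case (where $\cf$ has no closed form) both require extracting the precise linear-in-$\log\lambda$ dependence from the rescaling framework of Theorem~\ref{theorem:Deligne:Mumford} and from Rafi's comparison between flat and hyperbolic geometries.
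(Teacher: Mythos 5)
Your decomposition (clean thick pieces, cusp neighborhoods $\cO_P(R)$, thin annuli), your use of Proposition~\ref{prop:pointwise:conformal:factor} on the thick part, your derivation of $\log|dw/d\zeta(P)|=\tfrac{2}{d(P)+2}\log\lambda(Y_j)+O(1)$ from the rescaling $\tilde w=\lambda^{-2/(d+2)}w$, and your final Gauss--Bonnet bookkeeping all match the paper's proof (Lemma~\ref{lemma:bound:E:S:S0:thick:part} and the identity $\Area_{hyp}(Y(\eta))+\tfrac12\sum_\gamma\Area_{hyp}(A_\gamma(\eta))=-2\pi\chi(Y)$), and that part of the argument is sound, modulo spelling out the maximum-principle step that converts the boundary bound on $|\zeta|=R$ into a bound at $\zeta=0$.

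The genuine gap is in the thin annuli. Your claim that $\int_{A_\gamma(\eta)}\cf\,d\ghyp=2\eta\log w_\gamma+O(1)$ and that $2\eta\log w_\gamma$ then splits as $\eta\log\lambda(Y_j)+\eta\log\lambda(Y_{j'})+O(1)$ is correct only in the flat-cylinder case, where Lemma~\ref{lemma:flat:cylinder:equal:size} makes both adjacent sizes comparable to $w_\gamma$. For an expanding annulus, Lemma~\ref{lemma:modulus}(b) says $\ell_{hyp}(\gamma)\,|\log\lambda_+-\log\lambda_-|\asymp 1$, so the two adjacent sizes may differ by a factor of order $\exp(c/\ell_{hyp}(\gamma))$, while the flat length of the $q$-geodesic representative of $\gamma$ is comparable only to the \emph{smaller} of the two sizes. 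Hence $2\eta\log w_\gamma$ misses the correct value $\eta(\log\lambda_++\log\lambda_-)$ by $\eta|\log\lambda_+-\log\lambda_-|\asymp\eta/\ell_{hyp}(\gamma)$, which is unbounded as $\gamma$ pinches; and a rescaling/compactness argument via Theorem~\ref{theorem:Deligne:Mumford} cannot repair this directly, since the annuli $A_\gamma(\eta)$ have unbounded moduli and so do not range over a compact family. What is actually needed (and what the paper does in Lemma~\ref{lemma:estimate:thin:part}) is to write $q=\psi(z)(dz)^2$ in the uniformizing coordinate of the whole annulus, observe that $\log|\psi|$ is harmonic and nonvanishing there, pin its values on the two boundary circles to $2\log\lambda_\pm$ via Proposition~\ref{prop:pointwise:conformal:factor}, subtract the linear-in-$y$ harmonic interpolant, and apply the maximum principle; since the hyperbolic area of $A_\gamma(\eta)$ concentrates equally at its two ends, the interpolant integrates to $\tfrac12\Area_{hyp}(A_\gamma(\eta))(\log\lambda_++\log\lambda_-)$, and the remaining terms (including $\log|\cos(\pi y/h)\,h/\pi|$) are bounded by explicit integration. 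This single argument covers flat cylinders and expanding annuli uniformly and delivers exactly the $\eta k_j\log\lambda(Y_j)$ attribution your final assembly assumes.
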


For each stratum, we should consider
the positive parameters $\delta,\eta, R$,
and $S_0$ as fixed.
In       this       sense,       the       constant       $C$      in
Theorem~\ref{theorem:det:minus:det:upto:O(1)}  depends  only  on  the
stratum.

We note that Theorem~\ref{theorem:det:minus:det:upto:O(1)} immediately
implies Theorem~\ref{theorem:det:minus:det:is:O:ell:flat}, since by
Lemma~\ref{lemma:size:ge:lflat}, for any thick component $Y$ of $S$,
$\ell_{flat}(S) \le \lambda(Y)$ (and since $S$ is normalized to have
unit area, $\lambda(Y) = O(1)$).

\begin{Example}[\textit{Two merging zeroes}]
\label{ex:linear:in:lf}
Consider the following
one-parameter family
of flat surfaces. Take a flat surface
with  a  zero  $P$  of order $d$, and break this zero into two zeroes
$P_1,P_2$  of orders $d_1+d_2=d$ by a local surgery in a neighborhood
of   $P$,   see~\cite{Eskin:Masur:Zorich}  for  details.  Consider
a family
of   flat   surfaces   $S_\tau$  isometric  outside  of  a
neighborhood  of  $P_1,P_2$  such  that the saddle connection joining
$P_1$ with $P_2$ contracts.

\begin{figure}[hbt]
\includegraphics{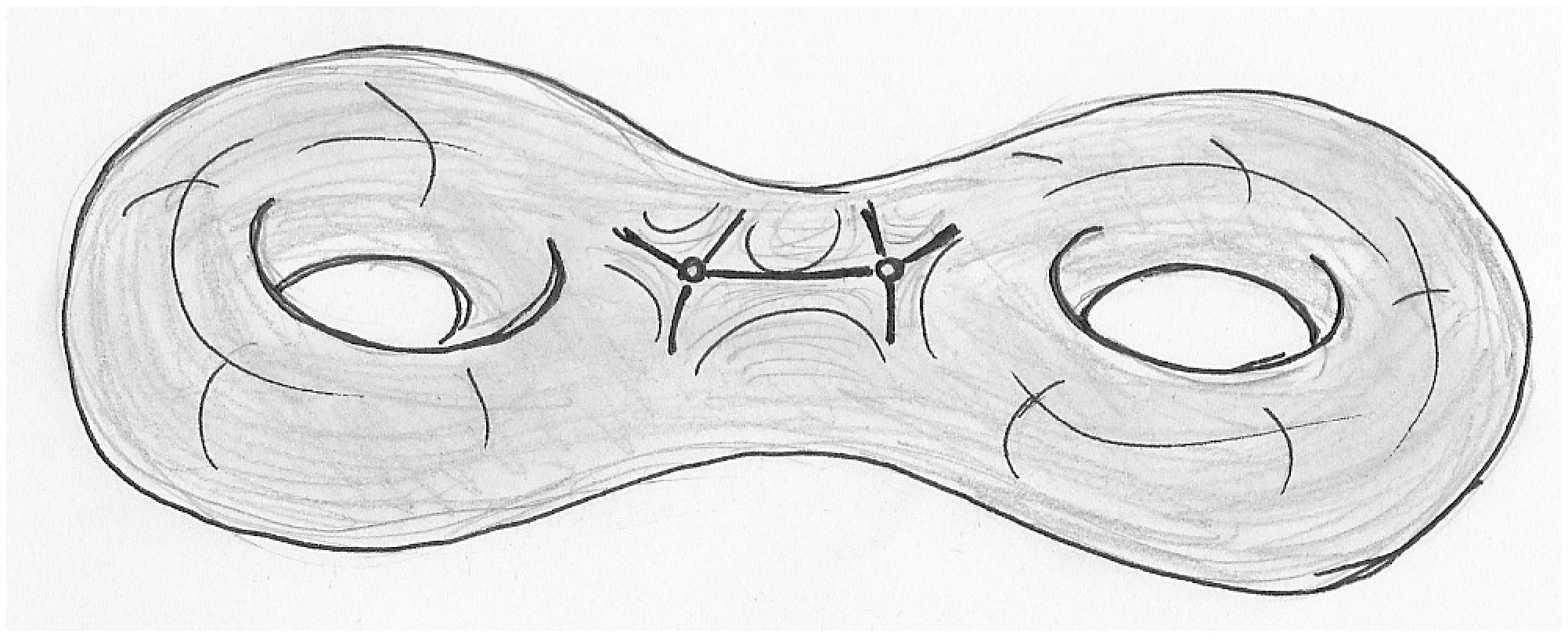}
\includegraphics{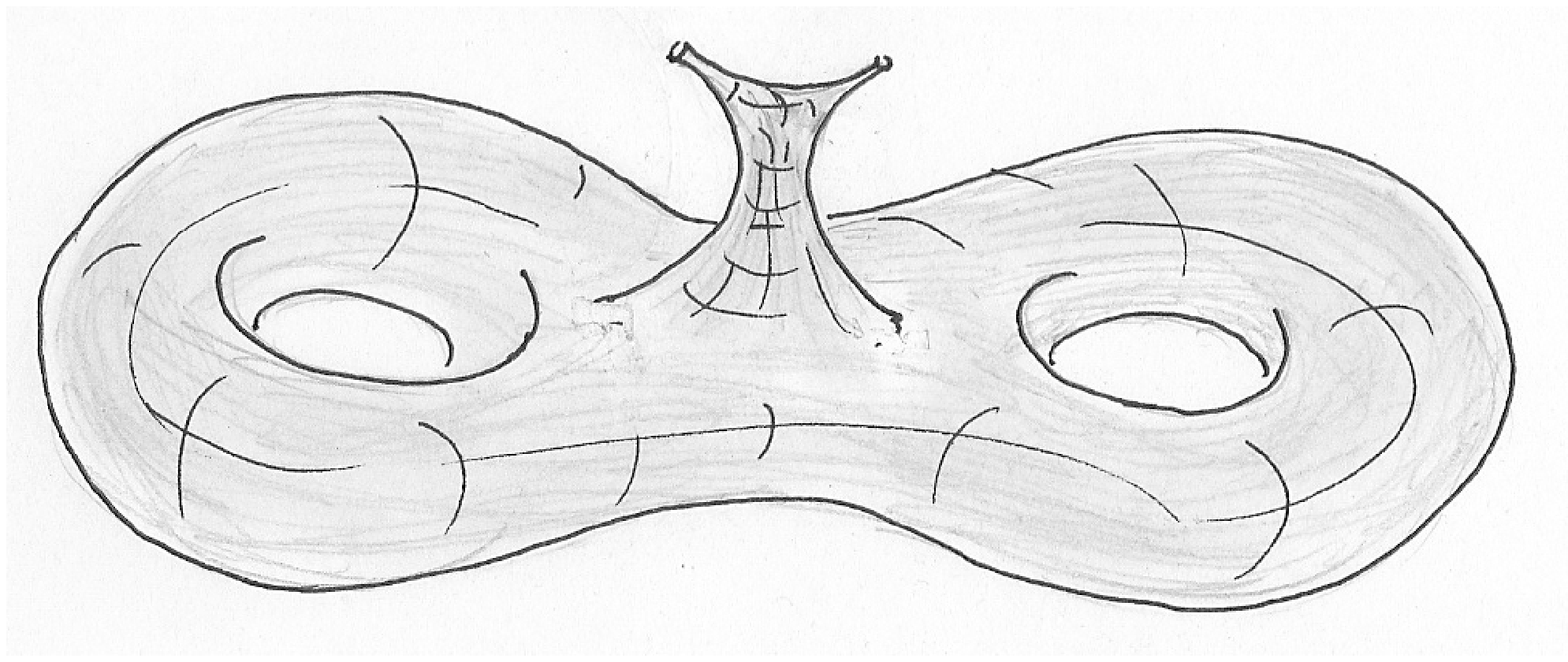}
\begin{picture}(0,0)(-25,70)
\put(-134,33){$P_1$}
\put(-98,33){$P_2$}
\put(45,67){$P_1$}
\put(82,67){$P_2$}
\end{picture}
\vspace{65pt}
\caption{
\label{fig:pinching:a:3:point:sphere}
A  simple  saddle  connection  in  the  flat  metric  produces in the
underlying  hyperbolic  metric  a  pair  of  pants  with  two  cusps.
Contracting  the  saddle connection we pinch the pair of pants out of
the main body of the surface. }
\end{figure}

For  the  underlying hyperbolic surface we get a ``bulb'' in the form
of  a pair of pants $Y_\tau$ growing out of our surface. This pair of
pants  has  cusps  at  the points $P_1,P_2$ and is separated from the
main  body of the surface by a short hyperbolic geodesic homotopic to
a          curve          encircling          $P_1,P_2$,          see
Figure~\ref{fig:pinching:a:3:point:sphere}.   Clearly,  the  size  of
$Y_\tau$      satisfies     $\lambda(Y_\tau)=2\lf(S_\tau)$,     where
$\lf(S_\tau)$  is  the  length of the short saddle connection joining
$P_1$  and  $P_2$.  The  size  of  the main body of the surface stays
bounded.  The Euler characteristic of the pair of pants $\Ypunc_\tau$
is  equal  to  minus  one. We assume that $S_\tau$ has no other short
saddle                      connections.                     Applying
Theorem~\ref{theorem:det:minus:det:upto:O(1)}, we get
\begin{multline*}
\log\det\Dflat(S,S_0)-\log\det\Delta_{\ghyp}(S,S_0)=
\frac{1}{12\pi}\,E(S,S_0)
=\\=
\frac{1}{6}\cdot
\left(1-\frac{2}{d_1+2}-\frac{2}{d_2+2}\right)\cdot \log\lf(S_\tau)
+ O(1)\,,
\end{multline*}
where the error term is bounded in terms only of the orders of the
singularities of $S_\tau$.
\end{Example}

\subsection{Admissible pairs of subsurfaces}
\label{sec:subsec:admissible:pairs}  Suppose  $Y  \subset S$ and $Y_0
\subset  S_0$  are  subsurfaces.  We  say  that the pair $(Y,Y_0)$ is
admissible  if  $Z(Y)  =  Z(Y_0)$ (i.e. the degrees of the zeroes and
poles  in $Y$ and $Y_0$ are the same). We now introduce the following
notation:  for  an admissible pair $(Y,Y_0)$, let (in the notation of
(\ref{eq:E})),
\begin{multline*}
E(Y, S; Y_0, S_0) = \left< \int_{Y} \phi \, dg_{hyp} - \int_{Y_0}
  \phi_0 \, dg_{hyp} \right>  \\ - 2\pi \sum_{P \in Z(Y)}
\left( \log \left|\frac{dw}{d\zeta}(P)\right| -
  \log\left|\frac{dw_0}{d\zeta}(P) \right| \right)\,.
\end{multline*}
This definition implies that
$$
E(Y, S; Y_0, S_0) = \int_{Y} \phi \, dg_{hyp} - \int_{Y_0}
  \phi_0 \, dg_{hyp}\,,\quad\text{when }Z(Y) = Z(Y_0)=\emptyset\,.
$$
If $Z(Y) = \emptyset$, we let
\begin{displaymath}
I(Y,S) = \int_Y \phi \, dg_{hyp}.
\end{displaymath}
Let
\begin{displaymath}
S = \left(\bigcup_{j=1}^m Y_j(\eta) \right) \cup \left(
  \bigcup_{\gamma \in \Gamma(\delta)} A_\gamma(\eta) \right)
\end{displaymath}
be  a  $(\delta,\eta)$-thick-thin decomposition of $S$ (as defined in
\S\ref{sec:subsec:delta:eta:thick:thin}).    We    now    choose    a
decomposition  $S_0$  into a sum $\bigcup_{j=1}^m Y_j'$ such that the
the  subsurfaces  $Y_j'$  have  pairwise disjoint interiors, and such
that  all  the  pairs  $(Y_j(\eta),  Y_j')$  are admissible. Then, it
follows immediately from the above definitions that
\begin{equation}
\label{eq:decomp:pieces}
E(S,S_0) = \sum_{j=1}^m E(Y_j(\eta),S; Y_j', S_0) + \sum_{\gamma \in
  \Gamma(\delta)} I(A_\gamma(\eta),S).
\end{equation}
Our  proof  of  Theorem~\ref{theorem:det:minus:det:upto:O(1)} will be
based  on (\ref{eq:decomp:pieces}). We will estimate the terms on the
right-hand-side   of   (\ref{eq:decomp:pieces})   in   the  following
subsections.

\subsection{Estimate for the thick part}
\label{sec:subsec:estimate:thick}

Recall   that  by  \textit{$\delta$-thick  components}  we  call  the
connected components of $S\setminus\sqcup_{\gamma_i\in\Gamma(\delta)}
\gamma_i$, where $\Gamma(\delta)$ is the collection of $\delta$-short
closed              hyperbolic             geodesics,             see
\S\ref{sec:Geometric:Compactification:Theorem}.

\begin{lemma}
\label{lemma:bound:E:S:S0:thick:part}   Suppose   that  $S,  S_0  \in
\cQ_1(d_1,  \dots,  d_\noz)$,  and  $(Y,Y_0)$  is an admissible pair,
where  $Y  \subset  S$,  $Y_0 \subset S_0$ and $Y$ is $\delta$-thick.
Then,
\begin{displaymath}
\left| E(Y(\eta), S; Y_0, S_0) - \left( \Area_{hyp}(Y(\eta))
-\sum_{P \in Z(Y)}
  \frac{4\pi}{d(P)+2} \right) \log \lambda(Y)\right| < C,
\end{displaymath}
where $C$ depends only on $\delta$, $\eta$,
$R$,
$Y_0$, $S_0$, where $R > 0$ be as defined in the beginning of
  \S\ref{sec:subsec:uniform:bound:conformal}.
\end{lemma}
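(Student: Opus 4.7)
The plan is to use the $(\delta,\eta)$-thick--thin decomposition to split each hyperbolic-volume integral appearing in $E(Y(\eta),S;Y_0,S_0)$ into a ``bulk'' part, namely $Y(\eta)\setminus\bigcup_{P\in Z(Y)}\cO_P(R)$, and ``cusp'' parts $\cO_P(R)$, and to extract the coefficient of $\log\lambda(Y)$ from each piece separately. On the bulk, Proposition~\ref{prop:pointwise:conformal:factor} gives the pointwise bound $|\phi(x)-\log\lambda(Y)|\le C'$. Integrating against $dg_{hyp}$ and subtracting the corresponding uniformly bounded quantity for $Y_0$ (here $\log\lambda(Y_0)$ is controlled by $S_0$ alone), I obtain
$$\int_{Y(\eta)\setminus\bigcup_P\cO_P(R)}\!\phi\,dg_{hyp}-\int_{Y_0\setminus\bigcup_P\cO_P(R)}\!\phi_0\,dg_{hyp}=\Area_{hyp}\!\Bigl(Y(\eta)\setminus\bigcup_P\cO_P(R)\Bigr)\log\lambda(Y)+O(1).$$

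Near each conical point $P$ of order $d=d(P)$, I will expand $\phi$ explicitly. Writing $g_{flat}=|w|^d|dw|^2$ and $g_{hyp}=|d\zeta|^2/(|\zeta|^2\log^2|\zeta|)$ and Taylor-expanding $w(\zeta)=a\zeta+b\zeta^2+O(\zeta^3)$ with $a:=(dw/d\zeta)(P)\neq 0$, a short computation gives
$$\phi(\zeta)=\tfrac{d+2}{2}\log|\zeta|+\log|\log|\zeta||+\tfrac{d+2}{2}\log|a|+r(\zeta),$$
where $r(\zeta)=\Re(\kappa\,\zeta)+O(|\zeta|^2)$ for a constant $\kappa$ depending only on the ratios of the first Taylor coefficients of $w$ to $a$. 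Evaluating this expansion on the circle $|\zeta|=R$ and applying Proposition~\ref{prop:pointwise:conformal:factor} yields the key identity
$$\log|a|=\tfrac{2}{d+2}\log\lambda(Y)+O(1),$$
while the analogous $\log|a_0|$ for the base surface is bounded in terms of $S_0$. By admissibility $Z(Y)=Z(Y_0)$ the singular pieces $\tfrac{d+2}{2}\log|\zeta|+\log|\log|\zeta||$ agree for $\phi$ and $\phi_0$ and cancel exactly in the relative integral, so
$$\int_{\cO_P(R)}(\phi-\phi_0)\,dg_{hyp}=\tfrac{d+2}{2}\log\!\Bigl|\tfrac{a}{a_0}\Bigr|\cdot\Area_{hyp}(\cO_P(R))+O(1)=\Area_{hyp}(\cO_P(R))\log\lambda(Y)+O(1),$$
where the smooth remainder $r(\zeta)-r_0(\zeta)$ integrates to a bounded quantity: the $\Re((\kappa-\kappa_0)\zeta)$ term vanishes by rotational symmetry of $dg_{hyp}$, and the quadratic-and-higher part is integrable with uniformly bounded integral.

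Summing the bulk and cusp contributions gives $\Area_{hyp}(Y(\eta))\log\lambda(Y)+O(1)$ for the relative-integral part of $E$, while the explicit boundary terms $-2\pi\sum_P(\log|a|-\log|a_0|)$ in the definition of $E$ contribute, via the key identity above, exactly $-\sum_{P\in Z(Y)}\frac{4\pi}{d(P)+2}\log\lambda(Y)+O(1)$. Adding the two pieces delivers the desired estimate.

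The main obstacle will be to verify that every $O(1)$ appearing above is genuinely uniform as $\lambda(Y)\to 0$. The essential input is that the Taylor-coefficient ratios $b/a$ (and the higher ones entering $r(\zeta)$) are invariant under the rescaling $q\mapsto q/\lambda(Y)^2$, under which $w\mapsto\lambda(Y)^{-2/(d+2)}w$. Applying Theorem~\ref{theorem:Deligne:Mumford} to any degenerating sequence $(C_\tau,q_\tau)$ yields a well-defined bounded rescaled differential $\tilde q_{\infty,j}$ on each limit component; the resulting family of local holomorphic coordinates $\tilde w$ at the punctures lies in a compact set, and this gives the required uniform bounds on $r(\zeta)-r_0(\zeta)$ as well as on the $O(1)$ appearing in $\log|a|=\tfrac{2}{d+2}\log\lambda(Y)+O(1)$.
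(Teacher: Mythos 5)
Your argument is correct and reaches the right constant, but it is organized quite differently from the paper's proof, so a comparison is worth recording. The paper begins with an exact algebraic identity: replacing $q$ by $\tilde q=\lambda(Y)^{-2}q$ shifts $\cf$ by $-\log\lambda(Y)$ and shifts each $\log|dw/d\zeta(P)|$ by $-\tfrac{2}{d(P)+2}\log\lambda(Y)$, so that
\begin{displaymath}
E(\tilde Y(\eta),\tilde S;Y_0,S_0)=E(Y(\eta),S;Y_0,S_0)-\Bigl(\Area_{hyp}(Y(\eta))-\sum_{P\in Z(Y)}\tfrac{4\pi}{d(P)+2}\Bigr)\log\lambda(Y)
\end{displaymath}
with no error term; the entire coefficient of $\log\lambda(Y)$ drops out in one line, and the lemma reduces to showing that the rescaled quantity $E(\tilde Y(\eta),\tilde S;Y_0,S_0)$ is $O(1)$, which follows from Proposition~\ref{prop:pointwise:conformal:factor} on the bulk and, near each cusp, from the maximum principle applied to the harmonic functions $\cf(\tilde q)-\cf_0$ and $\log|f|$, where $\tilde q=f(\zeta)\zeta^d(d\zeta)^2$. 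You instead extract the coefficient of $\log\lambda(Y)$ piece by piece---from the bulk integral, from the cusp integrals via the Taylor expansion of $w(\zeta)$, and from the boundary terms via your key identity---which makes visible where each summand of $\Area_{hyp}(Y(\eta))-\sum 4\pi/(d(P)+2)$ comes from, at the cost of more bookkeeping of error terms. Two remarks. First, your cusp computation can be shortened: $\cf-\cf_0=\tfrac12\log\bigl|w^d(dw/d\zeta)^2/(w_0^d(dw_0/d\zeta)^2)\bigr|$ is harmonic on $\cO_P(R)$ with value $\tfrac{d+2}{2}\log|a/a_0|$ at the puncture, so by the mean value property and the rotational invariance of $dg_{hyp}$ in $\zeta$ its integral over $\cO_P(R)$ is \emph{exactly} $\tfrac{d+2}{2}\log|a/a_0|\cdot\Area_{hyp}(\cO_P(R))$, with no Taylor remainder to estimate. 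Second, the point where your argument carries real weight is the uniformity of the remainder $r(\zeta)$ on $|\zeta|=R$ and of the higher Taylor-coefficient ratios; you correctly reduce this to the compactness supplied by Theorem~\ref{theorem:Deligne:Mumford}, but that is a subsequence/contradiction argument of the same kind as the proof of Proposition~\ref{prop:pointwise:conformal:factor}, whereas the paper sidesteps it entirely: boundedness of $\cf(\tilde q)$ on $\partial\cO_P(R)$ together with harmonicity of $\log|f|$ and the maximum principle yields $\log|f(0)|=(d+2)\log|d\tilde w/d\zeta(0)|=O(1)$ with no control on higher coefficients required.
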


\begin{proof}  In this proof we will say that a
  quantity is uniformly bounded if it is bounded only in terms of
  $\delta$, $\eta$, $Y_0$, $S_0$ and $R$.  Write $S = (C,q)$, and let
  $\tilde{q} = \lambda(Y)^{-2} q$. Then,
\begin{equation}
\label{eq:cf:tilde:q:cf:q}
\cf(\tilde{q})= \cf(q) - \log \lambda(Y).
\end{equation}
Let $P \in Y$ be a zero or a first-order pole of $q$. Let $\zeta$ be the
local coordinate near $P$ as in (\ref{eq:def:local:coordinate:zeta}), and
let $w$ be the local coordinate near $P$ as in
(\ref{eq:def:local:coordinate:w}). Let $\tilde{w}$ be the local
coordinate near $P$ as in (\ref{eq:def:local:coordinate:w}), for
$\tilde{q}$ instead of $q$. Then,
\begin{displaymath}
\tilde{w} = \lambda(Y)^{-2/(d(P)+2)} w,
\end{displaymath}
where $d(P)$ is the degree of $P$. Hence,
\begin{equation}
\label{eq:log:frac:tilde:w:log:frac:w}
\log \left| \frac{d\tilde{w}}{d\zeta}(P) \right| = - \frac{2}{d(P)+2}
\log \lambda(Y) +
\log \left| \frac{d w}{d\zeta}(P) \right|.
\end{equation}
Let
$\tilde{S} = (C,\tilde{q})$ and let $\tilde{Y} \subset \tilde{S}$ be
the corresponding subsurface (so the flat metric on
$\tilde{Y}$ is scaled to have size
$1$). Note that $\tilde{Y}$ and $Y$ have the same hyperbolic metric.
Then, by (\ref{eq:cf:tilde:q:cf:q}) and
(\ref{eq:log:frac:tilde:w:log:frac:w}),
\begin{multline*}
E(\tilde{Y}(\eta),\tilde{S}; Y_0, S_0) = \\
E(Y(\eta), S; Y_0, S_0) - \left( \Area_{hyp}(Y(\eta))
-  \sum_{P \in Z(Y)} \frac{4\pi}{d(P)+2} \right) \log \lambda(Y).
\end{multline*}
Thus, it is enough to show that $E(\tilde{Y}(\eta), \tilde{S}; Y_0,
S_0)$ is uniformly bounded. We may write
\begin{displaymath}
E(\tilde{Y}(\eta), \tilde{S}; Y_0, S_0) = H - 2\pi \sum_{P \in Z(Y)} J(P)\,,
\end{displaymath}
where
\begin{displaymath}
H = \left\langle \int_{Y(\eta)}\cf(\tilde{q}) \, dg_{hyp}  - \int_{Y_0}
  \phi_0 \, dg_{hyp} \right\rangle \\
\end{displaymath}
and
\begin{displaymath}
J(P) = \log \left|\frac{d\tilde{w}}{d\zeta}(P)\right| -
  \log\left|\frac{dw_0}{d\zeta}(P) \right|\,.
\end{displaymath}
Let $\cO(R) = \bigcup_{P \in Z(Y)} \cO_P(R)$.
We have
\begin{equation}
\label{eq:the:integral:I}
H = \int_{Y(\eta) \setminus \cO(R)} \cf(\tilde{q}) \, dg_{hyp} -
\int_{Y_0} \phi_0 \, dg_{hyp} + \sum_{P \in Z(Y)} \int_{\cO_P(R)}
(\cf(\tilde{q}) - \phi_0) \, dg_{hyp}\,.
\end{equation}
By Proposition~\ref{prop:pointwise:conformal:factor}, $\cf(\tilde{q})$ is
uniformly bounded (i.e. bounded depending only on $\delta$, $\eta$, $R$ and the stratum);
therefore, so is the first integral in (\ref{eq:the:integral:I})
Also, obviously the second integral in (\ref{eq:the:integral:I}) is
uniformly bounded,
since it is independent of $\tilde{q}$. To bound the third integral,
note that $\phi(\tilde{q}) - \phi_0$ is a harmonic function of the
coordinate $\zeta$ of (\ref{eq:def:local:coordinate:zeta}), and by
Proposition~\ref{prop:pointwise:conformal:factor},
$\phi(\tilde{q})-\phi_0$ is uniformly bounded on
$\partial \cO_P(R)$; then by the maximum principle,
$\phi(\tilde{q})-\phi_0$ is uniformly bounded
on all of $\cO_P(R)$. This shows that the third integral in
(\ref{eq:the:integral:I}) is uniformly bounded.

It remains to give a uniform bound for $J(P)$ for each $P$. We may write
\begin{equation}
\label{eq:tilde:q:tilde:w:d:f}
\tilde{q} = \tilde{w}^d (d\tilde{w})^2 = f(\zeta) \zeta^d (d\zeta)^2,
\end{equation}
where $\zeta$ is as in (\ref{eq:def:local:coordinate:zeta}) so $\zeta
= 0$ corresponds to the point $P$, $d$ is
the degree of $P$, and $f(\zeta)$ is some holomorphic function which
has no zeroes in $\cO_P(R)$.  Then,
we may write
\begin{displaymath}
\left(\frac{d\tilde{w}}{d\zeta}\right)^2 = f(\zeta) \left( \frac{\zeta -
      0}{\tilde{w} - 0} \right)^d,
\end{displaymath}
and taking the limit as $\zeta \to 0$ we get
\begin{displaymath}
\left(\frac{d\tilde{w}}{d\zeta}(0)\right)^2 = f(0) \left(
    \frac{d\zeta}{d\tilde{w}} (0) \right)^{d}.
\end{displaymath}
After taking logs, we get
\begin{displaymath}
\log \left| \frac{d\tilde{w}}{d\zeta}(0) \right| = \frac{1}{d+2} \log
|f(0)|.
\end{displaymath}
In view of (\ref{eq:tilde:q:tilde:w:d:f}) and of explicit formula (\ref{eq:def:local:coordinate:zeta})
for the hyperbolic metric in terms of $\zeta$, the conformal factor of
$\tilde{q}$
restricted to $\partial\cO_P(R)$
can be written as
\begin{equation}
\label{eq:cf:tilde:q:explicit:f}
\cf(\tilde{q}) = \frac{1}{2} \log \left| \frac{ f(\zeta)
    \zeta^d}{(|\zeta|^2\log^2|\zeta|)^{-1}} \right|
\quad\text{where }|\zeta|=R
\,.
\end{equation}
By Proposition~\ref{prop:pointwise:conformal:factor}, $\cf(\tilde{q})$
on $\partial \cO_P(R)$ is ``uniformly bounded'', i.e.
bounded by a constant depending only on
$\delta$, $\eta$, $R$ and the stratum; then by
(\ref{eq:cf:tilde:q:explicit:f}), $\log |f(\zeta)|$ is also
uniformly bounded on $\partial \cO_P(R)$.
Thus, by the maximum principle, $\log |f(0)|$ is uniformly
bounded.
\end{proof}

\subsection{Estimate for the thin part.}
\label{sec:subsec:estimates:thin:parts}

Let  $A_\gamma(\eta)$  be  a  thin  component  of the $(\delta,\eta)$
thick-thin   decomposition   of   a   flat   surface  $S=(C,q)$  (see
\S\ref{sec:subsec:delta:eta:thick:thin}),  corresponding to the curve
$\gamma \in \Gamma(\delta)$. Recall that each
short hyperbolic geodesic
$\gamma \in \Gamma(\delta)$
uniquely  determines  either  a flat cylinder or an expanding annulus
(see~\S\ref{sec:Geometric:Compactification:Theorem}      for      the
definitions). The short geodesic   $\gamma$   is  embedded  into  the
corresponding  maximal  flat  cylinder or expanding annulus and realizes
a generator of its fundamental group.
Let  $\lambda_+(A_\gamma)$ and $\lambda_-(A_\gamma)$ denote the sizes
of   the   $\delta$-thick   components
$Y_+,  Y_-\subset  S\setminus\Gamma(\delta)$  on  the  two  sides  of
$\gamma$.

\begin{lemma}
\label{lemma:flat:cylinder:equal:size}
Suppose  $\gamma$  is represented in the flat metric of $S$ by a flat
cylinder, of height $h$ and width $w$ (so that the flat length of the
$q$-geodesic represtative of $\gamma$ is $w$). Then,
\begin{equation}
\label{eq:lemma:flat:cylinder:equal:size}
|\log \lambda_+(A_\gamma) - \log w | \le C' \quad\text{ and } \quad
|\log \lambda_-(A_\gamma) - \log w | \le C',
\end{equation}
where $C'$ depends only on $\delta$, $\eta$ and the stratum.
\end{lemma}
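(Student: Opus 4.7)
My plan is to argue by contradiction using the Geometric Compactification Theorem~\ref{theorem:Deligne:Mumford}, in the same spirit as the proof of Proposition~\ref{prop:pointwise:conformal:factor}. Suppose the bound for $\lambda_+$ fails. Then there is a sequence $S_\tau = (C_\tau, q_\tau) \in \cQ_1(d_1,\dots,d_\noz)$ with short hyperbolic geodesics $\gamma_\tau \in \Gamma(\delta)$ represented by flat cylinders $F_{\gamma_\tau}$ of widths $w_\tau$ such that $|\log \lambda_{+,\tau} - \log w_\tau| \to \infty$, where $\lambda_{+,\tau} := \lambda_+(A_{\gamma_\tau})$. The first step is to rescale by setting $\tilde{q}_\tau := \lambda_{+,\tau}^{-2} q_\tau$, so that the $\tilde{q}_\tau$-representative $\tilde{\Y}_{+,\tau}$ of the $+$-side thick component has size exactly $1$.

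Next, I would apply Theorem~\ref{theorem:Deligne:Mumford} to the rescaled sequence $(C_\tau, \tilde{q}_\tau)$. After passing to a subsequence, there is a nontrivial meromorphic quadratic differential $\tilde{q}_{+,\infty}$ on the component $C_{+,\infty}$ of the stable limit Riemann surface, and $\tilde{\Y}_{+,\tau}$ converges to the $\tilde{q}_{+,\infty}$-representative $\tilde{\Y}_{+,\infty}$, with $\lambda(\tilde{\Y}_{+,\infty}) = 1$. The key observation is that one boundary component of $\tilde{\Y}_{+,\tau}$, coming from one end of the flat cylinder $F_{\gamma_\tau}$, is a closed $\tilde{q}_\tau$-geodesic of flat length exactly $w_\tau/\lambda_{+,\tau}$. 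By the failure hypothesis this quantity tends either to $+\infty$ or to $0$ along the subsequence.

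The case $w_\tau/\lambda_{+,\tau} \to \infty$ is ruled out by the theorem of K.~Rafi stated in \S\ref{ss:after:Rafi}, which gives $\diam_{\tilde{q}_\tau}(\tilde{\Y}_{+,\tau}) \le C(g,n,\delta)$; together with the fact that $Y_+$ has fixed topology, this forces any simple closed curve on $\tilde{\Y}_{+,\tau}$ to have bounded flat length, contradicting the unboundedness of the boundary length. The case $w_\tau/\lambda_{+,\tau} \to 0$ is ruled out because the $\tilde{q}_{+,\infty}$-geodesic representative of the corresponding boundary homotopy class on $\tilde{\Y}_{+,\infty}$ would then have zero length, which is incompatible with its role as a nontrivial boundary loop on the nondegenerate flat surface $(C_{+,\infty}, \tilde{q}_{+,\infty})$. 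The argument for $\lambda_-$ is identical.

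The principal technical subtlety I anticipate is the second case: a priori, Theorem~\ref{theorem:Deligne:Mumford} permits the limit differential $\tilde{q}_{+,\infty}$ to acquire a new zero or pole at the node of $C_\infty$ corresponding to $\gamma$, so one must carefully verify that the limiting boundary loop cannot collapse onto such a singularity while keeping $\tilde{\Y}_{+,\infty}$ nondegenerate. Resolving this should follow from a more detailed analysis of how the rescaled flat cylinder $\lambda_{+,\tau}^{-1} F_{\gamma_\tau}$ degenerates and of the corresponding limiting behavior of $\tilde{q}_\tau$ in a neighborhood of $\gamma_\tau$, which one can combine with the observation that the modulus $h_\tau/w_\tau \ge \pi/\delta$ (forced by $\ell_{hyp}[\gamma_\tau] \le \delta$) constrains the geometry of the cylinder.
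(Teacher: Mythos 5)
Your compactness strategy is a genuinely different route from the paper's (which gives a direct two-case geometric argument with no limiting procedure), but as it stands it has a real gap in the case $w_\tau/\lambda_{+,\tau}\to 0$, and you have correctly located it without closing it. The claimed contradiction --- that a nontrivial boundary loop of $\tilde\Y_{+,\infty}$ cannot have zero length --- is not a contradiction: a boundary component of the $q$-representative of a thick piece can perfectly well collapse onto a conical singularity in such a limit, and the limiting surface remains a nondegenerate flat surface of size one. Indeed, the statement of the lemma is \emph{false} if one does not puncture the surface at the zeroes and poles of $q$ (the paper stresses this immediately after the lemma), so any correct proof must use the punctures somewhere, and yours never does. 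The paper's resolution of this case is the essential content of the lemma: since $\Sigma_+=\partial F_\gamma\cap\partial\Y_+$ necessarily carries at least one conical singularity, hence a puncture, one can choose a simple closed curve $\beta$ of flat length comparable to $w$ separating $\Sigma_+$ together with its punctures from the main body of $Y_+$; then $\beta$ is not homotopic to $\gamma$ (the region between them contains a puncture), not homotopic to another boundary component of $Y_+$ (it is too short), hence non-peripheral in $\Ypunc_+$, and the definition of size forces $\lambda(Y_+)\le \lf[\beta]=O(w)$. Your deferred ``more detailed analysis of how the rescaled cylinder degenerates'' would have to reproduce exactly this argument.

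The other direction also needs repair. From $\diam_{\tilde q}(\tilde\Y_+)\le C(g,n,\delta)$ and fixed topology one cannot conclude that ``any simple closed curve on $\tilde\Y_{+,\tau}$ has bounded flat length'': closed geodesics on a flat surface of bounded diameter can be arbitrarily long, and the curve in question, $\Sigma_+$, is peripheral in $Y_+$, so the definition of size gives no control over it either. What saves this case is the bounded ``folding complexity'' of the cylinder boundary: at each cone point of angle $(d+2)\pi$ on $\Sigma_+$ the cylinder side consumes angle exactly $\pi$ per passage, so $\Sigma_+$ is a concatenation of at most $\sum_j (d_j+2)$ saddle connections, a number depending only on the stratum, and each of these is then controlled by $\lambda(Y_+)$ via the diameter bound and the triangulation of Lemma~\ref{lemma:uniform:triangulation}. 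Neither ingredient appears in your write-up. Finally, the aside that $\ell_{\mathit{hyp}}[\gamma_\tau]\le\delta$ forces $h_\tau/w_\tau\ge \pi/\delta$ is incorrect: a hyperbolically short curve may owe its shortness to expanding annuli rather than to a flat cylinder of large modulus.
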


It          is         important         to         note         that
Lemma~\ref{lemma:flat:cylinder:equal:size}   holds  only  because  we
consider the zeroes of the quadratic differential $q$ to be punctures
(cusps  in  the hyperbolic metric). Without this
assumption,   Lemma~\ref{lemma:flat:cylinder:equal:size}  fails,  and
part (a) of Lemma~\ref{lemma:modulus} below needs to be modified.

In   fact,   the   proof  is  contained  between  the  lines  of  the
paper~\cite{Rafi}  of K.~Rafi. However, since it is not stated in the
precise form which we need, we give a sketch of a proof below.

In    the   proofs   of   Lemmas~\ref{lemma:flat:cylinder:equal:size}
and~\ref{lemma:modulus}  the  constants $c_i$ will depend only on the
genus,  the number of punctures and the parameters $\delta, \eta$ and
$R$ of the thick-thin decomposition.

\begin{proof}[Proof of Lemma~\ref{lemma:flat:cylinder:equal:size}]
Suppose  that  the  perimeter $w$ of the cylinder is much bigger than
the  the  size $\lambda_\pm(A_\gamma)$ of the thick component $Y_\pm$
to  which it is adjacent. In order to glue a relatively wide cylinder
to  something  small  we  have  to fold the boundary of the cylinder.
However,  for  any  fixed  stratum, the complexity of this folding is
bounded   in   terms   of   the  genus  and  the  number  of  conical
singularities, which proves the inequalities
$$
\frac{w}{\lambda_\pm(A_\gamma)}\le c_1\,.
$$

\begin{figure}[htb]
\includegraphics{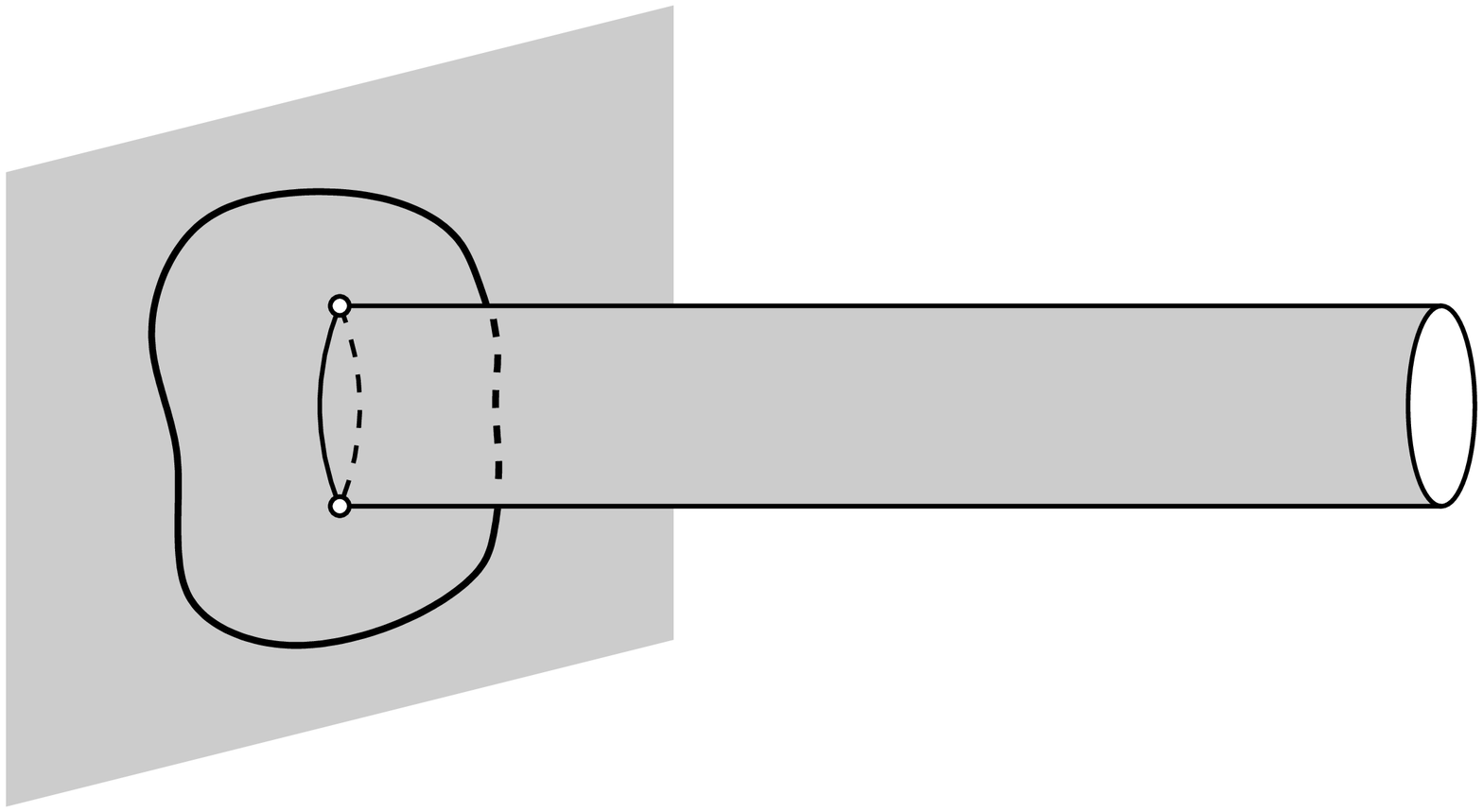}
\includegraphics{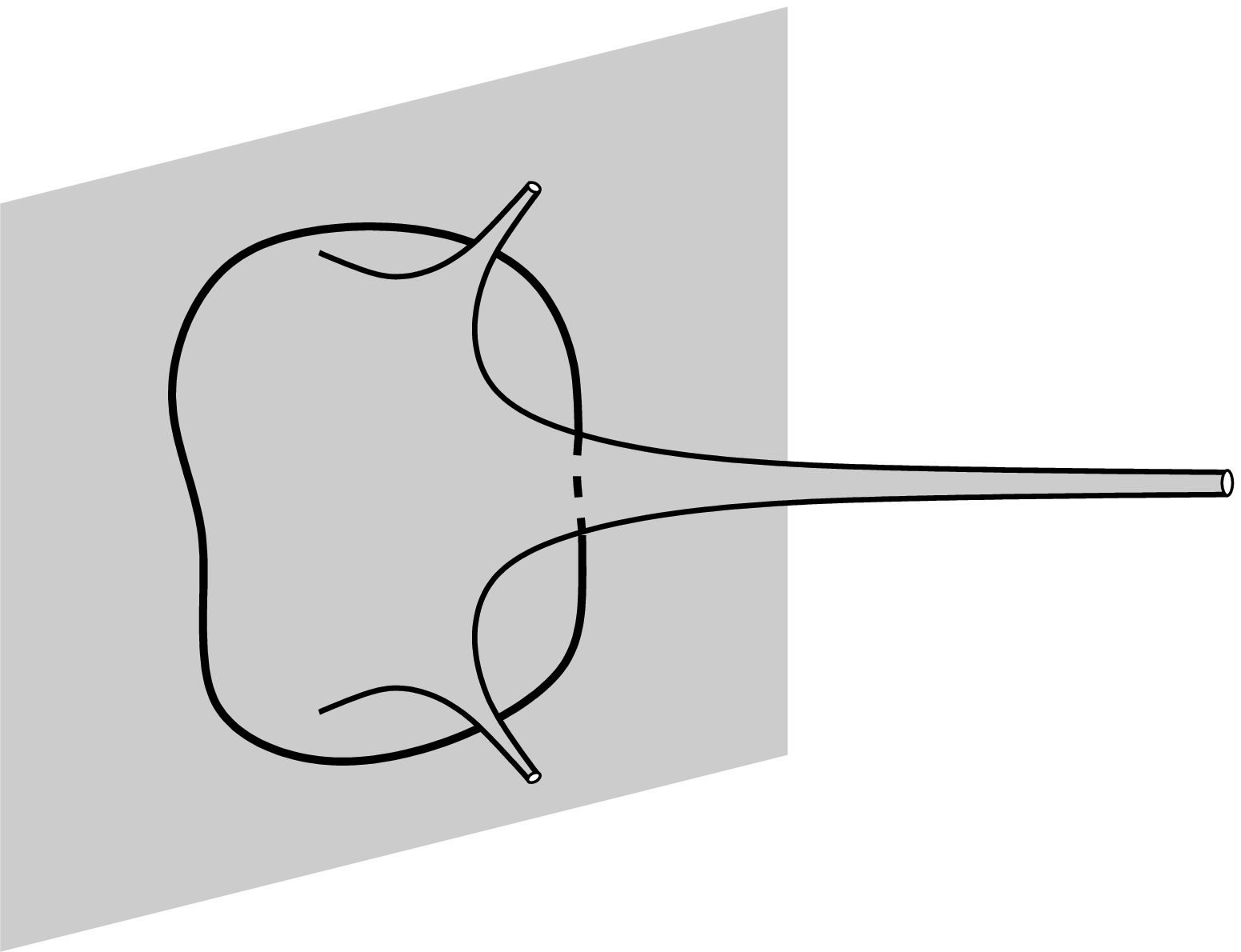}
\vspace{100pt}
\begin{picture}(0,0)(180,-115)
\put(24,-40){$\beta$}
\put(235,-40){$\beta$}
\end{picture}
\caption{
\label{fig:Lemma:7:2}
The simple closed curve $\beta$ separates all local geometry near the
boundary  of  the cylinder from the main body of the thick component.
The  left  picture  represents the flat metric, and the right picture
schematically represents the hyperbolic metric.
}
\end{figure}
Suppose  that  the perimeter $w$ of the cylinder is much smaller than
the  the  size $\lambda_\pm(A_\gamma)$ of the thick component $Y_\pm$
to which it is adjacent. Then all local geometry near the boundary of
the  cylinder  can  be  separated  from  the  main  body of the thick
component  by  a  simple  closed  curve  $\beta$  (non  necessarily a
geodesic)  such  that  the flat length of $\beta$ is much bigger than
$w$    but    much    smaller   than   $\lambda_\pm(A_\gamma)$,   see
Figure~\ref{fig:Lemma:7:2}.   Suppose  $\beta$  is  peripheral.  Then
either  $\beta$  is  homotopic  to  a  curve  in  the boundary of the
cylinder  $F_\gamma$  whose core curve is $\gamma$ or else $\beta$ is
homotopic    to    some    other    curve    in   the   boundary   of
$\lambda_{\pm}(A_\gamma$).  The  first possibility cannot occur since
the  boundary  of $F_\gamma$ has non-trivial topology (because of the
cone  points).  The  second possibility cannot occur since $\beta$ is
much smaller than the size of $\lambda_\pm(A_\gamma)$.

Thus $\beta$ must be non-peripheral. Then, by the definition of size,
$\lf(\beta)\ge  \lambda_\pm(A_\gamma)$  for  any nonperipheral curve.
Hence  our  assumption that the perimeter $w$ of the cylinder is much
smaller   than  the  the  size  $\lambda_\pm(A_\gamma)$  leads  to  a
contradiction, and we have proved that
$$
\frac{\lambda_\pm(A_\gamma)}{w}\le c_2\,.
$$
Lemma~\ref{lemma:flat:cylinder:equal:size} is proved.
\end{proof}

In  the statement and in the proof of Lemma~\ref{lemma:modulus} below
the  $c_i$  denote  constants  depending  only on the stratum, on the
parameters  $\delta$  and $\eta$ of the thick-thin decomposition, and
on  the  parameter  $R$  responsible  for neighborhoods of cusps. The
constants   $c_i$  are  different  from  those  used  in the proof of
Lemma~\ref{lemma:flat:cylinder:equal:size}.

\begin{lemma}
\label{lemma:modulus}
Suppose the constant $\delta$ defining the thick-thin decomposition is
sufficiently small (depending only on the genus and on the number of
punctures). Then,
for any $(\delta,\eta)$-thin component $A_\gamma(\eta)$ of a flat
surface $S$ the following holds:
\begin{itemize}
\item[{\rm (a)}]
Suppose $\gamma$ is represented in the flat metric by a flat
  cylinder of height $h$ and of width $w$ (so that
the flat length of the $q$-geodesic representative of $\gamma$ is
$w$). Then
\begin{displaymath}
\frac{\pi}{\ell_{hyp}(\gamma)} = \frac{h}{w} + O(1),
\end{displaymath}
where the implied constant is bounded only in terms of $\delta$,
$\eta$, $R$,and the stratum.
\item[{\rm (b)}] If $\gamma$ is represented in the flat metric
  by an expanding annulus, then
\begin{displaymath}
c_0  \le \ell_{hyp}(\gamma)
\left|\log \lambda_+(A_\gamma) - \log \lambda_-(A_\gamma)\right|
\le c_1
\end{displaymath}
where $c_0 > 0$ and $c_1> c_2$ depend only on $\delta$, $\eta$ and the
stratum.
\end{itemize}
In addition,
\begin{itemize}
\item[{\rm (c)}] There is a constant $M_0 > 0$ (depending only on
  $\delta$ and the stratum) such that any flat cylinder of modulus at
  least $M_0$ contains a hyperbolic geodesic of length at most
  $\delta$.
\end{itemize}
\end{lemma}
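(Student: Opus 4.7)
The plan is to reduce all three parts to two classical facts from the geometry of annuli on hyperbolic surfaces. First, the \emph{Maskit/Gr\"otzsch inequality}: if an annulus $A$ of conformal modulus $M$ is conformally embedded into a hyperbolic surface so that its core curve is homotopic to a simple closed geodesic $\gamma$, then $M\cdot \ell_{hyp}(\gamma)\le \pi$. Second, the \emph{collar lemma}: around a short simple closed geodesic $\gamma$ of hyperbolic length $\ell$, the standard $\eta$-collar (i.e.\ the region $A_\gamma(\eta)$ of \S\ref{sec:subsec:delta:eta:thick:thin}, for $\eta$ less than the Margulis constant) has conformal modulus $\pi/\ell - c(\eta)$ for some constant depending only on $\eta$. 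Both are standard (see e.g.\ Hubbard's book).

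Part~(c) is immediate: a flat cylinder of modulus $M$ is a conformally embedded annulus with core homotopic to a simple closed geodesic, so $\ell_{hyp}(\gamma)\le \pi/M$; taking $M_0=\pi/\delta$ does the job. For the upper bound in part~(a), apply Maskit to the flat cylinder $F_\gamma$ of modulus $h/w$ to get $h/w\le \pi/\ell_{hyp}(\gamma)$. For the matching lower bound, I will compare $F_\gamma$ to the standard collar $A_\gamma(\eta)$. By construction $F_\gamma$ is contained (up to enlarging by a bounded hyperbolic neighborhood) in $A_\gamma(\eta)$, and the set $A_\gamma(\eta)\setminus F_\gamma$ consists of pieces living inside the adjacent $\delta$-thick components $Y_\pm$. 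By Lemma~\ref{lemma:flat:cylinder:equal:size} these adjacent thick components have size $\lambda_\pm(A_\gamma)$ comparable to $w$, so by Rafi's comparison theorem the pieces of $A_\gamma(\eta)\setminus F_\gamma$ that sit in $Y_\pm$ have bounded hyperbolic diameter, hence bounded conformal modulus. Adding/subtracting these bounded moduli from $\mathrm{mod}(A_\gamma(\eta))=\pi/\ell_{hyp}(\gamma)+O(1)$ yields $h/w = \pi/\ell_{hyp}(\gamma)+O(1)$, as required.

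For part~(b), the central task is to establish that the conformal modulus of an expanding annulus $A_\gamma$ is comparable to $|\log\lambda_+(A_\gamma)-\log\lambda_-(A_\gamma)|$, with implied constants depending only on the stratum. This is a local calculation: by the description of expanding annuli in \S\ref{sec:Geometric:Compactification:Theorem}, $A_\gamma$ can be realized, in a suitable flat chart centered on the cone point (or conical structure) that accounts for all of the curvature $\kappa_{A_\gamma}$, as the region between two equidistant curves with inner and outer ``radii'' proportional to $\lambda_-$ and $\lambda_+$ respectively; integrating $|dz|/|z|$ in the appropriate conical coordinate gives $\mathrm{mod}(A_\gamma)\asymp |\log\lambda_+-\log\lambda_-|$, with constants depending only on the bounded combinatorial data (cone angles) of the stratum. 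Given this, Maskit applied to $A_\gamma$ yields the upper bound $\ell_{hyp}(\gamma)\cdot |\log\lambda_+-\log\lambda_-|\le c_1$, while the lower bound follows by comparing $A_\gamma$ to $A_\gamma(\eta)$ exactly as in part~(a): the complement consists of pieces of bounded hyperbolic diameter inside the adjacent thick components, so $\mathrm{mod}(A_\gamma)\ge \pi/\ell_{hyp}(\gamma)-O(1)$, and combining with the logarithmic formula for $\mathrm{mod}(A_\gamma)$ gives the lower bound $\ell_{hyp}(\gamma)\cdot|\log\lambda_+-\log\lambda_-|\ge c_0$ once $\delta$ is small enough.

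The main obstacle is the flat-modulus estimate for expanding annuli in~(b), since it requires an explicit conformal-coordinate description and a careful identification of the inner and outer radii with the sizes $\lambda_\pm$ of the adjacent thick components. Once this is in hand, parts (a), (b), and (c) all flow from the Maskit inequality and the collar lemma, with the bounded corrections absorbed into the constants depending on $\delta$, $\eta$, $R$, and the stratum, exactly as in the statement.
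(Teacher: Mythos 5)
Your reduction to the Maskit inequality and the collar lemma handles part (c) and the easy inequality $h/w \le \pi/\lh(\gamma)$ in part (a), and your identification of Rafi's size comparison as the key geometric input is on target. The gap is in the step where you pass from ``$A_\gamma(\eta)\setminus F_\gamma$ consists of pieces of bounded conformal modulus'' to ``$\Mod(A_\gamma(\eta)) \le h/w + O(1)$'' by ``adding/subtracting these bounded moduli.'' Conformal modulus is superadditive under a series decomposition (Gr\"otzsch), but it is \emph{not} subadditive: an annulus cut along a sufficiently wiggly core curve can have modulus far exceeding the sum of the moduli of the two pieces, so knowing that the complementary pieces have bounded modulus gives no upper bound on $\Mod(A_\gamma(\eta))$ in terms of $\Mod(F_\gamma)$. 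The same invalid subtraction reappears in your lower bound for part (b). The paper obtains the needed upper bound $\Mod(A_\gamma(\eta))\le h/w + c_4$ by a different route: it plugs the flat metric directly into the definition~(\ref{eq:extremal:length:family}) of extremal length as a test metric, which reduces the problem to the flat-area bound $\Area_{flat}(A_\gamma(\eta))\le hw + c_4 w^2$ of~(\ref{eq:bound:area:flat}). That bound in turn rests on the genuinely geometric fact~(\ref{eq:d:flat:alpha:i:gamma:i}) that every point of $\partial A_\gamma(\eta)$ lies within flat distance $O(w)$ of the corresponding boundary component $\Sigma_\pm$ of the maximal flat cylinder $F_\gamma$ --- proved by a topological argument showing that $Y'_\pm(\eta,R)$ must meet $\Sigma_\pm$, combined with the bounded hyperbolic diameter of $Y'_\pm(\eta,R)$, Lemma~\ref{lemma:flat:cylinder:equal:size} and Proposition~\ref{prop:pointwise:conformal:factor}. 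Your sketch asserts the containment ``up to a bounded hyperbolic neighborhood'' but never converts it into a flat-area (or test-metric) statement, which is where the actual work lies.

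Concerning part (b), you propose to re-derive the estimate $\Mod(A_\gamma)\asymp|\log\lambda_+(A_\gamma) - \log\lambda_-(A_\gamma)|$ by an explicit computation in a conical flat chart, and you correctly flag the identification of the inner and outer radii with the sizes $\lambda_\pm$ of the adjacent thick components as the main obstacle; that identification is precisely the nontrivial content. The paper does not reprove it --- it cites Minsky and Rafi for part (b), together with the comparability of extremal and hyperbolic length for short curves. If you want a self-contained argument you must carry out that local computation in detail and then avoid the subtraction issue above when comparing to the collar; as written, this portion is a placeholder rather than a proof.
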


\begin{proof}
The        statement        (c)        is        classical,       see
e.g.~\cite[Proposition~3.3.7]{Hubbard:book}. The statement (b) is due
to                  Minsky~\cite[\S{4}]{Minsky},                  see
also~\cite[Theorem~3.1]{Rafi:fellow}.         (The         discussion
in~\cite{Rafi:fellow}  is  in  terms  of extremal lengths, but recall
that  for  very  short curves, the extremal length is
comparable to the hyperbolic length~\cite{maskit}).

The  statement  (a)  is  standard, but since we found it difficult to
extract it in the precise form we need from the literature, we give a
sketch   of   a   proof   below.   (Similar   results  can  be  found
in~\cite{Bers},  \cite[\S{6}]{Masur:metric},  \cite{Wolpert:metric}).

Let      $Y_\pm(\eta)$      be     as     in     the     proof     of
Lemma~\ref{lemma:flat:cylinder:equal:size},  and  let  $Y_\pm'(\eta,R)$
denote  $Y_\pm(\eta)$  with
$R$-neighborhoods   of   the  cusps  removed  (with  the  cuts  along
horocycles     around    cusps    of    hyperbolic    length    $R$),
see~\S\ref{sec:subsec:uniform:bound:conformal}.

Let  $\alpha_\pm$ denote the boundary curves of $A_\gamma(\eta)$. We do
not  know  the precise position of the $\alpha_\pm$
in the flat metric.
However, we claim that
\begin{equation}
\label{eq:d:flat:alpha:i:gamma:i}
\forall p \in \alpha_+\,,\ d_{flat}(p,\Sigma_+) \le c_2 w
\quad\text{and}\quad
\forall p \in \alpha_-\,,\ d_{flat}(p,\Sigma_-) \le c_2 w
\,,
\end{equation}
see Figure~\ref{fig:Lemma:7:3}.
We  prove  the  first estimate; the second one is proved analogously.

\begin{figure}[htb]
   %
   %
\includegraphics{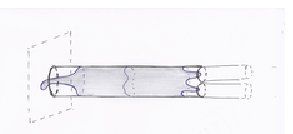}
\includegraphics{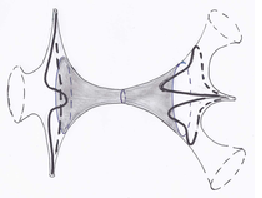}
\vspace{110pt}
\begin{picture}(0,0)(180,-110)
\put(80,-39){$\gamma$}
\put(50,-40){$\alpha_-$}
\put(110,-40){$\alpha_+$}
\put(55,-55){$A(\gamma)$}
\put(32,-70.5){$\Sigma_-$}
\put(125,-70.5){$\Sigma_+$}
\end{picture}
\begin{picture}(0,0)(2,-103)
\put(80,-44){$\gamma$}
\put(50,-30){$\alpha_-$}
\put(109,-30){$\alpha_+$}
\put(53,-55){$A(\gamma)$}
\put(41,-100){$\Sigma_-$}
\put(129,-86){$\Sigma_+$}
\end{picture}
   %
   %
\caption{
\label{fig:Lemma:7:3}
The  boundary  components  $\alpha_\pm$  of  the  hyperbolic cylinder
$A(\eta)$  of  large  modulus  (colored  in  grey)  stay  within flat
distance  of  order  $w$  from  the corresponding boundary components
$\Sigma_\pm$ of the flat cylinder $F_\gamma$ of perimeter $w$.
}
\end{figure}

Let us show that $Y_+'(\eta,R)$ has nonempty intersection
with the boundary component $\Sigma_+$ of the maximal flat cyliner $F_\gamma$.
First note that $Y_+'(\eta,R)$ cannot be completely contained in
the \textit{interior} of $F_\gamma$ for topological reasons.

By construction,
the boundary component $\alpha_+$ of $Y_+'(\eta,R)$ corresponding
to $\gamma$ is homotopic to the waist curve of the cylinder
$F_\gamma$. Each boundary component
$\Sigma_\pm$ of the maximal cylinder $F_\gamma$
passes through at least one conical singularity
of the flat metric, and this singularity defines a puncture.
Together these two observations imply that
$\alpha_+$
cannot be located completely outside of the part
$F_{\gamma,+}$
of the
flat cylinder
$F_\gamma$ bounded by $\gamma$ and $\Sigma_+$,
so $\alpha_+$ has nonempty intersection with $F_{\gamma,+}$.

Thus $Y_+'(\eta,R)$ has nonempty intersection with
$F_{\gamma,+}$ and is not contained in the interior
of $F_{\gamma,+}$. Hence, it intersects with the boundary
of $\partial F_{\gamma,+}=\gamma\sqcup\Sigma_+$.
Since $Y_+'(\eta,R)$ cannot intersect the boundary
component represented by the hyperbolic geodesic $\gamma$,
it should intersect the boundary component $\Sigma_+$.
Denote by $x_+$ a point in $Y_+'(\eta,R)\cap \Sigma_+$.

Suppose   $p   \in   \alpha_+$.  Since  the  hyperbolic  diameter  of
$Y_+'(\eta,R)$  is  bounded  by  a  constant $c_3$, there exists a path
$\lambda_{x_+,p}  \subset  Y_+'(\eta,R)$  connecting  $x_+$  to  $p$ of
hyperbolic      length      at      most     $c_3$.     But     then,
Lemma~\ref{lemma:flat:cylinder:equal:size}                        and
Proposition~\ref{prop:pointwise:conformal:factor} imply that
there exists a constant $c_2^+$ such that
the flat length of $\lambda_{x_+,p}$ is at most $c^+_2 w$.

Applying    a    similar   argument   to   $\Sigma_-$   and   letting
$c_2=\max(c_2^+,c_2^-)$            we            prove            the
estimate~\eqref{eq:d:flat:alpha:i:gamma:i}.

%
%

We note that as a consequence of (\ref{eq:d:flat:alpha:i:gamma:i}),
\begin{equation}
\label{eq:bound:area:flat}
\Area_{flat}(A_\gamma(\eta)) \le h w + c_4 w^2.
\end{equation}
Choose  any  $c_5  >  c_2$,  and  let  $A'$  denote the flat cylinder
obtained  by removing the $(c_5 w)$-neighborhood of the boundary from
$F_\gamma$.  Then,  by (\ref{eq:d:flat:alpha:i:gamma:i}), $A' \subset
A_\gamma(\eta)$.

Recall that the extremal length of a family of curves $\Gamma$ on a
surface $C$
endowed with a conformal structure
is defined to be
\begin{equation}
\label{eq:extremal:length:family}
\ext(\Gamma) = \sup_{\rho} \inf_{\gamma \in
  \Gamma}\frac{\ell_\rho(\gamma)^2}{\Area_\rho(C)}.
\end{equation}
The  supremum  in (\ref{eq:extremal:length:family}) is taken over all
the  metrics  in the conformal class of $C$. The extremal length is a
conformal  invariant,
and  the  modulus $M(A)$ of a topological annulus $A\subset C$ can be
expressed as
$$
M(A)=\frac{1}{\ext(\Gamma)}\,,
$$
where  the extremal length $\ext(\Gamma)$ is evaluated for the family
$\Gamma$ of curves $\gamma$ in $A$
given by the homotopy class of the generator of the fundamental group
of the annulus $A$.

Clearly, if $\Gamma_1 \subset \Gamma_2$ then $\ext(\Gamma_1) \ge
\ext(\Gamma_2)$. Then, since $A' \subset A_\gamma(\eta)$, we have
\begin{equation}
\label{eq:two:mods}
\Mod(A') \le \Mod(A_\gamma(\eta))\,.
\end{equation}
The cylinder $A'$ is flat, and so
\begin{equation}
\label{eq:left:mod}
\Mod(A') = \frac{h-2c_5 w}{w} = \frac{h}{w} - 2 c_5.
\end{equation}
Also by the explicit formula for the hyperbolic metric in a cylinder
(see \cite[pages 25-26 and
  page 72]{Hubbard:book} and also the
proof of Lemma~\ref{lemma:estimate:thin:part} below),
\begin{equation}
\label{eq:middle:mod}
\Mod(A_\gamma(\eta)) = \frac{\pi}{\lh(\gamma)} - c_6,
\end{equation}
where $c_6$ depends only on $\eta$.

It remains to bound $\Mod(A_\gamma(\eta))$ from above.
We now apply the definition (\ref{eq:extremal:length:family}) of
extremal length  to the family of curves $\Gamma''$ which consists of curves
homotopic to $\gamma$ and staying within
$A_\gamma(\eta)$. We get, by choosing the flat metric for $\rho$ and using
(\ref{eq:bound:area:flat}),
\begin{displaymath}
\frac{1}{\Mod(A_\gamma(\eta))} = \ext(\Gamma'') \ge
\frac{\lf(\gamma)^2}{\Area_{flat}(A_\gamma(\eta))} \ge
\frac{w^2}{hw + c_4 w^2}\,.
\end{displaymath}
Hence,
\begin{equation}
\label{eq:right:mod}
\Mod(A_\gamma(\eta)) \le \frac{h}{w} + c_4.
\end{equation}
Now part (a) of the lemma follows from (\ref{eq:two:mods}),
(\ref{eq:left:mod}), (\ref{eq:middle:mod}) and (\ref{eq:right:mod}).
\end{proof}

Let $\Gamma(\delta)$, $I(A_\gamma(\eta), S)$ be as defined in
\S\ref{sec:subsec:admissible:pairs}.

\begin{lemma}
\label{lemma:estimate:thin:part}
For any $\gamma \in \Gamma(\delta)$,
\begin{displaymath}
\left| I(A_\gamma(\eta),S) - \frac{1}{2} \Area_{hyp}(A_\gamma(\eta))(\log \lambda_+(A_\gamma) + \log
    \lambda_-(A_\gamma)) \right| < C,
\end{displaymath}
where $C$ depends only on $\delta$, $\eta$ and the stratum.
\end{lemma}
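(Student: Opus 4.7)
The plan is to parametrize $A_\gamma(\eta)$ by Fermi-type coordinates in which the integral becomes essentially one-dimensional and admits an explicit evaluation. For $\delta$ small enough $\gamma$ lies inside a standard Margulis tube, and I would parametrize $A_\gamma(\eta)$ as the quotient of the strip $\{(u,v):0\le u<\ell,\ v_-<v<v_+\}$ with $u$ periodic of period $\ell:=\lh(\gamma)$, in which the hyperbolic metric reads $\ghyp=(du^2+dv^2)/\sin^2 v$, the core geodesic $\gamma$ sits at $v=\pi/2$, and the boundary horocycles $\alpha_\pm$ of length $\eta$ are exactly the level sets $v=v_\pm$ with $\sin v_\pm=\ell/\eta$ and $v_++v_-=\pi$. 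A direct computation in these coordinates gives $\Area_{hyp}(A_\gamma(\eta))=2\ell\cot v_-$.

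Next I would use that $A_\gamma(\eta)$ contains no conical singularities (those lie in the thick components), so $\log|q|$ is harmonic on it and the conformal change of curvature formula applied to $K_{flat}=0$, $K_{hyp}=-1$ yields the Poisson equation $(\partial_u^2+\partial_v^2)\phi=-\csc^2 v$ on $A_\gamma(\eta)$. Expanding $\phi$ in Fourier series in the periodic variable $u$, only the zero mode $\bar\phi(v):=\frac{1}{\ell}\int_0^\ell \phi(u,v)\,du$ contributes to $I(A_\gamma(\eta),S)$, so
\begin{displaymath}
I(A_\gamma(\eta),S)\ =\ \ell\int_{v_-}^{v_+}\bar\phi(v)\,\csc^2 v\,dv,
\end{displaymath}
and $\bar\phi$ satisfies the ODE $\bar\phi''(v)=-\csc^2 v$, whose general solution is $\bar\phi(v)=\log\sin v+C_1 v+C_2$.

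The constants $C_1,C_2$ I would pin down from boundary data. The curves $\alpha_\pm$ lie inside the $(\delta,\eta)$-thick components $Y_\pm(\eta)$, and by the choice of $R$ in \S\ref{sec:subsec:uniform:bound:conformal} are disjoint from the neighborhoods $\cO_P(R)$ of the cusps, so Proposition~\ref{prop:pointwise:conformal:factor} gives $|\phi(u,v_\pm)-\log\lambda_\pm(A_\gamma)|\le C'$ uniformly in $u$; averaging over $u$ yields $|\bar\phi(v_\pm)-\log\lambda_\pm(A_\gamma)|\le C'$. Combining with the algebraic identity
\begin{displaymath}
\bar\phi(v_+)+\bar\phi(v_-)\ =\ 2\log\sin v_-+C_1\pi+2C_2
\end{displaymath}
(obtained from $\sin v_+=\sin v_-$ and $v_++v_-=\pi$) one gets
$\log\sin v_-+\tfrac{\pi}{2}C_1+C_2=\tfrac{1}{2}(\log\lambda_++\log\lambda_-)+O(1)$.

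The final step is a direct evaluation using the elementary antiderivatives $\int\csc^2 v\,dv=-\cot v$, $\int v\csc^2 v\,dv=-v\cot v+\log\sin v$, and (by parts) $\int\log\sin v\,\csc^2 v\,dv=-\cot v\log\sin v-\cot v-v$. Evaluating between $v_-$ and $v_+$ with $\cot v_+=-\cot v_-$, substituting the expression for $\log\sin v_-+\tfrac{\pi}{2}C_1+C_2$ from the previous paragraph and using $\Area_{hyp}(A_\gamma(\eta))=2\ell\cot v_-$, the bookkeeping collapses to
\begin{displaymath}
I(A_\gamma(\eta),S)\ =\ \Area_{hyp}(A_\gamma(\eta))\bigl[1+\tfrac{1}{2}(\bar\phi(v_+)+\bar\phi(v_-))\bigr]+\ell(2v_--\pi),
\end{displaymath}
which, using $\Area_{hyp}(A_\gamma(\eta))\le 2\eta$ and $\ell=O(1)$, equals $\tfrac{1}{2}\Area_{hyp}(A_\gamma(\eta))(\log\lambda_++\log\lambda_-)+O(1)$ as required. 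The main obstacle is the geometric input at the start: verifying that for $\delta$ chosen sufficiently small the curves $\alpha_\pm$ really are equidistant from $\gamma$ (so that $\alpha_\pm$ coincide with the level sets $v=v_\pm$) and that Proposition~\ref{prop:pointwise:conformal:factor} is applicable along them; once this standard Margulis-tube picture is in place, the rest of the proof is a routine explicit calculation.
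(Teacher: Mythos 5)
Your proof is correct and follows essentially the same route as the paper: the same conformal coordinates on the collar, the same use of Proposition~\ref{prop:pointwise:conformal:factor} to control the conformal factor on the two boundary curves of $A_\gamma(\eta)$, and the same underlying fact that $\phi$ differs from the explicit rotationally symmetric function $\log \sin v$ by a harmonic function (the paper phrases this as $q=\psi(z)(dz)^2$ with $\psi$ holomorphic and nonvanishing on the collar). The only difference is organizational --- you average over the periodic variable so the nonzero Fourier modes drop out of the integral and the zero mode solves an explicit ODE, whereas the paper keeps the two-dimensional harmonic part, kills the antisymmetric term by the $y\mapsto -y$ symmetry, controls the remainder by the maximum principle, and bounds the $\log\cos$ integral by direct substitution; your version is slightly tidier, though note that the boundary curves of $A_\gamma(\eta)$ are equidistant curves (hypercycles) of the core geodesic rather than horocycles.
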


\begin{proof}
Choose  coordinates  in  which  $A_\gamma$ is represented by a
rectangle    $0\le    x\le   1\quad$;   $-h/2\le   y\le   h/2$,   see
Figure~\ref{fig:cylinder}.

\begin{figure}[htb]
\includegraphics{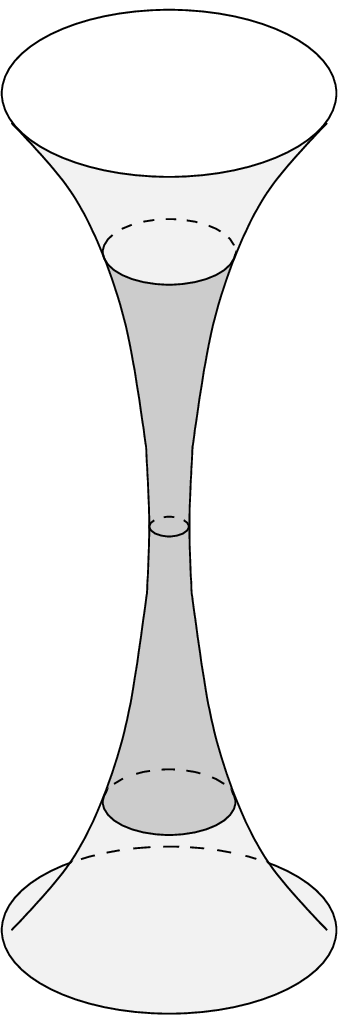}
\includegraphics{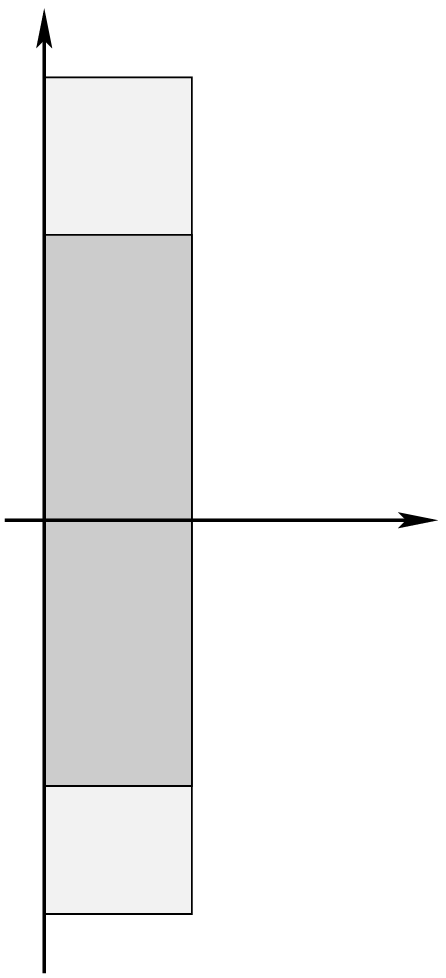}
  %
  %
\begin{picture}(0,0)(-25,70)
\put(19,5){\small$1$}
\put(33,-2){\small$x$}
\put(5,57){\small$y$}
\put(-5,45.5){\small$\frac{h}{2}$}
\put(-21,29.5){\small$\frac{h}{2}\!-\!y_0$}
\put(-10,-39.3){\small$-\frac{h}{2}$}
\put(-32,-26.7){\small$-\!\left(\!\frac{h}{2}\!-\!y_0\!\right)$}
\end{picture}
\vspace{120pt}
\caption{
\label{fig:cylinder}
Parametrization of a hyperbolic cylinder
}
\end{figure}
The hyperbolic metric on $A_\gamma$ is represented in
our coordinates as follows (see \cite[pages 25-26 and
  page 72]{Hubbard:book}):
\begin{equation}
\label{eq:canonical:hyperbolic:cylinder}
\ghyp=\frac{1}{\cos^2\left(\frac{\pi}{h}y\right)}
\left(\frac{\pi}{h}\right)^2
(dx^2+dy^2)\ .
\end{equation}
In  this hyperbolic metric the hyperbolic geodesic $\gamma$
representing  a  waist  curve  of the cylinder (the circle $y=0$) has
length  $l_{hyp}(\gamma)=\cfrac{\pi}{h}$. We assume that the modulus
of the cylinder is very large, so $l_{hyp}(\gamma)\ll 1$.

Cut  the  flat  cylinder at the vertical levels $\frac{h}{2}-y_0$ and
$-(\frac{h}{2}-y_0)$,  where  parameter $y_0(\eta)$ is chosen in such
a way that the hyperbolic length of the boundary curves is equal to
$\eta$.  As  usual, we assume that $l_{hyp}(\gamma)\ll \eta\ll 1$. It
is easy to see that
\begin{equation}
\label{eq:sin:pi:y0:over:h}
\cos\left(\frac{\pi}{h}\left(\frac{h}{2}-y_0\right)\right)=
\sin\left(\frac{\pi y_0}{h}\right)=\frac{\pi}{\eta h}=
 \frac{l_{\mathit{hyp}}(\gamma)}{\eta}\ll 1\,,
\end{equation}
so
\begin{equation}
\label{eq:formula:y0}
y_0=\frac{h}{\pi}\arcsin\frac{\pi}{\eta h}\approx \frac{1}{\eta}\ .
\end{equation}
Then, $A_\gamma(\eta)$ is represented in our coordinates by the
rectangle $0 \le x \le 1$, $-(h/2 - y_0) \le y \le (h/2-y_0)$.

The cylinder $A_\gamma(\eta)$ is subset of our surface $S$. As such,
it inherits a flat metric from the quadratic differential $q$ on
$S$. We may write
\begin{displaymath}
q = \psi(z) (dz)^2,
\end{displaymath}
where $z = x + iy$, and $\psi(z)$ is holomorphic. Note that $\psi$ has no
zeroes on $A_\gamma(\eta)$ (since zeroes of $\psi$ correspond to zeroes
of $q$ which will become cusps in our hyperbolic metric).
By (\ref{eq:canonical:hyperbolic:cylinder}),
the conformal factor $\phi(q)$ is given by:
\begin{equation}
\label{eq:phi:q:psi:cos:frac}
\phi(q) = \frac{1}{2} \log \left| \psi(x+iy)
\cos \left(\frac{\pi}{h} y\right)^2
\left(\frac{h}{\pi}\right)^2 \right|.
\end{equation}
Consider the values of $\cf(q)$ on the boundaries of
$A_\gamma(\eta)$, i.e on the segments $\alpha_+ \equiv [0,1] \times \{h/2 -y_0 \}$ and
$\alpha_- \equiv [0,1] \times \{-(h/2 - y_0)\}$. Let $\lambda_\pm$
be the
size of the thick component on the other side of $\alpha_\pm$ from
$A_\gamma(\eta)$.
Then, by Proposition~\ref{prop:pointwise:conformal:factor}, we have
\begin{equation}
\label{eq:cf:q:lambda:i}
|\cf(q) - \log \lambda_\pm| \le C,
\end{equation}
where $C$ is bounded in terms of $\delta$, $\eta$, $R$,
and the stratum. Then,
combining~\eqref{eq:sin:pi:y0:over:h}--\eqref{eq:cf:q:lambda:i}, we get
\begin{equation}
\label{eq:estimate:psi:on:Ii}
\left|\frac{1}{2} \log |\psi(z)| - \log \lambda_\pm \right| \le C'
\qquad\text{ on   $\alpha_\pm$,}
\end{equation}
where $C'$ is bounded in terms of $\eta$, $\delta$, $R$, and the stratum.

Let
\begin{equation}
\label{eq:def:fz:logpsi:z}
f(z) = \frac{1}{2} \log |\psi(z)| - \frac{\log \lambda_+ + \log \lambda_-}{2} -
\frac{(\log \lambda_+ - \log \lambda_-)y}{(h-2 y_0)}.
\end{equation}
Then, $f(z) = \frac{1}{2} \log |\psi(z)| -
\log \lambda_\pm$ on $\alpha_\pm$.
In view of (\ref{eq:estimate:psi:on:Ii}), we have $f(z) = O(1)$ on $\partial
A_\gamma(\eta)$. But $f$ is harmonic, and thus in view of the maximum
principle, $f(z) = O(1)$ (i.e. bounded in terms of $\delta$, $\eta$
$R$,
and the stratum) on all of $A_\gamma(\eta)$. Substituting
(\ref{eq:def:fz:logpsi:z}) into (\ref{eq:phi:q:psi:cos:frac}), we get
\begin{equation}
\label{eq:cfq:four:terms}
\cf(q) = \frac{\log \lambda_+ + \log \lambda_-}{2}  + \frac{(\log
  \lambda_+ - \log \lambda_-)y}{(h-2 y_0)}
+ \log \left|\cos \left(\frac{\pi}{h} y\right)
\frac{h}{\pi} \right| + f(z).
\end{equation}
We now multiply both sides by the hyperbolic metric (see
(\ref{eq:canonical:hyperbolic:cylinder}))
and integrate both sides over the rectangle $[0,1] \times [-(h/2
-y_0), (h/2 - y_0)]$. We get
\begin{equation}
\label{eq:IAS:I1:I2:I3:I4}
I(A_\gamma(\eta), S) = \frac{\log \lambda_+ + \log \lambda_-}{2}
\Area_{hyp}(A_\gamma(\eta)) + I_2 + I_3 + I_4\,,
\end{equation}
where  $I_2$,  $I_3$  and  $I_4$ are the contributions of the second,
third,                           and                           fourth
terms       in      (\ref{eq:cfq:four:terms}).      The      integral
$I_2$  vanishes  because  it  is  odd  under  the  map $y \to -y$. By
construction,  $|I_4| \le \sup |f(z)| \Area_{hyp}(A_\gamma(\eta))$ is
bounded  in  terms  of  $\delta$,  $\eta$,  $R$,  and the stratum. It
remains to bound $|I_3|$. We have

\begin{align*}
|I_3| & \le \int_0^1
\int_{-(h/2 - y_0)}^{(h/2-y_0)} \frac{1}{\cos^2(\frac{\pi y}{h})}
\left(\frac{\pi}{h}\right)^2 \left|\log \left(\cos (\frac{\pi y}{h})
\frac{h}{\pi} \right) \right| \, dx \, dy & \\
& =
2 \int_{0}^{(h/2-y_0)} \frac{1}{\cos^2(\frac{\pi y}{h})}
\left(\frac{\pi}{h}\right)^2 \left|\log \left(\cos (\frac{\pi y}{h})
\frac{h}{\pi} \right) \right| \, dy && \\
& =
2 \int_{\sin(\pi y_0/h)}^{1} \left(\frac{\pi}{h}\right) \frac{|\log
  ( h u/\pi)|}{u^2 \sqrt{1 -
    u^2}} \, du  \qquad\qquad\qquad \text{using $u =
      \cos(\pi y/h)$} \\
&
= 2\int_{\pi/(\eta h)}^{1} \left(\frac{\pi}{h}\right) \frac{|\log
  ( h u/\pi)|}{u^2 \sqrt{1 -
    u^2}} \, du  \qquad\qquad\qquad\qquad \text{using}~\eqref{eq:sin:pi:y0:over:h}
\\
& =
2 \int_{
\pi/(\eta h)
}^{1/\sqrt{2}} \left(\frac{\pi}{h}\right) \frac{|\log
  ( h u/\pi)|}{u^2 \sqrt{1 -
    u^2}} \, du + 2 \int_{1/\sqrt{2}}^{1} \left(\frac{\pi}{h}\right) \frac{|\log
  ( h u/\pi)|}{u^2 \sqrt{1 -
    u^2}} \, du \\
& =
2
(I_{3a} + I_{3b})\,.
\end{align*}
The integral $I_{3b}$ is bounded independently of $h\gg 1$
since it converges
and the integrand is bounded independently of $h$. Also,
\begin{align*}
I_{3a} & \le 2 \int_{
\pi/(\eta h)
}^{1/\sqrt{2}}
\left(\frac{\pi}{h}\right) \frac{|\log   ( h u/\pi)|}{u^2} \, du = \\
& = 2
\int_{
1/\eta
}^{\frac{h}{\pi \sqrt{2}}}
\frac{|\log v|}{v^2} \, dv && \text{using $v = h u/\pi$} \\
& \le 2 \int_{1/\eta}^{\infty}
\frac{|\log v|}{v^2} \, dv && \text{since the integral converges}.
\end{align*}
   %
We see that $I_{3a}$ is bounded depending only on $\eta$.
Thus, $|I_3|$ is bounded depending only $\eta$.
This completes the proof of the lemma.
\end{proof}

\subsection{Proof of Theorem~\ref{theorem:det:minus:det:upto:O(1)}}
The theorem follows almost immediately from (\ref{eq:decomp:pieces}),
Lemma~\ref{lemma:bound:E:S:S0:thick:part} and
Lemma~\ref{lemma:estimate:thin:part}. It remains only to note that for
any thick component $Y \subset S$,
\begin{displaymath}
\Area_{hyp}(Y(\eta)) + \frac{1}{2} \sum_{\gamma \in \partial Y}
\Area_{hyp}(A_\gamma(\eta)) = \Area_{hyp}(Y) = -2\pi \chi(Y)
\end{displaymath}
where the last equality follows from the Gauss-Bonnet theorem (since the geodesic curvature of
$\partial Y$ is $0$).  This
completes the proof of Theorem~\ref{theorem:det:minus:det:upto:O(1)}.

\section{Determinant  of  Laplacian  near  the boundary of the moduli
space}
\label{sec:det:near:the:boundary}

\subsection{Determinant of hyperbolic Laplacian near the boundary of
the moduli space}
\label{ss:Lundelius}

The  proof  of Theorem~\ref{theorem:Dflat:near:the:boundary} is based
on   the  following  result  of  R.~Lundelius,  see~\cite{Lundelius},
Theorem 1.2. This result generalizes an analogous statement proved by
S.~Wolpert in~\cite{Wolpert} for surfaces without cusps.

\begin{NNTheorem}[R.~Lundelius]
Let  $C_\tau$  be  a  family  of hyperbolic surfaces of finite volume
which tend to a stable Riemann surface $C_\infty$ as $\tau\to\infty$.
The surfaces are allowed to have cusps, but do not have boundary. Let
$C_0$  be  a  ``standard'' hyperbolic surface of the same topological
type as each $C_\tau$. Then
\begin{equation}
\label{eq:Lundelius}
-\log|\det\Delta_{\ghyp}(C_\tau,C_0)|
=\sum_k \frac{\pi^2}{3l_{\tau,k}}
+O(-\log\lh(C_\tau)) +O(1)
\end{equation}
as $\tau\to\infty$. Here $l_{\tau,k}$ are the lengths of the pinching
hyperbolic geodesics, and $\lh(C_\tau)$ is the length of the shortest
hyperbolic geodesic on $C_\tau$.
\end{NNTheorem}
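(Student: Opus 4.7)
The plan is to prove the asymptotic via a Mayer--Vietoris-type gluing formula for regularized determinants (in the spirit of Burghelea--Friedlander--Kappeler), combined with an explicit spectral computation on the model collars around the pinching geodesics. The key geometric input is the Keen--Randol collar lemma: each pinching geodesic $\gamma_{\tau,k}$ of length $l_{\tau,k}\to 0$ is embedded in a standard hyperbolic collar $A_{\tau,k}$ of half-width $\asymp \log(1/l_{\tau,k})$. The complement $Y_\tau := C_\tau\setminus \bigsqcup_k A_{\tau,k}$ consists of pieces which, after natural quasi-isometries, converge smoothly on compact subsets to the desingularized components of the stable limit $C_\infty$.

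First I would cut along curves in the interior of each collar where the hyperbolic cross-section has a fixed length (say, $\eta$, as in the $(\delta,\eta)$-thick-thin decomposition of \S\ref{sec:subsec:delta:eta:thick:thin}). The BFK gluing formula along these cutting curves expresses
\begin{equation*}
\log\det\Delta_{\ghyp}(C_\tau) \;=\; \log\det\Delta^D(Y_\tau) \;+\; \sum_k \log\det\Delta^D(A_{\tau,k}) \;+\; \log\det N_\tau,
\end{equation*}
where $\Delta^D$ is the Dirichlet Laplacian on each piece and $N_\tau$ is the Dirichlet-to-Neumann jump operator on the cutting curves. Writing the analogous formula for $C_0$ and subtracting produces a decomposition of $\log\det\Delta_{\ghyp}(C_\tau,C_0)$ as a sum of local contributions, each attached either to a thick piece, to a pinching collar, or to the cutting interfaces.

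Next I would analyze each piece separately. On the thick part, the natural quasi-isometries $Y_\tau\to Y_\infty$ are close to isometries on compact subsets, so a Polyakov-type comparison yields $\log\det\Delta^D(Y_\tau) = \log\det\Delta^D(Y_\infty) + O(1)$. The heart of the argument is the collar computation. On the standard hyperbolic cylinder, separation of variables reduces the spectral problem to a family of Sturm--Liouville problems indexed by $n\in\Z$. The $n\neq 0$ modes have uniformly bounded spectral zeta-derivatives as $l_{\tau,k}\to 0$, while the $n=0$ mode yields a zeta function that can be evaluated in closed form; its derivative at zero produces
\begin{equation*}
-\log\det\Delta^D(A_{\tau,k}) \;=\; \frac{\pi^2}{3 l_{\tau,k}} \;+\; O(1),
\end{equation*}
the leading term being a hyperbolic analogue of the classical asymptotic $\log|\eta(i/t)|\sim -\pi/(12t)$.

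The last step, and the main obstacle, is to control the contribution of the \emph{small eigenvalues} of the global Laplacian $\Delta_{\ghyp}(C_\tau)$. Each pinching geodesic is well known to produce one eigenvalue $\mu_{\tau,k} \asymp l_{\tau,k}$ (constructed via cutoff test functions supported in a one-sided half-collar, with upper/lower bounds from Cheeger/Buser-type inequalities). In the relative determinant these small eigenvalues contribute $-\sum_k\log\mu_{\tau,k} = O(-\log\lh(C_\tau))$; this is precisely the source of the logarithmic error term in the statement. The Dirichlet-to-Neumann correction $\log\det N_\tau$ involves operators on the cutting curves whose symbols depend smoothly on $\tau$ in a compact region of the moduli of boundaries, and a standard trace-class perturbation argument shows it absorbs into the $O(1)+O(-\log\lh)$ remainder. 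Summing the collar contributions with the bounded thick-part and jump-term contributions yields the required asymptotic \eqref{eq:Lundelius}.
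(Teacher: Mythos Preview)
The paper does not give its own proof of this statement: it is quoted as Theorem~1.2 of Lundelius~\cite{Lundelius} (with the extension to surfaces with cusps due to Jorgenson--Lundelius~\cite{Jorgenson:Lundelius}), and is used as a black box in the proof of Theorem~\ref{theorem:Dflat:near:the:boundary}. The only additional content the paper provides is the Remark immediately following the statement, which corrects a factor-of-two slip in Lundelius's cylinder computation (the coefficient should be $\pi^2/3$, not $\pi^2/6$). So there is no ``paper's own proof'' to compare your proposal against.

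That said, your sketch is in the right spirit and broadly parallels the actual argument in~\cite{Lundelius}. The decomposition into thick part plus collars, the use of a gluing/surgery formula for zeta-determinants, and the identification of the leading $\pi^2/(3l)$ term via an explicit cylinder computation are all present in Lundelius's paper. One point of divergence: from the paper's correction Remark one sees that Lundelius's collar computation is carried out on a \emph{flat} cylinder (the rectangle $[0,l)\times(2l,\pi-2l)$ with identified vertical sides, with Dirichlet conditions), not directly on the hyperbolic collar as you propose; the passage from the hyperbolic collar to this flat model is itself part of the estimate. Your claim that ``each pinching geodesic produces one eigenvalue $\mu_{\tau,k}\asymp l_{\tau,k}$'' is also not quite right as stated---the number and rates of small eigenvalues depend on how many components the pinching separates---but the conclusion you draw, that their total contribution to $\log\det$ is $O(-\log\lh)$, is correct and is indeed the source of that error term.
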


\begin{Remark} The definition of relative determinant of the Laplacian
  in the hyperbolic metric used in \cite{Lundelius} differs from
  ours. However, it was shown to be equivalent by J.~Jorgenson and
  R.~Lundelius in \cite{Jorgenson:Lundelius}.
\end{Remark}

\begin{Remark}
Note  that  the original formula of R.~Lundelius contains a misprint:
the coefficient in the denominator of the leading term in Theorem 1.2
of~\cite{Lundelius}  is  erroneously indicated as ``$6$'' compared to
``$3$'' in formula~\eqref{eq:Lundelius} above. The missing factor $2$
is  lost  in  the  computation  in  section  3.3  ``Analysis  of  the
cylinder''  of~\cite{Lundelius}.  The  author  considers there a flat
cylinder  obtained  by  identifying  the vertical sides of the narrow
rectangle   $[0,l)\times  (2l,\pi-2l)$,  where  $0<l\ll  1$,  in  the
standard  coordinate  plane  and  describes the eigenfunctions of the
Laplacian on this flat cylinder with Dirichlet conditions as
$$
\sin\left(\frac{2\pi n u}{l}\right)\sin\left(\frac{2 \pi m v}{a}\right)
\qquad\text{and}\qquad
\cos\left(\frac{2\pi n u}{l}\right)\sin\left(\frac{2 \pi m v}{a}\right)
$$
while they should be written as
$$
\sin\left(\frac{2\pi n u}{l}\right)\sin\left(\frac{\pi m v}{a}\right)
\qquad\text{and}\qquad
\cos\left(\frac{2\pi n u}{l}\right)\sin\left(\frac{\pi m v}{a}\right)
$$
with  $n\in\N$  and $m\in\N\cup\{0\}$ (page 232 of~\cite{Lundelius}).
The  rest  of  the  computation  works, basically, in the same way as
in~\cite{Lundelius}  except  that  the  resulting asymptotics for the
determinant  of  Laplacian on this flat cylinder is get multiplied
by the factor $2$ producing:
$$
-\log|\det\Dflat|
\sim\frac{\pi^2}{3l}+O(\log l)\,.
$$
(The  original  paper  has ``$6$'' in the denominator of the fraction
above.)
\end{Remark}

\begin{Example}[\textit{A pair of homologous saddle connections}]
\label{ex:comparison:to:Kokotov}
Consider  the following one-parameter family of flat surfaces. Take a
pair  of  flat  surfaces  $S_1,  S_2$; make a short slit on each flat
surface;  open  up  the  slits  and  glue  the surfaces together, see
Figure~\ref{fig:pair:of:homologous:sc}.  Contracting continuously the
length $s$ of the slit we get a family of flat surfaces $S_\tau$.
\begin{figure}[hbt]
  %
  %
\includegraphics{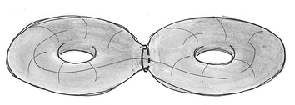}
\includegraphics{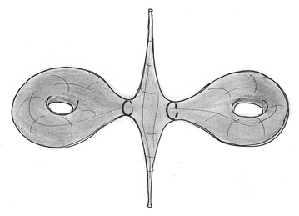}
\begin{picture}(0,0)(-25,70)
\put(-122,26){$P_1$}
\put(-122,-4){$P_2$}
\put(67,62){$P_1$}
\put(67,-44){$P_2$}
\end{picture}
   %
  %
\vspace{110pt}
\caption{
\label{fig:pair:of:homologous:sc}
A  pair  of homologous saddle connections in the flat metric produces
in  the underlying hyperbolic metric a thick component isometric to a
sphere with two cusps and with two boundary components represented by
short  hyperbolic  geodesics.  The stable curve obtained in the limit
has three irreducible components: the two Riemann surfaces underlying
$S_1$ and $S_2$ and a four-punctured sphere between them.
}
\end{figure}
For  the underlying hyperbolic surface we get three thick components:
two  obvious  ones,  but also a sphere separating the other two thick
components.  This  sphere  $Y_\tau$ has cusps at the endpoints points
$P_1,P_2$  of the slits and is separated from the rest of the surface
by a pair of short hyperbolic geodesic homotopic to curves encircling
$P_1,P_2$,  see  Figure~\ref{fig:pair:of:homologous:sc}. Clearly, the
size  of  $Y_\tau$  satisfies  $\lambda(Y_\tau)=2\lf(S_\tau)$,  where
$\lf(S_\tau)=s$ is the length of the slit. The sizes of the other two
thick components stay bounded. The Euler characteristic of the sphere
$\Ypunc_\tau$ with two holes and two punctures is equal to minus two.

Assuming  that  the slits which we made on the original flat surfaces
$S_1$,  $S_2$  are  not adjacent to conical singularities, the points
$P_1,  P_2$  on  the  compound flat surface $S_\tau$ have cone angles
$4\pi$  which  correspond  to  zeroes  of order $d=2$ in the sense of
quadratic                   differentials.                   Applying
Theorem~\ref{theorem:det:minus:det:upto:O(1)}, we get
\begin{multline}
\label{eq:det:minus:det:equals:1:6}
\log\det\Dflat(S,S_0)-\log\det\Delta_{\ghyp}(S,S_0)
=\\=
\frac{1}{6}\cdot
\left(2-\frac{2}{2+2}-\frac{2}{2+2}\right)\cdot \log\lf(S_\tau)
+ O(1)
=\frac{\log\lf(S_\tau)}{6}+ O(1)
\,,
\end{multline}
where the error term is bounded in terms only of the orders of the
singularities of $S_\tau$.

A   particular case of the above construction when the surfaces $S_1,
S_2$  belong  to  the principal stratum of Abelian differentials, was
recently    studied    by    A.~Kokotov    in   much   more   detail,
see~\cite{Kokotov:collapse:two:zeroes}. His result implies that
$$
\log\det\Dflat(S,S_0)= \frac{1}{2}\log\lf(S_\tau) + O(1)\,.
$$
We now compute the asymptotic of $\log\det\Delta_{\ghyp}(C_\tau,C_0)$
in        this       example       to       show       that       the
expression~\eqref{eq:det:minus:det:equals:1:6}  for the difference of
the flat and hyperbolic determinants matches the asymptotics obtained
by A.~Kokotov.

By Theorem of Lundelius, see~\eqref{eq:Lundelius},
$$
\log\det\Delta_{\ghyp}(C_\tau,C_0)
\sim -\sum_k \frac{\pi^2}{3l_{k}(S_\tau)}\,,
$$
where  summation is taken over all short hyperbolic geodesics. In our
case  we  have  two  short hyperbolic geodesics of approximately same
length $\lh(S_\tau)$, so we get
\begin{equation}
\label{eq:Lundelius:example}
\log\det\Delta_{\ghyp}(C_\tau,C_0)
\sim -2\cdot\frac{\pi^2}{3\lh(S_\tau)}\,.
\end{equation}
The length of a short hyperbolic geodesic is expressed in terms
of the modulus of the embodying maximal conformal annulus as
$$
\lh(S_\tau)=\frac{\pi}{\Mod_\tau}\,,
$$
see  (3.3.7) in~\cite{Hubbard:book}.

By      considering      the     Zhukovsky     function     $z\mapsto
\frac{1}{2}(z+\frac{1}{z})$  we  can  see that asymptotically, as the
size $\lf(S_\tau)$ of the slit tends to zero,
   %
   %
   %
$$
\frac{1}{\lh(S_\tau)}\sim -\frac{\log\lf(S_\tau)}{2\pi^2}\,.
$$
Plugging this into~\eqref{eq:Lundelius:example} we get
$$
\log\det\Delta_{\ghyp}(C_\tau,C_0)
\sim -2\cdot\frac{\pi^2}{3\lh(S_\tau)}
\sim -2\cdot\frac{\pi^2}{3}\,\frac{-\log\lf(S_\tau)}{2\pi^2}=
\frac{\log\lf(S_\tau)}{3}\,.
$$
Thus,
$$
\log\det\Dflat(S,S_0)-\log\det\Delta_{\ghyp}(S,S_0)\sim
\frac{\log\lf(S_\tau)}{6}\,,
$$
which matches~\eqref{eq:det:minus:det:equals:1:6}.
\end{Example}

It is immediate to recast the above Theorem of R.~Lundelius as a
uniform bound:
\begin{corollary}
\label{cor:lundelius}
Let  $C$, $C_0$ be two hyperbolic surfaces of finite volume and the
same topological type.
The surfaces are allowed to have cusps, but do not have boundary. Let
$\delta > 0$ (depending only on the genus $g$ and the number of
cusps $n$) be such that any two curves of hyperbolic length less than
$\delta$ are disjoint. Then, there exists $c_1 > 0$ (depending only
on $g$, $n$, $\delta$ and $C_0$) such that
\begin{equation}
\label{eq:cor:Lundelius}
\left| \log|\det\Delta_{\ghyp}(C,C_0)|
+ \frac{\pi^2}{3}\sum_{\gamma \in \Gamma(\delta)}
\frac{1}{\ell_{hyp}(\gamma)}\right| \le c_1 ( 1 +
|\log \lh(C)|)
\end{equation}
Here $\Gamma(\delta)$ is the set of closed geodesics of length at most
$\delta$ (so the cardinality of $\Gamma(\delta)$ is at most
$(3g-3+n)$), and $\lh(C)$ is the length of the shortest
hyperbolic geodesic on $C$.
\end{corollary}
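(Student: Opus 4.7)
The plan is to deduce the uniform bound~\eqref{eq:cor:Lundelius} from the Theorem of R.~Lundelius cited above via a compactness-and-contradiction argument based on the Deligne--Mumford compactification. The Theorem of Lundelius is an asymptotic statement about an individual degenerating family, so the $O$-constants there are \emph{a priori} allowed to depend on the family; the content of the corollary is that one may in fact choose them to depend only on $g$, $n$, $\delta$, and $C_0$.

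First I would fix $\delta$ small enough that, for any hyperbolic surface of type $(g,n)$, all curves in $\Gamma(\delta)$ are simple, pairwise disjoint, and pairwise non-homotopic (this is a standard consequence of the collar lemma, and yields $\#\Gamma(\delta)\le 3g-3+n$). Next I would suppose for contradiction that no such $c_1$ exists. Then there is a sequence $C_\tau$ of hyperbolic surfaces of the same topological type as $C_0$ such that
\begin{equation}
\label{eq:contradiction:ratio}
R_\tau\;:=\;\frac{\bigl|\log|\det\Delta_{\ghyp}(C_\tau,C_0)|+\tfrac{\pi^2}{3}\sum_{\gamma\in\Gamma(\delta)}\ell_{hyp}(\gamma)^{-1}\bigr|}{1+|\log\lh(C_\tau)|}\ \longrightarrow\ \infty.
\end{equation}
Pass to a subsequence converging in the Deligne--Mumford compactification to a stable Riemann surface $C_\infty$.

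There are two cases. If $C_\infty$ is smooth, then by Mumford's compactness $C_\tau$ stays in a compact part of moduli space; hence $\lh(C_\tau)$ is bounded below, $\Gamma(\delta)$ is eventually empty (shrinking $\delta$ if necessary), and $\log|\det\Delta_{\ghyp}(C_\tau,C_0)|$ stays bounded by continuity of the determinant on the smooth part of moduli space. Then both numerator and denominator of $R_\tau$ are bounded, contradicting~\eqref{eq:contradiction:ratio}. If $C_\infty$ is nodal, let $\gamma_{\tau,k}$ be the pinching geodesics, with hyperbolic lengths $l_{\tau,k}\to 0$. For $\tau$ large, $\Gamma(\delta)$ coincides with $\{\gamma_{\tau,k}\}$, since every non-pinching geodesic has hyperbolic length bounded below on $C_\infty$. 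Now I would apply the cited theorem of R.~Lundelius to this particular family, obtaining
\begin{equation*}
\Bigl|\log|\det\Delta_{\ghyp}(C_\tau,C_0)|+\tfrac{\pi^2}{3}\sum_k l_{\tau,k}^{-1}\Bigr|\;=\;O(-\log\lh(C_\tau))+O(1).
\end{equation*}
Since $\lh(C_\tau)=\min_k l_{\tau,k}\to 0$, the right-hand side is bounded by a constant (depending on the family) times $1+|\log\lh(C_\tau)|$. So the numerator in~\eqref{eq:contradiction:ratio} is bounded by $C(\text{family})\cdot(1+|\log\lh(C_\tau)|)$, which makes $R_\tau$ bounded along this subsequence, contradicting~\eqref{eq:contradiction:ratio}.

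The main (mild) obstacle is checking that the $O$-constants produced by Lundelius's theorem really are uniform once we have extracted a convergent subsequence; in other words, verifying that the proof in~\cite{Lundelius}, which proceeds via a model of a degenerate cylinder plus contributions from the thick parts, gives constants depending only on the stratum of $C_\infty$ and on $C_0$ (not on the sequence). This is immediate from inspecting the proof: the thick-part contribution converges by continuity on the compact-type moduli space of the irreducible components of $C_\infty$ (with $C_0$ fixed), and the pinching-cylinder contribution is analyzed by an explicit model depending only on $l_{\tau,k}$. Granting this, the contradiction argument closes and yields the claimed $c_1=c_1(g,n,\delta,C_0)$.
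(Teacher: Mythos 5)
Your proof is correct and follows essentially the same route as the paper: a contradiction argument, passing to a subsequence converging in the Deligne--Mumford compactification, and then invoking Lundelius's asymptotic formula~\eqref{eq:Lundelius} to bound the ratio along that subsequence. The extra case analysis and the discussion of uniformity of the $O$-constants are reasonable elaborations of what the paper leaves implicit, but the underlying argument is identical.
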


\begin{proof}[Proof of Corollary~\ref{cor:lundelius}]
The proof is by contradiction. If such a constant $c_1$ did not exist,
then there would exists a sequence $C_\tau$ with fixed topology such
that
\begin{equation}
\label{eq:lundelius:seq:counterexamples}
\frac{1}{1 +
|\log \lh(C_\tau)|} \left| \log|\det\Delta_{\ghyp}(C_\tau,C_0)|
+ \frac{\pi^2}{3}\sum_{\gamma \in \Gamma(\delta)}
\frac{1}{\ell_{hyp}(\gamma)}\right| \to \infty
\end{equation}
The existence of the
Deligne-Mumford compactification implies that (after passing to a
subsequence) we  may assume that the sequence $C_\tau$ tends to a
stable Riemann surface $C_\infty$. Then, from (\ref{eq:Lundelius}) we
see that the left-hand-side of
(\ref{eq:lundelius:seq:counterexamples}) is bounded. This contradicts
(\ref{eq:lundelius:seq:counterexamples}).
\end{proof}

\subsection{Proof of Theorem~\ref{theorem:Dflat:near:the:boundary}}


We start
with the following preparatory Lemma.

\begin{Lemma}
\label{lemma:O:lhyp:O:lflat}
Consider  a  stratum $\cQ(d_1,\dots,d_\noz)$ of meromorphic quadratic
differentials  with  at most simple poles (the case of global squares
of  $1$-forms  is  not  excluded).  Let  $\lf(S)$  be the length of a
shortest saddle connection on a flat surface $S$; let $\lh(S)$ be the
length  of  the  shortest geodesic in the canonical hyperbolic metric
with cusps in the conformal class of $S$.

The following estimate is valid for any flat surface $S$ of unit area
in the stratum:
$$
|\log\lh(C)|=O(|\log\lf(S)|)
$$
where
$$
O(|\log\ell_{\mathit{flat}}(S)|)
\le 2|\log\lf(S)|+C(g,\noz)
$$
with  $C(g,\noz)$  depending only on a genus of $S$ and on the number
$\noz$ of zeroes and simple poles of the quadratic differential.
\end{Lemma}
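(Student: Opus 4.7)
The plan is to bound $\lh(C)$ below by a multiple of $\lf(S)^2$, from which the stated logarithmic inequality follows immediately. If $\lh(C)\ge\delta$ for the constant $\delta$ used in the thick--thin decomposition, the required inequality is trivial, so we may assume $\lh(C)<\delta$ is realized by a short hyperbolic geodesic $\gamma$. Following the discussion in Section~\ref{sec:Geometric:Compactification:Theorem}, the geodesic $\gamma$ is the core curve of a maximal primitive annulus in the flat metric, which is either a flat cylinder or an expanding annulus; we analyze these two cases separately using Lemma~\ref{lemma:modulus}.

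In the flat-cylinder case, with width $w$ and height $h$, Lemma~\ref{lemma:modulus}(a) yields $\lh(\gamma)\ge c\,w/h$ for $\lh(\gamma)$ sufficiently small. Each boundary component of the maximal flat cylinder is a closed broken line of saddle connections of total length $w$, so $w\ge\lf(S)$; the area bound $wh\le\Area(S)=1$ then gives $h\le 1/\lf(S)$, and hence $\lh(\gamma)\ge c\,\lf(S)^2$. Taking logarithms produces $|\log\lh(C)|\le 2|\log\lf(S)|+C(g,\noz)$, which is the desired inequality and (as Example~\ref{ex:linear:in:lf} suggests) is sharp up to additive constants.

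In the expanding-annulus case, let $\lambda_{\pm}$ denote the sizes of the two adjacent $\delta$-thick components. Lemma~\ref{lemma:modulus}(b) gives $\lh(\gamma)\ge c_0/|\log\lambda_+-\log\lambda_-|$, and Lemma~\ref{lemma:size:ge:lflat} yields $\lambda_\pm\ge\lf(S)$. An upper bound $\lambda_\pm\le C(g,\noz)$ can be obtained from $\Area(S)=1$ together with a systolic bound in a fixed topological type: for thick components of non-trivial topology it follows from the existence of a short essential closed geodesic on a bounded-area surface, and the pair-of-pants case is handled by direct inspection of the three possible $q$-representatives described in Section~\ref{ss:Flat:interpretation:of:the:boundary:surfaces}. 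Consequently $|\log\lambda_+-\log\lambda_-|\le 2|\log\lf(S)|+C$, and so $|\log\lh(\gamma)|=O(\log|\log\lf(S)|)$, which is strictly better than the claimed bound.

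The one mildly subtle step is the uniform upper bound on the sizes $\lambda_\pm$ in the second case; everything else is a direct application of the Rafi-style comparison estimates already codified in Lemma~\ref{lemma:modulus} and Lemma~\ref{lemma:size:ge:lflat}. Once the two cases are combined with the trivial regime $\lh(C)\ge\delta$ and residual constants are absorbed into $C(g,\noz)$, the inequality $|\log\lh(C)|\le 2|\log\lf(S)|+C(g,\noz)$ follows.
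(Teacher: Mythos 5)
Your argument is correct, and it is in fact the route the paper explicitly acknowledges but declines to take: the published proof opens with the remark that the lemma can be deduced ``straightforwardly'' from Lemma~\ref{lemma:modulus}, which is precisely your case analysis, and then gives a different, two-line argument via extremal length. Namely, for the systole $\gamma$ one has $\ext(\gamma)\ge \lf[\gamma]^2/\Area(S)=\lf[\gamma]^2\ge \lf(S)^2$ by testing the supremum in the definition of extremal length against the unit-area flat metric (the flat geodesic representative being a concatenation of saddle connections, or the waist of a cylinder whose boundary contains one), and then one invokes Maskit's comparability of hyperbolic and extremal lengths for short curves and takes logarithms. The trade-off is instructive: the extremal-length proof requires no thick--thin decomposition, no dichotomy between flat cylinders and expanding annuli, and --- crucially --- no upper bound on the sizes $\lambda_\pm$ of the adjacent thick components, which is exactly the step you correctly single out as the one needing extra care (it is a systolic-type inequality for the $q$-representatives that is true but is nowhere established in the paper, so as written your expanding-annulus case leans on an unproved auxiliary fact). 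Your version, on the other hand, is more geometrically explicit: it localizes the worst case, showing that the factor $2$ in $2|\log\lf(S)|$ is contributed entirely by long flat cylinders of width comparable to $\lf(S)$ (consistent with the sharpness suggested by Example~\ref{ex:linear:in:lf}), while expanding annuli only produce the much smaller $O(\log|\log\lf(S)|)$. If you want to keep your route, the cleanest fix for the flagged step is to bound not $\lambda_\pm$ individually but the modulus of the expanding annulus directly: since it is embedded in a unit-area surface, its outer radius is $O(1)$ while its inner boundary has length at least $\lf(S)$, so Minsky's estimate already gives $\Mod\le C|\log\lf(S)|$ without any systolic input.
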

\begin{proof}
It is straightforward to deduce this lemma from
Lemma~\ref{lemma:modulus}, but we find the following argument more
illuminating.
Recall that the extremal length of a curve $\gamma$ on a Reimann surface $C$
is defined to be:
\begin{displaymath}
\ext(\gamma) = \sup_{\rho} \inf_{\alpha \in [\gamma]}
\frac{\ell_\rho(\alpha)^2}{\Area_\rho(C)},
\end{displaymath}
where the inf is over the homotopy class $[\gamma]$ of $\gamma$, and
the sup is over all metrics in the conformal class of $C$. Letting
$\rho$ be the flat metric on $C$ we get
\begin{displaymath}
\ext(\gamma) \ge \ell_{flat}(\gamma)^2.
\end{displaymath}
It is a well known fact (see e.g. \cite{maskit}) that for sufficiently short
curves, the hyperbolic length is comparable to the extremal
length. Then, taking logs completes the proof of the lemma.
\end{proof}

\begin{proof}[Proof of Theorem~\ref{theorem:Dflat:near:the:boundary}]
We choose $\delta > 0$ so that Lemma~\ref{lemma:modulus}
holds, and also Corollary~\ref{cor:lundelius} holds.
Choose $M > M_0$ where $M_0$ is as in Lemma~\ref{lemma:modulus}
(c). As above, let $\Gamma(\delta)$ denote the simple closed curves of
hyperbolic length at most $\delta$.
Let $\Gamma'_M$ denote the simple closed curves which are represented
in the flat metric by a flat cylinder of modulus at least $M$. Then
the sum in (\ref{eq:cor:Lundelius}) is over $\gamma \in
\Gamma(\delta)$, while the sum in the expression
(\ref{eq:Dflat:near:the:boundary}) in the statement of
Theorem~\ref{theorem:Dflat:near:the:boundary} is over $\gamma \in
\Gamma'_M$. By Lemma~\ref{lemma:modulus} (c), $\Gamma'_M \subset
\Gamma(\delta)$.
Let
\begin{align*}
\cE(S) & \equiv \frac{\pi^2}{3} \sum_{\gamma \in \Gamma(\delta)}
\frac{1}{\lh(\gamma)} - \frac{\pi}{3} \sum_{\gamma \in \Gamma'_M}
\frac{h(\gamma)}{w(\gamma)} \\
 & =  \frac{\pi}{3} \sum_{\gamma \in \Gamma(\delta) \cap \Gamma'_M} \left(
 \frac{\pi}{\lh(\gamma)} - \frac{h(\gamma)}{w(\gamma)} \right) +
\frac{\pi^2}{3} \sum_{\gamma \in \Gamma(\delta)\setminus \Gamma'_M}
\frac{1}{\lh(\gamma)}.
\end{align*}
We claim that
\begin{equation}
\label{eq:bound:cE:error:term}
|\cE(S)| = O(| \log \ell_{flat}(S)|).
\end{equation}
Indeed, since the number of terms in both sums defining $\cE(S)$ is
bounded by $3g-3+n$, it is enough to bound each term separately. If
$\gamma \in \Gamma(\delta) \cap \Gamma'_M$ then by
Lemma~\ref{lemma:modulus} (a),
\begin{displaymath}
\left|\frac{\pi}{\lh(\gamma)} -
\frac{h(\gamma)}{w(\gamma)}\right| = O(1).
\end{displaymath}
Now suppose $\gamma \in
\Gamma(\delta)\setminus \Gamma'_M$. Since $\gamma \in \Gamma(\delta)$,
$\gamma$ is
represented in the flat metric by either a flat cylinder or an
expanding annulus. If the representative is a flat cylinder, then,
since $\gamma \not\in \Gamma'_M$, the
modulus of the cylinder can be at most $M$; this implies by
Lemma~\ref{lemma:modulus} (a) that $\frac{1}{\lh(\gamma)}$ is bounded in
terms of $M$, i.e. $\frac{\pi}{\lh(\gamma)} = O(1)$. If the
representative of $A_\gamma(\eta)$ is an expanding annulus, then by
Lemma~\ref{lemma:modulus} (b), and Lemma~\ref{lemma:size:ge:lflat},
\begin{displaymath}
\frac{\pi}{\lh(\gamma)} \approx \left|\log
\frac{\lambda_+(A_\gamma)}{\lambda_-(A_\gamma)} \right| \le \left|\log
\frac{O(1)}{\lf(S)} \right|.
\end{displaymath}
Therefore, in this case, $\frac{\pi}{\lh(\gamma)} = O(|\log
\lf(S)|)$. This concludes the proof of (\ref{eq:bound:cE:error:term}).

Now Theorem~\ref{theorem:Dflat:near:the:boundary} follows immediately
from Corollary~\ref{cor:lundelius}, Lemma~\ref{lemma:O:lhyp:O:lflat}
and (\ref{eq:bound:cE:error:term}).
\end{proof}

\section{Cutoff near the boundary of the moduli space}
\label{sec:cutoff}

In             this            section            we            prove
Theorem~\ref{theorem:int:Dflat:equals:SV:const}          establishing
relation~\eqref{eq:int:Dflat:equals:SV:const} between the integral of
$\Dhyp\log|\det\Dflat(S,S_0)|$  over  a regular invariant suborbifold
$\cM_1$  and  the  Siegel---Veech constant $c_{\mathit{area}}(\cM_1)$
corresponding to this suborbifold.

The  only  property  of $\log|\det\Dflat(S,S_0)|$ which we use in the
current   section   is,   basically,   reduced   to   the  asymptotic
formula~\eqref{eq:Dflat:near:the:boundary}                       from
Theorem~\ref{theorem:Dflat:near:the:boundary}.  This formula does not
distinguish  flat surfaces defined by Abelian differentials from flat
surfaces  defined by meromorphic quadratic differentials with at most
simple poles. Thus, in the current section it is irrelevant whether a
regular invariant suborbifold $\cM_1$ belongs to a stratum of Abelian
differentials  or to a stratum of meromorphic quadratic differentials
with at most simple poles.

Recall  that the Laplace operator associated to the hyperbolic metric
of   curvature   $-4$  on  Teichm\"uller  discs  is  defined  on  the
projectivized  strata  $\PcH(m_1,\dots,m_\noz)$;  it  acts  along the
leaves  of  the  corresponding foliation in $\PcH(m_1,\dots,m_\noz)$.
The relative determinant of the flat Laplacian $\det\Dflat(S,S_0)$ is
defined   for   flat   surfaces  $S$  of  area  one  in  the  stratum
$\cH_1(m_1,\dots,m_\noz)$.    Note,   that   $\det\Dflat(S,S_0)$   is
invariant under the action of $\SO$. Using the natural identification
$$
\PcH(m_1,\dots,m_\noz)\simeq \cH_1(m_1,\dots,m_\noz)/\SO
$$
we    may    consider    $\det\Dflat(S,S_0)$   as   a   function   on
$\PcH(m_1,\dots,m_\noz)$.

In practice, it would be convenient to pull back all the functions to
the stratum $\cH_1(m_1,\dots,m_\noz)$ and work there. Throughout this
section we consider only those functions on $\cH_1(m_1,\dots,m_\noz)$
which are $\SO$-invariant.

\subsection{Green's Formula and cutoff near the boundary}

We  start  by  recalling  Green's Formula adopted to our notations.

\begin{GreenFormula}
Suppose  that $f_1: \cM_1 \to \reals$ and $f_2: \cM_1 \to \reals$ are
continuous,     leafwise-smooth     along     Teichm\"uller    discs,
$\SO$-invariant,  and  at  least  one  of  the  functions has compact
support. Then,
\begin{equation}
\label{eq:stokes:teich}
\int_{\cM_1} f_1 (\Dhyp f_2)\, d\nu_1
  =
- \int_{\cM_1} (\nhyp f_1) \cdot
(\nhyp f_2)\, d\nu_1
  =
\int_{\cM_1} (\Dhyp f_1) f_2 \,d\nu_1\,.
\end{equation}
\end{GreenFormula}

Let  $C$  be  a  flat  cylinder.  We denote its modulus by $\Mod(C)$.
(Recall  that the modulus of a cylinder with closed horizontal curves
is  its  height  divided  by  its width.) We denote the length of the
waist  curve  (i.e.  of the closed trajectory) of the cylinder $C$ by
$w(C)$. For any point $S \in \cM_1$, let $Cyl_K(S)$ denote the set of
cylinders with modulus at least $K$.
We   shall   always   assume   that  $K$  is  large  enough, so  that
condition~\eqref{eq:condition:III} is satisfied.
We also assume that $K$ is sufficiently large so that
the core curves of all the cylinders
in    $Cyl_K(S)$    are    short    in    the    hyperbolic   metric,
see~\cite{Wolpert:metric}  or~\cite{Wolpert:metric:survey}. Thus, the
cylinders  in $Cyl_K(S)$ are disjoint, and their number is bounded by
$3g-3+\noz$.
Let
\begin{displaymath}
\ell_K(S) = \min_{C \in Cyl_K(S)} w(C).
\end{displaymath}
We set $\ell_K(S) = 1000$ if $Cyl_K(S)$ is empty.

As      in     Theorem~\ref{theorem:Dflat:near:the:boundary},     let
$\ell_{\mathit{flat}}(S)$  be  the  length  of  the  shortest  saddle
connection     in     the    flat    metric    on    $S$.    Clearly,
$\ell_{\mathit{flat}}(S) \le \ell_K(S)$.

\begin{lemma}
\label{lemma:sv:sc:estimate}
For    any   invariant   suborbifold   $\cM_1$,   we   have
\begin{equation}
\label{eq:sv:sc:estimate}
\nu_1\big( \{ S \in \cM_1 \;|\; \ell_{\mathit{flat}}(S) <
\epsilon \} \big) \le C \epsilon^2,
\end{equation}
where  $C$  depends  only  on  $\cM_1$.

In       particular, (after summing the geometric
  series), we see that       for       any       $\beta      <      2$,
$\big(\ell_{\mathit{flat}}(\cdot)\big)^{-\beta}   \in   L^1(\cM_1,
\nu_1)$.
\end{lemma}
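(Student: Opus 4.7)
The plan is to bound the measure of the set of surfaces with a short saddle connection by the Siegel--Veech count of short saddle connections, for which a bound of order $\epsilon^2$ is already known.

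First, I will observe that if $\ell_{\mathit{flat}}(S)<\epsilon$, then by definition $S$ carries at least one saddle connection of length less than $\epsilon$. Letting $N_{sc}(S,\epsilon)$ denote the number of saddle connections on $S$ of length at most $\epsilon$, this gives the pointwise bound
$$
\mathbf{1}_{\{\ell_{\mathit{flat}}(S)<\epsilon\}}(S)\ \le\ N_{sc}(S,\epsilon).
$$
Integrating against $\nu_1$ and applying the Siegel--Veech formula for saddle connections (the analogue for $N_{sc}$ of formula~\eqref{eq:SV:constant:definition}, established in the work of A.~Eskin and H.~Masur~\cite{Eskin:Masur}) yields
$$
\nu_1\big(\{S\in\cM_1\,:\,\ell_{\mathit{flat}}(S)<\epsilon\}\big)\ \le\ \int_{\cM_1} N_{sc}(S,\epsilon)\,d\nu_1\ =\ c_{sc}(\cM_1)\,\pi\,\epsilon^2,
$$
which is~\eqref{eq:sv:sc:estimate} with $C=\pi\,c_{sc}(\cM_1)$.

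For the $L^1$ assertion, I will use a standard dyadic decomposition. Fixing some scale $\ell_0>0$ above which $\ell_{\mathit{flat}}^{-\beta}$ is bounded, I split
$$
\int_{\cM_1}\ell_{\mathit{flat}}(S)^{-\beta}\,d\nu_1\ \le\ \ell_0^{-\beta}\,\nu_1(\cM_1)\ +\ \sum_{n\ge 0}2^{\beta(n+1)}\,\nu_1\big(\{2^{-n-1}\le\ell_{\mathit{flat}}<2^{-n}\}\big).
$$
Applying~\eqref{eq:sv:sc:estimate} to each dyadic piece bounds the sum by a constant times $\sum_{n\ge 0}2^{(\beta-2)n}$, which is a convergent geometric series precisely when $\beta<2$.

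The main obstacle is simply the invocation of the Siegel--Veech formula for saddle connections on a general regular invariant suborbifold $\cM_1$, rather than just on a connected component of a stratum; for strata this is exactly the content of~\cite{Eskin:Masur}, and the extension to a general ergodic $\SL$-invariant probability measure uses only $\SL$-invariance and finiteness of total mass, both of which are built into the definition of $\cM_1$. I note that, unlike the role of condition~(iii) of Definition~\ref{def:invariant:regular} in section~\ref{sec:cutoff}, this part of the argument does not require regularity in the cylinder sense --- only the existence of the Siegel--Veech constant for the saddle connection counting function.
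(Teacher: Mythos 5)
Your proposal is correct and follows essentially the same route as the paper: the paper likewise bounds the indicator of $\{\ell_{\mathit{flat}}(S)<\epsilon\}$ by the saddle-connection counting function $N_s(S,\epsilon)$ and invokes the Siegel--Veech formula of \cite{Veech:Siegel} and \cite{Eskin:Masur}, noting that only $\SL$-invariance of the probability measure is used. Your dyadic decomposition is exactly the ``summing the geometric series'' step the paper leaves implicit.
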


\begin{proof} We use only the fact that $\nu_1$ is an
$\SL$-invariant
probability  measure (and not the manifold structure of $\cM_1$). Let
$N_s(S,L)$  denote  the number of saddle connections on $S$ of length
at  most  $L$.  By  the  Siegel---Veech  formula  applied  to  saddle
connections  \cite{Veech:Siegel},  \cite[Theorem~2.2]{Eskin:Masur} we
have for all $\epsilon > 0$,
\begin{displaymath}
\int_{\cM_1} N_s(S,\epsilon) \, d\nu_1(S) =
c_s(\cM_1) \cdot\pi \epsilon^2\,.
\end{displaymath}
Note  that  if $\ell_{\mathit{flat}}(S) < \epsilon$, $N_s(S,\epsilon)
\ge 1$. It follows that
\begin{displaymath}
\nu_1\big(\{ S \in \cM_1 \;|\; \ell_{\mathit{flat}}(S) < \epsilon \}\big)
\le \int_{\cM_1} N_s(S,\epsilon) \, d\nu_1(S) \le
c_s(\cM_1)\cdot\pi\epsilon^2\,.
\end{displaymath}
\end{proof}

Let  $\chi_\epsilon$  be the characteristic function of the set $\{ S
\in  \cM_1  \; \mid \; \ell_K(S) \ge \epsilon \}$. Pick a nonnegative
$\SO$-invariant  smooth  function  $\eta:  \SL  \to \reals$ such that
$\int_{\SL}  \eta(g)  \,  dg = 1$, and $\eta$ is supported on the set
$\{  g  \;|\;  1/2  < \|g\| < 2 \}$. Here $\|g\|$ is the operator
norm of $g$, viewed as a $2 \times 2$ matrix. Let
\begin{equation}
\label{eq:f:epsilon}
f_\epsilon(S):= \int_{\SL} \eta(g) \chi_\epsilon(g S) \, dg\ ,
\end{equation}
where  $dg$  is  the  Haar  measure  on  $\SL$.  Note  that since the
functions  $\eta$  is $\SO$-invariant, $f_\epsilon:\cM_1\to\reals$ is
also       $\SO$-invariant       and      thus      quotients      to
$f_\epsilon: \operatorname{\mathbb{P}}\!\cM \to\reals$.

\begin{lemma}
\label{lemma:properties:f2}
The  nonnegative  function  $f_\epsilon:  \cM_1  \to
\reals$ has the following properties:
\begin{itemize}
\item[{\rm (a)}] $f_\epsilon(S) = 0$ if $\ell_K(S) \le \epsilon/2$.
\item[{\rm (b)}] $f_\epsilon(S) = 1$ if $\ell_K(S) \ge 2\epsilon$.
\item[{\rm  (c)}] $f_\epsilon$ is leafwise-smooth along Teichm\"uller
discs,  and  $\nhyp f_\epsilon$ and $\Dhyp f_\epsilon$ are bounded on
$\cM_1$ by a uniform bound independent of $\epsilon$.
\end{itemize}
\end{lemma}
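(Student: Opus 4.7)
The plan is to handle (a), (b), (c) in turn, with nonnegativity of $f_\epsilon$ following trivially from $\eta\geq 0$ and $0\leq \chi_\epsilon\leq 1$ together with definition~\eqref{eq:f:epsilon}.

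For (a) and (b), I will exploit the bi-Lipschitz character of the $\SL$-action on the flat geometry. Any $g\in\SL$ with $\|g\|<2$ acts on the two-dimensional real period space as a linear automorphism with $\|g^{-1}\|=\|g\|<2$ (the latter because $\det g=1$), and therefore, for any flat cylinder $C$ on $S$, the corresponding cylinder $gC$ on $gS$ has width $w(gC)\in [w(C)/2,\, 2w(C)]$ and modulus in $[\Mod(C)/4,\, 4\Mod(C)]$. I would deduce (a) by contraposition: if $f_\epsilon(S)>0$ then there is some $g$ in the support of $\eta$ with $\chi_\epsilon(gS)=1$, witnessed by a cylinder $\tilde C\in Cyl_K(gS)$ of width $\geq\epsilon$; pulling back, $g^{-1}\tilde C$ is a cylinder on $S$ of width $\geq\epsilon/2$ and modulus $\geq K/4$. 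Under the standing assumption of this section that $K$ is chosen sufficiently large, $g^{-1}\tilde C\in Cyl_K(S)$, whence $\ell_K(S)>\epsilon/2$. For (b) I run the reverse argument: given $\ell_K(S)\geq 2\epsilon$ and $\|g\|<2$, any $\tilde C\in Cyl_K(gS)$ pulls back to a cylinder $g^{-1}\tilde C\in Cyl_{K/4}(S)$; the standing assumption on $K$ places it in $Cyl_K(S)$, so its width on $S$ is at least $2\epsilon$, hence its width on $gS$ is at least $\epsilon$. Thus $\chi_\epsilon(gS)=1$ throughout the support of $\eta$, giving $f_\epsilon(S)=\int_{\SL}\eta\,dg=1$.

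For (c) the standard approach is to transfer derivatives from the discontinuous $\chi_\epsilon$ onto the smooth mollifier $\eta$. Using bi-invariance of Haar measure on the unimodular group $\SL$, the substitution $g\mapsto gh^{-1}$ yields
\[
f_\epsilon(hS)=\int_{\SL}\eta(gh^{-1})\,\chi_\epsilon(gS)\,dg,
\]
so that $h\mapsto f_\epsilon(hS)$ inherits the smoothness of $\eta$. After passing to the quotient by $\SO$, both $\nhyp$ and $\Dhyp$ are realized as first- and second-order right-invariant differential operators in the variable $h$; differentiating under the integral sign and using $0\le\chi_\epsilon\le 1$ gives pointwise bounds
\[
|(\nhyp f_\epsilon)(S)|\leq C_1,\qquad |(\Dhyp f_\epsilon)(S)|\leq C_2,
\]
with $C_1, C_2$ depending only on the corresponding derivatives of $\eta$, hence independent of $\epsilon$ and $S$. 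The main delicate point will be the bookkeeping in (a) and (b): reconciling the factor-of-$4$ distortion in the modulus with the precise width thresholds $\epsilon/2$ and $2\epsilon$ forces the initial choice of $K$ to absorb the loss, which is guaranteed by the standing assumptions on $K$ in this section.
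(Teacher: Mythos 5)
Your part (c) is exactly the paper's argument: the substitution $g\mapsto gh^{-1}$ using the invariance of Haar measure on $\SL$ transfers the $h$-dependence onto the smooth, compactly supported mollifier $\eta$, after which differentiation under the integral sign together with $0\le\chi_\epsilon\le 1$ gives bounds on $\nhyp f_\epsilon$ and $\Dhyp f_\epsilon$ depending only on $\eta$. For (a) and (b) the paper offers no argument (they are declared ``clear from the definition''), and your instinct to supply the distortion estimates $w(gC)\in[w(C)/2,\,2w(C)]$ and $\Mod(gC)\in[\Mod(C)/4,\,4\Mod(C)]$ for $\|g\|<2$, $\|g^{-1}\|=\|g\|$, is the right computation to write down.

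However, the step by which you close (a) and (b) fails. You assert that ``under the standing assumption that $K$ is sufficiently large'' a cylinder of modulus at least $K/4$ belongs to $Cyl_K$. No choice of $K$ achieves this: the membership threshold for $Cyl_K$ is $K$ itself, and $K/4<K$ for every $K$, so the factor-of-$4$ loss cannot be absorbed by enlarging $K$. Concretely, if $S$ carries a unique cylinder $C$ of modulus $\ge K/4$, with $\Mod(C)=K$, $w(C)=\epsilon/2$ and horizontal waist, then for $g=\mathrm{diag}(3/2,\,2/3)$ one has $w(gC)=3\epsilon/4<\epsilon$ but $\Mod(gC)=4K/9<K$, so $Cyl_K(gS)$ is empty, $\ell_K(gS)=1000$ and $\chi_\epsilon(gS)=1$ on a neighbourhood of $g$ — your implication chain for (a) breaks there. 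There is also a quantifier slip in your contrapositive for (a): $\chi_\epsilon(gS)=1$ asserts that \emph{every} cylinder of $Cyl_K(gS)$ has width $\ge\epsilon$ (or that there are none), not that a witnessing cylinder exists, and likewise $\ell_K(S)>\epsilon/2$ requires controlling \emph{all} of $Cyl_K(S)$, not just the single cylinder you pull back. What your estimates honestly prove is the pair of implications $\ell_{4K}(S)\le\epsilon/2\Rightarrow f_\epsilon(S)=0$ and $\ell_{K/4}(S)\ge 2\epsilon\Rightarrow f_\epsilon(S)=1$; these adjusted-threshold versions are what one should record (and they suffice for every subsequent use of the lemma, since the relevant supports still have measure $O(\epsilon^2)$), but they are not (a) and (b) as literally stated with the single cutoff $K$.
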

\begin{proof}
The properties (a) and (b) are clear from the definition. To see that
(c) holds, note that for $h(t) \in \SL$ we can rewrite
$$
f_\epsilon(hS) = \int_{\SL} \eta(g)\, \chi_\epsilon(ghS) \, dg =
\int_{\SL} \eta\big(g h^{-1}(t)\big)\, \chi_\epsilon(gS) \, dg
$$
and (c) follows since $\eta$ is smooth
and has compact support.
\end{proof}

\subsection{Restriction   to   cylinders  of  large  modulus  sharing
parallel core curves}
\label{ss:restriction:to:parallel:cylinders}

Let $\widetilde{Cyl}_K(S) \subseteq Cyl_K(S)$ denote those cylinders,
which are parallel to the cylinder whose waist curve is the shortest.
If  there  are  two  cylinders  in  $Cyl_K(S)$ with nonparallel waist
curves   of   the   same   shortest   length  $\ell_K(S)$  we  define
$\widetilde{Cyl}_K(S)$ to be empty.

We define
\begin{displaymath}
\psi^K(S) := \sum_{C \in Cyl_K(S)} (\Mod(C)-K)\,,
\end{displaymath}
and
\begin{displaymath}
\tilde{\psi}^K(S) := \sum_{C \in \widetilde{Cyl}_K(S)} (\Mod(C)-K)\,.
\end{displaymath}
By  convention,  a  sum  over  an empty set is defined to be equal to
zero.  Thus,  both  functions $\psi$ and $\tilde\psi$ are continuous,
piecewise smooth, and $\SO$-invariant on $\cM_1$.
Recall that it follows from our assumptions on $K$ that the waist
curves of the
cylinders  in $Cyl_K(S)$ are disjoint, and their number is bounded by
$3g-3+\noz$.  Since  the  area  of  any  cylinder  is at most $1$, it
follows that
\begin{equation}
\label{eq:disjoint:cyls}
\tilde{\psi}^K(S) \le \psi^K(S) \le
\frac{3g-3+\noz}{\big(\ell_{\mathit{flat}}(S)\big)^2}\ .
\end{equation}

\begin{lemma}
\label{lemma:can:neglect:nonparallel}
Let  $\cM_1$  be  a  regular  suborbifold,  and $f_\epsilon$ be as in
(\ref{eq:f:epsilon}). Then,
$$
\int_{\cM_1}
\Dhyp \log\det\Dflat(S,S_0) \,d\nu_1
= \cfrac{\pi}{3}\cdot
\lim_{\epsilon \to 0}
\int_{\cM_1}
\nhyp  \tilde{\psi}^K \cdot
 \nhyp f_\epsilon\,d\nu_1\ .
$$
\end{lemma}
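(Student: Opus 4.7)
The plan is to reduce the Laplacian integral on the left-hand side to an expression involving only the gradient of $\tilde\psi^K$ by combining Theorem~\ref{theorem:Dflat:near:the:boundary}, Green's formula, and the regularity condition~(iii) of Definition~\ref{def:invariant:regular}.

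First I would insert the cutoff $f_\epsilon$ and integrate by parts. Since $f_\epsilon\to 1$ pointwise with $|f_\epsilon|\le 1$, the aim is to justify
$$\int_{\cM_1}\Dhyp\log\det\Dflat(S,S_0)\,d\nu_1=\lim_{\epsilon\to 0}\int_{\cM_1} f_\epsilon\,\Dhyp\log\det\Dflat\,d\nu_1,$$
and then invoke Green's formula to shift $\Dhyp$ onto $f_\epsilon$, rewriting the right-hand side as $\int_{\cM_1}(\Dhyp f_\epsilon)\log\det\Dflat\,d\nu_1$. The required $L^1$ control follows from the bound $|\log\det\Dflat|=O(1+|\log\ell_{flat}|)$ of Theorem~\ref{theorem:Dflat:near:the:boundary} together with Lemma~\ref{lemma:sv:sc:estimate}, which ensures that $\log\ell_{flat}\in L^1(\cM_1,\nu_1)$.

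Next I would substitute Theorem~\ref{theorem:Dflat:near:the:boundary}. Taking $K\ge M$, write
$$-\log\det\Dflat(S,S_0)=\tfrac{\pi}{3}\psi^K(S)+R(S),$$
where $R(S)$ absorbs the continuous bounded piece $\sum_{M\le \Mod(C)<K}\Mod(C)+K|Cyl_K(S)|$ together with the error $O(|\log\ell_{flat}(S)|)$ from the theorem. On the support of $\Dhyp f_\epsilon$, which by Lemma~\ref{lemma:properties:f2} lies in $\{\ell_K(S)\in[\epsilon/2,2\epsilon]\}$, one has $|R|=O(\log(1/\epsilon))$ and $|\Dhyp f_\epsilon|=O(1)$, while this set has $\nu_1$-measure $O(\epsilon^2)$ by Lemma~\ref{lemma:sv:sc:estimate}. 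Hence $\int(\Dhyp f_\epsilon)R\,d\nu_1=O(\epsilon^2\log(1/\epsilon))\to 0$, and a second Green's formula gives
$$\int_{\cM_1}\Dhyp\log\det\Dflat\,d\nu_1=\tfrac{\pi}{3}\lim_{\epsilon\to 0}\int_{\cM_1}\nhyp\psi^K\cdot\nhyp f_\epsilon\,d\nu_1.$$

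Finally I would replace $\psi^K$ by $\tilde\psi^K$. The difference $\psi^K-\tilde\psi^K$ vanishes unless $S$ carries at least two non-parallel cylinders in $Cyl_K(S)$; when in addition $\ell_K(S)\le 2\epsilon$, such $S$ lies in $\cM_1(K,2\epsilon)$. Since the modulus of a cylinder deforms along a Teichm\"uller disc at a rate bounded by a constant multiple of $1/w(C)^2$, one has $|\nhyp(\psi^K-\tilde\psi^K)|=O(1/\ell_{flat}^2)$ on this set. Combined with $|\nhyp f_\epsilon|=O(1)$ and the regularity estimate $\nu_1(\cM_1(K,2\epsilon))=o(\epsilon^2)$ of Definition~\ref{def:invariant:regular}(iii), the difference integral is $O(\epsilon^{-2}\cdot o(\epsilon^2))=o(1)$, yielding the result.

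The main technical obstacle will be the second step: confirming that the error term $R(S)$ truly contributes negligibly requires not merely an integrability bound on $R$ but uniform control on its interaction with $\Dhyp f_\epsilon$ as $\epsilon\to 0$, and this in turn depends on the uniformity of the implied constant in Theorem~\ref{theorem:Dflat:near:the:boundary}. The third step, although short, depends crucially on the \emph{regular} hypothesis entering through~(iii); this is the only place in the paper where that hypothesis is essential, so the estimate on $|\nhyp(\psi^K-\tilde\psi^K)|$ must be sharp enough that the $o(\epsilon^2)$ gain from~(iii) dominates the $O(\epsilon^{-2})$ blow-up of the gradient.
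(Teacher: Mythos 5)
Your overall architecture matches the paper's: cut off with $f_\epsilon$, apply Green's formula, substitute the asymptotic expansion of $\log\det\Dflat$ from Theorem~\ref{theorem:Dflat:near:the:boundary}, kill the error term using the $O(\epsilon^2)$ measure of the support of $\Dhyp f_\epsilon$, and finally pass from $\psi^K$ to $\tilde\psi^K$ via regularity. However, there are two gaps, one minor and one that breaks the last step as written.

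The minor one: on the support $\{\epsilon/2<\ell_K(S)<2\epsilon\}$ of $\Dhyp f_\epsilon$ you claim $|R(S)|=O(\log(1/\epsilon))$. This is false, because the error in Theorem~\ref{theorem:Dflat:near:the:boundary} is $O(|\log\ell_{flat}(S)|)$ and $\ell_{flat}(S)$ is only bounded \emph{above} by $\ell_K(S)$; a surface with $\ell_K(S)\approx\epsilon$ can carry a saddle connection far shorter than $\epsilon$ that is not the waist curve of any high-modulus cylinder. The correct argument (the paper's) is dominated convergence: $|\log\ell_{flat}|\in L^1(\nu_1)$ by Lemma~\ref{lemma:sv:sc:estimate}, $\Dhyp f_\epsilon$ is uniformly bounded, and its support has measure tending to zero, so $\int|\log\ell_{flat}|\,|\Dhyp f_\epsilon|\,d\nu_1\to 0$. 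You have the ingredients, but your explicit $O(\epsilon^2\log(1/\epsilon))$ estimate is not justified.

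The serious one is in the replacement of $\psi^K$ by $\tilde\psi^K$. You assert that a surface in the support carrying two non-parallel cylinders of $Cyl_K(S)$ lies in $\cM_1(K,2\epsilon)$. But condition~(iii) requires \emph{both} non-parallel cylinders to have width less than $\epsilon$, whereas only the shortest one satisfies $w<2\epsilon$; the second non-parallel high-modulus cylinder can have width as large as $K^{-1/2}$, a constant. So the set where $\psi^K\ne\tilde\psi^K$ inside the support of $\nhyp f_\epsilon$ is \emph{not} contained in $\cM_1(K,2\epsilon)$, and the regularity hypothesis cannot be invoked at scale $2\epsilon$. The paper repairs this with a two-scale decomposition: choose $R(\epsilon)\to\infty$ with $\nu_1(\cM_1(K,\epsilon R(\epsilon)))=o(\epsilon^2)$, split off the non-parallel cylinders of waist length at least $\epsilon R(\epsilon)$ (whose total contribution to $\psi^K$ is at most $(3g-3+n)/(\epsilon^2 R(\epsilon)^2)=o(\epsilon^{-2})$, hence negligible against the $O(\epsilon^2)$ measure of the support), and only then apply regularity at the scale $\epsilon R(\epsilon)$ to the remaining narrow non-parallel cylinders. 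Without this intermediate scale your estimate $O(\epsilon^{-2})\cdot o(\epsilon^2)$ does not apply to the whole difference $\psi^K-\tilde\psi^K$, and the step fails.
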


\begin{proof}
By assumption $\cM_1$ is regular. Let $f := \log\det\Dflat(S,S_0)$.
Note  that  $f_\epsilon(S) \to 1$ as $\epsilon \to 0$. Then, by Green's
Formula~\eqref{eq:stokes:teich},
\begin{equation}
\label{eq:ders:on:f2}
\int_{\cM_1} \Dhyp f\, d\nu_1
   =
\lim_{\epsilon \to 0}
\int_{\cM_1} f_\epsilon\, \Dhyp f\,d\nu_1
   =
\lim_{\epsilon \to 0}
\int_{\cM_1} f\,\Dhyp f_\epsilon\,d\nu_1\ .
\end{equation}
Now,      by     equation~\eqref{eq:Dflat:near:the:boundary}     from
Theorem~\ref{theorem:Dflat:near:the:boundary} we have
\begin{equation}
\label{eq:log:error}
f(S) = -\cfrac{\pi}{3}\cdot\psi^K(S)
  + O\big(\log(\ell_{\mathit{flat}}(S))\big)\,,
\end{equation}
where   we  use  that  $K\cdot\card(Cyl_K(S))\le  (3g-3+n)K=O(1)$  is
dominated by $O\big(\log(\ell_{\mathit{flat}}(S))\big)$.

Note  that  by  Lemma~\ref{lemma:properties:f2}  the  function $\Dhyp
f_\epsilon$ is bounded and supported on the set
\begin{equation}
\label{eq:M:1:epsilon}
\cM_1^{\epsilon}=\{ S \; | \; \epsilon/2 < \ell_K(S) < 2 \epsilon \}\,.
\end{equation}
Since  $\lf(S)\le\ell_K(S)$, Lemma~\ref{lemma:sv:sc:estimate} implies
that  $\nu_1(\cM_1^\epsilon)=O(\epsilon^2)$.  Also,  it  follows from
Lemma~\ref{lemma:sv:sc:estimate},       that       the       function
$|\log\ell_{\mathit{flat}}|$  is  of  the  class  $L^1(\cM_1,\nu_1)$.
Then, by the dominated convergence theorem, we get:
\begin{displaymath}
\lim_{\epsilon \to 0} \int_{\cM_1} |\log
\ell_{\mathit{flat}}|\, \Dhyp f_\epsilon\,\,d\nu_1 = 0\,.
\end{displaymath}
Therefore,
\begin{equation}
\label{eq:intermediate:K}
\int_{\cM_1}
\Dhyp f\,d\nu_1
   =
-\cfrac{\pi}{3}\cdot\lim_{\epsilon \to 0}
\int_{\cM_1}
\psi^K\, \Dhyp f_\epsilon\,d\nu_1\,.
\end{equation}

Recall the definition of $\cM_1(K,\epsilon)$ from (\ref{eq:condition:III}).
Since $\cM_1$ is regular, there exists a function $R(\epsilon)$
(depending on $\cM_1$) with $R(\epsilon) \to \infty$ as $\epsilon \to
0$ such that
\begin{equation}
\label{eq:condition:III:R}
\lim_{\epsilon \to 0} \frac{\nu_1(\cM_1(K,\epsilon R(\epsilon)))}{\epsilon^2} = 0.
\end{equation}
For $S \in \cM_1^\epsilon$, we may write
\begin{displaymath}
\psi^K(S) = \psi_1^K(S) + \psi_2^K(S),
\end{displaymath}
where $\psi_2^K(S)$ is the contribution of all cylinders in
$Cyl_K(S)\setminus \widetilde{Cyl}_K(S)$
with waist curve of length at least
$\epsilon R(\epsilon)$, and
$\psi_1^K(S)$ is the contribution of the rest of the cylinders. Then,
\begin{equation}
\label{eq:disjoint:cyls:2}
\tilde{\psi}^K(S) \le \psi_1^K(S) \le \psi^K(S) \le
\frac{3g-3+\noz}{\big(\ell_{\mathit{flat}}(S)\big)^2}\ .
\end{equation}
Also, as in (\ref{eq:disjoint:cyls}), for $S \in \cM_1^\epsilon$ we have
\begin{displaymath}
\psi_2^K(S) \le \frac{3g-3+n}{\epsilon^2 R(\epsilon)^2}.
\end{displaymath}
By Lemma~\ref{lemma:properties:f2} (c), $|\Dhyp f_\epsilon|$ is bounded
by some constant $C(\cM_1)$ which does not depend on $\epsilon$.
Therefore,
\begin{displaymath}
\left|\int_{\cM_1}
\psi_2^K\, \Dhyp f_\epsilon\,d\nu_1 \right| \le
C(\cM_1)\cdot\frac{3g-3+n}{\epsilon^2 R(\epsilon)^2} \nu_1(\cM_1^\epsilon)\,.
  %
\end{displaymath}
Hence, since $R(\epsilon) \to \infty$ as $\epsilon \to 0$
and since $\nu_1(\cM_1^\epsilon)=O(\epsilon^2)$ by
Lemma~\ref{lemma:sv:sc:estimate}, we have
\begin{displaymath}
\lim_{\epsilon \to 0} \int_{\cM_1}
\psi_2^K\, \Dhyp f_\epsilon\,d\nu_1 = 0.
\end{displaymath}
By (\ref{eq:condition:III:R}), we have $\frac{1}{\epsilon^2} \nu_1(\{ S
\in  \cM_1^{\epsilon}  \;|\;  \psi_1^K(S) > \tilde{\psi}^K(S) \}) \to 0$ as
$\epsilon \to 0$.
By  (\ref{eq:disjoint:cyls:2}), we get
$\psi^K(S) = O(\epsilon^{-2})$ on
$\cM_1^{\epsilon}$. Thus,
\begin{multline*}
-\cfrac{\pi}{3}\cdot\lim_{\epsilon \to 0}
\int_{\cM_1}
\psi^K\, \Dhyp f_\epsilon\,d\nu_1
=
-\cfrac{\pi}{3}\cdot\lim_{\epsilon \to 0}
\int_{\cM_1}
\psi_1^K\, \Dhyp f_\epsilon\,d\nu_1
\ =\ \\
-\cfrac{\pi}{3}\cdot\lim_{\epsilon \to 0}
\int_{\cM_1}
\tilde{\psi}^K\, \Dhyp f_\epsilon\,d\nu_1
\ =\
\cfrac{\pi}{3}\cdot
\lim_{\epsilon \to 0}
\int_{\cM_1}
\nhyp  \tilde{\psi}^K \cdot \nhyp f_\epsilon\,d\nu_1\ .
\end{multline*}
For the last equality we applied Green's formula to $f_\epsilon$
and  $\tilde\psi^K$.  The  function  $\tilde\psi^K$  is continuous on
$\cM_1$  and  $\nhyp \tilde{\psi}^K$ is
\textit{piecewise}  continuous, which is sufficient for the validity
of Green's formula.
\end{proof}

Let  $Cyl(S,\epsilon,\epsilon/2)$  denote  the  cylinders  on $S$ for
which  the  length  of  the  core  curve  is between $\epsilon/2$ and
$\epsilon$.

\begin{lemma}
\label{lemma:tilde:N:approx}
Let
\begin{displaymath}
\tilde{N}^K_{\mathit{area}}(S,\epsilon,\epsilon/2) := \sum_{C \in \widetilde{Cyl}_K(S) \cap
  Cyl(S,\epsilon,\epsilon/2)} \Area(C)\,.
\end{displaymath}
Then,
\begin{displaymath}
c_{\mathit{area}}(\cM_1) =
\lim_{\epsilon \to 0} \frac{1}{\tfrac{3}{4}\pi \epsilon^2}
\int_{\cM_1} \tilde{N}^K_{\mathit{area}}(S,\epsilon,\epsilon/2) \, d\nu_1(S)\,.
\end{displaymath}
\end{lemma}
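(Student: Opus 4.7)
The plan is to derive the desired asymptotic from the Siegel--Veech formula applied at widths $\epsilon$ and $\epsilon/2$, and then show that restricting the sum to the single parallel family of modulus-$\geq K$ cylinders in $\widetilde{Cyl}_K(S)$ does not affect the limit.

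First I would set $N_{\mathit{area}}(S,\epsilon,\epsilon/2) := N_{\mathit{area}}(S,\epsilon) - N_{\mathit{area}}(S,\epsilon/2)$, so that by the Siegel--Veech formula~\eqref{eq:SV:constant:definition} applied at the two values of $L$,
\begin{equation*}
\int_{\cM_1} N_{\mathit{area}}(S,\epsilon,\epsilon/2)\,d\nu_1 \;=\; c_{\mathit{area}}(\cM_1)\cdot \tfrac{3}{4}\pi \epsilon^2.
\end{equation*}
It then suffices to show that $\int_{\cM_1}\bigl(N_{\mathit{area}}(S,\epsilon,\epsilon/2) - \tilde N^K_{\mathit{area}}(S,\epsilon,\epsilon/2)\bigr)\,d\nu_1 = o(\epsilon^2)$. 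The integrand is the area-weighted count of cylinders of width in $(\epsilon/2,\epsilon]$ that do \emph{not} lie in $\widetilde{Cyl}_K(S)$. I would split these into two classes and estimate each one separately.

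For class (a), cylinders of modulus strictly less than $K$: each such cylinder has area at most $Kw^2 \le K\epsilon^2$. Applying Veech's Siegel--Veech formula for the \emph{unweighted} count of cylinders (which gives $\int N(S,L)\,d\nu_1 = c_{\mathit{cyl}}\pi L^2$ for a constant $c_{\mathit{cyl}}(\cM_1)$), the expected number of cylinders of width in $(\epsilon/2,\epsilon]$ is $O(\epsilon^2)$, so this class contributes at most $O(K\epsilon^2)\cdot O(\epsilon^2) = O(K\epsilon^4) = o(\epsilon^2)$ to the integral.

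For class (b), cylinders of modulus $\geq K$ that are not parallel to the shortest cylinder in $Cyl_K(S)$: the presence of any such cylinder of width $\leq \epsilon$ implies that $S$ carries two non-parallel cylinders both of modulus $>K$ and width $<\epsilon$, i.e.\ $S\in\cM_1(K,\epsilon)$ in the notation of \S\ref{sec:Regular:invariant:submanifolds}. The total area-weighted contribution per such surface is bounded by $1$ since cylinder areas sum to at most the total area. Thus the class~(b) contribution is bounded by $\nu_1(\cM_1(K,\epsilon))$, which is $o(\epsilon^2)$ by the regularity condition~\eqref{eq:condition:III}. Combining the two estimates and dividing by $\tfrac{3}{4}\pi \epsilon^2$ yields the lemma.

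The only place where anything could go wrong is class (b): this is precisely where the regularity hypothesis (iii) is essential, and indeed without it one would have no control over the contribution of surfaces carrying several independent short cylinder families. The treatment of class (a) is completely routine once one invokes the unweighted Siegel--Veech estimate, and the Siegel--Veech formula at two widths handles the leading-order identification with $c_{\mathit{area}}(\cM_1)$.
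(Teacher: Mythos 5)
Your proof is correct and follows essentially the same three-step route as the paper: the Siegel--Veech formula at radii $\epsilon$ and $\epsilon/2$, removal of the small-modulus cylinders, and removal of the non-parallel large-modulus cylinders via the regularity condition~\eqref{eq:condition:III}. The only divergence is in your class~(a): the paper bounds the number of cylinders of width in $(\epsilon/2,\epsilon]$ pointwise by $O\big(\ell_{\mathit{flat}}(S)^{-\beta}\big)$ using \cite[Theorem~5.1]{Eskin:Masur} and then integrates using $\ell_{\mathit{flat}}(\cdot)^{-\beta}\in L^1(\cM_1,\nu_1)$, whereas you integrate the unweighted cylinder count directly via a Siegel--Veech formula (which indeed follows from the saddle-connection count in Lemma~\ref{lemma:sv:sc:estimate}, since each such cylinder contributes a boundary saddle connection of length at most $\epsilon$); both give the required $o(\epsilon^2)$.
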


\begin{proof}
Write           $N_{\mathit{area}}(S,\epsilon,\epsilon/2)           =
N_{\mathit{area}}(S,\epsilon)  - N_{\mathit{area}}(S,\epsilon/2)$. By
Siegel---Veech   formula~\eqref{eq:SV:constant:definition},  for  any
$\epsilon > 0$,
\begin{equation}
\label{eq:c:mathit:area}
c_{\mathit{area}}(\cM_1) = \frac{1}{\tfrac{3}{4} \pi \epsilon^2}
\int_{\cM_1} N_{\mathit{area}}(S,\epsilon,\epsilon/2) \, d\nu_1(S).
\end{equation}
Let
\begin{displaymath}
N^{K}_{\mathit{area}}(S,\epsilon,\epsilon/2) :=
 \sum_{C \in Cyl_K(S) \cap
  Cyl(S,\epsilon,\epsilon/2)} \Area(C)\,.
\end{displaymath}

By~\cite[Theorem~5.1]{Eskin:Masur}, $\card Cyl(S,\epsilon,\epsilon/2)
=  O(\ell_{\mathit{flat}}(S)^{-\beta})$  for  any  $1  <  \beta < 2$.
Suppose  $C$  is  a cylinder in $Cyl(S,\epsilon,\epsilon/2) \setminus
Cyl_K(S)$.  Then, since $\Mod(C) \le K$, $\Area(C) \le K w(C)^2 \le K
\epsilon^2$. Thus,
\begin{equation}
\label{eq:N:NK}
N_{\mathit{area}}(S,\epsilon,\epsilon/2) -
N^K_{\mathit{area}}(S,\epsilon,\epsilon/2) \le K \epsilon^2 \ell_{\mathit{flat}}(S)^{-\beta}.
\end{equation}
Since  the  left hand side of (\ref{eq:N:NK}) is supported on $\{S\in
\cM_1\,|\,   \ell_{\mathit{flat}}(S)  \le  \epsilon  \}$,  and  since
$\ell_{\mathit{flat}}(   \cdot)^{-\beta}   \in   L^1(\cM_1,\nu_1)$   by
Lemma~\ref{lemma:sv:sc:estimate}, we have
\begin{displaymath}
\lim_{\epsilon \to 0} \frac{1}{\epsilon^2} \int_{\cM_1}
\big(N_{\mathit{area}}(S,\epsilon,\epsilon/2) -
N^K_{\mathit{area}}(S,\epsilon,\epsilon/2)\big) \, d\nu_1 = 0\,.
\end{displaymath}
Thus, in view of (\ref{eq:c:mathit:area}),
\begin{displaymath}
c_{\mathit{area}}(\cM_1) = \lim_{\epsilon \to 0} \frac{1}{\tfrac{3}{4}
  \pi \epsilon^2} \int_{\cM_1}
N^K_{\mathit{area}}(S,\epsilon,\epsilon/2) \, d\nu_1(S).
\end{displaymath}
By~\eqref{eq:condition:III}
$N^K_{\mathit{area}}(\cdot,\epsilon,\epsilon/2)$ and
$\tilde N^K_{\mathit{area}}(\cdot,\epsilon,\epsilon/2)$
might differ only on a set of measure $o(\epsilon^2)$.
Note also, that
$N^K_{\mathit{area}}(S,\epsilon,\epsilon/2)\le 3g-3+n$.
Hence,   we   may   replace
$N^K_{\mathit{area}}$  by  $\tilde{N}^K_{\mathit{area}}$ in the above
equation. Lemma~\ref{lemma:tilde:N:approx} is proved.
\end{proof}

Suppose   $P   >   1$.  Let  $\widetilde{Cyl}_{K,P}(S)  :=  \{  C  \in
\widetilde{Cyl}_{K}(S)  \;|\;  w(C)  < P \ell_K(S) \}$,
and let
\begin{displaymath}
\tilde{\psi}^{K,P}(S) := \sum_{C \in \widetilde{Cyl}_{K,P}(S)}
\left(\Mod(C)-K \right).
\end{displaymath}
Let
\begin{displaymath}
\tilde{N}^{K,P}_{\mathit{area}}(S,\epsilon,\epsilon/2) :=
\sum_{C \in \widetilde{Cyl}_{K,P}(S) \cap
  Cyl(S,\epsilon,\epsilon/2)} \Area(C).
\end{displaymath}

\begin{lemma}
\label{lemma:nearly:equal:widths}
For  all  $K$  sufficiently  large,  and  all  $P > 1$, the following
estimates hold:
\begin{multline}
\label{eq:psi:KP}
\Bigg|\int_{\cM_1}
\Dhyp \log\det\Dflat(S,S_0) \,d\nu_1
\ -\\-\
 \cfrac{\pi}{3}\cdot
\lim_{\epsilon \to 0}
\int_{\cM_1}
\nhyp  \tilde{\psi}^{K,P} \cdot \nhyp f_\epsilon\,d\nu_1\
\Bigg|
\le \frac{C(\cM_1)}{P^2}
\end{multline}
and
\begin{equation}
\label{eq:tilde:KP}
\left|c_{\mathit{area}}(\cM_1) - \lim_{\epsilon \to 0} \frac{1}{\tfrac{3}{4}
  \pi \epsilon^2} \int_{\cM_1}
\tilde{N}^{K,P}_{\mathit{area}}(S,\epsilon,\epsilon/2) \, d\nu_1(S)
\right| \le \frac{C(\cM_1)}{P^2}\,,
\end{equation}
where    the   constant   $C(\cM_1)$   depends   only   on   $\cM_1$.
\end{lemma}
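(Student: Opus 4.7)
The plan is to reduce both inequalities to estimates on the measure of the set of surfaces that carry a cylinder whose width is at least a factor $P$ larger than the shortest parallel cylinder, and then invoke Lemma~\ref{lemma:sv:sc:estimate} (the Siegel--Veech estimate on short saddle connections) to control that measure. Throughout, I use that $\ell_{\mathit{flat}}(S)\le\ell_K(S)$, that $|\widetilde{Cyl}_K(S)|\le 3g-3+\noz$, and that each cylinder has $\Area(C)\le 1$, so that $\Mod(C)=\Area(C)/w(C)^2\le 1/w(C)^2$.

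For the second inequality~\eqref{eq:tilde:KP}, Lemma~\ref{lemma:tilde:N:approx} allows us to estimate the difference $\tilde{N}^K_{\mathit{area}}-\tilde{N}^{K,P}_{\mathit{area}}$. This difference is supported on surfaces $S$ carrying a cylinder $C\in\widetilde{Cyl}_K(S)\cap Cyl(S,\epsilon,\epsilon/2)$ with $w(C)\ge P\ell_K(S)$; any such $S$ satisfies $\ell_{\mathit{flat}}(S)\le\ell_K(S)\le w(C)/P<\epsilon/P$. By Lemma~\ref{lemma:sv:sc:estimate}, the $\nu_1$-measure of this set is at most $C(\cM_1)(\epsilon/P)^2$. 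Since the integrand is bounded above pointwise by $3g-3+\noz$ (each cylinder contributes area at most $1$), the integral is $O(\epsilon^2/P^2)$, and after division by $\tfrac{3}{4}\pi\epsilon^2$ we obtain the required bound $C(\cM_1)/P^2$.

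For the first inequality~\eqref{eq:psi:KP}, Lemma~\ref{lemma:can:neglect:nonparallel} shows it suffices to bound
\[
\bigg|\frac{\pi}{3}\lim_{\epsilon\to 0}\int_{\cM_1}\nhyp\big(\tilde{\psi}^K-\tilde{\psi}^{K,P}\big)\cdot\nhyp f_\epsilon\,d\nu_1\bigg|\ .
\]
Both $\tilde{\psi}^K$ and $\tilde{\psi}^{K,P}$ are continuous and piecewise smooth, so Green's formula converts this to $-\int_{\cM_1}(\tilde{\psi}^K-\tilde{\psi}^{K,P})\cdot\Dhyp f_\epsilon\,d\nu_1$. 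By Lemma~\ref{lemma:properties:f2}(c), $\Dhyp f_\epsilon$ is bounded uniformly in $\epsilon$ and supported on $\cM_1^\epsilon=\{S:\epsilon/2<\ell_K(S)<2\epsilon\}$, as in~\eqref{eq:M:1:epsilon}. For any $S\in\cM_1^\epsilon$ and any cylinder $C$ contributing to the difference $\tilde{\psi}^K(S)-\tilde{\psi}^{K,P}(S)$, one has $w(C)\ge P\ell_K(S)\ge P\epsilon/2$, so
\[
0\le \Mod(C)-K\le\Mod(C)\le\frac{1}{w(C)^2}\le\frac{4}{P^2\epsilon^2}\ ,
\]
which, together with the bound on the number of parallel cylinders, gives the pointwise estimate $(\tilde{\psi}^K-\tilde{\psi}^{K,P})(S)=O\big(1/(P\epsilon)^2\big)$ on $\cM_1^\epsilon$. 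Since Lemma~\ref{lemma:sv:sc:estimate} yields $\nu_1(\cM_1^\epsilon)=O(\epsilon^2)$, the integral is $O(1/P^2)$ uniformly in $\epsilon$, which proves the claim.

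The only conceptual point, and hence the place where the regularity condition~\eqref{eq:condition:III} is implicitly reused, is that the preceding Lemmas~\ref{lemma:can:neglect:nonparallel} and~\ref{lemma:tilde:N:approx} have already disposed of the contribution of configurations involving non-parallel cylinders; once that has been done, the present lemma is a purely quantitative sharpening obtained from the pointwise modulus bound $\Mod(C)\le 1/w(C)^2$ combined with the Siegel--Veech estimate on saddle connections of length $\le\epsilon/P$.
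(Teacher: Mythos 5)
Your treatment of \eqref{eq:tilde:KP} coincides with the paper's: the difference $\tilde{N}^{K}_{\mathit{area}}-\tilde{N}^{K,P}_{\mathit{area}}$ is bounded by $3g-3+\noz$ and supported where $\ell_{\mathit{flat}}(S)\le \epsilon/P$, so Lemma~\ref{lemma:sv:sc:estimate} gives the $O(\epsilon^2/P^2)$ bound and Lemma~\ref{lemma:tilde:N:approx} finishes. That half is fine.

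For \eqref{eq:psi:KP} there is a genuine gap. You pass from $\int_{\cM_1}\nhyp(\tilde\psi^K-\tilde\psi^{K,P})\cdot\nhyp f_\epsilon\,d\nu_1$ to $-\int_{\cM_1}(\tilde\psi^K-\tilde\psi^{K,P})\,\Dhyp f_\epsilon\,d\nu_1$ by Green's formula, on the assertion that $\tilde\psi^{K,P}$ is continuous. It is not: along the locus where $w(C)=P\,\ell_K(S)$ for some $C\in\widetilde{Cyl}_K(S)$, the term $\Mod(C)-K$ enters or leaves the sum defining $\tilde\psi^{K,P}$ while being strictly positive (nothing forces $\Mod(C)=K$ there), so $\tilde\psi^{K,P}$, and hence the difference $\tilde\psi^K-\tilde\psi^{K,P}$, has jump discontinuities whose size on the support of $\Dhyp f_\epsilon$ can be as large as the quantity $O\big(1/(P\epsilon)^2\big)$ you are trying to exploit. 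Integration by parts against a function with jumps produces boundary terms along the codimension-one jump locus, and nothing in your argument controls them: Lemma~\ref{lemma:sv:sc:estimate} bounds the $\nu_1$-measure of open sets, not the transverse measure of hypersurfaces. The paper is careful on exactly this point: Green's formula is applied only to the pair $(f_\epsilon,\tilde\psi^K)$ in Lemma~\ref{lemma:can:neglect:nonparallel}, and in Lemma~\ref{lemma:last:lemma} the integral $\int\nhyp\tilde\psi^{K,P}\cdot\nhyp f_\epsilon$ is evaluated by disintegration along Teichm\"uller discs rather than by a second integration by parts.

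The repair is the paper's route: bound the gradient directly. For a single cylinder the function $H(g)=\Mod(gC)$ satisfies $\|\nhyp H\|=2H$ (in upper half-plane coordinates adapted to the direction of $C$ one has $H=y\Mod(C)$, whence $\nhyp H=(0,2H)$), so your pointwise bound $\tilde\psi^K-\tilde\psi^{K,P}\le (3g-3+\noz)/\big(P^2\ell_K(S)^2\big)$ upgrades, wherever the gradients are defined, to $\|\nhyp(\tilde\psi^K-\tilde\psi^{K,P})\|\le 2(3g-3+\noz)/\big(P^2\ell_K(S)^2\big)$. Pairing this with the uniform bound on $\|\nhyp f_\epsilon\|$, its support in $\cM_1^\epsilon$ where $\ell_K(S)>\epsilon/2$, and $\nu_1(\cM_1^\epsilon)=O(\epsilon^2)$ gives the $O(1/P^2)$ estimate with no integration by parts at all. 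This gradient identity is the one ingredient your write-up is missing.
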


\begin{proof}
If  $C  \in \widetilde{Cyl}_{K,P}(S)\setminus \widetilde{Cyl}_{K}(S)$
then  $w(C)  \ge  P  \ell_K(C)$, and hence $\Mod(C) \le \cfrac{1}{P^2
\big(\ell_K(S)\big)^2}$\,. The latter implies, that
\begin{equation}
\label{eq:tilde:psi:K:S}
\tilde{\psi}^K(S) - \tilde{\psi}^{K,P}(S)
  \le \frac{3g-3+\noz}{P^2\big(\ell_K(S)\big)^2}\,.
\end{equation}

Suppose  $C$  is  a  vertical
cylinder  on  a  surface  $S$ (so that the waist
  curve of $C$ is vertical).  Then  for $g \in
\SL$,  $gC$  is  a  cylinder on $gS$. Let $H(g) = \Mod(gC)$.
Then, we claim that
\begin{equation}
\label{eq:nhyp:h:h}
\nhyp H =
\begin{pmatrix}
0 \\
2 H
\end{pmatrix}
\relax .
\end{equation}
Indeed, we may write
\begin{equation}
\label{eq:rotation:dilatation:unipotent}
g =  \begin{pmatrix} \cos
\theta & \sin \theta \\ - \sin \theta & \cos \theta \end{pmatrix}
\begin{pmatrix}
y^{1/2} & 0 \\ 0 & y^{-1/2} \end{pmatrix}
\begin{pmatrix} 1 & x \\ 0 & 1 \end{pmatrix} =
r_\theta a_y u_x,
\end{equation}
in such a way that $u_x$ acts by Dehn twists on $C$. Then,
\begin{displaymath}
H(r_\theta a_y u_x) = H(a_y) = y \Mod(C).
\end{displaymath}
The decomposition~\eqref{eq:rotation:dilatation:unipotent}
was chosen in such way that $\zeta=x+iy$ provides a standard
coordinate in the hyperbolic upper half-plane parametrizing
the Teichm\"uller disc, see section~\ref{ss:Teichmuller:discs}.
For the associated hyperbolic metric of curvature -4 one has
\begin{displaymath}
\nhyp H = \begin{pmatrix}
2y \frac{\partial H}{\partial x} \vspace{0.05in} \\
2y \frac{\partial H}{\partial y}
\end{pmatrix}
 = \begin{pmatrix} 0 \\
2y \Mod(C)
\end{pmatrix}
= \begin{pmatrix}
0 \\
2 H
\end{pmatrix}
\end{displaymath}
This completes the proof of~\eqref{eq:nhyp:h:h}.

In general, the direction of the gradient of the function $H(g) =
\Mod(gC)$ depends on the cylinder $C$ (however we still have $\| \nhyp
H \| = 2 H$). This is the motivation for the restriction to parallel
cylinders in \S\ref{ss:restriction:to:parallel:cylinders}
and the ``regularity'' assumption in
\S\ref{sec:Regular:invariant:submanifolds}.

Now  in view of~\eqref{eq:tilde:psi:K:S}, and~\eqref{eq:nhyp:h:h}, we
have
\begin{displaymath}
0\le
\|
\nhyp \tilde{\psi}^K(S) - \nhyp \tilde{\psi}^{K,P}(S)
\|
\le \frac{2(3g-3+\noz)}{P^2\big(\ell_K(S)\big)^2}\,,
\end{displaymath}
for    all   $S$   where   $\nhyp   \tilde{\psi}^K(S)$   and   $\nhyp
\tilde{\psi}^{K,P}(S)$ are defined.

Note  that  by  Lemma~\ref{lemma:properties:f2}  the  function $\Dhyp
f_\epsilon$ is bounded and supported on the set $\cM_1^\epsilon$
defined in~\eqref{eq:M:1:epsilon}. On this set we can extend the latter
estimate as
\begin{displaymath}
\|
\nhyp \tilde{\psi}^K(S) - \nhyp \tilde{\psi}^{K,P}(S)
\|
\le \frac{2(3g-3+\noz)}{P^2\big(\ell_K(S)\big)^2}
\le \frac{2(3g-3+\noz)}{P^2\big(\epsilon/2\big)^2}\,.
\end{displaymath}
Finally,       note       that       since      $\lf(S)\le\ell_K(S)$,
Lemma~\ref{lemma:sv:sc:estimate}             implies             that
$\nu_1(\cM_1^\epsilon)=O(\epsilon^2)$.    By    property    (c)    of
Lemma~\ref{lemma:properties:f2},  $\| \nhyp  f_\epsilon \|$  is bounded by a
uniform      bound      independent      of      $\epsilon$.      The
estimate~\eqref{eq:psi:KP}          now          follows         from
Lemma~\ref{lemma:can:neglect:nonparallel}.

For the estimate (\ref{eq:tilde:KP}) note that if
$\tilde{N}^{K}_{\mathit{area}}(S,\epsilon,\epsilon/2) -
\tilde{N}^{K,P}_{\mathit{area}}(S,\epsilon,\epsilon/2) > 0$,
i.e. if there exists
$C \in \widetilde{Cyl}_{K}(S) \setminus \widetilde{Cyl}_{K,P}(S)$
with $\epsilon/2 < w(C) < \epsilon$, then
$\ell_{\mathit{flat}}(S) \le \ell_K(S) \le \cfrac{\epsilon}{P}$.
Now since
$\tilde{N}^{K}_{\mathit{area}}(S,\epsilon,\epsilon/2) -
\tilde{N}^{K,P}_{\mathit{area}}(S,\epsilon,\epsilon/2)
\le (3g-3+n)$,
\begin{multline*}
\int_{\cM_1} (\tilde{N}^{K}_{\mathit{area}}(S,\epsilon,\epsilon/2) -
\tilde{N}^{K,P}_{\mathit{area}}(S,\epsilon,\epsilon/2)) \, d\nu_1(S)
\le\\ \le
(3g-3+\noz)\cdot
  \nu_1\big(\{ S \;|\; \ell_{\mathit{flat}}(S)<\tfrac{\epsilon}{P} \}\big)
= O\left(\frac{{\epsilon}^2}{P^2}\right)\,,
\end{multline*}
where  we  have  used  Lemma~\ref{lemma:sv:sc:estimate}  for the last
estimate.   Now   the   estimate   (\ref{eq:tilde:KP})  follows  from
Lemma~\ref{lemma:tilde:N:approx}.
\end{proof}

\begin{Remark}
\label{rm:horizontal:versus:vertical}
Note that in the calculation in Lemma~\ref{lemma:nearly:equal:widths}
we confront a conflict of two conventions. One uses the \textit{upper
half-plane}  for  the Poincar\'e model of the hyperbolic plane, which
imposes the decomposition~\eqref{eq:rotation:dilatation:unipotent} of
$\SL$. The latter implies, that the holonomy vector associated to the
waist   curve   of   the   cylinder   $C$   should  be  expressed  as
$\begin{pmatrix}0\\w(C)\end{pmatrix}$, as if it was \textit{vertical}
and  not  traditionally  \textit{horizontal}.  A similar situation is
reproduced in the next section.
\end{Remark}

\subsection{The Determinant of the Laplacian and the Siegel---Veech constant}
\label{ss:det:of:L:and:SV}

\begin{lemma}
\label{lemma:last:lemma}  If $K/P^2$ is sufficiently large (depending
only on the genus), then
\begin{displaymath}
\lim_{\epsilon \to 0}
  \int_{\cM_1} \nhyp \tilde{\psi}^{K,P} \cdot
              \nhyp f_\epsilon\,d\nu_1
=
-(4\pi)\cdot
\lim_{\epsilon \to 0}
  \frac{1}{\tfrac{3}{4} \pi \epsilon^2}
  \int_{\cM_1} \tilde{N}^{K,P}_{\mathit{area}}(S,\epsilon,\epsilon/2)
    \, d\nu_1(S).
\end{displaymath}
\end{lemma}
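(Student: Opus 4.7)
The plan is to compute $\nhyp\tilde\psi^{K,P}\cdot\nhyp f_\epsilon$ pointwise on a local Teichm\"uller disc, and then to unfold both sides of the asserted identity via the Siegel--Veech formula for cylinder configurations, matching them configuration by configuration.

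\emph{Step 1: Pointwise formula.} I fix $S$ in the support of $\nhyp f_\epsilon$, so $\ell_K(S)\in(\epsilon/2,2\epsilon)$; by the regularity hypothesis~\eqref{eq:condition:III} I may assume, up to a set of measure $o(\epsilon^2)$, that the shortest cylinder $C_*$ is unique and every $C\in\widetilde{Cyl}_{K,P}(S)$ is parallel to it. Working on the Teichm\"uller disc through $S$ in upper-half-plane coordinates $(x,y)$ with $S$ at $(0,1)$ and $C_*$ vertical, formula~\eqref{eq:nhyp:h:h} gives $\nhyp\Mod(C)=(0,2\Mod(C))$ for every such $C$, while $\ell_K(a_yu_xS)=y^{-1/2}\ell_K(S)+O(x^2)$ yields $\nhyp\ell_K=(0,-\ell_K)$. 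Since $f_\epsilon$ is $\SO$-invariant, it is of the form $F(\ell_K/\epsilon)$ for a smooth $F:\reals\to[0,1]$ supported in $(1/2,\infty)$ with $F\equiv 1$ on $(2,\infty)$; the chain rule then gives the pointwise identity
\[
\nhyp\tilde\psi^{K,P}(S)\cdot\nhyp f_\epsilon(S)=-2\,\Phi(S)\,u(S)\,F'(u(S)),\qquad u:=\ell_K/\epsilon,
\]
with $\Phi(S):=\sum_{C\in\widetilde{Cyl}_{K,P}(S)}\Mod(C)=\sum_C\Area(C)/w(C)^2$.

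\emph{Step 2: Siegel--Veech unfolding.} I organize the sums defining $\Phi(S)$ and $\tilde N^{K,P}_{\mathit{area}}(S,\epsilon,\epsilon/2)=\sum_C\Area(C)\,\mathbf{1}[w(C)\in(\epsilon/2,\epsilon)]$ by the combinatorial type $\cC$ of the parallel cylinder configuration (cf.~\S\ref{ss:introduction:Siegel:Veech:values} and~\cite{Eskin:Masur:Zorich}), with $m\le 3g-3+\noz$ cylinders and relative widths $1=r_1\le\cdots\le r_m\le P$. For each $\cC$ the Siegel--Veech formula for configurations,
\[
\int_{\cM_1}\sum_{\cC\text{-inst.\ on }S}g(v_1)\,d\nu_1=c_\cC\int_{\reals^2}g(v)\,dv,
\]
with $v_1$ the holonomy of the shortest waist curve ($|v_1|=\ell_K$, $w(C_i)=r_i|v_1|$), converts the LHS of the lemma into $-2\cdot 2\pi\cdot(\text{configuration weight}_\cC)$ via the polar integral $\int_{\reals^2}|v|\epsilon^{-1}F'(|v|/\epsilon)|v|^{-2}\,dv=2\pi\int_0^\infty F'(u)\,du=2\pi$, and converts the RHS into $-4\pi/(\tfrac{3\pi}{4}\epsilon^2)\cdot\tfrac{3\pi}{4}\epsilon^2\cdot(\text{same configuration weight}_\cC)$ via the corresponding polar integral over the annulus $|v|\in(\epsilon/2,\epsilon)$. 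The identical configuration weights cancel and the remaining universal constants $-4\pi$ on each side match, yielding the identity in the limit.

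\emph{Main obstacle.} The delicate step is that the integrand $uF'(u)$ depends on the whole configuration (through $\ell_K$) rather than on an individual cylinder, so one cannot directly apply the naive Siegel--Veech formula for a cylinder sum. The regularity assumption~\eqref{eq:condition:III} is precisely what allows us to restrict to parallel configurations (non-parallel short cylinders contribute only $o(\epsilon^2)$), and ``$K/P^2$ sufficiently large'' together with Lemma~\ref{lemma:modulus}(c) places every $C\in\widetilde{Cyl}_{K,P}$ firmly in the hyperbolic thin part, so that the configuration types are discrete. The required exchange of limit, integration, and configuration sum is justified by the uniform bound $\tilde\psi^{K,P}\le(3g-3+\noz)\ell_{\mathit{flat}}^{-2}$ from~\eqref{eq:disjoint:cyls} together with Lemma~\ref{lemma:sv:sc:estimate}.
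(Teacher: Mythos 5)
Your per-cylinder arithmetic is correct and encouraging: a cylinder of area $A$ whose width is $r$ times the shortest waist length in its direction contributes $-4\pi A r^{-2}$ times the same Siegel--Veech density to each side of the identity, which is exactly the cancellation the lemma encodes. But the mechanism you invoke in Step~2 to legitimize the unfolding has a genuine gap. The family $\widetilde{Cyl}_{K,P}(S)$ is \emph{not} an Eskin--Masur--Zorich configuration of homologous saddle connections: its members are merely parallel cylinders whose widths may be pairwise incommensurable, so the ``relative widths $1=r_1\le\dots\le r_m\le P$'' and the areas are continuous moduli varying over $\cM_1$, not discrete combinatorial data. There is consequently no finite list of types $\cC$ with constant weights $c_\cC$, and the displayed formula $\int_{\cM_1}\sum g(v_1)\,d\nu_1=c_\cC\int_{\reals^2}g$ does not exist in the form you use it: if the $r_i$ are part of the type, $c_\cC$ is a density rather than a constant; if they are not, the weight $\sum_i A_i r_i^{-2}$ is not determined by the type and the formula cannot produce the required $r^{-2}$ factors. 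What is actually needed is Veech's general theorem on Siegel measures applied to the marked point process $S\mapsto\{(v_C,\Area(C),r_C)\}$, set up \emph{identically} on both sides so that the unknown transverse measure on the marks cancels; your write-up asserts ``the identical configuration weights cancel'' without constructing this common measure. A secondary gap sits in Step~1: $f_\epsilon$ is a group convolution, and $\ell_K(gS)$ is not a function of $(\ell_K(S),g)$ alone --- the minimizing cylinder and the set $Cyl_K(gS)$ can change as $g$ ranges over the support of $\eta$ --- so the identity $f_\epsilon=F(\ell_K/\epsilon)$, and hence your pointwise formula for the dot product, holds only up to an error that must be shown to contribute $o(\epsilon^2)$ after integration. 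This is precisely where the hypothesis that $K/P^2$ is large must enter, and your proposal does not carry out that estimate.

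For comparison, the paper avoids Siegel--Veech unfolding here altogether. It chooses a codimension-two cross-section $\tilde\cN$ of the thin set $\{\epsilon/Q<\ell_K<Q\epsilon\}$ with $Q=2P$, on which $\ell_K=\epsilon$ and the relevant cylinders are horizontal, writes every nearby surface as $r_\theta a_y S_\epsilon$, disintegrates $d\nu_1=\tfrac{dy}{4y^2}\,d\theta\,d\beta'$, and proves the identity fiberwise in $y$ for each fixed $S_\epsilon$ (equation~\eqref{eq:equal:regions}): the left side telescopes to $-\sum_C\Mod(C)$ because $f_\epsilon$ runs from $1$ to $0$ along the fiber --- so only the boundary values of $f_\epsilon$ are needed, never its gradient --- while the right side is the elementary integral of $\tfrac{dy}{4y^2}$ over $[w^2/\epsilon^2,\,4w^2/\epsilon^2]$. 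The discrepancies caused by cylinders entering or leaving $\widetilde{Cyl}_{K,P}$ along the fiber are bounded in \eqref{eq:estimate:h} and \eqref{eq:estimate:A} and vanish in the limit. That disintegration replaces both of your missing ingredients at once; to keep your route you would have to supply the marked Siegel-measure argument and the smoothing-error estimate explicitly.
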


\begin{proof}
Let  $Q = 2P$, where, by assumption, $P>1$. Note that the supports of
both  $\nhyp  f_\epsilon$  and  $\tilde{N}_{\mathit{area}}^{K,P}$ are
contained in the set
$$
\cM_1^{Q,\epsilon}  =  \{ S \in \cM_1 \;|\; \epsilon/Q < \ell_K(S) <
Q\epsilon \}\,.
$$
Note  also  that the support of $\tilde{N}_{\mathit{area}}^{K,P}$, is
contained           in           the          smaller          subset
$\tilde\cM_1^{Q,\epsilon}\subseteq\cM_1^{Q,\epsilon}$     of    those
surfaces,  for  which  all  cylinders  in $Cyl_K(S)$ having the waist
curve  of the shortest length $\ell_K(S)$ are parallel. Note that the
intersection   of   the   supports   of  $\nhyp  f_\epsilon$  and  of
$\tilde{\psi}^{K,P}$ is also contained in $\tilde\cM_1^{Q,\epsilon}$.

We    normalize    the    Haar    measure    $dg$    on    $\SL$   in
coordinates~\eqref{eq:rotation:dilatation:unipotent} as
$$
dg=\cfrac{1}{4y^2} \, dx\, dy \, d\theta=d\ghyp\,d\theta\,,
$$
where $\ghyp$ is the hyperbolic metric of curvature $-4$ on the upper
half-plane
$$
\Hyp\simeq\SL/\SO\,.
$$

We   choose   a   codimension   two   cross  section  $\tilde\cN$  of
$\tilde\cM_1^{Q,\epsilon}$  represented  by the surfaces $S_\epsilon$
for   which   $\ell_K(S_\epsilon)   =  \epsilon$  and  such  that  on
$S_\epsilon$ the cylinders in $\widetilde{Cyl}_{K,P}(S_\epsilon)$ are
\textit{horizontal}          in          the         sense         of
Remark~\ref{rm:horizontal:versus:vertical}     at    the    end    of
section~\ref{ss:restriction:to:parallel:cylinders}.  Then,  every  $S
\in \tilde\cM_1^{Q,\epsilon}$ can be represented as
\begin{equation}
\label{eq:coordinates}
S = r_{\!\theta}\, a_y S_\epsilon\,,
\end{equation}
where $y \in  [Q^{-2},Q^2]$, $S_\epsilon \in \tilde\cN$.

Recall that since the measure $d\nu_1$ is affine, it disintegrates as
\begin{displaymath}
d\nu_1 =  \frac{dy}{4y^2}\, d\theta\,d\beta',
\end{displaymath}
where $\beta'$ is a measure on $\tilde\cN$.

For $S_\epsilon \in \tilde\cN$, let
\begin{displaymath}
H(y,S_\epsilon) := \sum_{C \in \widetilde{Cyl}_{K,P}(S_\epsilon)}
 \Mod(a_y C)\,.
\end{displaymath}
Suppose that some cylinder $C$ belongs to the symmetric difference of
$\widetilde{Cyl}_{K,P}(S_\epsilon)$   and  $\widetilde{Cyl}_{K,P}(a_y
S_\epsilon)$ for some $y \in [Q^{-2}, Q^2]$. Then,
\begin{displaymath}
KQ^{-2} \le \Mod(C) \le KQ^2
\end{displaymath}
By  assumption  $KQ^{-2}$ is sufficiently large so that all cylinders
of  modulus  at  least $KQ^{-2}$ are disjoint. It follows that for $y
\in [Q^{-2},Q^2]$,
\begin{displaymath}
|\tilde{\psi}^{K,P}(a_y S_\epsilon) - H(y,S_\epsilon)| \le
(3g-3+n) KQ^2\,.
\end{displaymath}
By  the  same  argument  as in the proof of~\eqref{eq:nhyp:h:h}, this
implies
\begin{equation}
\label{eq:estimate:h}
\|
\nhyp \tilde{\psi}^{K,P}(a_y S_\epsilon) - \nhyp H(y,S_\epsilon)
\|
\le 2(3g-3+n) KQ^2
\end{equation}
We will eventually need to consider the integral
\begin{equation}
\label{eq:tmp:left:integral}
\int_{\cM_1} \nhyp \tilde{\psi}^{K,P} \cdot \nhyp f_\epsilon \,
  d\nu_1.
\end{equation}
 However, the integrand is supported on a set $\cM_1^{Q,\epsilon}$ satisfying $\nu_1(\cM_1^{Q,\epsilon}) \le
C(\cM_1) \epsilon^2$ and $\|\nhyp f_\epsilon\|$ is bounded independent of
$\epsilon$. Then,  the  contribution  of the right hand
side of~\eqref{eq:estimate:h} to (\ref{eq:tmp:left:integral})
will tend to $0$ as $\epsilon \to 0$.

Similarly, let
\begin{displaymath}
A(y,S_\epsilon) :=
\sum_{
  C\in\widetilde{Cyl}^{K,P}(S_\epsilon)
  \cap Cyl(a_y S_\epsilon,\epsilon,\epsilon/2)
} \Area(C).
\end{displaymath}
As above, if some cylinder $C$ belongs to the symmetric difference of
$\widetilde{Cyl}_{K,P}(S_\epsilon)$   and  $\widetilde{Cyl}_{K,P}(a_y
S_\epsilon)$ for some $y \in [Q^{-2}, Q^2]$, then,
\begin{displaymath}
\Area(C) = \big(w(C)\big)^2 \Mod(C) \le (Q \epsilon)^2 K Q^2 \le K Q^4
\epsilon^2.
\end{displaymath}
Thus,
\begin{equation}
\label{eq:estimate:A}
|\tilde{N}^{K,P}_{\mathit{area}}(a_y S_\epsilon, \epsilon,
\epsilon/2) - A(y,S_\epsilon)| \le
(3g-3+n) KQ^4 \epsilon^2.
\end{equation}
We will eventually need to consider the expression:
\begin{equation}
\label{eq:tmp:right:integral}
  \frac{1}{\tfrac{3}{4} \pi \epsilon^2}
  \int_{\cM_1} \tilde{N}^{K,P}_{\mathit{area}}(S,\epsilon,\epsilon/2)
    \, d\nu_1(S).
\end{equation}
Since the integrand is supported on a set $\cM_1^{Q,\epsilon}$ satisfying $\nu_1(\cM_1^{Q,\epsilon}) \le
C(\cM_1) \epsilon^2$,  the  contribution  of the right hand
side of~\eqref{eq:estimate:A} to (\ref{eq:tmp:right:integral})
will tend to $0$ as $\epsilon \to 0$.

We now claim that for any $S_\epsilon \in \tilde\cN$  we have
\begin{equation}
\label{eq:equal:regions}
\int_{1/Q^2}^{Q^2} \nhyp  H(y , S_\epsilon) \cdot \nhyp
f_\epsilon(a_y S_\epsilon) \, \frac{dy}{4y^2}
=
-4\pi\,\cdot\,
\frac{1}{\tfrac{3}{4} \pi \epsilon^2}
 \int_{1/Q^2}^{Q^2}
A(y,S_\epsilon)
\, \frac{dy}{4y^2}\,.
\end{equation}

Note  that  by definition the function $H(y,S_\epsilon)$ is linear in
$y$,     namely,     for     $y     \in    [1/Q^2,Q^2]$    we    have
$H(y,S_\epsilon)=y\cdot\Mod(C)$.    Also    by    construction,   for
$S_\epsilon \in \tilde\cN$, $f_\epsilon(a_{Q^2} S_\epsilon) = 0$, and
$f_\epsilon(a_{1/Q^2} S_\epsilon) = 1$.
Thus,
\begin{multline*}
\int_{1/Q^2}^{Q^2}
\nhyp H(y,  S_\epsilon) \cdot
\nhyp f_\epsilon( a_y  S_\epsilon) \, \frac{dy}{4y^2}
=
\int_{1/Q^2}^{Q^2}
\nabla H(y,  S_\epsilon)\, \nabla f_\epsilon( a_y  S_\epsilon) \, dy
= \\=
\sum_{C \in \widetilde{Cyl}_{K,P}(S_\epsilon)} \Mod(C) \int_{1/Q^2}^{Q^2}
\frac{\partial f_\epsilon(a_y  S_\epsilon)}{\partial   y} \, dy
\\
=
\sum_{C \in \widetilde{Cyl}_{K,P}(S_\epsilon)} \Mod(C)
(f_\epsilon(a_{Q^2} S_\epsilon) - f_\epsilon( a_{1/Q^2} S_\epsilon))
=
\sum_{C \in \widetilde{Cyl}_{K,P}(S_\epsilon)} \Mod(C)
\cdot(-1)\,.
\end{multline*}

Now,
\begin{displaymath}
A(y,S_\epsilon) = \sum_{C
  \in \widetilde{Cyl}_{K,P}(S_\epsilon)} \Area(C)\cdot
  \chi_{(\epsilon/2,\epsilon)}\big(y^{-1/2}\cdot w(C)\big)\,,
\end{displaymath}
where  the characteristic function $\chi_{(a,b)}(t)$ is $1$ if $a < t
< b$ and $0$ otherwise. By our choice of $Q$ and by the definition of
$\widetilde{Cyl}_{K,P}(S_\epsilon)$,     for     every     $C     \in
\widetilde{Cyl}_{K,P}(S_\epsilon)$, we have
$\left[\frac{w^2(C)}{4\epsilon^2},\frac{w^2(C)}{\epsilon^2}\right]
\subset [Q^{-2},Q^2]$. Then,
\begin{multline*}
\int_{1/Q^2}^{Q^2} A(y,S_\epsilon)\, \frac{dy}{4y^2}
=
\int_{1/Q^2}^{Q^2} \sum_{C\in \widetilde{Cyl}_{K,P}(S_\epsilon)}
   \Area(C) \chi_{(\epsilon/2,\epsilon)}\big(y^{-1/2} w(C)\big)\,
      \frac{dy}{4y^2}
= \\=
\sum_{C \in \widetilde{Cyl}_{K,P}(S_\epsilon)} \Area(C)
  \int_{w^2(C)/\epsilon^2}^{4w^2(C)/\epsilon^2}
    \frac{dy}{4y^2}
=
\cfrac{1}{4}\,\cdot
\frac{3}{4}\, \epsilon^2 \sum_{C \in \widetilde{Cyl}_{K,P}(S_\epsilon)}
\Area(C) \frac{1}{w(C)^2}
= \\=
\cfrac{1}{4}\,\cdot
\frac{3}{4}\, \epsilon^2 \sum_{C \in \widetilde{Cyl}_{K,P}(S_\epsilon)}
\Mod(C)\,.
\end{multline*}
This   completes   the  proof  of  (\ref{eq:equal:regions}).  We  now
integrate  (\ref{eq:equal:regions})  over $\tilde\cN$ with respect to
the measure  $d\beta'$,  and  over  $\theta$  from  $0$  to  $2\pi$,
use~\eqref{eq:estimate:h}  and~\eqref{eq:estimate:A},  and  take  the
limit  as  $\epsilon  \to  0$.  Since  $\nu_1(\cM_1^{Q,\epsilon}) \le
C(\cM_1) \epsilon^2$ and $\nhyp f_\epsilon$ is bounded independent of
$\epsilon$,  we  see  that  the  contributions  of  of  each  of  the
right-hand-sides  of~\eqref{eq:estimate:h}  and~\eqref{eq:estimate:A}
tend  to  $0$  as  $\epsilon  \to  0$.  Lemma~\ref{lemma:last:lemma}
follows.
\end{proof}

\begin{proof}[Proof of Theorem~\ref{theorem:int:Dflat:equals:SV:const}]
Choose  arbitrary  large  $P>1$  and  choose  $K'$ so large, that all
previous                       considerations                      in
sections~\ref{ss:restriction:to:parallel:cylinders} --
\ref{ss:det:of:L:and:SV}  work for $K=K'/Q^2=K'/(4P^2)$. Since $P$ is
arbitrary,   formula~\eqref{eq:int:Dflat:equals:SV:const}  and  thus,
Theorem~\ref{theorem:int:Dflat:equals:SV:const}      follow      from
Lemma~\ref{lemma:nearly:equal:widths}                             and
Lemma~\ref{lemma:last:lemma}.
Theorem~\ref{theorem:int:Dflat:equals:SV:const} is proved.
\end{proof}

\section{Evaluation of Siegel--Veech constants}
\label{sec:Evaluation:of:SV:constants}
It   follows   from   the   general   results   of   A.~Eskin   and
H.~Masur~\cite{Eskin:Masur}  that  almost  all  flat  surfaces in any
closed  connected  regular  $\SL$-invariant suborbifold $\cM_1$ share
the same quadratic asymptotics
\begin{equation}
\label{eq:c:area:definition}
\lim_{L\to\infty}
\cfrac{N_{\mathit{area}}(S,L)}{\pi L^2}=c_{\mathit{area}}(\cM_1)
\end{equation}
where  the  \textit{Siegel---Veech  constant} $c_{area}(\cM_1)$ depends
only    on    $\cM_1$    (see   also   more   specific   results   of
Ya.~Vorobets~\cite{Vorobets}).

In   section~\ref{sec:Arithmetic:Teichmuller:discs}  we recall  some
basic facts concerning arithmetic Teichm\"uller discs. The reader can
find     a     more     detailed     presentation     in   the  original
articles~\cite{Gutkin:Judge},             \cite{Eskin:Masur:Schmoll},
\cite{Hubert:Lelievre}, \cite{Zorich:square:tiled}.

Following      analogous     computations in~\cite{Veech:Teichmuller},
\cite{Veech:Siegel},
\cite{Eskin:Masur:Zorich}, \cite{Lelievre:Siegel}
  and~\cite{Eskin:Masur:Schmoll}  we compute
the Siegel--Veech constant $c_{area}$ for an arithmetic Teichm\"uller
surface                                                            in
section~\ref{Siegel:Veech:constants:for:square:tiled:surfaces}   thus
proving Theorem~\ref{theorem:SVconstant:for:square:tiled}.


\subsection{Arithmetic Teichm\"uller discs}
\label{sec:Arithmetic:Teichmuller:discs}

Consider  a  unit  square  representing  a  fundamental domain of the
integer  lattice  $\Z\oplus  \sqrt{-1}\cdot\Z$  in the complex plane.
Consider  a  flat  torus  $\T$ obtained by identification of opposite
sides  of  this unit square. A \textit{square-tiled surface} (also an
\textit{origami}) $S$ is a ramified cover
\begin{equation}
\label{eq:p:S:to:T}
S\xrightarrow{\ p\ }\T
\end{equation}
of finite degree $D$ over the torus such that all ramification points
project to the same point of the torus.

Clearly, $S\in\cH(m_1,\dots,m_\noz)$ where $m_1+1,\dots,m_\noz+1$ are
degrees of ramification points. By construction, the cohomology class
of  the  closed  1-form  $\omega=p^\ast  dz$ is integer: $[\omega]\in
H(S,\{\text{zeroes}\};\Z\oplus \sqrt{-1}\cdot\Z)$.

One  can  slightly  generalize the above construction admitting other
flat  tori  without singularities and with a single marked point as a
base  of  the  cover~\eqref{eq:p:S:to:T}.  The corresponding covering
flat  surface  $S$ is called an \textit{arithmetic Veech surface}. An
$\SL$-orbit  of  such  flat  surface  in the corresponding stratum is
called  an \textit{arithmetic Teichm\"uller disc}, and its projection
to  $\mathbb{P}\cH(m_1,\dots,m_\noz)$  (or  to the
moduli space of curves) is called an \textit{arithmetic Teichm\"uller curve}.

We  say that an arithmetic Veech surface is \textit{reduced} if the
cover~\eqref{eq:p:S:to:T}   does  not  factor  through  a  nontrivial
regular cover of a larger torus:
$$
\begin{array}{c}
S\xrightarrow{\quad p\quad}\T\\
\searrow\hspace*{12pt}\nearrow\\
\T
\end{array}
$$

Throughout  this  section we consider only reduced arithmetic Veech
surfaces  $S$.  Moreover, we always assume that the base torus of the
cover~\eqref{eq:p:S:to:T} has area one.

The  action of the group $\GL$ on an arithmetic Veech surface $S$ and
on   the   underlying   torus   $\T$   are   compatible:   having   a
cover~\eqref{eq:p:S:to:T}  we  get  a cover $gS\to g\T$ for any group
element  $g$;  moreover,  this new cover has the same topology as the
initial  one.  This implies, in particular, that if the base torus of
the  cover~\eqref{eq:p:S:to:T}  has  area  one,  than the $\SL$-orbit
$\SL\cdot S$ of an arithmetic Veech surface $S$ representing contains
at  least  one square-tiled surface. This also implies that the orbit
$\SL\cdot  S$  of  $S$  is a finite nonramified cover over the moduli
space $\cH_1(0)$ of flat tori with a marked point.

It  would  be  convenient  to  apply  extra  factorization  over $\pm
\operatorname{Id}\in\SLZ$  and  to  pass  to  $\PSL$ and $\PSLZ$. The
degree $N$ of the cover
$$
\Pi:\PSL\cdot S\to\cH_1(0)
$$
coincides   with   the   cardinality  of  the  $\PSLZ$-orbit  of  any
square-tiled surface $S_0$ in the orbit $\PSL\cdot S$,
$$
N=\deg(\Pi)=\card\,\PSLZ\cdot S_0
$$

Rescaling  every  flat  surface  in  the  orbit  $\PSL\cdot  S$  by a
homothety  with  a  factor  $1/\sqrt{D}$  we  can  identify the orbit
$\PSL\cdot S$ with a regular $\PSL$-invariant variety $\cM_1$ of flat
surfaces   of   area   one.  The  corresponding  Teichm\"uller  curve
$\mathbb{P}  \cM$  has  a  natural structure of a cover of degree $N$
over the \textit{modular curve} $\mathbb{P}\cH(0)$, where
$$
\mathbb{P}\cH(0)\simeq\PSO\backslash\PSL/\PSLZ\simeq\Hyp/\PSLZ\,.
$$
This  cover  might  have  ramification points over any (or over both)
orbifoldic points of the modular curve.

The  canonical density  measure $d\nu$ on $\cH_1(0)=\PSL/\PSLZ$ in
standard  normalization  disintegrates  to  the  hyperbolic area form
$d\nuhyp$ on $\mathbb{P}\cH(0)\simeq\Hyp/\PSLZ$. In particular,
$$
\nu\big(\cH_1(0)\big)=\frac{\pi^2}{3},\qquad
\nuhyp\big(\mathbb{P}\cH(0)\big)=\frac{\pi}{3}.
$$

Clearly, a flat torus of area one cannot have two short non-homologous
closed      geodesics.
Since $\cH_1^\epsilon(0)$   is   connected,   it   represents   the  single
\textit{cusp} of $\cH_1(0)$. It is easy to compute that
$$
\nu(\cH^\epsilon_1(0))=\pi\epsilon^2,\qquad
\nuhyp(\mathbb{P}\cH^\epsilon(0))=\epsilon^2.
$$

Since  any  arithmetic  Teichm\"uller  curve  $\mathbb{P} \cM$  is a (possibly
ramified)  cover  of finite order $N$ over the modular curve, $\mathbb{P} \cM$
is  a  Riemann  surface  of  finite  area $N\cdot\pi^2/3$ with cusps,
where the  \textit{cusps}  of  $\mathbb{P}
\cM$  are in a bijection
with   connected   components   $\cC_1,\dots,\cC_s$   of  the  subset
$\mathbb{P}\cM^\epsilon$.

Consider a very short (say, shorter than $\frac{\epsilon}{N}$) simple
closed     curve     $\gamma$     non-homotopic     to     zero    in
$\mathbb{P}\cH^\epsilon(0)$  (for  example,  a very short horocycle).
Consider  its  preimage $\Pi^{-1}\gamma$ in $\mathbb{P}\cM^\epsilon$.
By  construction  the  preimage  has  a  unique  connected  component
$\gamma_j$  in  each  cusp  $\cC_j$  of  $\mathbb{P}\cM^\epsilon$. We
define a \textit{width} $N_j$ \textit{of the cusp} $\cC_j$ as a ratio
of  lengths  of  $\gamma_j$  and  $\gamma$  measured in the canonical
hyperbolic    metric.    Note    that    the    connected   component
$\mathbb{P}\cM^\epsilon(\cC_j)$      of      $\mathbb{P}\cM^\epsilon$
representing  the  cusp  $\cC_j$  is a cover of degree $N_j$ over the
neighborhood  $\mathbb{P}\cH^{\epsilon/N_j}(0)$  of  the only cusp of
the modular curve.

Consider  a square-tiled surface $S_0$. Every nonsingular leaf of the
horizontal  foliation on $S_0$ is closed. Thus, $S_0$ decomposes into
a  finite number of maximal cylinders bounded by unions of horizontal
saddle  connections.  We  denote  the  length of the horizontal waist
curve  of the cylinder number $j$ by $w_j$ and the vertical height of
the  cylinder by $h_j$. We enumerate the cylinders in such a way that
$w_1\le  w_2\le\dots\le  w_k$,  where  $k$  is  the  total  number of
cylinders. Clearly all parameters $w_j, h_j$ are integer. The area of
the  cylinder  number  $j$  equals  $w_j h_j$. The area of the entire
square-tiled  surface  $S_0$  (which coincides with the number $D$ of
unit squares tiling it) is equal to the sum
$$
\area(S)=D=w_1 h_1+\dots+w_k h_k\,,
$$
where  $k$  is  the  total  number  of  cylinders.  We  enumerate the
cylinders in such a way that $w_1\le w_2\le\dots\le w_k$.

Consider a unipotent subgroup
$$
U=\left\{\begin{pmatrix} 1& n\\0 & 1\end{pmatrix}\ \vert\ n\in\Z\right\}
$$
of $\PSLZ$. Consider an orbit $U\cdot S_0$ of a square-tiled surface.
Any  flat  surface  in  this  orbit  is  also a square-tiled surface.
Moreover, it has the same number of maximal cylinders in its cylinder
decomposition,  and the cylinders have the same heights and widths as
the  ones  of  the initial square-tiled surface. (The only parameters
which  differ  for different elements of $U\cdot S_0$ are the integer
\textit{twists}  which  are  responsible  for  gluing  the  cylinders
together.)

The   proof   of   the   following   simple   Lemma   can   be  found
in~\cite{Hubert:Lelievre}.

\begin{Lemma}
Let  $S_0$  be  a  reduced  square-tiled  surface and let $Z(S_0) =
\PSLZ\cdot S_0$ be the set of square-tiled surfaces in its orbit. The
cusps    of   the   corresponding   arithmetic   Teichm\"uller   disc
$\cM_1=\PSL\cdot  S_0$  are  in  bijection  with  the  $U$-orbits  of
$Z(S_0)$,   and   the   widths  $N_j$  of  the  cusps  coincide  with
cardinalities of the corresponding $U$-orbits.
\begin{equation}
\label{eq:Z:equals:sum:Ui}
Z(S_0)=\sqcup_{i=1}^s U_i\qquad \card(U_i)=N_i\,,
\end{equation}
where $s$ is the total number of cusps.
\end{Lemma}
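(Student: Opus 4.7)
The arithmetic Teichm\"uller disc admits the presentation $\mathbb{P}\cM = \Gamma\backslash\Hyp$, where $\Gamma\le\PSLZ$ is the Veech group of $S_0$, i.e.\ the stabilizer of $S_0$ under the $\PSL$-action, intersected with $\PSLZ$. Since $S_0$ is reduced, $\Gamma$ has finite index $N=\card\PSLZ/\Gamma=\card\PSLZ\cdot S_0$, and the covering map $\Pi\colon\mathbb{P}\cM\to\mathbb{P}\cH(0)=\PSLZ\backslash\Hyp$ is simply the quotient by $\PSLZ/\Gamma$. I would therefore start by fixing these identifications and by identifying the natural $\PSLZ$-equivariant bijection
\[
Z(S_0)=\PSLZ\cdot S_0\ \longleftrightarrow\ \PSLZ/\Gamma,\qquad \gamma\cdot S_0\mapsto\gamma\Gamma,
\]
which intertwines the left $U$-action on $Z(S_0)$ with left multiplication by $U$ on $\PSLZ/\Gamma$.

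Next I would invoke the classical cusp dictionary for finite index subgroups of $\PSLZ$. The unique cusp of $\mathbb{P}\cH(0)$ is the parabolic fixed point $\infty$, stabilized precisely by $U$, so the $\PSLZ$-orbit of $\infty$ is in bijection with $\PSLZ/U$. The cusps of $\mathbb{P}\cM$ lying over this point are exactly the $\Gamma$-orbits on $\PSLZ/U$, i.e.\ the double cosets $\Gamma\backslash\PSLZ/U$, which via the inversion $g\mapsto g^{-1}$ correspond bijectively to $U\backslash\PSLZ/\Gamma$. Composing with the identification of the previous paragraph produces the required bijection between cusps of $\mathbb{P}\cM$ and $U$-orbits on $Z(S_0)$, decomposed as in~\eqref{eq:Z:equals:sum:Ui}.

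For the assertion on widths, I would recall that the width of the cusp $\Gamma\gamma\cdot\infty$ is by definition the ramification index of $\Pi$ at this cusp, namely
\[
[\operatorname{Stab}_{\PSLZ}(\gamma\infty):\operatorname{Stab}_\Gamma(\gamma\infty)]
=[\gamma U\gamma^{-1}:\Gamma\cap\gamma U\gamma^{-1}]
=[U:\gamma^{-1}\Gamma\gamma\cap U].
\]
On the other hand the stabilizer in $U$ of the coset $\gamma^{-1}\Gamma\in\PSLZ/\Gamma$ (equivalently, of the surface $\gamma^{-1}S_0\in Z(S_0)$) equals $U\cap\gamma^{-1}\Gamma\gamma$, so the cardinality of the corresponding $U$-orbit is exactly $[U:\gamma^{-1}\Gamma\gamma\cap U]$. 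These two indices coincide, proving the equality $N_j=\card U_j$ for each cusp.

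The only real content is the double-coset bookkeeping in the last step, and the mild care required by the correspondence $\Gamma\backslash\PSLZ/U\leftrightarrow U\backslash\PSLZ/\Gamma$ under inversion. The geometric intuition behind the $U$-invariance is transparent: the unipotent subgroup $U$ acts on a square-tiled surface by integer horizontal shears, which preserve the horizontal cylinder decomposition together with its combinatorial data $\{w_j,h_j\}$, so $U$-orbits are precisely those families of square-tiled surfaces that share a common asymptotic direction under the Teichm\"uller geodesic flow degenerating to the cusp. This heuristic is not needed in the proof itself but would be mentioned to motivate the statement.
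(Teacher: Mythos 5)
Your argument is correct. The paper itself gives no proof of this lemma---it simply refers the reader to the article of Hubert and Leli\`evre---and the double-coset computation you carry out (identifying $Z(S_0)$ with $\PSLZ/\Gamma$, the cusps with $\Gamma\backslash\PSLZ/U$, matching the two via inversion, and comparing the stabilizer indices to get the widths) is precisely the standard proof found in that reference. The only input you use implicitly, namely that $\mathbb{P}\cM$ is the quotient $\Gamma\backslash\Hyp$ by a finite-index subgroup $\Gamma\le\PSLZ$ realizing the degree-$N$ cover of the modular curve, is already set up in the surrounding section of the paper, so nothing is missing.
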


\subsection{Siegel--Veech constants for square-tiled surfaces}
\label{Siegel:Veech:constants:for:square:tiled:surfaces}

Consider  an  arithmetic  Veech  surface  $S$;  let $p:S\to\T$ be the
corresponding  torus  cover.  As usual we assume that the area of the
flat  torus in the base of the cover is equal to one. Let $\gamma$ be
a  closed  geodesic  on  $S$. Its projection $p(\gamma)$ to the torus
$\T$  is  also a closed geodesic. Let $\vec v\in\R{2}$ be a primitive
vector  of  the  lattice  associated to $\T$ representing this closed
geodesic   on   the   torus.   Applying   an   appropriate   rotation
$r_\theta\in\PSO$  to  $\vec v$ we can make it horizontal. Applying a
hyperbolic transformation
$$
g_t=\begin{pmatrix} e^{t} & 0\\ 0 & e^{-t}\end{pmatrix}
$$
with  a  sufficiently  large  \textit{negative}  $t$ to the resulting
horizontal  vector  we can make it very short. The corresponding flat
surface $g_t r_\theta\cdot S$ belongs to a neighborhood of one of the
cusps $\cC_j$ of the orbit $\PSL\cdot S$.

Note  that  a  direction  of  any  closed  geodesic (or of any saddle
connection)   on   a   square-tiled   surface  is  \textit{completely
periodic}:  any leaf of the foliation in the same direction is either
a  regular  closed  leaf  or is a saddle connection. Thus, any closed
geodesic  on  a square-tiled surface defines a cylinder decomposition
of it. Proportions of lengths of the waist curves of the cylinders or
of  heights of the cylinders as well as areas on the cylinders do not
change  under  the  action  of  the  group $\PSL$. In particular, any
closed  geodesic  on  a  square-tiled surface defines a \textit{rigid
configuration  of saddle connections}. We say that this configuration
has  type $\cC_j$ when the flat surface $g_t r_\theta\cdot S$ defined
as above belongs to a neighborhood of one of the cusps $\cC_j$.

Any  closed  geodesic  corresponds  to  a  unique  cusp  $\cC_j$,  so
$$
c_{area}=\sum_{i=1}^s c_{area}(\cC_i)\,.
$$
Here   $s$  denotes  the  total  number  of  cusps  of  $\mathbb{P} \cM$.  The
Siegel--Veech  constant  $c_{area}(\cC_i)$  corresponds  to  counting
total  areas  of  only  those  cylinders  of  bounded  length,  which
represent a given rigid configuration $\cC_i$ of saddle connections.

To  compute  the  Siegel--Veech  constant $c_{area}(\cC_i)$ we follow
analogous             computations
in~\cite{Eskin:Masur},
\cite{Eskin:Masur:Zorich},
\cite{Lelievre:Siegel}
     and     especially     a    computation
in~\cite{Eskin:Masur:Schmoll} which is the closest to our case.

Having  an arithmetic Veech surface $S\in\cM_1$ choose a cusp $\cC_i$
of  $\cM_1$.  Having  a configuration of closed geodesics of the type
$\cC_i$   choose   a   regular   closed  geodesic  $\gamma$  in  this
configuration  and consider the associated vector $\vec v(\gamma)$ as
above.  By  construction  $\vec v$ does not depend on the choice of a
representative  $\gamma$. Moreover, it can be explicitly evaluated as
follows. Consider the cylinder decomposition of square-tiled surfaces
in  the  orbit  $U$-orbit $U_i$ representing the cusp $\cC_i$. If the
representative $\gamma$ belongs to a cylinder number $j$, then
$$
\vec v(\gamma)=\frac{\vec\gamma}{w_j}
$$
where  $\vec{\gamma}$  is  a  plane  vector having the length and the
direction of $\gamma$.

Associating  to  every  configuration of parallel closed geodesics of
the  type  $\cC_i$ a vector $\vec v$ as above we construct a discrete
subset  $V_i(S)$  in  the  plane $\R{2}$. By construction the subset
changes  equivariantly  with  respect  to  the  group action: for any
$g\in\PSL$ we have $V_i(gS)=gV_i$.

Consider a Siegel--Veech transform which associates to a function $f$
with  compact support in $\R{2}$ a function $\hat f$ on $\cM$ defined
as
$$
\hat{f}(S)=\sum_{v\in V_i(S)} f(v)
$$

By   a   Theorem   of   Veech   (see~\cite{Veech:Siegel})   one   has
\begin{equation}
\label{eq:Siegel:Veech:formula}
\frac{1}{\nu(\cM_1)}\int_{\cM_1} \hat{f}(S) \,d\nu=
const\cdot\int_{\R{2}} f(x,y) \,dxdy
\,,
\end{equation}
where  the  constant  $const$  does  not  depend on the function $f$.

Hence,  to  compute the constant $const$ it is sufficient to evaluate
both  integrals  for  some convenient function $f$, for example for a
characteristic    function    $\chi_\epsilon(x,y)$    of    a    disc
$\{(x,y)\,\vert\,x^2+y^2\le\epsilon^2\}$   of  a  very  small  radius
$\epsilon$. In this particular case the integral on the right is just
the area $\pi\epsilon^2$ of the disc. Function $\hat\chi_\epsilon$ is
the  characteristic  function  of  those  component  of  the preimage
$\Pi^{-1}(\cH_1^\epsilon(0))$, which corresponds to the cusp $\cC_i$.
If the width of the corresponding cusp is $N_i$, than,
$$
\int_{\cM_1} \hat{f}(S) \,d\nu=
N_i\cdot\nu(\cH_1^\epsilon(0))=N_i\cdot \pi\epsilon^2
$$
Finally,   $\nu(\cM_1)=N\cdot\nu(\cH_1(0))=N\pi^2/3$.  Thus,  the
Siegel---Veech   formula~\eqref{eq:Siegel:Veech:formula}   applied   to
$\chi_\epsilon$ establishes the following relation:
$$
\frac{1}{N\pi^2/3}\cdot N_i\pi\epsilon^2=const\cdot\pi\epsilon^2
$$
which  implies  that  the constant in~\eqref{eq:Siegel:Veech:formula}
has the following value:
\begin{equation}
\label{eq:const}
const=\frac{3}{\pi^2}\frac{N_i}{N}\,.
\end{equation}

To   compute  $c_{area}(\cC_i)$  we  introduce  a  counting  function
$\chi_r(\vec{v},\cC_i):\R{2}\to\R{}$  with compact support defined as
follows:
$$
\chi_r(\vec{v},\cC_i):=
\begin{cases}
0      &\text{when }w_1\|\vec{v}\|>r \\
\cfrac{w_1 h_1}{D}&\text{when }w_2\|\vec{v}\|>r \ge w_1\|\vec{v}\| \\
\dots&\dots\\
\cfrac{1}{D}\,(w_1 h_1+\dots+w_j h_j)&\text{when }w_{j+1}\|\vec{v}\|>r \ge w_j\|\vec{v}\| \\
\dots&\dots\\
\cfrac{1}{D}\,(w_1 h_1+\dots+w_k h_k)&\text{when }r \ge w_k\|\vec{v}\|
\end{cases}
$$
Here  $k$  is total number of cylinders in the cylinder decomposition
corresponding  to the configuration $\cC_i$, and $D=\area(S)$ is the
number of unit squares used to tile the initial square-tiled surface.
As  always,  we  enumerate  the  cylinders in such a way that $w_1\le
w_2\le\dots\le w_k$.

By definition of $N_{area}(S,r;\cC_i)$ we have
$$
N_{area}(S,r;\cC_i)=
\sum_{v\in V_i(S)}
\chi_r(\vec{v},\cC_i)=
\hat\chi_r(S,\cC_i)\,.
$$

Note  that modifying a flat structure on a surface $S$ by a homothety
with  a  positive coefficient $\lambda$ is equivalent to changing the
scale.  Hence,  for  any  counting function $N(S,r)$ with a quadratic
asymptotics in $r$ we get
$$
N(\lambda\cdot S,r)=N\left(S,\cfrac{r}{\lambda}\right)\sim
\cfrac{1}{\lambda^2}\cdot N(S,r)
$$

By  definition the coefficient $c_{area}$ is defined as a coefficient
in  a  quadratic  asymptotics  of a counting function $N_{area}$ on a
surface of \textit{unit} area. Since arithmetic Veech surfaces in our
consideration  have  area  $D$ (the number of unit squares tiling the
initial  square-tiled  surface $S_0$), we need to normalize the limit
below by the area of $S$ in order to obtain $c_{area}$:
$$
c_{area}(\cC_i):=\area(S)\cdot\lim_{r\to\infty}\frac{N_{area}(S,r;\cC_i)}{\pi r^2}\ =\
D\cdot
\lim_{r\to\infty}\frac{1}{\pi r^2}\cdot \hat\chi_r(S,\cC_i)
\,.
$$

By  the  results  of  W.~Veech~\cite{Veech:Siegel}  for the case of a
Teichm\"uller  disc  of a Veech surface the constant above is one and
the same for all surfaces in the corresponding Teichm\"uller disc and
\begin{equation}
\label{eq:c:area:1}
c_{area}(\cC_i)=D\cdot
\lim_{r\to\infty}\frac{1}{\pi r^2}\cdot
\frac{1}{\nu(\cM_1)}\int_{\cM_1}\hat\chi_r(S,\cC_i)\,d\nu
\,.
\end{equation}

On       the      other      hand,      by      the      Siegel---Veech
formula~\eqref{eq:Siegel:Veech:formula} the above normalized integral
equals to
\begin{equation}
\label{eq:c:area:2}
\frac{1}{\nu(\cM_1)}\int_{\cM_1}\hat\chi_r(S,\cC_i)\,d\nu=
const\int_{\R{2}}\chi_r(v,\cC_i)\,dx dy\,,
\end{equation}
where  the  value  of  the  constant is obtained in~\eqref{eq:const}.

It remains to compute the integral
\begin{multline}
\label{eq:c:area:3}
\int_{\R{2}}\chi_r(\vec{v},\cC_i)\, dx dy\ =\ \pi r^2\cdot\cfrac{1}{D}
\left(\frac{h_1 w_1}{w_1^2}+\frac{h_2 w_2}{w_2^2}+\dots+\frac{h_k w_k}{w_k^2}\right)\ =\\ =\ \pi r^2\,\cfrac{1}{D}
\sum_{j=1}^k\
\cfrac{h_{j}}{w_{j}}\,,
\end{multline}
and to collect equations~\eqref{eq:const}--\eqref{eq:c:area:3} to get
$$
c_{area}(\cC_i)\ =D\cdot
\frac{3}{\pi^2}\frac{N_i}{N}\cdot
\frac{1}{D}\cdot
\sum_{j=1}^k
\cfrac{h_{j}}{w_{j}}\ =\
\frac{3}{\pi^2}\frac{1}{N}
\sum_{\substack{\text{surfaces}\\\text{in the}\\\text{orbit }U_i}}
\sum_{j=1}^k\
\cfrac{h_{j}}{w_{j}}
\,.
$$

Taking  a  sum  of  $c_{area}(\cC_i)$  over  all cusps $\cC_1, \dots,
\cC_s$   of   $\cM_1$   and   taking   into  consideration  that  the
$\PSLZ$-orbit  $Z(S)$  of the initial square-tiled surface decomposes
into a disjoint union of orbits $U_i$, see~\eqref{eq:Z:equals:sum:Ui}
we obtain the desired formula~\eqref{eq:SVconstant:for:square:tiled}:
\begin{multline*}
c_{area}=\sum_{i=1}^s c_{area}(\cC_i)\ =\
\frac{3}{\pi^2}\frac{1}{N}\cdot
\sum_{\text{cusps }\cC_i}\
\sum_{\substack{\text{surfaces}\\\text{in the}\\\text{orbit }U_i}}\
\sum_{j=1}^{k(i)}
\cfrac{h_{ij}}{w_{ij}}\ =\\
=\ \cfrac{3}{\pi^2}\cdot
\cfrac{1}{\card(\PSLZ\cdot S_0)}\
\sum_{S_i\in\PSLZ\cdot S_0}\ \
\sum_{\substack{
\mathit{horizontal}\\
\mathit{cylinders\ cyl}_{ij}\\
such\ that\\S_i=\sqcup\mathit{cyl}_{ij}}}\
\cfrac{h_{ij}}{w_{ij}}
\end{multline*}

Theorem~\ref{theorem:SVconstant:for:square:tiled} is proved.
\qed


\appendix

\section{Conjectural approximate values of individual Lyapunov
exponents in small genera}
\label{a:Values:of:exponents}

\begin{table}[hbt]
\small
$$
\begin{array}{|c|c||c|c||c||c|}

\hline
&&\multicolumn{4}{|c|}{}\\

\multicolumn{1}{|c|}{\text{Degrees}}&
\multicolumn{1}{|c||}{\text{Con-}}&
\multicolumn{4}{|c|}{\text{Lyapunov exponents}}\\

\multicolumn{1}{|c|}{\text{of }}&
\multicolumn{1}{|c||}{\text{nected}}&
\multicolumn{4}{|c|}{\text{}}\\

\cline{3-6}

\multicolumn{1}{|c|}{\text{zeros}}&
\multicolumn{1}{|c||}{\text{compo-}}&
\multicolumn{3}{|c||}{\text{}}&
\multicolumn{1}{|c|}{\text{}}\\

\multicolumn{1}{|c|}{}&
\multicolumn{1}{|c||}{\text{nent}}&
\multicolumn{3}{|c||}{\text{Experimental}}&
\multicolumn{1}{|c|}{\text{Exact}}\\

\multicolumn{1}{|c|}{\text{}}&
\multicolumn{1}{|c||}{\text{}}&
\multicolumn{3}{|c||}{\text{}}&
\multicolumn{1}{|c|}{\text{}}\\

\cline{3-6}

\multicolumn{1}{|c|}{\text{}}&
\multicolumn{1}{|c||}{\text{}}&
\multicolumn{1}{|c|}{\lambda_2}&
\multicolumn{1}{|c||}{\lambda_3}&
\multicolumn{1}{|c||}{\overset{g}{\underset{j=1}{\sum}} \lambda_j}&
\multicolumn{1}{|c|}{\overset{g}{\underset{j=1}{\sum}} \lambda_j}\\
[-\halfbls] &&&&&\\
\hline &&&&& \\ [-\halfbls]
(4)      & hyp   & 0.6156  & 0.1844 & 1.8000 & 9/5      \\
[-\halfbls] &&&&&\\
\hline &&&&& \\ [-\halfbls]
(4)      & odd & 0.4179  & 0.1821 & 1.6000 & 8/5      \\
[-\halfbls] &&&&&\\
\hline &&&& \\ [-\halfbls]
(1,3)    & -   & 0.5202  & 0.2298  & 1.7500 & 7/4      \\
[-\halfbls] &&&&&\\
\hline &&&&& \\ [-\halfbls]
(2,2)    & hyp   & 0.6883  & 0.3117 & 2.000 & 4/2      \\
[-\halfbls] &&&&&\\
\hline &&&&& \\ [-\halfbls]
(2,2)    & odd & 0.4218  & 0.2449 & 1.6667 & 5/3      \\
[-\halfbls] &&&&&\\
\hline &&&&& \\ [-\halfbls]
(1,1,2)  & -   & 0.5397  & 0.2936 & 1.8333 & 11/6     \\
[-\halfbls] &&&&&\\
\hline &&&&& \\ [-\halfbls]
(1,1,1,1)& -   & 0.5517  & 0.3411  & 1.8928 & 53/28    \\
[-\halfbls] &&&&&\\ \hline
\end{array}
$$
\end{table}

\clearpage   

\begin{table}
\small
$$
\begin{array}{|c|c||c|c|c||c||c|}

\hline
&&\multicolumn{5}{|c|}{}\\

\multicolumn{1}{|c|}{\text{Degrees}}&
\multicolumn{1}{|c||}{\text{Con-}}&
\multicolumn{5}{|c|}{\text{Lyapunov exponents}}\\

\multicolumn{1}{|c|}{\text{of }}&
\multicolumn{1}{|c||}{\text{nected}}&
\multicolumn{5}{|c|}{\text{}}\\

\cline{3-7}

\multicolumn{1}{|c|}{\text{zeros}}&
\multicolumn{1}{|c||}{\text{compo-}}&
\multicolumn{4}{|c||}{\text{}}&
\multicolumn{1}{|c|}{\text{}}\\

\multicolumn{1}{|c|}{}&
\multicolumn{1}{|c||}{\text{nent}}&
\multicolumn{4}{|c||}{\text{Experimental}}&
\multicolumn{1}{|c|}{\text{Exact}}\\

\multicolumn{1}{|c|}{\text{}}&
\multicolumn{1}{|c||}{\text{}}&
\multicolumn{4}{|c||}{\text{}}&
\multicolumn{1}{|c|}{\text{}}\\

\cline{3-7}

\multicolumn{1}{|c|}{\text{}}&
\multicolumn{1}{|c||}{\text{}}&
\multicolumn{1}{|c|}{\lambda_2}&
\multicolumn{1}{|c|}{\lambda_3}&
\multicolumn{1}{|c||}{\lambda_4}&
\multicolumn{1}{|c||}{\overset{g}{\underset{j=1}{\sum}} \lambda_j}&
\multicolumn{1}{|c|}{\overset{g}{\underset{j=1}{\sum}} \lambda_j}\\
[-\halfbls] &&&&&&\\
\hline &&&&&& \\ [-\halfbls]
(6)      & hyp   & 0.7375  & 0.4284  & 0.1198 & 2.2857 & 16/7  \\
[-\halfbls] &&&&&&\\
\hline &&&&&& \\ [-\halfbls]
(6)      &even & 0.5965  & 0.2924  & 0.1107 & 1.9996 & 14/7  \\
[-\halfbls] &&&&&&\\
\hline &&&&&& \\ [-\halfbls]
(6)      & odd & 0.4733  & 0.2755  & 0.1084 & 1.8572 & 13/7  \\
[-\halfbls] &&&&&&\\
\hline &&&&&& \\ [-\halfbls]
(1,5)    & -   & 0.5459  & 0.3246  & 0.1297 & 2.0002 & 2     \\
[-\halfbls] &&&&&&\\
\hline &&&&&& \\ [-\halfbls]
(2,4)    &even & 0.6310  & 0.3496  & 0.1527 & 2.1333 & 32/15 \\
[-\halfbls] &&&&&&\\
\hline &&&&&& \\ [-\halfbls]
(2,4)    & odd & 0.4789  & 0.3134  & 0.1412 & 1.9335 & 29/15 \\
[-\halfbls] &&&&&&\\
\hline &&&&&& \\ [-\halfbls]
(3,3)    & hyp   & 0.7726  & 0.5182  & 0.2097 & 2.5005 & 5/2   \\
[-\halfbls] &&&&&&\\
\hline &&&&&& \\ [-\halfbls]
(3,3)    & -   & 0.5380  & 0.3124  & 0.1500 & 2.0004 & 2     \\
[-\halfbls] &&&&&&\\
\hline &&&&&& \\ [-\halfbls]
%
(1,2,3)  & -   & 0.5558  & 0.3557  & 0.1718 & 2.0833 & 25/12 \\
[-\halfbls] &&&&&&\\
\hline &&&&&& \\ [-\halfbls]
(1,1,4)  & -   & 0.55419 & 0.35858 & 0.15450& 2.06727 & 1137/550     \\
[-\halfbls] &&&&&&\\
\hline &&&&&& \\ [-\halfbls]
(2,2,2)  &even & 0.6420  & 0.3785  & 0.1928 & 2.2133 & 737/333\\
[-\halfbls] &&&&&&\\
\hline &&&&&& \\ [-\halfbls]
(2,2,2)  & odd & 0.4826  & 0.3423  & 0.1749 & 1.9998 & 2     \\
[-\halfbls] &&&&&&\\
\hline &&&&&& \\ [-\halfbls]
(1,1,1,3)& -   & 0.5600  & 0.3843  & 0.1849 & 2.1292 & 66/31 \\
[-\halfbls] &&&&&&\\
\hline &&&&&& \\ [-\halfbls]
(1,1,2,2)& -   & 0.5604  & 0.3809  & 0.1982 & 2.1395 & 5045/2358   \\
[-\halfbls] &&&&&&\\
\hline &&&&&& \\ [-\halfbls]
(1,1,1,1,2)& - & 0.5632  & 0.4032  & 0.2168 &  2.1832 & 131/60    \\
[-\halfbls] &&&&&&\\
\hline &&&&&& \\ [-\halfbls]
(1,1,1,1,1,1)&-& 0.5652  & 0.4198  & 0.2403 &  2.2253 & 839/377   \\
[-\halfbls] &&&&&&\\ \hline
\end{array}
$$
\end{table}

\clearpage   

\begin{table}
\small
$$
\begin{array}{|c|c||c|c|c|c||c||c|}

\hline
&&\multicolumn{6}{|c|}{}\\

\multicolumn{1}{|c|}{\text{Degrees}}&
\multicolumn{1}{|c||}{\text{Con-}}&
\multicolumn{6}{|c|}{\text{Lyapunov exponents}}\\

\multicolumn{1}{|c|}{\text{of }}&
\multicolumn{1}{|c||}{\text{nected}}&
\multicolumn{6}{|c|}{\text{}}\\

\cline{3-8}

\multicolumn{1}{|c|}{\text{zeros}}&
\multicolumn{1}{|c||}{\text{compo-}}&
\multicolumn{5}{|c||}{\text{}}&
\multicolumn{1}{|c|}{\text{}}\\

\multicolumn{1}{|c|}{}&
\multicolumn{1}{|c||}{\text{nent}}&
\multicolumn{5}{|c||}{\text{Experimental}}&
\multicolumn{1}{|c|}{\text{Exact}}\\

\multicolumn{1}{|c|}{\text{}}&
\multicolumn{1}{|c||}{\text{}}&
\multicolumn{5}{|c||}{\text{}}&
\multicolumn{1}{|c|}{\text{}}\\

\cline{3-8}

\multicolumn{1}{|c|}{\text{}}&
\multicolumn{1}{|c||}{\text{}}&
\multicolumn{1}{|c|}{\lambda_2}&
\multicolumn{1}{|c|}{\lambda_3}&
\multicolumn{1}{|c|}{\lambda_4}&
\multicolumn{1}{|c||}{\lambda_5}&
\multicolumn{1}{|c||}{\overset{g}{\underset{j=1}{\sum}} \lambda_j}&
\multicolumn{1}{|c|}{\overset{g}{\underset{j=1}{\sum}} \lambda_j}\\
[-\halfbls] &&&&&&&\\
\hline &&&&&&& \\ [-\halfbls] (8) & hyp &
0.798774 & 0.586441 & 0.305803 & 0.086761 & 2.777779  &  \frac{25}{9}\\
[-\halfbls] &&&&&&&\\
\hline &&&&&&& \\ [-\halfbls]
(8) & even &
0.597167 & 0.362944 & 0.189205 & 0.072900 & 2.222217  & \frac{20}{9}\\
[-\halfbls] &&&&&&&\\
\hline &&&&&&& \\ [-\halfbls]
(8) & odd &
0.515258 & 0.343220 & 0.181402 & 0.071107 & 2.110987  & \frac{19}{9} \\
[-\halfbls] &&&&&&&\\
\hline &&&&&&& \\ [-\halfbls]
(7, 1) & - &
0.560205 & 0.378184 & 0.206919 & 0.081789 & 2.227098  &  \frac{2423}{1088} \\
[-\halfbls] &&&&&&&\\
\hline &&&&&&& \\ [-\halfbls]
(6, 2) & even &
0.603895 & 0.385796 & 0.220548 & 0.091624 & 2.301862   &  \frac{178429}{77511} \\
[-\halfbls] &&&&&&&\\
\hline &&&&&&& \\ [-\halfbls]
(6, 2) & odd &
0.521181 & 0.368690 & 0.211988 & 0.088735 & 2.190594   &  \frac{46}{21} \\
[-\halfbls] &&&&&&&\\
\hline &&&&&&& \\ [-\halfbls]
(6, 1, 1) & - &
0.563306 & 0.398655 & 0.229768 & 0.093637 & 2.285367   &  \frac{59332837}{25961866} \\
[-\halfbls] &&&&&&&\\
\hline &&&&&&& \\ [-\halfbls]
(5, 3) & - &
0.561989 & 0.376073 & 0.216214 & 0.095789 & 2.250066   &  \frac{9}{4} \\
[-\halfbls] &&&&&&&\\
\hline &&&&&&& \\ [-\halfbls]
(5, 2, 1) & - &
0.564138 & 0.396293 & 0.236968 & 0.103124 & 2.300523   &  \frac{4493}{1953} \\
[-\halfbls] &&&&&&&\\
\hline &&&&&&& \\ [-\halfbls]
(5, 1, 1, 1) & - &
0.565422 & 0.414702 & 0.252838 & 0.107906 & 2.340868    &  \frac{103}{44} \\
[-\halfbls] &&&&&&&\\ \hline
\end{array}
$$
\end{table}

\pagebreak

\section{Square-tiled surfaces and permutations}
\label{a:pairs:of:permutations}

\subsection{Alternative interpretation of Siegel--Veech constant
for arithmetic Teichm\"uller discs}

Consider  an  $N$-square-tiled  surface  and enumerate its squares in
some way. The structure of the square tiling can be encoded by a pair
of  permutations $(\pihor,\pivert)$, indicating for each square (say,
for  a  square  number  $k$)  the number $\pi_{hor}(k)$ of its direct
neighbor  to  the  right, and the number $\pi_{hor}(k)$ of its direct
neighbor  on  top.  Reciprocally,  any  ordered  pair of permutations
$(\pihor,\pivert)$  from $\mathfrak{S}_N$, such that $\pihor,\pivert$
do  not  have nontrivial common invariant subsets in $\{1,\dots,N\}$,
defines a connected square-tiled surface.

Applying a simultaneous conjugation
\begin{equation}
\label{eq:conjugation}
(\pi\circ\pihor\circ\pi^{-1}\ ,\ \pi\circ\pivert\circ\pi^{-1})
\end{equation}
by the same permutation $\pi$ to both permutations $(\pihor,\pivert)$
we  do  not change the square-tiled surface, but only the enumeration
of  the  squares. Thus, $N$-square-tiled surfaces are in a one-to-one
correspondence  with  the  resulting  equivalence  classes of ordered
pairs of permutations.

Let $S(\pihor,\pivert)\in\cH(m_1,\dots,m_\noz)$. The degrees $m_i$ of
zeroes  can  be  reconstructed  from  $(\pihor,\pivert)$  as follows.
Consider a decomposition of the commutator
$$
[\pihor,\pivert]:=\pihor\circ\pivert\circ\pihor^{-1}\circ\pivert^{-1}
$$
into   cycles.   Then   the   following   two   unordered  sets  with
multiplicities coincide:
\begin{multline*}
\{m_1+1,\dots,m_\noz+1\}
\ =\\=\
\{\text{Lengths of cycles of }[\pihor,\pivert]
  \text{, which are longer than 1}\}\,.
\end{multline*}

Consider   the  following  generators  $T,S$  of  the  group  $\SLZ$:
\begin{align*}
T     &:=\begin{pmatrix}1&-1\\0&1\end{pmatrix}\\
R     &:=\begin{pmatrix}0&1\\-1&0\end{pmatrix}\,.
\end{align*}
In  terms  of  pairs  of  permutations  the  action of $T$ and $R$ on
square-tiled surfaces can be represented as
\begin{align*}
T(\pihor,\pivert)&=(\pihor\,,\,\pivert\circ\pihor^{-1})\\
R(\pihor,\pivert)&=(\pivert^{-1}\,,\,\pihor)\,.
\end{align*}
Thus,   an   $\SLZ$-orbit   $\cO(S)$   of   a   square-tiled  surface
$S(\pihor,\pivert)$  can  be  obtained as an orbit of the equivalence
class $\widetilde{(\pihor,\pivert)}$ under the transformations $T, S$
as  above  in  the  set  of  equivalence  classes of ordered pairs of
permutations.

We  can rewrite now expression~\eqref{eq:SVconstant:for:square:tiled}
for  the  Siegel--Veech  constant of an arithmetic Teichm\"uller disc
$\cM_1$   as  follows.  Let  $\cO(S)$  be  the  $\SLZ$-orbit  of  the
square-tiled  surface $S(\pi_h,\pi_v)$. Let $\cO(\pi_h,\pi_v)$ be the
corresponding  orbit  in  the  set  of equivalence classes of ordered
permutations. Then

\begin{multline*}
   %
c_{\mathit{area}}(\cM_1)=
\cfrac{3}{\pi^2}\cdot
\cfrac{1}{\card \cO(S)}\
\sum_{S_i\in\cO(S)}\ \
\sum_{\substack{
\mathit{horizontal}\\
\mathit{cylinders\ cyl}_{ij}\\
such\ that\\S_i=\sqcup\mathit{cyl}_{ij}}}\
\cfrac{h_{ij}}{w_{ij}}
\ = \\ =\
\cfrac{3}{\pi^2}\cdot
\frac{1}{\card \cO(\pi_h,\pi_v)}
\sum_{\substack{\widetilde{(\pihor,\pivert)}\\
\text{in }\cO(\pi_h,\pi_v)}}
\sum_{\substack{\text{cycles }c_i\\\text{ in }\pihor}}
\frac{1}{\text{length of }c_i}
\end{multline*}

Note  that  the subset of \textit{noncommuting} pairs of permutations
$(\pihor,\pivert)$    in    $\mathfrak{S}_N\times\mathfrak{S}_N$   is
invariant      under     the     action~\eqref{eq:conjugation}     of
$\mathfrak{S}_N$,  and this action does not have fixed points in this
subset.  Hence,  when the surface $S(\pi_h,\pi_v)$ has genus at least
two,   the  projection  of  the  $T,R$-orbit  of  $(\pi_h,\pi_v)$  in
$\mathfrak{S}_N\times\mathfrak{S}_N$  to the orbit $\cO(\pi_h,\pi_v)$
in  the  set of equivalence classes is a $(N!)$-to-one map. Since the
collection  of lengths of the cycles of a permutation does not change
under  the  conjugation,  we  can  rewrite  the  expression  for  the
Siegel--Veech constant in terms of the $T,S$-orbit:

\begin{equation}
\label{eq:SVconstant:for:pair:of:permutations}
c_{\mathit{area}}(\cM_1)=
\cfrac{3}{\pi^2}\cdot
\frac{1}{\card\big(T,R\text{-orbit of }(\pi_h,\pi_v)\big)}
\sum_{\substack{
  (\pihor,\pivert)\text{}\\
  \text{in the}\\
  \rule{0pt}{7pt}T,R\text{-orbit}
}}
\ \sum_{\substack{\text{cycles }c_i\\\text{ in }\pihor}}
\frac{1}{\text{length of }c_i}
\end{equation}
   %

\subsection{Non varying phenomenon}

By  Corollaries~\ref{cor:hyp:connected:comps}  and~\ref{cor:g2},  the
Siegel--Veech  constant  of  any  arithmetic  Teichm\"uller disc in a
hyperelliptic   locus  depends  only  on  the  ambient  locus.  Being
formulated                 in                 terms                of
equation~\eqref{eq:SVconstant:for:pair:of:permutations}          this
statement    becomes   by   far   more   intriguing.   For   example,
Corollary~\ref{cor:g2}  implies  the following statements about pairs
of permutations.

\textbf{Corollary     \ref{cor:g2}$\pmb{}'$.}    \textit{    Consider
permutations  $\pi_h,\pi_v\in\mathfrak{S}_N$  such that $\pi_h,\pi_v$
do not have nontrivial common invariant subsets in $\{1,\dots,N\}$.
}

\textit{
If  the commutator $[\pi_h,\pi_v]$ has a single cycle of length three
and all other cycles have lengths one, than
}
$$
\frac{1}{\card\big(T,R\text{-orbit of }(\pi_h,\pi_v)\big)}
\sum_{\substack{
  (\pihor,\pivert)\text{}\\
  \text{in the}\\
  \rule{0pt}{7pt}T,R\text{-orbit}
}}
\ \sum_{\substack{\text{cycles }c_i\\\text{ in }\pihor}}
\frac{1}{\text{length of }c_i}
=
\frac{10}{9}
$$
\textit{  Here  by  a  ``$T,R$-orbit of $(\pi_h,\pi_v)$'' we mean the
minimal  subset  in  $\mathfrak{S}_N\times\mathfrak{S}_N$  containing
$(\pi_h,\pi_v)$ and invariant under the operations $T$ and $R$.
}

\textit{  If the commutator $[\pi_h,\pi_v]$ has exactly two cycles of
length two and all other cycles have lengths one, than
}
$$
\frac{1}{\card\big(T,R\text{-orbit of }(\pi_h,\pi_v)\big)}
\sum_{\substack{
  (\pihor,\pivert)\text{}\\
  \text{in the}\\
  \rule{0pt}{7pt}T,R\text{-orbit}
}}
\ \sum_{\substack{\text{cycles }c_i\\\text{ in }\pihor}}
\frac{1}{\text{length of }c_i}
=
\frac{5}{4}
$$
\medskip

In  other words, when the commutator has a single nontrivial cycle of
length  three,  or  only  two  nontrivial  cycles  of length two, the
average inverse length of a cycle over all cycles of all permutations
in a $T,R$-orbit does not depend neither on $N$ nor on a specific $T,
R$-orbit for a given $N$.

Experimenting  with orbits of square-tiled surfaces, the authors have
observed  the  same phenomenon in further strata in small genera. For
example,  in  genus  three  the  Siegel--Veech constant of arithmetic
Teichm\"uller  discs  did not vary for discs in all strata except the
principal one, $\cH(1,1,1,1)$.

Of  course, this non-varying phenomenon was initially checked only for
orbits  of  size  sufficiently  small to be treated by a computer (of
cardinality below $10^6$). However, we have conjectured that it would
be  valid for all orbits in a certain list of connected components of
the strata in genera $3, 4, 5$.

An  explanation and a proof of this non-varying phenomenon was finally
recently  found by D.~Chen and M.~M\"oller~\cite{Chen:Moeller} almost
a decade after it was conjectured.

\subsection{Global average}

Finally,             one            can            use            the
interpretation~\eqref{eq:SVconstant:for:pair:of:permutations}  of the
Siegel--Veech  constant  of an arithmetic Teichm\"uller disc to state
the  following  statement,  where  the operations $T$ and $R$ are not
present anymore.

\begin{Definition}
A  pair  $(\pi_h,\pi_v)$  of  permutations  in  $\mathfrak{S}_N$  has
\textit{type}  $(m_1,\dots,m_\noz)$  if  $\pi_h,\pi_v$  do  not  have
nontrivial  common  invariant  subsets  in $\{1,\dots,N\}$ and if the
length  spectrum  of  decomposition  into  cycles  of  the commutator
$[\pihor,\pivert]$ satisfies
\begin{multline*}
\{m_1+1,\dots,m_\noz+1\}
\ =\\=\
\{\text{Lengths of cycles of }[\pihor,\pivert]
  \text{, which are longer than 1}\}\,.
\end{multline*}
\end{Definition}

\begin{Proposition}
For  any  connected  stratum  $\cH(m_1,\dots,m_\noz)$ the limit below
exists and is equal to the normalized Siegel--Veech constant:
$$
\lim_{N\to\infty}\
\sum_{k=1}^N\
\sum_{\substack{
  (\pihor,\pivert)\\
  \text{\rm of type}\\
  (m_1,\dots,m_\noz)\\
  \text{\rm in }\mathfrak{S}_k\times\mathfrak{S}_k
}}
\ \sum_{\substack{\text{\rm cycles }c_i\\\text{\rm in }\pihor}}
\frac{1}{\text{\rm length of }c_i}
=
\frac{\pi^2}{3}\cdot c_{area}\big(\cH(m_1,\dots,m_\noz)\big)
$$
\end{Proposition}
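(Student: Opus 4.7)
The plan is to identify the unnormalized double sum on the left-hand side with an aggregate Siegel--Veech count over all arithmetic Teichm\"uller discs of $\cH(m_1,\dots,m_\noz)$, and then combine the Eskin--Okounkov asymptotic for counting square-tiled surfaces with the Siegel--Veech formula for the stratum to extract the limit. First, I would pass from ordered pairs $(\pihor,\pivert)\in\mathfrak{S}_k\times\mathfrak{S}_k$ to equivalence classes under simultaneous conjugation, using that for pairs with no common nontrivial invariant subset the conjugation action is free; this produces a factor of $k!$ that will cancel against the orbit-size normalization below. Using equation~\eqref{eq:SVconstant:for:pair:of:permutations}, the inner sum $\sum_i 1/|c_i|$ averaged over the $T,R$-orbit of a single class equals $\frac{\pi^2}{3}\,c_{\mathit{area}}(\cM_1)$ for the corresponding arithmetic Teichm\"uller disc $\cM_1$. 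Consequently, once the $k$-th slice on the left is divided by the total number of such classes, it becomes precisely the Masur--Veech-weighted average of $\frac{\pi^2}{3}c_{\mathit{area}}(\cM_1)$ over all arithmetic Teichm\"uller discs of $\cH(m_1,\dots,m_\noz)$ that contain a $k$-square-tiled representative.

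Second, I would apply the Eskin--Okounkov theorem~\cite{Eskin:Okounkov}, which describes the growth rate of the counting function for $k$-square-tiled surfaces of a fixed combinatorial type in terms of the Masur--Veech volume of the ambient connected component, and which gives equidistribution of the rescaled square-tiled surfaces with respect to the canonical probability measure $\nu_1$ on $\cH_1(m_1,\dots,m_\noz)$. Given this equidistribution, the Masur--Veech-weighted average of $c_{\mathit{area}}(\cM_1)$ over arithmetic discs of growing complexity converges to $\int_{\cH_1(m_1,\dots,m_\noz)}c_{\mathit{area}}(\nu_1)\,d\nu_1 = c_{\mathit{area}}(\cH(m_1,\dots,m_\noz))$. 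The Siegel--Veech formula~\eqref{eq:SV:constant:definition} then identifies this integral with the stratum's Siegel--Veech constant. The factor $\pi^2/3$ on the right-hand side is the hyperbolic area $\nu\big(\cH_1(0)\big)$ of the modular curve, which is precisely the proportionality factor introduced when passing from the orbital sum in~\eqref{eq:SVconstant:for:pair:of:permutations} to the Masur--Veech integral.

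The hard part will be justifying the equidistribution argument uniformly over the Siegel--Veech observable. Unlike bounded continuous test functions, the quantity $\sum_i 1/|c_i|$ grows without bound on arithmetic discs supported near the boundary of the moduli space, i.e.\ on square-tiled surfaces with very short saddle connections. Controlling these ``cusp'' contributions requires a quantitative equidistribution result---of the type implicit in Eskin--Okounkov and developed further by Zorich---that shows the mass contributed by surfaces with $\ell_{\mathit{flat}}<\epsilon$ is $O(\epsilon^2)$ uniformly in $k$. Provided such a uniform estimate holds (which matches the Siegel--Veech bound of Lemma~\ref{lemma:sv:sc:estimate} applied individually to each disc and summed), a standard truncation-plus-dominated-convergence argument allows one to interchange the limit in $N$ with the Siegel--Veech averaging, and completes the proof. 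A secondary technical point is that the sum $\sum_{k=1}^N$ must be interpreted with the implicit normalization by the cumulative count of equivalence classes up to $k=N$, so that the claimed limit is finite; this normalization is tacit in the statement and arises naturally from the Ces\`aro-type averaging inherent in the Eskin--Okounkov counting.
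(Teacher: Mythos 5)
The paper does not actually give an argument here: its entire proof reads ``This is essentially the content of \cite[Appendix A]{Chen:slope}.'' Your proposal therefore supplies substantially more than the paper does, and the architecture you choose --- pass from ordered pairs to conjugacy classes to $T,R$-orbits, apply \eqref{eq:SVconstant:for:pair:of:permutations} orbit by orbit to convert the inner sum into $\tfrac{\pi^2}{3}c_{\mathit{area}}(\cM_1)$ of the corresponding arithmetic Teichm\"uller disc, and then let the counting measures on $k$-square-tiled surfaces equidistribute towards $\nu_1$ --- is the natural route and is in the same spirit as Chen's appendix. Your reading of the tacit normalization (the left-hand side as written is a divergent sum of positive terms and must be divided by the cumulative number of pairs of the given type) is also the correct one. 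Two small imprecisions: the equidistribution of square-tiled surfaces is a consequence of the lattice-point definition of the Masur--Veech measure rather than of the Eskin--Okounkov volume computation per se, and the claim that absence of a common invariant subset makes the conjugation action free is not quite right --- square-tiled surfaces with nontrivial translation automorphisms (e.g.\ the Eierlegende Wollmilchsau) give pairs with nontrivial stabilizer, so the $k!$-to-one statement only holds up to lower-order corrections.

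The genuine gap is the step you yourself flag as ``the hard part.'' The observable $\sum_i 1/|c_i|=\sum_j h_j/w_j$ is unbounded on each stratum, growing like $\ell_{\mathit{flat}}(S)^{-2}$ near the cusp, so weak-$\ast$ equidistribution of the counting measures is not enough: you need a tail bound, uniform in $k$, for the \emph{discrete} measures, of the form ``the proportion of $k$-square-tiled surfaces carrying a cylinder of width $\le \epsilon\sqrt{k}$ and modulus $\ge M$ is $O(\epsilon^2)$ with implied constant independent of $k$.'' Your proposed justification --- Lemma~\ref{lemma:sv:sc:estimate} ``applied individually to each disc and summed'' --- does not deliver this: that lemma controls the smooth measure $\nu_1$ of each disc with a constant $c_s(\cM_1)$ depending on the disc, and the uniform boundedness of these constants over all arithmetic discs of growing degree is itself equivalent to the kind of uniform lattice-point estimate you are trying to prove, so the argument as stated is circular. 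Closing this gap requires an independent combinatorial or counting input (a direct estimate on the number of pairs of permutations producing a short wide cylinder, or the quantitative counting results of Eskin--Masur/Eskin--Okounkov adapted to the discrete setting), which is precisely what \cite[Appendix A]{Chen:slope} provides; without it the interchange of the limit in $N$ with the Siegel--Veech averaging is unjustified.
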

\begin{proof}
This is essentially the content of \cite[Appendix A]{Chen:slope}.
\end{proof}

\subsection*{Acknowledgments}
We  are extremely grateful to A.~Kokotov for clarifying to us several
confusions  related  to  normalization  of  the flat Laplacian and to
K.~Rafi  who  has  carefully read a preliminary version of this text,
has made several important remarks and has given some helpful advice.
We  thank  D.~Korotkin  for  suggesting us an idea of the alternative
proof    of   Theorem~\ref{theorem:main:local:formula}.   We   highly
appreciate  important  discussions with J.~Grivaux, with H.~Masur, with
M.~M\"oller  and  with  A.~Wright.

We  express our special thanks to G.~Forni and to P.~Hubert for their
careful  reading  of  the paper. Their numerous and valuable comments
and suggestions have helped us to radically improve the presentation.

The  authors  are  grateful to HIM, IHES, MPIM and to the Universities of
Chicago  and  Rennes  1 for their hospitality during preparation of this
paper.


\end{document}